\documentclass [reqno, oneside, , 12pt]{amsart}
\usepackage{amsmath,amssymb,amsfonts,amsthm,enumerate,color,datetime}
\usepackage{graphicx}

\setlength{\topmargin}{0pt} \setlength{\oddsidemargin}{.2in}
\setlength{\evensidemargin}{0pt} \setlength{\textwidth}{5.35in}
\setlength{\textheight}{8.3in}

\setlength{\textwidth}{430pt}
\setlength{\textheight}{600pt}

\setlength{\topmargin}{0pt}
\setlength{\oddsidemargin}{20pt}
\setlength{\evensidemargin}{20pt}


\input epsf

\markboth{\thepage}{\thepage}

\newtheorem{theorem}{Theorem}[section]
\newtheorem*{theoremA*}{Theorem A}
\newtheorem*{theoremB}{Theorem B}
\newtheorem{proposition}[theorem]{Proposition}
\newtheorem{lemma}[theorem]{Lemma}
\newtheorem{corollary}[theorem]{Corollary}
\newtheorem{definition}[theorem]{Definition}

\newtheorem{remark}[theorem]{Remark}
\newtheorem{remarks}[theorem]{Remarks}

\numberwithin{equation}{section}

\theoremstyle{definition}

\begin{document}

\newcommand{\namelistlabel}[1] {\mbox{#1}\hfil}
\newenvironment{namelist}[1]{%
\begin{list}{}
{\let\makelabel\namelistlabel
\settowidth{\labelwidth}{#1}
\setlength{\leftmargin}{1.1\labelwidth}}
}{%
\end{list}}

\def\C{{\mathbb C}}
\def\Cs{\mathcal C}
\def\D{{\mathbb D}}
\def\E{{\mathcal E}}
\def\F{{\mathcal F}}
\def\K{{\mathcal K}}
\def\L{{\mathcal L}}
\def\M{{\mathcal M}}
\def\N{{\mathbb N}}
\def\T{{\mathcal T}}
\def\Sc{{\mathcal S}}
\def\R{{\mathbb R}}
\def\Z{\mathbb Z}
\def \RE {\Re\text{\rm e}}
\def \IM {\Im\text{\rm m}}
\def\i{{\mathbf i}}
\def\j{{\mathbf j}}
\def\x{{\mathbf x}}
\def\y{{\mathbf y}}
\def\u{{\mathbf u}}
\def\v{{\mathbf v}}
\def\s{{\mathbf s}}
\def\t{{\mathbf t}}
\def\0{{\mathbf 0}}
\def\balpha{{\bar\alpha}}
\def\L{\ell}

\def \lan {\langle}
\def \ran {\rangle}
\def \al {\alpha}
\def \la {\lambda}
\def \ph {\varphi}
\def \del {\delta}
\def \de {\partial}
\def \inv{^{-1}}
\def \supp {\text{\rm supp\,}}
\def \<  {{[\![}}
\def \> {{]\!]}}
\newcommand{\lsup}[2]{\ensuremath{{}^{#1}\!{#2}}}   
\def\begeq{\begin{equation}\begin{aligned}}
\def\endeq{\end{aligned}\end{equation}}
\def \eps {\varepsilon}
\def \cJ {\mathcal J}

\title[Flag Kernels on Homogeneous Groups]{Singular Integrals with Flag Kernels\\ on Homogeneous Groups:  I}

\author{Alexander Nagel, Fulvio Ricci, Elias M.~Stein, Stephen Wainger}

\begin{abstract}Let $\mathcal K$ be a flag kernel on a homogeneous nilpotent Lie group $G$. We prove that operators $T$ of the form $T(f)= f*\mathcal K$ form an algebra under composition, and that such operators are bounded on $L^{p}(G)$ for $1<p<\infty$.
\end{abstract}

\subjclass[2010]{42B20}

\keywords{flag kernel, homogeneous nilpotent Lie group, cancellation condition}

\address{
Department of Mathematics\\ 
University of Wisconsin\\ 
Madison, WI
53706}
\email{nagel@math.wisc.edu}

\address{
Scuola Normale Superiore\\ Piazza dei Cavalieri 7\\ 56126 Pisa\\
Italy}
\email{fricci@sns.it}

\address{
Department of Mathematics\\ Princeton University\\ Princeton, NJ
08544, USA}
\email{stein@math.princeton.edu}

\address{
Department of Mathematics\\ 
University of Wisconsin\\ 
Madison, WI
53706}
\email{wainger@math.wisc.edu}

\maketitle
{\small
\setcounter{tocdepth}{1} \tableofcontents}

\thispagestyle{empty}

\section{Introduction}\label{Intro}

\allowdisplaybreaks{
This is the first of two papers dealing with singular integral operators with flag kernels on homogeneous nilpotent groups.  Our goal is to show that these operators, along with  appropriate sub-collections, form algebras under composition, and that the operators in question are bounded on $L^p$.  

Operators of this kind arose initially when studying compositions of sub-elliptic operators on the Heisenberg group (such as the sub-Laplacian $\mathcal{L}$ and $\square_b$) with elliptic-type operators.  In particular in \cite{MuRiSt95} one saw that operators of the form $m ( \mathcal{L}, iT)$, (where $m$ is a ``Marcinkiewicz multiplier'') are singular integrals with flag kernels and satisfy $L^p$ estimates.  The theory was extended in \cite{NaRiSt00} to encompass general flag kernels in the Euclidean space $\mathbb{R}^N$, and the resulting operators arising via abelian convolution.  In addition, aspects of the CR theory for quadratic manifolds could be studied via such operators on various step-2 groups.  More recently, flag kernels have been studied in \cite{MR2499336} and \cite{MR2591640}. In view of this, and because of their potential further application, it is desirable to extend the above results in \cite{NaRiSt00} to the setting of homogeneous groups of higher step. To achieve this goal requires however that we substantially recast the approach and techniques used previously, since these were essentially limited to the step 2 case.

Our main results are two-fold.  Suppose $G$ is a homogeneous nilpotent group and $\mathcal{K}$ denotes a distribution on $G$ which is a flag kernel (the requisite definitions are given below in Definition \ref{Def2.1}).


\begin{theoremA*}
The operators $T$ of the form $T ( f
) = f \, \ast \, \mathcal{K}$ form an algebra under composition.
\end{theoremA*}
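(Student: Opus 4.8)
The plan is to replace the differential-and-cancellation definition of a flag kernel by a dyadic, Littlewood--Paley-type decomposition adapted to the flag, and then to show that the class of such decompositions is closed under group convolution.

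The first and most technical step is a structural equivalence: a distribution $\mathcal{K}$ is a flag kernel in the sense of Definition \ref{Def2.1} if and only if it can be written as
\[
\mathcal{K} \;=\; \sum_{\mathbf{j}\in\Z^n} \varphi_{\mathbf{j}},
\]
the series converging in $\mathcal{S}'(G)$, where each $\varphi_{\mathbf{j}}$ is obtained from a member of a fixed bounded subset of $\mathcal{S}(G)$ by the anisotropic dilation that rescales the $i$-th layer of the flag by $2^{j_i}$, normalized so the $L^1$ norms stay bounded, and where $\varphi_{\mathbf{j}}$ carries the partial cancellation dictated by the flag conditions (vanishing integral over the truncated variable groups $(x_1,\dots,x_i)$ for the appropriate indices $i$). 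Both implications must be proved: synthesizing a flag kernel out of such bumps, and decomposing a given flag kernel into them while producing the right cancellations. This is done one layer of the flag at a time, using the hierarchical structure of the flag together with the homogeneous structure of $G$.

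Granting the equivalence, $\mathcal{K}\ast\mathcal{K}' = \sum_{\mathbf{j},\mathbf{k}} \varphi_{\mathbf{j}}\ast\psi_{\mathbf{k}}$, and the crux is one convolution estimate: for each pair $(\mathbf{j},\mathbf{k})$, the function $\varphi_{\mathbf{j}}\ast\psi_{\mathbf{k}}$ equals a constant times $2^{-\eps\|\mathbf{j}-\mathbf{k}\|_1}$ times a normalized bump $\theta_{\mathbf{m}}$ of the same type, at the coordinatewise-maximum scale $\mathbf{m}=\mathbf{j}\vee\mathbf{k}$, with $\theta_{\mathbf{m}}$ again carrying the required partial cancellations. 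The geometric-decay factor is produced layer by layer, by playing the cancellation of whichever of the two kernels is narrower in that layer against the smoothness of the other; the claim that the product lives at scale $\mathbf{m}$ rather than at some intermediate mixed scale is where the dilation structure of $G$ and its compatibility with the flag enter. One then reassembles:
\[
\mathcal{K}\ast\mathcal{K}' \;=\; \sum_{\mathbf{m}}\Big(\sum_{\mathbf{j}\vee\mathbf{k}=\mathbf{m}} c_{\mathbf{j},\mathbf{k}}\,2^{-\eps\|\mathbf{j}-\mathbf{k}\|_1}\Big)\,\theta_{\mathbf{m}},
\]
where the inner sum converges absolutely; the resulting family is again of the type characterized in the first step, so $\mathcal{K}\ast\mathcal{K}'$ is a flag kernel. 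A preliminary point is that the convolution of two flag-kernel distributions is well defined at all: one truncates to $\|\mathbf{j}\|,\|\mathbf{k}\|\le N$, runs the argument with bounds uniform in $N$, and passes to the limit.

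The main obstacle, and the reason the step-$2$ arguments of \cite{NaRiSt00} do not transfer directly, is the bookkeeping forced by the non-isotropic, non-commutative structure in higher step. The layers of the flag need not be homogeneous subgroups, so one must fix a graded basis adapted simultaneously to the dilations and to the flag, and the associated partial dilations interact with the group law only through the Baker--Campbell--Hausdorff formula rather than diagonally. A flag kernel has cancellation only in the truncated variable groups, never full cancellation, so tracking which cancellations survive a convolution, and in which layer, requires an induction up the flag. Finally, the pairs $(\mathbf{j},\mathbf{k})$ that agree in some coordinates while differing widely in others are the genuinely delicate ones, where decay must be gained in the ``bad'' layers with no loss in the ``good'' ones. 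Proving the convolution estimate of the previous paragraph with all of this under control is the heart of the argument.
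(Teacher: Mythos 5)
Your overall strategy (dyadic decomposition into dilated bumps, a single convolution estimate for pairs of bumps, reassembly) is the same as the paper's, but the key convolution estimate is stated in a form that is false on a noncommutative group, and this is precisely where the difficulty lies. You claim that $\varphi_{\mathbf j}*\psi_{\mathbf k}$ is $2^{-\epsilon\|\mathbf j-\mathbf k\|_1}$ times a bump at scale $\mathbf j\vee\mathbf k$ ``again carrying the required partial cancellations.'' Neither half of this survives scrutiny. To gain decay in a given layer one must move a derivative across the convolution, and on $G$ this can only be done by rewriting Euclidean derivatives as left- or right-invariant vector fields; the rewriting produces error terms involving derivatives in \emph{higher} layers, and those errors come with gains of the form $2^{-\epsilon(j_{\ell+1}-j_\ell)}$ (consecutive differences within a single index) rather than $2^{-\epsilon|j_\ell-k_\ell|}$. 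The correct statement (Lemma \ref{Lemma4.3qw}, Corollary \ref{Cor7.4}) is that $[\varphi]_I*[\psi]_J$ is a finite sum of terms in which each layer contributes \emph{either} $2^{-\epsilon|i_\ell-j_\ell|}$ \emph{or} $2^{-\epsilon(i_{\ell+1}-i_\ell)}$ (resp.\ $2^{-\epsilon(j_{\ell+1}-j_\ell)}$), and the summation over $\{\mathbf j\vee\mathbf k=\mathbf m\}$ converges only because of how these two kinds of gains interlock. Likewise, strong cancellation genuinely fails for the convolution; only the weaker property of Definition \ref{Def1.14a2} (weak cancellation: either a derivative in a layer or a gain $2^{-\epsilon(i_{\ell+1}-i_\ell)}$) survives, and one then needs a separate theorem (Theorem \ref{Lemma5.4zw}) asserting that sums of dilates with only weak cancellation still produce flag kernels. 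Without isolating this weaker notion your reassembly step has nothing to land on.

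A second gap is in your structural equivalence. A flag kernel cannot simply be written as a sum over all of $\Z^n$ of bumps with flag cancellation that reproduces a flag kernel; the sum must be restricted to the monotone cone $i_1\le\cdots\le i_n$, and even then the decomposition of a given flag kernel over that cone is only valid modulo correction terms that are flag kernels for strictly coarser flags (Theorem \ref{Lemma2.3}; the unqualified statement in \cite{NaRiSt00} is incorrect on this point). Consequently $\K_1*\K_2$ is in general a \emph{finite sum} of flag kernels adapted to the given flag and to coarser ones, and the pairs $(\mathbf j,\mathbf k)$ must be partitioned combinatorially according to the interleaving of their entries before the inner sums can be identified as coarser-flag kernels. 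Finally, your limiting argument for defining $\K_1*\K_2$ is vaguer than needed; the clean route is to first prove $L^2$ boundedness of each $T_\K$ by Cotlar--Stein (using the very almost-orthogonality estimate above) and define the composition as a composition of bounded operators on $L^2$.
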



\begin{theoremB}
The above operators are bounded on
$L^p ( G )$ for $1 < p < \infty$.
\end{theoremB}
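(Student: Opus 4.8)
The plan is to split the argument into an $L^{2}$ estimate proved by almost orthogonality and a separate passage to general $p$, since the classical Calder\'on--Zygmund machinery is not directly available here: a flag kernel is singular along an entire flag of subspaces, not at a single point, and is not attached to a doubling metric. A useful preliminary remark is that the class of flag kernels is closed under the adjoint $\mathcal{K}\mapsto\mathcal{K}^{*}$, $\mathcal{K}^{*}(x)=\overline{\mathcal{K}(x\inv)}$, so that the operators $T(f)=f*\mathcal{K}$ are closed under $T\mapsto T^{*}$; by duality it then suffices to treat $2\le p<\infty$ (and Theorem A, which makes these operators an algebra, in particular puts $T^{*}T$ back in the class).

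First I would establish a \emph{dyadic decomposition} of an arbitrary flag kernel: $\mathcal{K}=\sum_{\mathbf{j}\in\mathcal{E}}\varphi_{\mathbf{j}}$, where $\mathcal{E}$ is a cone in $\Z^{n}$ determined by the flag, the series converges as a distribution, and each $\varphi_{\mathbf{j}}$ is a normalized bump function obtained by applying the appropriate $n$-parameter family of dilations to members of a bounded subset of Schwartz space, with the crucial extra property that $\varphi_{\mathbf{j}}$ has vanishing integral in the ``newest'' block of variables at scale $\mathbf{j}$. This is the Littlewood--Paley-type decomposition adapted to the flag; proving it, along with the converse statement needed for Theorem A, is one of the principal technical steps.

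For the $L^{2}$ bound, write $T=\sum_{\mathbf{j}\in\mathcal{E}}T_{\mathbf{j}}$ with $T_{\mathbf{j}}f=f*\varphi_{\mathbf{j}}$. Using the size estimates together with the partial cancellation of the $\varphi_{\mathbf{j}}$ one bounds the operator norms of $T_{\mathbf{j}}T_{\mathbf{k}}^{*}$ and $T_{\mathbf{j}}^{*}T_{\mathbf{k}}$ by $C\,2^{-\varepsilon|\mathbf{j}-\mathbf{k}|}$ for some $\varepsilon>0$, and the Cotlar--Stein lemma yields $\|T\|_{L^{2}\to L^{2}}<\infty$. For $p\ne2$ I would route through a multi-parameter theory in one of two ways. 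The cleaner conceptual route is to \emph{lift}: exhibit $\mathcal{K}$ on $G$ as obtained, by integrating out auxiliary variables, from a product kernel $\mathcal{K}^{\sharp}$ on a larger homogeneous group $G^{\sharp}$ that carries a genuine $n$-parameter dilation structure; on $G^{\sharp}$ the operator $g\mapsto g*\mathcal{K}^{\sharp}$ is a product-type singular integral, hence bounded on $L^{p}(G^{\sharp})$, and a Fubini/transference argument descends the bound to $L^{p}(G)$. The more self-contained route is a square-function argument: fix an $n$-parameter Littlewood--Paley family $\{\Delta_{\mathbf{k}}\}$ on $G$ with $\|f\|_{p}\approx\bigl\|(\sum_{\mathbf{k}}|\Delta_{\mathbf{k}}f|^{2})^{1/2}\bigr\|_{p}$ for $1<p<\infty$; since $\Delta_{\mathbf{k}}T_{\mathbf{j}}$ is negligible unless $\mathbf{k}$ is close to $\mathbf{j}$, a vector-valued (Fefferman--Stein) maximal inequality gives $\|S(Tf)\|_{p}\lesssim\|Sf\|_{p}$ and hence the $L^{p}$ bound. (Interpolation between the $L^{2}$ estimate and an $L^{\infty}\to\mathrm{BMO}$ bound, for the appropriate flag/product $\mathrm{BMO}$, is a third option for $2<p<\infty$.)

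I expect the passage to $p\ne2$, together with the decomposition in the first step, to be the main obstacle. The difficulty is structural: $G$ possesses only a single one-parameter group of automorphic dilations, while the flag imposes several independent scaling parameters, and the ``partial'' dilations attached to the individual flag steps are in general not group automorphisms. Reconciling this mismatch --- arranging the bump decomposition so that it is genuinely compatible with the group law, and organizing the summation over the cone $\mathcal{E}$ so that the multi-parameter cancellation can actually be exploited --- is precisely what makes the higher-step case qualitatively harder than the step-two case treated previously, and is where most of the work is concentrated.
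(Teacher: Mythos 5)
Your proposal is correct and, in its ``self-contained'' square-function variant, is essentially the paper's argument: dyadic decomposition into dilates of normalized bumps with (strong/weak) cancellation, Cotlar--Stein for the $L^{2}$ bound, and then two square functions $S$ and $\mathfrak{S}$ built by composing one-parameter Littlewood--Paley pieces lifted from the subgroups $G_{k}$ to $G$, with the pointwise bound $S(Tf)\lesssim \mathfrak{S}(f)$ coming from the almost-orthogonality estimate $|P_{t}TP_{s}^{*}f|\le \gamma(s,t)\,\mathcal{M}(f)$. The only caveats are that the paper's ``lifting'' is of maximal and square functions from the subgroups $G_{k}$ up to $G$ (not of the kernel to a larger product group, a route the paper does not take), and that it uses the two one-sided inequalities $\|f\|_{p}\le A_{p}\|S(f)\|_{p}$ and $\|\mathfrak{S}(f)\|_{p}\le A_{p}'\|f\|_{p}$ rather than a two-sided $n$-parameter Littlewood--Paley equivalence, the essential quantitative input being the truncated-kernel estimates that produce the decay of $\gamma(s,t)$.
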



Given the complexity of the material, in this introductory section we provide the reader with an outline of the main ideas that enter in the proofs of the above theorems.  Moreover, in order to simplify the presentation we will often not state matters in the most general setting and sometimes describe the situation at hand a little imprecisely.


\subsection{Flag kernels}\label{FlagKernelsA}\quad

\smallskip

We start with a direct sum decomposition $\mathbb{R}^N = \mathbb{R}^{a_1} \oplus  \,\cdots \, \oplus \,\mathbb{R}^{a_n}$, with  $\sum_{j=1}^{n}a_{j} = N$, and we write $\x = ( \x_1 , \x_2, \cdots ,\x_n )$, with $\x_m \in \mathbb{R}^{a_m}$.  We also fix a one-parameter family of dilations $\delta_r$ on $\R^{N}$, given by $\delta_r ( \x ) = ( r^{d_1} \x_1 , \ldots , r^{d_n} \x_n )$, with positive exponents $d_1 < d_2 \cdots < d_k$.\footnote{One can also allow non-isotropic dilations on each subspace $\R^{a_{l}}$. See Section \ref{Dilations} below.} We denote by $Q_k = d_k a_k$ the homogeneous dimension of $\mathbb{R}^{a_k}$.  We also define the partial ``norms'' $N_k ( \x ) = | \x_k |_{e}^{1/d_k}$ where $|\x_{k}|_{e}$ is the standard Euclidean norm on $\R^{a_{k}}$.

In this setting, a flag kernel $\mathcal{K}$ is a distribution on $\mathbb{R}^N$ which is given by integration against a $C^\infty$ function $K ( \x )$ away from $\x_1 = 0$ and which satisfies two types of conditions.  The first are the differential inequalities for $\x_1 \ne 0$: 
\begin{equation}\label{Eqn1.1}
| \partial^\alpha_\x \, K ( \x ) | \, \leq \, C_\alpha \, \prod_{k = 1}^{n} \, ( N_1 ( \x ) \, + \, N_2 ( \x ) \cdots \, + \, N_k ( \x ))^{-Q_k - d_k \alpha_k}
\end{equation}
with $\alpha = ( \alpha_1 , \ldots , \alpha_n )$.  The second are the cancellation conditions.  These are most easily expressed recursively. Let $\langle \K,\varphi\rangle$ denote the action of the distribution $\K$ on a test function $\varphi$. At the beginning of the recursion there is the following condition, in many ways typical of the others:
\begin{equation}\label{Eqn1.2}
\sup\limits_{R} | \langle \mathcal{K} , \varphi_R \rangle | < \infty
\end{equation}
where $R = ( R_1 , R_2 , \cdots R_n )$,  $\varphi_R ( x ) = \varphi( R^{d_1}_1 x_1$,  $R^{d_2}_2 x_2 ,  \cdots R^{d_n}_n x_n )$, and $\varphi$ is an arbitrary $C^\infty$ function which is supported in the unit ball.  More generally, one requires that the action of $\K$ on a test function in some subset of variables $\{\x_{m_{1}}, \ldots, \x_{m_{\beta}}\}$ produces a flag kernel in the remaining variables $\{\x_{l_{1}}, \ldots, \x_{l_{\alpha}}\}$. The precise formulation of these conditions is given in Section \ref{Dilations} and Definition \ref{Def2.1} below.

\smallskip

\subsection{Dyadic decomposition}\label{Sect1.2pjv}\quad

\smallskip

A main tool used in studying flag kernels is their dyadic decomposition into sums of ``bump functions''.  This proceeds as follows. Let $I = ( i_1 , i_2 \cdots , i_n)$ denote any indexing set of integers that satisfies.
\begin{equation}\label{Eqn1.3}
i_1 \, \leq \, i_2 \,\leq  \cdots \, \leq i_{n-1} \, \leq \, i_n \, .
\end{equation}
Also let $\{ \varphi^I \}$ be a family of $C^\infty$ functions supported in the unit ball  that are uniformly bounded in the $C^{(m)}$ norm for each $m$.  Set 
\begin{equation*}
[\varphi^{I}]_I ( \x ) = 2^{- i_1 Q_1 \, - i_2 Q_2 \, \cdots \, - i_n Q_n} \:
\varphi^I ( 2^{-d_1 i_1} \x_1 , \cdots 2^{-d_k i_n} \x_n ) \, ,
\end{equation*}
so that the $[\varphi^{I}]_I$ are $L^1$-normalized.  We say that the $\varphi^{I}$ satisfy the ``strong cancellation'' condition if for each $k$ with $1 \leq k \leq n$, 
\begin{equation}\label{Eqn1.4}
\int\limits_{\mathbb{R}^{a_k}} \, \varphi^{I}(\x_{1}, \ldots, \x_{k},\ldots, \x_{n}) \, d \x_k \equiv
0
\end{equation}
when all the inequalities (\ref{Eqn1.3}) for $I$ are strict.  In the case that there are some equalities in (\ref{Eqn1.3}), say $i_{\ell-1}<i_\ell = i_{\ell + 1} = \cdots = i_k<i_{k+1}$, then only cancellation in the collection of corresponding variables is required: 
\begin{equation*}\tag{\ref{Eqn1.4}$^{\prime}$}
\int\limits_{\mathbb{R}^{a_\ell} \oplus \cdots \oplus \,\mathbb{R}^{a_k}}  \varphi^{I} (\x_{1}, \ldots, \x_{\ell}, \ldots, \x_{k},\ldots, \x_{n})\, d\x_\ell \,  \cdots \, d\x_k\, \equiv \, 0.
\end{equation*}
The first result needed is that any sum
\begin{equation}\label{Eqn1.5}
\sum\limits_{I} \, [\varphi^{I}]_{I}
\end{equation}
made up of such bump functions, with cancellation condition (\ref{Eqn1.4}) and (\ref{Eqn1.4}$^{\prime}$), converges in the sense of distributions to a flag kernel, and conversely, any flag kernel $\mathcal{K}$ can be written in this way (of course, not uniquely).

There are two parts to this result (which in effect is stated but not proved completely in \cite{NaRiSt00}).  The first is that the sum in (\ref{Eqn1.5}) is indeed a flag kernel.  To see this, one can use the estimate in Proposition \ref{Prop2.11} given in Appendix II below; one also notes from this that even without the cancellation conditions (\ref{Eqn1.4}) and (\ref{Eqn1.4}$^{\prime}$),  the sum (\ref{Eqn1.5}) satisfies the differential inequalities (\ref{Eqn1.1}). The converse part requires Theorem \ref{Lemma2.3} below, and the observation that the parts of the sum (\ref{Eqn1.5}) contributed by $I$'s where there may be equality in (\ref{Eqn1.3}) give flag distributions corresponding to various ``coarser'' flags.

However, what will be key in what follows is that the strong cancellation conditions (\ref{Eqn1.4}) or (\ref{Eqn1.4}$^{\prime}$) can be weakened, and still lead to the same conclusion.  While these ``weak'' cancellation conditions are somewhat complicated to state (see Definition \ref{Def1.14a2} below), they are easily illustrated in the special step 2 case.  Here we have the decomposition  $\mathbb {R}^n = \mathbb{R}^{a_1} \oplus \mathbb{R}^{a_2}$, $\x = ( \x_1 , \x_2)$. The cancellation condition for the second variable is as before:  $\int \varphi^{I}( \x_1 , \x_2 ) d\x_2 \equiv 0$.  For $\x_1$ the weak cancellation condition takes the form 
\begin{equation}\label{Eqn1.6}
\displaystyle{\int\limits_{\mathbb{R}^{a_1}}} \, \varphi^{I} ( \x_1 , \x_2 ) \, d\x_1 = 2^{-\epsilon ( i_2 - i_1)} \eta^{I} ( \x_2),
\end{equation}
for some $\epsilon >0$, with $I = ( i_1 , i_2)$ and $\eta^{I}$ an $L^1$ normalized bump in the $\x_2$ variable.

In this context the main conclusion (Theorem \ref{Lemma5.4zw}) is that the sum (\ref{Eqn1.5}) is still a flag kernel if the weak-cancellation conditions are assumed instead of (\ref{Eqn1.4})  and (\ref{Eqn1.4}$^{\prime}$), and the functions $\{\varphi^{I}\}$ are allowed to belong to the Schwartz class instead of being compactly supported.  In understanding Definition \ref{Def1.14a2}, one should keep in mind that conditions like (\ref{Eqn1.4}) which involve vanishing of integrals are equivalent with expressions of the $\varphi^{I}$ as the sums of appropriate derivatives. (This is established in  Lemma \ref{Lemma1.12}). 

\smallskip

\subsection{Other properties of flag kernels}\quad

\smallskip

Along with the results about decompositions of flag kernels, there are a number of other properties of these distributions that are worth mentioning and are discussed in Section \ref{FlagKernels}. First, the class of flag kernels is invariant under the change of variables compatible with the structure of the flags.  We have in mind transformations $\x \mapsto \y = F ( \x )$, with  $\y_k = \x_k + P_k ( \x )$, and $P_k$ a homogeneous  polynomial of $\x_1 , \ldots \x_{k - 1}$, of the same degree as $\x_k$.  The fact that $\mathcal{K} \circ F$ satisfies the same differential inequalities (\ref{Eqn1.1}) as $\mathcal{K}$ is nearly obvious, but the requisite cancellation conditions (such as (\ref{Eqn1.2})) are more subtle and involve the weak cancellation of the bump functions.  (See Theorem \ref{Thm3.12}.)

A second fact is that the cancellations required in the definition of a flag kernel can be relaxed.  For example, assuming that the differential inequalities (\ref{Eqn1.1}) hold, then the less restrictive version of (\ref{Eqn1.2}) requires that the supremum is taken only over those $R$ for which $R_1 \geq R_2 \cdots \geq R_n > 0$.   The formulation and proof of the sufficiency of these restricted conditions is in Theorem \ref{restricted}.

Finally we should point out that at the basis of many of our arguments is an earlier characterization in \cite{NaRiSt00} of flag kernels in terms of their Fourier transforms: these are bounded multipliers that satisfy the dual differential inequalities given in Definition \ref{Def2.4}.

\smallskip

\subsection{Graded groups and compositions of flag kernels}\quad

\smallskip

Up to this point our discussion of flag kernels has focused on their definition as distributions on the Euclidean space $\mathbb{R}^N$.  We now consider convolutions with flag kernels on graded nilpotent Lie groups $G$ whose underlying space is $\mathbb{R}^N$. The choice of an appropriate coordinate system on the group $G$, and its multiplication structure, induces a decomposition $\mathbb{R}^N = \mathbb{R}^{a_1} \oplus \cdots \oplus \mathbb{R}^{a_n}$ and allows us to find exponents $d_1 < d_2 \cdots < d_n$ as above so that the dilation $\delta_r ( \x ) = ( \delta^{r_1} \x_1 , \cdots , \delta^{r_n} \x_{n} )$, $\delta > 0$, are automorphisms of $G$.

The proof of Theorem A reduces to the statement that if $\mathcal{K}_1$ and $\mathcal{K}_2$ are a pair of flag kernels, then  $\mathcal{K}_1 \ast \mathcal{K}_2$ is a sum of flag kernels, where the convolution is taken with respect to $G$.  Note that when $G$ is the abelian group $\mathbb{R}^N$, the result follows immediately from the characterization of flag kernels in terms of their Fourier transforms, cited earlier, and in fact the convolution of two flag kernels is a single flag kernel.  In the non-commutative case the proof is not as simple and proceeds as follows.  First write $\mathcal{K}_1 \, = \,  {\sum_{I}} \,[ \varphi^{I}]_{I}$,\,\, $\mathcal{K}_2 \, = \, {\sum_{J}} \, [\psi^{J}]_{J}$ in terms of decompositions with bump functions with strong cancellation.  Now formally
\begin{equation}
\mathcal{K}_1 \ast \mathcal{K}_2 \, = \, \displaystyle{\sum\limits_{I}} \,\displaystyle{\sum\limits_{J}}  [\varphi^{I}]_{I} \, \ast \, [\psi^{J}]_{J} \, .
\end{equation}
We look first at an individual term $[\varphi^{I}]_{I} \ast [\psi^{J}]_{J}$ in the above sum.  It has three properties:
\begin{enumerate}[(a)]

\smallskip

\item $[\varphi^{I}]_{I} \ast [\psi^{J}]_{J} = [\theta^{I,J}]_{K}$ with $[\theta^{I,J}]_{K}$ a ``bump'' scaled according to $K$, where $K = I \vee J$; that is $K = ( k_1 , k_2 , \cdots k_n)$, and $k_m = \max ( i_m , j_m )$ $1 \leq m \leq n$.  This conclusion holds even if we do not assume the cancellation conditions on $[\varphi^{I}]_{I}$ and $[\psi^{J}]_{J}$.

\smallskip

\item Next, because we do have the cancellation conditions (\ref{Eqn1.4}) and (\ref{Eqn1.4}$^\prime$), we have a gain:  There exists $\epsilon > 0$ so that  $[\theta^{I,J}]_{K}$ can be written as a finite sum of terms of the form 
\begin{equation*}
\prod_{l\in A}2^{-\epsilon|i_{l}-j_{l}|}\,\prod_{m\in B}2^{-\epsilon[(i_{m+1}-i_{m})+(j_{m+1}-j_{m})}\,[\widetilde \theta^{I,J}]_{K}
\end{equation*}
where $[\widetilde\theta^{I,J}]_{K}$ is another bump function scaled according to $K$, and $A$ and $B$ are disjoint sets with $A \cup B =\{1, \ldots, n\}$.

\smallskip

\item[(c)] The strong cancellation fails in general for $[\theta^{I,J}]_{K}$, but weak cancellation holds.

\end{enumerate}

The statements (a), (b), and (c) above are contained in Lemmas \ref{Lemma5.1} and \ref{Lemma4.3qw}.  With these assertions proved, one can proceed roughly\footnote{There are actually additional complications. We must first make a preliminary partition of the set of all pairs $(I,J)$, and the result is that $\K_{1}*\K_{2}$ is actually a finite sum of flag kernels.} as follows. We define $\tilde{\theta}_K = \displaystyle{\sum\limits_{I \vee J = K}} \, [\varphi^{I}]_{I} \ast [\varphi^{J}]_J,$ where the sum is taken over all pairs $(I , J)$ for which $I \vee J = K$.  Because of the exponential gain given in (b) this sum converges to a $K$-scaled bump function. Moreover, because of (c), $\tilde{\theta}_K $ satisfies the weak cancellation property.  As a result, the sum ${\sum_{K}} \tilde{\psi}_K$ converges to a flag kernel, and hence $\mathcal{K}_1 \ast \mathcal{K}_2$ is a flag kernel as was to be shown.

We comment briefly on the arguments needed to establish (b) and (c). Here we use the strong cancellation properties of $[\varphi^{I}]_{I}$ (or $[\psi^{J}]_{J}$).  For (b) we express $[\varphi^{I}]_{I}$ as a sum of derivatives with respect to appropriate coordinates, then re-express these in terms of left-invariant vector fields, and finally pass these differentiation to $[\psi^{J}]_{J}$.  The reverse may be done starting with cancellation of $[\psi^{J}]_{J}$.  To obtain (c), the weak cancellation of $[\varphi^{I}]_{I} \ast [\psi^{J}]_{J}$, we begin the same way, but express $[\varphi^{I}]_{I}$ in terms of right-invariant vector fields and then pass these differentiations on the resulting convolution products.  The mechanism underlying this technique is set out in the various lemmas of Section \ref{VectorFields}.

The argument is a little more complex when we are in the case of equality for some of the indices's that arise in $I$ or $J$.  This  in effect involves convolutions with kernels belonging to coarser flags.  The guiding principle for convolutions of such bump functions (or kernels) is that if $\mathcal{K}_j$ are flag kernels corresponding to the flag $\mathcal{F}_j$, $j = 1,2$, then $\mathcal{K}_1 \ast \mathcal{K}_2$ is a flag kernel for the flag $\mathcal{F}$ which is the coarsest flag that is finer than $\mathcal{F}_1$ and $\mathcal{F}_2$. The combinatorics involved are illustrated by several examples given in Sections \ref{Example} and  \ref{FlagAlgebra}.

\smallskip

\subsection{$L^p$ estimates via square functions}\quad

\smallskip

The proof of the $L^p$ estimates (Theorem (\ref{Theorem6.14})) starts with the descending chain of sub-groups $G = G_1 \supset G_2 \supset \cdots \supset G_n$  where  $$G_m = \big\{ \x = ( \x_1 , \x_2 , \cdots \x_n ) : \x_1 = 0,\,\x_2 = 0 \,, \ldots, \,\x_{m-1} = 0\},$$ when $m \geq 2$.  We observe that the dilation's $\delta_r$ restrict to automorphisms of the $G_m$.  We then proceed as follows: 
\begin{enumerate}[(i)]

\item The standard (one-parameter) maximal functions and square functions on each group $G_m$, as given in [FS], are then ``lifted'' (or ``transferred'') to the group $G$.

\smallskip

\item Compositions of those lifted objects lead to ($n$-parameter) maximal functions and square functions on $G$.  Among these is the ``strong'' maximal function
\begin{equation*}
M ( f ) ( \x ) \, = \, \sup \, \frac{1}{m ( R_s)} \, \displaystyle{\int_{R_s}} \, | f ( \x \y^{-1} ) | d\y
\end{equation*}
for which one can prove vector-valued $L^p$ inequalities.  Here $R_s = \{ \x: | \x_k | \leq s_k^{d_k}\}$, with $(s_1 , \cdots , s_n)$ restricted to $s_1 \leq s_2 \cdots \leq s_k$.  There are also a pair of square functions, $S$ and $\mathfrak{S}$, with the property that
\begin{equation}\label{Eqn1.8}
\parallel f \parallel_{L^p} \leq A_p \parallel S ( f ) \parallel_{L^p} \ \mbox{and} \ \parallel \mathfrak{S} ( f ) \parallel_{L^p} \, \leq \, A^\prime_p \parallel f \parallel_{L^p} \, , \ \mbox{for} \ 1 < p < \infty \, .
\end{equation}

\smallskip

\item The connection of these square functions with our operators $T$, given by $T f = f \ast \mathcal{K}$ with $\mathcal{K}$ a flag kernel, comes about because of the point-wise estimate:
\begin{equation}\label{Eqn1.9}
S ( T f ) ( x ) \leq c \,\mathfrak{S} ( f ) ( x ) \, ,
\end{equation}
which is Lemma \ref{Lemma6.13w} below. 
\end{enumerate} 
Now (\ref{Eqn1.8}) together with (\ref{Eqn1.9}) prove the $L^p$ boundedness of our operators.

Among the ideas used to prove (\ref{Eqn1.9}) is the notion of a ``truncated'' flag kernel: such a kernel is truncated at ``width a'', $a \geq 0$, if it  satisfies the conditions such as (\ref{Eqn1.1}), but with $N_1 + \cdots + N_k$ replaced by $a + N_1 + \cdots + N_k$ throughout, (see Definition \ref{DefTruncated}). A key fact that is exploited is that a convolution of a bump of width $b$ with a truncated kernel of width $a$ yields a truncated kernel of width $a + b$.  For this, see Theorem \ref{Theorem6.7}, and its consequence, Theorem \ref{Theorem6.9}.

\subsection{Final remarks}\quad

\smallskip
  
The collection of operators with flag kernels contains both the automorphic (non-isotropic) Calder\'{o}n-Zygmund operators as well as the usual isotropic Calder\'{o}n-Zygmund operators with kernels of compact support, (broadly speaking, the standard pseudo-differential operators of order $0$).  But flag kernels, by their definition, may have singularities away from the origin.  Thus the algebra  we are considering consists of operators that are not necessarily pseudo-local. The study of a narrower algebra that arises naturally, which consists of pseudo-local operators and yet contains both types of Calder\'{o}n-Zygmund operators, will be the subject of the second paper \cite{NRSW2} in this series.

The authors are grateful to Brian Street for conversations and suggestions about the decomposition of flag kernels into sums of dilates of compactly supported functions. We would also like to thank the referee for a very careful reading of the paper. We note that the topic of this paper was the subject of several lectures given by one of us (EMS), in particular at a conference in honor of F.~Treves at Rutgers, April, 2005, and at Washington University and U.C.L.A in April and October 2008. During the preparation of this paper we learned of the work of G\l owacki  \cite{MR2602167}, \cite{MR2679042}, and \cite{Glow2010} where overlapping results are obtained by different methods. We should also mention a forthcoming paper of Brian Street \cite{Street2010} that deals with the $L^2$-theory in a more general context than is done in the present paper.

\section{Dilations and flag kernels on $\R^{N}$}\label{Dilations}

Throughout this paper we shall use standard multi-index notation.  $\Z$ denotes the set of integers and $\N$ denotes the set of non-negative integers. If $\alpha = (\alpha_{1}, \ldots, \alpha_{N})\in \mathbb N^{N}$, then   $|\alpha| = \alpha_{1}+ \cdots + \alpha_{N}$ and $\alpha!= \alpha_{1}!\cdots \alpha_{N}!$. If $\x=(x_{1}, \ldots, x_{N}) \in \R^{N}$, then $\x^{\alpha}= x_{1}^{\alpha_{1}}\cdots x_{N}^{\alpha_{N}}$. For $1\leq j \leq N$,  $\partial_{x_{j}}$ (or more simply $\partial_{j}$) denotes the differential operator $\frac{\partial}{\partial x_{j}}$. If $\alpha\in \N^{N}$ then $\partial^{\alpha}$ denotes the partial differential operator $\partial^{\alpha_{1}}_{1}\cdots \partial^{\alpha_{N}}_{N}$. 

The space of infinitely differentiable real-valued functions on $\R^{N}$ with compact support is denoted by $\mathcal C^{\infty}_{0}(\R^{N})$  and the space of Schwartz functions is denoted by $\mathcal S(\R^{N})$. The basic semi-norms on these spaces are defined as follows: 
\begin{equation*}\label{E11}
\begin{aligned}
&&&\text{if $\varphi\in \Cs^{\infty}_{0}(\R^{N})$,}& ||\varphi||_{(m)} &= \sup\left\{\big\vert\partial^{\alpha}_{\x}\varphi(\x)\big\vert\,\Big\vert\,|\alpha|\leq m,\,\,\x\in\R^{N}\right\};&&\\
&&&\text{if $\varphi\in \mathcal S(\R^{N})$,}&||\varphi||_{[M]}&= \sup\left\{ \big\vert (1+|\x|_{e})^{\alpha}\partial^{\beta}_{\x}\varphi(\x)\big\vert\,\Big\vert\, |\alpha|+|\beta|\leq M,\, \x\in \R^{N}\right\}.&&
\end{aligned}
\end{equation*}
Here $|\x|_{e}$ denotes the usual Euclidean length of $\x\in\R^{N}$.

\subsection{The basic family of dilations}\quad

\smallskip

Fix positive real numbers $0<d_{1}\leq d_{2}\leq \cdots \leq d_{N}$, and define a one-parameter family of dilations on $\R^{N}$ by setting
\begin{equation}\label{E2}
\delta_{r}[\x]= r\cdot \x = \big(r^{d_{1}}x_{1}, \ldots, r^{d_{N}}x_{N}\big).
\end{equation}
Also fix a  smooth homogeneous norm $|\x|$ on $\R^{N}$ so that $|r\cdot \x|=r\,|\x|$. The {homogeneous ball} of radius $r$ is $B(r)= \{\x\in\R^{N}\,\big\vert\,|\x|<r\}$, and the {homogeneous dimension} of $\R^{N}$ (relative to this family of dilations) is $Q = d_{1}+ \cdots + d_{N}$. Recall that $|\x|_{e}=\sqrt{x_{1}^{2}+\cdots +x_{N}^{2}}$ denotes the ordinary Euclidean length of a vector $\x\in \R^{N}$.  If $m(\x) = c\,\x^{\alpha}=cx_{1}^{\alpha_{1}}\cdots x_{N}^{\alpha_{N}}$ is a monomial, then $m(r\cdot \x) = r^{\alpha_{1}d_{1}+ \cdots \alpha_{N}d_{N}}m(\x)$, and the {homogeneous degree} of $m$ is $\Delta(m) = \alpha_{1}d_{1}+ \cdots \alpha_{N}d_{N}$. In particular, the homogeneous degree of a constant is zero. We shall agree that if the homogeneous degree of a monomial is negative, the monomial itself must be identically zero. With this convention, if $m$ is any monomial, we have
\begin{equation}\label{E6}
\Delta(\partial_{{j}}m) = \Delta(m)-d_{j}.
\end{equation}
We denote by $\mathcal H_{d}$ the space of real-valued polynomials which are sums of monomials of homogeneous degree $d$.  We have the following easy result.

\begin{proposition}\label{Prop2.1mn}
If $P$ is a polynomial, then $P\in \mathcal H_{d}$ if and only if $P(\x)=\sum_{\alpha\in \mathfrak H_{d}}c_{\alpha}\x^{\alpha}$ where $\mathfrak H_{d}= \Big\{\alpha=(\alpha_{1}, \ldots, \alpha_{N}) \in \mathbb N^{n}\,\Big\vert\, \sum_{j=1}^{N}\alpha_{j}d_{j}=d\Big\}$. Moreover:
\begin{enumerate}[{\rm(1)}]

\smallskip

\item \label{E8} if $P\in \mathcal H_{d}$ then $P(r\cdot \x)= r^{d}\,P(\x)$;

\smallskip

\item \label{E9} if $P\in \mathcal H_{d_{1}}$ and $Q\in \mathcal H_{d_{2}}$, then $PQ\in \mathcal H_{d_{1}+d_{2}}$;

\smallskip

\item \label{E.10} if $P\in \mathcal H_{d}$ then $\partial_{k}(P)(\x)\equiv 0$ if $d_{k}>d$.
\end{enumerate}
\end{proposition}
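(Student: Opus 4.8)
The plan is to prove both directions of the characterization of $\mathcal{H}_d$ directly from the definition, and then establish the three enumerated properties as simple consequences.

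First I would prove the equivalence. By definition $\mathcal{H}_d$ is the span of monomials of homogeneous degree $d$, and the homogeneous degree of the monomial $\x^\alpha$ is $\Delta(\x^\alpha) = \sum_{j=1}^N \alpha_j d_j$. So a monomial $\x^\alpha$ has homogeneous degree $d$ precisely when $\alpha \in \mathfrak{H}_d$. Hence any element of $\mathcal{H}_d$ is a linear combination $\sum_{\alpha \in \mathfrak{H}_d} c_\alpha \x^\alpha$, and conversely any such combination lies in $\mathcal{H}_d$. The only point needing a word of care is the convention that a monomial of negative homogeneous degree is identically zero: since $d_j > 0$ for all $j$ and $\alpha_j \geq 0$, the quantity $\sum \alpha_j d_j$ is automatically non-negative, so for $d \geq 0$ this convention does not interfere, and for $d < 0$ both $\mathcal{H}_d$ and the set $\mathfrak{H}_d$ are empty, so the statement holds vacuously.

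Next I would deduce (1), (2), (3). For (1): writing $P = \sum_{\alpha \in \mathfrak{H}_d} c_\alpha \x^\alpha$, we compute $P(r \cdot \x) = \sum_\alpha c_\alpha (r^{d_1} x_1)^{\alpha_1} \cdots (r^{d_N} x_N)^{\alpha_N} = \sum_\alpha c_\alpha r^{\sum_j \alpha_j d_j} \x^\alpha = r^d P(\x)$, using $\sum_j \alpha_j d_j = d$ for every $\alpha$ in the sum. For (2): if $P \in \mathcal{H}_{d_1}$ and $Q \in \mathcal{H}_{d_2}$, it suffices by bilinearity to treat monomials $\x^\alpha$ with $\Delta(\x^\alpha) = d_1$ and $\x^\beta$ with $\Delta(\x^\beta) = d_2$; then $\x^\alpha \x^\beta = \x^{\alpha + \beta}$ has $\Delta(\x^{\alpha+\beta}) = \sum_j(\alpha_j + \beta_j)d_j = d_1 + d_2$, so $PQ \in \mathcal{H}_{d_1 + d_2}$. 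For (3): again reduce to a monomial $m = \x^\alpha$ with $\Delta(m) = d$; by \eqref{E6} we have $\Delta(\partial_k m) = \Delta(m) - d_k = d - d_k$, which is negative when $d_k > d$, so by the stated convention $\partial_k m \equiv 0$, and summing over the monomials of $P$ gives $\partial_k(P) \equiv 0$.

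There is no real obstacle here — this is the "easy result" the authors flag. The only thing to be careful about is bookkeeping with the degenerate conventions (negative degrees, empty index sets) so that the statements are literally correct rather than just morally correct, and to note that all three parts follow by reducing to monomials and invoking bilinearity/linearity, together with the already-established identity \eqref{E6} for part (3).
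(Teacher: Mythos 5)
Your proof is correct and is exactly the direct verification the authors have in mind when they call this an ``easy result'' and omit the argument: identify $\mathcal H_{d}$ with the span of the monomials $\x^{\alpha}$, $\alpha\in\mathfrak H_{d}$, and check (1)--(3) on monomials using the definition of $\Delta$, equation (\ref{E6}), and the convention on negative homogeneous degrees. The only microscopic quibble is that for $d<0$ the convention makes $\mathcal H_{d}=\{0\}$ (the zero polynomial, i.e.\ the empty sum over $\mathfrak H_{d}=\emptyset$) rather than the empty set, but your vacuous-case reading is otherwise consistent with the statement.
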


\subsection{Standard flags and flag kernels in $\R^{N}$}\quad

\smallskip

If $X$ is an $N$-dimensional vector space, an \emph{$n$-step} flag in $X$ is a collection of subspaces $X_{j}\subseteq X$,  $1\leq j \leq n$, such that $(0) \subsetneq X_{1}\subsetneq X_{2}\subsetneq \cdots \subsetneq X_{n-1}\subsetneq X_{n}= X$. When $X=\R^{N}$ we single out a special class of \emph{standard} flags parameterized by partitions $N= a_{1}+ \cdots + a_{n}$ where each $a_{j}$ is a positive integer. We write
\begin{equation}\label{Eqn2.5tyu}
\R^{N}= \R^{a_{1}}\oplus \cdots \oplus \R^{a_{n}},
\end{equation}
and we write $\x\in \R^{N}$ as $\x=(\x_{1}, \ldots, \x_{n})$ with $\x_{j}\in \R^{a_{j}}$. With an abuse of notation, we identify $\R^{a_{k}}$ with vectors in $\R^{N}$ of the form $(\mathbf 0, \ldots, \mathbf 0, \x_{k},\mathbf 0, \ldots, \mathbf 0)$. Then the \emph{standard flag} $\F$ associated to the partition $N=a_{1}+ \cdots +a_{n}$ and to the decomposition (\ref{Eqn2.5tyu}) is given by
\begin{equation} \label{Eqn2.6tyu}
(0)\subset \R^{a_{n}}\subset \R^{a_{n-1}}\oplus\R^{a_{n}}\subset \cdots \subset \R^{a_{2}}\oplus \cdots \oplus \R^{a_{n}}\subset \R^{a_{1}}\oplus \cdots \oplus \R^{a_{n}}=\R^{N}.
\end{equation}

In dealing with such decompositions and flags, it is important to make clear which variables in $\R^{N}$ appear in which factor $\R^{a_{l}}$. We can write $\x\in \R^{N}$ either as $\x=(x_{1}, \ldots, x_{N})$ with each $x_{j}\in \R$, or as $\x=(\x_{1}, \ldots, \x_{n})$ with $\x_{l}= (x_{p_{l}}, \ldots, x_{q_{l}}) \in \R^{a_{l}}$ so that $q_{l}= p_{l}+a_{l}-1$. Denote by $J_{l}= \{p_{l}, p_{l}+1, \ldots, q_{l}\}$ the set of subscripts corresponding to the factor $\R^{a_{l}}$ so that $\{1, \ldots, N\}$ is the disjoint union $J_{1}\, \cup \,\cdots \,\cup \,J_{n}$. There is a mapping \label{DefOfPi} $\pi:\{1, \ldots, N\}\mapsto \{1,\ldots, n\}$ so that $j\in J_{\pi(j)}$ for $1 \leq j \leq N$. Thus for example $\pi(10)=3$ means that the variable $x_{10}$ belongs to the factor $\R^{a_3}$. 

With the family of dilations defined in (\ref{E2}), the action on the subspace $\R^{a_{l}}$ is given by
\begin{equation}\label{E2.1}
r \cdot\x_{l} = \big(r^{d_{p_{l}}}x_{p_{l}},  \ldots, r^{d_{q_{l}}}x_{q_{l}}\big).
 \end{equation}
The homogeneous dimension of $\R^{a_{l}}$ is 
\begin{equation}
Q_{l}= d_{p_{l}}+ \cdots + d_{q_{l}} = \sum_{j\in J_{l}}d_{j}.
\end{equation} 
The function 
\begin{equation}
N_{l}(\x_{l}) = \sup_{p_{l}\leq s \leq q_{l}}|x_{s}|^{1/d_{s}}
\end{equation}
is a homogeneous norm on $\R^{a_{l}}$ so that $N_{l}(r\cdot\x_{l}) = r\,N_{l}(\x_{l})$. If $\alpha=(\alpha_{1}, \ldots, \alpha_{N}) \in \N^{N}$, let $\balpha_{l}= (\alpha_{p_{l}}, \ldots, \alpha_{q_{l}})$, and set
\begin{equation}
\<  \bar\alpha_{l}\> = \alpha_{p_{l}}d_{p_{l}}+ \cdots + \alpha_{q_{l}}d_{q_{l}}= \sum_{j\in J_{l}}\alpha_{j}d_{j}.
\end{equation}

We can introduce a partial order on the set of all standard flags on $\R^{N}$.

\begin{definition} \label{Def} Let $\mathcal A=(a_{1}, \ldots, a_{r})$ and $\mathcal B=(b_{1}, \ldots, b_{s})$ be two partitions of $N$ so that $N=a_{1}+ \cdots +a_{r}=b_{1}+\cdots +b_{s}$.
\begin{enumerate}[{\rm(1)}]
\smallskip
\item  The partition $\mathcal A$  is \emph{finer} than the  partition $\mathcal B$, (or  $\mathcal B$ is \emph{coarser} than $\mathcal A$),  if there are integers $\{1=\alpha_{1}<\alpha_{2}< \cdots < \alpha_{s+1}=r+1)$ so that $b_{k}= \sum_{j=\alpha_{k}}^{\alpha_{k+1}-1}a_{j}$. We write  $\mathcal A \preceq \mathcal B$ or $\mathcal B\succeq \mathcal A$. If $\mathcal A\preceq\mathcal B$ but $\mathcal A\neq \mathcal B$ we write $\mathcal A \prec \mathcal B$ or $\mathcal B\succ \mathcal A$.

\smallskip

\item If $\F_{\mathcal A}$ and $\F_{\mathcal B}$ are the flags corresponding to the two partitions and if $\mathcal A\preceq \mathcal B$ (or $\mathcal A\prec\mathcal B$), we say that the flag $\F_{\mathcal A}$ is finer than $\F_{\mathcal B}$ (or $\F_{\mathcal B}$ is coarser than $\F_{\mathcal A}$) and we also write $\F_{\mathcal A} \preceq \F_{\mathcal B}$ and $\F_{\mathcal B}\succeq \F_{\mathcal A}$ (or $\F_{\mathcal A} \prec \F_{\mathcal B}$ and $\F_{\mathcal B}\succ \F_{\mathcal A}$).
\end{enumerate}
\end{definition}

\smallskip

We recall from \cite{NaRiSt00} the concept of a flag kernel on the vector space $\R^{N}$ associated to the decomposition $ \R^{a_{1}}\oplus \cdots \oplus \R^{a_{n}}$, equipped with the family of dilations given in equation (\ref{E2}). Let $\F$ be the standard flag given in (\ref{Eqn2.6tyu}). In order to formulate the cancellation conditions on the flag kernel, we need notation which allows us to split the variables $\{\x_{1}, \ldots, \x_{n}\}$ into two disjoint sets. Thus if $L=\{l_{1}, \ldots, l_{\alpha}\}$ and $M=\{m_{1}, \ldots, m_{\beta}\}$ are complementary subsets of $\{1, \ldots, n\}$ (so that $\alpha+\beta=n$), let $N_{a}=a_{l_{1}}+ \cdots +a_{l_{\alpha}}$ and $N_{b}= a_{m_{1}}+ \cdots +a_{m_{\beta}}$. Write $\x\in \R^{N}$ as $\x=(\x',\x'')$ where $\x'=(\x_{l_{1}}, \ldots, \x_{l_{\alpha}})$ and $\x''=(\x_{m_{1}}, \ldots, \x_{m_{\beta}})$. If $f$ is a function on $\R^{N_{a}}$ and $g$ is a function on $\R^{N_{b}}$, define a function $f\otimes g$ on $\R^{N}$ by setting
\begin{equation*}
f\otimes g(\x_{1}, \ldots, \x_{n}) = f(\x_{l_{1}}, \ldots, \x_{l_{\alpha}})\,g(\x_{m_{1}}, \ldots, \x_{m_{\beta}}).
\end{equation*}

\begin{definition}\label{Def2.1}
A \emph{flag kernel} adapted to the flag $\mathcal F$ is a distribution $\mathcal K\in \mathcal S'(\R^{N})$ which satisfies the following differential inequalities (part {\rm(\ref{Def2.1a})}) and cancellation conditions (part {\rm(\ref{Def2.1b})}).

\begin{enumerate}[{\rm(a)}]

\smallskip

\item  \label{Def2.1a} 
 For test functions supported away from the subspace $\x_{1}=0$, the distribution $\K$ is given by integration against a  $\mathcal C^{\infty}$-function $K$. Moreover for every $\alpha = (\alpha_{1}, \ldots, \alpha_{N}) \in \mathbb Z^{N}$ there is a constant $C_{\alpha}$ so that if $\balpha_{k}=(\alpha_{p_{k}}, \ldots, \alpha_{q_{k}})$, then for $\x_{1}\neq 0$,
\begin{equation*}
\big\vert\partial^{\alpha}K(\x)\big\vert \leq C_{\alpha}\,\prod_{k=1}^{n} \left[N_{1}(\x_{1}) + \cdots N_{k}(\x_{k})\right]^{-Q_{k}-[\![\balpha_{k}]\!]}.
\end{equation*}

\item \label{Def2.1b}
Let $\{1, \dots, n\}= L \cup M$ with $L=\{l_{1}, \ldots, l_{\alpha}\}$, $M=\{m_{1}, \ldots, m_{\beta}\}$ and $L\cap M = \emptyset$ be any pair of complementary subsets.  For any $\psi \in \Cs^{\infty}_{0}(\R^{N_{b}})$ and any positive real numbers $R_{1}, \ldots, R_{\beta}$, put $\psi_{R}(\x_{m_{1}}, \ldots, \x_{m_{\beta}}) = \psi(R_{1}\cdot\x_{m_{1}}, \ldots, R_{\beta}\cdot\x_{m_{\beta}})$. Define a distribution $\mathcal K^{\#}_{\psi,R}\in \mathcal S'(\R^{a_{l_{1}}+\cdots +a_{l_{r}}})$ by setting $$\big\langle \mathcal K_{\psi,R}^{\#}, \varphi\big\rangle = \big\langle \mathcal K, \psi_{R}\otimes \varphi\big\rangle$$ for any test function $\varphi \in \mathcal S(\R^{a_{l_{1}}+\cdots +a_{l_{r}}})$. Then the distribution $\mathcal K^{\#}_{\psi,R}$ satisfies the differential inequalities of part {\rm(\ref{Def2.1a})} for the decomposition $\R^{a_{l_{1}}}\oplus \cdots \oplus\,\R^{a_{l_{r}}}$. Moreover, the corresponding constants that appear in these differential inequalities are \emph{independent} of the parameters $\{R_{1}, \ldots, R_{s}\}$, and depend only on the constants $\{C_{\alpha}\}$ from part {\rm(\ref{Def2.1a})} and the semi-norms of $\psi$.

\smallskip

\end{enumerate}

\noindent The constants $\{C_{\alpha}\}$ in part {\rm(\ref{Def2.1a})} and the implicit constants in part {\rm(\ref{Def2.1b})} are called the \emph{flag kernel constants} for the flag kernel $\K$.
\end{definition}

\begin{remarks} \label{Remark2.2}\quad

\begin{enumerate}[{\rm (a)}]
\item {\rm This definition proceeds by induction on the length $n$ of the flag. The case $n=1$ corresponds to Calder\'on-Zygmund kernels, and the inductive definition is invoked in part (\ref{Def2.1b}). }

\smallskip

\item {\rm With an abuse of notation, the distribution $\K^{\#}_{\psi,R}$ is often written
\begin{equation*}
K^{\#}_{\psi,R}(\x_{l_{1}}, \ldots, \x_{l_{r}}) = \idotsint\limits_{\R^{a_{m_{1}}}\oplus\cdots\oplus \,\R^{a_{m_{s}}}} K(\x)\,\psi_{R}(\x_{m_{1}}, \ldots, \x_{m_{s}})\,d\x_{m_{1}}\cdots d\x_{m_{s}}.
\end{equation*}}
\end{enumerate}
\end{remarks}

\section{Homogeneous vector fields}\label{VectorFields}\

In Section \ref{Nilpotent} below, where we consider a nilpotent Lie group $G$ whose underlying space is $\R^{N}$, we will need to consider the families of left- and right-invariant vector fields on $G$. At this stage, before we introduce the group structure, we consider instead two spanning sets of vector fields $\{X_{1}, \ldots, X_{N}\}$ and $\{Y_{1}, \ldots Y_{N}\}$ on $\R^{N}$ which are homogeneous with respect to the basic family of dilations given in (\ref{E2}); this means that if $Z_{j}$ is either $X_{j}$ or $Y_{j}$ for $1\leq j \leq N$, then $Z_{j}$ can be written\footnote{Despite some risk of confusion, we do not introduce different notation for the coefficients of $X_{j}$ and $Y_{j}$.}
\begin{equation}\label{E13}
Z_{j}[\psi](\x)= \partial_{{j}}[\psi](\x)+\sum_{d_{l}>d_{j}}P_{j}^{l}(\x) \partial_{{l}}[\psi](\x), 
\end{equation}
{with $P_{j}^{l}\in \mathcal H_{d_{l}-d_{j}}$}. It follows from part (\ref{E.10}) of Proposition \ref{Prop2.1mn} that $\partial_{{k}}(P^{l}_{j})\equiv 0$ if $d_{k} > d_{l}$. Thus we can commute the operators given by multiplication by the polynomial $P^{l}_{j}$ and differentiation with respect to $x_{l}$ and also write
\begin{equation}\label{E14}
Z_{j}[\psi](\x)= \partial_{{j}}[\psi](\x)+\sum_{d_{l}>d_{j}}^{N}\partial_{{l}}[P_{j}^{l}\psi](\x). 
\end{equation}
It follows from (\ref{E13}) or (\ref{E14}) that $Z_{N}= \partial_{{N}}$.

\begin{proposition}\label{Prop2.1}
If $P\in \mathcal H_{d}$ and $Z_{j}$ is either $X_{j}$ or $Y_{j}$, then $Z_{j}[P]\in \mathcal H_{d-d_{j}}$, and if $d_{j}>d$, $Z_{j}[P]\equiv 0$.
\end{proposition}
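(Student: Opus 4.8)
The plan is to verify the two claims for the vector field $Z_j$ directly from its expression \eqref{E13}, using the grading properties collected in Proposition \ref{Prop2.1mn}. First I would recall that $Z_j = \partial_j + \sum_{d_l > d_j} P_j^l \,\partial_l$ with $P_j^l \in \mathcal H_{d_l - d_j}$, so that for a polynomial $P \in \mathcal H_d$ we have
\begin{equation*}
Z_j[P] = \partial_j P + \sum_{d_l > d_j} P_j^l\, \partial_l P.
\end{equation*}
Now I would analyze each summand. By \eqref{E6} (i.e.\ $\Delta(\partial_j m) = \Delta(m) - d_j$ on monomials, together with the convention that monomials of negative homogeneous degree vanish), the operator $\partial_j$ maps $\mathcal H_d$ into $\mathcal H_{d - d_j}$; hence $\partial_j P \in \mathcal H_{d-d_j}$. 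For the remaining terms, $\partial_l P \in \mathcal H_{d - d_l}$ by the same reasoning, and then part \eqref{E9} of Proposition \ref{Prop2.1mn} (the product of something in $\mathcal H_{d_l - d_j}$ with something in $\mathcal H_{d - d_l}$ lies in $\mathcal H_{(d_l - d_j) + (d - d_l)} = \mathcal H_{d - d_j}$) gives $P_j^l\,\partial_l P \in \mathcal H_{d - d_j}$. Since $\mathcal H_{d-d_j}$ is a vector space, the sum $Z_j[P]$ lies in $\mathcal H_{d - d_j}$, which is the first assertion.

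For the second assertion, suppose $d_j > d$. Then $\partial_j P \in \mathcal H_{d - d_j}$ with $d - d_j < 0$, so $\partial_j P \equiv 0$ by the convention on negative degrees (equivalently, by part \eqref{E.10} of Proposition \ref{Prop2.1mn}, since $d_j > d$). For each term $P_j^l \,\partial_l P$ in the sum we have $d_l > d_j > d$, so again $\partial_l P \in \mathcal H_{d - d_l}$ with $d - d_l < 0$, forcing $\partial_l P \equiv 0$. Therefore every term vanishes and $Z_j[P] \equiv 0$. This also follows a posteriori from the first part, since $\mathcal H_{d-d_j} = \{0\}$ when $d - d_j < 0$.

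There is essentially no obstacle here: the statement is a bookkeeping consequence of the homogeneity of the coefficients $P_j^l$ and the effect of $\partial_j$ on homogeneous degree, both of which are already recorded in \eqref{E6} and Proposition \ref{Prop2.1mn}. The only point requiring a modicum of care is keeping the degree arithmetic straight — that $\partial_l$ lowers homogeneous degree by $d_l$ while multiplication by $P_j^l$ raises it by $d_l - d_j$, so that the two contributions telescope to a net shift of $-d_j$ — and remembering throughout that ``$\mathcal H_e$ for $e<0$'' is to be read as the zero space, which is exactly the convention adopted just before \eqref{E6}.
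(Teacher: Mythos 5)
Your proof is correct and follows essentially the same route as the paper's: apply \eqref{E6} to see $\partial_l P \in \mathcal H_{d-d_l}$, use part \eqref{E9} of Proposition \ref{Prop2.1mn} to place each product $P_j^l\,\partial_l P$ in $\mathcal H_{d-d_j}$, and deduce the vanishing statement from part \eqref{E.10} (the convention on negative homogeneous degrees). Nothing is missing.
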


\begin{proof}
It follows from (\ref{E6}) that if $P\in \mathcal H_{d}$, then $\partial_{{l}}[P] \in \mathcal H_{d-d_{l}}$, and since $P^{l}_{j}\in \mathcal H_{d_{l}-d_{j}}$, it follows from part (\ref{E9}) of Proposition \ref{Prop2.1mn} that $P^{l}_{j}\,\partial_{x_{l}}[P]\in \mathcal H_{d-d_{j}}$. Thus $Z_{j}[P]\in \mathcal H_{d-d_{j}}$. The last conclusion then follows from part (\ref{E.10}) of Proposition \ref{Prop2.1mn} .
\end{proof}

In equations (\ref{E13}) or (\ref{E14}), the vector fields $\{Z_{j}\}$ are written in terms of the Euclidean derivatives. Because these equations are in upper-triangular form, it is easy to solve for the Euclidean derivatives in terms of the vector fields. 

\begin{proposition}\label{Prop1.2}
For each $1\leq j \leq N$ let $Z_{j}$ denote either $X_{j}$ or $Y_{j}$. Then there are polynomials $Q^{l}_{k}\in \mathcal H_{d_{l}-d_{k}}$ such that for $\psi\in \mathcal S(\R^{N})$, 
\begin{equation*}
\partial_{{k}}[\psi](\x)= Z_{k}[\psi]+ \sum_{d_{l}>d_{k}}^{N}Q^{l}_{k}(\x)\,Z_{l}[\psi](\x)=Z_{k}[\psi](\x)+\sum_{d_{l}>d_{k}}^{N}Z_{l}\big[Q^{l}_{k}\psi\big](\x).
\end{equation*}
\end{proposition}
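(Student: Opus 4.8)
The plan is to invert the upper-triangular system (\ref{E13}) by downward recursion on the homogeneous degree $d_j$, exactly as one inverts a unipotent matrix. First I would relabel so that the vector fields are grouped by the values of the $d_j$'s, and treat the "block" with the largest degree: since $Z_N = \partial_N$ (and more generally, if $d_j$ is maximal among all degrees then $Z_j = \partial_j$ because the sum in (\ref{E13}) is empty), the claim is trivial for the top block, with $Q^l_k \equiv 0$. For the inductive step, suppose the formula $\partial_l[\psi] = Z_l[\psi] + \sum_{d_m > d_l} Q^m_l(\x)\, Z_m[\psi]$ has been established for all $l$ with $d_l > d_k$, where each $Q^m_l \in \mathcal H_{d_m - d_l}$. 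From (\ref{E13}) we have $\partial_k[\psi] = Z_k[\psi] - \sum_{d_l > d_k} P_k^l(\x)\, \partial_l[\psi]$; substituting the inductive expression for each $\partial_l[\psi]$ and collecting terms gives $\partial_k[\psi] = Z_k[\psi] + \sum_{d_m > d_k} \widetilde Q^m_k(\x)\, Z_m[\psi]$, where $\widetilde Q^m_k$ is a sum of $-P^m_k$ and products $-P^l_k Q^m_l$ with $d_k < d_l < d_m$. By Proposition \ref{Prop2.1mn}(\ref{E9}) and the fact that $P^l_k \in \mathcal H_{d_l - d_k}$ and $Q^m_l \in \mathcal H_{d_m - d_l}$, each such product lies in $\mathcal H_{d_m - d_k}$, hence $\widetilde Q^m_k \in \mathcal H_{d_m - d_k}$. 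This closes the recursion and yields the first equality with $Q^l_k := \widetilde Q^l_k$.

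For the second equality, I would observe that $Q^l_k \in \mathcal H_{d_l - d_k}$ and that, as noted after (\ref{E14}), part (\ref{E.10}) of Proposition \ref{Prop2.1mn} gives $\partial_m(Q^l_k) \equiv 0$ whenever $d_m > d_l$. Now apply (\ref{E14}) (rather than (\ref{E13})) to $Z_l$: writing $Z_l[Q^l_k\psi] = \partial_l[Q^l_k\psi] + \sum_{d_m > d_l}\partial_m[P^m_l Q^l_k \psi]$ and comparing with $Q^l_k Z_l[\psi] = Q^l_k \partial_l[\psi] + \sum_{d_m>d_l} Q^l_k \partial_m[P^m_l\psi]$, the difference of the two is a sum of terms in which at least one derivative $\partial_l$ or $\partial_m$ (with $d_m > d_l$) falls on $Q^l_k$; each such term vanishes because $\Delta(Q^l_k) = d_l - d_k < d_l < d_m$ forces $\partial_l(Q^l_k)=0$ as well (using $d_l - d_k < d_l$), and likewise $\partial_m(Q^l_k) = 0$. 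Hence $Z_l[Q^l_k\psi] = Q^l_k\, Z_l[\psi]$ for each $l$, and summing over $l$ with $d_l > d_k$ gives the second equality. (Alternatively, one can run the downward recursion directly with the divergence-form expression (\ref{E14}) from the start.)

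The main obstacle is bookkeeping rather than any deep point: one must be careful that the recursion is genuinely well-founded, i.e.\ that "$d_l > d_k$" induces a strict partial order with finitely many levels, and that when several indices share the same value of $d_j$ the substitution step does not create terms with $d_m = d_k$ (it does not, since (\ref{E13}) only involves $d_l > d_j$ strictly, and composing strict inequalities keeps them strict). One should also record that the homogeneity bookkeeping via Proposition \ref{Prop2.1mn}(\ref{E9}) is what guarantees the $Q^l_k$ are genuine polynomials in $\mathcal H_{d_l-d_k}$ and not merely smooth functions — this is the only place the graded structure is used, and it is precisely what makes the inversion stay within the class of homogeneous vector fields.
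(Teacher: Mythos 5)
Your proposal is correct and follows essentially the same route as the paper: a downward (reverse) induction that inverts the upper-triangular system \eqref{E13}, substituting the inductive expressions for $\partial_l[\psi]$ and using Proposition \ref{Prop2.1mn}\,(\ref{E9}) to see that the products $P^l_kQ^m_l$ land in $\mathcal H_{d_m-d_k}$. Your explicit verification of the second equality (that $Q^l_k$ commutes past $Z_l$ because $\partial_m(Q^l_k)\equiv 0$ for $d_m\geq d_l>d_l-d_k$) is the same commutation the paper records just after \eqref{E13} for the coefficients $P^l_j$, so nothing is missing.
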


\begin{proof}
We argue by reverse induction on the index $k$. When $k=N$ it follows from equation (\ref{E13}) that $\partial_{{N}}=Z_{N}=X_{N}=Y_{N}$.  To establish the induction step, suppose that the conclusion of the Proposition is true for all indices greater than $k$. From equation (\ref{E13}) and the induction hypothesis, for either choice of $Z_{k}$ we have
\begin{equation*}
\begin{aligned}
\partial_{{k}}[\psi]
&= 
Z_{k}[\psi] -\sum_{d_{m}>d_{k}}^{N}P^{m}_{k}\partial_{{m}}[\psi]
=
Z_{k}[\psi] -\sum_{d_{m}>d_{k}}^{N}P^{m}_{k}\Big[Z_{m}[\psi]+\sum_{d_{l}>d_{m}}^{N}Q^{l}_{m}Z_{l}[\psi]\Big]\\
&=
Z_{k}[\psi] -\sum_{d_{m}>d_{k}}^{N}P^{m}_{k}Z_{m}[\psi]-\sum_{d_{l}>d_{k}}\Big[\sum_{d_{k}<d_{m}<d_{l}}P^{m}_{k}Q^{l}_{m}\Big]\,Z_{l}[\psi].
\end{aligned}
\end{equation*}
But according to part (\ref{E9}) of Proposition \ref{Prop2.1mn},  $P^{m}_{k}\,Q^{l}_{m}\in \mathcal H_{d_{l}-d_{k}}$, and this completes the proof.
\end{proof}

For $\psi\in \mathcal S(\R^{N})$ and $t>0$ set $\psi_{t}(\x) = \psi(t^{-1}\cdot \x)$. Then multiplication by a polynomial $P\in \mathcal H_{d}$ is an operator homogeneous of degree $d$ in the sense that $P(\x)\psi_{t}(\x) = t^{d}(P\varphi)_{t}(\x)$, and the vector fields $X_{j}$ and $Y_{j}$ are operators homogeneous of degree $-d_{j}$ in the sense that $X_{j}[\varphi_{t}](\x) = t^{-d_{j}}(X_{j}\varphi)_{t}(\x)$, $Y_{j}[\varphi_{t}](\x) = t^{-d_{j}}(Y_{j}\varphi)_{t}(\x)$. In particular, the commutators  $[X_{j},X_{k}]$ and $[Y_{j},Y_{k}]$ are vector fields which are homogeneous of degree $-(d_{j}+d_{k})$. It follows that we can write
\begin{equation*}
\begin{aligned}\label{Equation2.14mn}
{[}X_{j},X_{k}{]} &= \sum_{d_{m}\geq d_{j}+d_{k}}Q^{m}_{j,k}(\x)\,\partial_{{m}}= \sum_{d_{m}\geq d_{j}+d_{k}}R^{m}_{j,k}(\x)\,Z_{m}, \,\, \text{with $Q^{m}_{j,k}, R^{m}_{j,k}\in \mathcal H_{d_{m}-d_{j}-d_{k}}$},\\
{[}Y_{j},Y_{k}{]} &= \sum_{d_{m}\geq d_{j}+d_{k}}\widetilde Q^{m}_{j,k}(\x)\,\partial_{{m}}= \sum_{d_{m}\geq d_{j}+d_{k}}\widetilde R^{m}_{j,k}(\x)\,Z_{m}, \,\, \text{with  $\widetilde Q^{m}_{j,k}, \widetilde R^{m}_{j,k}\in \mathcal H_{d_{m}-d_{j}-d_{k}}$}.
\end{aligned}
\end{equation*}
If the operators $\{X_{j}\}$ and $\{Y_{j}\}$ are bases for a Lie algebra (as in the case of left- or right-invariant vector fields),  the coefficients $\{R^{m}_{j,k}\}$ and $\{\widetilde R^{m}_{j,k}\}$ are constants.

\smallskip

Equations (\ref{E13}) or (\ref{E14}) express the vector fields $\{Z_{k}\}$ in terms of the standard derivatives $\{\partial_{j}\}$, and Proposition \ref{Prop1.2} expresses the standard derivatives in terms of the vector fields. We shall need analogous identities for  products of $r$ vector fields $Z_{k_{1}}\cdots Z_{k_{r}}$ or products of $r$ Euclidean derivatives $\partial_{k_{1}}\cdots \partial_{k_{r}}$. The formulas are somewhat complicated, since they involve products of operators of various lengths. To help with the formulation of the results, it will be  convenient to introduce the following notation. 

\begin{definition}\label{disjoint}
Let $k_{1}, \ldots, k_{r}\in \{1, \ldots , N\}$ be a set of $r$ integers, possibly with repetitions.

\begin{enumerate}[(1)]

\smallskip

\item  For any non-empty set $U\subset \{1, \ldots , r\}$, put 
\begin{align*}
d_{U} &= \sum_{\ell\in U}d_{k_{\ell}}, \quad \text{and}\\
\mathfrak I(U) &= \left\{m\in\{1, \ldots, N\}\,\Big\vert\, d_{m}\geq d_{U}\right\}.
\end{align*}
Note that if $U$ consists of two or more elements and $m\in \mathfrak I(U)$, then $d_{m}>\sup_{\ell\in U}d_{k_{\ell}}$.
\smallskip

\item For each integer $1\leq s \leq r$ let $\mathcal U_{s}^{r}$ denote the set of partitions of the set $\{1, \ldots, r\}$ into $s$ non-empty disjoint subsets $\mathcal U = \{U_{1}, \ldots, U_{s}\}$. 
\end{enumerate}
\end{definition}

\noindent The following Proposition then shows how to write products of vector fields in terms of products of Euclidean derivatives.

\begin{proposition}\label{Prop1.3}
Let $k_{1}, \ldots, k_{r}\in \{1, \ldots , N\}$ be a  set of $r$ integers, possibly with repetitions. For $1\leq \ell \leq r$, let $Z_{k_{\ell}}$ denote either $X_{k_{\ell}}$ or $Y_{k_{\ell}}$. Then there are polynomials $P^{m_{\ell}}_{U_{\ell}}\in \mathcal H_{d_{m_{\ell}}-d_{U_{\ell}}}$ such that 
\begin{align*}
Z_{k_{1}}\cdots Z_{k_{r}}[\psi] &= 
\sum_{s=1}^{r}\,\,
\sum_{(U_{1}, \ldots, U_{s})\in \mathcal U^{r}_{s}}\,\,
\sum_{m_{1}\in \mathfrak I(U_{1})}
\cdots 
\sum_{m_{s}\in \mathfrak I(U_{s})}
\partial_{{m_{1}}}\cdots \partial_{{m_{s}}}
\big[P^{m_{1}}_{U_{1}}\cdots P^{m_{s}}_{U_{s}}\psi\big].
\end{align*}
If $s=r$, so that $U_{j}=\{k_{j}\}$, the polynomial $P^{m_{j}}_{U_{j}}(\x)\equiv 1$.
\end{proposition}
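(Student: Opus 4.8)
The natural approach is induction on $r$, the number of vector fields. The base case $r=1$ is exactly equation (\ref{E13}) (or (\ref{E14})): $Z_{k_{1}}[\psi] = \partial_{k_{1}}[\psi] + \sum_{d_{m}>d_{k_{1}}}\partial_{m}[P^{m}_{k_{1}}\psi]$, which is precisely the claimed formula with $s$ ranging over $\{1\}$ (forcing $U_{1}=\{1\}$, hence $P^{m_{1}}_{U_{1}}\equiv 1$) and $s$ also potentially giving the single-block partition where $m_1$ ranges over $\mathfrak I(\{1\}) = \{m : d_m \ge d_{k_1}\}$. Actually the single block $\{1\}$ already covers everything at $r=1$, so one should check the bookkeeping: $\mathfrak I(\{1\}) = \{m: d_m \ge d_{k_1}\}$ contains $k_1$ itself (giving the $\partial_{k_1}$ term, with $P^{k_1}_{\{1\}}$ a homogeneous polynomial of degree $0$, i.e. a constant, which one arranges to be $1$) and the indices with $d_m > d_{k_1}$ (giving the remaining terms).

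For the inductive step, I would write $Z_{k_{1}}\cdots Z_{k_{r}}[\psi] = Z_{k_{1}}\big[Z_{k_{2}}\cdots Z_{k_{r}}[\psi]\big]$, apply the induction hypothesis to the inner product of $r-1$ fields (indexed by $\{2,\ldots,r\}$), obtaining a sum of terms $\partial_{m_{2}}\cdots\partial_{m_{s+1}}[P\,\psi]$ for partitions of $\{2,\ldots,r\}$, and then apply $Z_{k_{1}}$ using (\ref{E14}): $Z_{k_{1}} = \partial_{k_{1}} + \sum_{d_{l}>d_{k_{1}}}\partial_{l}\circ(P^{l}_{k_{1}}\cdot)$. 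Applying $Z_{k_1}$ to a term $\partial_{m_2}\cdots\partial_{m_{s+1}}[P\psi]$ produces two kinds of contributions. The first, from the leading $\partial_{k_1}$ (and the terms $\partial_l(P^l_{k_1}\cdot)$), creates a \emph{new} block $U_1 = \{1\}$ prepended to the partition of $\{2,\ldots,r\}$ — here one must commute the new derivative $\partial_{k_1}$ (or $\partial_l$) past the polynomial $P$ and the existing derivatives, using Proposition \ref{Prop2.1mn}(\ref{E.10}) (that $\partial_k(P^l_j)\equiv 0$ when $d_k > d_l$) exactly as was done to pass from (\ref{E13}) to (\ref{E14}); any extra terms produced by differentiating $P$ get absorbed. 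The second kind of contribution is subtler: when $P^l_{k_1}$ (or the new derivative) interacts so as to \emph{merge} index $1$ into one of the existing blocks $U_j$, producing a block $U_1 \cup \{1\}$. This is where the homogeneous-degree bookkeeping must be checked: one needs $P^{m}_{U\cup\{1\}} \in \mathcal H_{d_m - d_{U\cup\{1\}}}$, which follows because multiplying/differentiating homogeneous polynomials adds/subtracts the appropriate degrees (Proposition \ref{Prop2.1mn}(\ref{E9}), (\ref{E6})), and one checks that the new $m$ indices still lie in the enlarged $\mathfrak I$-sets — the condition $d_m \ge d_{U\cup\{1\}}$ is what makes the relevant polynomial have nonnegative degree, and hence not vanish identically.

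The main obstacle is purely combinatorial/organizational: verifying that the terms produced by applying $Z_{k_1}$ to the inductive expression reorganize \emph{exactly} into a sum over $\mathcal U^{r}_{s}$ for $s = 1,\ldots,r$, with each partition of $\{1,\ldots,r\}$ arising and the polynomials landing in the correct homogeneity classes $\mathcal H_{d_{m_\ell}-d_{U_\ell}}$. Concretely, every partition of $\{1,\ldots,r\}$ is obtained from a unique partition of $\{2,\ldots,r\}$ either by adjoining $\{1\}$ as a new singleton block (the $s \to s+1$ case, handled by the leading $\partial_{k_1}$ term of $Z_{k_1}$) or by inserting $1$ into one existing block (the $s \to s$ case, handled by the $\sum_{d_l > d_{k_1}}\partial_l(P^l_{k_1}\,\cdot)$ terms together with the product rule acting on the polynomial coefficients). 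Once this correspondence is set up cleanly, the degree conditions are routine consequences of Proposition \ref{Prop2.1mn} and the observation following Definition \ref{disjoint} (that a multi-element block forces $d_m > \sup_{\ell\in U}d_{k_\ell}$, which is exactly what guarantees the relevant derivative hits a nonzero polynomial coefficient). I would present the bijection on partitions first, then track the polynomial degrees along it, keeping the explicit calculation to a minimum.
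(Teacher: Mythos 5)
Your proposal is the paper's argument in all essentials — induction on $r$, with the base case given by (\ref{E14}) and the inductive step carried out by expanding one more vector field via (\ref{E13})/(\ref{E14}) and reorganizing with the product rule, the homogeneity bookkeeping resting on Proposition \ref{Prop2.1mn} and the convention that polynomials of negative homogeneous degree vanish. The one genuine difference is the direction of the peeling: the paper writes $Z_{k_{1}}\cdots Z_{k_{r+1}}[\psi]=Z_{k_{1}}\cdots Z_{k_{r}}\big[Z_{k_{r+1}}[\psi]\big]$ and detaches the \emph{innermost} factor, whereas you detach the outermost factor $Z_{k_{1}}$. The paper's choice is slightly cleaner, because the object that has to be transported is then a single first-order derivative $\partial_{k_{r+1}}$ (or $\partial_{m}$) passing outward through the product $P^{m_{1}}_{U_{1}}\cdots P^{m_{s}}_{U_{s}}$: one application of the product rule, so the new index $r+1$ either becomes a new singleton block or is inserted into exactly one existing block, and your bijection on partitions describes the terms exactly. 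In your direction the object to be transported is the polynomial $P^{l}_{k_{1}}$, which must pass \emph{inward} through the whole string $\partial_{m_{2}}\cdots\partial_{m_{s}}$; the iterated product rule then also produces terms involving higher derivatives $\partial_{m_{i}}\partial_{m_{j}}\cdots(P^{l}_{k_{1}})$, which correspond to merging $\{1\}$ with \emph{several} existing blocks at once. So the correspondence you state ("adjoin $\{1\}$ as a singleton, or insert $1$ into one block") does not account for all the terms your computation generates. This is not a fatal gap: the multi-merge terms still land in the required form (the degrees add up to $d_{m}-d_{U}$ for the merged block $U$, and the condition $m\in\mathfrak I(U)$ is enforced automatically by the vanishing convention), and since the proposition only asserts that the sum can be written in the displayed shape — not that the polynomials are canonical — distinct contributions to the same partition simply add within $\mathcal H_{d_{m_{\ell}}-d_{U_{\ell}}}$. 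But you should organize the inductive step as "every term produced is absorbable into some partition of $\{1,\ldots,r\}$" rather than around the claimed bijection, or else switch to peeling off the innermost factor as the paper does.
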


\begin{proof}
We argue by induction on $r$. The case $r=1$ is contained in equation (\ref{E14}), so suppose we are given vector fields $\{Z_{k_{1}}, \ldots, Z_{k_{r+1}}\}$ where each $Z_{k_{\ell}}$ is either $X_{k_{\ell}}$ or $Y_{k_{\ell}}$.
 Then by induction
\begin{equation}\label{Eqn2.8io}
\begin{aligned}
Z_{k_{1}}\cdots &Z_{k_{r+1}}[\psi]
=
Z_{k_{1}}\cdots Z_{k_{r}}\big[Z_{k_{r+1}}[\psi]\big]\\
&=
\sum_{s=1}^{r}
\sum_{(U_{1}, \ldots, U_{s})\in \mathcal U^{r}_{s}}
\sum_{m_{1}\in \mathfrak I(U_{1})}\cdots \sum_{m_{s}\in \mathfrak I(U_{s})}\partial_{{m_{1}}}\cdots \partial_{{m_{s}}}\big[P^{m_{1}}_{U_{1}}\cdots P^{m_{s}}_{U_{s}}\big[Z_{k_{r+1}}[\psi]\big]\big].
\end{aligned}
\end{equation}
Since we can write
$
Z_{k_{r+1}}[\psi]= \partial_{{k_{r+1}}}[\psi]+\sum_{\{m\vert d_{m}>d_{k_{r+1}}\}}\partial_{{m}}[P^{m}_{k_{r+1}}\psi]
$
where $P^{m}_{k_{r+1}}\in \mathcal H_{d_{m}-d_{k_{r+1}}}$, the derivative $\partial_{{m_{1}}}\cdots \partial_{{m_{s}}}\big[P^{m_{1}}_{U_{1}}\cdots P^{m_{s}}_{U_{s}}\big[Z_{k_{r+1}}[\psi]\big]\big]$ in the last line of (\ref{Eqn2.8io}) can be written
\begin{align*}
\partial_{{m_{1}}}\cdots \partial_{{m_{s}}}&\big[P^{m_{1}}_{U_{1}}\cdots P^{m_{s}}_{U_{s}}\big[Z_{k_{r+1}}[\psi]\big]\big]\\
&=
\partial_{{m_{1}}}\cdots \partial_{{m_{s}}}\big[P^{m_{1}}_{U_{1}}\cdots P^{m_{s}}_{U_{s}}\big[\partial_{k_{r+1}}[\psi]\big]\big]\\
&\qquad\qquad\qquad
+\sum_{d_{m}>d_{k_{r+1}}}
\partial_{{m_{1}}}\cdots \partial_{{m_{s}}}\big[P^{m_{1}}_{U_{1}}\cdots P^{m_{s}}_{U_{s}}\big[\partial_{m}[P^{m}_{k_{r+1}}\psi]\big]\big]\\
&=
\partial_{{m_{1}}}\cdots \partial_{{m_{s}}}\partial_{k_{r+1}}\big[P^{m_{1}}_{U_{1}}\cdots P^{m_{s}}_{U_{s}}\psi\big] \\
&\qquad\qquad -
\partial_{{m_{1}}}\cdots \partial_{{m_{s}}}\big[\partial_{k_{r+1}}\big[P^{m_{1}}_{U_{1}}\cdots P^{m_{s}}_{U_{s}}\big]\psi\big]\\
&\qquad\qquad\qquad
+
\sum_{d_{m}>d_{k_{r+1}}}
\partial_{{m_{1}}}\cdots \partial_{{m_{s}}}\partial_{m}\big[P^{m_{1}}_{U_{1}}\cdots P^{m_{s}}_{U_{s}}P^{m}_{k_{r+1}}\psi\big]\\
&\qquad\qquad\qquad\qquad\qquad
-
\sum_{d_{m}>d_{k_{r+1}}}
\partial_{{m_{1}}}\cdots \partial_{{m_{s}}}\big[\partial_{m}\big[P^{m_{1}}_{U_{1}}\cdots P^{m_{s}}_{U_{s}}]P^{m}_{k_{r+1}}\psi\big].
\end{align*}
The terms in the first and third lines of the last expression have the right form for the case $r+1$.  Thus in the term $\partial_{{m_{1}}}\cdots \partial_{{m_{s}}}\partial_{k_{r+1}}\big[P^{m_{1}}_{U_{1}}\cdots P^{m_{s}}_{U_{s}}\psi\big]$, $r$ has been replaced by $r+1$, the set $\{1, \ldots, r, r+1\}$ has been decomposed into $s+1$ subsets $\{U_{1}, \ldots, U_{s}, U_{s+1}\}$ where $U_{s+1}= \{k_{r+1}\}$, and $P^{k_{r+1}}_{U_{s+1}}(\x) \equiv 1$. The same is true for each term $\partial_{{m_{1}}}\cdots \partial_{{m_{s}}}\partial_{m}\big[P^{m_{1}}_{U_{1}}\cdots P^{m_{s}}_{U_{s}}P^{m}_{k_{r+1}}\psi\big]$, except that $P^{m}_{U_{s+1}}= P^{m}_{k_{r+1}}$.

For terms in the second and fourth lines, we use the product rule; we write $\partial_{k_{r+1}}\big[P^{m_{1}}_{U_{1}}\cdots P^{m_{s}}_{U_{s}}\big]$ and $\partial_{m}\big[P^{m_{1}}_{U_{1}}\cdots P^{m_{s}}_{U_{s}}]$  as a sum of $s$ terms. Consider, for example, 
\begin{align*}
\sum_{d_{m}>d_{k_{r+1}}}
\partial_{{m_{1}}}\cdots \partial_{{m_{s}}}&\big[\partial_{m}\big[P^{m_{1}}_{U_{1}}]P^{m_{2}}_{U_{2}}\cdots P^{m_{s}}_{U_{s}}P^{m}_{k_{r+1}}\psi\big]\\
&=
\partial_{{m_{1}}}\cdots \partial_{{m_{s}}}
\Big[\big(\sum_{d_{m}>d_{k_{r+1}}}\partial_{m}\big[P^{m_{1}}_{U_{1}}]P^{m}_{k_{r+1}}\big)P^{m_{2}}_{U_{2}}\cdots P^{m_{s}}_{U_{s}}\Big]\\
&=
\partial_{{m_{1}}}\cdots \partial_{{m_{s}}}\big[\widetilde P^{m_{1}}_{\widetilde U_{1}}P^{m_{2}}_{U_{2}}\cdots P^{m_{s}}_{U_{s}}\big]
\end{align*}
where  $\widetilde P^{m_{1}}_{\widetilde U_{1}}=\sum_{d_{m}>d_{k_{r+1}}}\partial_{m}\big[P^{m_{1}}_{U_{1}}]P^{m}_{k_{r+1}} \in \mathcal H_{d_{m_{1}-d_{U_{1}}-d_{k_{r+1}}}}$. These terms also have the right form for the case $r+1$, since we now let $\widetilde U_{1}= U_{1}\cup \{k_{r+1}\}$ so that $\{k_{1}, \ldots, k_{r+1}\} = \widetilde U_{1}\cup U_{2}\cup \cdots \cup U_{s}$.  This establishes the Proposition. \end{proof}

%
%

The next result shows how to write products of Euclidean derivatives in terms of products of vector fields. Since the proof is similar to that of Proposition \ref{Prop1.3}, we omit it.
\begin{proposition}\label{Prop1.3w}
Let $k_{1}, \ldots, k_{r}\in \{1, \ldots , N\}$ be a  set of $r$ integers, possibly with repetitions, chosen from the set $\{1, \ldots, N\}$. 
There are polynomials $Q^{m_{\ell}}_{U_{\ell}}\in \mathcal H_{d_{m_{\ell}}-d_{U_{\ell}}}$ such that 
\begin{align*}
\partial_{{k_{1}}}\cdots\partial_{{k_{r}}}[\psi] 
&=
\sum_{s=1}^{r}\,\,
\sum_{(U_{1}, \ldots, U_{s})\in \mathcal U^{r}_{s}}\,\,
\sum_{m_{1}\in \mathfrak I(U_{1})}
\cdots 
\sum_{m_{s}\in \mathfrak I(U_{s})}
Z_{m_{1}}\cdots Z_{m_{s}}
[Q^{m_{1}}_{U_{1}}\cdots Q^{m_{s}}_{U_{s}}\psi].
\end{align*}
Here each $Z_{j}$ is either $X_{j}$ or $Y_{j}$. If $s=r$, so that $U_{\ell}=\{k_{\ell}\}$, the polynomial $Q^{m_{\ell}}_{U_{\ell}}(\x)\equiv 1$.
\end{proposition}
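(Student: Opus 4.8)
## Proof Plan for Proposition \ref{Prop1.3w}

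The plan is to mimic the inductive structure of the proof of Proposition \ref{Prop1.3}, but running the roles of the Euclidean derivatives and the homogeneous vector fields in the opposite direction. The base case $r=1$ is exactly Proposition \ref{Prop1.2}, which expresses a single $\partial_{k}$ as $Z_{k}+\sum_{d_{l}>d_{k}}Z_{l}[Q^{l}_{k}\,\cdot\,]$ with $Q^{l}_{k}\in\mathcal H_{d_{l}-d_{k}}$; note this already has the claimed form with $s$ running over $\{1\}$ and the single block $U_{1}=\{k_{1}\}$ carrying the polynomial $Q^{m_{1}}_{U_{1}}$ (equal to $1$ when $m_{1}=k_{1}$, equal to $Q^{m_{1}}_{k_{1}}$ otherwise).

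For the inductive step, I would write $\partial_{k_{1}}\cdots\partial_{k_{r+1}}[\psi] = \partial_{k_{1}}\cdots\partial_{k_{r}}\big[\partial_{k_{r+1}}[\psi]\big]$ and apply the inductive hypothesis to the outer $r$ derivatives, obtaining a sum over $s$, over partitions $(U_{1},\dots,U_{s})\in\mathcal U^{r}_{s}$ of $\{1,\dots,r\}$, and over $m_{\ell}\in\mathfrak I(U_{\ell})$, of terms $Z_{m_{1}}\cdots Z_{m_{s}}[Q^{m_{1}}_{U_{1}}\cdots Q^{m_{s}}_{U_{s}}\,\partial_{k_{r+1}}\psi]$. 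Now I would substitute $\partial_{k_{r+1}}[\psi]=Z_{k_{r+1}}[\psi]-\sum_{d_{m}>d_{k_{r+1}}}P^{m}_{k_{r+1}}\,\partial_{m}[\psi]$ — wait, more cleanly: use the form $\partial_{k_{r+1}}=Z_{k_{r+1}}+\sum_{d_{m}>d_{k_{r+1}}}Z_{m}\big[Q^{m}_{k_{r+1}}\,\cdot\,\big]$ from Proposition \ref{Prop1.2}. The subtlety is that $Q^{m}_{k_{r+1}}$ is a polynomial sitting to the \emph{right} of all the $Z$'s already produced, so to reach the claimed normal form (all polynomials collected, then all vector fields applied) I must commute each polynomial $Q^{m_{1}}_{U_{1}}\cdots Q^{m_{s}}_{U_{s}}$ past the vector fields $Z_{m_{1}}\cdots Z_{m_{s}}$. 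Using the Leibniz-type rule $Z_{j}[Pf]=P\,Z_{j}[f]+Z_{j}[P]\,f$ together with Proposition \ref{Prop2.1} (which says $Z_{j}[P]\in\mathcal H_{d-d_{j}}$ when $P\in\mathcal H_{d}$, and vanishes when $d_{j}>d$), each such commutation produces a main term in which the polynomial degrees and block structure are exactly as needed, plus correction terms in which one block $U_{\ell}$ absorbs the index $r+1$ (its polynomial being replaced by a product/derivative combination, still homogeneous of the correct degree $d_{m_{\ell}}-d_{U_{\ell}\cup\{r+1\}}$, by the homogeneity bookkeeping of Proposition \ref{Prop2.1mn} parts (\ref{E9}) and (\ref{E.10})). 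The terms where $\{r+1\}$ forms its own new singleton block $U_{s+1}$ handle the "$s\mapsto s+1$" case, with $Q^{m_{s+1}}_{U_{s+1}}$ either $1$ (if $m_{s+1}=k_{r+1}$) or $Q^{m_{s+1}}_{k_{r+1}}$.

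The main obstacle is purely organizational: keeping track of the homogeneous degrees so that every polynomial produced genuinely lies in the asserted space $\mathcal H_{d_{m_{\ell}}-d_{U_{\ell}}}$, and verifying that $m_{\ell}\in\mathfrak I(U_{\ell})$ is preserved when a block grows (here one uses the observation in Definition \ref{disjoint} that enlarging $U$ only enlarges $d_{U}$, hence shrinks $\mathfrak I(U)$, and that a polynomial of negative homogeneous degree is identically zero, so the out-of-range terms automatically drop). Since the combinatorial manipulation is word-for-word the mirror image of the one carried out in full detail for Proposition \ref{Prop1.3} — with $X,Y$ and $\partial$ interchanged and Proposition \ref{Prop1.2} playing the role that equation (\ref{E14}) played there — it is legitimate to omit the repetition and simply invoke that parallel, which is exactly what the statement preceding the proposition announces.
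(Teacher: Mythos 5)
Your proposal is correct and is precisely the argument the paper has in mind: the paper omits the proof of Proposition \ref{Prop1.3w} with the remark that it is similar to that of Proposition \ref{Prop1.3}, and your mirrored induction (base case from Proposition \ref{Prop1.2}, inductive step via the substitution $\partial_{k_{r+1}}=Z_{k_{r+1}}+\sum_{d_{m}>d_{k_{r+1}}}Z_{m}[Q^{m}_{k_{r+1}}\,\cdot\,]$ followed by the Leibniz commutation $P\,Z_{j}[f]=Z_{j}[Pf]-Z_{j}[P]f$ and the homogeneity bookkeeping of Propositions \ref{Prop2.1mn} and \ref{Prop2.1}) is exactly that omitted argument. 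No gaps.
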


\section{Normalized bump functions and their dilations}\label{NBFDilations}

\subsection{Families of dilations}\label{Section4.1}\quad

\smallskip

Fix the family of dilations on $\R^{N}$ given in equation (\ref{E2}). We introduce an \emph{$N$-parameter} family of dyadic\footnote{In Section  \ref{LpEstimates}, we shall use a continuous version of this dyadic family. If $\t=(t_{1}, \ldots, t_{N})$ with each $t_{j }>0$, we will set $f_{\t}(\x) = f(t_{1}^{d_{1}}x_{1}, \ldots, t_{N}^{d_{N}}x_{N})$.}  dilations.  For $f\in L^{1}(\R^{N})$ and $I\in \Z^{N}$ set
\begin{equation}\label{E21}
\begin{aligned}
2^{I}\cdot \x &= (2^{-d_{1}i_{1}}x_{1}, \ldots, 2^{-d_{N}i_{N}}x_{N}), \\
\big[f\big]_{I}(\x) &= 2^{-\sum\limits_{\ell=1}^{N}d_{\ell}i_{\ell}} f(2^{I}\cdot \x).
\end{aligned}
\end{equation}
Then $\big|\big|[f]_{I}\big|\big|_{L^{1}(\R^{N})} = ||f||_{L^{1}(\R^{N})}$. The set of  monotone increasing indices is denoted by 
\begin{equation}\label{E22}
E_{N}= \left\{I=(i_{1}, \ldots, i_{N}) \in \Z^{N}\,\big\vert\, i_{1}\leq i_{2}\leq \cdots \leq i_{N}\right\}.
\end{equation}
When we consider flag kernels corresponding to the decomposition $\mathfrak A$ given by $N=a_{1}+\cdots+a_{n}$ and $\R^{N}=\R^{a_{1}}\oplus \cdots \oplus \R^{a_{n}}$, we consider the $n$-parameter family of dilations parameterized by $n$-tuples $I=(i_{1}, \ldots, i_{n}) \in \Z^{n}$:
\begin{equation}\label{E2.2}
\begin{aligned}
2^{I}\cdot\x &=(2^{i_{1}}\cdot\x_{1}, \ldots, 2^{i_{n}}\cdot\x_{n}), \qquad\text{where}\\
2^{i_{\ell}}\cdot\x_{\ell} &= (2^{-d_{p_{\ell}}i_{\ell}}x_{p_{\ell}}, \ldots, 2^{-d_{q_{\ell}}i_{\ell}}x_{q_{\ell}}), \qquad\text{and}\\
[f]_{I}(\x) &= 2^{-\sum\limits_{\ell=1}^{n} Q_{\ell}i_{\ell}}f(2^{I}\cdot \x).
\end{aligned}
\end{equation}
The set of monotone increasing indices in this case is denoted by
\begin{equation}\label{Eqn4.4iop}
\mathcal E_{n}= \left\{I=(i_{1}, \ldots, i_{n}) \in \Z^{n}\,\big\vert\, i_{1}\leq i_{2}\leq \cdots \leq i_{n}\right\}.
\end{equation}
Given the decomposition $\mathfrak A$, there is a mapping $p_{\mathfrak A}:\mathcal E_{n}\mapsto E_{N}$ given by
\begin{equation}\label{E2.18asd}
p_{\mathfrak A}(i_{1}, \ldots, i_{n}) = \big(\,\overset{a_{1}}{\overbrace{i_{1}, \ldots,i_{1}}}\,,\,\overset{a_{2}}{\overbrace{i_{2}, \ldots,i_{2}}}\,, \ldots , \,\overset{a_{n}}{\overbrace{i_{n}, \ldots,i_{n}}}\,\big).
\end{equation}

\smallskip

We shall want to write flag kernels as sums of dilates $[\varphi]_{I}$ of  normalized bump functions $\varphi$. Roughly speaking, a family of functions $\{\varphi_{\alpha}\}$ in $\mathcal C^{\infty}_{0}(\R^{N})$ or $\mathcal S(\R^{N})$ is \textit{normalized} if one has uniform control of the supports (in the case of $\mathcal C^{\infty}_{0}(\R^{N})$) and of the semi-norms $||\varphi_{\alpha}||_{(m)}$ or $||\varphi_{\alpha}||_{[M]}$. The following definition will simplify the precise statements of our results.

\begin{definition}\label{Def1.5}\quad

\smallskip

\begin{enumerate}[{\rm(1)}]

\item If $\varphi,\,\widetilde\varphi\in \Cs^{\infty}_{0}(\R^{n})$,  then $\widetilde\varphi$ is \emph{normalized in terms of $\varphi$} if \footnote{We shall sometimes use the expressions `normalized with respect to' or `normalized relative to' as a variant of `normalized in terms of'.} there are constants $C, C_{m}>0$ and integers $p_{m}\geq 0$ so that:
\begin{enumerate}[{\rm(a)}]

\smallskip

\item If the support of $\varphi$ is contained in the ball $B(\rho)$, then the support of $\widetilde\varphi$ is contained in the ball $B(C\rho)$.

\smallskip

\item For every non-negative integer $m$, $||\widetilde\varphi||_{(m)}\leq C_{m}\,||\varphi||_{(m+p_{m})}$.

\smallskip
\end{enumerate}

\smallskip

\item If $\psi,\,\widetilde \psi\in \mathcal  S(\R^{n})$, then $\widetilde \psi$ is \emph{normalized in terms of $\psi$} if there are constants $C_{N}>0$ and integers $p_{N}\geq 0$ so that $||\widetilde\psi||_{[N]}\leq C_{N}\,||\psi||_{[N+p_{N}]}$ for every non-negative integer $N$.

\smallskip

\item If $P,\,\widetilde P$ are polynomials, then $\widetilde P$ is normalized in terms of $P$ if $\widetilde P$ is obtained from $P$ by multiplying each coefficient by a constant of modulus less than or equal to $1$.
\end{enumerate}

\end{definition}

\smallskip

If $\varphi \in \mathcal C^{\infty}_{0}(\R^{N})$, it is sometimes convenient to write $\varphi$ as a sum of  products of functions of a single variable. That this is possible follows from the following fact.

\begin{proposition}\label{Prop2.5t}
Let $\varphi\in \mathcal C^{\infty}_{0}(\R^{N})$. Then for each $\alpha\in \mathbb N^{N}$ and $1 \leq k \leq N$ there are functions $\varphi_{\alpha,k}\in \mathcal C^{\infty}_{0}(\R^{N})$ so that $$\varphi(x_{1},\ldots, x_{N}) = \sum_{\alpha\in \N^{N}}c_{\alpha}\varphi_{\alpha,{1}}(x_{1}) \cdots \varphi_{\alpha,n}(x_{n}),$$where for any $M>0$, there is a constant $C_{M}$ such that   $|c_{\alpha}|\leq C_{M}\,(1+|\alpha|)^{-M}$.
\end{proposition}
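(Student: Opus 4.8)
The plan is to expand $\varphi$ in a multiple Fourier series on a large cube containing its support, and then multiply each exponential factor by a fixed cut-off so that it becomes compactly supported in a single variable.

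After a preliminary dilation in each coordinate I would assume $\supp\varphi\subset (-\pi,\pi)^{N}$. Extending $\varphi$ to be $2\pi$-periodic in each variable produces a $\Cs^{\infty}$ function on $\R^{N}$, since $\varphi$ together with all its derivatives vanishes near the boundary of the cube; hence it has a Fourier expansion $\varphi(\x)=\sum_{\mathbf k\in\Z^{N}}a_{\mathbf k}\,e^{i\mathbf k\cdot\x}$. Integrating by parts repeatedly gives, for each $M$, a bound $|a_{\mathbf k}|\le C_{M}(1+|\mathbf k|)^{-M}$ with $C_{M}$ controlled by finitely many of the semi-norms $||\varphi||_{(m)}$; this rapid decay is exactly what is wanted for the coefficients, and it also guarantees that the series, together with all of its term-by-term derivatives, converges uniformly on $\R^{N}$.

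Next I would fix once and for all a single function $\chi\in\Cs^{\infty}_{0}(\R)$ with $\chi\equiv 1$ on $[-\pi,\pi]$, so that $\varphi(\x)=\chi(x_{1})\cdots\chi(x_{N})\,\varphi(\x)$, and substitute the series to obtain
\[
\varphi(\x)=\sum_{\mathbf k\in\Z^{N}}a_{\mathbf k}\,\big(\chi(x_{1})e^{ik_{1}x_{1}}\big)\cdots\big(\chi(x_{N})e^{ik_{N}x_{N}}\big).
\]
Each factor $\chi(x_{j})e^{ik_{j}x_{j}}$ lies in $\Cs^{\infty}_{0}(\R)$ with support contained in a fixed interval independent of $\mathbf k$, so this is already an expansion of the required product form. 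To land in the real-valued class $\Cs^{\infty}_{0}$ used here, I would write $e^{ik_{j}x_{j}}=\cos(k_{j}x_{j})+i\sin(k_{j}x_{j})$ in each factor, multiply out, and recombine conjugate terms using $a_{-\mathbf k}=\overline{a_{\mathbf k}}$ (valid since $\varphi$ is real); this replaces the exponentials by the real functions $\chi(x_{j})\cos(k_{j}x_{j})$ and $\chi(x_{j})\sin(k_{j}x_{j})$ and only alters the coefficients by bounded factors. Finally I would transport the index set from $\Z^{N}$ to $\N^{N}$ through a bijection $\sigma$ with $(1+|\alpha|)$ and $(1+|\sigma(\alpha)|)$ comparable, so that the bound $(1+|\mathbf k|)^{-M}$ turns into $(1+|\alpha|)^{-M}$, and absorb the remaining constants into $c_{\alpha}$.

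I do not expect any genuine obstacle: periodic extension, rapid decay of Fourier coefficients, and uniform convergence with all derivatives are standard. The only point needing a little care is the bookkeeping of the last paragraph — passing from the complex exponential series to a series of real, one-variable, compactly supported factors while keeping the rapid decay of the coefficients — and one should also record (for later use) that although $||\chi(\,\cdot\,)\cos(k\,\cdot\,)||_{(m)}$ grows only polynomially in $|k|$, this growth is harmlessly absorbed by choosing $M$ large in the estimate for $|c_{\alpha}|$.
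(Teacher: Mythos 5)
Your proof is correct and follows exactly the route the paper indicates: regard $\varphi$ as periodic in each variable, expand in a rapidly converging Fourier series, and split the exponentials into compactly supported one-variable factors. The paper gives only a one-sentence sketch of this, and your additional bookkeeping (the cutoff $\chi$, real-valuedness, reindexing to $\N^{N}$, and absorbing the polynomial growth of the factor norms into the rapid decay of the coefficients) fills in the details appropriately.
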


\noindent Proposition \ref{Prop2.5t} follows easily by regarding $\varphi$ as a periodic function in each variable, and then expanding $\varphi$ in a rapidly converging Fourier series.

\smallskip

\subsection{Differentiation and multiplication of dilates of bump functions}\quad

\smallskip

In this section we study the action of differentiation or multiplication by a homogeneous polynomial on dilates of bump functions. The key results are Proposition \ref{Prop2.10jk} and Corollary \ref{Prop2.15jkl} below. We begin with the following result which  follows easily from the definitions and the chain rule:

\begin{proposition}\label{Prop1.6}
Let $\psi\in \mathcal S(\R^{N})$. Then
\begin{align}\label{E23}
2^{+d_{k}i_{k}}\,\partial_{{k}}[\psi]_{I}(\x) &= [\partial_{{k}}\psi]_{I}(\x)&&\text{and}&
2^{-d_{k}i_{k}}\,x_{k}\,[\psi]_{I}(\x) &= [x_{k}\,\psi]_{I}.
\end{align}
More generally, if $I=(i_{1}, \ldots, i_{N})\in \mathbb Z^{N}$, and $\alpha\in \mathbb N^{N}$, then
\begin{align}\label{E24}
2^{+[\![\alpha\cdot I]\!]}\,\partial^{\alpha}[\psi]_{I} &= \big[\partial^{\alpha}\psi\big]_{I}&&\text{and}&
2^{-[\![\alpha\cdot I]\!]}\x^{\alpha}[\psi]_{I}&= \big[\x^{\alpha}\psi\big]_{I},
\end{align}
where $[\![\alpha\cdot I[\!] = \sum_{k=1}^{N} \alpha_{k}d_{k}i_{k}$.
\end{proposition}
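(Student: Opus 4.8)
The plan is to prove Proposition \ref{Prop1.6} by direct computation from the definitions, using the chain rule for differentiation and the explicit form of the dilation $2^{I}\cdot\x$ given in (\ref{E21}). Since the one-variable identities in (\ref{E23}) are the special case $\alpha = e_k$ of the multi-index identities in (\ref{E24}), and the general case follows from the $k=1,\dots,N$ cases by iteration, it suffices in effect to establish a single differentiation identity and a single multiplication identity; I would nevertheless present (\ref{E23}) first as the base computation since it exhibits the mechanism transparently.

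First I would recall that $[\psi]_I(\x) = 2^{-\sum_\ell d_\ell i_\ell}\,\psi(2^{-d_1 i_1}x_1, \ldots, 2^{-d_N i_N}x_N)$. Applying $\partial_k = \partial/\partial x_k$ and using the chain rule, each factor $2^{-d_\ell i_\ell}x_\ell$ depends on $x_k$ only when $\ell = k$, contributing a factor $2^{-d_k i_k}$; hence $\partial_k[\psi]_I(\x) = 2^{-d_k i_k}\,[\partial_k\psi]_I(\x)$, which rearranges to the first identity in (\ref{E23}). The second identity is even more immediate: multiplying $[\psi]_I(\x)$ by $x_k$ and writing $x_k = 2^{d_k i_k}\cdot(2^{-d_k i_k}x_k)$ shows $x_k\,[\psi]_I(\x) = 2^{d_k i_k}\,[x_k\,\psi]_I(\x)$, which is the claimed relation.

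For (\ref{E24}), I would argue by induction on $|\alpha|$, or simply note that $\partial^\alpha = \partial_1^{\alpha_1}\cdots\partial_N^{\alpha_N}$ is a composition of the one-variable operators, and each application of $\partial_k$ produces one factor of $2^{-d_k i_k}$ by the base case; after $\alpha_k$ applications in the $k$-th variable and over all $k$, the accumulated factor is $2^{-\sum_k \alpha_k d_k i_k} = 2^{-[\![\alpha\cdot I]\!]}$, giving $\partial^\alpha[\psi]_I = 2^{-[\![\alpha\cdot I]\!]}[\partial^\alpha\psi]_I$, equivalently the first identity in (\ref{E24}). The monomial identity $\x^\alpha[\psi]_I = 2^{[\![\alpha\cdot I]\!]}[\x^\alpha\psi]_I$ follows the same way from the base multiplication case, or one can observe that multiplication by $\x^\alpha$ and $\partial^\alpha$ transform by reciprocal scalars, consistent with their being formal adjoints up to sign.

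There is essentially no obstacle here: the proposition is a bookkeeping identity and the only thing to be careful about is the sign convention in the exponent $2^{I}\cdot\x = (2^{-d_1 i_1}x_1, \ldots)$, so that differentiation (which "brings down" the inner scaling factor $2^{-d_k i_k}$) produces the $+[\![\alpha\cdot I]\!]$ on the left-hand side once moved across, while multiplication by $\x^\alpha$ produces the opposite sign. The $L^1$-normalization prefactor $2^{-\sum_\ell d_\ell i_\ell}$ in the definition of $[\,\cdot\,]_I$ plays no role in these identities since it is the same on both sides and commutes with $\partial^\alpha$ and with multiplication by $\x^\alpha$; I would remark on this so the reader sees why it drops out. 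The proposition is then immediate, and indeed the excerpt already anticipates this by saying it "follows easily from the definitions and the chain rule."
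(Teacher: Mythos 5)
Your computation is correct and is exactly the argument the paper intends: the paper gives no written proof, stating only that the proposition "follows easily from the definitions and the chain rule," which is precisely the direct calculation you carry out. The sign bookkeeping (differentiation bringing down $2^{-d_k i_k}$, multiplication absorbing $2^{d_k i_k}$) and the iteration to general $\alpha$ are all as expected.
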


\smallskip

We will frequently use the following generalization of the second identity in equation (\ref{E24}).

\begin{proposition}\label{Prop1.7}
Let  $P\in \mathcal H_{d}$ where $d <d_{l}$.  If $I\in E_{N}$, there is a polynomial $P_{I}\in \mathcal H_{d}$, normalized in terms of $P$, so that $P(\x)[\psi]_{I}(\x) =2^{d\,i_{l}} \big[P_{I} \psi\big]_{I}(\x)$ for $\psi\in\mathcal S(\R^{N})$. 
\end{proposition}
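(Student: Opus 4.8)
The plan is to prove Proposition \ref{Prop1.7} by reducing it to the monomial case and then invoking the second identity in equation (\ref{E24}). First I would write $P\in \mathcal H_{d}$ as a finite sum of monomials $P(\x) = \sum_{\alpha\in \mathfrak H_{d}}c_{\alpha}\x^{\alpha}$, using Proposition \ref{Prop2.1mn}, so that it suffices to prove the claim for a single monomial $m(\x) = \x^{\alpha}$ with $\sum_{j=1}^{N}\alpha_{j}d_{j} = d$ and then sum (the coefficients $c_{\alpha}$ get absorbed into the normalization, since $P_{I}$ is only required to be normalized in terms of $P$ in the sense of Definition \ref{Def1.5}(3)). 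By (\ref{E24}), for a monomial we have $\x^{\alpha}[\psi]_{I} = 2^{[\![\alpha\cdot I]\!]}[\x^{\alpha}\psi]_{I}$ where $[\![\alpha\cdot I]\!] = \sum_{k=1}^{N}\alpha_{k}d_{k}i_{k}$.

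The key point is then to compare the exponent $[\![\alpha\cdot I]\!] = \sum_{k=1}^{N}\alpha_{k}d_{k}i_{k}$ with the target exponent $d\,i_{l}$, using the hypotheses that $I\in E_{N}$ (so $i_{1}\leq i_{2}\leq \cdots \leq i_{N}$), that $\sum_{k}\alpha_{k}d_{k} = d$, and that $d < d_{l}$. The condition $d < d_{l}$ forces $\alpha_{k} = 0$ whenever $d_{k}\geq d_{l}$, in particular whenever $k\geq l$ (since the $d_{k}$ are non-decreasing); hence the only $k$ contributing to the sum satisfy $k < l$, so $i_{k}\leq i_{l}$ for each such $k$. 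Writing $2^{[\![\alpha\cdot I]\!]} = 2^{d\,i_{l}}\cdot 2^{\sum_{k}\alpha_{k}d_{k}(i_{k}-i_{l})}$ and noting $\sum_{k}\alpha_{k}d_{k}(i_{k}-i_{l}) \leq 0$ (every term being a non-positive number times a non-negative number, using $\sum_k\alpha_kd_k=d$), we obtain $\x^{\alpha}[\psi]_{I} = 2^{d\,i_{l}}\cdot 2^{-\lambda_{\alpha,I}}[\x^{\alpha}\psi]_{I}$ with $\lambda_{\alpha,I}\geq 0$. Setting $P_{I}(\x) = \sum_{\alpha}c_{\alpha}2^{-\lambda_{\alpha,I}}\x^{\alpha}$ gives a polynomial in $\mathcal H_{d}$ obtained from $P$ by multiplying each coefficient by $2^{-\lambda_{\alpha,I}}$, a number of modulus $\leq 1$, so $P_{I}$ is normalized in terms of $P$ per Definition \ref{Def1.5}(3); and by construction $P(\x)[\psi]_{I}(\x) = 2^{d\,i_{l}}[P_{I}\psi]_{I}(\x)$.

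I do not anticipate a serious obstacle here: this is essentially a bookkeeping argument built on (\ref{E24}) and the monotonicity of $I$. The one point requiring a little care — the step I'd flag as the crux — is verifying that the relevant monomial exponents $\alpha$ are supported on indices $k$ with $d_{k} < d_{l}$, hence (by the ordering of the $d_{j}$) on indices with $i_{k}\leq i_{l}$, which is exactly what makes the residual exponent non-positive and lets the constant be absorbed into the normalization. One subtlety worth a sentence in the write-up: if two distinct indices $k,k'$ have $d_{k} = d_{k'}$ but $k < l \leq k'$, one must use $d < d_l$ rather than $d \le d_l$ to conclude $\alpha_{k'} = 0$; since the hypothesis is the strict inequality $d < d_{l}$, this causes no trouble. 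Everything else is the definition-chasing already packaged in the cited propositions.
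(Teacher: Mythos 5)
Your proof is correct and follows essentially the same route as the paper's: decompose $P$ into monomials supported on indices $k<l$ (forced by $d<d_{l}$ and the monotonicity of the $d_{j}$), apply the second identity of Proposition \ref{Prop1.6}, and rewrite $[\![\alpha\cdot I]\!]=d\,i_{l}-\sum_{m<l}\alpha_{m}d_{m}(i_{l}-i_{m})$ with non-positive residual exponent, absorbing the factors $2^{-\lambda_{\alpha,I}}$ into the coefficients of $P_{I}$. No gaps.
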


\begin{proof} 
Write $P(\x) = \sum_{\alpha\in \mathfrak H_{d}}c_{\alpha}\x^{\alpha}$ with  $\mathfrak H_{d}=\{\alpha=(\alpha_{1}, \ldots, \alpha_{N})\in \N^{N}\,\vert\, \alpha_{1}d_{1}+\cdots +\alpha_{N}d_{N}=d\}$. Since $d<d_{l}$, if $\alpha\in \mathfrak H_{d}$ we have $\alpha_{j}=0$ for $j\geq l$. According to Proposition \ref{Prop1.6}, it follows that $P(\x)[\psi]_{I}(\x) = \sum_{\alpha\in \mathfrak H_{d}}c_{\alpha}2^{[\![\alpha\cdot I]\!]}\big[\x^{\alpha}\psi\big]_{I}(\x)$, and 
\begin{align*}
\<  \alpha\cdot I\> &=\sum_{m=1}^{l-1}\alpha_{m}d_{m}i_{m}=i_{{l}}\sum_{m=1}^{l-1}\alpha_{m}d_{m}-\sum_{m=1}^{l-1}\alpha_{m}d_{m}(i_{{l}}-i_{m})\\
&= 
d\,i_{{l}}-\sum_{m=1}^{l-1}\alpha_{m}d_{m}(i_{l}-i_{m}).
\end{align*}
Thus
\begin{equation*}
P(\x)\big[\psi]_{I}(\x)= 2^{d\,i_{l} }\Big[\sum_{\alpha\in \mathfrak H_{d}}c_{\alpha}2^{-\sum_{m=1}^{l-1}\alpha_{m}d_{m}(i_{l}-i_{m})}\x^{\alpha}\psi\Big]_{I}(\x).
\end{equation*}
But since $I\in E_{N}$, each exponent $-\sum_{m=1}^{l-1}\alpha_{m}d_{m}(i_{l}-i_{m})\leq 0$. The proof is complete if we set $P_{I}(\x) = \sum_{\alpha\in \mathfrak H_{d}}c_{\alpha}2^{-\sum_{m=1}^{l-1}\alpha_{m}d_{m}(i_{l}-i_{m})}\x^{\alpha}$.
\end{proof}

\smallskip

\noindent \textbf{Remark:} Equation (\ref{E23}) shows \label{Normalized} that the operator \,$2^{d_{k}i_{k}}\partial_{x_{k}}$ \,applied to the $I$-dilate  of a normalized function $\varphi$ is the $I$-dilate of a function $\widetilde \varphi$ normalized in terms of $\varphi$, and multiplying the $I$-dilate of normalized function $\varphi$ by \,$2^{-d_{k}i_{k}}x_{k}$ \,is  the  $I$-dilate of a function $\widetilde \varphi$ normalized in terms of $\varphi$. Thus at `scale $I$', the operators $2^{d_{k}i_{k}}\partial_{x_{k}}$ and $2^{-d_{k}i_{k}}x_{k}$ are `invariant'; they map the collection $I$-dilates of normalized functions to itself.  

\smallskip

A key observation, which is used when we consider convolution on homogeneous nilpotent groups, is that we can replace the operator \,$2^{d_{k}i_{k}}\partial_{x_{k}}$  with the operator \,$2^{d_{k}i_{k}}Z_{k}$, or conversely the operator  $2^{d_{k}i_{k}}Z_{k}$ \, by the operator \,$2^{d_{k}i_{k}}\partial_{x_{k}}$, at the cost of introducing an error involving terms $2^{d_{l}i_{l}}Z_{l}$ or $2^{d_{l}i_{l}}\partial_{l}$ where $l>k$ multiplied by a `gain'  $2^{-d_{k}(i_{l}-i_{k})}$.  The precise statement is given in Proposition \ref{Prop1.8} below. 

Let  $P^{l}_{k} \in \mathcal H_{d_{l}-d_{k}}$ be the homogeneous polynomials that are coefficients of a vector field $Z_{k}$ as in equations (\ref{E13}) or (\ref{E14}), and let $Q^{l}_{k}\in \mathcal H_{d_{l}-d_{k}}$ be the polynomials in Proposition \ref{Prop1.2}. Since $d_{l}-d_{k}<d_{l}$, we can use Proposition \ref{Prop1.7} to write
\begin{equation}\label{E25}
\begin{aligned}
P^{l}_{k}(\x)[\psi]_{I}(\x) &= 2^{i_{l}(d_{l}-d_{k})}\big[P^{l}_{k,I}\psi\big]_{I}(\x),\\
Q^{l}_{k}(\x)[\psi]_{I}(\x) &= 2^{i_{l}(d_{l}-d_{k})}\big[Q^{l}_{k,I}\psi\big]_{I}(\x),
\end{aligned}
\end{equation}
where $P^{l}_{k,I},\,Q^{l}_{k,I}\in \mathcal H_{d_{l}-d_{k}}$ are normalized relative to $P^{l}_{k}$ and $Q^{l}_{k}$

\begin{proposition}\label{Prop1.8}
Let $I\in E_{N}$, and let  $\{P^{l}_{k,I}\}$ and $\{Q^{l}_{k,I}\}$ be the homogeneous polynomials defined in equation (\ref{E25}). Let $\psi \in \mathcal C^{\infty}_{0}(\R^{N})$ (respectively $\psi \in \mathcal S(\R^{N})$). Then
\begin{align*}
(2^{d_{k}i_{k}}Z_{k})[\psi]_{I}
&=
(2^{d_{k}i_{k}}\partial_{{k}})[\psi]_{I}
+
\sum_{d_{l}>d_{k}}^{N}2^{-d_{k}(i_{l}-i_{k})}\,(2^{d_{l}i_{l}}\partial_{{l}})[P^{l}_{k,I}\psi]_{I},
\\
(2^{d_{k}i_{k}}\partial_{{k}})[\psi]_{I} 
&=
(2^{d_{k}i_{k}} Z_{k})[\psi]_{I}+
\sum_{d_{l}>d_{k}}^{N} 2^{-d_{k}(i_{l}-i_{k})}(2^{d_{l}i_{l}}Z_{l})[Q^{l}_{k,I}\psi]_{I}.
\end{align*}
The functions $\{P^{l}_{k,I}\psi\}$ and $\{Q^{l}_{k,I}\psi\}$ are normalized with respect to $\psi$ in $\Cs^{\infty}_{0}(\R^{N})$ (respectively in $\Sc(\R^{N})$).
\end{proposition}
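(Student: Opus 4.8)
The plan is to prove the first identity directly from equation (\ref{E14}) together with Proposition \ref{Prop1.7}, and then observe that the second identity follows in exactly the same way starting from Proposition \ref{Prop1.2} instead of (\ref{E14}). First I would write out $Z_k$ in the divergence form (\ref{E14}), namely $Z_k[\psi] = \partial_k[\psi] + \sum_{d_l>d_k}\partial_l[P^l_k\psi]$, and apply it to the dilate $[\psi]_I$. The point is that $[Z_k\psi]_I$ is not quite what appears; rather, I want to express $Z_k[\psi]_I$, so I should first move the dilation inside. Using the chain rule bookkeeping recorded in Proposition \ref{Prop1.6}, I have $\partial_k[\psi]_I = 2^{-d_k i_k}[\partial_k\psi]_I$, and for the higher terms $\partial_l[P^l_k\psi]_I = 2^{-d_l i_l}[\partial_l(P^l_k\psi)]_I$ — but the cleaner route is to keep everything at scale $I$ and feed in the scaling relations in the form the Remark on page \pageref{Normalized} emphasizes: the operator $2^{d_k i_k}\partial_k$ is "scale-$I$ invariant," i.e. $(2^{d_k i_k}\partial_k)[\psi]_I = [\partial_k\psi]_I$.

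The key step is handling the polynomial coefficients. Since $P^l_k\in\mathcal H_{d_l-d_k}$ and $d_l-d_k<d_l$, Proposition \ref{Prop1.7} (applied with the exponent $l$ playing the role of its "$l$") gives $P^l_k(\x)[\psi]_I(\x) = 2^{i_l(d_l-d_k)}[P^l_{k,I}\psi]_I(\x)$ with $P^l_{k,I}\in\mathcal H_{d_l-d_k}$ normalized in terms of $P^l_k$; this is precisely the content of (\ref{E25}). Now I apply $2^{d_k i_k}$ to $Z_k[\psi]_I = \partial_k[\psi]_I + \sum_{d_l>d_k}\partial_l[P^l_k[\psi]_I]$. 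Wait — one must be careful: in $Z_k[\psi]_I$ the vector field acts on the already-dilated function, so the higher-order terms are $\partial_l[P^l_k \cdot ([\psi]_I)]$, and I substitute (\ref{E25}) for $P^l_k\cdot[\psi]_I$ to get $\partial_l[2^{i_l(d_l-d_k)}[P^l_{k,I}\psi]_I] = 2^{i_l(d_l-d_k)}\partial_l[P^l_{k,I}\psi]_I$. Then I rewrite $\partial_l[P^l_{k,I}\psi]_I = 2^{-d_l i_l}(2^{d_l i_l}\partial_l)[P^l_{k,I}\psi]_I$, so this term becomes $2^{-d_l i_l}\cdot 2^{i_l(d_l-d_k)}(2^{d_l i_l}\partial_l)[P^l_{k,I}\psi]_I = 2^{-d_k i_l}(2^{d_l i_l}\partial_l)[P^l_{k,I}\psi]_I$. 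Multiplying through by $2^{d_k i_k}$ produces the claimed factor $2^{d_k i_k - d_k i_l} = 2^{-d_k(i_l-i_k)}$ in front of $(2^{d_l i_l}\partial_l)[P^l_{k,I}\psi]_I$, while the leading term gives $(2^{d_k i_k}\partial_k)[\psi]_I$. This is exactly the first displayed identity.

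For the second identity I would repeat the argument verbatim, replacing (\ref{E14}) by the first expression in Proposition \ref{Prop1.2}, $\partial_k[\psi] = Z_k[\psi] + \sum_{d_l>d_k} Z_l[Q^l_k\psi]$ — or more precisely its "divergence-type" second form $\partial_k[\psi] = Z_k[\psi] + \sum_{d_l>d_k} Z_l[Q^l_k\psi]$ applied to $[\psi]_I$ — using the second line of (\ref{E25}) for $Q^l_k\cdot[\psi]_I$, and noting that $2^{d_l i_l}Z_l$ is scale-$I$ invariant in the same sense (one checks $(2^{d_l i_l}Z_l)[g]_I$ against the homogeneity of $Z_l$, which is of degree $-d_l$). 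The normalization claim on $\{P^l_{k,I}\psi\}$ and $\{Q^l_{k,I}\psi\}$ is immediate: $P^l_{k,I}$ is normalized relative to $P^l_k$ by Proposition \ref{Prop1.7}, multiplication by a fixed polynomial maps $\mathcal C^\infty_0$ (resp. $\mathcal S$) continuously to itself with control of the relevant semi-norms, and the support of $P^l_{k,I}\psi$ is that of $\psi$, so Definition \ref{Def1.5} is satisfied with constants depending only on the fixed data $\{P^l_k\}$, $\{Q^l_k\}$. The only mildly delicate point — and the one I expect to require the most care in the write-up rather than any real obstacle — is keeping the bookkeeping of which dilation index governs which factor straight, in particular verifying that the exponent arithmetic $i_l(d_l-d_k) - d_l i_l + d_k i_k = -d_k(i_l - i_k)$ comes out correctly and that the hypothesis $I\in E_N$ is used only where Proposition \ref{Prop1.7} needs it (to guarantee the normalization, via $i_l\geq i_m$ for the relevant $m$).
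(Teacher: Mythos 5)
Your argument is correct and is essentially the paper's own proof: the first identity comes from the divergence form (\ref{E14}) combined with (\ref{E25}), the second from Proposition \ref{Prop1.2} combined with (\ref{E25}), and the exponent arithmetic $2^{d_ki_k}\cdot 2^{i_l(d_l-d_k)}\cdot 2^{-d_li_l}=2^{-d_k(i_l-i_k)}$ is exactly the computation the paper displays. The normalization claim is handled the same way, via Proposition \ref{Prop1.7} and the fact that $I\in E_N$ makes the exponents in $P^l_{k,I}$, $Q^l_{k,I}$ nonpositive.
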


\begin{proof}
The first identity follows immediately from equation (\ref{E14}), Proposition \ref{Prop1.7}, and equation (\ref{E25}). To obtain the second, use Propositions \ref{Prop1.2} and equation (\ref{E25}); we have
\begin{align*}
2^{d_{k}i_{k}}\partial_{x_{k}}[\psi]_{I} &=2^{d_{k}i_{k}}Z_{k}[\psi]_{I}+2^{d_{k}i_{k}}\sum_{l=k+1}^{N}Z_{l}Q^{l}_{k}(\x)[\psi]_{I}\\
&=
2^{d_{k}i_{k}}Z_{k}[\psi]_{I}+ \sum_{l=k+1}^{N} 2^{-d_{k}(i_{l}-i_{k})}(2^{d_{l}i_{l}}Z_{l})[Q^{l}_{k,I}\psi]_{I},
\end{align*}
which is the desired formula. 
\end{proof}

\smallskip

\begin{corollary}\label{Cor1.9}
If $\varphi\in \mathcal C^{\infty}_{0}(\R^{N})$ (respectively  $\varphi\in \mathcal S(\R^{N})$) and if $I \in E_{N}$, then  there is a  function $\widetilde\varphi\in \mathcal C^{\infty}_{0}(\R^{N})$, (respectively  $\widetilde\varphi\in \mathcal S(\R^{N})$), normalized with respect to $\varphi$, such that   $ 2^{d_{k}i_{k}}Z_{k}[\varphi]_{I}=[\widetilde\varphi]_{I}$.
\end{corollary}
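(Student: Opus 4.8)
The corollary is the bump-function analogue of the first identity in Proposition \ref{Prop1.8}, and the strategy is to apply that identity and then absorb the error terms by an inductive (reverse-induction on $k$) argument. First I would record the base case: by equation (\ref{E14}) we have $Z_{N}=\partial_{N}$, so $2^{d_{N}i_{N}}Z_{N}[\varphi]_{I} = 2^{d_{N}i_{N}}\partial_{N}[\varphi]_{I} = [\partial_{N}\varphi]_{I}$ by the first identity in equation (\ref{E23}), and $\widetilde\varphi = \partial_{N}\varphi$ is trivially normalized in terms of $\varphi$ (in both $\Cs^{\infty}_{0}$ and $\Sc$).

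\textbf{Inductive step.} For general $k$, apply the first identity of Proposition \ref{Prop1.8}:
\begin{equation*}
(2^{d_{k}i_{k}}Z_{k})[\varphi]_{I}
=
[\partial_{k}\varphi]_{I}
+
\sum_{d_{l}>d_{k}}2^{-d_{k}(i_{l}-i_{k})}\,(2^{d_{l}i_{l}}\partial_{l})[P^{l}_{k,I}\varphi]_{I}.
\end{equation*}
By the first identity in equation (\ref{E24}) (or (\ref{E23})), each summand $(2^{d_{l}i_{l}}\partial_{l})[P^{l}_{k,I}\varphi]_{I} = [\partial_{l}(P^{l}_{k,I}\varphi)]_{I}$, and since $P^{l}_{k,I}$ is normalized in terms of $P^{l}_{k}$ (hence has bounded coefficients depending only on $P^{l}_{k}$) and since $I\in E_{N}$ forces $i_{l}\geq i_{k}$, the factor $2^{-d_{k}(i_{l}-i_{k})}\leq 1$. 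Therefore the right-hand side equals $[\,\widetilde\varphi\,]_{I}$ with
\begin{equation*}
\widetilde\varphi = \partial_{k}\varphi + \sum_{d_{l}>d_{k}} 2^{-d_{k}(i_{l}-i_{k})}\,\partial_{l}(P^{l}_{k,I}\varphi),
\end{equation*}
which is a finite sum of derivatives of $\varphi$ and of products of a fixed-coefficient homogeneous polynomial with $\varphi$, multiplied by constants bounded by $1$. One checks directly that $\widetilde\varphi\in\Cs^{\infty}_{0}(\R^{N})$ (its support is contained in that of $\varphi$, so condition (a) of Definition \ref{Def1.5} holds with $C=1$) respectively $\widetilde\varphi\in\Sc(\R^{N})$, and that the semi-norm bounds of Definition \ref{Def1.5}(1)(b) or (2) hold with constants depending only on $k$, $N$, the dilation exponents $d_{j}$, and the coefficients of the fixed polynomials $P^{l}_{k}$ — in particular independent of $I$. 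This gives the claim.

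\textbf{Expected main obstacle.} There is essentially no obstacle here beyond bookkeeping: the substance is already contained in Proposition \ref{Prop1.8}, and the only point requiring (routine) care is verifying that the error terms, after moving the dilation operators inside, produce a function that is genuinely normalized \emph{uniformly in $I\in E_{N}$}. The two facts that make this work are that $P^{l}_{k,I}$ is normalized in terms of $P^{l}_{k}$ (so its coefficients are controlled independently of $I$) and that the gain factors $2^{-d_{k}(i_{l}-i_{k})}$ are bounded by $1$ precisely because $I$ lies in $E_{N}$; multiplication by a normalized polynomial and differentiation each send normalized families to normalized families, with a fixed loss in the semi-norm index, so the conclusion follows. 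The reverse induction is not strictly needed — one application of Proposition \ref{Prop1.8} suffices — so I would present it as a direct argument, keeping the base-case remark only to explain why the sum over $d_{l}>d_{k}$ is finite and eventually empty.
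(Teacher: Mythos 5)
Your argument is correct and is exactly the intended (and only natural) deduction: the corollary is immediate from the first identity of Proposition \ref{Prop1.8} together with equation (\ref{E23}), absorbing the factors $2^{-d_{k}(i_{l}-i_{k})}\leq 1$ and the $I$-uniformly normalized polynomials $P^{l}_{k,I}$ into a single normalized $\widetilde\varphi$; the paper states the corollary without proof for precisely this reason. As you note yourself, the induction scaffolding is unnecessary and a single application of Proposition \ref{Prop1.8} suffices.
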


\smallskip

We shall need an analogue of Proposition \ref{Prop1.8} for $r$-fold products of vector fields or Euclidean derivatives.  If $\{P^{m_{l}}_{U_{l}}\}$ and $\{Q^{m_{l}}_{U_{l}}\}$ are the polynomials appearing in Propositions  \ref{Prop1.3} and \ref{Prop1.3w}, we use Proposition \ref{Prop1.7} to define polynomials $P^{m_{l}}_{U_{l},I}$ and $Q^{m_{l}}_{U_{l},I}$ by the formulas
\begin{equation}\label{Eqn2.20poi}
\begin{aligned}
P^{m_{l}}_{U_{l}}(\x)[\psi]_{I}(\x) &= 2^{i_{l}(d_{m_{l}}-d_{U_{l}})}[P^{m_{l}}_{U_{l},I}\psi]_{I}(\x),\\
Q^{m_{l}}_{U_{l}}(\x)[\psi]_{I}(\x) &= 2^{i_{l}(d_{m_{l}}-d_{U_{l}})}[Q^{m_{l}}_{U_{l},I}\psi]_{I}(\x).
\end{aligned}
\end{equation}

\begin{proposition}\label{Prop2.10jk}
Let $k_{1}, \ldots, k_{r}\in \{1, \ldots, N\}$ be a set of $r$ integers, possibly with repetitions, and let $I \in E_{N}$. Let $Z_{\ell}$ denote either $X_{\ell}$ or $Y_{\ell}$. Then
\begin{align*}
(2^{d_{k_{1}}i_{k_{1}}}Z_{k_{1}}) \cdots &(2^{d_{k_{r}}i_{k_{r}}}Z_{k_{r}})[\psi]_{I} 
\\&=
\sum_{s=1}^{r}
\sum_{(U_{1}, \ldots, U_{s})\in\mathcal U^{r}_{s}}
\sum_{m_{1}\in \mathfrak I(U_{1})}\cdots \sum_{m_{s}\in \mathfrak I(U_{s})}
2^{-\sum_{l=1}^{j}\sum_{t\in U_{l}}d_{k_{l}}(i_{m_{l}}-i_{k_{t}})}\\
&\qquad\qquad\qquad
(2^{d_{m_{1}}i_{m_{1}}}\partial_{m_{1}}) \cdots (2^{d_{m_{s}}i_{m_{s}}}\partial_{m_{s}})[P^{m_{1}}_{U_{1}, I}\cdots P^{m_{s}}_{U_{s}, I}\psi]_{I},
\end{align*}
and
\begin{align*}
(2^{d_{k_{1}}i_{k_{1}}}\partial_{k_{1}}) \cdots &(2^{d_{k_{r}}i_{k_{r}}}\partial_{k_{r}})[\psi]_{I} 
\\&=
\sum_{s=1}^{r}
\sum_{(U_{1}, \ldots, U_{s})\in\mathcal U^{r}_{s}}
\sum_{m_{1}\in \mathfrak I(U_{1})}\!\!\!\cdots \!\!\!\sum_{m_{s}\in \mathfrak I(U_{s})}\!\!\!
2^{-\sum_{l=1}^{j}\sum_{t\in U_{l}}d_{k_{l}}(i_{m_{l}}-i_{k_{t}})}\\
&\qquad\qquad\qquad
(2^{d_{m_{1}}i_{m_{1}}}Z_{m_{1}}) \cdots (2^{d_{m_{s}}i_{m_{s}}}Z_{m_{s}})[Q^{m_{1}}_{U_{1}, I}\cdots Q^{m_{s}}_{U_{s}, I}\psi]_{I}.
\end{align*}
In either identity, if $s=r$ so that $U_{\ell}= \{k_{\ell}\}$, $1 \leq \ell \leq r$, the polynomials $P^{m_{\ell}}_{U_{\ell}}(\x) = Q^{m_{\ell}}_{U_{\ell}}(\x) \equiv 1$.
\end{proposition}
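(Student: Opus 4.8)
The plan is to prove the two identities in Proposition \ref{Prop2.10jk} by combining the combinatorial product formulas of Propositions \ref{Prop1.3} and \ref{Prop1.3w} with the single-step rescaling identity of Proposition \ref{Prop1.7} (in the form recorded in equation (\ref{Eqn2.20poi})), applied term by term to the dilate $[\psi]_I$. Since the two identities are entirely symmetric under interchanging the roles of $\{X_j\},\{Y_j\}$-monomials with Euclidean-derivative monomials — Proposition \ref{Prop1.3} expresses $Z_{k_1}\cdots Z_{k_r}$ in terms of $\partial_{m_1}\cdots\partial_{m_s}$ composed with multiplication by $P^{m_1}_{U_1}\cdots P^{m_s}_{U_s}$, while Proposition \ref{Prop1.3w} does the reverse with $Q$'s — I would prove the first identity in detail and remark that the second follows verbatim with $Z$'s and $\partial$'s swapped and $P$'s replaced by $Q$'s.

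For the first identity, I would start from Proposition \ref{Prop1.3} applied to the test function $[\psi]_I$:
\begin{equation*}
Z_{k_1}\cdots Z_{k_r}[\psi]_I = \sum_{s=1}^r \sum_{(U_1,\ldots,U_s)\in\mathcal U^r_s}\sum_{m_1\in\mathfrak I(U_1)}\cdots\sum_{m_s\in\mathfrak I(U_s)}\partial_{m_1}\cdots\partial_{m_s}\big[P^{m_1}_{U_1}\cdots P^{m_s}_{U_s}\,[\psi]_I\big].
\end{equation*}
The heart of the argument is then to move each factor $P^{m_l}_{U_l}$ past the dilation operator using (\ref{Eqn2.20poi}), which for a single homogeneous polynomial $P^{m_l}_{U_l}\in\mathcal H_{d_{m_l}-d_{U_l}}$ gives $P^{m_l}_{U_l}(\x)[\psi]_I(\x) = 2^{i_{m_l}(d_{m_l}-d_{U_l})}[P^{m_l}_{U_l,I}\psi]_I(\x)$. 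Applying this successively to the product $P^{m_1}_{U_1}\cdots P^{m_s}_{U_s}$ (noting that the product again lies in a space $\mathcal H$ and Proposition \ref{Prop1.7} can be applied to each factor in turn, with the prior factors absorbed into $\psi$) produces the scalar $\prod_{l=1}^s 2^{i_{m_l}(d_{m_l}-d_{U_l})}$ times $[P^{m_1}_{U_1,I}\cdots P^{m_s}_{U_s,I}\psi]_I$. Next I would apply the first identity of Proposition \ref{Prop1.6}, namely $2^{+[\![\alpha\cdot I]\!]}\partial^\alpha[\psi]_I = [\partial^\alpha\psi]_I$, to convert each plain derivative $\partial_{m_l}$ acting on the dilate into the renormalized operator $2^{d_{m_l}i_{m_l}}\partial_{m_l}$, contributing a factor $\prod_{l=1}^s 2^{-d_{m_l}i_{m_l}}$. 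Finally, to match the left-hand side I would multiply through by $\prod_{j=1}^r 2^{d_{k_j}i_{k_j}}$, and then the bookkeeping step is to check that the net exponent of $2$ is exactly $-\sum_{l=1}^s\sum_{t\in U_l}d_{k_t}(i_{m_l}-i_{k_t})$: indeed $\sum_j d_{k_j}i_{k_j} = \sum_l\sum_{t\in U_l}d_{k_t}i_{k_t}$ since $\{U_l\}$ partitions $\{1,\ldots,r\}$, and $d_{U_l}=\sum_{t\in U_l}d_{k_t}$, so $\sum_l i_{m_l}(d_{m_l}-d_{U_l}) - \sum_l d_{m_l}i_{m_l} + \sum_j d_{k_j}i_{k_j} = -\sum_l i_{m_l}d_{U_l} + \sum_l\sum_{t\in U_l}d_{k_t}i_{k_t} = -\sum_l\sum_{t\in U_l}d_{k_t}(i_{m_l}-i_{k_t})$, as claimed. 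The normalization assertion for $P^{m_1}_{U_1,I}\cdots P^{m_s}_{U_s,I}\psi$ follows from the normalization clause in Proposition \ref{Prop1.7} (applied repeatedly) together with the fact that finite products of normalized bump functions are normalized; the case $s=r$, where all $U_\ell=\{k_\ell\}$ and the $P$'s are $\equiv 1$, is the special case of Proposition \ref{Prop1.3} already noted there.

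The main obstacle — really the only place requiring care rather than routine substitution — is the exponent bookkeeping and, relatedly, the legitimacy of applying Proposition \ref{Prop1.7} to a \emph{product} $P^{m_1}_{U_1}\cdots P^{m_s}_{U_s}$ rather than a single homogeneous polynomial. One must either invoke part (\ref{E9}) of Proposition \ref{Prop2.1mn} to see the product is again in some $\mathcal H_d$ and apply Proposition \ref{Prop1.7} once with a single exponent $i_l$ (here one needs the $m_l$ to all be distinct, or more carefully one peels off the factors one at a time, each time absorbing the already-rescaled factors into the argument $\psi$), checking at each peel that the relevant homogeneous degree is strictly less than $d_{m_l}$ so the hypothesis $d<d_{l}$ of Proposition \ref{Prop1.7} is met — and this holds because when $U_l$ has two or more elements, $m_l\in\mathfrak I(U_l)$ forces $d_{m_l}>\sup_{t\in U_l}d_{k_t}\ge d_{m_{l'}}-d_{U_{l'}}$ type comparisons, as noted after Definition \ref{disjoint}, while for singleton $U_l$ the polynomial is $1$ and nothing is needed. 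Since all the constituent facts are already in place in the excerpt, I expect the proof to be short: set up the two product formulas, substitute, rescale factor-by-factor, collect exponents, and verify the normalization — with the note that the second identity is obtained from the first by the $X,Y\leftrightarrow\partial$ and $P\leftrightarrow Q$ symmetry, exactly as the paper does for Proposition \ref{Prop1.3w}.
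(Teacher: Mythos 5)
Your proposal is correct and follows essentially the same route as the paper's proof: apply the product formula of Proposition \ref{Prop1.3} (resp.\ \ref{Prop1.3w}) to $[\psi]_{I}$, rescale each polynomial factor one at a time via Proposition \ref{Prop1.7} as recorded in equation (\ref{Eqn2.20poi}), factor out the dilation scalars, and collect the powers of $2$, with the companion identity obtained by the same symmetry. Your exponent bookkeeping $\sum_{\ell}d_{k_{\ell}}i_{k_{\ell}}-\sum_{\ell}i_{m_{\ell}}d_{U_{\ell}}=-\sum_{\ell}\sum_{t\in U_{\ell}}d_{k_{t}}(i_{m_{\ell}}-i_{k_{t}})$ is exactly the computation the paper carries out (partly in the remark following the proposition), and your extra care about the hypothesis $d<d_{l}$ when peeling off the polynomial factors is sound, since $d_{m_{\ell}}-d_{U_{\ell}}<d_{m_{\ell}}$ always holds.
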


\begin{proof}
Using Proposition \ref{Prop1.3}, we have
\begin{align*}
&(2^{d_{k_{1}}i_{k_{1}}}\partial_{k_{1}}) \cdots (2^{d_{k_{r}}i_{k_{r}}}\partial_{k_{r}})[\psi]_{I}\\
&=
2^{\sum\limits_{\ell=1}^{r}d_{k_{\ell}}i_{k_{\ell}}}
\sum_{s=1}^{r}\,\,\sum_{\mathcal U\in \mathcal U^{r}_{s}}
\sum_{m_{1}\in \mathfrak I(U_{1})}\cdots \sum_{m_{s}\in \mathfrak I(U_{s})}Z_{m_{1}}\cdots Z_{m_{s}}\big[Q^{m_{1}}_{U_{1}}\cdots Q^{m_{s}}_{U_{s}}[\psi]_{I}\big]\\
&=
2^{\sum\limits_{\ell=1}^{r}d_{k_{\ell}}i_{k_{\ell}}}
\sum_{s=1}^{r}\,\,\sum_{\mathcal U\in \mathcal U^{r}_{s}}
\sum_{m_{1}\in \mathfrak I(U_{1})}\cdots \sum_{m_{s}\in \mathfrak I(U_{s})}
2^{\sum\limits_{\ell=1}^{s}i_{m_{\ell}}(d_{m_{\ell}}-d_{U_{\ell}})}\,Z_{m_{1}}\cdots Z_{m_{s}}\big[Q^{m_{1}}_{U_{1},I}\cdots Q^{m_{s}}_{U_{s},I}\,\psi\big]_{I}\\
&=
\sum_{s=1}^{r}\,\,\sum_{\mathcal U\in \mathcal U^{r}_{s}}
\sum_{m_{1}\in \mathfrak I(U_{1})}\cdots \sum_{m_{s}\in \mathfrak I(U_{s})}
2^{\sum\limits_{\ell=1}^{r}d_{k_{\ell}}i_{k_{\ell}}-\sum\limits_{\ell=1}^{s}i_{m_{\ell}}d_{U_{\ell}}}\\
&\qquad\qquad\qquad\qquad\qquad\qquad\qquad\qquad(2^{d_{m_{1}}i_{m_{1}}}Z_{m_{1}})\cdots (2^{d_{m_{s}}i_{m_{s}}}Z_{m_{s}})\big[Q^{m_{1}}_{U_{1},I}\cdots Q^{m_{s}}_{U_{s},I}\,\psi\big]_{I}.
\end{align*}
This completes the proof of the first identity. The second identity is established in the same way.
\end{proof}

In Proposition \ref{Prop2.10jk}, we can rewrite the exponent  in the power of $2$ as follows. Having chosen $\mathcal U = (U_{1}, \ldots, U_{s}) \in \mathcal U^{r}_{s}$ , there is a unique mapping $\sigma=\sigma_{\mathcal U}:\{1, \ldots, r\}\to \{1, \ldots ,s\}$ so that $\ell \in U_{\sigma(\ell)}$ for $1 \leq \ell \leq r$. Then
\begin{align*}
\sum_{\ell=1}^{r}d_{k_{\ell}}i_{k_{\ell}}-\sum_{\ell=1}^{s}i_{m_{\ell}}d_{U_{\ell}}
=
\sum_{\ell=1}^{r}d_{k_{\ell}}i_{k_{\ell}} - \sum_{\ell=1}^{r}i_{m_{\sigma(\ell)}}d_{k_{\ell}}
= 
-\sum_{\ell=1}^{r}d_{k_{\ell}}(i_{m_{\sigma(\ell)}}-i_{k_{\ell}}).
\end{align*}
Since $\ell \in U_{\sigma(\ell)}$ and $m_{\sigma(\ell)}\in \mathfrak I(U_{\sigma(\ell)})$, it follows that $d_{m_{\sigma(\ell)}}\geq d_{U_{\sigma(\ell)}}\geq d_{k_{\ell}}$, with equality only possible if $U_{\sigma(\ell)}= \{\sigma(\ell)\}$. Thus if $I\in E_{N}$, it follows that $i_{m_{\sigma(\ell)}}\geq i_{k_{\ell}}$, in which case
\begin{align*}
2^{\sum_{\ell=1}^{r}d_{k_{\ell}}i_{k_{\ell}}-\sum_{\ell=1}^{s}i_{m_{\ell}}d_{U_{\ell}}}\leq 2^{-\epsilon\sum_{\ell=1}^{r}(i_{m_{\sigma(\ell)}}-i_{k_{\ell}})}
\end{align*}
where we can take $\epsilon = d_{1}>0$.

\smallskip

We can now recast the identities in Proposition \ref{Prop2.10jk} in a way which, although losing  some information, makes them more useful and easier to work with when dealing with flag kernels

\begin{corollary}\label{Prop2.15jkl}
Fix a decomposition $\R^{N}= \R^{a_{1}}\oplus \cdots \oplus \R^{a_{n}}$.
Let $k_{1}, \ldots, k_{r}\in \{1, \ldots, N\}$ be a set of $r$ integers, possibly with repetitions, with $k_{\ell}\in J_{\pi(\ell)}$.\footnote{Recall from page \pageref{DefOfPi} that $\pi:\{1, \ldots, N\}\to \{1, \ldots, n\}$; for any coordinate $x_{j}$, $1 \leq j \leq N$, then $x_{j}$ is a coordinate in the factor $\R^{a_{\pi(j)}}$, and $J_{\pi(j)}$ is the set of indices of all the coordinates in $\R^{a_{\pi(j)}}$.} For $1 \leq \ell \leq r$, let $Z_{k_{\ell}}$ denote either $X_{k_{\ell}}$ or $Y_{k_{\ell}}$. Let $I\in \mathcal E_{n}$, and let $\psi \in \mathcal S(\R^{N})$ or $\mathcal C^{\infty}_{0}(\R^{N})$. 

\begin{enumerate}[{\rm(1)}]
\smallskip
\item  \label{Prop2.15jkl1} The function $(2^{d_{k_{1}}i_{\pi(1)}}Z_{k_{1}}) \cdots (2^{d_{k_{r}}i_{\pi(r)}}Z_{k_{r}})[\psi]_{I}$ can be written as a finite sum of terms of the following form. Decompose the set $\{1, \ldots, r\}$ into two disjoint complementary subsets $A$ and $B$ with $\pi(j) \neq n$ for any $j \in A$ and $B \neq \emptyset$. Let $B= \{\ell_{1}, \ldots, \ell_{s}\}$, and choose integers $\bar m=\{m_{1}, \ldots, m_{s}\}$ so that  each  $m_{t}\in J_{\ell_{t}}$. Then a typical term in the expansion of $(2^{d_{k_{1}}i_{\pi(1)}}Z_{k_{1}}) \cdots (2^{d_{k_{r}}i_{\pi(r)}}Z_{k_{r}})[\psi]_{I}$ is
\begin{align*}
2^{-\epsilon\sum_{j\in A}(i_{{\pi(j)}+1}-i_{{\pi(j)}})}\,(2^{d_{m_{1}}i_{\pi(1)}}\partial_{m_{1}})\cdots (2^{d_{m_{s}}i_{\pi(s)}}\partial_{m_{s}})[\psi_{A,B,\bar m}]_{I}
\end{align*}
where $\psi_{A,B,\bar m}$ is normalized relative to $\psi$. 

\smallskip
\item  \label{Prop2.15jkl2} The function $(2^{d_{k_{1}}i_{\pi(1)}}\partial_{k_{1}}) \cdots (2^{d_{k_{r}}i_{\pi(r)}}\partial_{k_{r}})[\psi]_{I}$ can be written as a finite sum of terms of the following form. Decompose the set $\{1, \ldots, r\}$ into two disjoint complementary subsets $A$ and $B$ with $\pi(j) \neq n$ for any $j \in A$ and $B \neq \emptyset$. Let $B= \{\ell_{1}, \ldots, \ell_{s}\}$, and choose integers $\bar m=\{m_{1}, \ldots, m_{s}\}$ so that  each  $m_{t}\in J_{\ell_{t}}$. Then a typical term in the expansion of $(2^{d_{k_{1}}i_{\pi(1)}}\partial_{k_{1}}) \cdots (2^{d_{k_{r}}i_{\pi(r)}}\partial_{k_{r}})[\psi]_{I}$ is
\begin{align*}
2^{-\epsilon\sum_{j\in A}(i_{{\pi(j)}+1}-i_{{\pi(j)}})}\,(2^{d_{m_{1}}i_{\pi(1)}}Z_{m_{1}})\cdots (2^{d_{m_{s}}i_{\pi(s)}}Z_{m_{s}})[\psi_{A,B,\bar m}]_{I}
\end{align*}
where $\psi_{A,B,\bar m}$ is normalized relative to $\psi$. 
\end{enumerate}
\end{corollary}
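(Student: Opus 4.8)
The plan is to obtain both identities by specializing Proposition \ref{Prop2.10jk} to the image $p_{\mathfrak A}(I)\in E_N$ of $I\in\mathcal E_n$ under the map (\ref{E2.18asd}), and then collecting terms. For a coordinate $m\in\{1,\dots,N\}$ belonging to the factor $\R^{a_{\pi(m)}}$, the $E_N$-entry of $p_{\mathfrak A}(I)$ indexed by $m$ is precisely $i_{\pi(m)}$; hence the scalars $2^{d_m i_m}$ appearing in Proposition \ref{Prop2.10jk} turn into $2^{d_m i_{\pi(m)}}$, which is the normalization used in the Corollary. So from the first identity of Proposition \ref{Prop2.10jk} the left side of part (\ref{Prop2.15jkl1}) becomes a finite sum, over $1\le s\le r$, over partitions $\mathcal U=(U_1,\dots,U_s)\in\mathcal U^r_s$, and over tuples $m_t\in\mathfrak I(U_t)$, of expressions
\[
2^{-\sum_{\ell=1}^r d_{k_\ell}\,(i_{\pi(m_{\sigma(\ell)})}-i_{\pi(k_\ell)})}\;(2^{d_{m_1}i_{\pi(m_1)}}\partial_{m_1})\cdots(2^{d_{m_s}i_{\pi(m_s)}}\partial_{m_s})\big[P^{m_1}_{U_1,I}\cdots P^{m_s}_{U_s,I}\psi\big]_I,
\]
where $\sigma=\sigma_{\mathcal U}$ is as in the discussion following Proposition \ref{Prop2.10jk}. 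The last assertion of that Proposition already gives that $P^{m_1}_{U_1,I}\cdots P^{m_s}_{U_s,I}\psi$ is normalized relative to $\psi$, uniformly in $I$, so it may be renamed $\psi_{A,B,\bar m}$ once $A$, $B$, $\bar m$ are fixed.

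The substance is the combinatorial repackaging of each such term. I would let $B$ consist of one representative $\ell_t$ of each block $U_t$, chosen to lie in the \emph{latest} factor, i.e. with $\pi(k_{\ell_t})=\max_{\ell\in U_t}\pi(k_\ell)$, and set $A=\{1,\dots,r\}\setminus B$, with $\bar m=\{m_1,\dots,m_s\}$ the associated surviving coordinates; then $|B|=s\ge 1$, so $B\neq\emptyset$, and the $s$ one-scale derivatives displayed above are exactly the ones in the Corollary's form. In the exponent one discards the $\ell\in B$ summands (each is $\ge 0$, since $d_{m_{\sigma(\ell)}}\ge d_{U_{\sigma(\ell)}}\ge d_{k_\ell}$ forces $i_{\pi(m_{\sigma(\ell)})}\ge i_{\pi(k_\ell)}$ for $I\in\mathcal E_n$). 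For $\ell\in A$, $\ell$ lies in some $U_t$ with $|U_t|\ge 2$ and $\ell\neq\ell_t$, and Definition \ref{disjoint} gives $d_{m_t}>\sup_{u\in U_t}d_{k_u}\ge d_{k_\ell}$; since $d_{U_t}$ is a \emph{sum}, $m_t$ lands strictly past every factor meeting $U_t$, so $\pi(m_t)>\pi(k_\ell)$, whence $\pi(\ell)=\pi(k_\ell)\neq n$ and, by monotonicity of $I\in\mathcal E_n$, $i_{\pi(m_t)}-i_{\pi(k_\ell)}\ge i_{\pi(\ell)+1}-i_{\pi(\ell)}\ge 0$. Using $d_{k_\ell}\ge d_1=:\epsilon$ on the retained summands bounds the power of $2$ by $2^{-\epsilon\sum_{\ell\in A}(i_{\pi(\ell)+1}-i_{\pi(\ell)})}$, which is the asserted form.

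Part (\ref{Prop2.15jkl2}) is proved identically, starting from the second identity of Proposition \ref{Prop2.10jk} (which rests on Proposition \ref{Prop1.3w} rather than Proposition \ref{Prop1.3}): this only interchanges the roles of $\partial_{m_t}$ and $Z_{m_t}$, and of the polynomials $P$ and $Q$, leaving all exponents and normalizations untouched. I expect the genuinely delicate point to be the middle step — organizing the partition-and-representative bookkeeping so that \emph{every} index placed in $A$ is charged a full consecutive gap $i_{\pi(\ell)+1}-i_{\pi(\ell)}$. This is what dictates choosing representatives in the latest factor of each $U_t$ and what makes it essential that $d_{U_t}=\sum_{u\in U_t}d_{k_u}$, so that for $|U_t|\ge 2$ the coordinate $m_t\in\mathfrak I(U_t)$ is forced into a strictly later factor (immediate when the dilations are isotropic on each factor, and requiring a little more care in general). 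The remaining ingredients — the reindexing $I\mapsto p_{\mathfrak A}(I)$, the identity $2^{d_m i_m}=2^{d_m i_{\pi(m)}}$, the nonnegativity of the discarded exponents, and the collapse of the polynomial product into one normalized bump $\psi_{A,B,\bar m}$ — are routine.
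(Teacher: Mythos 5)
Your proof takes exactly the paper's route: the paper gives no separate argument for this Corollary beyond the exponent computation in the paragraph preceding it, which is precisely your rewriting of the exponent as $-\sum_{\ell}d_{k_\ell}(i_{\pi(m_{\sigma(\ell)})}-i_{\pi(k_\ell)})$ with $\epsilon=d_1$, followed by the bookkeeping that assigns the surviving derivatives to $B$ and charges each absorbed index in $A$ a full consecutive gap. The one step you isolate as delicate is indeed the whole content, and your treatment of it is correct under the convention that the exponents $d_j$ are constant on each factor $\R^{a_l}$ (the paper's baseline setting; cf.\ the introduction, where one exponent $d_l$ is attached to each $\R^{a_l}$): then $|U_t|\geq 2$ forces $d_{m_t}\geq d_{U_t}>d_j$ for every $j$ in the last factor met by $U_t$, so $m_t$ genuinely exits that factor and $i_{\pi(m_t)}-i_{\pi(k_\ell)}\geq i_{\pi(\ell)+1}-i_{\pi(\ell)}$ for every non-representative $\ell$. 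I would only caution that your parenthetical ``requiring a little more care in general'' understates the situation: if a factor carries non-isotropic exponents the needed containment can simply fail. For instance, with $\R^{3}=\R^{2}\oplus\R$, $d=(1,2,3)$ and $Z_{1}=\partial_{1}+x_{1}\partial_{2}$, one has $(2^{d_{1}i_{1}}Z_{1})(2^{d_{1}i_{1}}Z_{1})[\psi]_{I}=[\partial_{1}^{2}\psi]_{I}+[\partial_{2}\psi]_{I}+\cdots$, and the single-derivative term $[\partial_{2}\psi]_{I}=(2^{d_{2}i_{1}}\partial_{2})[\psi]_{I}$ carries no gain at all even though the corresponding $A$ is nonempty: here $U=\{1,2\}$, $d_{U}=2=d_{2}$, and $m=2\in\mathfrak I(U)$ stays inside the first factor. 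So the Corollary itself, not just the proof, requires the per-factor isotropy; granting that, your argument is complete and coincides with the paper's.
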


\begin{remarks}\label{Remarks4.9}\quad

\smallskip

\begin{enumerate}[{\rm(a)}]

\smallskip

\item {\rm The essential point of part (\ref{Prop2.15jkl1}) in the Corollary is that, when replacing  the operator 
\begin{equation*}
(2^{d_{k_{1}}i_{\pi(1)}}Z_{k_{1}}) \cdots (2^{d_{k_{r}}i_{\pi(r)}}Z_{k_{r}})[\psi]_{I}
\end{equation*}
with a sum of terms of the form
\begin{equation*}
2^{-\epsilon\sum_{j\in A}(i_{{\pi(j)}+1}-i_{{\pi(j)}})}\,(2^{d_{m_{1}}i_{\pi(1)}}\partial_{m_{1}})\cdots (2^{d_{m_{s}}i_{\pi(s)}}\partial_{m_{s}})[\psi_{A,B,\bar m}]_{I},
\end{equation*}
either the factor $(2^{d_{k_{\ell}}i_{\pi(\ell)}}Z_{k_{\ell}})$ is replaced by a term $(2^{d_{m_{\ell}}i_{\pi(\ell)}}\partial_{m_{\ell}})$ where the coordinate $x_{m_{\ell}}$ belongs to the \emph{same} subspace as $x_{k_{\ell}}$ and hence has the same dilation $i_{\pi(\ell)}$, or it is replaced by the gain $2^{-\epsilon(i_{\pi(\ell)+1}-i_{\pi(\ell)})}$. Part (\ref{Prop2.15jkl2}) is the same assertion with the roles of the vector fields and Euclidean differentiation interchanged.}

\smallskip

\item {\rm One term in the expansion of $(2^{d_{k_{1}}i_{\pi(1)}}Z_{k_{1}}) \cdots (2^{d_{k_{r}}i_{\pi(r)}}Z_{k_{r}})[\psi]_{I}$ in part (\ref{Prop2.15jkl1}) arises by letting $A = \emptyset$ so that $B=\{1, \ldots, r\}$, and then choosing $m_{r}=k_{r}$. We have seen that in this case the function $\psi_{A,B,\bar m}= \psi$, so we get the term $$(2^{d_{k_{1}}i_{\pi(1)}}\partial_{k_{1}}) \cdots (2^{d_{k_{r}}i_{\pi(r)}}\partial_{k_{r}})[\psi]_{I}.$$ \emph{Every other term} in the expansion then involves either a gain $2^{-\epsilon(i_{\pi(\ell)+1}-i_{\pi(\ell)})}$ or the replacement of a variable $x_{k_{\ell}}$ by a different variable $x_{m_{\ell}}$ with $d_{m_{\ell}}> d_{k_{\ell}}$. We shall say that any term of the form
\begin{equation*}
2^{-\epsilon\sum_{j\in A}(i_{{\pi(j)}+1}-i_{{\pi(j)}})}\,(2^{d_{m_{1}}i_{\pi(1)}}\partial_{m_{1}})\cdots (2^{d_{m_{s}}i_{\pi(s)}}\partial_{m_{s}})[\psi_{A,B,\bar m}]_{I},
\end{equation*}
where either $A\neq \emptyset$ or some $d_{m_{\ell}}> d_{k_{\ell}}$ is an \emph{allowable error}. Thus the difference
\begin{align*}
(2^{d_{k_{1}}i_{\pi(1)}}Z_{k_{1}}) \cdots (2^{d_{k_{r}}i_{\pi(r)}}Z_{k_{r}})[\psi]_{I}-(2^{d_{k_{1}}i_{\pi(1)}}\partial_{k_{1}}) \cdots (2^{d_{k_{r}}i_{\pi(r)}}\partial_{k_{r}})[\psi]_{I}
\end{align*}
is a sum of allowable errors.}

\smallskip

\item {\rm Since Euclidean derivatives commute, it follows that if $\sigma$ is any permutation of the set $\{1, \ldots, r\}$, then the difference
\begin{align*}
(2^{d_{k_{1}}i_{\pi(1)}}Z_{k_{1}}) \cdots (2^{d_{k_{r}}i_{\pi(r)}}Z_{k_{r}})[\psi]_{I}-(2^{d_{k_{1}}i_{\pi(1)}}Z_{k_{\sigma(1)}}) \cdots (2^{d_{k_{r}}i_{\pi(r)}}Z_{k_{\sigma(r)}})[\psi]_{I}
\end{align*}
is a sum of allowable errors.}

\end{enumerate}
\end{remarks}

\section{Cancellation}

A function is often said to have \emph{cancellation} if its average or integral is zero. For our purposes, we shall need a more refined notion involving integrals in some subset of variables. Let $J=\{j_{1}, \ldots, j_{s}\}\subseteq \{1, \ldots, N\}$. If $\psi\in\Sc(\R^{N})$, write
\begin{equation}
\int \psi(\x)\,d\x_{J} = \int_{\R^{s}}\psi(x_{1}, \ldots, x_{N})\,dx_{j_{1}}\cdots dx_{j_{s}}.
\end{equation}
Note that $\int \psi(\x)\,d\x_{J}$ is then a function of the variables $x_{k}$ for which $k \notin J$. We say that a function $\psi$ has \emph{cancellation in the variables $\{x_{j_{1}}, \ldots, x_{j_{s}}\}$} if $\int_{\R^{s}}\psi(\x)\,d\x_{J}\equiv 0$ where $J=\{j_{1}, \ldots, j_{s}\}$.  Later in Section \ref{StrongWeak} we shall give additional definitions of `strong cancellation' and `weak cancellation' for a function $\psi$.

\subsection{Cancellation and the existence of primitives}\quad

\smallskip

We begin by showing that cancellation in certain collections of variables is equivalent to the existence of appropriate primitives.

\begin{lemma}\label{Lemma1.12}
Let $\psi\in\Sc(\R^{N})$, and let $J_{k}\subseteq \{1, \ldots, n\}$ be non-empty subsets for $1 \leq k \leq r$. If the sets $\{J_{k}\}$ are mutually disjoint, then the following two statements are equivalent:
\begin{enumerate}[{\rm(a)}]
\smallskip
\item \label{1.12a} For $1\leq k \leq r$, $\displaystyle \int \psi(\x)\,d\x_{J_{k}}=0$.

\smallskip

\item \label{1.12b} There are functions $\psi_{j_{1}, \ldots, j_{r}}\in \Sc(\R^{N})$, normalized with respect to $\psi$, such that
\begin{equation*}
\psi(\x) = \sum_{j_{1}\in J_{1}}\cdots \sum_{j_{r}\in J_{r}}\partial_{{j_{1}}}\cdots \partial_{{j_{r}}}\psi_{j_{1}, \ldots, j_{r}}(\x).
\end{equation*}
\end{enumerate}
Moreover, if the function $\psi\in \Cs^{\infty}_{0}(\R^{N})$, then we can choose the functions $\psi_{j_{1}, \ldots, j_{r}}\in \Cs^{\infty}_{0}(\R^{N})$ normalized with respect to $\psi$.
\end{lemma}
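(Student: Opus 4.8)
The plan is to prove the two implications separately, treating $(\ref{1.12b}) \Rightarrow (\ref{1.12a})$ as the easy direction and $(\ref{1.12a}) \Rightarrow (\ref{1.12b})$ as the substantive one, and to reduce everything to the case $r=1$ by an induction on $r$. The implication $(\ref{1.12b}) \Rightarrow (\ref{1.12a})$ is immediate: if $\psi = \sum_{j_1\in J_1}\cdots\sum_{j_r\in J_r}\partial_{j_1}\cdots\partial_{j_r}\psi_{j_1,\ldots,j_r}$, then for each fixed $k$ the integral $\int \psi\,d\x_{J_k}$ contains, in every summand, at least one derivative $\partial_{j_k}$ with $j_k\in J_k$, and integrating that derivative over the corresponding variable $x_{j_k}\in\R$ (all functions being Schwartz, hence vanishing at infinity) gives zero. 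So the work is in the forward direction.

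\textbf{The case $r=1$.} Here $J_1 = \{j_1,\ldots,j_s\}$ and we assume $\int\psi(\x)\,d\x_{J_1}\equiv 0$. I would build the primitive one variable at a time. Write $J_1 = \{\nu_1 < \nu_2 < \cdots < \nu_s\}$ and, for $0\le t\le s$, set $\Phi_t(\x) = -\int_{x_{\nu_t}}^{\infty}\!\!\cdots$ — more precisely define $\psi^{(0)} = \psi$ and, having produced $\psi^{(t-1)}$, set
\begin{equation*}
\psi^{(t)}(\x) \;=\; -\int_{x_{\nu_t}}^{\infty} \psi^{(t-1)}(x_1,\ldots,x_{\nu_t-1},u,x_{\nu_t+1},\ldots,x_N)\,du .
\end{equation*}
Then $\partial_{\nu_t}\psi^{(t)} = \psi^{(t-1)}$. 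The issue is that $\psi^{(t)}$ need not be Schwartz: $\partial_{\nu_t}$-antidifferentiation only produces decay in $x_{\nu_t}$ if the integrand already has vanishing total integral in that variable. This is where the hypothesis is used, but it is used \emph{globally} for $J_1$, not variable-by-variable, so a naive iterated primitive fails. The standard fix is a partition-of-unity / telescoping device: choose $\chi\in\Cs^\infty_0(\R)$ with $\int_\R\chi = 1$, and write, for a single variable $j\in J_1$,
\begin{equation*}
\psi \;=\; \partial_{j}\Big(\!-\!\!\int_{x_j}^\infty \psi\,du\Big) \;+\; \chi(x_j)\otimes\!\Big(\int_\R \psi\,dx_j\Big),
\end{equation*}
where the first term is Schwartz because the bracketed antiderivative decays in $x_j$ once we subtract off the $\chi$-times-marginal piece appropriately; iterating this identity over $j=\nu_1,\ldots,\nu_s$ produces $2^s$ terms, and the one term carrying \emph{all} the factors $\chi(x_{\nu_1})\cdots\chi(x_{\nu_s})$ is exactly $\chi(x_{\nu_1})\cdots\chi(x_{\nu_s})\otimes\int\psi\,d\x_{J_1}$, which vanishes by hypothesis; every other term has a genuine antiderivative in at least one $x_{\nu_t}$ applied to a function with vanishing marginal in that variable, hence is $\partial_{\nu_t}$ of a Schwartz function. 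Collecting terms and relabeling gives $\psi = \sum_{j\in J_1}\partial_j\psi_j$ with $\psi_j$ Schwartz. Normalization: each $\psi_j$ is obtained from $\psi$ by finitely many operations — multiplication by fixed functions $\chi$, integration in some variables against $\chi$, and one antidifferentiation — and each such operation is bounded on the Schwartz seminorms $\|\cdot\|_{[M]}$ up to a shift $M\mapsto M+p$; so $\|\psi_j\|_{[M]}\le C_M\|\psi\|_{[M+p_M]}$, i.e. $\psi_j$ is normalized in terms of $\psi$ in the sense of Definition \ref{Def1.5}(2). If $\psi\in\Cs^\infty_0(\R^N)$, choose $\chi$ supported in a fixed interval; then the antiderivative $-\int_{x_j}^\infty(\cdots)$ of a compactly supported function with vanishing $x_j$-marginal is again compactly supported (the support in $x_j$ is controlled by the support of $\psi$), and all operations preserve compact support with the size of the support dilated by a fixed constant — so the $\psi_j$ lie in $\Cs^\infty_0(\R^N)$ and are normalized there.

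\textbf{The induction on $r$.} Suppose the lemma holds for $r-1$ disjoint sets. Given disjoint $J_1,\ldots,J_r$ with $\int\psi\,d\x_{J_k}=0$ for all $k$, apply the $r=1$ case to the set $J_1$: this gives $\psi = \sum_{j_1\in J_1}\partial_{j_1}\psi_{j_1}$ with $\psi_{j_1}\in\Sc(\R^N)$ (or $\Cs^\infty_0$) normalized in terms of $\psi$. The key point is that the remaining cancellation conditions are inherited: for $2\le k\le r$, since $J_k$ is disjoint from $J_1$, the variable $x_{j_1}$ is \emph{not} among $\{x_j : j\in J_k\}$, so
\begin{equation*}
\int \psi_{j_1}(\x)\,d\x_{J_k} \;=\; \text{(a } \partial_{j_1}\text{-antiderivative, in the }x_{j_1}\text{-variable, of }\textstyle\int\psi\,d\x_{J_k}\,)\;=\;0,
\end{equation*}
because $\partial_{j_1}$ and $\int\,d\x_{J_k}$ commute and $\int\psi\,d\x_{J_k}\equiv 0$. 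Thus each $\psi_{j_1}$ satisfies the hypotheses of the lemma for the $r-1$ disjoint sets $J_2,\ldots,J_r$, and the inductive hypothesis yields $\psi_{j_1} = \sum_{j_2\in J_2}\cdots\sum_{j_r\in J_r}\partial_{j_2}\cdots\partial_{j_r}\psi_{j_1,j_2,\ldots,j_r}$ with each $\psi_{j_1,\ldots,j_r}$ normalized in terms of $\psi_{j_1}$, hence (composing the two normalization estimates) in terms of $\psi$. Substituting back gives the desired representation $\psi = \sum_{j_1\in J_1}\cdots\sum_{j_r\in J_r}\partial_{j_1}\cdots\partial_{j_r}\psi_{j_1,\ldots,j_r}$, and the $\Cs^\infty_0$ statement follows the same way since every step preserves compact support (with the support radius inflated by a fixed multiplicative constant depending only on $r$ and the $\chi$'s).

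\textbf{Main obstacle.} The delicate point is the $r=1$ case, specifically arranging that the iterated antiderivative is genuinely Schwartz (resp.\ compactly supported): one cannot simply antidifferentiate $s$ times, because only the \emph{joint} marginal $\int\psi\,d\x_{J_1}$ is assumed to vanish, not the intermediate ones. The telescoping identity with the auxiliary function $\chi$ is what converts the single global cancellation hypothesis into $2^s$ terms each of which is manifestly an honest derivative of a good function; verifying that the ``all-$\chi$'' term is precisely $\pm(\prod\chi)\otimes\int\psi\,d\x_{J_1}$ and hence zero is the crux. Everything else — the induction on $r$ and the bookkeeping of normalization constants — is routine.
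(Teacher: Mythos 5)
Your proof is correct and follows essentially the same route as the paper: the paper's Proposition \ref{Prop1.13} is exactly your $r=1$ step (organized as a telescoping sum in the $J_1$-variables rather than your binomial expansion of $\prod(L_j+M_j)$, but it is the same device, one the paper reuses verbatim in the proof of Proposition \ref{Prop5.7ijn}), and the induction on $r$ via the observation that operations in the $J_1$-variables preserve the vanishing of the marginals over the disjoint sets $J_2,\ldots,J_r$ is identical. The only blemish is the displayed identity in your $r=1$ discussion, which as written reads $\psi=\psi+\chi\otimes\int\psi\,dx_j$; the antiderivative must be taken of $\psi-\chi(x_j)\int\psi\,dx_j$ rather than of $\psi$, as your surrounding prose correctly indicates.
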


It follows easily from the fundamental theorem of calculus that (\ref{1.12b}) implies (\ref{1.12a}). The main content of the Lemma is thus the opposite implication. This will follow by induction on $r$ from the following assertion.

\begin{proposition}\label{Prop1.13}
Let $\psi\in \Cs^{\infty}_{0}(\R^{N})$. Suppose that $J_{1}, J_{2}\subset \{1, \ldots, N\}$ are non-empty and disjoint. If \,\, $\int\psi(\x)\,d\x_{J_{1}}=\int\psi(\x)\,d\x_{J_{2}}=0$, then for each $k\in J_{1}$ there is a function $\psi_{k}\in \Cs^{\infty}_{0}(\R^{N})$, normalized relative to $\psi$, so that: 
\begin{enumerate}[{\rm(i)}]
\item we can write $\psi = \sum_{k\in J_{1}}\partial_{{k}}\psi_{k}$;
\item for each $k\in J_{1}$ we still have $\int \psi_{k}(\x)\,d\x_{J_{2}}=0$.
\end{enumerate}
If $\psi\in \Sc(\R^{N})$, the same conclusions hold except that  the functions $\psi_{k}\in \Sc(\R^{N})$ and are normalized relative to $\psi$.
\end{proposition}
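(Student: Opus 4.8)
The plan is to prove the key implication $\int \psi\, d\x_{J_1} = \int \psi\, d\x_{J_2} = 0 \implies \psi = \sum_{k \in J_1}\partial_k \psi_k$ with each $\psi_k$ still having cancellation in $\x_{J_2}$, by an explicit construction using fixed one-dimensional ``averaging-and-difference'' operators applied successively along the coordinates in $J_1$. First I would fix, once and for all, a function $\chi \in \Cs^\infty_0(\R)$ with $\int_\R \chi(t)\,dt = 1$, and for a single coordinate direction $x_k$ introduce the operator that decomposes $\psi$ into its ``$x_k$-average part'' $\chi(x_k)\int_\R \psi\, dx_k$ and a remainder, where the remainder, having integral zero in $x_k$, is $\partial_k$ of its $x_k$-antiderivative $\Psi_k(\x) = \int_{-\infty}^{x_k}\big(\psi - \chi(x_k')\smallint\psi\,dx_k'\big)\,dx_k'$, which lies in $\Cs^\infty_0$ (or $\Sc$) and is normalized relative to $\psi$ because $\chi$ is fixed and has compact support.

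The heart of the argument is to iterate this over the coordinates of $J_1 = \{j_1,\dots,j_m\}$ so that the ``leftover average parts'' telescope correctly. Write $\psi = \partial_{j_1}\Psi_{j_1} + \chi(x_{j_1})A_{j_1}\psi$ where $A_{j_1}\psi = \int\psi\,dx_{j_1}$; then apply the same decomposition in the variable $x_{j_2}$ to $A_{j_1}\psi$, and so on. After processing all of $J_1$, the fully-averaged remainder is $\chi(x_{j_1})\cdots\chi(x_{j_m})\int\psi\,d\x_{J_1}$, which vanishes by hypothesis (a) for $J_1$. Collecting the pieces produced at each stage gives $\psi = \sum_{k\in J_1}\partial_k \psi_k$ where $\psi_k$ is built from one antiderivative in $x_k$ and some $\chi$-factors and $A$-averages in the other $J_1$-variables processed before it. Since all operations commute with integration in the $J_2$-variables (as $J_1 \cap J_2 = \emptyset$ and antiderivative/average in an $x_k$, $k\in J_1$, passes through $\int\,d\x_{J_2}$), and since $\int\psi\,d\x_{J_2}=0$, each $\psi_k$ satisfies $\int\psi_k\,d\x_{J_2}=0$ as well; this gives conclusion (ii). Normalization in both (a) the $\Cs^\infty_0$ case — support control comes from the fixed compact support of $\chi$ and the fact that antidifferentiation along $x_k$ of a compactly supported zero-integral function stays compactly supported — and (b) the $\Sc$ case — where one checks the antiderivative of a Schwartz function with zero integral is again Schwartz, with seminorm bounds — is routine bookkeeping.

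Granting Proposition~\ref{Prop1.13}, Lemma~\ref{Lemma1.12} follows by induction on $r$: the case $r=1$ is exactly the single-variable decomposition above (only the cancellation hypothesis for $J_1$ is used). For the inductive step, apply Proposition~\ref{Prop1.13} with the pair $(J_1, J_k)$ for one chosen $k \in \{2,\dots,r\}$ — actually one needs the remainder functions to retain cancellation in \emph{all} of $J_2,\dots,J_r$ simultaneously, so I would strengthen Proposition~\ref{Prop1.13} (or rather use it with $J_2$ replaced successively, noting the construction in $J_1$-variables commutes with integration in every $J_k$, $k\ge 2$, since all these sets are mutually disjoint) to get $\psi = \sum_{j_1\in J_1}\partial_{j_1}\psi_{j_1}$ with each $\psi_{j_1}$ still satisfying $\int\psi_{j_1}\,d\x_{J_k}=0$ for $2\le k\le r$; then apply the inductive hypothesis (with $r-1$ disjoint sets $J_2,\dots,J_r$) to each $\psi_{j_1}$ and combine.

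The main obstacle I anticipate is not the existence of the decomposition — the telescoping construction is clean — but rather verifying the normalization estimates uniformly, particularly in the Schwartz case: one must show that $\Psi_k(\x)=\int_{-\infty}^{x_k}\big(\psi(\x)-\chi(x_k)\int_\R\psi\,dx_k\big)\,dx_k$ has $\Sc(\R^N)$-seminorms controlled by finitely many $\Sc(\R^N)$-seminorms of $\psi$, the delicate point being decay in the $x_k$ direction itself (for $x_k\to+\infty$ one uses $\int_\R(\psi-\chi\smallint\psi)\,dx_k=0$ to rewrite $\Psi_k$ as $-\int_{x_k}^{\infty}(\cdots)\,dx_k$ and then estimate via the tail decay of $\psi$ and $\chi$). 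Tracking how these seminorm losses accumulate through the $m=|J_1|$ successive steps, and through the induction on $r$, requires care but is a finite, mechanical estimate; I would state it as the normalization claim and relegate the routine bounds to a remark.
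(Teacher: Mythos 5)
Your proposal is correct and follows essentially the same route as the paper: the same telescoping decomposition of $\psi$ into pieces $\varphi_j$ (average over the first $j-1$ coordinates of $J_1$ minus average over the first $j$, with $\chi$-factors), the vanishing of the final fully-averaged term by the $J_1$-cancellation, antidifferentiation in $x_j$ of each zero-mean piece via the two representations $\int_{-\infty}^{x_j}$ and $-\int_{x_j}^{\infty}$, and the observation that everything commutes with $\int d\x_{J_2}$. The only small adjustment needed is that, in the $\Cs^{\infty}_{0}$ case, you should rescale $\chi$ to the support of $\psi$ (as the paper does with $\chi_j(t)=a_j^{-1}\chi(a_j^{-1}t)$) so that the support condition in Definition \ref{Def1.5} is met with a universal constant.
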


\begin{proof}
By relabeling the coordinates, we can assume that $J_{1}=\{1, \ldots, k\}$, and that $J_{2}\subseteq\{k+1,\ldots, N\}$.  Suppose that $\psi$ has compact support in the set $B=\{\x\in\R^{N}\,\big\vert\,|x_{j}|<a_{j}\}$. Choose $\chi\in\Cs^{\infty}_{0}(\R)$ with support in $[-1,+1]$ such that $\int_{\R}\chi(s)\,ds = 1$, and put $\chi_{j}(t) = a_{j}^{-1}\chi(a_{j}^{-1}t)$ so that $\chi_{j}$ is support in $[-a_{j},+a_{j}]$ and still has integral equal to $1$. Put
\begin{equation*}
\varphi_{1}(\x) = \psi(\x) - \chi_{1}(x_{1})\,\int_{\R}\psi(s, x_{2}, \ldots, x_{N})\,ds
\end{equation*}
and for $2\leq j \leq k$, put
\begin{align*}
\varphi_{j}(\x) &= \Big[\prod_{l=1}^{j-1}\chi_{l}(x_{l}) \Big]\int_{\R^{j-1}}\psi(s_{1}, \ldots, s_{j-1},x_{j}, \ldots, x_{N})\,ds_{1}\cdots ds_{j-1}
\\
&\qquad \qquad\qquad
-
\Big[\prod_{l=1}^{j}\chi_{l}(x_{l})\Big] \int_{\R^{j}}\psi(s_{1}, \ldots, s_{j},x_{j+1}, \ldots, x_{N})\,ds_{1}\cdots ds_{j}.
\end{align*}
Then the functions $\{\varphi_{j}\}$ have the following properties. First, since  $\int\psi(\x)\,d\x_{J_{1}}=0$,
the second integral in the definition of the last function $\varphi_{k}$ is zero, and hence $\psi(\x) = \sum_{j=1}^{k}\varphi_{j}(\x)$. Next, it is clear that each  $\varphi_{j}$ is supported in the set $B$. Finally, for  $1 \leq j \leq k$, 
$$
\int_{\R}\varphi_{j}(x_{1}, \ldots,x_{j-1},s,x_{j+1}, \ldots, x_{N})\,ds= 0,
$$ 
so if we put 
\begin{equation}\label{E29}
\begin{aligned}
\psi_{j}(\x) &= \int_{-\infty}^{x_{j}}\varphi_{j}(x_{1}, \ldots,x_{j-1},s,x_{j+1}, \ldots, x_{N})\,ds \\
&= -\int_{x_{j}}^{\infty}\varphi_{j}(x_{1}, \ldots,x_{j-1},s,x_{j+1}, \ldots, x_{N})\,ds ,
\end{aligned}
\end{equation}
then $\psi_{j}$ is supported on the set $B$, and $ \varphi_{j}(\x) = \partial_{j}\psi_{j}(\x)$. It is clear that one can estimate the size of the derivatives of the functions $\{\psi_{j}\}$ in terms of the derivatives of $\psi$, so $\psi_{j}$ is normalized in terms of $\psi$. Moreover, since $\int\psi(\x)\,d\x_{J_{2}}=0$, it follows from their definitions that $\int\varphi_{j}(\x)\,d\x_{J_{2}}=0$, and hence  $\int\psi_{j}(\x)\,d\x_{J_{2}}=0$. This completes the proof if $\psi\in \Cs^{\infty}_{0}(\R^{N})$. If $\psi\in \Sc(\R^{N})$, the proof goes the same way. One only has to observe from equation (\ref{E29}) that the functions $\psi_{j}\in\Sc(\R^{N})$. This completes the proof of Proposition \ref{Prop1.13}, and hence Lemma \ref{Lemma1.12} is also established.
\end{proof}

\subsection{Strong and weak cancellation}\label{StrongWeak}\quad

\smallskip
In this section we introduce two kinds of cancellation conditions that can be imposed on functions in $\mathcal S(\R^{N})$ or $\mathcal C^{\infty}_{0}(\R^{N})$. As discussed in  Section \ref{FlagKernelsA}, these concepts are used in the context of the  decomposition $\R^{N}= \R^{a_{1}}\oplus \cdots \oplus\R^{a_{n}}$ given in (\ref{Eqn2.5tyu}) where $a_{1}+ \cdots +a_{n}= N$ and each $a_{j}\geq 1$. Recall that if $\x\in \R^{N}$, we write  $\x= (\x_{1}, \ldots, \x_{n})$ where $\x_{l}= (x_{p_{l}}, x_{p_{l}+1}, \ldots, x_{q_{l}})\in \R^{a_{l}}$. We then let $J_{l}$ denote the set of integers $\{p_{l}, p_{l}+1, \ldots, q_{l}\}$. If $\varphi\in \mathcal S(\R^{N})$, set
\begin{equation}\label{Equation2.20zx}
\int_{\R^{a_{l}}}\varphi(\x_{1}, \ldots, \x_{n})\,d\x_{l}= \int_{\R^{a_{l}}}\varphi(\x_{1}, \ldots, \x_{n})\,dx_{p_{l}}\cdots dx_{q_{l}}.
\end{equation}

\smallskip

\begin{definition}\label{Def1.14}
Fix the decomposition (\ref{Eqn2.5tyu}), and let $\varphi\in \Sc(\R^{N})$. The function $\varphi$ has \emph{strong cancellation} if and only if 
\begin{equation*}
\int_{\R^{a_{l}}}\varphi(\x_{1}, \ldots, \x_{n})\,d\x_{\ell}\equiv 0, \qquad 1 \leq \ell \leq n.
\end{equation*}
That is, $\varphi$ has strong cancellation if and only if it has cancellation in each collection of variables $\{x_{p_{l}}, \ldots, x_{q_{l}}\}$ for $1 \leq l \leq n$.
\end{definition}

\begin{remark} \label{Remark2.18}
{\rm It follows from Proposition \ref{Prop1.13} that  $\varphi$ has strong cancellation if and only if there are functions $\varphi_{j_{1}, \ldots, j_{n}}$ normalized with respect to $\varphi$ so that
\begin{equation*}
\varphi = \sum_{j_{1}\in J_{1}}\cdots \sum_{j_{n}\in J_{n}}\partial_{{j_{1}}}\cdots \partial_{{j_{n}}}\varphi_{j_{1}, \ldots, j_{n}}.
\end{equation*}}
\end{remark}

\smallskip

We now introduce a weaker cancellation condition.

\begin{definition}\label{Def1.14a2}
 Fix the decomposition (\ref{Eqn2.5tyu}) of $\R^{N}$.
Let $\varphi\in \Sc(\R^{N})$ or $\varphi\in \mathcal C^{\infty}_{0}(\R^{N})$, and let  $I=(i_{1}, \ldots i_{n}) \in \mathcal E_{n}$. The function $\varphi$ has \emph{weak cancellation} with parameter $\epsilon >0$ relative to the multi-index $I$  if and only if 
\begin{align*}
\varphi &= \sum_{\substack{A\cup B = \{1, \ldots, n\}\\A=\{\alpha_{1}, \ldots, \alpha_{r}\}\\n\notin B}}\,\Big(\prod_{s\in B}2^{-\epsilon(i_{s+1}-i_{s})} \Big)\,
\sum_{j_{1}\in J_{\alpha_{1}}}\cdots \sum_{j_{r}\in J_{\alpha_{r}}} \big(\partial_{j_{1}}\cdots \partial_{j_{r}}\big)[\varphi_{A,B,j_{1}, \ldots, j_{r}}]
\end{align*}
where each $\varphi_{A,B,j_{1}, \ldots, j_{r}}$ is normalized relative to $\varphi$. Here the outer sum is taken over all decompositions of the set $\{1, \ldots, n\}$ into two disjoint subsets $A$ and $B$ such that $n\in A$.
\end{definition}

According to Remark \ref{Remark2.18},  $\varphi$ has strong cancellation if it can be written as a sum of functions of the form $\partial_{{j_{1}}}\cdots \partial_{{j_{n}}}\varphi$; \textit{i.e.} as $n^{th}$-derivatives of functions where there is one derivative in a variable from each of the $n$ subspaces  $\R^{a_{l}}$. Definition \ref{Def1.14a2} imposes a weaker condition; a function $\varphi$ has weak cancellation if again it is a sum of terms, but the term $\partial_{{j_{1}}}\cdots \partial_{{j_{n}}}\varphi_{j_{1}, \ldots, j_{n}}$ is itself replaced by a new sum of terms.  If a derivative $\partial_{j_{l}}$ does not appear, so that the term does not have cancellation in the subspace $\R^{a_{l}}$, there is instead a gain given by $2^{-\epsilon (i_{l+1}-i_{l})}$.

\smallskip

\begin{remarks}\label{Remark2.20}
{\rm There will be occasions when we will use the fact that a function $\varphi$ has weak cancellation with respect to $I\in \mathcal E_{n}$ to draw inferences about existence of primitives and smallness in only some of the subspaces $\{\R^{a_{l}}\}$. In particular, the following assertions follow easily from Definition \ref{Def1.14a2}.}
\begin{enumerate}[{\rm(1)}]
\item {\rm Suppose that  $I\in \mathcal E_{n}$ and that $\varphi$ has weak cancellation relative to $I$. Let $M\subset\{1, \ldots, n\}$. 
Then 
\begin{align*}
\varphi &= \sum_{\substack{A\cup B = M\\A=\{\alpha_{1}, \ldots, \alpha_{r}\}\\n\notin B}}\,\Big(\prod_{s\in B}2^{-\epsilon(i_{s+1}-i_{s})} \Big)\,
\sum_{j_{1}\in J_{\alpha_{1}}}\cdots \sum_{j_{r}\in J_{\alpha_{r}}} \big(\partial_{j_{1}}\cdots \partial_{j_{s}}\big)[\widetilde\varphi_{A,B,j_{1}, \ldots, j_{r}}]
\end{align*}
where each $\widetilde\varphi_{A,B,j_{1}, \ldots, j_{r}}$ is normalized relative to $\varphi$, and where the outer sum is over all subsets $B\subset M$, with the understanding that if $n\in M$ then $n\notin B$. }

\smallskip

\item {\rm In particular, if we take $M= \{1\}$, it follows that if $\varphi$ has weak cancellation relative to $I\in \mathcal E_{n}$, we can write
\begin{equation*}
\varphi= \sum_{r={1}}^{a_{1}}\partial_{r}[\varphi_{r}] +2^{-\epsilon(i_{2}-i_{1})}\varphi_{0}
\end{equation*}
where $\{\varphi_{0}, \varphi_{1}, \ldots, \varphi_{a_{1}}\}$ are normalized relative to $\varphi$. Here, the derivatives are with respect to the variables in the first subspace $\R^{a_{1}}$.}
\end{enumerate}
\end{remarks}

We can also characterize weak cancellation in terms of the ``smallness'' of integrals, to be compared with the characterization of strong cancellation in terms of the vanishing of integrals given in Lemma \ref{Lemma1.12}. For any partition $\{1, \ldots, n\} = A\cup B$ with $A= \{j_{1}, \ldots, j_{a}\}$ and $B= \{k_{1}, \ldots, k_{b}\}$,  write $\x\in \R^{N}$ as $\x=(\x_{A}, \x_{B})$ where $\x_{A}= (\x_{j_{1}}, \ldots, \x_{j_{l}})$, $\x_{B}=(\x_{k_{1}}, \ldots, \x_{k_{b}})$. Let $d\x_{A}= d\x_{j_{1}}\cdots d\x_{j_{l}}$, and let $d\x_{B}=d\x_{k_{1}}\cdots d\x_{k_{b}}$. 

\begin{proposition}\label{Prop5.7ijn}
Let $\epsilon >0$ and $I \in \E_{n}$. A function $\psi\in \mathcal C^{\infty}_{0}(\R^{N})$ has weak cancellation with parameter $\epsilon$ relative to $I$ if and only if for every partition $\{1, \ldots, n\}= A\cup B$ into disjoint subsets with $n \notin B$, we have
\begin{equation*}
\int_{\bigoplus_{k\in B}\R^{a_{k}}}\psi(\x_{A}, \x_{B})\,d\x_{B} = \Big[\prod_{k\in B}2^{-\epsilon(i_{k+1}-i_{k})}\Big]\psi_{A}(\x_{A})
\end{equation*}
where $\psi_{A}\in \mathcal S(\bigoplus_{j \in A}\R^{a_{j}})$ is normalized relative to $\psi$. If $\psi\in C^{\infty}_{0}(\R^{N})$, the functions $\psi_{A}\in C^{\infty}_{0}(\bigoplus_{j \in A}\R^{a_{j}})$.
\end{proposition}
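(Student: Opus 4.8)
The plan is to prove the two implications separately, using Lemma \ref{Lemma1.12} as the engine that converts vanishing of integrals into existence of primitives. Throughout, I will repeatedly use that, once one has cancellation (in the sense of vanishing integrals) in a collection of variables $J_k$, Lemma \ref{Lemma1.12} produces normalized primitives in those variables; and conversely the fundamental theorem of calculus turns such primitives back into vanishing integrals.

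\medskip

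\emph{Necessity (weak cancellation $\Rightarrow$ the integral estimate).} Assume $\psi$ has weak cancellation with parameter $\epsilon$ relative to $I$, so $\psi = \sum_{A'\cup B' = \{1,\dots,n\},\, n\notin B'} \big(\prod_{s\in B'}2^{-\epsilon(i_{s+1}-i_s)}\big)\sum_{j_1\in J_{\alpha_1}}\cdots\sum_{j_r\in J_{\alpha_r}}\partial_{j_1}\cdots\partial_{j_r}[\varphi_{A',B',j_1,\dots,j_r}]$ with $A'=\{\alpha_1,\dots,\alpha_r\}$. Fix the partition $\{1,\dots,n\}=A\cup B$ with $n\notin B$ for which we want the conclusion, and integrate $\psi$ in the variables $\x_B$. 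For each summand indexed by $(A',B')$: if some $\alpha_\ell\in A'$ lies in $B$, then that summand carries a derivative $\partial_{j_\ell}$ with $j_\ell\in J_{\alpha_\ell}\subset \bigcup_{k\in B}J_k$, and integrating that derivative in $\x_B$ (in particular in $x_{j_\ell}$) kills the term by the fundamental theorem of calculus. Hence only summands with $A'\subseteq A$ survive, equivalently $B\subseteq B'$. For such a summand, $\prod_{s\in B'}2^{-\epsilon(i_{s+1}-i_s)} = \prod_{k\in B}2^{-\epsilon(i_{k+1}-i_k)}\cdot\prod_{s\in B'\setminus B}2^{-\epsilon(i_{s+1}-i_s)}$, and each exponent is $\le 0$ since $I\in\mathcal E_n$, so the extra factor is $\le 1$; integrating the remaining derivatives $\partial_{j_\ell}$ with $j_\ell\in J_{\alpha_\ell}$, $\alpha_\ell\in A'\subseteq A$, in the $\x_B$ variables just produces $\partial_{j_\ell}$ of a normalized function on $\bigoplus_{j\in A}\R^{a_j}$ (the derivative and the $\x_B$-integration commute since $J_{\alpha_\ell}$ is disjoint from the $\x_B$ variables). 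Summing, $\int_{\bigoplus_{k\in B}\R^{a_k}}\psi\,d\x_B = \big[\prod_{k\in B}2^{-\epsilon(i_{k+1}-i_k)}\big]\psi_A(\x_A)$ with $\psi_A$ normalized relative to $\psi$ (the bounded extra factors and the Schwartz/derivative bounds being absorbed into the normalization constants), which is the asserted identity.

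\medskip

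\emph{Sufficiency (the integral estimates $\Rightarrow$ weak cancellation).} This is the harder direction. Assume the stated identity holds for every partition with $n\notin B$. I will build the weak-cancellation decomposition by summing over subsets $B\subseteq\{1,\dots,n-1\}$ via an inclusion-exclusion / telescoping argument. For a subset $S\subseteq\{1,\dots,n-1\}$ write $\psi_S(\x_{S^c})$ for the function on the complementary variables obtained by integrating out the $\x_k$, $k\in S$; by hypothesis $\psi_S = \big[\prod_{k\in S}2^{-\epsilon(i_{k+1}-i_k)}\big]\,\widetilde\psi_S$ with $\widetilde\psi_S$ normalized. The idea is: starting from $\psi$, successively split off, for each $k=1,2,\dots,n-1$ in turn, the ``average in $\x_k$'' piece from the ``cancellation in $\x_k$'' piece; the cancellation piece acquires (by Lemma \ref{Lemma1.12}, using that the relevant integral vanishes exactly) a normalized primitive $\partial_{j}$ with $j\in J_k$; the average piece is, up to a normalized bump, exactly $\psi_{\{k\}}$, which carries the gain $2^{-\epsilon(i_{k+1}-i_k)}$. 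Iterating over all $k\le n-1$ and tracking which indices were ``averaged'' (these form $B$) versus ``differentiated'' (these form $A\cap\{1,\dots,n-1\}$) produces the sum over partitions $A\cup B=\{1,\dots,n\}$ with $n\notin B$. The index $n$ always lands in $A$: after integrating out all of $\x_1,\dots,\x_{n-1}$ from any piece that reaches that stage, the residual function is $\psi_{\{1,\dots,n-1\}}$, which involves only $\x_n$; to produce a genuine primitive in a variable of $J_n$ one uses the $B=\{1,\dots,n-1\}$ hypothesis together with Lemma \ref{Lemma1.12} applied in the $\x_n$ variables — here I must check that the relevant integral over $\bigoplus_{k\le n}\R^{a_k}$, i.e. the full integral of $\psi$, can be arranged to vanish, or more precisely that the final residual term can be absorbed: this is where one invokes the $B=\{1,\dots,n-1\}$ case to express $\psi_{\{1,\dots,n-1\}}$ with the full product of gains, and since $n\notin B$ is required, this term must be written with a derivative in a $J_n$-variable, which is legitimate because after the averaging step the residual has cancellation in $\x_n$ as well (its integral is a constant multiple of $\int\psi$, and one splits off that constant-in-$\x_n$ part). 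The cutoff functions $\chi_j$ from the proof of Proposition \ref{Prop1.13} are used to keep everything compactly supported (or Schwartz) and normalized at each stage.

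\medskip

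\emph{Main obstacle.} The bookkeeping in the sufficiency direction is the crux: one must run an $n$-fold induction in which, at stage $k$, the ``average'' branch retains a gain $2^{-\epsilon(i_{k+1}-i_k)}$ and the ``derivative'' branch retains a primitive in $J_k$, while simultaneously preserving for \emph{all} not-yet-processed indices the hypothesis that the partial integrals satisfy the required smallness — this persistence of the hypothesis along the induction (the integral of a normalized primitive, or of a gain-weighted normalized function, still obeys the corresponding estimate for the coarser sub-collections) is exactly the content of Remarks \ref{Remark2.20} and must be verified carefully. Handling the index $n$ correctly — ensuring $n\notin B$, i.e. that the fully-averaged term is nonetheless expressed with a $J_n$-derivative rather than left as a bare multiple of $\int\psi$ — is the one genuinely delicate point, and is resolved precisely by applying Lemma \ref{Lemma1.12} in the $\x_n$-variables to the residual (whose $\x_n$-integral one first subtracts off as the constant-in-$\x_n$ piece, the latter being $O\big(\prod_{k\le n-1}2^{-\epsilon(i_{k+1}-i_k)}\big)$ and hence absorbable, while the remainder has genuine cancellation in $\x_n$).
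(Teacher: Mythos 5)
Your plan follows essentially the same route as the paper's proof. The ``successive splitting'' of each block into an average piece and a cancellation piece is exactly the paper's expansion $\psi=\prod_{l=1}^{n}(L_{l}+M_{l})[\psi]=\sum_{A\cup B}\psi_{A,B}$, where $M_{l}[\psi]=\chi_{l}(\x_{l})\int_{\R^{a_{l}}}\psi\,d\x_{l}$ and $L_{l}=\mathrm{Id}-M_{l}$. Because these operators all commute, the bookkeeping you identify as the main obstacle (``persistence of the hypothesis along the induction'') is automatic: one does not iterate at all, but computes $\big(\prod_{k\in B}M_{k}\big)[\psi]$ in one step from the hypothesis for the partition $(A,B)$, and then $\prod_{j\in A}L_{j}$ produces genuine cancellation in every $A$-block, so Lemma \ref{Lemma1.12} applies once to yield the derivatives $\partial_{j_{1}}\cdots\partial_{j_{r}}$. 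Your necessity argument (derivatives in $B$-variables integrate to zero; surviving terms have $B\subseteq B'$ and the extra gain factors are $\leq 1$) is the standard verification that the paper declares clear.

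The one step in your write-up that is not sound is the treatment of the index $n$. After averaging in $\x_{1},\dots,\x_{n-1}$, the residual is $\big[\prod_{k\leq n-1}2^{-\epsilon(i_{k+1}-i_{k})}\big]\psi_{\{n\}}(\x_{n})$ times a product of bumps, and $\psi_{\{n\}}$ has no reason to have integral zero in $\x_{n}$; the hypothesis of Proposition \ref{Prop5.7ijn} is never imposed for a partition with $n\in B$, so nothing forces cancellation there. Consequently the ``constant-in-$\x_{n}$ part'' you split off is a genuine leftover: it carries neither a derivative in a $J_{n}$-variable nor a gain $2^{-\epsilon(i_{n+1}-i_{n})}$ (which does not exist), so it cannot be placed in the template of Definition \ref{Def1.14a2}, and your assertion that it is ``absorbable'' is unjustified. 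You should know that the paper's own proof has exactly the same blind spot --- the terms $\psi_{A,B}$ with $n\in B$ are never discussed there either --- so this is a defect of the argument (indeed of the statement, whose hypothesis omits precisely those partitions) rather than something your approach introduces; but your attempted patch does not close it.
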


\begin{proof}
It is clear that if $\psi$ has weak cancellation, then it satisfies the condition of Proposition \ref{Prop5.7ijn}, so the main content is the opposite implication.

Let $\chi_{l}\in \mathcal C^{\infty}_{0}(\R^{a_{l}})$ have support in the set where $|\x_{l}|\leq 1$, with $\int_{\R^{a_{l}}}\chi(\x_{l})\,d\x_{l} = 1$. For $\psi\in \mathcal S(\R^{N})$ and $1 \leq l \leq N$, define
\begin{align*}
L_{l}[\psi](\x) &= \psi(\x) - \chi_{l}(\x_{l})\,\int_{\R^{a_{l}}}\psi(\x)\,d\x_{l},\\
M_{l}[\psi](\x) &= \chi_{l}(\x_{l})\,\int_{\R^{a_{l}}}\psi(\x)\,d\x_{l}.
\end{align*}
It is easy to check that the operators $\{L_{1}, \ldots, L_{n}, M_{1}, \ldots, M_{n}\}$ all commute, and that
\begin{align*}
\int_{\R^{a_{l}}}M_{l}[\psi](\x)\,d\x_{l} &= \int_{\R^{a_{l}}}\psi(\x)\,d\x_{l}, \\
\int_{\R^{a_{l}}}L_{l}[\psi](\x)\,d\x_{l} &= 0.
\end{align*}
Since $L_{l}+M_{l}$ is the identity operator, if $\psi\in \mathcal S(\R^{N})$ 
\begin{align*}
\psi(\x) &= \prod_{l=1}^{n}(L_{l}+M_{l})[\psi](\x)\\
&=
\sum_{A\cup B = \{1, \ldots, n\}}\Big(\prod_{j\in A}L_{l}\Big)\Big(\prod_{k\in B}M_{k}\Big)[\psi](\x) = \sum_{A\cup B = \{1, \ldots, n\}}\psi_{A,B}(\x).
\end{align*}
It is clear that the functions $\{\psi_{A,B}\}$ are normalized relative to $\psi$. Note that
\begin{equation*}
\Big(\prod_{k\in B}M_{k}\Big)[\psi](\x) = \Big(\prod_{k\in B}\chi_{k}(\x_{k})\Big)\,\int_{\bigoplus_{k\in B}\R^{a_{k}}}\psi(\x_{A}, \x_{B})\,d\x_{B} 
\end{equation*}
and for every $j \in A$ we have
\begin{equation*}
\int_{\R^{a_{j}}}\psi_{A,B}(\x)\,d\x_{j}=0.
\end{equation*}

Now suppose that $\psi$ satisfies the hypotheses of Proposition \ref{Prop5.7ijn}. For every decomposition $\{1, \ldots, n\} = A\cup B$, it follows that 
\begin{align*}
\Big(\prod_{k\in B}M_{k}\Big)[\psi](\x) =\Big[\prod_{k\in B}2^{-\epsilon(i_{k+1}-i_{k})}\Big]\psi_{A}(\x_{A})\Big(\prod_{k\in B}\chi_{k}(\x_{k})\Big)
\end{align*}
where $\psi_{A}$ is normalized relative to $\psi$. But then
\begin{align*}
\psi_{A,B}(\x) &= \Big(\prod_{j\in A}L_{l}\Big)\Big(\prod_{k\in B}M_{k}\Big)[\psi](\x)\\
&=
\Big[\prod_{k\in B}2^{-\epsilon(i_{k+1}-i_{k})}\Big]\Big(\prod_{j\in A}L_{l}\Big)[\psi_{A}](\x_{A})\,\Big(\prod_{k\in B}\chi_{k}(\x_{k})\Big).
\end{align*}
Now the function $\Big(\prod_{j\in A}L_{l}\Big)[\psi_{A}](\x_{A})\,\Big(\prod_{k\in B}\chi_{k}(\x_{k})\Big)$ has cancellation in each of the subspaces $\R^{a_{j}}$ for $j\in A$. If we write $A= \{\alpha_{1}, \ldots \alpha_{r}\}$,  it follows from Lemma \ref{Lemma1.12} that we can write this function as a sum of terms of the form $\partial_{j_{1}}\cdots \partial_{j_{r}}[\psi_{A,B,\sigma}]$ where the variable $x_{j_{k}}$ belongs to the subspace $\R^{a_{\alpha_{k}}}$. This then makes it clear that $\psi$ has weak cancellation, and completes the proof.
\end{proof}

\section{The structure of flag kernels}\label{FlagKernels}

In this section we establish four important properties of flag kernels. The first (Theorem \ref{Lemma2.3}) shows how to decompose a given flag kernel into a sum of dilates of compactly supported functions with strong cancellation. The second (Theorem \ref{Lemma5.4zw}) shows conversely that a sum of dilates of Schwartz functions with weak cancellation converges to a flag kernel. The third (Theorem \ref{restricted}) shows that the cancellation conditions imposed in part (\ref{Def2.1b}) of Definition \ref{Def2.1} can be relaxed. The fourth (Theorem \ref{Thm3.12}) shows that the family of flag kernels is invariant under appropriately chosen changes of variables.

\subsection{Dyadic decomposition of flag kernels}\label{SecDyadDecomp}\quad

\smallskip

Every flag kernel is a product kernel in the sense of Definition 2.1.1 of \cite{NaRiSt00}, and Corollary 2.2.2 in that reference implies that if $\K$ is a flag kernel, then there are functions $\varphi^{I}\in \mathcal C^{\infty}_{0}(\R^{N})$, $I\in \mathbb Z^{n}$ with strong cancellation and uniformly bounded seminorms $||\varphi^{I}||_{(m)}$, supported in the set where $\frac{1}{2}\leq N_{j}(\x_{j})\leq 4$ for every $j$, such that $\K = \sum_{I\in \mathbb Z^{n}}[\varphi^{I}]_{I}$ in the sense of distributions. Recall that $\E_{n}=\{I=(i_{1}, \ldots, i_{n}) \in \Z^{n}\,\big\vert\,i_{1}\leq i_{2}\leq \cdots \leq i_{n}\}$. Since a flag kernel satisfies better differential inequalities than a general product kernel, one expects that it should be possible to write $\K$ as a sum of dilates of functions $\varphi^{I}\in \mathcal C^{\infty}_{0}(\R^{N})$ where the dilations range \emph{only} over the set $\E_{n}$ instead of over all of $\mathbb Z^{n}$. Such a result is stated in Corollary 2.2.4 of \cite{NaRiSt00}, but the precise statement there is not correct because one must allow additional terms involving flag kernels adapted to coarser flags. This section provides a correct statement and proof.

\begin{theorem}\label{Lemma2.3}
Let $\mathcal K$ be a flag kernel adapted to the standard flag $\mathcal F$ 
\begin{equation*}\tag{\ref{Eqn2.6tyu}}
(0)\subseteq \R^{a_{n}}\subseteq \R^{a_{n-1}}\oplus \R^{a_{n}}\subseteq \cdots \subseteq \R^{a_{3}}\oplus\cdots\oplus \R^{a_{n}}\subseteq \R^{a_{2}}\oplus \cdots \oplus \R^{a_{n}}\subseteq \R^{N}
\end{equation*}
Then there is a decomposition $\K = \K_{0} +\K_{1}+ \cdots +\K_{n}$ with the following properties.
\begin{enumerate}[{\rm(1)}]

\smallskip

\item For each $I \in \mathcal E_{n}$ there is a function $\varphi^{I}\in \Cs^{\infty}_{0}(\R^{N})$ so that $\K_{0}= \sum_{I\in \E_{n}}[\varphi^{I}]_{I}$ with convergence in the sense of distributions. Moreover:

\begin{enumerate}[{\rm (a)}]

\smallskip

\item the support of each function $\varphi^{I}$ is contained in the unit ball $B= \{\x\in\R^{N}\,\big\vert\,|\x|\leq 1\}$;

\smallskip

\item there are constants  $C_{m}>0$ (depending on the constants for the flag kernel $\K$) so that for each $I \in \mathcal E_{n}$ and all $m\geq 0$,   $||\varphi^{I}||_{(m)}\leq C_{m}$;

\smallskip

\item each function $\varphi^{I}$ has strong cancellation in the sense of Definition \ref{Def1.14}: for $1 \leq j \leq n$
$$\int_{R^{a_{j}}}\varphi(\x_{1}, \ldots, \x_{j}, \ldots, \x_{n})\,d\x_{j}=0;$$

\smallskip

\item  for each $I\in \mathcal E_{n}$,  $\varphi^{I}(\x_{1}, \ldots, \x_{n})=0$ for $|\x_{1}|<\frac{1}{8}$.

\smallskip

\end{enumerate}

\smallskip

\item For $1 \leq j \leq n$,  each $\K_{j}$ is a flag kernel adapted to a flag which is \emph{strictly coarser} than $\F$.

\end{enumerate}
\end{theorem}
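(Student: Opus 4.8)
The plan is to start from the known product-kernel decomposition $\K = \sum_{I\in\Z^n}[\varphi^I]_I$ with strong cancellation and supports in $\{\tfrac12\le N_j(\x_j)\le 4\}$, supplied by Corollary 2.2.2 of \cite{NaRiSt00}, and then sort the multi-indices according to which of the monotonicity inequalities $i_1\le i_2\le\cdots\le i_n$ hold. For a subset $S\subseteq\{1,\dots,n-1\}$ let $\Z^n_S$ be the set of $I$ with $i_k\le i_{k+1}$ for $k\notin S$ and $i_k>i_{k+1}$ for $k\in S$; this partitions $\Z^n$. The piece $\K_0$ will be $\sum_{I\in\mathcal E_n}[\varphi^I]_I$ (the case $S=\emptyset$), and the remaining pieces $\K_1,\dots,\K_n$ will collect the $S\ne\emptyset$ contributions, regrouped so that each is recognizably a flag kernel for a strictly coarser flag. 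The two genuinely separate tasks are: (i) prove that $\K_0$, as defined, converges and satisfies (1)(a)--(d); and (ii) prove the reorganization claim (2).

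For (i), the support normalization (a) and (d) and the uniform seminorm bound (b) are essentially inherited from the given decomposition after a routine rescaling (shrinking the support annulus $\{\tfrac12\le N_j\le 4\}$ into the unit ball costs only bounded dilation factors, absorbed into the constants), and strong cancellation (c) is preserved verbatim since it is a property of each individual $\varphi^I$. Convergence of $\sum_{I\in\mathcal E_n}[\varphi^I]_I$ in $\mathcal S'$ is the one point requiring the improved flag differential inequalities rather than mere product-kernel bounds: one pairs against a test function, uses the strong cancellation of $\varphi^I$ to gain, for each $k$, a factor that is summable once the indices are monotone, and invokes Proposition \ref{Prop2.11} (Appendix II) to control the tails. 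This is the same mechanism already alluded to in Section \ref{Sect1.2pjv} of the introduction.

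For (ii), the idea is that when $i_k>i_{k+1}$ for some $k\in S$, the dilate $[\varphi^I]_I$ "lives at a coarser scale" in the block straddling position $k$: because $\varphi^I$ is supported where $N_{k+1}(\x_{k+1})\asymp 1$ and is being dilated by the larger parameter $2^{i_k}$ in that variable relative to $2^{i_{k+1}}$ in $\x_{k}$... more precisely, one reindexes by $j_m=\max(i_m,i_{m+1},\dots)$ along maximal descending runs, which amounts to merging the affected consecutive blocks $\R^{a_k}\oplus\R^{a_{k+1}}$ into a single block, i.e.\ passing to the coarser partition. After this merge the residual indices are monotone for the coarser flag, the merged bump functions are normalized bumps for the coarser decomposition (here one uses Proposition \ref{Prop1.7}-type rescalings to keep polynomial coefficients and seminorms under control), and strong cancellation in the merged block follows from strong cancellation in each constituent block. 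Summing over all nonempty $S$ and grouping by the resulting coarser flag yields finitely many flag kernels $\K_1,\dots,\K_n$, each adapted to a flag strictly coarser than $\F$; applying the converse direction (the part of Section \ref{Sect1.2pjv} that a sum of strongly-cancelling dilates over the monotone cone is a flag kernel, via Proposition \ref{Prop2.11}) to each merged sum establishes that these are genuine flag kernels.

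The main obstacle is (ii): carefully setting up the bijection between $\{I\in\Z^n: i_k>i_{k+1}\ \exists k\}$ sorted by descent-set and the monotone cones of the various coarser flags, and verifying that the regrouped bump functions are simultaneously (a) compactly supported and seminorm-normalized, (b) correctly scaled for the coarser dilation structure, and (c) strongly cancelling in the merged variables — all uniformly in $I$. The combinatorics of nested/adjacent descents (a descent at $k$ and at $k+1$ merges three blocks, etc.) is where the bookkeeping is delicate, and it is exactly the point the excerpt flags as "not correct" in \cite{NaRiSt00} for want of these coarser-flag terms; getting the scaling factors $2^{i_k-i_{k+1}}$ to cancel correctly against the homogeneous-dimension normalizations $2^{-\sum Q_\ell i_\ell}$ when blocks are merged is the crux of the argument.
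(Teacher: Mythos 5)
Your route is genuinely different from the one taken in the paper: the paper never sorts the product-kernel decomposition by descent sets in physical space. Instead it passes to the Fourier transform (Lemma \ref{Lemma2.5}), splits the multiplier $m=\widehat{\K}$ with smooth cutoffs in the ratios $|\xi_{j-1}|\,|\xi_j|^{-1}$ (Lemma \ref{Lemma2.6}) so that the ``comparable-frequency'' pieces are visibly multipliers for coarser flags, localizes the remaining piece $m_0$ (supported where $|\xi_j|\ge 10|\xi_{j+1}|$) with a Littlewood--Paley partition indexed by $\mathcal E_n$, and then converts the resulting Schwartz bumps into compactly supported ones vanishing near $\x_1=0$ via Lemmas \ref{Lemma2.8} and \ref{Lemma2.7w}. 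So if your argument worked it would be a new, purely spatial proof.

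However, there is a genuine gap at the crux of your step (ii). Corollary 2.2.2 of \cite{NaRiSt00} only gives \emph{uniformly bounded} seminorms $\|\varphi^I\|_{(m)}$ for $I\in\Z^n$; it gives no smallness when $I$ is non-monotone. With only uniform bounds, the descent piece cannot be regrouped into a coarser flag kernel: fix a descent $i_{k+1}<i_k$ and sum over the free index $i_{k+1}$ with the merged scale $i_k$ held fixed. Since the supports $\{N_{k+1}(\x_{k+1})\asymp 2^{i_{k+1}}\}$ are essentially disjoint annuli, there is no cancellation in this sum at a fixed point, and its value near $\x_{k+1}=0$ is of size $N_{k+1}(\x_{k+1})^{-Q_{k+1}}$ times the remaining factors; this exceeds the coarser-flag bound $(N_1+\cdots+N_{k+1})^{-Q_k-Q_{k+1}}\approx 2^{-(Q_k+Q_{k+1})i_k}$ by the unbounded factor $2^{Q_{k+1}(i_k-i_{k+1})}$. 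Indeed a \emph{single} term $[\varphi^I]_I$ with $i_{k+1}<i_k$ already violates the coarser flag's size estimate unless $\|\varphi^I\|_\infty\lesssim 2^{-Q_{k+1}(i_k-i_{k+1})}$. So the missing ingredient is precisely an exponential gain in the descent gaps $i_k-i_{k+1}$ for the non-monotone $\varphi^I$, which must be extracted from the flag (as opposed to product) differential inequalities by revisiting the construction of the $\varphi^I$ from $K$ --- it is not supplied by the product-kernel corollary you cite, and Proposition \ref{Prop1.7} (rescaling of polynomial coefficients) is irrelevant to it. This is exactly the delicate point on which Corollary 2.2.4 of \cite{NaRiSt00} foundered and which the paper's Fourier-side splitting is designed to circumvent; without it your reindexing along descending runs does not produce normalized bumps, nor even distributions satisfying the coarser flags' differential inequalities.
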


\smallskip
Recall that in Section \ref{Sect1.2pjv} we distinguished between elements $I=(i_{1}, \ldots, i_{n}) \in \mathcal E_{n}$ where we have strict inequality $i_{1}<i_{2}< \cdots < i_{n-1}<i_{n}$, and elements where some of the entries are equal. Underlying this dichotomy is the fact that $\mathcal E_{n}$ is a polyhedral cone in $\Z^{n}$, the set of elements with strict inequality form the `open' $n$-dimensional interior, and elements with various sets of equal elements correspond to lower dimensional faces. In Theorem \ref{Lemma2.3}, we may as well assume that the dyadic sum representing $\K_{0}$ extends over the interior n-tuples with strictly increasing components, leaving the "boundary n-tuples" with repeated indices to parametrize the dyadic terms of the sums representing the other $\K_{j}$'s.

\goodbreak

Using induction on the number of steps in a flag, Theorem \ref{Lemma2.3} immediately gives us the following corollary.

\begin{corollary}\label{Cor3.2xz}
Let $\K$ be a flag kernel adapted to a standard flag $\mathcal F$  of length $n$. There is a finite collection of flags $\{\mathcal F_{k,s}\}$, $1\leq k \leq n$ and $1 \leq s \leq b_{k}$, with the following properties.
\begin{enumerate}[{\rm(1)}]

\smallskip

\item For $k=n$, $b_{n}=1$ and $\mathcal F_{n,1}= \mathcal F$.

\smallskip

\item  For each $k<n$, the flag $\mathcal F_{k,s}$ has length $k$ and is strictly coarser than $\mathcal F$.

\smallskip

\item For each $(k,s)$ there is a uniformly bounded family of functions $\{\varphi^{J}_{k,s}\}\subset\mathcal C^{\infty}_{0}(\R^{N})$, $J\in \E_{k}$, all supported in the unit ball and having strong cancellation relative to the decomposition of $\R^{N}$ corresponding to the flag $\mathcal F_{k,s}$ so that in the sense of distributions,
\begin{equation}\label{Eqn3.1ps}
\K = \sum_{k=1}^{n}\sum_{s=1}^{b_{k}}\sum_{J\in E_{k}}[\varphi^{J}_{k,s}]_{J}.
\end{equation}
\end{enumerate}
\end{corollary}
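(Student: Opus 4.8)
The plan is to prove Corollary \ref{Cor3.2xz} by induction on the length $n$ of the flag $\mathcal F$, using Theorem \ref{Lemma2.3} as the inductive step. For the base case $n=1$, the flag $(0)\subset\R^{N}$ is the coarsest possible and admits no strictly coarser flag, so Theorem \ref{Lemma2.3} applied to $\K$ forces $\K_{1}=\cdots=\K_{n}=0$ and leaves $\K=\K_{0}=\sum_{I\in\mathcal E_{1}}[\varphi^{I}]_{I}$, with the $\varphi^{I}$ supported in the unit ball, uniformly bounded in all the seminorms $||\cdot||_{(m)}$, and having strong cancellation. Setting $b_{1}=1$, $\mathcal F_{1,1}=\mathcal F$, and $\varphi^{J}_{1,1}=\varphi^{J}$ disposes of the base case. (Equivalently, a flag kernel adapted to a length-$1$ flag is a homogeneous Calder\'on--Zygmund kernel, for which this dyadic decomposition is classical.)

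For the inductive step, I would assume the statement for all flag kernels adapted to flags of length strictly less than $n$, and take $\K$ adapted to $\mathcal F$ of length $n$. Invoking Theorem \ref{Lemma2.3}, write $\K=\K_{0}+\K_{1}+\cdots+\K_{n}$. The piece $\K_{0}=\sum_{I\in\mathcal E_{n}}[\varphi^{I}]_{I}$ is already in the required shape, so I put $\mathcal F_{n,1}=\mathcal F$, $b_{n}=1$, and $\varphi^{J}_{n,1}=\varphi^{J}$; parts (1)(a)--(c) of Theorem \ref{Lemma2.3} supply the support condition, the uniform bounds on the seminorms, and the strong cancellation relative to the decomposition attached to $\mathcal F$. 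For each $j$ with $1\le j\le n$, the distribution $\K_{j}$ is a flag kernel adapted to a flag $\mathcal G_{j}$ strictly coarser than $\mathcal F$, hence of some length $n_{j}\le n-1$, so the induction hypothesis applies to $\K_{j}$ and represents it as a finite double sum over a finite family of flags, all of length $\le n_{j}\le n-1$, with bump functions enjoying the required support, uniform bounds, and strong cancellation.

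To assemble the output, I would note that every flag occurring in the decomposition of a given $\K_{j}$ is either $\mathcal G_{j}$ itself or, for the lengths below $n_{j}$, a flag strictly coarser than $\mathcal G_{j}$; since the relation $\preceq$ on standard flags is a partial order and $\mathcal G_{j}\prec\mathcal F$, transitivity makes each of these flags strictly coarser than $\mathcal F$. I then keep $\mathcal F_{n,1}=\mathcal F$ for $k=n$, and for each $k<n$ let $\{\mathcal F_{k,s}\}_{s=1}^{b_{k}}$ be the list, re-indexed by the single parameter $s$, of all length-$k$ flags produced by applying the induction hypothesis to $\K_{1},\ldots,\K_{n}$, attaching to each the corresponding family $\{\varphi^{J}_{k,s}\}_{J\in\mathcal E_{k}}$. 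Because Theorem \ref{Lemma2.3} produces only finitely many summands $\K_{j}$ and the recursion has depth at most $n$, this collection is finite; adding up all the contributions gives $\K=\K_{0}+\sum_{j=1}^{n}\K_{j}=\sum_{k=1}^{n}\sum_{s=1}^{b_{k}}\sum_{J\in\mathcal E_{k}}[\varphi^{J}_{k,s}]_{J}$, which is precisely the asserted representation.

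I do not expect a genuine analytic obstacle: all of the real work sits inside Theorem \ref{Lemma2.3}, and what remains is bookkeeping. The points needing care are verifying that ``strictly coarser'' is transitive (so flags produced anywhere in the recursion stay strictly coarser than, and strictly shorter than, the original $\mathcal F$), confirming that the recursion terminates with a finite output (bounded branching at each node, depth at most $n$), and re-indexing the length-$k$ flags coming from the different branches $\K_{j}$ by one parameter $s$ without notational collisions.
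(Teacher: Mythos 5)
Your proposal is correct and follows exactly the route the paper intends: the paper derives this corollary by induction on the length of the flag using Theorem \ref{Lemma2.3}, which is precisely your argument (the paper simply does not write out the bookkeeping you supply). No gaps.
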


The proof of Theorem \ref{Lemma2.3} relies on three preliminary results.
The first, which gives the characterization of flag kernels in terms of their Fourier transforms, was established in  \cite{NaRiSt00}. We briefly recall the relevant definitions. Let $(\R^{N})^{*}$ denote the space of linear functionals on $\R^{N}$, and for a subspace $W\subseteq \R^{N}$, let $W^{\perp}\subset(\R^{N})^{*}$ be the subspace of linear functions which are zero on $W$. If $\mathcal F$ is the flag in $\R^{N}$ given in (\ref{Eqn2.6tyu}), the dual flag $\xi\in (\R^{N})^{*}$ is $\mathcal F^{*}$ given by
\begin{equation*}
(0)^{\perp} \supseteq (\R^{a_{n}})^{\perp}\supseteq (\R^{a_{n-1}}\oplus \R^{a_{n}})^{\perp}\supseteq \cdots \supseteq (\R^{a_{2}}\oplus \cdots \oplus \R^{a_{n}})^{\perp} =(\R^{N})^{\perp}.
\end{equation*}
If we identify $(\R^{a_{k}}\oplus\cdots \oplus \R^{a_{n}})^{\perp}$ with $(\R^{a_{1}})^{*}\oplus \cdots \oplus (\R^{a_{k-1}})^{*}$, the dual flag becomes
\begin{equation}\label{E2.5}
(0) \subseteq (\R^{a_{1}})^{*}\subseteq (\R^{a_{1}})^{*}\oplus (\R^{a_{2}})^{*}\subseteq \cdots \subseteq (\R^{a_{1}})^{*}\oplus\cdots \oplus (\R^{a_{n-1}})^{*}\subseteq (\R^{N})^{*}.
\end{equation}
If $\xi \in (\R^{N})^{*}$, we write $\xi = (\xi_{1}, \ldots, \xi_{n})$ where $\xi_{j}=(\xi_{j,1}, \ldots, \xi_{j,a_{j}})\in (\R^{a_{j}})^{*}$. The family of dilations on $\R^{N}$ defined in equation (\ref{E2}) induce a family of dilations on $(\R^{N})^{*}$ so that $\langle r\cdot \x,\xi\rangle= \langle\x,r\cdot \xi\rangle$. We let $|\xi|$ be a smooth homogeneous norm on $(\R^{N})^{*}$, and if $\xi_{j}\in (\R^{a_{j}})^{*}$, we let $|\xi_{j}|$ be the restriction of the norm to this subspace.

\begin{definition} \label{Def2.4}
A \emph{flag multiplier} relative to the flag $\mathcal F^{*}$ given in (\ref{E2.5}) is a function $m(\xi)$ which is infinitely differentiable away from the subspace $\xi_{n}=0$, and which satisfies the differential inequalities
\begin{equation*}
\big\vert\partial^{\bar\alpha_{1}}_{\xi_{1}}\cdots \partial^{\bar\alpha_{n}}_{\xi_{n}}m(\xi)\big\vert \leq C_{\alpha}\,\prod_{j=1}^{n}(|\xi_{j}|+\cdots +|\xi_{n}|)^{-\<  \bar\alpha_{j}\> }.
\end{equation*}
\end{definition}
 
We can now state Theorem 2.3.9 of \cite{NaRiSt00} as follows:

\begin{lemma}\label{Lemma2.5}
Let $\K$ be a flag kernel adapted to the flag $\mathcal F$. Then the Fourier transform of $\K$ is a flag multiplier relative to the dual flag $\mathcal F^{*}$. Conversely, every flag multiplier relative to the flag $\mathcal F^{*}$ is the Fourier transform of a flag kernel adapted to the flag $\mathcal F$.
\end{lemma}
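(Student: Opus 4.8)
This statement is Theorem 2.3.9 of \cite{NaRiSt00}, so the plan here is to recall the structure of its proof rather than reprove it from scratch. For the forward direction one starts from a dyadic decomposition of $\K$ into dilates of compactly supported bumps with strong cancellation — for instance the product-kernel decomposition $\K=\sum_{I\in\Z^{n}}[\varphi^{I}]_{I}$ recalled at the beginning of Section~\ref{SecDyadDecomp}, which applies to the larger class of product kernels and hence does not presuppose the present lemma. Taking Fourier transforms term by term gives, for $\xi_{n}\neq 0$,
\[
\widehat\K(\xi)=\sum_{I}\widehat{\varphi^{I}}\big(2^{-I}\cdot\xi\big),
\]
where $2^{-I}\cdot\xi$ denotes the dual dilation that rescales the block $\xi_{\ell}$ by $2^{i_{\ell}}$; the monotonicity $i_{1}\leq\cdots\leq i_{n}$ of $I$ then matches the nesting of the dual flag $\F^{*}$, with $\xi_{n}$ playing the role that $\x_{1}$ plays for $\K$.

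Two facts about the individual pieces drive the estimate. First, because each $\varphi^{I}$ is a normalized bump, $\widehat{\varphi^{I}}$ and all of its derivatives decay faster than any power of $|\eta_{\ell}|$ in each block. Second, because $\varphi^{I}$ has strong cancellation, $\widehat{\varphi^{I}}$ vanishes on each coordinate subspace $\{\eta_{\ell}=0\}$, so that $\big|\partial_{\eta}^{\bar\alpha}\widehat{\varphi^{I}}(\eta)\big|\lesssim\prod_{\ell}\min\big(|\eta_{\ell}|,|\eta_{\ell}|^{-M}\big)$ for every $M$. Differentiating the series for $\widehat\K$ termwise produces, for each $I$, a factor $2^{\langle\bar\alpha_{\ell}\rangle i_{\ell}}$ from the chain rule in block $\ell$ together with $(\partial^{\bar\alpha}\widehat{\varphi^{I}})(2^{-I}\cdot\xi)$, and one must then sum these over $I$. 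This summation is the main obstacle: one splits the index set according to the relative sizes of the $|\xi_{j}|$ and sums the resulting geometric series block by block, working from the outermost block $\xi_{n}$ inward, so that the partial sums collapse to precisely the flag-shaped bound $\prod_{j=1}^{n}\big(|\xi_{j}|+\cdots+|\xi_{n}|\big)^{-\langle\bar\alpha_{j}\rangle}$ of Definition~\ref{Def2.4}; smoothness of $\widehat\K$ away from $\{\xi_{n}=0\}$ falls out of the same estimate, together with absolute and locally uniform convergence of the series there.

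For the converse one runs the machine backwards. Given a flag multiplier $m$ relative to $\F^{*}$, perform a Littlewood--Paley decomposition adapted to the dual flag, writing $m=\sum_{I\in\E_{n}}\widehat{\psi^{I}}(2^{-I}\cdot\xi)$ with $\{\psi^{I}\}$ a normalized family of Schwartz functions (now no longer compactly supported) having strong cancellation in each block; the decay hypothesis on $m$ is exactly what lets one produce such bumps, and the smallness of $m$ near $\{\xi_{n}=0\}$ is what forces the cancellation. Taking inverse Fourier transforms yields the candidate kernel $\K=\sum_{I\in\E_{n}}[\psi^{I}]_{I}$; the differential inequalities of Definition~\ref{Def2.1}(a) follow from the dual form of the summation above, while the cancellation conditions of Definition~\ref{Def2.1}(b) are verified by pairing $\K$ against $\psi_{R}\otimes\varphi$, which on the Fourier side amounts to restricting $m$ to a coordinate subspace of $(\R^{N})^{*}$. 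Since the restriction of a flag multiplier to such a subspace is again a flag multiplier for the shorter induced flag, one closes the induction on the length of the flag on which both Definition~\ref{Def2.1} and Definition~\ref{Def2.4} are built, and the equivalence follows.
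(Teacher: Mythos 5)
The paper offers no proof of Lemma \ref{Lemma2.5} at all: it is imported verbatim as Theorem 2.3.9 of \cite{NaRiSt00}, so your identification of the source is exactly what the paper does. The problem is with the sketch you then supply, which contains a genuine gap in the forward direction. The only decomposition available without presupposing the lemma is the product-kernel decomposition $\K=\sum_{I\in\Z^{n}}[\varphi^{I}]_{I}$ over \emph{all} of $\Z^{n}$; there is no monotonicity $i_{1}\leq\cdots\leq i_{n}$ to invoke. Summing the termwise bounds $\prod_{\ell}2^{\<\bar\alpha_{\ell}\> i_{\ell}}\min\big((2^{i_{\ell}}|\xi_{\ell}|)^{\epsilon},(2^{i_{\ell}}|\xi_{\ell}|)^{-M}\big)$ freely over $\Z^{n}$ collapses, block by block, to the \emph{product} bound $\prod_{j}|\xi_{j}|^{-\<\bar\alpha_{j}\>}$, which is strictly weaker than the flag bound $\prod_{j}(|\xi_{j}|+\cdots+|\xi_{n}|)^{-\<\bar\alpha_{j}\>}$ of Definition \ref{Def2.4}. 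The flag bound only emerges when the sum is restricted to $I\in\E_{n}$, because the constraint $i_{\ell}\leq i_{j}$ for $j\geq\ell$ together with the localization $2^{-i_{j}}\approx|\xi_{j}|$ forces $2^{-i_{\ell}}\gtrsim|\xi_{\ell}|+\cdots+|\xi_{n}|$ (this is the dual of Proposition \ref{Prop2.11}). But producing a decomposition of a flag kernel over $\E_{n}$ (plus coarser-flag remainders) is precisely Theorem \ref{Lemma2.3}, whose proof in this paper runs \emph{through} Lemma \ref{Lemma2.5} via Lemma \ref{Lemma2.6}; so within the logical architecture here your argument is circular, and without the monotone restriction it proves the wrong (weaker) estimate. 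To close the gap one must either exploit the flag differential inequalities to show that the non-monotone pieces $\varphi^{I}$, $I\notin\E_{n}$, carry an exponential gain $\prod_{k}2^{-Q_{k}(\max_{j\leq k}i_{j}-i_{k})}$ making their contribution absorbable, or follow the inductive argument of \cite{NaRiSt00}, which works directly from the recursive cancellation conditions of Definition \ref{Def2.1}(\ref{Def2.1b}) and does not pass through a dyadic decomposition of the kernel.

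The converse direction has the mirror-image defect: a flag multiplier cannot in general be written as $\sum_{I\in\E_{n}}\widehat{\psi^{I}}(2^{-I}\cdot\xi)$ alone, since nothing confines its support to the region $|\xi_{1}|\gtrsim\cdots\gtrsim|\xi_{n}|$. One must first split off the pieces supported where some $|\xi_{j}|\lesssim|\xi_{j+1}|$ and recognize them as multipliers for coarser flags, exactly as in Lemma \ref{Lemma2.6}; only then does the remaining piece admit an $\E_{n}$-indexed Littlewood--Paley decomposition to which Theorem \ref{Lemma5.4zw} applies. With that splitting made explicit, and with the observation that flag kernels for coarser flags are flag kernels for $\F$, your converse argument does go through.
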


\medskip

The next preliminary result provides a decomposition of test functions in $\mathcal S(\R^{N})$.

\begin{lemma}\label{Lemma2.8}
Let $\psi\in \mathcal S(\R^{N})$. Then there are functions $\{\psi^{k}\}\subset \mathcal C^{\infty}_{0}(\R^{N})$, $k = 0,\,1,\,2,\ldots$ \,such that 
\begin{equation*}
\psi(\x) = \sum_{k=0}^{\infty}2^{-kQ}\psi^{k}(2^{-k}\cdot\x),
\end{equation*}
where $Q$ is the homogeneous dimension of $\R^{N}$. Moreover these functions have the following properties.
\begin{enumerate}[{\rm(a)}]

\smallskip

\item \label{Lemma2.8a} Each $\psi^{k}$ is supported in the unit ball;

\smallskip

\item \label{Lemma2.8b} For any $\delta>0$ and any $\alpha\in \N^{N}$, there exists $M\in\N$ depending on $\delta$ and $\alpha$ so that 
\begin{equation*}
\sup_{\x\in\R^{N}}\big\vert\partial^{\alpha}\psi^{k}(\x)\big\vert \leq ||\psi||_{[M]}\,2^{-k\delta};
\end{equation*}

\smallskip

\item \label{Lemma2.8c} If $\psi$ has strong cancellation, then each $\psi^{k}$ has strong cancellation.
\end{enumerate}
\end{lemma}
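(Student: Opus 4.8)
The plan is to produce the decomposition by a telescoping dyadic partition of unity in the homogeneous dilation variable, and then to verify the three listed properties directly. First I would fix a function $\zeta\in\Cs^\infty_0(\R^N)$ which equals $1$ on the ball $B(1)$ and is supported in $B(2)$, and set $\beta(\x)=\zeta(\x)-\zeta(2\cdot\x)$, so that $\beta$ is supported in the annulus $\tfrac12\le|\x|\le 2$ and $\sum_{k\ge 0}\beta(2^{-k}\cdot\x)=1-\zeta(2\cdot\x)$ for all $\x$; adding back the single term $\zeta(2\cdot\x)$ (which is itself compactly supported) absorbs the bounded region near the origin. Thus writing $\psi=\psi\cdot\zeta(2\cdot)+\sum_{k\ge0}\psi\cdot\beta(2^{-k}\cdot)$ and rescaling each piece, one gets $\psi(\x)=\sum_{k\ge0}2^{-kQ}\psi^k(2^{-k}\cdot\x)$ with $\psi^k(\y)=2^{kQ}\,\beta(\y)\,\psi(2^k\cdot\y)$ for $k\ge1$ and a corresponding lowest term for $k=0$; after enlarging the supporting ball slightly (or rescaling $\beta$) one arranges that each $\psi^k$ is supported in the unit ball, which is property (\ref{Lemma2.8a}).

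Next I would verify the decay estimate (\ref{Lemma2.8b}). On the support of $\psi^k$ with $k\ge1$ we have $|\y|\ge\tfrac12$, hence $|2^k\cdot\y|\gtrsim 2^k$ in the homogeneous norm, so $(1+|2^k\cdot\y|_e)\gtrsim 2^{kd_1}$. Applying $\partial^\alpha$ and the Leibniz rule distributes derivatives between the fixed bump $\beta$ (whose derivatives are bounded) and $\psi(2^k\cdot\y)$; each derivative of the latter produces at most a factor $2^{kd_N}$, while the overall prefactor $2^{kQ}$ is a fixed power of $2$. Against this we may spend a Schwartz seminorm: $|\partial^\gamma\psi(\z)|\le \|\psi\|_{[M]}(1+|\z|_e)^{-M'}$ for $M'$ as large as we like (at the cost of increasing $M$), and with $|\z|=|2^k\cdot\y|_e\gtrsim 2^{kd_1}$ this yields a gain $2^{-kd_1 M'}$. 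Choosing $M'$ (hence $M$) large enough depending on $\delta$, $\alpha$, $Q$, and the $d_j$ absorbs all the positive powers of $2$ and leaves the bound $\|\psi\|_{[M]}\,2^{-k\delta}$; the $k=0$ term is handled trivially since it is a single compactly supported function controlled by finitely many seminorms of $\psi$.

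Finally, for property (\ref{Lemma2.8c}) I would observe that strong cancellation is the vanishing of $\int_{\R^{a_j}}\psi\,d\x_j$ for each $j$, and that this condition is \emph{not} in general inherited by the products $\psi\cdot\beta(2^{-k}\cdot)$. To fix this I would instead use Lemma \ref{Lemma1.12}: if $\psi$ has strong cancellation, write $\psi=\sum_{j_1\in J_1}\cdots\sum_{j_n\in J_n}\partial_{j_1}\cdots\partial_{j_n}\psi_{j_1,\dots,j_n}$ with the $\psi_{j_1,\dots,j_n}\in\Sc(\R^N)$ normalized relative to $\psi$, apply the already-constructed (cancellation-free) decomposition to each $\psi_{j_1,\dots,j_n}$, and then pull the $n$ derivatives back onto each dyadic piece. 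Since a dilate of a derivative is a (rescaled) derivative of a dilate, each resulting $\psi^k$ is then a sum over $(j_1,\dots,j_n)$ of $\partial_{j_1}\cdots\partial_{j_n}$ of a unit-ball-supported function, and by the easy direction of Lemma \ref{Lemma1.12} such a function has strong cancellation; the decay estimate (\ref{Lemma2.8b}) survives because differentiation at fixed scale $2^k$ costs only a further fixed power of $2$, which is again absorbed by choosing $M$ larger. The main obstacle is precisely this last point: the naive cutoff destroys cancellation, so one must route the construction through the primitive representation of Lemma \ref{Lemma1.12} and check that the quantitative control and compact support are preserved throughout; everything else is routine bookkeeping with the homogeneous norm and Schwartz seminorms.
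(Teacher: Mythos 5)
Your proposal is correct and follows essentially the same route as the paper: a telescoping dyadic partition of unity in the dilation variable (the paper uses $\eta(|\x|)$ with $\eta$ supported in $[\tfrac18,1]$, which makes the unit-ball support automatic), the Schwartz decay of $\psi$ on the annulus $|2^k\cdot\y|\gtrsim 2^k$ for part (\ref{Lemma2.8b}), and — crucially, as you correctly identified — routing part (\ref{Lemma2.8c}) through the primitive representation of Lemma \ref{Lemma1.12} and pulling the derivatives back onto each dyadic piece, since the naive cutoff does not preserve the vanishing integrals.
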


\begin{proof}
Choose $\eta\in \mathcal C^{\infty}_{0}(\R)$ supported in $[-1,-\frac{1}{8}]\cup[\frac{1}{8},1]$ with $\eta(t)=\eta(-t)$ such that $$\sum_{k=-\infty}^{+\infty}\eta(2^{-k}t) = 1$$ for all $t\neq 0$. For any $t\in \R$, including $t=0$, set $\eta_{0}(t) = 1-\sum_{k=1}^{\infty}\eta(2^{-k}t)$. Then $\eta_{0}\in \mathcal C^{\infty}_{0}(\R)$ is supported in $[-1,+1]$, and $\eta_{0}(t) + \sum_{k=1}^{\infty}\eta(2^{-k}t) \equiv 1$ for all $t\in \R$. Recall that $\x \to |\x|$ is a smooth homogeneous norm on $\R^{N}$. Set
\begin{equation*}
\psi^{k}(\x) =
\begin{cases}
\psi(\x)\eta_{0}(|\x|) & \text{if $k=0$,}\\\\
2^{kQ}\psi(2^{k}\cdot \x)\eta(|\x|) &\text{if $k\geq 1$}.
\end{cases}
\end{equation*}
Then $2^{-kQ}\psi^{k}(2^{-k}\cdot\x) = \psi(\x)\eta(2^{-k}|\x|)$ for $k\geq 1$, and so $\psi(\x) = \sum_{k=0}^{\infty}2^{-kQ}\psi^{k}(2^{-k}\cdot\x)$. From the choice of $\eta$ and the definition of $\eta_{0}$ it follows that each $\psi^{k}$ is supported on the set where $|\x|\leq 1$, and this gives the assertion (\ref{Lemma2.8a}). 

Since $\psi\in \mathcal S(\R^{N})$, it follows that for every  $M\in \N$ and every $\alpha\in \N^{N}$ with $|\alpha|\leq M$ we have
\begin{equation}\label{E2.9a}
|\partial^{\alpha}\psi(\x)|\leq ||\psi||_{[M]}(1+|\x|_{e})^{-M},
\end{equation}
where $|\x|_{e}$ is the Euclidean length of the vector $\x\in \R^{N}$.
If $\x$ belongs to the support of $\psi^{k}$ when $k\geq 1$, we have $|\x|\geq 2^{-3}$, and so $|2^{k}\cdot\x|\geq 2^{k-3}$. Assertion (\ref{Lemma2.8b}) then follows easily from equation (\ref{E2.9a}) and the fact that $|\x|^{d_{1}}\lesssim |\x|_{e}$ if $|\x|_{e}\geq 1$. 

Finally, suppose that $\psi$ has strong cancellation, so that $\int_{\R^{a_{k}}}\psi(\x_{1}, \ldots, \x_{n})\,d\x_{k}=0$ for $1 \leq k \leq n$. It follows from Lemma \ref{Lemma1.12} that we can write $\psi$ as a finite sum of terms of the form $\partial_{j_{1}}\cdots \partial_{j_{n}}\psi_{j_{1}, \ldots, j_{n}}$ where $x_{j_{k}}$ is a coordinate in $\x_{k}$ for $1\leq k \leq n$, and each function $\psi_{j_{1}, \ldots, j_{n}}\in \mathcal S(\R^{N})$ is normalized relative to $\psi$. Using assertions (\ref{Lemma2.8a}) and (\ref{Lemma2.8b}), we can write
\begin{equation*}
\psi_{j_{1}, \ldots, j_{n}}(\x)=\sum_{k=0}^{\infty}2^{-kQ}\psi_{j_{1}, \ldots, j_{n}}^{k}(2^{-k}\cdot\x),
\end{equation*}
and so
\begin{equation*}
\partial_{j_{1}}\cdots \partial_{j_{n}}\psi_{j_{1}, \ldots, j_{n}}(\x)=\sum_{k=0}^{\infty}2^{-k(d_{j_{1}}+ \cdots+d_{j_{n}}+Q)}\partial_{j_{1}}\cdots \partial_{j_{n}}\psi_{j_{1}, \ldots, j_{n}}^{k}(2^{-k}\cdot\x).
\end{equation*}
Since each term $2^{-k(d_{j_{1}}+ \cdots+d_{j_{n}})}\partial_{j_{1}}\cdots \partial_{j_{n}}\psi_{j_{1}, \ldots, j_{n}}^{k}$ has strong cancellation, summing over a finite number of such terms establishes assertion (\ref{Lemma2.8c}), and completes the proof.
\end{proof}

Finally, we will need the following result which provides a decomposition of test functions in $\mathcal C^{\infty}_{0}(\R^{N})$,

\begin{lemma}\label{Lemma2.7w}
Let $\varphi\in \mathcal C^{\infty}_{0}(\R^{N})$ have compact support in the unit ball, and suppose that $\varphi$ has cancellation in $\x_{1}$; \textit{i.e.}  $\int_{\R^{a_{1}}}\varphi(\x_{1}, \x_{2}, \ldots, \x_{n})\,d\x_{1}=0$. Then there are functions $\{\varphi^{j}\}\subseteq\mathcal C^{\infty}_{0}(\R^{N})$ such that
\begin{equation*}
\varphi(\x_{1}, \x_{2}, \ldots, \x_{n})= \sum_{j=-\infty}^{0}2^{-kQ_{1}}\varphi^{j}(2^{-j}\cdot\x_{1}, \x_{2}, \ldots, \x_{n}),
\end{equation*}
where $Q_{1}$ is the homogeneous dimension of $\R^{a_{1}}$. Moreover, these functions have the following properties.

\begin{enumerate}[{\rm(a)}]

\smallskip

\item Each $\varphi^{j}$ is supported in the unit ball;

\smallskip

\item Each $\varphi^{j}$ is normalized relative to the function $\varphi$;

\smallskip

\item For $-\infty<j\leq 0$ we have $\varphi^{j}(\x_{1}, \x_{2}, \ldots, \x_{n}) = 0$ if $|\x_{1}|\leq \frac{1}{8}$;

\smallskip

\item If $\varphi$ has strong cancellation, then each function $\varphi^{j}$ has strong cancellation.
\end{enumerate}
\end{lemma}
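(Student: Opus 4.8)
The plan is to avoid decomposing $\varphi$ directly, and instead to first ``peel off'' its cancellation: write $\varphi$ as a sum of $\x_{1}$-derivatives of compactly supported functions, dyadically decompose those primitives in the $\x_{1}$-variable by an \emph{ordinary} partition of unity (no cancellation is needed for that), and then differentiate back. The key point is that when the surviving derivative lands on a dyadically localized, compactly supported piece, it automatically restores the cancellation in $\x_{1}$ by the fundamental theorem of calculus. Throughout I would let $|\x_{1}|$ be a fixed smooth homogeneous norm on $\R^{a_{1}}$ and choose the homogeneous norm on $\R^{N}$ comparable to $\max_{l}N_{l}(\x_{l})$, so that $|\x|\le 1$ forces $|\x_{l}|\le 1$ for every $l$ and cut-offs in the first factor keep supports in the unit ball (enlarging constants and rescaling at the very end if needed). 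I would also fix $\eta\in\mathcal C^{\infty}_{0}(\R)$ supported in $[-1,-\tfrac18]\cup[\tfrac18,1]$ with $\sum_{k\in\Z}\eta(2^{-k}t)=1$ for $t\ne 0$, and set $\eta_{0}(t)=1-\sum_{k\ge 1}\eta(2^{-k}t)$, so that $\eta_{0}\in\mathcal C^{\infty}_{0}(\R)$ is supported in $[-1,1]$, equals $1$ near $0$, and $\eta_{0}(t)=\sum_{j\le 0}\eta(2^{-j}t)$ for $t\ne 0$.

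First I would use the cancellation of $\varphi$ in $\x_{1}$ and Lemma \ref{Lemma1.12} (with the single set $J_{1}$) to write $\varphi=\sum_{j_{1}\in J_{1}}\partial_{j_{1}}\Phi_{j_{1}}$ with each $\Phi_{j_{1}}\in\mathcal C^{\infty}_{0}(\R^{N})$ normalized relative to $\varphi$; when $\varphi$ also has strong cancellation I would instead apply Lemma \ref{Lemma1.12} to the full disjoint family $J_{1},\dots,J_{n}$ and regroup the expansion as $\varphi=\sum_{j_{1}\in J_{1}}\partial_{j_{1}}\Phi_{j_{1}}$, where now each $\Phi_{j_{1}}$ is a finite sum of terms $\partial_{j_{2}}\cdots\partial_{j_{n}}(\,\cdot\,)$ with $j_{l}\in J_{l}$, so that in addition $\Phi_{j_{1}}$ has cancellation in each of $\x_{2},\dots,\x_{n}$. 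Next, since $\Phi_{j_{1}}$ is supported in the unit ball, one checks directly (away from $\x_{1}=0$, hence in the sense of distributions) that
\[
\Phi_{j_{1}}(\x)=\sum_{j\le 0}2^{-jQ_{1}}\,\Phi_{j_{1}}^{\,j}(2^{-j}\cdot\x_{1},\x''),
\]
where $\Phi_{j_{1}}^{\,j}(\y_{1},\x'')=2^{jQ_{1}}\Phi_{j_{1}}(2^{j}\cdot\y_{1},\x'')\,\eta(|\y_{1}|)$ for $j<0$ and $\Phi_{j_{1}}^{\,0}=\Phi_{j_{1}}\cdot\bigl(\eta(|\x_{1}|)+1-\eta_{0}(|\x_{1}|)\bigr)$. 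Each $\Phi_{j_{1}}^{\,j}$ is then supported in the unit ball, vanishes for $|\x_{1}|\le\tfrac18$, is normalized relative to $\varphi$ uniformly in $j\le 0$ (the prefactor $2^{jQ_{1}}\le 1$ absorbs the dilation $2^{j}\cdot$ of $\Phi_{j_{1}}$), and inherits the cancellation of $\Phi_{j_{1}}$ in $\x_{2},\dots,\x_{n}$.

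Then I would apply $\partial_{j_{1}}$ termwise; using $\partial_{x_{j_{1}}}\bigl[\Phi_{j_{1}}^{\,j}(2^{-j}\cdot\x_{1},\x'')\bigr]=2^{-jd_{j_{1}}}(\partial_{j_{1}}\Phi_{j_{1}}^{\,j})(2^{-j}\cdot\x_{1},\x'')$ this gives
\[
\varphi(\x)=\sum_{j\le 0}2^{-jQ_{1}}\,\varphi^{j}(2^{-j}\cdot\x_{1},\x''),\qquad \varphi^{j}:=\sum_{j_{1}\in J_{1}}2^{-jd_{j_{1}}}\,\partial_{j_{1}}\Phi_{j_{1}}^{\,j},
\]
which is the asserted decomposition. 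Properties (a) and (c) are immediate, since $\varphi^{j}\in\mathcal C^{\infty}_{0}(\R^{N})$, differentiation preserves supports, and $\Phi_{j_{1}}^{\,j}$ vanishes for $|\x_{1}|\le\tfrac18$. For (b) I would expand $2^{-jd_{j_{1}}}\partial_{j_{1}}\Phi_{j_{1}}^{\,j}$ by the product rule into $2^{jQ_{1}}(\partial_{j_{1}}\Phi_{j_{1}})(2^{j}\cdot\y_{1},\x'')\,\eta(|\y_{1}|)$ and $2^{j(Q_{1}-d_{j_{1}})}\Phi_{j_{1}}(2^{j}\cdot\y_{1},\x'')\,\partial_{x_{j_{1}}}\!\bigl[\eta(|\y_{1}|)\bigr]$; both prefactors are $\le 1$ for $j\le 0$ because $d_{j_{1}}\le Q_{1}=\sum_{k\in J_{1}}d_{k}$, so $\varphi^{j}$ is normalized relative to $\varphi$ uniformly in $j$. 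For (d), when $\varphi$ has strong cancellation, $\int_{\R^{a_{1}}}\varphi^{j}\,d\x_{1}=0$ automatically because $\varphi^{j}$ is a sum of $\partial_{j_{1}}$-derivatives ($j_{1}\in J_{1}$) of compactly supported functions, while for $l\ge 2$ one has $\int_{\R^{a_{l}}}\varphi^{j}\,d\x_{l}=\sum_{j_{1}}2^{-jd_{j_{1}}}\partial_{j_{1}}\!\int_{\R^{a_{l}}}\Phi_{j_{1}}^{\,j}\,d\x_{l}=0$ since $\Phi_{j_{1}}^{\,j}$ inherited the cancellation of $\Phi_{j_{1}}$ in $\x_{l}$.

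The step I expect to be the main obstacle is exactly property (d) in the first variable: a dyadic partition of unity applied to $\varphi$ itself would destroy $\int\varphi\,d\x_{1}=0$, since the annular pieces $\varphi\cdot\eta(2^{-j}|\x_{1}|)$ have no reason to integrate to zero in $\x_{1}$. Routing the decomposition through the primitives $\Phi_{j_{1}}$ (which carry no cancellation) and only afterwards restoring $\partial_{j_{1}}$ is precisely what repairs this. A minor bookkeeping point, handled exactly as in Lemma \ref{Lemma2.8}, is that the displayed identity is read off the singular set $\x_{1}=0$ (equivalently in $\mathcal S'$ or $L^{1}_{\mathrm{loc}}$): each $\varphi^{j}$ vanishes near $\x_{1}=0$, so the right-hand side does too and the two sides agree precisely on $\{\x_{1}\ne 0\}$, which is all that is needed in the applications to flag kernels.
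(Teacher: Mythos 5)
Your proof is correct, but it takes a genuinely different route from the paper's. The paper works with $\varphi$ itself: it cuts $\varphi$ into dyadic annular pieces $\varphi\,\chi_{j}$ in the $\x_{1}$-variable, subtracts from each piece a multiple $\widetilde\chi_{j}a_{j}$ of its $\x_{1}$-mean to restore cancellation, and then redistributes the accumulated means $A_{j}=\sum_{s=j}^{0}a_{s}$ by an Abel-summation (telescoping) identity using the differences $\widetilde\chi_{j}-\widetilde\chi_{j-1}$, which have $\x_{1}$-integral zero. In that argument the hypothesis $\int\varphi\,d\x_{1}=0$ is used \emph{quantitatively}: it gives $\|A_{j}\|_{(m)}\lesssim 2^{jQ_{1}}\|\varphi\|_{(m)}$, which is exactly what makes the redistributed pieces uniformly normalized after rescaling, and it kills the boundary term in the summation by parts. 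You instead use the hypothesis \emph{structurally}, via Lemma \ref{Lemma1.12}: convert cancellation into primitives $\Phi_{j_{1}}$, dyadically decompose the primitives with an ordinary partition of unity (no cancellation needed there), and differentiate back, so that each piece $\partial_{j_{1}}\Phi_{j_{1}}^{\,j}$ has $\x_{1}$-integral zero automatically; uniform normalization then reduces to the elementary inequality $d_{j_{1}}\le Q_{1}$ controlling the prefactors $2^{jQ_{1}}$ and $2^{j(Q_{1}-d_{j_{1}})}$. What your route buys is transparency in property (d) and in the residual cancellation of the pieces in $\x_{1}$ — in the paper these require checking that $a_{j}$ and $A_{j}$ inherit the cancellation of $\varphi$ in $\x_{2},\dots,\x_{n}$, which is asserted rather tersely — at the cost of slightly heavier bookkeeping with homogeneous degrees and of producing pieces supported in a fixed dilate of the unit ball (harmless under Definition \ref{Def1.5}, as you note). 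Your handling of the convergence issue at $\x_{1}=0$ matches the paper's: both decompositions are identities off the measure-zero set $\{\x_{1}=0\}$, hence in $L^{1}_{\mathrm{loc}}$ and in $\mathcal S'$, which is all the applications use.
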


\begin{proof}
Choose $\eta \in \mathcal C^\infty_{0}(\R)$ such that $\eta( t )$ vanishes if $t \geq 2$ or $t\leq \frac{1}{4}$, and such that $\sum_{l \leq 0} \eta( 2^{-l}t) = 1$ for $0<t\leq 1$.  Put $A_{1}(\x_{2}, \ldots, \x_{n}) = 0$, and for $j\leq 0$, put
\begin{align*}
	\chi_j(\x_1) &=\eta\left(2^{-j} |\x_1| \right)\\
	\widetilde{\chi}_j(\x_1) &= \chi_j(\x_1)\Big[ \int_{\R^{a_{1}}} \chi_j (\x_{1})\,d\x_{1}\Big]^{-1}\\
	a_{j}(\x_2,  \dots, \x_n ) &= \int_{\R^{a_{1}}} \varphi(\x_{1},\x_{2},\ldots, \x_{n}) \chi_j(\x_1)\ d\x_1,\\
	A_{j}(\x_{2}, \ldots, \x_{n}) &= \sum_{s=j}^{0}a_{s}(\x_2,  \dots, \x_n ).
\end{align*}
We have $\sum_{j\leq 0}\chi_{j}(\x_{1}) \equiv 1$ for $0<|\x_{1}|\leq 1$, and  since $\varphi$ is supported in the unit ball, we can sum by parts  for $\x_1 \neq0$ to get
\begin{align*}
\varphi(\x) 
&=
\sum_{j\leq 0}\varphi(\x_{1}, \ldots,\x_{n})\chi_{j}(\x_{1})\\
&=
\sum_{j\leq 0}\big[\varphi(\x_{1}, \ldots,\x_{n})\chi_{j}(\x_{1})-\widetilde\chi_{j}(\x_{1})a_{j}( \x_2, \dots, \x_n )\big]\\
&\qquad \qquad \qquad 
+\sum_{j\leq 0}\big(\widetilde \chi_{j}(\x_{1})-\widetilde\chi_{j-1}(\x_{1})\big)A_{j}(\x_{2}, \ldots, \x_{n})
=
\sum_{j\leq 0} \widetilde \varphi_{j}(\x).
\end{align*}
Let 
\begin{equation*}
\varphi^{j}(\x_{1},\x_{2},  \ldots, \x_{n}) = 2^{jQ_{1}}\widetilde \varphi_{j}(2^{j}\cdot \x_{1}, \x_{2}, \ldots, \x_{n}).
\end{equation*}
Then
\begin{equation}\label{E2.10a}
\begin{aligned}
\varphi^{j}(\x) &= 
2^{jQ_{1}}\Big[\varphi(2^{j}\cdot\x_{1}, \ldots,\x_{n})\chi_{0}(\x_{1})-2^{-jQ_{1}}\widetilde\chi_{0}(\x_{1})a_{j}( \x_2, \ \dots, \x_n )
\\
&\qquad\qquad \qquad
+2^{jQ_{1}}\big(\widetilde \chi_{0}(\x_{1})-2^{-Q_{1}}\widetilde\chi_{0}(2\cdot\x_{1})\big)A_{j}(\x_{2}, \ldots, \x_{n})\Big],
\end{aligned}
\end{equation}
and
\begin{equation}\label{E2.11a}
\varphi(\x_{1}, \ldots, \x_{n}) = \sum_{j=-\infty}^{0}2^{-jQ_{1}}\,\varphi^{j}(2^{-j}\cdot \x_{1}, \x_{2}, \ldots, \x_{n}).
\end{equation}

The functions $\{a_{j}\}$ and hence also the functions $\{A_{j}\}$ are infinitely differentiable functions supported in the unit ball of $\R^{a_{2}}\oplus \cdots \oplus \R^{a_{n}}$. Moreover, it follows from the fact that $\chi_{j}(\x_{1})$ is supported on the set $|\x_{1}|\leq 2^{j+1}$ that there is a constant $C$ so that for each integer $m$, we have 
\begin{equation}\label{E2.12a}
||a_{j}||_{(m)}\leq C\,2^{jQ_{1}}||\varphi||_{(m)}.
\end{equation}
The function $\sum_{s=-\infty}^{j}\chi_{s}(\x_{1})$ is also supported on the set $|\x_{1}|\leq 2^{j+1}$, and is bounded independently of $j$. We have
\begin{equation*}
A_{j}(\x_{2}, \ldots, \x_{n}) + \int_{\R^{a_{1}}}\varphi(\x_{1}) \sum_{s=-\infty}^{j+1}\chi_{s}(\x_{1})\,d\x_{1}= \int_{\R^{a_{1}}}\varphi(\x_{1}, \x_{2}, \ldots, \x_{n})\,d\x_{1} = 0,
\end{equation*}
and it thus follows that we also have
\begin{equation}\label{E2.13a}
||A_{j}||_{(m)}\leq C\,2^{jQ_{1}}||\varphi||_{(m)}.
\end{equation}
It is clear from our construction that  each function $\varphi^{j}$ has compact support in the unit ball. It follows from equation (\ref{E2.10a}) that each $\varphi^{j}$ vanishes when $|\x_{1}|\leq \frac{1}{8}$, and also that if $\varphi$ has strong cancellation, then $ \int_{\R^{a_{k}}}\varphi^{j}(\x_{1},\x_{2}, \ldots, \x_{n})\,d\x_{k}=0$ for $1 \leq k \leq n$.  Finally, equations (\ref{E2.10a}), (\ref{E2.12a}), and (\ref{E2.13a}) show that $||\varphi^{j}||_{(m)}\leq C\,2^{-jQ_{1}}||\varphi||_{(m)}$. This completes the proof.
\end{proof}

\medskip

Let $\K$ be a flag kernel adapted to the flag $\mathcal F$, and let $m= \widehat{\mathcal K}$ be the flag multiplier on the flag $\mathcal F^{*}$ which is the Fourier transform of $\K$. 
Choose a function $\eta\in \mathcal C^{\infty}_{0}(\R)$ supported in $[\frac{1}{2},4]$ such that $\sum_{j\in \mathbb Z}\eta(2^{j}t) \equiv 1$ for all $t>0$. For each $I=(i_{1}, \ldots, i_{n})\in \mathcal E_{n}$, set 
\begin{equation}\label{E2.6}
\eta_{I}(\xi) = \eta(2^{i_{1}}|\xi_{1}|)\cdots \eta(2^{i_{n}}|\xi_{n}|).
\end{equation}
Note that $\eta_{I}$ is supported where $|\xi_{j}|\approx 2^{-i_{j}}$ for $1 \leq j \leq n$. We establish

\begin{lemma}\label{Lemma2.6}
Let $m$ be a flag multiplier relative to the flag $\mathcal F^{*}$ given in (\ref{E2.5}). Then
\begin{equation}\label{E2.7}
m(\xi) = \sum_{I\in \mathcal E_{n}} m_{0}(\xi)\eta_{I}(\xi) + \sum_{k=1}^{n}m_{k}(\xi)
\end{equation}
where $m_{0}$ is the Fourier transform of a flag kernel relative to the flag $\mathcal F$, and for $1 \leq k \leq n$, the function $m_{k}$ is the Fourier transform of a flag kernel adapted to a flag \emph{strictly coarser} than $\mathcal F$.
\end{lemma}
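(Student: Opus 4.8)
The plan is to build the decomposition \eqref{E2.7} by iterating a one-step construction that peels off the "top" variable $\xi_n$ from the flag. Fix $\eta$ as above and write $\eta_0(t) = 1 - \sum_{j\geq 1}\eta(2^j t)$, so $\eta_0 \in \mathcal C^\infty_0$ is supported in $\{|t| \leq 4\}$ and $\eta_0(t) + \sum_{j\geq 1}\eta(2^j t) \equiv 1$. The first step is to insert, for each choice of $i_1 \leq i_2 \leq \cdots \leq i_{n-1}$, a partition of unity in the $\xi_n$-variable subordinate to the dyadic scales $2^{-i_n}$ with $i_n \geq i_{n-1}$: one writes
\begin{equation*}
m(\xi) = \sum_{i_1\leq \cdots \leq i_{n-1}}\Big[\prod_{j=1}^{n-1}\eta(2^{i_j}|\xi_j|)\Big]\Big(\eta_0(2^{i_{n-1}}|\xi_n|) + \sum_{i_n > i_{n-1}}\eta(2^{i_n}|\xi_n|)\Big)m(\xi),
\end{equation*}
after first inserting $1 = \sum_{i_1 \in \Z}\eta(2^{i_1}|\xi_1|)$ and $1 = \eta_0(2^{i_1}|\xi_1|) + \sum_{i_2 > i_1}\eta(2^{i_2}|\xi_2|)$ recursively to get the full product $\prod_{j<n}\eta(2^{i_j}|\xi_j|)$ with $i_1 \leq \cdots \leq i_{n-1}$ (absorbing the "$\eta_0$ tails" along the way). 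The terms containing $\eta(2^{i_n}|\xi_n|)$ with $i_n > i_{n-1}$ will, after reassembly, produce $\sum_{I \in \mathcal E_n}m_0(\xi)\eta_I(\xi)$; the terms containing the tail factor $\eta_0(2^{i_{n-1}}|\xi_n|)$ are independent (up to that cutoff) of the fine structure in $\xi_n$ and will be recognized as flag multipliers on coarser flags in which $\R^{a_{n-1}}\oplus\R^{a_n}$ is merged.

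The key checks are then three. First, one must verify that $m_0(\xi) := \sum_{I\in\mathcal E_n}m(\xi)\eta_I(\xi)$, extended to all of $(\R^N)^*$, is itself a flag multiplier relative to $\mathcal F^*$ — that is, it satisfies the differential inequalities of Definition \ref{Def2.4}. On the support of $\eta_I$ one has $|\xi_j| \approx 2^{-i_j}$, and since $I \in \mathcal E_n$ means $i_1 \leq \cdots \leq i_n$, the quantity $|\xi_j| + \cdots + |\xi_n|$ is comparable to $|\xi_j| \approx 2^{-i_j}$; the cutoffs $\eta(2^{i_j}|\cdot|)$ contribute at most $C_\alpha 2^{i_j \langle\!\langle \bar\alpha_j\rangle\!\rangle}$ worth of derivatives in $\xi_j$, which is exactly the allowed bound, and the derivatives falling on $m$ are controlled by hypothesis. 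Finite-overlap of the supports of the $\eta_I$ in each dyadic annulus then gives the bound for the sum. This is essentially the content of (the easy direction of) the product-kernel decomposition from \cite{NaRiSt00}, restricted to the cone $\mathcal E_n$; I expect this to be routine but bookkeeping-heavy. Second, one must identify the leftover pieces: a term carrying the tail $\eta_0(2^{i_{n-1}}|\xi_n|)$ depends on $\xi_n$ only through a fixed-profile cutoff at scale $2^{-i_{n-1}}$, and grouping such terms according to the scales $i_1 \leq \cdots \leq i_{n-1}$ yields, by the same estimate with $\R^{a_{n-1}}$ and $\R^{a_n}$ fused into one factor of homogeneous dimension $Q_{n-1}+Q_n$, a flag multiplier for the flag $\mathcal F'$ obtained from $\mathcal F$ by deleting the subspace $\R^{a_n}$ from the chain — a flag strictly coarser than $\mathcal F$. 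Third, one iterates: apply the same one-step peeling to each of these coarser-flag multipliers if necessary, or more efficiently observe that handling the boundary of the cone $\mathcal E_n$ where consecutive indices coincide already produces all the coarser-flag contributions $m_1, \ldots, m_n$ at once (this is the remark following Theorem \ref{Lemma2.3} about interior versus boundary $n$-tuples). Invoking Lemma \ref{Lemma2.5}, each $m_k$ being a flag multiplier on a coarser flag is the Fourier transform of a flag kernel on that flag, and $m_0$ is the Fourier transform of a flag kernel on $\mathcal F$; this gives \eqref{E2.7}.

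The main obstacle is the second check — confirming that the tail terms genuinely reassemble into flag multipliers for the *coarser* flags, with constants controlled only by the flag-multiplier constants of $m$. The subtlety is that $\eta_0(2^{i_{n-1}}|\xi_n|)$ is not exactly "no cutoff in $\xi_n$"; it is a cutoff to the region $|\xi_n| \lesssim 2^{-i_{n-1}}$, which is precisely what the coarser flag (with $\R^{a_{n-1}}\oplus\R^{a_n}$ merged) sees at scale $i_{n-1}$, so one must check that $|\xi_{n-1}| + |\xi_n| \approx |\xi_{n-1}| \approx 2^{-i_{n-1}}$ there and that derivatives in $\xi_n$ of $\eta_0(2^{i_{n-1}}|\xi_n|)$ cost exactly $2^{i_{n-1}\langle\!\langle \bar\alpha_n\rangle\!\rangle}$, matching the coarser flag's differential inequality with the combined homogeneous dimension. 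Once this matching of scales is pinned down, the rest is the same finite-overlap summation argument as for $m_0$. The uniformity of constants is then automatic because every estimate used refers back only to the $C_\alpha$ in Definition \ref{Def2.4} for $m$ and to the fixed seminorms of $\eta$.
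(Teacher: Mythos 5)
Your overall strategy is the same as the paper's: peel off the part of $m$ supported on the ``ordered'' region $|\xi_{1}|\gtrsim|\xi_{2}|\gtrsim\cdots\gtrsim|\xi_{n}|$ (which carries the sum $\sum_{I\in\mathcal E_{n}}m\,\eta_{I}$) and absorb the complementary pieces into coarser flags. The paper does the peeling with smooth cutoffs $\theta(|\xi_{j-1}|\,|\xi_{j}|^{-1})$ in the successive ratios, iterated from $j=n$ downward, and only at the very end observes that on the remaining support ($|\xi_{j}|\geq 10|\xi_{j+1}|$ for all $j$) the restricted partition of unity $\sum_{I\in\mathcal E_{n}}\eta_{I}$ is identically $1$; your version builds the dyadic partition in from the start. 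That difference is cosmetic. (A small formal point: with your definition $m_{0}:=\sum_{I}m\,\eta_{I}$ one does not literally get $\sum_{I}m_{0}\eta_{I}$ as the main term, since the $\eta_{I}$ are not idempotent; either take $m_{0}=m$ itself, or follow the paper and define $m_{0}$ by the ratio cutoffs so that $\sum_{I\in\mathcal E_{n}}\eta_{I}\equiv 1$ on its support.)

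There is, however, a genuine gap in your treatment of the tail terms, and it is exactly at the step you single out as the main obstacle. You place the coarser-flag contribution on the region $|\xi_{n}|\lesssim|\xi_{n-1}|$ and verify the merged-block inequality there using $|\xi_{n-1}|+|\xi_{n}|\approx|\xi_{n-1}|$. This is backwards on two counts. First, the complement of the cone $\mathcal E_{n}$ consists of the dyadic pieces with $i_{j+1}<i_{j}$ for some $j$, i.e.\ the region where a \emph{later} variable is large relative to an earlier one ($|\xi_{j+1}|\gtrsim|\xi_{j}|$); that, not its opposite, is what must be swept into the coarser flags. (Indeed, with $\eta_{0}(t)=1-\sum_{j\geq 1}\eta(2^{j}t)=\sum_{j\leq 0}\eta(2^{j}t)$ your own formula forces $\eta_{0}(2^{i_{n-1}}|\xi_{n}|)$ to be supported where $|\xi_{n}|\gtrsim 2^{-i_{n-1}}$, i.e.\ where $|\xi_{n}|$ is \emph{large}; it is not supported in $\{|t|\leq 4\}$ as you assert.) Second, and more importantly, the merged-block differential inequality can only be deduced from the hypotheses on $m$ on the region $|\xi_{n-1}|\lesssim|\xi_{n}|$: there $(|\xi_{n-1}|+|\xi_{n}|)^{-[\![\bar\alpha_{n}]\!]}\approx|\xi_{n}|^{-[\![\bar\alpha_{n}]\!]}$, so the bound $|\partial^{\bar\alpha_{n}}_{\xi_{n}}m|\leq C|\xi_{n}|^{-[\![\bar\alpha_{n}]\!]}$ transfers to the coarser flag. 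On your region $|\xi_{n}|\ll|\xi_{n-1}|$ the required bound $(|\xi_{n-1}|+|\xi_{n}|)^{-[\![\bar\alpha_{n}]\!]}\approx|\xi_{n-1}|^{-[\![\bar\alpha_{n}]\!]}$ is far smaller than the available $|\xi_{n}|^{-[\![\bar\alpha_{n}]\!]}$; your verification only tracks derivatives falling on the cutoff $\eta_{0}$ and omits the derivatives falling on $m$ itself, which is where the estimate breaks. Once the regions are assigned correctly -- main term on $i_{1}\leq\cdots\leq i_{n}$, coarser terms where some $|\xi_{j}|\lesssim|\xi_{j+1}|$ -- the rest of your finite-overlap argument goes through and matches the paper's proof.
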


\begin{proof}
Let $\theta$ be a smooth function on $\R$ supported where $t \geq 10$ such that $\theta(t) = 1$ for $t\geq 20$.  Write
\begin{equation*}
	m(\xi) = m(\xi) \theta\left( | \xi_{n-1} |\, | \xi_n|^{-1} \right) 
		+ m(\xi)\big[1 - \theta\left( | \xi_{n-1} |\, | \xi_n|^{-1} \right) \big] = n_1(\xi) + m_1(\xi).
\end{equation*}
On the support of $\theta'\left( | \xi_{n-1} |\, | \xi_n|^{-1} \right)$ we have $|\xi_{n-1}| \sim |\xi_n|$. Also, by  homogeneity we have
\begin{equation*}
	\big\vert\partial^{\alpha_{n-1}}_{\xi_{n-1}}(|\xi_{n-1}|)\big\vert
	\leq C |\xi_{n-1}|^{1 - |\alpha_{n-1}| } 
	\qquad\text{and} \qquad
	\big\vert\partial^{\alpha_{n}}_{\xi_{n}}(|\xi_{n}|)\big\vert
	\leq C |\xi_n|^{1-|\alpha_n|}.
\end{equation*}
Thus Lemma \ref{Lemma2.5} implies that $n_1(\xi)$ and $m_1(\xi)$ are flag multipliers relative to the flag (\ref{E2.5}). On the support of $m_{1}$ we have $|\xi_{n-1}|\, |\xi_n|^{-1} \leq 20$.  Thus we can group together the variables $\xi_n$ and $\xi_{n-1}$, and it follows that $m_{1}(\xi)$ is a flag kernel relative to a flag \emph{coarser} than $\mathcal F^{*}$.  Also $|\xi_{n-1}| \geq 10 |\xi_n|$ on the support of $n_{1}$.

Next write
\begin{equation*}
	n_{1}(\xi) = n_{1}(\xi) \theta\left( | \xi_{n-2} |\, | \xi_{n-1}|^{-1} \right) 
		+n_{1}(\xi)\big[1 - \theta\left( | \xi_{n-2} |\, | \xi_{n-1}|^{-1} \right) \big] = n_2(\xi) + m_2(\xi).
\end{equation*}
Since $\theta'\left( | \xi_{n-2} |\, | \xi_{n-1}|^{-1} \right)$ is supported where $|\xi_{n-2}| \sim |\xi_{n-1}|$ and $|\xi_{n-1}| \geq 10 |\xi_n|$, it again follows that $n_2$ and $m_2$ are Fourier transforms of flag kernels relative to the flag $\mathcal F^{*}$. On the support of $m_2$ we have $|\xi_{n-2}|\, |\xi_{n-1}|^{-1} \leq 20$ and $|\xi_{n-1}|\, |\xi_n|^{-1} \leq 20$. Thus we can group together the variables $\xi_n, \xi_{n-1}$ and $\xi_{n-2}$, and it follows that $m_{2}(\xi)$ is a flag kernel relative to a flag \emph{coarser} than $\mathcal F^{*}$. Also $|\xi_{n-1}|\geq 10|\xi_{n-1}|$, and $|\xi_{n-1}|\geq 10|\xi_{n}|$ on the support of $n_{2}$.

We proceed inductively to see that
$$
	m(\xi) = m_{0}(\xi) + \sum_{k=1}^{n} m_k(\xi)
$$
where each $m_s(\xi)$ is the Fourier transform of a coarser flag kernel and ${m}_{0}(\xi)=n_{n}(\xi)$ is supported where $|\xi_j| \geq 10 |\xi_{j+1}|$ for $1 \leq j \leq n-1$.

From our choice of $\eta$, it follows that 
\begin{align*}
1 &= \sum_{J=(j_{1}, \ldots, j_{n})\in \Z^{n}}\eta\left(2^{j_1} | \xi_1 | \right)\cdots \eta\left(2^{j_n}|\xi_n| \right)\\
&=
\sum_{J\in \mathcal E_{n}}\eta\left(2^{j_1} | \xi_1 | \right)\cdots \eta\left(2^{j_n}|\xi_n| \right) +\sum_{J\in \mathbb Z^{n}-E_{n}}\eta\left(2^{j_1} | \xi_1 | \right)\cdots \eta\left(2^{j_n}|\xi_n| \right)
\end{align*}
Thus if $\sum_{J\in \mathcal E_{n}}\eta\left(2^{j_1} | \xi_1 | \right)\cdots \eta\left(2^{j_n}|\xi_n| \right)\neq 1$, there is an $n$-tuple $J=(j_{1}, \ldots, j_{n})$ such and integers $1 \leq r<s\leq n$ such that $j_{r}>j_{s}$ and $\eta\left(2^{j_r} | \xi_r | \right)\, \eta\left(2^{j_s}|\xi_s| \right)\neq 0$. Since $\eta$ is supported on $[\frac{1}{2},4]$, it follows that $|\xi_{r}|\leq 4\,2^{-j_{r}}\leq 4\,2^{-j_{s}}\leq 8|\xi_{s}|$. However, on the support of $m_{0}$ we have $|\xi_{r}|>10|\xi_{s}|$. Thus on the support of $m_{0}$ we have $1 = \sum_{J\in \mathcal E_{n}}\eta\left(2^{j_1} | \xi_1 | \right)\cdots \eta\left(2^{j_n}|\xi_n| \right)$, and so
\begin{equation*}
m(\xi) = \sum_{J\in \mathcal E_{n}}m_{0}(\xi)\eta\left(2^{j_1} | \xi_1 | \right)\cdots \eta\left(2^{j_n}|\xi_n| \right) + \sum_{k=1}^{n} m_k(\xi),
\end{equation*}
which completes the proof.
\end{proof}

\smallskip

We now turn to the proof of Theorem \ref{Lemma2.3}.
If we write $\K_{j}$ as the inverse Fourier transform of the flag multiplier $m_{j}$ of Lemma \ref{Lemma2.6}, we have shown that $\K = \K_{0}+ \sum_{j=1}^{n}\K_{j}$, and for $1 \leq j \leq n$, $\K_{j}$ is a flag kernel adapted to a flag which is coarser than $\mathcal F$. Also since $m_{0}(\xi) = \sum_{J\in \mathcal E_{n}}m_{0}(\xi)\eta_{I}(\xi)$, we can write 
\begin{equation}\label{sdf}
\K_{0}= \sum_{J\in \mathcal E_{n}} \big[\Psi^{I}\big]_{I}
\end{equation}
where
\begin{equation*}
\widehat{\Psi^{I}}(\xi) = m_{0}(2^{-i_{1}}\cdot \xi_{1}, \ldots, 2^{-i_{n}}\cdot\xi_{n}) \eta(|\xi_{1}|)\cdots \eta(|\xi_{n}|),
\end{equation*}
and the sum converges in the sense of distributions. The differential inequalities for $m_{0}$ imply that each function $\Psi^{I}\in \mathcal S(\R^{N})$,  with Schwartz norms uniformly bounded in $I$.  Also since $\Psi^{I}$ vanishes on the coordinate axes, for $1 \leq k \leq n$ we have
\begin{equation}\label{eq:eq10}
	\int \Psi^{I}(\x_1,\ldots,\x_n)\ d\x_k = 0.
\end{equation}



\smallskip

In order to complete the proof of Theorem \ref{Lemma2.3}, it remains to show that we can replace the Schwartz functions $\Psi^{I}$ in equation (\ref{sdf}) by functions in $\mathcal C^{\infty}_{0}(\R^{N})$, all supported in the unit ball with strong cancellation, and which vanish when $|\x_{1}|<\epsilon$. This is done in two steps, using Lemmas \ref{Lemma2.8} and \ref{Lemma2.7w}.

\smallskip

First, according to Lemma \ref{Lemma2.8}, for each $I\in \mathcal E_{n}$ there are functions $\psi^{k,I}\in \mathcal C^{\infty}_{0}(\R^{N})$ each supported in the unit ball and having strong cancellation, so that $\psi^{I}(\x) = \sum_{k=0}^{\infty}2^{-kQ}\,\psi^{k,I}(2^{-k}\cdot \x)$, and for every $\Delta >0$ and every positive integer $m$ there is an integer $p_{m}$ so that 
\begin{equation*}
||\psi^{k,I}||_{(m)}\leq 2^{-k\Delta}\,||\psi^{I}||_{[m+p_{m}]}.
\end{equation*}
Thus $[\psi^{I}]_{I}(\x) = \sum_{k=0}^{\infty}2^{-Q_{1}(k+i_{1}) - \cdots - Q_{n}(k+i_{n})}\psi^{k,I}(2^{-(i_{1}+k)}\cdot\x_{1}, \ldots, 2^{-(i_{n}+k)}\cdot\x_{n})$, and so formally
\begin{align*}
\sum_{I\in \mathcal E_{n}}[\psi^{I}]_{I}(\x) 
&= 
\sum_{k=0}^{\infty}\sum_{I\in \mathcal E_{n}} 2^{-Q_{1}(k+i_{1}) - \cdots - Q_{n}(k+i_{n})}\psi^{k,I}(2^{-(i_{1}+k)}\cdot\x_{1}, \ldots, 2^{-(i_{n}+k)}\cdot\x_{n})\\
&=
\sum_{k=0}^{\infty}\sum_{J\in \mathcal E_{n}} 2^{-Q_{1}(j_{1}) - \cdots - Q_{n}(j_{n})}\psi^{k,I}(2^{-j_{1}}\cdot\x_{1}, \ldots, 2^{-j_{n}}\cdot\x_{n})\\
&=
\sum_{J\in \mathcal E_{n}}2^{-Q_{1}(j_{1}) - \cdots - Q_{n}(j_{n})}\Big[\sum_{k=0}^{\infty}\psi^{k,I}\Big](2^{-j_{1}}\cdot\x_{1}, \ldots, 2^{-j_{n}}\cdot\x_{n})\\
&=
\sum_{J\in \mathcal E_{n}}\Big[\sum_{k=0}^{\infty}\psi^{k,I}\Big]_{J}(\x),
\end{align*}
The estimates we have on the functions $\{\psi^{k,I}\}$ show that the series $\sum_{k=0}^{\infty}\psi^{k,I}= \widetilde\varphi^{I}$ converges in $\mathcal C^{\infty}_{0}(\R^{N})$ to a function in  supported in the unit ball which has strong cancellation. This formal calculation is easily justified by applying it to finite sums of dilates. Thus we have shown $\K_{0}= \sum_{I\in \mathcal E_{n}}[\widetilde\varphi^{I}]_{I}$ with convergence in the sense of distributions, where the functions $\widetilde \varphi^{I}\in \mathcal C^{\infty}_{0}(\R^{N})$ all have support in the unit ball, have strong cancellation, and are uniformly bounded in each semi-norm $||\,\cdot\,||_{(m)}$.

\smallskip


Finally, Lemma \ref{Lemma2.7w} shows that for each $I\in \mathcal E_{n}$, there exist functions $\varphi^{j,I}\in \mathcal C^{\infty}_{0}(\R^{N})$ with strong cancellation, each supported in the unit ball, normalized relative to $\widetilde \varphi^{I}$, and vanishing when $|\x_{1}|\leq \frac{1}{8}$ so that $\widetilde \varphi^{I}(\x_{1}, \x_{2}, \ldots, \x_{n})= \sum_{j=-\infty}^{0}2^{-jQ_{1}}\varphi^{j,I}(2^{-j}\cdot\x_{1}, \x_{2}, \ldots, \x_{n})$. We then have
\begin{equation*}
\begin{aligned}
\big[\widetilde{\varphi}^{I}\big]_{I}&(\x_{1}, \x_{2}, \ldots, \x_{n})\\
&= \sum_{j=-\infty}^{0}2^{-(j+i_{1})Q_{1}-i_{2}Q_{2}-\cdots -i_{n}Q_{n}}\varphi^{j,I}(2^{-(j+i_{1})}\cdot\x_{1}, 2^{-i_{2}}\cdot\x_{2}, \ldots, 2^{-i_{n}}\cdot\x_{n}),
\end{aligned}
\end{equation*}
and so since we are summing only over non-positive indices $j$, we have
\begin{align*}
\sum_{I\in \mathcal E_{n}}[\widetilde \varphi^{I}]_{I}(\x)
&=
\sum_{j=-\infty}^{0}\sum_{I\in \mathcal E_{n}}
2^{-(j+i_{1})Q_{1}-i_{2}Q_{2}-\cdots -i_{n}Q_{n}}\varphi^{j,I}(2^{-(j+i_{1})}\cdot\x_{1}, \ldots, 2^{-i_{n}}\cdot\x_{n})\\
&=
\sum_{j=-\infty}^{0}\sum_{I\in \mathcal E_{n}}
2^{-i_{1}Q_{1}-i_{2}Q_{2}-\cdots -i_{n}Q_{n}}\varphi^{j,I}(2^{-i_{1}}\cdot\x_{1},  \ldots, 2^{-i_{n}}\cdot\x_{n})\\
&=
\sum_{I\in \mathcal E_{n}}
2^{-i_{1}Q_{1}-i_{2}Q_{2}-\cdots -i_{n}Q_{n}}\Big[\sum_{j=-\infty}^{0}\varphi^{j,I}\Big](2^{-i_{1}}\cdot\x_{1},  \ldots, 2^{-i_{n}}\cdot\x_{n})\\
&=
\sum_{I\in \mathcal E_{n}}\Big[\sum_{j=-\infty}^{0}\varphi^{j,I}\Big]_{I}(\x).
\end{align*}
The estimates we have on the functions $\{\varphi^{j,I}\}$ show that the series $\sum_{j=-\infty}^{0}\varphi^{j,I}= \varphi^{I}$ converges in $\mathcal C^{\infty}_{0}(\R^{N})$ to a function in  supported in the unit ball which has strong cancellation, and which vanishes when $|\x_{1}|\leq \frac{1}{8}$. This at last completes the proof of Theorem \ref{Lemma2.3}.

\subsection{Dyadic sums with weak cancellation}\quad

\smallskip

The main result of this section, Theorem \ref{Lemma5.4zw}, is a strengthening of the converse to Theorem \ref{Lemma2.3} which would assert that sums of dilates of appropriate compactly supported bump functions with strong cancellation are flag kernels; we consider sums of Schwartz functions instead of compactly supported functions, and more critically, we assume only weak cancellation instead of strong cancellation relative to the decomposition $\R^{N}= \R^{a_{1}}\oplus \cdots \oplus \R^{a_{n}}$. 


\smallskip

\begin{theorem}\label{Lemma5.4zw}
For each $I\in \mathcal E_{n}$, let $\varphi^{I}\in \mathcal S(\R^{N})$, and suppose
\begin{enumerate}[{\rm (a)}]

\smallskip

\item there are constants $C_{N}>0$ so that $||\varphi^{I}||_{[N]}\leq C_{N}$ for each $I\in \mathcal E_{n}$ and each $N\geq 0$;

\smallskip

\item there is a constant $\epsilon >0$ so that each $\varphi^{I}$ has weak cancellation with respect to $I$ with parameter $\epsilon$ relative to the decomposition $\R^{N}= \R^{a_{1}}\oplus\cdots\oplus\R^{a_{n}}$.
\end{enumerate}
Then we have the following conclusions.
\begin{enumerate}[{\rm(1)}]
\smallskip

\item \label{Thm3.7(1)}For any finite set $F\subset E_{n}$, the function $K_{F} = \sum_{I\in F}[\varphi^{I}]_{I}\in \mathcal S(\R^{N})$ defines a flag kernel $\K_{F}$ for the  flag  $\mathcal F$ with bounds which are independent of the set $F$.

\smallskip

\item \label{Thm3.7(2)}Let $F_{1}\subset F_{2}\subset \cdots \subset F_{m}\subset \cdots $ be any increasing sequence of finite subsets of $E_{N}$ with $E_{N}= \bigcup_{m=1}^{\infty}F_{m}$. Then for any test function $\psi\in \mathcal S(\R^{N})$, 
\begin{equation*}
\lim_{m\to\infty}\langle K_{F_{m}},\psi\rangle = \lim_{m\to\infty}\sum_{I\in F_{m}}\int_{\R^{N}} [\varphi^{I}]_{I}(\x)\psi(\x)\,dx
\end{equation*}
exists and defines a flag kernel $\K\in \mathcal S'(\R^{N})$ which is independent of the choice of the finite subsets. We write this limit as $\displaystyle \mathcal K=\lim_{F\nearrow E_{N}}\sum_{I\in F}[\varphi^{I}]_{I}$.

\end{enumerate}

\end{theorem}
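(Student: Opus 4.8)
The plan is to prove Theorem \ref{Lemma5.4zw} by reducing it to the characterization of flag kernels in terms of their Fourier transforms (Lemma \ref{Lemma2.5}), combined with a careful analysis of the Fourier transforms $\widehat{[\varphi^I]_I}(\xi)$ and the exponential gains supplied by weak cancellation. Part (\ref{Thm3.7(2)}) will then follow routinely from part (\ref{Thm3.7(1)}): once we have uniform flag-kernel bounds for the partial sums $K_F$, the pairing $\langle K_F,\psi\rangle$ is Cauchy (indeed the tail sums are controlled geometrically in the ``diameter'' of $F$ using the weak-cancellation gains), so the limit exists, is independent of the exhaustion, and satisfies the same bounds, hence is a flag kernel. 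So the heart of the matter is the uniform estimate in (\ref{Thm3.7(1)}).

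First I would set up notation: write $\widehat{[\varphi^I]_I}(\xi) = \widehat{\varphi^I}(2^{i_1}\!\cdot\xi_1,\ldots,2^{i_n}\!\cdot\xi_n)$, and recall that it suffices (by Lemma \ref{Lemma2.5}) to show that $\widehat{K_F}(\xi)=\sum_{I\in F}\widehat{[\varphi^I]_I}(\xi)$ satisfies the flag-multiplier differential inequalities of Definition \ref{Def2.4} uniformly in $F$. Fix $\xi$ with all $\xi_j\neq 0$ and a multi-index; for each $I$ the summand is essentially supported, after differentiation, where $2^{i_j}|\xi_j|\lesssim 1$, i.e.\ $i_j \lesssim -\log_2|\xi_j|$, because $\widehat{\varphi^I}$ is Schwartz uniformly in $I$. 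This pins down the ``location'' of the relevant $I$'s. The differential inequality for the multiplier at $\xi$ with the factor $(|\xi_j|+\cdots+|\xi_n|)^{-\langle\bar\alpha_j\rangle}$ then amounts to showing that summing the contributions of $I\in\mathcal E_n$ in the admissible range produces only a bounded constant (times the required product of powers of $|\xi_j|$). Without cancellation this sum would diverge logarithmically along the faces where consecutive $i_j$ coincide or along the directions $i_j\to-\infty$ that are still admissible; the role of weak cancellation is exactly to supply convergence.

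The key mechanism is to translate weak cancellation of $\varphi^I$ into decay of $\widehat{\varphi^I}(2^{i_1}\!\cdot\xi_1,\ldots)$. Using Definition \ref{Def1.14a2}, write $\varphi^I$ as the finite sum over partitions $A\cup B=\{1,\ldots,n\}$ (with $n\in A$) of $\prod_{s\in B}2^{-\epsilon(i_{s+1}-i_s)}$ times sums of $\partial_{j_1}\cdots\partial_{j_r}\varphi^I_{A,B,\ldots}$ with one derivative in each subspace $\R^{a_{\alpha_t}}$, $\alpha_t\in A$. On the Fourier side each such term picks up a factor $\prod_{t}(2^{i_{\alpha_t}}\xi_{\alpha_t,j_t})$, i.e.\ a gain $\prod_{\alpha\in A}\min(1,2^{i_\alpha}|\xi_\alpha|)$ coming from the derivatives (since $\widehat{\varphi^I_{A,B}}$ is Schwartz uniformly, $|2^{i_\alpha}\xi_{\alpha}\,\widehat{\varphi^I_{A,B}}(2^{I}\!\cdot\xi)|\lesssim \min(1,2^{i_\alpha}|\xi_\alpha|)$, and more generally any power via higher derivatives of the Schwartz bound), plus the explicit factor $\prod_{s\in B}2^{-\epsilon(i_{s+1}-i_s)}$. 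Thus for every $I$ and every subset partition we have, with room to spare,
\begin{equation*}
\big|\partial^{\bar\alpha}_\xi \widehat{[\varphi^I]_I}(\xi)\big| \;\lesssim\; \Big(\prod_{j=1}^n (2^{i_j}|\xi_j|)^{-\langle\bar\alpha_j\rangle}\Big)\Big(\prod_{\alpha\in A}\min(1,2^{i_\alpha}|\xi_\alpha|)^{\epsilon_0}\Big)\Big(\prod_{s\in B}2^{-\epsilon(i_{s+1}-i_s)}\Big)
\end{equation*}
for suitable $\epsilon_0>0$ and for every admissible $I$ (one absorbs the powers $(2^{i_j}|\xi_j|)^{-\langle\bar\alpha_j\rangle}\ge 1$ in the admissible range into the Schwartz decay). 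The remaining task is the combinatorial one: sum the right-hand side over $I\in\mathcal E_n$. Decompose $\mathcal E_n$ according to which of the consecutive gaps $i_{s+1}-i_s$ are ``large'' and which entries $i_\alpha$ are such that $2^{i_\alpha}|\xi_\alpha|$ is small; over the complementary region one has a genuine geometric series in each independent direction (the gain $\min(1,2^{i_\alpha}|\xi_\alpha|)^{\epsilon_0}$ gives geometric decay as $i_\alpha\to-\infty$, and $2^{-\epsilon(i_{s+1}-i_s)}$ gives geometric decay as the gap grows). Summing everything yields a bound by a constant times $\prod_{j=1}^n(|\xi_j|+\cdots+|\xi_n|)^{-\langle\bar\alpha_j\rangle}$ — one checks that replacing $|\xi_j|$ by $|\xi_j|+\cdots+|\xi_n|$ is exactly what the monotonicity $i_1\le\cdots\le i_n$ on $\mathcal E_n$ buys, since along $\mathcal E_n$ the constraint is governed by the largest of $|\xi_j|,\ldots,|\xi_n|$ at each level.

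I expect the main obstacle to be this last bookkeeping step: organizing the sum over the cone $\mathcal E_n$ so that the gains from weak cancellation (which come either from derivatives in subspaces indexed by $A$ or from gap factors $2^{-\epsilon(i_{s+1}-i_s)}$ indexed by $B$, and which vary with the partition) always dominate the potentially divergent directions, and simultaneously producing the ``flag'' form $(|\xi_j|+\cdots+|\xi_n|)^{-\langle\bar\alpha_j\rangle}$ rather than merely $|\xi_j|^{-\langle\bar\alpha_j\rangle}$. The device that makes this manageable is to fix $\xi$, set $\ell_j=\lfloor -\log_2|\xi_j|\rfloor$ (note $\ell_1\ge\ell_2\ge\cdots$ up to $O(1)$ by no ordering assumption on $\xi$ — actually one must also split according to the ordering of the $|\xi_j|$, which is where coarser flags would enter were we proving the decomposition theorem, but here for a fixed flag multiplier inequality we only need the upper bound), and sum geometric series in the shifted variables $i_j-\ell_j$. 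Everything else — justifying the interchange of summation and differentiation, the uniform Schwartz bounds on the $\varphi^I_{A,B,\ldots}$ coming from ``normalized relative to $\varphi^I$'', and the passage from part (1) to part (2) — is routine and I would treat it briefly.
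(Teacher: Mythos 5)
Your route is genuinely different from the paper's. The paper never passes to the Fourier transform at this point: part (1) is proved entirely in physical space by verifying the two halves of Definition \ref{Def2.1} separately --- the size estimates come from Proposition \ref{Proposition5.3z} together with the geometric-sum estimate of Proposition \ref{Prop2.11} in Appendix II (no cancellation is needed for this half), and the cancellation conditions come from Proposition \ref{Lemma5.4y}, whose engine is Proposition \ref{Prop2.15}: one integrates $[\varphi^I]_I$ directly against $\psi(R\cdot\x'')$ and uses weak cancellation to extract the gains $2^{-\epsilon(i_{m_r+1}-i_{m_r})}$ and $\min\{(R_s2^{i_{m_s}})^{\epsilon},(R_s2^{i_{m_s}})^{-\epsilon}\}$ that make the sum over the integrated-out indices converge. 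You instead reduce everything to the multiplier characterization (Lemma \ref{Lemma2.5}) and do the almost-orthogonality bookkeeping on the Fourier side. This is a legitimate and arguably more unified strategy, and your identification of the three sources of decay --- derivative factors from the $A$-part of weak cancellation as $i_\alpha\to-\infty$, the explicit $2^{-\epsilon(i_{s+1}-i_s)}$ factors from the $B$-part, and Schwartz decay of $\widehat{\varphi^I}$ as $i_j\to+\infty$ --- is exactly what makes the sum over the cone $\mathcal E_n$ converge to a flag multiplier. What your route costs is, first, the converse half of Lemma \ref{Lemma2.5} in quantitative form (flag-kernel constants controlled by flag-multiplier constants, uniformly in $F$); this is true in \cite{NaRiSt00} but must be invoked explicitly. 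Second, the combinatorics of producing $(|\xi_j|+\cdots+|\xi_n|)^{-\langle\bar\alpha_j\rangle}$ rather than $|\xi_j|^{-\langle\bar\alpha_j\rangle}$ is where the real work sits; your displayed bound, with factors $(2^{i_j}|\xi_j|)^{-\langle\bar\alpha_j\rangle}\ge 1$ on the admissible range, only closes if $\epsilon_0$ is taken larger than $\max_j\langle\bar\alpha_j\rangle$ and the block norms $N_j(\xi_j)$ (not single coordinates $\xi_{j,t}$) are used when converting the derivatives $\partial_{j_t}$ into Fourier-side gains. One correction on part (2): it does not follow from part (1) alone, since uniform bounds on the partial sums give no convergence; but your parenthetical supplies the right supplement --- weak cancellation makes $|\langle[\varphi^I]_I,\psi\rangle|$ geometrically small in each coordinate direction of $\mathcal E_n$, so the series is absolutely summable and the limit is independent of the exhaustion. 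The paper instead integrates by parts and applies dominated convergence against the explicit integrable majorants $K^{\ell}$ and $K^{0}$, which has the side benefit of producing a concrete representation of the limiting distribution.
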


\smallskip

Since the proof of this result is somewhat involved, \label{remarks} let us indicate the main steps. If $F\subset E_{n}$ is any finite set, let $\K_{F}= \sum_{I\in F}[\varphi^{I}]_{I}$. We show (in Proposition \ref{Proposition5.3z}) that $K_{F} = \sum_{I\in \mathcal E_{n}}\big[\varphi^{I}\big]_{I}$ satisfies the correct size estimates,  and also (in Proposition \ref{Lemma5.4y}) that $K_{F}$ satisfies the cancellation conditions  with constants independent of $F$ of Definition \ref{Def2.1}. This will establish part (\ref{Thm3.7(1)}) of Theorem \ref{Lemma5.4zw}. 
To establish the existence of the limit in part (\ref{Thm3.7(2)}), we use the weak cancellation of the functions $\{\varphi^{I}\}$ to show that for any test function $\psi$, the bracket $\big\langle\K_{F},\psi\big\rangle$ can be rewritten as the integral of $\psi$ and its derivatives against locally integrable functions. The existence of the limit then follows from the Lebesgue dominated convergence theorem. 

\smallskip

We now turn to the proofs. Note that in the next Proposition  we impose no cancellation conditions on the functions $\{\varphi^{I}\}$.

\smallskip

\begin{proposition} \label{Proposition5.3z}
For each $I\in \mathcal E_{n}$ let $\varphi^{I}\in \mathcal S(\R^{N})$, and suppose there are constants $C_{M}$ so that for all $I\in \mathcal E_{n}$ and all $M$, $||\varphi^{I}||_{[M]}\leq C_{M}$. Let $F\subset E_{n}$ be a finite subset, and let $K_{F}(\x) = \sum_{I\in F}[\varphi^{I}]_{I}(\x)$. For any $\alpha =(\balpha_{1}; \ldots ; \balpha_{n}) \in \mathbb N^{a_{1}}\times \cdots \times \mathbb N^{a_{n}}$ with $|\alpha|\leq M$, there is a constant $A_{M}$ independent of the finite set $F$ so that
\begin{equation*}
\big\vert\partial^{\alpha}_{\x}K_{F}(\x)\big\vert \leq A_{M}\prod_{j=1}^{n}\left[N_{1}(\x_{1}) + N_{2}(\x_{2}) + \cdots +N_{j}(\x_{j})\right]^{-(Q_{j}+[\![\balpha_{j}]\!])}.
\end{equation*}
\end{proposition}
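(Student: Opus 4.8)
The plan is to reduce the estimate on the sum $K_F = \sum_{I \in F}[\varphi^I]_I$ to the elementary pointwise estimate on a single dilate $[\varphi^I]_I$ together with a summation lemma of the type in Appendix II (Proposition \ref{Prop2.11}). First I would record the single-term bound. Since $\varphi^I \in \mathcal S(\R^N)$ with $\|\varphi^I\|_{[M]} \leq C_M$ uniformly, Proposition \ref{Prop1.6} gives $\partial^\alpha_{\x}[\varphi^I]_I = 2^{-[\![\alpha\cdot p_{\mathfrak A}(I)]\!]}[\partial^\alpha\varphi^I]_I$, so that
\begin{equation*}
\big\vert\partial^\alpha_{\x}[\varphi^I]_I(\x)\big\vert \leq C_M\,2^{-\sum_{j=1}^n i_j([\![\balpha_j]\!]+Q_j)}\,(1+|2^I\cdot\x|_e)^{-L}
\end{equation*}
for any $L$, where the factor $2^{-\sum i_j Q_j}$ comes from the $L^1$-normalization and $2^{-\sum i_j[\![\balpha_j]\!]}$ from the derivatives. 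Because $\varphi^I$ is Schwartz with uniform seminorms, on the region where $N_k(\x_k) \gtrsim 2^{i_k}$ for some $k$ the rapid decay lets us absorb any polynomial loss; this is what replaces the ``supported in $\frac12 \leq N_j(\x_j) \leq 4$'' hypothesis used in the compactly supported case.

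Next I would sum over $I \in \mathcal E_n$. Fix $\x$ with all $\x_k \neq 0$ and set $\mu_k = \log_2 N_k(\x_k)$, so the target is $\prod_{j=1}^n (\max_{k\leq j} 2^{\mu_k})^{-(Q_j+[\![\balpha_j]\!])}$. Split the sum over $I=(i_1,\dots,i_n)\in\mathcal E_n$ according to, for each $k$, whether $i_k \geq \mu_k$ or $i_k < \mu_k$. When $i_k \geq \mu_k$ for a given $k$, the corresponding variable is in the ``small'' regime and the geometric factor $2^{-i_k(Q_k+[\![\balpha_k]\!])}$ sums (over $i_k \geq \mu_k$, noting $Q_k + [\![\balpha_k]\!] > 0$) to a constant times $2^{-\mu_k(Q_k+[\![\balpha_k]\!])}$. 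When $i_k < \mu_k$, I use the Schwartz decay: $(1+|2^I\cdot\x|_e)^{-L}$ contributes a factor like $2^{-L(\mu_k - i_k)\cdot(\text{something positive})}$ after comparing the homogeneous and Euclidean norms, which again sums geometrically in $i_k$ and, more importantly, produces the needed decay to reconstruct the factor $2^{-\mu_k(Q_k+[\![\balpha_k]\!])}$ while also beating the monotonicity constraint $i_1 \leq \cdots \leq i_n$. The monotonicity is handled exactly as in Proposition \ref{Prop2.11}: one sums from the innermost index outward, and at each stage the constraint $i_{k-1}\leq i_k$ only helps (it restricts the range), while the accumulated exponential gains from the Schwartz tails dominate. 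Carrying this out index by index reassembles the product $\prod_{j}(\max_{k\leq j}2^{\mu_k})^{-(Q_j+[\![\balpha_j]\!])}$, with a constant $A_M$ depending only on $M$, $n$, the $d_j$, and the $C_M$'s, and manifestly independent of the finite set $F$ since we have bounded $\sum_{I\in\mathcal E_n}$ absolutely.

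The main obstacle is the bookkeeping in the second step: making the geometric summation over the cone $\mathcal E_n$ respect the monotonicity constraints while correctly distributing the decay so that the ``$N_1+\cdots+N_j$'' structure (rather than the simpler product structure of a Calderón–Zygmund kernel) emerges. This is precisely the content of the appendix estimate, so the cleanest route is to verify that the hypotheses of Proposition \ref{Prop2.11} are met by the single-term bound above and invoke it, rather than redo the combinatorics. One subtlety to check carefully: near the singular set $\x_1 = 0$ (or more generally where some $N_k(\x_k)$ is very small compared to $2^{i_k}$ for all relevant $i_k$), the sum over $i_k \to -\infty$ must be controlled; here the hypothesis that we sum only over $\mathcal E_n$ — so $i_1$ is the smallest index and the $\x_1$-scale is the coarsest — together with the absence of any lower cutoff is exactly what is needed, and the factor $(N_1(\x_1)+\cdots+N_j(\x_j))^{-Q_j-[\![\balpha_j]\!]}$ with $N_1(\x_1)$ appearing in every factor is what blows up, consistent with the flag-kernel singularity.
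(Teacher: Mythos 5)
Your proposal follows the paper's proof essentially verbatim: the single-term bound $|\partial^\alpha[\varphi^I]_I|\leq C_M 2^{-\sum_j i_j(Q_j+[\![\balpha_j]\!])}(1+\sum_k 2^{-i_k}N_k(\x_k))^{-M}$ obtained from Proposition \ref{Prop1.6} and the uniform Schwartz seminorms, followed by an appeal to estimate (\ref{E2.10}) of Proposition \ref{Prop2.11} with $A_k=N_k(\x_k)$, which is exactly what the paper does. Your middle paragraph re-derives the combinatorics that are already packaged in the appendix lemma, but you correctly identify at the end that invoking it directly is the clean route, so the argument is sound.
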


\begin{proof}

It follows from Proposition \ref{Prop1.6} that
\begin{align*} 
\left\vert\partial^{\alpha}_{\x}K_{F}(\x)\right\vert 
&= 
\Big\vert\sum_{I\in F}2^{-\sum_{j=1}^{n}i_{j}(Q_{j}+[\![\balpha_{j}]\!])}\partial^{\balpha}_{\x}[\varphi^{I}](2^{-i_{1}} \cdot\x_{1}, \ldots, 2^{-i_{n}}\cdot\x_{n})\Big\vert.
\end{align*}
Since $\varphi^{I}\in \mathcal S(\R^{n})$, for any $M$ we have the estimate
\begin{equation*}
\big|\partial^{\balpha}_{\x}[\varphi^{I}](2^{-i_{1}} \x_{1}, \ldots, 2^{-i_{n}}\x_{n})\big|
\leq C_{M}\, \big(1+\sum_{k=1}^{N}2^{-i_{k}}N_{k}(\x_{k})\big)^{-M}
\end{equation*}
Thus 
\begin{equation*}
\left\vert\partial^{\balpha}_{\x}K_{F}(\x)\right\vert
\leq C_{M}\,
\sum_{I\in F}\,2^{-\sum_{j=1}^{n}i_{k}(Q_{k}+[\![\balpha_{k}]\!])}\big(1+\sum_{k=1}^{N}2^{-i_{k}}N_{k}(\x_{k})\big)^{-M}.
\end{equation*}
Proposition \ref{Proposition5.3z} thus follows from estimate (\ref{E2.10}) in Proposition \ref{Prop2.11}.
\end{proof}

\smallskip

The next result provides estimates that will be used in establishing the cancellation conditions (part (\ref{Def2.1b}) of Definition \ref{Def2.1}) for finite sums $\K_{F}= \sum_{I\in F}[\varphi^{I}]_{I}$.  Recall that these cancellation conditions involve integrals of the form
\begin{equation*}
\int K(\x_{1}, \ldots,\x_{n}) \psi(R_{1}\cdot \x_{m_{1}}, \ldots, R_{\beta}\cdot \x_{m_{\beta}})\,d\x_{m_{1}}\cdots d\x_{m_{\beta}}
\end{equation*}
where $\{m_{1}, \ldots, m_{\beta}\}$ is a non-empty subset of $\{1, \ldots, n\}$.  Let  $M= \{m_{1}, \ldots, m_{\beta}\}$, and let $L=\{l_{1}, \ldots, l_{\alpha}\} = \{1,\ldots, n\}\setminus M$. Let $N_{\alpha}= a_{l_{1}}+ \cdots + a_{l_{\alpha}}$ and $N_{\beta}= a_{m_{1}}+ \cdots + a_{m_{\beta}}$ so that $N_{\alpha}+N_{\beta}=N$.
For $\x=(\x_{1}, \ldots, \x_{n})\in \R^{N}$, write 
\begin{align*}
\x=(\x', \x'')
\qquad
\text{with}
\qquad
\begin{cases}\x' =(\x_{l_{1}}, \ldots, \x_{l_{\alpha}})\in \R^{N_{\alpha}}\\
\x''= (\x_{m_{1}}, \ldots, \x_{m_{\beta}})\in \R^{N_{\beta}}
\end{cases}.
\end{align*} 
For $I=(i_{1}, \ldots, i_{n}) \in \mathcal E_{n}$, write 
\begin{align*}
I=(I', I'')\qquad\text{where}\qquad 
\begin{cases}
I'=(i_{l_{1}}, \ldots, i_{l_{\alpha}}) \in E_{\alpha}\\
I''= (i_{m_{1}}, \ldots, i_{m_{\beta}})\in E_{\beta}
\end{cases}.
\end{align*} 
If $R=(R_{1}, \ldots, R_{\beta})$ is a $\beta$-tuple of positive real numbers, write 
\begin{align*}
R\cdot \x''&= (R_{1}\cdot\x_{m_{1}}, \ldots, R_{\beta}\cdot\x_{m_{\beta}}),\\
2^{I''}R\cdot \x''&=(2^{i_{m_{1}}}R_{1}\cdot\x_{m_{1}}, \ldots, 2^{i_{m_{\beta}}}R_{\beta}\cdot\x_{m_{\beta}}).
\end{align*}
Finally, let $P(M)$ denote the set of all partitions of the set $M=\{m_{1}, \ldots, m_{\beta}\}$ into two disjoint (possibly empty) subsets $A$ and $B$. 

\begin{proposition}\label{Prop2.15}
With the above notation, let $I=(i_{1}, \ldots, i_{n})\in \mathcal E_{n}$, and let ${\varphi}\in \mathcal S(\R^{N})$. Let $\psi \in \mathcal C^{\infty}_{0}\left(\R^{N_{\beta}}\right)$  so  that $\psi$ can be regarded as a function of the variables $\x''=(\x_{m_{1}}, \ldots, \x_{m_{\beta}})$.  Let $R= (R_{1}, \ldots , R_{\beta})$ be a $\beta$-tuple of positive real numbers. Define $\Phi$ on $\R^{N_{\alpha}}$ by setting
\begin{align*}
\Phi(\x')
&= 
\int_{\R^{N_{\beta}}}
[\varphi]_{I}(\x',\x'') \,\psi(R_{1}\cdot\x_{m_{1}}, \ldots, R_{\beta}\cdot\x_{m_{\beta}})\,\,d\x_{m_{1}}\cdots d\x_{m_{\beta}}.
\end{align*}
\begin{enumerate}[{\rm(1)}]

\item\label{Prop2.15a} 
There exists $\Theta \in \mathcal S(\R^{N_{\alpha}})$, normalized relative to $\varphi$ and $\psi$ (with constants independent of $(R_{1}, \ldots, R_{\beta})$) such that $\Phi(\x') = [\Theta]_{I'}(\x')$. 

\smallskip

\item\label{Prop2.15b} If  ${\varphi}$ has weak cancellation with respect to $I$, there are constants $C$ and $\epsilon$ independent of $R=(R_{1}, \ldots , R_{\beta})$ so that  $\Theta = \sum_{(A,B)\in P(M)}\Theta_{A,B}$ where for each partition $M=A\cup B$,
\begin{align*}
|\Theta_{A,B}(\x')| \leq C\,
\prod_{m_{r}\in A}2^{-\epsilon(i_{m_{r}+1}-i_{m_{r}})}\,\prod_{m_{s}\in B}\min\big\{(R_{s}2^{i_{m_{s}}})^{+\epsilon},(R_{s}2^{i_{m_{s}}})^{-\epsilon}\big\}.
\end{align*}
Thus for each partition $M=A\cup B$ of the set of variables $\{\x_{m_{1}}, \ldots, \x_{m_{\beta}}\}$ , $|\Theta_{A,B}(\x')|$ is small due to two kinds of gains:  there is a gain $2^{-\epsilon(i_{m_{r}+1}-i_{m_{r}})}$ for every index $m_{r}\in A$, and there is a gain $\min\big\{(R_{r}2^{i_{m_{r}}})^{+\epsilon},(R_{r}2^{i_{m_{r}}})^{-\epsilon}\big\}$ for every index $m_{r}\in B$. 
\end{enumerate}
\end{proposition}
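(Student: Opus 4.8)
The plan is to prove Part (1) by a straightforward change of variables and Part (2) by substituting the weak--cancellation decomposition of $\varphi$ into the integral defining $\Theta$ and then integrating by parts in the ``small--scale'' blocks. For $m=m_{s}\in M$ write $t_{m}=R_{s}2^{i_{m}}$; every estimate below will be uniform in $(t_{m})_{m\in M}$, hence in $R$, and I abbreviate $\psi(t\cdot\u'')=\psi(t_{m_{1}}\cdot\u_{m_{1}},\dots,t_{m_{\beta}}\cdot\u_{m_{\beta}})$. For Part (1), unwind the definition of $[\varphi]_{I}$ and, in each block $\x_{m_{s}}$, substitute $\u_{m_{s}}=2^{i_{m_{s}}}\cdot\x_{m_{s}}$: the Jacobian $2^{Q_{m_{s}}i_{m_{s}}}$ cancels the matching normalizing factor of $[\varphi]_{I}$, and $R_{s}\cdot\x_{m_{s}}$ turns into $t_{m_{s}}\cdot\u_{m_{s}}$, giving
\[
\Phi(\x')=[\Theta]_{I'}(\x'),\qquad \Theta(\x')=\int_{\R^{N_{\beta}}}\varphi(\x',\u'')\,\psi(t\cdot\u'')\,d\u''.
\]
That $\Theta\in\mathcal S(\R^{N_{\alpha}})$ is normalized relative to $\varphi$ and $\psi$, uniformly in $t$, is routine: $\psi$ has fixed compact support, so $\psi(t\cdot\u'')$ is bounded by $\|\psi\|_{\infty}$ and supported in $\prod_{s}\{|\u_{m_{s}}|\lesssim t_{m_{s}}^{-1}\}$, and for $M$ large the rapid decay of $\partial^{\gamma}_{\x'}\varphi$ in the $\u''$--variables gives $\int|\partial^{\gamma}_{\x'}\varphi(\x',\u'')|\,|\psi(t\cdot\u'')|\,d\u''\le C(1+|\x'|_{e})^{-M}$ with $C$ independent of $t$; differentiation under the integral is justified as usual.

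For Part (2), put $M_{0}=\{m\in M:t_{m}\ge 1\}$, $M_{1}=M\setminus M_{0}$, and $\epsilon_{0}=\min\{\epsilon,d_{1}\}>0$, and insert into $\Theta$ the weak cancellation of $\varphi$ (Definition \ref{Def1.14a2}). It then suffices to bound each resulting term: one such term is indexed by a partition $\{1,\dots,n\}=A'\cup B'$ with $n\in A'$, by a choice $j_{k}\in J_{\alpha_{k}}$ for $A'=\{\alpha_{1},\dots,\alpha_{r}\}$, carries the prefactor $\prod_{p\in B'}2^{-\epsilon(i_{p+1}-i_{p})}$, and has the shape $\int\big(\partial_{j_{1}}\!\cdots\partial_{j_{r}}\varphi_{A',B',\dots}\big)(\x',\u'')\,\psi(t\cdot\u'')\,d\u''$ with $\varphi_{A',B',\dots}$ normalized. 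I split the derivatives $\partial_{j_{k}}$ by whether $\alpha_{k}\in L$ or $\alpha_{k}\in M$. The $L$--derivatives commute past the $\u''$--integration and come out as $\x'$--derivatives on a Schwartz function, costing only a constant. For each $m=\alpha_{k}\in M\cap M_{1}$ I integrate $\partial_{j_{k}}$ by parts in $u_{j_{k}}$ onto $\psi$; this produces the factor $\pm\,t_{m}^{\,d_{j_{k}}}\le t_{m}^{\epsilon_{0}}$ (since $t_{m}<1$ and $d_{j_{k}}\ge d_{1}\ge\epsilon_{0}$) and replaces $\psi$ by $\partial_{j_{k}}\psi$, of the same support and still normalized. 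For $m=\alpha_{k}\in M\cap M_{0}$ I leave $\partial_{j_{k}}$ in place. What remains is the prefactor, the gains $\prod_{m\in M\cap A'\cap M_{1}}t_{m}^{\,d_{j_{k}}}$, some harmless $\x'$--derivatives, and an integral $\int g(\x',\u'')\,\widetilde\psi(t\cdot\u'')\,d\u''$ with $g$ a normalized Schwartz function (possibly differentiated in the $M_{0}$--blocks) and $\widetilde\psi$ compactly supported, hence supported in $\prod_{s}\{|\u_{m_{s}}|\lesssim t_{m_{s}}^{-1}\}$. Estimating this integral exactly as in Part (1) — using that the set $\{|\u_{m}|\lesssim t_{m}^{-1}\}$ has measure $\lesssim t_{m}^{-Q_{m}}$ for $m\in M_{0}$, and that the $\u_{m}$--integral stays bounded for $m\in M_{1}$ — produces a factor $\prod_{m\in M_{0}}t_{m}^{-Q_{m}}\le\prod_{m\in M_{0}}t_{m}^{-\epsilon_{0}}$ (and extra decay $(1+|\x'|_{e})^{-M}$, which I discard).

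Now collect the gains, setting $A:=M\cap B'\cap M_{1}$ and $B:=M\setminus A$. Each $m\in A$ contributes $2^{-\epsilon(i_{m+1}-i_{m})}\le 2^{-\epsilon_{0}(i_{m+1}-i_{m})}$ from the prefactor (and $m<n$, because $n\in A'$, so the index $m+1$ is legitimate); each $m\in B$ contributes either $t_{m}^{-Q_{m}}\le t_{m}^{-\epsilon_{0}}$ if $t_{m}\ge 1$, or $t_{m}^{\,d_{j_{k}}}\le t_{m}^{\epsilon_{0}}$ if $t_{m}<1$ (which forces $m\in A'$) — i.e.\ $\min\{t_{m}^{\epsilon_{0}},t_{m}^{-\epsilon_{0}}\}$ in every case; the leftover $B'$--factors are bounded by $1$. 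Hence every term is $\le C\prod_{m\in A}2^{-\epsilon_{0}(i_{m+1}-i_{m})}\prod_{m\in B}\min\{t_{m}^{\epsilon_{0}},t_{m}^{-\epsilon_{0}}\}$. Summing the finitely many terms that produce a given pair $(A,B)$ and calling that sum $\Theta_{A,B}$ — and setting $\Theta_{A,B}=0$ for the pairs in $P(M)$ that never occur, in particular those with $n\in A$ — yields $\Theta=\sum_{(A,B)\in P(M)}\Theta_{A,B}$ with the asserted bounds, after renaming $\epsilon_{0}$ as $\epsilon$.

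The main obstacle is the combinatorial organization of Part (2). One must see that two genuinely different mechanisms coexist — the scaling mismatch of the cutoff $\psi$, which requires no cancellation and yields a gain $t_{m}^{-\epsilon}$ precisely when $t_{m}$ is large; and the weak cancellation of $\varphi$, which after a single integration by parts yields $t_{m}^{+\epsilon}$ when $t_{m}$ is small, and which yields $2^{-\epsilon(i_{m+1}-i_{m})}$ outright when the relevant weak--cancellation term carries no derivative in the $m$th block — and then verify that these package into a single estimate indexed by an honest partition $A\cup B$ of $M$, with all constants independent of $R$ and with $n$ never landing in $A$ (so that every symbol $i_{m+1}$ appearing in the final bound is meaningful). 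Keeping the Schwartz seminorm bookkeeping uniform in $R$ throughout is the other point that requires care.
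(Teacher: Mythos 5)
Your proof is correct and follows essentially the same route as the paper: the same change of variables for part (1), and for part (2) the substitution of the weak-cancellation decomposition followed by integration by parts onto $\psi$ in the small-scale blocks and a support-volume estimate in the large-scale blocks. If anything, your block-by-block choice between the two mechanisms (integrate by parts when $R_s2^{i_{m_s}}<1$, use the support volume when $R_s2^{i_{m_s}}\ge 1$) makes explicit the step the paper compresses into ``taking the min'' of the two global bounds, which is the right way to justify the product of minima.
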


\begin{proof}
Make the change of variables $$\x''=(\x_{m_{1}}, \ldots, \x_{m_{\beta}}) \mapsto (2^{i_{m_{1}}}\cdot\x_{m_{1}}, \ldots, 2^{i_{m_{\beta}}}\cdot\x_{m_{\beta}}).$$ Then
 \begin{align*}
&\Phi(\x')\\
 &= 2^{-\sum_{k=1}^{n}i_{k}Q_{k}}
\int_{\R^{N_{\beta}}}\!\!
\varphi(2^{-i_{1}}\x_{1}, \ldots, 2^{-i_{n}}\cdot \x_{n})
\psi(R_{1}\cdot\x_{m_{1}}, \ldots, R_{\beta}\cdot\x_{m_{\beta}})
\,d\x_{m_{1}}\cdots d\x_{m_{\beta}}\\
&=
\int_{\R^{N_{\beta}}}
\big[\varphi\big]_{I'}(\x',\x'')
\psi(R_{1}2^{i_{m_{1}}}\cdot \x_{m_{1}}, \ldots, R_{\beta}2^{i_{m_{\beta}}}\cdot \x_{m_{\beta}})
\,d\x_{m_{1}}\cdots d\x_{m_{\beta}}\\
&=
\big[\Theta\big]_{I'}(\x')
\end{align*} 
where 
\begin{equation} \label{Eqn3.13po}
\Theta(\x')
= \int_{\R^{N_{\beta}}}
\varphi(\x',\x'')
\psi(R_{1}2^{i_{m_{1}}}\cdot \x_{m_{1}}, \ldots, R_{\beta}2^{i_{m_{\beta}}}\cdot \x_{m_{\beta}})
\,d\x_{m_{1}}\cdots d\x_{m_{\beta}}.
\end{equation}
Now
\begin{align*}
(1+|\x'|)^{M}\partial^{\balpha}_{\x'}\Theta(\x') 
=
\int_{\R^{N_{\beta}}} (1+|\x'|)^{M}\partial^{\balpha}_{\x'}\varphi(\x',\x'')\,\psi(2^{I''}R\cdot \x'')\,d\x'',
\end{align*}
and we can estimate the $\mathcal S(\R^{N_{\alpha}})$-seminorms of $\Theta$ in terms of those of $\varphi$ and the supremum of $|\psi|$, independent of the choice of $R$. This establishes assertion (\ref{Prop2.15a}).

\smallskip

To prove the  decomposition and additional size estimates for $\Theta$ asserted in part (\ref{Prop2.15b}), assume that ${\varphi}$ has weak cancellation relative to $I$.  Since the definiton (\ref{Eqn3.13po}) of $\Theta$ involves integration with respect to the variables $\{\x_{m_{1}}, \ldots, \x_{m_{\beta}}\}$, we will only use the weak cancellation in these variables. We can use  the first of the Remarks \ref{Remark2.20}  to write $\varphi$ as a sum of terms of the form 
\begin{equation*}
\big(\prod_{s\in A}2^{-\epsilon (i_{s+1}-i_{s})}\big)\,\big(\prod_{t\in B}\partial_{\sigma(t)}\big)[\widetilde \varphi _{A,B,\sigma}]
\end{equation*}
where $\{m_{1}, \ldots, m_{\beta}\} = A\cup B$ with $A\cap B= \emptyset$ and $n \in B$ if $n\in \{m_{1}, \ldots, m_{\beta}\}$, $\sigma:B\to \{1, \ldots, N\}$ so that $\sigma(m_{\ell}) \in J_{m_{\ell}}$, and each $\widetilde \varphi _{A,B,\sigma}$ normalized relative to $\varphi$.


If we write $B=\{m_{\ell_{1}}, \ldots, m_{\ell_{s}}\}$, it follows that $\Theta(\x')$ is a finite sum of terms of the form
\begin{equation*}\label{Equation3.13yu}
\begin{aligned}
I(A;B; \sigma) &=
\prod_{s\in A}2^{-\epsilon (i_{s+1}-i_{s})}
\int\limits_{\R^{N_{\beta}}}
\partial_{\sigma(m_{\ell_{1}})}\cdots \partial_{\sigma(m_{\ell_{s}})}[\widetilde \varphi_{A;B;\sigma}]\\
&\qquad\qquad\qquad\qquad
\psi(R_{1}2^{i_{m_{1}}}\cdot \x_{m_{1}}, \ldots, R_{\beta}2^{i_{m_{\beta}}}\cdot \x_{m_{\beta}})\,d\x_{m_{1}}\cdots d\x_{m_{\beta}}.
\end{aligned}
\end{equation*}
We can use integration by parts to move the differentiations $\partial_{\sigma(m_{\ell_{1}})}\cdots \partial_{\sigma(m_{\ell_{s}})}$ from $\widetilde \varphi_{A;B;\sigma}$ to $\psi$. Differentiating the function $\psi(R_{1}2^{i_{m_{1}}}\cdots \x_{m_{1}}, \ldots, R_{\beta}2^{i_{m_{\beta}}}\cdot \x_{m_{\beta}})$ with respect to the variable $x_{\sigma(m_{\ell_{k}})}$  with $\sigma(m_{\ell_{k}}) \in J_{m_{\ell_{k}}}$ brings out a factor $(R_{m_{\ell_{k}}}2^{i_{m_{\ell_{k}}}})^{d_{\sigma(m_{\ell_{k}})}}$, and so we have the estimate
\begin{align*}
\big\vert I(j_{1}, \ldots, j_{\beta}; A; \sigma) \big\vert
 \lesssim
\prod_{s\in A}2^{-\epsilon (i_{s+1}-i_{s})}\,
\prod_{r\in B}(R_{r}2^{i_{r}})^{d_{\sigma(r)}}.
\end{align*}
On the other hand, without integrating by parts, since $\psi$ has compact support, the integral in the variables $\{\x_{m_{r}}\,\big\vert\, r\in B\}$ is taken over the set where $|\x_{m_{r}}|\lesssim (R_{r}2^{i_{m_{r}}})^{Q_{m_{r}}}$ for $r\in B$, and this set has volume bounded by a constant times $\prod_{r\in B}(R_{r}2^{i_{m_{r}}})^{-Q_{m_{r}}}$. It follows that there exists $\epsilon > 0$ so that 
\begin{align}
\big\vert I(j_{1}, \ldots, j_{\beta}; A; \sigma)\big\vert 
\lesssim 
\prod_{s\in A}2^{-\epsilon (i_{s+1}-i_{s})}\,
\prod_{r\in B}\min\big\{(R_{r}2^{i_{m_{r}}})^{+\epsilon},(R_{r}2^{i_{m_{r}}})^{-\epsilon}\big\}.
\end{align}
This completes the proof.
\end{proof}

We now show that if the functions $\{\varphi^{I}\}$ have weak cancellation, then the sum $K_{F}(\x)$ satisfies the cancellation condition (condition (\ref{Def2.1b})) of Definition \ref{Def2.1}. We use the same notation as in Proposition \ref{Prop2.15}. Thus  $L=\{l_{1}, \ldots, l_{\alpha}\}$ and $M= \{m_{1}, \ldots, m_{\beta}\}$ are complementary subsets of $\{1,\,2,\ldots,\,n\}$, and we set  $N_{\alpha}= a_{l_{1}}+ \cdots + a_{l_{\alpha}}$ and $N_{\beta}= a_{m_{1}}+ \cdots + a_{m_{\beta}}$. If $\x=(\x_{1}, \ldots, \x_{n})\in \R^{N}$, we write $\x=(\x', \x'')$,  with $\x' =(\x_{l_{1}}, \ldots, \x_{l_{\alpha}})$ and $\x''= (\x_{m_{1}}, \ldots, \x_{m_{\beta}})$. If $I=(i_{1}, \ldots, i_{n}) \in \mathcal E_{n}$, write $I'=(i_{l_{1}}, \ldots, i_{l_{\alpha}}) \in E_{\alpha}$ and $I''= (i_{m_{1}}, \ldots, i_{m_{\beta}})\in E_{\beta}$







\begin{proposition}\label{Lemma5.4y} 
For each $I\in \mathcal E_{n}$ let $\varphi^{I}\in \mathcal S(\R^{N})$, and suppose there are constants $C_{M}$ so that that for all $I\in \mathcal E_{n}$ and all $M$, $||\varphi^{I}||_{[M]}\leq C_{M}$. Let $F\subset E_{n}$ be a finite subset, and let $K_{F}(\x) = \sum_{I\in F}[\varphi^{I}]_{I}(\x)$.
Let $\psi \in \mathcal C^{\infty}_{0}\left(\R^{N_{\beta}}\right)$ be a bump function in the variables $\x''=(\x_{m_{1}}, \ldots, \x_{m_{\beta}})$.  Let $R= (R_{1}, \ldots , R_{\beta})$ be a $\beta$-tuple of positive real numbers, and let $\bar\gamma = (\gamma_{l_{1}}, \ldots, \gamma_{l_{\alpha}})\in \N^{a_{l_{1}}}\oplus \cdots \oplus \N^{a_{l_{\alpha}}}$. There exists a constant $C_{\bar\gamma}$, independent of $R$ so that
\begin{align*}
\Big\vert 
\partial^{\gamma_{l_{1}}}_{\x_{l_{1}}}\cdots  \partial^{\gamma_{l_{\alpha}}}_{\x_{l_{\alpha}}} \int\limits_{\R^{N_{\beta}}} K_{F}(\x',\x'') \,&\psi(R\cdot \x'')\,d\x''\Big\vert
\leq 
C_{\bar\gamma}
\prod_{p=1}^{\alpha} \left [N_{l_{1}}(\x_{l_{1}}) + \cdots N_{l_{p}}(\x_{l_{p}})\right]^{-Q_{l_{p}}-[\![\gamma_{l_{p}}]\!]}.
\end{align*}
\end{proposition}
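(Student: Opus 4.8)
The plan is to prove this directly by reducing it, via Proposition \ref{Prop2.15}, to a summation estimate of the type already used in Proposition \ref{Proposition5.3z}. Fix the complementary subsets $L = \{l_1,\dots,l_\alpha\}$ and $M = \{m_1,\dots,m_\beta\}$ of $\{1,\dots,n\}$, the bump $\psi\in\mathcal C^\infty_0(\R^{N_\beta})$, the tuple $R=(R_1,\dots,R_\beta)$ and the multi-index $\bar\gamma = (\gamma_{l_1},\dots,\gamma_{l_\alpha})$. Since $K_F = \sum_{I\in F}[\varphi^I]_I$ is a finite sum, I can interchange the differentiation $\partial^{\gamma_{l_1}}_{\x_{l_1}}\cdots\partial^{\gamma_{l_\alpha}}_{\x_{l_\alpha}}$ and the integration $\int_{\R^{N_\beta}}(\,\cdot\,)\,\psi(R\cdot\x'')\,d\x''$ with the sum over $I$. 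For each individual $I$, Proposition \ref{Prop2.15} applies to $\varphi = \varphi^I$: part (1) tells me that $\int_{\R^{N_\beta}}[\varphi^I]_I(\x',\x'')\,\psi(R\cdot\x'')\,d\x'' = [\Theta^I]_{I'}(\x')$ for some $\Theta^I\in\mathcal S(\R^{N_\alpha})$ normalized relative to $\varphi^I$ and $\psi$ with constants independent of $R$; part (2), using the weak cancellation of $\varphi^I$ with respect to $I$, gives the decomposition $\Theta^I = \sum_{(A,B)\in P(M)}\Theta^I_{A,B}$ with the stated size bound.

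Next I differentiate. Because $I'\in E_\alpha$ is a monotone index (its entries are a subsequence of those of $I\in\mathcal E_n$), Proposition \ref{Prop1.6} gives
\begin{equation*}
\partial^{\gamma_{l_1}}_{\x_{l_1}}\cdots\partial^{\gamma_{l_\alpha}}_{\x_{l_\alpha}}[\Theta^I]_{I'}(\x') = 2^{-\sum_{p=1}^\alpha i_{l_p}[\![\gamma_{l_p}]\!]}\,[\partial^{\gamma_{l_1}}_{\x_{l_1}}\cdots\partial^{\gamma_{l_\alpha}}_{\x_{l_\alpha}}\Theta^I]_{I'}(\x'),
\end{equation*}
and since $\partial^{\bar\gamma}\Theta^I$ inherits the same normalization and the same weak-cancellation-type bound as $\Theta^I$ (up to raising the Schwartz seminorm index), I may as well absorb the $\bar\gamma$-derivative and assume $\bar\gamma = 0$ at the cost of the harmless factor $2^{-\sum_p i_{l_p}[\![\gamma_{l_p}]\!]}$, which is exactly the extra decay $[\![\gamma_{l_p}]\!]$ demanded in the target inequality. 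So it remains to bound
\begin{equation*}
\Big|\sum_{I\in F}\,\sum_{(A,B)\in P(M)}[\Theta^I_{A,B}]_{I'}(\x')\Big|,
\end{equation*}
using that $|\Theta^I_{A,B}(\x')|\le C\,(1+|\x'|)^{-M}\prod_{m_r\in A}2^{-\epsilon(i_{m_r+1}-i_{m_r})}\prod_{m_s\in B}\min\{(R_s 2^{i_{m_s}})^{\epsilon},(R_s 2^{i_{m_s}})^{-\epsilon}\}$ for any $M$, with $C = C_M$ independent of $F$ and $R$. For each fixed partition $(A,B)$ I split the sum over $I\in F$ into the sum over $I'' = (i_{m_1},\dots,i_{m_\beta})$ and then over $I' = (i_{l_1},\dots,i_{l_\alpha})$, subject to the monotonicity constraint coupling them. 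The factors $\prod_{m_s\in B}\min\{(R_s 2^{i_{m_s}})^{\pm\epsilon}\}$ are summable over the corresponding components $i_{m_s}$ uniformly in $R_s$ (each is a two-sided geometric series in $i_{m_s}$), and the factors $\prod_{m_r\in A}2^{-\epsilon(i_{m_r+1}-i_{m_r})}$ supply the decay needed to carry out the remaining sum over $I'$ exactly as in Proposition \ref{Prop2.11}/Proposition \ref{Proposition5.3z}: the argument there shows that $\sum_{I'\in E_\alpha}2^{-\sum_p i_{l_p}Q_{l_p}}(1+\sum_p 2^{-i_{l_p}}N_{l_p}(\x_{l_p}))^{-M}$, weighted by the extra geometric gains coming from the $A$-indices and the coupling with the $B$-indices, is dominated by $\prod_{p=1}^\alpha[N_{l_1}(\x_{l_1})+\cdots+N_{l_p}(\x_{l_p})]^{-Q_{l_p}}$. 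Summing the finitely many partitions $(A,B)\in P(M)$ then yields the claimed bound with a constant $C_{\bar\gamma}$ independent of $F$ and $R$.

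The main obstacle is the bookkeeping in the last summation: one must carefully track how the monotonicity constraint $i_1\le\cdots\le i_n$ intertwines the $I'$-sum and the $I''$-sum, and verify that the weak-cancellation gains $2^{-\epsilon(i_{m_r+1}-i_{m_r})}$ for $m_r\in A$ — which involve the index $m_r+1$ that may or may not lie in $L$ — still combine with the gaps among the $l_p$'s to feed the geometric-series estimate of Proposition \ref{Prop2.11}. The cleanest route is probably to first sum out all the $B$-components (each contributes a factor bounded uniformly in $R$), obtaining a reduced sum over a monotone index in the remaining $\alpha + |A|$ coordinates with gains attached to the $A$-coordinates, and then invoke the abstract summation lemma (Proposition \ref{Prop2.11}) for that configuration; the gains on the $A$-coordinates are precisely what is needed to telescope those coordinates away and land on the flag of the $L$-variables alone. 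Everything else — the interchange of finite sum and integral, the application of Proposition \ref{Prop2.15}, and the reduction $\bar\gamma\to 0$ — is routine.
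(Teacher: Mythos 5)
Your proposal is correct and follows essentially the same route as the paper: apply Proposition \ref{Prop2.15} to each term to write $\int[\varphi^I]_I\,\psi(R\cdot\x'')\,d\x''=[\Theta^I]_{I'}(\x')$, reorganize the sum over $F$ as an iterated sum over $I'$ and then $I''$, use the decay estimates from part (2) of Proposition \ref{Prop2.15} to sum out the $I''$-components into a normalized Schwartz function for each fixed $I'$, and finish with Proposition \ref{Proposition5.3z}. The paper delegates the handling of $\partial^{\bar\gamma}$ to Proposition \ref{Proposition5.3z} rather than treating it separately via Proposition \ref{Prop1.6}, but this is only a cosmetic difference.
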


\begin{proof}
Recall the definition of $I'$ and $I''$ from just before the statement of the Proposition. 
Using Proposition \ref{Prop2.15}  to write $\int [\varphi^{I}]_{I'}(\x',\x'')\psi(R\cdot \x'')\,d\x''= [\Theta^{I}]_{I'}(\x')$, we have
\begin{align*}
\int K_{F}(\x',\x'') \psi(R\cdot \x'')\,d\x''
&=
\sum_{I\in F\subset E_{n}} \int [\varphi^{I}]_{I'}(\x',\x'')\psi(R\cdot \x'')\,d\x''\\
&
=
\sum_{I\in F\subset E_{n}}[\Theta^{I}]_{I'}(\x').
\end{align*}
Each $\Theta^{I}$ is normalized relative to $\varphi$. 
We write the sum over $I\in F$ as an iterated sum as follows. Let
\begin{align*}
E_{1}&= \left\{I'=(i_{l_{1}}, \ldots, i_{l_{\alpha}})\in \mathbb Z^{\alpha}\,\big\vert\,(i_{1}, \ldots, i_{n})\in F\right\},\\
\intertext{and for $I'\in E_{1}$, let}
E_{2}(I')&= \left\{I''=(i_{m_{1}}, \ldots, i_{m_{\beta}})\}\in \mathbb Z^{\beta}\,\big\vert\, (i_{1}, \ldots i_{n})\in F\subset E_{n}\right\}.
\end{align*}
If $I\in F$, we write $I=(I', I'')$ with $I'\in E_{1}$ and $I''\in E_{2}(I')$.
Then
\begin{align*}
\int K_{F}(\x',\x'') \psi(R\cdot \x'')\,d\x
=
\sum_{I'\in E_{1}}\Big[\sum_{I''\in E_{2}(I')} \Theta^{(I', I'')}\Big]_{I'}(\x').
\end{align*}
We must show that this sum satisfies  the differential inequalities for flags on the space $\R^{a_{l_{1}}}\oplus \cdots \oplus \R^{a_{l_{\alpha}}}$, with constants independent of the finite set $F$. This will follow from Proposition \ref{Proposition5.3z} provided we can show that for each $I'\in E_{1}$, the  sum $\sum_{I''\in E_{2}(I')}\Theta^{(I',I'')}$ converges to a normalized Schwartz function. However,  this follows from the estimates in part (\ref{Prop2.15b}) of Proposition \ref{Prop2.15}.  \end{proof}

%



\smallskip

We now turn to the proof of Theorem \ref{Lemma5.4zw}. As already indicated on page \pageref{remarks} just before the statement of Proposition \ref{Proposition5.3z}, part (1) is an immediate consequence of  Proposition \ref{Proposition5.3z} and Proposition \ref{Lemma5.4y},  so we only need to establish part (2).  Let $\psi \in \mathcal S(\R^{n})$, and for each $I\in \mathcal E_{n}$ let $\varphi^{I}\in \mathcal S(\R^{N})$ have weak cancellation relative to $I$. According to the second of the two Remarks \ref{Remark2.20}, we can write
\begin{equation*}
\varphi^{I}= \sum_{\ell=1}^{a_{1}}\partial_{\ell}[\varphi_{\ell}^{I}] +2^{-\epsilon(i_{2}-i_{1})}\varphi_{0}^{I},
\end{equation*}
and so
\begin{equation*}
[\varphi^{I}]_{I}= \sum_{\ell=1}^{a_{1}}(2^{i_{1}d_{\ell}}\partial_{\ell})[\varphi_{\ell}^{I}]_{I} +2^{-\epsilon(i_{2}-i_{1})}[\varphi_{0}^{I}]_{I}.
\end{equation*}
Then integrating by parts, we have
\begin{align*}
\int_{\R^{N}}[\varphi^{I}(\x)]_{I}\psi(\x)\,d\x = -\sum_{\ell=1}^{a_{1}}\int_{\R^{N}}2^{i_{1}d_{\ell}}[\varphi^{I}_{\ell}]_{I}(\x)\frac{\partial\psi\,\,}{\partial x_{\ell}}(\x)\,d\x
+
\int_{\R^{N}}2^{-\epsilon(i_{2}-i_{1})}[\varphi_{0}^{I}]_{I}\psi(\x)\,d\x.
\end{align*}
Thus if $F\subset E_{n}$ is a finite subset, and $K_{F}(\x) = \sum_{I\in F}[\varphi^{I}]_{I}(\x)$, we have
\begin{align*}
\int_{\R^{N}}&K_{F}(\x)\psi(\x)\,d\x \\
&= 
-\sum_{\ell=1}^{a_{1}}
\int_{\R^{N}}
\Big[\sum_{I\in F}2^{i_{1}d_{\ell}}[\varphi^{I}_{\ell}]_{I}(\x)\Big]\frac{\partial\psi\,\,}{\partial x_{\ell}}(\x)\,d\x
+
\int_{\R^{N}}\Big[\sum_{I\in F}2^{-\epsilon(i_{2}-i_{1})}[\varphi_{0}^{I}]_{I}\Big]\psi(\x)\,d\x
\\
&=
-\sum_{\ell=1}^{a_{1}}\int_{\R^{N}} K_{F}^{\ell}(\x)\,\frac{\partial\psi}{\partial x_{\ell}}(\x)\,d\x + \int_{\R^{N}} K_{F}^{0}(\x)\,\psi(\x)\,d\x 
\end{align*}
where
\begin{align*}
K_{F}^{\ell}(\x) &= \sum_{I\in F}2^{i_{1}d_{\ell}}[\varphi^{I}_{\ell}]_{I}(\x), \quad 1 \leq \ell \leq a_{1},\\
K_{F}^{0}(\x) &= \sum_{I\in F}2^{-\epsilon(i_{2}-i_{1})}[\varphi_{0}^{I}]_{I}.
\end{align*}
If $\alpha=(\alpha_{1}, \ldots, \alpha_{N}) \in \N^{N}$, we have
\begin{align*}
\partial^{\alpha}K_{F}^{\ell}(\x) &=\sum_{I\in F}2^{i_{1}d_{\ell}}2^{-\sum_{j=1}^{n}i_{j}(Q_{j}+[\![\alpha_{j}]\!])}\partial^{\alpha}[\varphi^{I}_{\ell}](2^{-I}\cdot \x),
\\
\partial^{\alpha}K_{F}^{0}(\x) &=\sum_{I\in F}2^{-\epsilon(i_{2}-i_{1})}2^{-\sum_{j=1}^{n}i_{j}(Q_{j}+[\![\alpha_{j}]\!])}\partial^{\alpha}[\varphi^{I}_{0}](2^{-I}\cdot \x).
\end{align*}
It follows from Proposition \ref{Prop2.11} in Appendix II that (at least if $d_{\ell}<Q_{1}+[\![\alpha_{1}]\!]$)
\begin{align*}
\big|\partial^{\alpha} K_{F}^{\ell}(\x)\big|
&\leq
C\,N_{1}(\x_{1})^{d_{\ell}}
\prod_{\substack{j=1}}^{n}[N_{1}(\x_{1})+ \cdots + N_{j}(\x_{j})]^{-(Q_{j}+[\![\alpha_{j}]\!])}, \,\,\text{and}\\
\big|\partial^{\alpha} K_{F}^{0}(\x)\big|
&\leq
C\,N_{1}(\x_{1})^{\epsilon}(N_{1}(\x_{1})+N_{2}(\x_{2}))^{-\epsilon}
\prod_{\substack{j=1}}^{n}[N_{1}(\x_{1})+ \cdots + N_{j}(\x_{j})]^{-(Q_{j}+[\![\alpha_{j}]\!])}.
\end{align*}
The functions on the right hand side of the last two inequalities are integrable on $\R^{N}$. The proof of (2) then follows
from the dominated convergence theorem:
\begin{align*}
\lim_{F\nearrow \mathcal E_{n}} \big\langle K_{F}, \psi\big\rangle = -\sum_{\ell=1}^{a_{1}}\int_{\R^{N}} K^{\ell}(\x)\,\frac{\partial\psi}{\partial x_{\ell}}(\x)\,d\x + \int_{\R^{N}} K^{0}(\x)\,\psi(\x)\,d\x
\end{align*}
where
\begin{align*}
K^{\ell}(\x) &= \sum_{I\in \mathcal E_{n}}2^{i_{1}d_{\ell}}[\varphi^{I}_{\ell}]_{I}(\x), \quad 1 \leq \ell \leq a_{1},\\
K^{0}(\x) &= \sum_{I\in \mathcal E_{n}}2^{-\epsilon(i_{2}-i_{1})}[\varphi_{0}^{I}]_{I}.
\end{align*}

\subsection{Rewriting sums of bump functions with weak cancellation}\quad

\medskip

It follows from Theorem \ref{Lemma5.4zw} that a sum of dilates of normalized bump functions with weak cancellation converges to a flag kernel, and it follows from Theorem \ref{Lemma2.3} that a flag kernel can be written as a sum of dilates of normalized bump functions with strong cancellation plus a sum of flag kernels adapted to strictly coarser flags. It follows that a sum of dilates of functions with \emph{weak} cancellation can be rewritten as sums of dilates of functions with \emph{strong} cancellation relative to coarser flags. In this section we give a direct proof of this fact. The basic idea is to use telescoping series to replace a function with weak cancellation by a sum of functions with strong cancellation plus an error term which belongs to a flag which is coarser than the original flag. Aside from its intrinsic interest, we shall need this observation in the forthcoming paper \cite{NRSW2}. 

Thus consider the standard flag $\mathcal F_{\mathcal A}$ on $\R^{N}$ of step $n$ associated to the decomposition 
\begin{equation*}
(\mathcal A)\qquad \R^{N}= \R^{a_{1}}\oplus\cdots \oplus\R^{a_{n}}.
\end{equation*}
Any strictly coarser flag $\mathcal F_{\mathcal B}\succ\mathcal F_{\mathcal A}$ then arises from a decomposition
\begin{equation*}
(\mathcal B)\qquad \R^{N}= \R^{b_{1}}\oplus\cdots \oplus\R^{b_{m}}
\end{equation*}
where $m<n$ and each $\R^{b_{j}}= \R^{a_{r_{j}}}\oplus \cdots \oplus \R^{a_{s_{j}}}$ where $1=r_{1}$, $n=s_{m}$, $r_{j}\leq s_{j}$ for $1 \leq j \leq m$, and $r_{j+1}=s_{j}+1$ for $1\leq j \leq m-1$. As usual we let $\mathcal E_{n}$ denote the set of $n$-tuples of integers $I=(i_{1},\ldots, i_{n})$ with $i_{1}\leq \cdots \leq i_{n}$. For any strictly coarser flag $\mathcal F_{\mathcal B}$ of step $m<n$ as above, we let $\mathcal E_{\mathcal B}$ denote the set of $m$-tuples of integers $J=(j_{1}, \ldots, j_{m})$ with $j_{1}\leq \cdots \leq j_{m}$.  Given the argument for part (\ref{Thm3.7(2)}) of Theorem \ref{Lemma5.4zw}, we shall only concern ourselves with finite sums, and thus will not need to worry about convergence questions.

\begin{proposition} Let $\mathcal F_{\mathcal A}$ denote the standard flag associated to the decomposition $\R^{N}= \R^{a_{1}}\oplus\cdots \oplus\R^{a_{n}}$. Suppose that
\begin{equation}\label{oiu}
K(\x)= \sum_{I\in \mathcal E_{n}}[\varphi^{I}]_{I}(\x)
\end{equation}
is a finite sum, where each $\varphi^{I}$ is a normalized bump function for the flag $\mathcal F_{\mathcal A}$, with weak cancellation relative to $I\in \mathcal E_{n}$ with parameter $\epsilon$. Then we can write
\begin{equation}\label{strong+coarser}
K(\x) = \sum_{\mathcal B\succeq \mathcal A}\sum_{J\in \mathcal E_{\mathcal B}}[\eta^{J}_{\mathcal B}]_{J}
\end{equation}
where the outer sum is taken over decompositions equal to or coarser than $\mathcal A$, and each $\eta^{J}_{\mathcal B}$ is a normalized bump function which has strong cancellation relative to the flag $\mathcal F_{\mathcal B}$ associated to the decompsition $\mathcal B$.
\end{proposition}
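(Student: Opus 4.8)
The plan is to argue by induction on the number of steps $n$, converting the weak cancellation of each $\varphi^I$ into strong cancellation in the first subspace $\R^{a_1}$ at the cost of new terms that live on coarser flags, and then iterating. The starting point is the second of Remarks \ref{Remark2.20}: applied with $M=\{1\}$, it lets us write each normalized $\varphi^I$ as $\varphi^I = \sum_{r=1}^{a_1}\partial_r[\varphi^I_r] + 2^{-\epsilon(i_2-i_1)}\varphi^I_0$, where $\varphi^I_0,\varphi^I_1,\dots,\varphi^I_{a_1}$ are normalized relative to $\varphi^I$ and the derivatives are in the variables of $\R^{a_1}$. The first block of terms, $\sum_{r}\partial_r[\varphi^I_r]$, already has cancellation in $\R^{a_1}$; one must still examine whether these pieces retain weak cancellation in the remaining subspaces $\R^{a_2},\dots,\R^{a_n}$, but this follows from part (1) of Remarks \ref{Remark2.20} applied with $M=\{2,\dots,n\}$ (weak cancellation in a subset of subspaces is preserved under $\partial_r$, $r$ an $\R^{a_1}$-variable, since such a derivative commutes with integration in the other blocks). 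So modulo the error term, we have replaced $\varphi^I$ by a normalized bump with \emph{strong} cancellation in $\R^{a_1}$ and weak cancellation in the rest — a function to which we can recurse on the flag of step $n-1$ obtained by deleting $\R^{a_1}$, i.e. treating $\R^{a_2}\oplus\cdots\oplus\R^{a_n}$.

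The heart of the argument is handling the error term $\sum_{I\in\mathcal E_n} 2^{-\epsilon(i_2-i_1)}[\varphi^I_0]_I$. Here is where the telescoping enters. Since this sum is finite, and since the gain factor is $2^{-\epsilon(i_2-i_1)}$, I would group the terms according to the value of $i_1$ (with $i_1 \le i_2$) and perform a telescoping summation in $i_1$: writing $2^{-\epsilon(i_2-i_1)}[\varphi^I_0]_I$ as a difference of consecutive dyadic dilates in the first variable times a bounded coefficient, the $\R^{a_1}$-dilation becomes slaved to the $\R^{a_2}$-dilation up to a geometric error, and the sum reorganizes into dilates indexed by $(i_2,\dots,i_n)$ with $i_2\le\cdots\le i_n$ — that is, dilates adapted to the coarser flag $\mathcal B_0$ in which $\R^{a_1}$ and $\R^{a_2}$ are merged into a single block $\R^{a_1+a_2}$. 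Concretely: $[\varphi^I_0]_I$ viewed at scale $(i_2,\dots,i_n)$ on the coarse flag is a bump function $\theta$ scaled by $i_2$ in the merged block; summing $\sum_{i_1\le i_2} 2^{-\epsilon(i_2-i_1)}\theta(2^{i_1}\cdot x_1,\ \cdot)$ converges (geometric series) to a new normalized bump on the coarse flag, which one checks still has weak cancellation relative to $(i_2,\dots,i_n)\in\mathcal E_{\mathcal B_0}$ — the weak cancellation in $\R^{a_3},\dots,\R^{a_n}$ survives verbatim, and in the merged block $\R^{a_1+a_2}$ we get at worst weak cancellation because the original $\varphi^I_0$ had (weak) cancellation structure there. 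Thus the error term is itself a finite sum of dilates of weakly cancelling bumps on a strictly coarser flag, and the induction hypothesis — applied to each coarser flag $\mathcal B \succ \mathcal A$ in turn — finishes it.

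I expect the main obstacle to be the bookkeeping in the telescoping step: one must be careful that the coefficients produced by telescoping are genuinely bounded (so the geometric series over $i_1$ converges to something normalized, with norms controlled by the $C_N$ from hypothesis (a) of Theorem \ref{Lemma5.4zw}), and that when $\R^{a_1}$ merges with $\R^{a_2}$ the resulting function really has \emph{weak} cancellation with respect to the coarse index — not something worse — so that the inductive hypothesis applies. The cleanest way to verify this is to run the telescoping at the level of the coarse-flag dilation operators $[\cdot]_J$ for $J\in\mathcal E_{\mathcal B_0}$ directly, using Proposition \ref{Prop5.7ijn} (the characterization of weak cancellation via smallness of integrals) to transfer the weak-cancellation bookkeeping from the fine flag to the coarse one; the identity $\partial_r$-commutes-with-integration and the fact that $(i_2-i_1)\ge 0$ on $\mathcal E_n$ are exactly what is needed. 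The base case $n=1$ is trivial since weak cancellation then coincides with strong cancellation (there is no coarser flag and no gain factor). A secondary technical point is that one should apply the first-subspace reduction not only to split off $\R^{a_1}$ but, at each stage of the recursion on the reduced flag, to the then-first subspace; since at every stage the number of steps strictly decreases, the recursion terminates after at most $n$ rounds and produces exactly the finite family $\{\eta^J_{\mathcal B}\}_{\mathcal B\succeq\mathcal A}$ asserted in \eqref{strong+coarser}.
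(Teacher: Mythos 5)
Your approach is essentially the paper's: both arguments convert the weak cancellation of the $\varphi^{I}$ into strong cancellation one block at a time by means of a telescoping identity in the dilation parameter, with the boundary term of the telescope landing on the coarser flag obtained by merging two adjacent blocks, followed by induction on the number of steps. (The paper eliminates the ``bad'' indices starting from $n-1$ and working downward rather than starting from $\R^{a_{1}}$, but that difference is immaterial.)

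One claim in your middle paragraph is false as literally written, and you should not route the argument through it: the sum $\sum_{i_{1}\leq i_{2}}2^{-\epsilon(i_{2}-i_{1})}[\varphi^{I}_{0}]_{(i_{1},i_{2},\ldots,i_{n})}$ does \emph{not} converge to a normalized bump at the coarse scale $(i_{2},i_{2},i_{3},\ldots,i_{n})$. Rewriting $[\varphi^{I}_{0}]_{I}$ as a dilate at the merged scale multiplies its sup-norm by $2^{Q_{1}(i_{2}-i_{1})}$, which the gain $2^{-\epsilon(i_{2}-i_{1})}$ does not offset; only the $L^{1}$ norms are geometrically summable, and the limit is a singular object concentrated near $\x_{1}=0$, not a normalized bump. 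The repair is the telescoping identity itself, applied \emph{before} summing over $i_{1}$: writing $[\varphi^{I}_{0}]_{I}=\sum_{i=i_{1}+1}^{i_{2}}[(\varphi^{I}_{0})']_{(i,i_{2},\ldots,i_{n})}+[\varphi^{I}_{0}]_{(i_{2},i_{2},\ldots,i_{n})}$, where $(\varphi^{I}_{0})'(\x)=2^{Q_{1}}\varphi^{I}_{0}(2\cdot\x_{1},\x_{2},\ldots,\x_{n})-\varphi^{I}_{0}(\x)$ has cancellation in $\x_{1}$, every intermediate term is normalized at its own scale, and only the single boundary term sits at the merged scale, where it genuinely is a normalized bump with the weak cancellation appropriate to the coarser flag. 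When you then regroup by the new scale, each fixed scale collects about $(i_{2}-i_{1})$ telescoped terms against the gain $2^{-\epsilon(i_{2}-i_{1})}$, so the cancellation parameter degrades to some $\epsilon'<\epsilon$ in the coarser-flag remainder; this is precisely the bookkeeping you flagged as the main obstacle, and it is what the paper's proof carries out explicitly.
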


\begin{proof}
We argue by induction on the number of steps $n$ in the original flag $\mathcal A$. For $n=1$ there is nothing to prove, because then there is no distinction between weak and strong cancellation. 

Thus suppose the proposition has been established for all flags of step less than or equal to  $n-1$, and consider the flag of step $n$ corresponding to the decomposition $\mathcal A$. The inductive step itself requires an induction. Let the function $K$ be given by (\ref{oiu}). Since each $\varphi^{I}$ has weak cancellation with respect to $I$ with parameter $\epsilon$, we can write
\begin{equation*}
\varphi^{I}= \sum_{S\subset \{1,2,\ldots, n-1\}}2^{-\epsilon\sum_{l\in B}(j_{l+1}-j_{l})}\eta^{I}_{S}
\end{equation*}
where the sum is over all subsets $S$ of $\{1, \ldots, n-1\}$ and each $\eta^{I}_{S}$ is a normalized bump function which has integral zero in each multi-variable $\x_{r}$ with $r\not\in S$. Thus we have
\begin{equation}\label{0.3asd}
K =\sum_{I\in \mathcal E_{n}} \sum_{S\subset \{1,2,\ldots, n-1\}}2^{-\epsilon\sum_{l\in S}(j_{l+1}-j_{l})}[\eta^{I}_{S}]_{I}.
\end{equation}
We will prove by induction on $k$, for $1\le k\le n$, that $K$ can be written
\begin{equation}\label{thesis}
K=\sum_{I\in \mathcal E_{n}}\sum_{S\subseteq\{1,2,\dots,n-k\}}2^{-\eps\sum_{l\in S}(j_{l+1}-j_l)}[\eta^{I}_{S}]_I + \sum_{\substack{\mathcal B\succ \mathcal A}}K^{\mathcal B},
\end{equation}
where the functions $\{\eta^{I}_{S}\}$ and $\{K^{\mathcal B}\}$ have the following properties.
\begin{enumerate}[(i)]
\item Each normalized bump function $\eta^{I}_{S}$ has integral zero with respect to each variable $\x_{r}$ with $r\not\in S$.
\item For each $\mathcal B\succ \mathcal A$, (\textit{i.e.} for each decomposition $\mathcal B$ strictly coarser than $\mathcal A$ and hence whose corresponding flag has step strictly less than $n$), the function $K^{\mathcal B}$ can be written as a finite sum
\begin{equation*}
K^{\mathcal B}=\sum_{J\in\mathcal E(\mathcal B)}[\theta^{J}_{\mathcal B}]_{J},
\end{equation*} 
where each $\theta^{J}_{\mathcal B}$ is a normalized bump function with weak cancellation with some parameter $\epsilon' > 0$ relative to the flag arising from the decomposition $\mathcal B$. (It follows from the induction hypothesis that each such function can be rewritten as a sum of dilates of normalized bump functions with strong cancellation.)
\item The bump functions $\{\eta^{I}_{S}\}$ and the $\{\theta^{J}_{\mathcal B}\}$ are uniformly normalized relative to the normalized bump functions $\{\varphi^{I}\}$ defined in  \eqref{oiu}.
\end{enumerate}
Clearly equation \eqref{0.3asd} gives the desired conclusion in \eqref{thesis} for $k=1$. Moreover, when $k=n$ the set $S$ must be empty and thus we will have written $K$ as an appropriate sum of dilates of normalized bump functions with strong cancellation.

Thus we turn to the induction step. Suppose that equation \eqref{thesis} holds for a given $k<n$. We must show that \eqref{thesis} also holds with $k$ replaced by $k+1$.  We split the first sum into two parts depending on whether or not the subset $S$ contains the element $n-k$:
\begin{align}
K&=\!\!\!\sum_{\substack{J\in \mathcal E_{n}\\S\subseteq\{1,2,\dots,n-k\}\\n-k\in S}}\!\!\!2^{-\eps\sum_{l\in S}(j_{l+1}-j_l)}[\eta^{J}_{S}]_J 
+\!\!\!\sum_{\substack{I\in \mathcal E_{n}\\S\subseteq\{1,2,\dots,n-(k+1)\}}}\!\!\!2^{-\eps\sum_{l\in S}(j_{l+1}-j_l)}[\eta^{I}_{S}]_I + \sum_{\substack{\mathcal B\succ \mathcal A}}K^{\mathcal B}\notag\\
&=
K_{1}+K_{2}+K_{3}.\label{thesis(2)}
\end{align}
Now $K_{2}+K_{3}$ are already of the form in \eqref{thesis} with $k$ replaced by $k+1$, so we only need to deal with $K_{1}$.

Thus let $n-k\in S\subset \{1, 2, \ldots, n-k\}$, and consider the corresponding term $\eta=\eta^{J}_{S}$ in $K_{1}$.  Let $Q_{n-k}$ denote the homogeneous dimension of the space $\R^{a_{n-k}}$. Then the function
$$
(\eta^{J}_{S})'(\x_{1}, \ldots, \x_{n})=2^{Q_{n-k}}\eta^{J}_{S}\big(\x_{1},\dots,2\cdot\x_{{n-k}},\dots,\x_{n}\big)-\eta^{J}_{S}\big(\x_{1}, \ldots, \x_{n-k}, \ldots, \x_{n}\big)
$$
has cancellation in the variables $\x_{r}$ for all $r\not\in S'=S\setminus\{{n-k}\}$. Note that  $S'\subset \{1, 2, \ldots, n-(k+1)\}$.  Let  $J=(j_1,\dots,j_n)$. Using a telescoping series we have
\begin{align*}
\eta^{J}_{S}(\x)&=\sum_{i=1}^{j_{n-k+1}-j_{n-k}}2^{-iQ_{n-k}}(\eta^{J}_{S})'\big(\x_{1},\dots,2^{-i}\cdot\x_{n-k},\dots,\x_{n}\big)\\
&\qquad\qquad
+2^{-(j_{n-k+1}-j_{n-k})Q_{n-k}}\eta^{J}_{S}\big(\x_{1},\dots,2^{-(j_{n-k+1}-j_{n-k})}\cdot\x_{n-k},\dots,\x_{n}\big),
\end{align*}
and hence
\begin{equation}\label{Eq0.5bn}
[\eta^{J}_{S}]_J=\sum_{i=j_{n-k}+1}^{j_{n-k+1}}[(\eta^{J}_{S})']_{(j_1,\dots,j_{{n-k}-1},i,j_{{n-k}+1},\dots,j_s)} +[\eta^{J}_{S}]_{(j_1,\dots,j_{{n-k}-1},j_{{n-k}+1},j_{{n-k}+1},\dots,j_s)}\ .
\end{equation}
We regard the last term as associated to the coarser flag of step $(n-1)$ associated to the decomposition
\begin{equation*}
(\mathcal B)\quad \R^{a_{1}}\oplus \R^{a_{n-k-1}}\oplus \big[\R^{a_{n-k}}\oplus\R^{a_{n-k+1}}\big] \oplus \R^{a_{n-k+2}}\oplus\cdots \oplus\R^{a_{n}}
\end{equation*}
where $\R^{a_{n-k}}\oplus\R^{a_{n-k+1}}$ is now considered one factor. If  we set 
\begin{align*}
\text{$J_i^{(n-k)}=(j_1,\dots,j_{{n-k}-1},i,j_{{n-k}+1},\dots,j_n)$, $J^{\widehat{n-k}}=(j_1,\dots,j_{{n-k}-1},j_{{n-k}+1},\dots,j_n)$},
\end{align*}
then $J^{(n-k)}_{i}\in \mathcal E_{n}$ for $ j_{n-k}+1\le i\le j_{n-k+1}$ and $J^{\widehat{n-k}}\in \mathcal E_{\mathcal B}$. Moreover, $\eta^{J}_{S}$ has weak cancellation with respect to the flag $(\mathcal B)$ with parameter $\epsilon$.
Formula \eqref{Eq0.5bn} then becomes
\begin{equation}\label{eta-eta'}
[\eta^{J}_{S}]_J=\sum_{i=j_{n-k}+1}^{j_{n-k+1}}[(\eta^{J}_{S})']_{J_i^{(n-k)}}+[\eta^{J}_{S}]_{J^{\widehat{n-k}}}\ .
\end{equation}
Applying the identity \eqref{eta-eta'}, we obtain 
\begin{align*}
K_{1}&=\sum_{J\in \mathcal E_{n}}\sum_{\substack{S\subseteq\{1,2,\dots,n-k\}\\n-k\in S}}2^{-\eps\sum_{l\in S}(j_{l+1}-j_l)}[\eta^{J}_{S}]_J\\
&=
\sum_{J\in \mathcal E_{n}}\sum_{\substack{S\subseteq\{1,2,\dots,n-k-1\}}}2^{-\eps\sum_{l\in S\cup\{n-k\}}(j_{l+1}-j_l)}[\eta^{J}_{S\cup\{n-k\}}]_J
\\
&=
\sum_{J\in \mathcal E_{n}}\sum_{\substack{S\subseteq\{1,2,\dots,n-k-1\}}}2^{-\eps\sum_{l\in S\cup\{n-k\}}(j_{l+1}-j_l)}\sum_{i=j_{{n-k}+1}}^{j_{n-k}-1}\big[(\eta^{J^{(n-k)}_{i}}_{S})'\big]_{J^{(n-k)}_{i}}
\\
&\qquad\qquad
+\sum_{J\in \mathcal E_{n}}\sum_{\substack{S\subseteq\{1,2,\dots,n-k-1\}}}2^{-\eps\sum_{l\in S\cup\{n-k\}}(j_{l+1}-j_l)}[\eta^{J^{\widehat{n-k}}}_{S}]_{J^{\widehat{n-k}}}
\\
&=
\Sigma_{1}+\Sigma_{2}.
\end{align*}

In $\Sigma_1$ we change the order of summation, grouping together all the terms for which $J_i^{(n-k)}$ is a given $J'=(j'_1,\dots,j'_n)\in\mathcal E_{n}$. Clearly, this condition forces $J$ to differ from $J'$ only in its component $j_{n-k}$, and we have $j'_{n-k-1}\le j_{n-k}<j'_{n-k}$. 
In order to express the factor $2^{-\eps\sum_{l\in S\cup\{n-k\}}(j_{l+1}-j_l)}$ in terms of $J'$, we split the summation over the subsets $S$ into two parts; the first consists of subsets $S$ not containing $n-k-1$ and the second consists of the subsets which do contain $n-k-1$. We then have
$$
\begin{aligned}
&\Sigma_1\\
&=
\sum_{J'\in\mathcal E_{n}}\,
\sum_{S\subseteq\{1,2,\dots,n-k-2\}}
2^{-\eps\sum_{l\in S}(j'_{l+1}-j'_l)}
\Big(\sum_{i=j_{n-k-1}'}^{j'_{n-k}-1}2^{-\eps(j'_{n-k+1}-i)}\big[(\eta^{J'}_{S})'\big]_{J'}\Big)\\
&
+\sum_{J'\in\mathcal E_{n}}\,\sum_{S\subseteq\{1,2,\dots,n-k-2\}}2^{-\eps\sum_{l\in S}(j'_{l+1}-j'_l)}2^{-\eps(j'_{n-k+1}-j'_{n-k-1})}\Big(\sum_{i=j_{n-k-1}'}^{j'_{n-k}-1}\big[(\eta^{J'}_{S\cup\{n-k-1\}})'\big]_{J'}\Big)\\
&=\Sigma_{1,1}+\Sigma_{1,2}\ .
\end{aligned}
$$
Each function appearing in $\Sigma_{1,1}$ has integral zero in each variable $\x_{r}$ with $r\not\in B\subset\{1,\ldots, n-k-2\}$, and each function appearing in $\Sigma_{1,2}$ has integral zero in each variable $\x_{r}$ with $r\not\in B\cup\{n-k-1\}\subset \{1,\ldots, n-k-1\}$.  All the $\eta'$ in the above formula are normalized relative to the initial data. Hence, the term in $\Sigma_{1,1}$ indexed by  $(J',S)$ contains a function $\tilde\eta^{J'}_{S}$ normalized relative to the initial data, and multiplied by a factor
$$
2^{-\eps\sum_{l\in S}(j'_{l+1}-j'_l)}\sum_{j_{n-k-1}'}^{j'_{n-k}-1}2^{-\eps(j'_{n-k+1}-i)}\lesssim 2^{-\eps\sum_{l\in S}(j'_{l+1}-j'_l)}\ ,
$$
whereas the corresponding term in $\Sigma_{1,2}$ is a bump function $\tilde\eta^{J'}_{S\cup\{n-k-1\}}$ normalized relative to the initial data and multiplied by
$$
\begin{aligned}
2^{-\eps\sum_{l\in S}(j'_{l+1}-j'_l)}2^{-\eps(j'_{n-k}-j'_{n-k-1})}(j'_{n-k}-j'_{n-k-1})
\lesssim 2^{-\eps'\sum_{l\in S\cup\{n-k-1\}}(j'_{l+1}-j'_l)}\ ,
\end{aligned}
$$
with $\eps'<\eps$. Hence,
$$
\Sigma_1=\sum_{J'\in\mathcal E_{n}}\sum_{S\subseteq\{1,2,\dots,n-k-1\}}2^{-\eps\sum_{l\in S}(j'_{l+1}-j'_l)}[\tilde\eta^{J'}_{S}]_{J'}\ ,
$$
and $\Sigma_1$ together with $K_{2}$ gives the first sum in \eqref{thesis} with $k$ replaced by $k+1$.

It remains to prove that $\Sigma_2$ can be absorbed in the remainder term (second sum) of \eqref{thesis}. We group together the terms at the same scale and separating the sets $S$ containing $n-k-1$ form the others. Indexing the elements $J'$ of $\mathcal E_{\mathcal B}$ as $J'=(j'_1,\dots,j'_{n-k-1},j'_{n-k},\dots,j'_{n-1})\in \mathcal E_{n-1}$, we have
$$
\begin{aligned}
\Sigma_2
&=
\sum_{J\in \mathcal E_{n}}\sum_{\substack{S\subseteq\{1,2,\dots,n-k-1\}}}2^{-\eps\sum_{l\in S\cup\{n-k\}}(j_{l+1}-j_l)}[\eta^{J^{\widehat{n-k}}}_{S}]_{J^{\widehat{n-k}}}
\\
&=\sum_{J'\in \mathcal E_{n-1}}\sum_{S\subseteq\{1,2,\dots,n-k-2\}}
\Big(\sum_{J\in\mathcal E_{n}:J^{\widehat{n-k}}=J'}2^{-\eps\sum_{l\in S\cup\{n-k\}}(j_{l+1}-j_l)}\big[\eta^{J^{\widehat{n-k}}}_{S}\big]_{J^{\widehat{n-k}}}\Big)
\\
&\qquad 
+\sum_{J'\in \mathcal E_{n-1}}\sum_{S\subseteq\{1,2,\dots,n-k-2\}}\Big(\sum_{J\in\mathcal E_{n}:J^{\widehat{n-k}}=J'}2^{-\eps\sum_{l\in S\cup\{n-k-1,n-k\}}(j_{l+1}-j_l)}\big[\eta^{J^{\widehat{n-k}}}_{S}\big]_{J^{\widehat{n-k}}}\Big)\\
&=\Sigma_{2,1}+\Sigma_{2,2}\ .
\end{aligned}
$$
As in the previous discussion, each term in parentheses is a function normalized relative to the initial data, multiplied by a factor controlled by
$$
2^{-\eps\sum_{l\in S}(j_{l+1}-j_l)}\sum_{i=j'_{n-k-1}}^{j'_{n-k}}2^{-\eps(j'_{n-k}-i)}\lesssim 2^{-\eps\sum_{l\in S}(j'_{l+1}-j'_l)}\ ,
$$
for the terms in $\Sigma_{2,1}$ and by
$$
2^{-\eps\sum_{l\in S}(j_{l+1}-j_l)}2^{-\eps(j'_{n-k}-j'_{n-k-1})}(j'_{n-k}-j'_{n-k-1})\lesssim 2^{-\eps'\sum_{l\in S\cup\{n-k-1\}}(j'_{l+1}-j'_l)}\ ,
$$
with $\eps'<\eps$, for the terms in $\Sigma_{2,2}$. It follows that 
$$
\Sigma_3=\sum_{J'\in\mathcal E_{n-1}}\sum_{S\subseteq\{1,2,\dots,n-k-1\}}2^{-\eps\sum_{l\in S}(j'_{l+1}-j'_l)}\big[\tilde\eta^{J'}_{S}\big]_{J'}\ ,
$$
and this concludes the proof.
\end{proof}

\subsection{Restricted cancellation conditions}\quad

\smallskip

Our next result shows that the cancellation conditions in the definition of a flag kernel can be relaxed. As usual, let $\mathcal F$ denote the standard flag 
$$
(0)\subset\R^{a_n} \subset\R^{a_{n-1}}\oplus\R^{a_n}\subset \cdots\subset \R^{a_2}\oplus\cdots\oplus\R^{a_n}\subset \R^N\ .
$$

\begin{theorem}\label{restricted}
Let $\K$ be a distribution in $\Sc(\R^N)$ which satisfies the conditions of Definition \ref{Def2.1} for the flag $\mathcal F$, except that in condition {\rm(\ref{Def2.1b})}, the values of the parameters   $R_1,\dots,R_s$ are restricted to $R_1\ge R_2\ge\cdots\ge R_s$.
Then $\K$ is a flag kernel.
\end{theorem}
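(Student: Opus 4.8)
The plan is to argue by induction on the length $n$ of the flag $\mathcal F$, using the Fourier characterization of Lemma \ref{Lemma2.5}: it suffices to prove that $m:=\widehat{\K}$ satisfies the flag-multiplier estimates of Definition \ref{Def2.4} relative to the dual flag $\mathcal F^{*}$ of (\ref{E2.5}). When $n=1$ there is nothing to prove, since the only non-trivial splitting in condition (\ref{Def2.1b}) has $|M|=1$, which carries no ordering constraint on the dilation parameters. So assume the statement for all flags of length $<n$.

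Following the scheme of the proof of Theorem \ref{Lemma2.3}, I would first split off a ``frequency-ordered'' part of $\K$. Fix $\eta$ supported in $[\tfrac12,4]$ with $\sum_{i\in\Z}\eta(2^{i}t)\equiv 1$ for $t>0$, put $\eta_{I}(\xi)=\prod_{k=1}^{n}\eta(2^{i_{k}}|\xi_{k}|)$ and $\rho=\sum_{I\in\mathcal E_{n}}\eta_{I}$. As observed at the end of the proof of Lemma \ref{Lemma2.6}, $\rho\equiv 1$ on the open cone $\Gamma=\{\,|\xi_{1}|\ge 10|\xi_{2}|\ge\cdots\ge 10^{n-1}|\xi_{n}|\ne 0\,\}$, and $\rho$ is supported in a fixed neighborhood of $\Gamma$. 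Let $\tilde\K=\check{(\rho\,m)}$, so that $\tilde\K=\sum_{I\in\mathcal E_{n}}[\varphi^{I}]_{I}$ where $[\varphi^{I}]_{I}$ is $\K$ convolved, in each block $\x_{k}$ separately, with a fixed mean-zero Schwartz bump dilated to scale $2^{i_{k}}$. Using only the differential inequalities (\ref{Def2.1a}) and the vanishing moments of those bumps, a standard Littlewood--Paley estimate (of the type in the proof of Theorem \ref{Lemma2.3}, together with the Appendix II bound of Proposition \ref{Prop2.11}) shows that the $\varphi^{I}$ are uniformly normalized Schwartz functions, and they have strong cancellation because mean-zero convolution preserves vanishing integrals. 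Hence by Theorem \ref{Lemma5.4zw} the sum $\sum_{I\in\mathcal E_{n}}[\varphi^{I}]_{I}=\tilde\K$ is a flag kernel for $\mathcal F$, so $\rho\,m$ is a flag multiplier.

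It remains to treat $\K-\tilde\K=\check{((1-\rho)m)}$, whose Fourier transform is supported off $\Gamma$. Here I would mimic the successive-peeling construction in the proof of Lemma \ref{Lemma2.6}: multiplying $(1-\rho)m$ by smooth cutoffs in the ratios $|\xi_{r}|\,|\xi_{r+1}|^{-1}$ gives a finite decomposition $(1-\rho)m=\sum_{k}m_{k}$ in which each $m_{k}$ is supported where two consecutive dual blocks are comparable, so those blocks may be merged; thus $m_{k}$ is attached to a partition $\mathcal B$ strictly coarser than $\mathcal A$. I would then check that $\K_{k}:=\check{m_{k}}$ inherits from $\K$ the differential inequalities (\ref{Def2.1a}) and the \emph{restricted} cancellation conditions (\ref{Def2.1b}) for the shorter flag $\mathcal F_{\mathcal B}$ — this is the one point where the restricted hypothesis is used. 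The size estimates for $\K_{k}$ come from those of $\K$ and the frequency support of $m_{k}$ (the computation in the proof of Lemma \ref{Lemma2.6}, which does not use cancellation), while a restricted cancellation condition for $\mathcal F_{\mathcal B}$, when written out in terms of $\K$, forces the two parameters of the merged block to be comparable, so the configuration it demands of $\K$ stays \emph{ordered} and is therefore available. By the inductive hypothesis each $\K_{k}$ is a flag kernel for $\mathcal F_{\mathcal B}$, hence (a coarser flag kernel being a flag kernel, and dually a coarser flag multiplier being a flag multiplier, for any finer flag) each $m_{k}$ is a flag multiplier for $\mathcal F^{*}$. Adding these to $\rho\,m$ shows $m$ is a flag multiplier, and Lemma \ref{Lemma2.5} then gives that $\K$ is a flag kernel.

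I expect the main obstacle to be the bookkeeping in the last paragraph: verifying cleanly that each coarser piece $\K_{k}$ satisfies the differential inequalities for $\mathcal F_{\mathcal B}$ (a convolution-type estimate for $\K$ against the kernel of a homogeneous cutoff on two merged blocks) and, above all, that the restricted cancellation conditions for $\mathcal F_{\mathcal B}$ really do pull back to \emph{ordered} configurations for $\mathcal F$ and never to a disordered one. A secondary technical point is the treatment of the low-frequency pieces when identifying $\tilde\K$ with $\sum_{I\in\mathcal E_{n}}[\varphi^{I}]_{I}$, handled as in the passage through Lemmas \ref{Lemma2.8} and \ref{Lemma2.7w} in the proof of Theorem \ref{Lemma2.3}.
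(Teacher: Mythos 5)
Your route is genuinely different from the paper's, but as it stands it has real gaps. The paper's proof is a short, purely real-variable argument: given an arbitrary configuration $\{\rho_i\}$, set $R_i=\max_{l\in I,\,l\ge i}\rho_l$, so that $R_1\ge R_2\ge\cdots$ and the hypothesis covers $\K^{\#}_{\Psi,R}$; then expand $\Psi_\rho-\Psi_R$ into finitely many terms, each containing at least one factor $\eta_i(\x_i)=\psi_i(\rho_i\cdot\x_i)-\psi_i(R_i\cdot\x_i)$ supported in an annulus away from $\x_i=0$; applying the (still ordered) cancellation in the untouched variables reduces each term to an absolutely convergent integral of a kernel satisfying {\rm(\ref{Def2.1a})} against these annular bumps, which is bounded by the direct volume computation for $V_l$. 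No Fourier transform and no induction on $n$ are needed.

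Two steps in your plan do not go through as written. First, the assertion that the $\varphi^{I}$ are uniformly normalized ``using only the differential inequalities {\rm(\ref{Def2.1a})} and the vanishing moments of those bumps'' is false: to bound $2^{\sum_k i_kQ_k}(\K*\check\eta_I)(2^{I}\cdot\x)$ for $\x$ near the singular set one must pair $\K$ against bump functions at scales $R_{m_k}\sim 2^{-i_{m_k}}$ in some subset $M$ of the blocks, and this is exactly a cancellation condition, not a size estimate. (The fix is that for $I\in\mathcal E_n$ these $R$'s are automatically decreasing, so the \emph{restricted} hypothesis applies — but this has to be stated and proved; it is the actual content of that step.) Second, and more seriously, the treatment of $(1-\rho)m$ is circular in the form proposed: the peeling argument of Lemma \ref{Lemma2.6} works on the multiplier side and invokes Lemma \ref{Lemma2.5} at each stage, i.e.\ it presupposes that $m$ is already a flag multiplier, which is your conclusion. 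The kernel-side substitute you sketch — that $\K_k=\K*\check\chi_k$, with $\chi_k$ a zero-homogeneous cutoff in $|\xi_r|\,|\xi_{r+1}|^{-1}$, inherits {\rm(\ref{Def2.1a})} and the restricted {\rm(\ref{Def2.1b})} for the coarser flag — is not a routine convolution estimate, since $\check\chi_k$ is itself a singular flag-type kernel rather than a bump, and no argument is given. You correctly identify this as the main obstacle, but that obstacle is essentially the whole theorem; the paper's running-maximum argument is the device that avoids it entirely.
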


Before giving the formal proof, let us explain what needs to be done. We are asserting that if the cancellation conditions in part {\rm(\ref{Def2.1b})} of Definition \ref{Def2.1} are satisfied when $R_1\ge R_2\ge\cdots\ge R_s$, then they are satisfied for \emph{all} values of the scaling parameters. To do this we fix a non-empty subset $I\subseteq\{1,\dots,n\}$, and a constant $\rho_i>0$ for every $i\in I$. For each $i\in I$ we choose $\psi_i\in C^\infty_0(\R^{a_i})$ equal to 1 on the $N_i$-ball of radius 1 and supported on the $N_i$-ball of radius 2, and with bounds on the norms $\{||\psi_{i}||_{(m)}\}$.
We set $\x_I=(\x_i)_{i\in I}$ and $\x'_I=(\x_i)_{i\not\in I}$, and put
$$
\Psi_\rho(\x_I)=\prod_{i\in I}\psi_i(\rho_i\cdot\x_i)\ .
$$
Let $\K^\#_{\Psi,\rho}=\K^\#_{\Psi,\rho}(\x_{I}^{'})= \langle \K, \Psi_{\rho}\rangle$ denote the distribution in the variables $\x'_I$ such that
$
\lan \K^\#_{\Psi,\rho},\ph\ran=\lan \K,\Psi_\rho\otimes\ph\ran
$
for every test function $\varphi$ in the variables $\x_{I}^{'}$. 
We must then prove the following:  

\smallskip

\noindent \textit{Suppose that $\K$ satisfies the hypotheses of Theorem \ref{restricted}. Then
\begin{enumerate}[(a)]
\smallskip
\item \label{parta}If $I= \{1, \ldots, n\}$, then $|\langle \K, \Psi_{\rho}\rangle|\leq C$ where $C$ is independent of the choice of $\{\rho_{i}\}$ and $\{\psi_{i}\}$.
\smallskip
\item \label{partb}If $I$ is a proper, non-empty subset of $\{1, \ldots, n\}$, let $i_{0}=\min\{j\,\big\vert\,j\notin I\}$. Then $\K^\#_{\Psi,\rho}$ coincides with a smooth function for $\x_{i_0}\ne 0$, and for every multi-index $\bar\al$,
\begin{equation}\label{inequalities}
\big|\de^{\bar\al}_{\x'_I}\K^\#_{\Psi,\rho}(\x'_I)|\le C_{\alpha}\prod_{i\not\in I}\Big(\sum_{\substack{l\not\in I \\ l\le i }}N_l(\x_l)\Big)^{-Q_i-[\![\bar\al_i]\!]}
\end{equation}
where $\{C_{\alpha}\}$ are independent of the choice of $\{\rho_{i}\}$ and $\{\psi_{i}\}$.
\end{enumerate}}

In proving (\ref{parta}) or (\ref{partb}), we may assume that $\K$ has compact support, and look for non-restricted cancellation estimates that only depend on the constants in Definition 4.1, and not on the size of the support. For general $\K$, the conclusion will then follow by a limiting argument, based on the following construction. We fix a $C_0^\infty$-function $\ph$ on the real line, equal to 1 on a neighborhood of the origin, and set $\Phi=\ph\otimes\cdots\otimes\ph\in C^\infty_0(\R^N)$, $\Phi_r=\Phi\circ \del_{r\inv}$. Then $\K_r=\Phi_r\K$ has compact support, satisfies the hypotheses of Theorem \ref{restricted} uniformly in $r$, and $\lim_{r\to\infty}\K_r=\K$ in the sense of distributions.

\begin{proof}

For $i\in I$, define $R_i$ as
\begin{equation}\label{Ri}
R_i=\max_{\substack{l\in I \\ l\ge i }}\rho_\ell\ .
\end{equation}
Then $R_{1}\geq R_{2}\geq \cdots$, and so by hypothesis  $\K^\#_{\Psi,R}$ satisfies the estimates in \eqref{inequalities}. Thus it suffices to prove that the difference $\K^\#_{\Psi,\rho}-\K^\#_{\Psi,R}$ does as well.

Denote by $I^0$, (respectively $I^+$), the set of $i\in I$ such that $R_i=\rho_i$, (respectively $R_i>\rho_i$). Setting $\eta_i(\x_i)=\psi_i(\rho_i\cdot\x_i)-\psi_i(R_i\cdot\x_i)$, we have

\begin{equation*}
\begin{aligned}
\Psi_\rho(\x)-\Psi_R(\x)&=\Big(\prod_{i\in I^0}\psi_i(R_i\cdot\x_i)\Big)\Big(\prod_{i\in I^+}\psi_i(\rho_i\cdot\x_i)-\prod_{i\in I^+}\psi_i(R_i\cdot\x_i)\Big)\\
&=\Big(\prod_{i\in I^0}\psi_i(R_i\cdot\x_i)\Big)\Big(\prod_{i\in I^+}\big(\psi_i(R_i\cdot\x_i)+\eta_i(\x_i)\big)-\prod_{i\in I^+}\psi_i(R_i\cdot\x_i)\Big)\\
&=\Big(\prod_{i\in I^0}\psi_i(R_i\cdot\x_i)\Big)\sum_{\emptyset\ne J\subseteq I^+}\Big(\prod_{i\in J}\eta_i(\x_i)\Big)\Big(\prod_{i\in I^+\setminus J}\psi_i(R_i\cdot\x_i)\Big)\\
&=\sum_{\emptyset\ne J\subseteq I^+}\Big(\prod_{i\in J}\eta_i(\x_i)\Big)\Big(\prod_{i\in I\setminus J}\psi_i(R_i\cdot\x_i)\Big)\ .
\end{aligned}
\end{equation*}

Fix $J\subseteq I^+$, $J\ne\emptyset$.
By definition of $I^0$ and $I^+$, for each $i\in J$, $R_i=\rho_l$ for some $l\in I^0$, $l>i$. Set 
$\bar i=\min\{l\in I^0:l>i\text{ and }R_i=\rho_l\}$ and $\bar J=\{\bar i:i\in J\}$. 
By the cancellation  of $\K$ in the variables $\x_i$ for $i\in I\setminus(J\cup\bar J)$, 
$$
\K^\#_{\Psi,\rho}(\x'_I)-\K^\#_{\Psi,R}(\x'_I)=\sum_{\emptyset\ne J\subseteq I^+}\Big\lan \K^\#_J\,,\,\Big(\prod_{i\in J}\eta_i\Big)\Big(\prod_{i\in \bar{J}}(\psi_i\circ\del_{R_i})\Big)\Big\ran_{J\cup\bar J}\ ,
$$
where each $\K^\#_J$ is a distribution in the variables $\x_i$ with $i\in {}^cI\cup J\cup\bar J$, satisfying condition {\rm(\ref{Def2.1a})} of Definition \ref{Def2.1}.

Notice that this pairing can be expressed as an integral because the right-hand side $f$ in the pairing above is supported where $\K^\#_J$ is smooth. To see this, denote by  $i_0$ the smallest element of ${}^cI\cup J\cup \bar J$. Then $\K^\#_J$ is smooth for $\x_{i_0}\ne0$.  On the other hand, if $\x=(\x_i)_{i\in{}^cI\cup J\cup \bar J} \in\supp f$, all coordinates $\x_i$ are bounded away from zero except for those in $\bar J$. But every element of $\bar J$ is strictly larger than some element of $J$, therefore $i_0\not\in\bar J$.

We estimate the $\bar\al$-derivative of each $\K^\#_J$ by
\begin{equation}\label{diffineq'}
\begin{aligned}
|\de^{\bar\al}_{\x'_I}&\K^\#_J(\x'_I,\x_J,\x_{\bar J})|\\
&\le C_{\alpha}\Big(\prod_{i\in J}N_i(\x_i)^{-Q_i}\Big)\Big(\prod_{l\in \bar J}\big(\sum_{i\in J_l}N_i(\x_i)\big)^{-Q_l}\Big)\prod_{i\not\in I}\Big(\sum_{\substack{l\not\in I \\ l\le i }}N_l(\x_l)\Big)^{-Q_i-[\![\bar\al_i]\!]}\ ,
\end{aligned}
\end{equation}
where $J_l=\{i\in J:\bar i=l\}=\{i\in J:R_i=R_l\}$. Since the $J_l$ form a partition of $J$, we have
$$
\big|\K^\#_{\Psi,\rho}(\x'_I)-\K^\#_{\Psi,R}(\x'_I)\big|\le C\Big(\prod_{i\not\in I}\Big(\sum_{\substack{l\not\in I \\ l\le i }}N_l(\x_l)\Big)^{-Q_i-[\![\bar\al_i]\!]}\Big)\sum_{\emptyset\ne J\subseteq I^+}\prod_{l\in\bar J}V_l\ .
$$
With $m_l$ denoting the cardinality of $J_l$,
\allowdisplaybreaks{
\begin{align*}
V_l&=\int\limits_{\{N_i(\x_i)>R_l\inv \,,\,\forall i\in J_l\}}\int\limits_{N_l(\x_l)<R_l\inv}\Big(\prod_{i\in J_l}N_i(\x_i)^{-Q_i}\Big)\Big(\sum_{i\in J_l}N_i(\x_i)\Big)^{-Q_l}\,d\x_l\,\prod_{i\in J_l}d\x_i\\
&=R_l^{-Q_l}\int\limits_{\{N_i(\x_i)>R_l\inv \,,\,\forall i\in J_l\}}\Big(\prod_{i\in J_l}N_i(\x_i)^{-Q_i}\Big)\Big(\sum_{i\in J_l}N_i(\x_i)\Big)^{-Q_l}\,\prod_{i\in J_l}d\x_i\\
&\le CR_l^{-Q_l}\prod_{i\in J_l}\int\limits_{N_i(\x_i)>R_l\inv}N_i(\x_i)^{-Q _i-Q_l/m_l}\,d\x_i
\le C\ .
\end{align*}
}

This concludes the proof.
\end{proof}

\subsection{Invariance of flag kernels under changes of variables}\label{ChVariables}\quad

\smallskip

We study the effect of a change of variables on the class of flag distributions.  If $\K\in \mathcal S'(\R^{N})$, then formally $\langle \K,\psi\rangle = \int_{\R^{N}}K(\x)\psi(\x)\,d\x$, where $K$ is the `kernel' associated to $\K$. Let $F:\R^{N}\to \R^{N}$ be a diffeomorphism with inverse $G$. We want to define a new distribution $\K^{\#}$ which is the composition of $\K$ with the change of variables $F$. Now formally 
\begin{equation*}
\big\langle \K\circ F, \psi\big\rangle = \int_{\R^{N}}K(F(\x))\psi(\x)\,d\x= \int_{\R^{N}}K(\y) \psi(G(\y))\det(JG)(\y)\,dy
\end{equation*} 
where $JG$ is the Jacobian matrix of $G$. Thus if $\psi^{\#}(\y) = \psi(G(\y))\det(JG)(\y)$, and if the change of variables has the property that $\psi\in \mathcal S(\R^{N})$ implies $\psi^{\#}\in \mathcal S(\R^{N})$, we can define $\K^{\#}= \K\circ F$ by setting $\big\langle\K^{\#},\psi\big\rangle = \big\langle \K,\psi^{\#}\big\rangle$ for all $\psi \in\mathcal S(\R^{N})$.

We are primarily interested in changes of variables of the form
\begin{equation*}
F(x_{1}, \ldots, x_{N}) = (x_{1}+P_{1}(\x), \ldots, x_{n}+P_{N}(\x))
\end{equation*}
where $P_{1}, \ldots, P_{N}$ are polynomials of the form $P_{k}(x_{1}, \ldots, x_{N}) = \sum c^{\alpha}_{k}\,x_{1}^{\alpha_{1}}\,\cdots\,x_{k-1}^{\alpha_{k-1}}$ with coefficients $c^{\alpha}_{k}\in \R$. Thus $P_{k}$ depends only on the variables $\{x_{1}, \ldots, x_{k-1}\}$, and this guarantees that $F$ is a diffeomorphism with inverse $G$ of the same form. In particular, $\det (JG)(\x)$ is a polynomial. Thus if $\psi\in \mathcal S(\R^{N})$, then $\det(JG)\,\psi\circ G \in \mathcal S(\R^{N})$, so $\K^{\#}$ is well-defined. 
We want to show that if $\K$ is a flag distribution adapted to the decomposition $\R^{N}= \R^{a_{1}}\oplus \cdots \oplus \R^{a_{n}}$, then $\K^{\#}$ has the same property. In order to show this, we need to make additional assumptions on the coefficients $\{c^{\alpha}_{k}\}$. 

\smallskip

\begin{definition} A change of variables $\y=F(\x)$ of the form $y_{k}= x_{k}+P_{k}(\x)$ where $P_{k}(\x) = \sum_{\alpha\in \mathcal B_{k}}c^{\alpha}_{k}x_{1}^{\alpha_{1}}\cdots x_{k-1}^{\alpha_{k-1}}$ 
is \emph{allowable} if $$\mathcal B_{k}= \Big\{(\alpha_{1}, \ldots, \alpha_{k-1})\in \mathbb N^{k-1}\,\big\vert\,\sum_{j=1}^{k-1}\alpha_{j}d_{j} = d_{k}\Big\}.$$




\end{definition}

\begin{theorem}\label{Thm3.12}
Let $\K\in \mathcal S'(\R^{N})$ be a flag kernel adapted to the standard flag $\mathcal F$ coming from the decomposition $\R^{N}= \R^{a_{1}}\oplus\cdots \oplus \R^{a_{n}}$. If $\y=F(\x)$ is an allowable change of variables, then $\K^{\#}= \K\circ F$ is a flag distribution for the same decomposition.




\end{theorem}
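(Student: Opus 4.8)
The plan is to argue by induction on the step $n$ of the flag, using the dyadic decomposition of $\K$ and the weak‑cancellation criterion of Theorem \ref{Lemma5.4zw}. Two preliminary observations organize everything. First, since each $P_{k}$ depends only on $x_{1},\dots,x_{k-1}$, the Jacobian matrix $JG$ of the inverse map $G$ is unipotent upper‑triangular, so $\det(JG)\equiv 1$; hence $\langle\K^{\#},\psi\rangle=\langle\K,\psi\circ G\rangle$ and, away from $\x_{1}=0$, $\K^{\#}$ is integration against $K^{\#}(\x)=K(F(\x))$. Second — and this is the key structural point — because each $P_{k}\in\mathcal H_{d_{k}}$, the map $F$ commutes with the one‑parameter dilations $\delta_{r}$, and conjugating $F$ by the dyadic dilation $\x\mapsto 2^{I}\cdot\x$, $I\in\mathcal E_{n}$, produces
\[
\widetilde F_{I}=2^{I}\cdot F\bigl(2^{-I}\cdot(\ \cdot\ )\bigr),\qquad
\bigl(\widetilde F_{I}\bigr)_{k}(\y)=y_{k}+\sum_{\alpha\in\mathcal B_{k}}c^{\alpha}_{k}\,2^{\,\sum_{j<k}\alpha_{j}d_{j}(i_{\pi(j)}-i_{\pi(k)})}\,y_{1}^{\alpha_{1}}\cdots y_{k-1}^{\alpha_{k-1}}.
\]
Because $\alpha\in\mathcal B_{k}$ forces the support of $\alpha$ into $\{1,\dots,k-1\}$, whence $\pi(j)\le\pi(k)$ and $i_{\pi(j)}\le i_{\pi(k)}$ for $I\in\mathcal E_{n}$, every exponent above is $\le 0$: thus $\widetilde F_{I}$ is again an allowable change of variables, with polynomial coefficients bounded uniformly in $I\in\mathcal E_{n}$, and the ``cross‑block'' monomials (those involving a variable from a block strictly earlier than $\pi(k)$) come with a gain bounded by $\prod_{s=\pi(j)}^{\pi(k)-1}2^{-\epsilon_{0}(i_{s+1}-i_{s})}$, $\epsilon_{0}:=d_{1}$, by the telescoping $i_{\pi(j)}-i_{\pi(k)}=-\sum_{s=\pi(j)}^{\pi(k)-1}(i_{s+1}-i_{s})$. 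The elementary identity that links everything is $[\varphi]_{I}\circ F=[\varphi\circ\widetilde F_{I}]_{I}$.

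For the base case $n=1$ the flag is trivial, so there is no distinction between weak and strong cancellation: the differential inequalities for $K^{\#}$ follow from the chain rule and the homogeneity of $F$, while for a unit‑ball bump $\varphi$ one has $(\varphi_{R})^{\#}=\varphi_{R}\circ G=\varphi(R\cdot G(\cdot))=(\varphi\circ G)_{R}$ (using $\det JG\equiv1$ and $G\circ\delta_{R}=\delta_{R}\circ G$), so the cancellation condition of $\K$ applied to the normalized bump $\varphi\circ G$ gives $\sup_{R}|\langle\K^{\#},\varphi_{R}\rangle|<\infty$. For the inductive step ($n\ge2$), use Theorem \ref{Lemma2.3} to write $\K=\K_{0}+\K_{1}+\dots+\K_{n}$, where $\K_{0}=\sum_{I\in\mathcal E_{n}}[\varphi^{I}]_{I}$ with $\{\varphi^{I}\}$ a uniformly bounded family of unit‑ball bumps having strong cancellation, and each $\K_{j}$, $j\ge1$, is a flag kernel for a flag strictly coarser than $\mathcal F$. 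Since allowability depends only on the weights $d_{1},\dots,d_{N}$, $F$ is also allowable for each coarser flag; the inductive hypothesis thus makes $\K_{j}\circ F$ a flag kernel for that coarser flag, and a flag kernel for a coarser flag is in particular a flag kernel for $\mathcal F$ — this monotonicity is transparent from the Fourier characterization of Lemma \ref{Lemma2.5}, since a flag multiplier for the coarser dual flag dominates one for $\mathcal F^{*}$. For $\K_{0}$ one has $\K_{0}\circ F=\sum_{I\in\mathcal E_{n}}[\varphi^{I}\circ\widetilde F_{I}]_{I}$; each $\varphi^{I}\circ\widetilde F_{I}$ is a normalized bump (as $\widetilde F_{I}$ has uniformly bounded coefficients and maps bounded sets to bounded sets uniformly in $I$), and, granting the Claim below, it has weak cancellation relative to $I$ with a fixed parameter. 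Theorem \ref{Lemma5.4zw} then identifies $\K_{0}\circ F$ as a flag kernel for $\mathcal F$, and adding the pieces shows $\K^{\#}=\K_{0}\circ F+\sum_{j\ge1}\K_{j}\circ F$ is a flag kernel for $\mathcal F$.

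The heart of the matter is the \emph{Claim}: if $\varphi$ is a normalized unit‑ball bump with strong cancellation relative to the decomposition $\R^{N}=\R^{a_{1}}\oplus\cdots\oplus\R^{a_{n}}$ and $I\in\mathcal E_{n}$, then $\varphi\circ\widetilde F_{I}$ is a normalized bump with weak cancellation relative to $I$, with parameter $\epsilon$ depending only on $d_{1},\dots,d_{N}$. To prove it, invoke Remark \ref{Remark2.18} to write $\varphi$ as a finite sum of terms $\partial_{j_{1}}\cdots\partial_{j_{n}}\varphi_{j_{1},\dots,j_{n}}$ with one Euclidean derivative in a coordinate of each block and the $\varphi_{j_{1},\dots,j_{n}}$ normalized, precompose with $\widetilde F_{I}$, and push the derivatives to the outside. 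The Jacobian $J\widetilde F_{I}$ is the identity plus a strictly upper‑triangular matrix whose $(k,m)$ entry lies in $\mathcal H_{d_{k}-d_{m}}$ and, when $x_{k}$ belongs to a strictly later block than $x_{m}$, carries a gain at most $\prod_{s=\pi(m)}^{\pi(k)-1}2^{-\epsilon(i_{s+1}-i_{s})}$. Inverting this triangular system, exactly as in Propositions \ref{Prop1.2}, \ref{Prop1.3w} and \ref{Prop2.10jk}, and commuting the resulting homogeneous polynomial coefficients inside the derivatives — coefficients of negative homogeneous degree dropping out by the convention following (\ref{E6}) — each original factor $\partial_{x_{j_{\ell}}}$, a derivative in block $\ell$, is replaced by a sum of terms in each of which it is either a $y$‑derivative in the same block $\ell$, or a $y$‑derivative in some strictly later block $\ell'$ together with a normalized polynomial coefficient and a gain $\prod_{s=\ell}^{\ell'-1}2^{-\epsilon(i_{s+1}-i_{s})}$. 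Now collect the finite sum by the set $A$ of blocks still carrying a $y$‑derivative and its complement $B$: since the $n$ migrating derivatives can only move forward, block $n$ always lies in $A$; and if a block $p$ fails to lie in $A$, the derivative originating in block $p$ was pushed past the boundary $p$, contributing the factor $2^{-\epsilon(i_{p+1}-i_{p})}$. Any surplus gain is a product of factors $2^{-\epsilon(i_{s+1}-i_{s})}\le1$ and is absorbed into the normalized functions. This is precisely the form required by Definition \ref{Def1.14a2}. Alternatively, one verifies the integral characterization of weak cancellation in Proposition \ref{Prop5.7ijn} by integrating $\varphi\circ\widetilde F_{I}$ over the blocks of $B$ and using, for each such block, the cancellation of $\varphi$ there together with the chain rule.

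The principal obstacle is this Claim, and within it the combinatorial bookkeeping: one must track which of the $n$ derivatives migrates to which later block, pair each migration with exactly the matching factor $2^{-\epsilon(i_{s+1}-i_{s})}$, and check that all the auxiliary functions and polynomial coefficients stay normalized uniformly in $I$; the convention that monomials of negative homogeneous degree vanish is what guarantees the iteration terminates. A secondary point requiring care is the monotonicity of the flag‑kernel property under passage to a coarser flag, used above to recombine the coarser pieces $\K_{j}\circ F$.
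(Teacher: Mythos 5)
Your proposal is correct, and its skeleton coincides with the paper's: both reduce to the strongly cancelling dyadic part $\sum_{I}[\varphi^{I}]_{I}$, both rest on the identity $[\varphi^{I}]_{I}\circ F=[\varphi^{I}\circ\widetilde F_{I}]_{I}$ together with the computation that the rescaled coefficients $2^{-d_{l}i_{l}+\sum_{j}\alpha_{j}d_{j}i_{j}}c^{\alpha}_{l}$ carry nonpositive exponents when $I\in\mathcal E_{n}$, and both conclude by establishing weak cancellation and invoking Theorem \ref{Lemma5.4zw}. The one genuine divergence is how weak cancellation of $\varphi^{I}\circ\widetilde F_{I}$ is verified. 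The paper checks the integral characterization of Proposition \ref{Prop5.7ijn}: writing $\varphi^{I}$ as a sum of $\partial_{j_{1}}\cdots\partial_{j_{n}}\widetilde\varphi^{I}$, it sets $x_{j_{r}}=0$ inside the polynomial arguments so that the integral over $\x_{B}$ vanishes by the fundamental theorem of calculus, bounds the difference by $2^{-\epsilon(i_{r+1}-i_{r})}$ for each $r\in B$ separately, and then takes a geometric mean (shrinking $\epsilon$) to obtain the product over $B$. Your primary route instead inverts the unipotent Jacobian of $\widetilde F_{I}$ and migrates the Euclidean derivatives forward, in the spirit of Propositions \ref{Prop1.2}--\ref{Prop2.10jk} and Corollary \ref{Prop2.15jkl}; this yields the product of gains directly and exhibits the weak-cancellation form of Definition \ref{Def1.14a2} without passing through the integral criterion, at the price of the commutator bookkeeping you describe --- which is, however, exactly the machinery the paper already develops for vector fields, so nothing new is required. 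Your explicit induction on $n$ to dispose of the coarser pieces $\K_{j}\circ F$ (together with the remark that a flag kernel for a coarser flag is one for $\mathcal F$) makes precise a reduction the paper leaves implicit in its opening ``we can assume.'' Both routes are sound.
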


\begin{proof}
We can assume that  $\K = \sum_{I\in \mathcal E_{n}}[\varphi^{I}]_{I}$ where each $\varphi^{I}$ is a normalized bump function having strong cancellation. Let $I\in E_{N}$. We consider the dilate $[\varphi^{I}]_{I}$ composed with an allowable change of variables. We have
\begin{align*}
[\varphi^{I}]_{I}(F(\x))
&
= [\varphi^{I}]_{I}\big( \ldots,x_{l}+P_{l}(x_{1}, \ldots, x_{l-1}), \ldots\big)\\
&=
2^{-\sum_{k=1}^{N}d_{k}i_{k}}\varphi^{I}\big( \ldots,2^{-d_{l}i_{l}}[x_{l}+P_{l}(x_{1}, \ldots, x_{l-1})],\ldots\big)
\end{align*}
Put
\begin{align*}
\theta^{I}(\x) &
=
\varphi^{I}\big(\ldots, x_{l}+2^{-d_{l}i_{l}}P_{l}(2^{d_{1}i_{1}}x_{1}, \ldots, 2^{d_{l-1}i_{l-1}}x_{l-1}), \ldots\big)
\end{align*}
so that $[\theta^{I}]_{I}(\x) = [\varphi^{I}]_{I}(F(\x))$. Put 
\begin{equation*}
P_{l}^{I}(\x) = 2^{-d_{l}i_{l}}P_{l}(2^{I}\cdot \x) 
=
\sum_{\mathcal B_{l}}\,2^{-d_{l}i_{l}+ \sum_{k=1}^{l-1}d_{j}\alpha_{j}i_{j}}\,c_{k}^{\alpha}x_{1}^{\alpha_{1}}\cdots x_{k-1}^{\alpha_{k-1}}.
\end{equation*}
Then $\theta^{I}(\x) = \varphi^{I}\big(x_{1}+P_{1}^{I}(\x), \ldots, x_{N}+P_{N}^{I}(\x)\big)$. Since the change of variables is allowable and $I\in \mathcal E_{n}$, we have
\begin{equation*}
-d_{l}i_{l}+\sum_{k=1}^{l-1}d_{k}\alpha_{k}i_{k}=-\sum_{k=1}^{l-1}\alpha_{k}d_{k}(i_{l}-i_{k})\leq 0.
\end{equation*}
It follows that each $P_{l}^{I}$ is normalized relative to $P_{l}$, and this shows that $[\varphi^{I}]_{I}(F(\x))= [\theta^{I}]_{I}(\x)$, where $\theta^{I}\in \mathcal C^{\infty}_{0}(\R^{N})$ is normalized relative to $\varphi^{I}$. 

\medskip

Next we study the cancellation properties of $\theta^{I}$. If we can show that each $\theta^{I}$ has weak cancellation relative to the multi-index $I\in \E_{n}$, it follows from Theorem \ref{Lemma5.4zw} that 
\begin{equation*}
\K\circ F = \sum_{I}[\varphi^{I}]_{I}\circ F = \sum_{I}[\theta^{I}]_{I}
\end{equation*}
is a flag kernel, which is what we want to show. 

To do this we use Proposition \ref{Prop5.7ijn}. Let $\{1, \ldots, n\}= A\cup B$ with $A\cap B = \emptyset$. We study
\begin{align*}
\int\limits_{\bigoplus_{k\in B}\R^{a_{k}}}&\theta^{I}(\x_{A}, \x_{B})\,d\x_{B}\\
&=
\int\limits_{\bigoplus_{k\in B}\R^{a_{k}}}\varphi^{I}\big(\ldots, x_{l}+2^{-d_{l}i_{l}}P_{l}(2^{d_{1}i_{1}}x_{1}, \ldots, 2^{d_{l-1}i_{l-1}}x_{l-1}), \ldots\big)\,d\x_{B}.
\end{align*}
Since $\varphi^{I}$ has strong cancellation, we can write it as a sum of terms of the form $\partial_{j_{1}}\cdots \partial_{j_{n}}\widetilde \varphi^{I}$, where each index $j_{l}\in J_{l}$. It suffices to consider the integrals
\begin{equation*}
\int\limits_{\bigoplus_{k\in B}\R^{a_{k}}}\!\!\!\!\!
\partial_{j_{1}}\cdots \partial_{j_{n}}\widetilde \varphi^{I}
\big(\ldots, x_{l}+2^{-d_{l}i_{l}}P_{l}(2^{d_{1}i_{1}}x_{1}, \ldots, 2^{d_{j_{r}}i_{j_{r}}}x_{j_{r}}, \ldots, 2^{d_{l-1}i_{l-1}}x_{l-1}), \ldots\big)\,d\x_{B}.
\end{equation*}
Let $r\in B$. In this last integral,  replace the term $$2^{-d_{l}i_{l}}P_{l}(2^{d_{1}i_{1}}x_{1}, \ldots, 2^{d_{j_{r}}i_{j_{r}}}x_{j_{r}}, \ldots, 2^{d_{l-1}i_{l-1}}x_{l-1})$$ for $l>j_{r}$ by the term $2^{-d_{l}i_{l}}P_{l}(2^{d_{1}i_{1}}x_{1}, \ldots, 0, \ldots, 2^{d_{l-1}i_{l-1}}x_{l-1})$; \textit{i.e.} we set $x_{j_{r}}=0$ everywhere in the integrand except where it appears by itself in the $j_{r}^{th}$ entry of $\partial_{j_{1}}\cdots \partial_{j_{n}}\widetilde \varphi^{I}$. Now
\begin{equation*}
\int_{\bigoplus_{k\in B}\R^{a_{k}}}
\partial_{j_{1}}\cdots \partial_{j_{n}}\widetilde \varphi^{I}
\big( \ldots, x_{l}+2^{-d_{l}i_{l}}P_{l}(2^{d_{1}i_{1}}x_{1}, \ldots, 0, \ldots, 2^{d_{l-1}i_{l-1}}x_{l-1}), \ldots\big)\,d\x_{B}=0
\end{equation*}
since one of the variables we integrate is $x_{j_{r}}$, and we are integrating the derivative of a Schwartz function. Thus it suffices to estimate the integral of the difference:
\begin{align*}
\int_{\bigoplus_{k\in B}\R^{a_{k}}}
&\Big[\partial_{j_{1}}\cdots \partial_{j_{n}}\widetilde \varphi^{I}
\big(\ldots, x_{l}+2^{-d_{l}i_{l}}P_{l}(2^{d_{1}i_{1}}x_{1}, \ldots, 2^{d_{j_{r}}i_{j_{r}}}x_{j_{r}}, \ldots, 2^{d_{l-1}i_{l-1}}x_{l-1}), \ldots\big)\\
&
-
\partial_{j_{1}}\cdots \partial_{j_{n}}\widetilde \varphi^{I}
\big( \ldots, x_{l}+2^{-d_{l}i_{l}}P_{l}(2^{d_{1}i_{1}}x_{1}, \ldots, 0, \ldots, 2^{d_{l-1}i_{l-1}}x_{l-1}), \ldots\big)\Big]\,
d\x_{B}.
\end{align*}
A typical term in the polynomial $P_{l}$ has the form $c_{\alpha}x_{1}^{\alpha_{1}}\cdots x_{l-1}^{\alpha_{l-1}}$ where $\alpha_{1}d_{1}+ \cdots +\alpha_{l-1}d_{l-1}= d_{l}$.  Thus we get a `gain' whose  size can be estimated by a sum of terms of the form
\begin{align*}
|c_{\alpha}|2^{-d_{l}i_{l}+ \alpha_{1}d_{1}i_{1}+\cdots +\alpha_{l-1}d_{l-1}i_{l-1}}|x_{1}|^{\alpha_{1}}\cdots |x_{l-1}|^{\alpha_{l-1}}
\end{align*}
where $\alpha_{j_{r}}>0$. However,
\begin{align*}
-d_{l}i_{l}+\sum_{t=1}^{l-1}\alpha_{t}d_{t}i_{t} &= \big[-d_{1}+\sum_{t=1}^{l-1}\alpha_{t}d_{t}\big]i_{l}+\sum_{t=1}^{l-1}\alpha_{t}d_{t}(i_{t}-i_{l})\\
&=\sum_{t=1}^{l-1}\alpha_{t}d_{t}(i_{t}-i_{l})\\
& \leq -\alpha_{r}d_{r}(i_{l}-i_{r}) \leq -\epsilon(i_{r+1}-i_{r}).
\end{align*}
Thus for every $r\in B$ we have shown that $\big|\int_{\bigoplus_{k\in B}\R^{a_{k}}}\theta^{I}(\x_{A}, \x_{B})\,d\x_{B}\Big| \lesssim 2^{-\epsilon(i_{r+1}-i_{r})}$. Thus with a smaller $\epsilon$ we have
\begin{equation*}
\big|\int_{\bigoplus_{k\in B}\R^{a_{k}}}\theta^{I}(\x_{A}, \x_{B})\,d\x_{B}\Big| \lesssim \prod_{r\in B}2^{-\epsilon(i_{r+1}-i_{r})},
\end{equation*}
and it follows from Proposition \ref{Prop5.7ijn} that $\theta^{I}$ has weak cancellation. This completes the proof.
\end{proof}

\section{Convolutions on nilpotent Lie groups}

\subsection{Homogeneous nilpotent Lie groups}\label{Nilpotent}\qquad

\smallskip

We now begin the study of operators $f\to f*\K$ where $\K$ is a flag kernel, and the convolution is on a homogeneous nilpotent Lie groug $G$ with Lie algebra $\mathfrak g$. To say that a Lie group $G$ is homogeneous means that there is a one-parameter group of automorphisms $\delta_{r}:G\to G$ for $r>0$, with $\delta_{1}= \text{Id}$.
As a manifold, $G$ is an $N$-dimension real vector space, and we assume that with an appropriate choice of coordinates,  $G= \R^{N}$ and the automorphisms are given by $\delta_{r}[\x] = r\cdot \x = (r^{d_{1}}x_{1}, \ldots, r^{d_{N}}x_{N})$ with  $1\leq d_{1}\leq d_{2}\leq \cdots \leq d_{N}$. We begin by summarizing the facts about group multiplication, invariant vector fields, and group convolution that we need in this context. Additional background information, and in particular the proofs of formulas (\ref{1.4}) and (\ref{1.6}) below, can be found in the first chapter of \cite{FoSt82}. 

The product on $G=\R^{N}$ is given by a polynomial mapping; if $\x= (x_{1}, \ldots, x_{N})$ and $\y=(y_{1}, \ldots, y_{N})$, the $k^{th}$ component of the product $\x\y$ is given by
\begin{equation}\label{1.4}
\begin{split}
(\x\y)_{k}&= x_{k}+y_{k} + M_{k}(\x,\y)
= x_{k}+y_{k} + \sum_{\alpha,\beta\in \mathcal M_{k}} c^{\alpha,\beta}_{k}x_{1}^{\alpha_{1}}\cdots x_{k-1}^{\alpha_{k-1}}y_{1}^{\beta_{1}}\cdots y_{k-1}^{\beta_{k-1}}
\end{split}
\end{equation}
where $\{c^{\alpha,\beta}_{k}\}$ are real constants, and 
$$\mathcal M_{k} = \big\{(\alpha;\beta) = (\alpha_{1}, \ldots, \alpha_{k-1}; \beta_{1},\ldots, \beta_{k-1})\,\Big\vert\,\sum_{l=1}^{k-1} d_{l}(\alpha_{l}+\beta_{l}) = d_{k}\big\}.$$
Note that $M_{k}(r\cdot\x, r\cdot\y)= r^{d_{k}}M_{k}(\x,\y)$. 



Next, let $\{X_{1}, \ldots, X_{N}\}$ and $\{Y_{1}, \ldots, Y_{N}\}$ be the left- and right-invariant vector fields on $G$ such that at the origin, $X_{k}=Y_{k}=\partial_{x_{k}}$. Then
\begin{equation}\label{1.6}
\begin{split}
X_{k} &= \frac{\partial}{\partial x_{k}} +\sum_{\substack{l=k+1\\d_{l}>d_{k}}}^{N}P_{k_{l}}(\x)\frac{\partial}{\partial x_{l}}
= \frac{\partial}{\partial x_{k}} + \sum_{\substack{l=k+1\\d_{l}>d_{k}}}^{N}  \sum_{\alpha\in \mathfrak H_{d_{l}-d_{k}}}a^{\alpha}_{k_{l}}\,x_{1}^{\alpha_{1}}\cdots x_{l-1}^{\alpha_{l-1}}\,\frac{\partial}{\partial x_{l}},\\
Y_{k} &= \frac{\partial}{\partial x_{k}} +\sum_{\substack{l=k+1\\d_{l}>d_{k}}}^{N}\widetilde P_{k_{l}}(\x)\frac{\partial}{\partial x_{l}}
=\frac{\partial}{\partial x_{k}} + \sum_{\substack{l=k+1\\d_{l}>d_{k}}}^{N} \sum_{\alpha\in \mathfrak H_{d_{l}-d_{k}}}\widetilde a^{\alpha}_{k_{l}}\,x_{1}^{\alpha_{1}}\cdots x_{l-1}^{\alpha_{l-1}} \,\frac{\partial}{\partial x_{l}},
\end{split}
\end{equation}
where $\{a^{\alpha}_{k_{l}}\}$ and $\{\widetilde a^{\alpha}_{k_{l}}\}$ are real constants, and the index set $\mathfrak H_{d}$ is defined in Proposition \ref{Prop2.1mn}. It follows that $P_{k_{l}}, \widetilde P_{k_{l}}\in \mathcal H_{d_{l}-d_{k}}$. 

The bi-invariant Haar measure on $G$ is Lebesgue measure $d\y=dy_{1}\cdots dy_{N}$.  The \textit{convolution} of functions $f, g \in L^{1}(G)$ is given by
\begin{equation*}
f*g(\x) = \int_{G}f(\x\y^{-1})g(\y)\,d\y =\int_{G}f(\y)g(\y^{-1}\x)\,d\y,
\end{equation*}
and the integral converges absolutely for almost all $x\in G$. The following result can be found on page 22 of \cite{FoSt82}.

\begin{proposition}\label{Proposition3.1} Let $f,g \in \mathcal C^{1}(G) \cap L^{1}(G)$. 

\smallskip

\begin{enumerate}[{\rm(1)}]

\item \label{Proposition3.1b} If $X$ is a left-invariant vector field and $Y$ is a right invariant vector field, then $X[f*g] = f*X[g]$ and $Y[f*g] = Y[f]*g$.

\smallskip

\item \label{Proposition3.1c}If $X$ is a left-invariant vector field and $Y=\widetilde X$ is the unique right-invariant vector field agreeing with $X$ at the origin, then $X[f]*g = f*Y[g]$.

\smallskip

\item \label{Proposition3.1d} If $\delta = \delta_{0}$ denotes the Dirac delta-function at the origin, then $\varphi(x) = \varphi * \delta(x) = \delta *\varphi(x)$ for $\varphi \in \mathcal C^{\infty}_{0}(G)$. In particular, if $X$ is a left-invariant vector field and $Y$ is a right invariant vector field, $X[\varphi] = \varphi*X[\delta]$ and $Y[\varphi] = Y[\delta] * \varphi$.
\end{enumerate}
\end{proposition}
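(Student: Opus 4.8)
The plan is to obtain all three statements from three ingredients: the characterization of invariant vector fields by one-parameter translations, the bi-invariance of the Haar measure (which, as noted above, is Lebesgue measure on $G$), and differentiation under the integral sign. Recall that a left-invariant vector field $X$ is determined by its value $v=X|_{\mathbf 0}$ at the origin through $X[\psi](\x)=\frac{d}{dt}\big|_{t=0}\psi\big(\x\exp(tv)\big)$, where $t\mapsto\exp(tv)$ is the one-parameter subgroup of $G$ with tangent $v$ at $\mathbf 0$, and a right-invariant field $Y$ satisfies $Y[\psi](\x)=\frac{d}{dt}\big|_{t=0}\psi\big(\exp(tv)\,\x\big)$; the hypothesis in part (\ref{Proposition3.1c}) that $\widetilde X$ agrees with $X$ at the origin says exactly that the two fields are built from the same $v$.

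For part (\ref{Proposition3.1b}) I would start from $f*g(\x)=\int_G f(\y)\,g(\y^{-1}\x)\,d\y$ and compute, for $X$ left-invariant, $X[f*g](\x)=\frac{d}{dt}\big|_{0}\int_G f(\y)\,g\big((\y^{-1}\x)\exp(tv)\big)\,d\y=\int_G f(\y)\,(Xg)(\y^{-1}\x)\,d\y=(f*X[g])(\x)$: the $\y$-integration is untouched and the derivative acts only on the last argument. Dually, using $f*g(\x)=\int_G f(\x\y^{-1})\,g(\y)\,d\y$ together with a right-invariant $Y$ gives $Y[f*g]=Y[f]*g$ in the same way. For part (\ref{Proposition3.1c}) I would write $X[f]*g(\x)=\int_G (Xf)(\y)\,g(\y^{-1}\x)\,d\y=\frac{d}{dt}\big|_{0}\int_G f\big(\y\exp(tv)\big)\,g(\y^{-1}\x)\,d\y$, then substitute $\y\mapsto\y\exp(-tv)$; since right translations preserve Lebesgue measure (the Haar measure being bi-invariant) this change of variable is measure-preserving, and $f(\y\exp(tv))$ becomes $f(\y)$ while $g(\y^{-1}\x)$ becomes $g(\exp(tv)\,\y^{-1}\x)$. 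Hence $X[f]*g(\x)=\frac{d}{dt}\big|_{0}\int_G f(\y)\,g\big(\exp(tv)\,\y^{-1}\x\big)\,d\y=\int_G f(\y)\,(\widetilde X g)(\y^{-1}\x)\,d\y=f*\widetilde X[g](\x)$, because $\widetilde X$ is the right-invariant field with germ $v$.

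For part (\ref{Proposition3.1d}), the identities $\varphi=\varphi*\delta=\delta*\varphi$ rest only on the fact that the origin $\mathbf 0$ is the group identity — it is the unique point fixed by all the automorphisms $\delta_{r}$, hence equal to $e$ — so that $\varphi*\delta(\x)=\varphi(\x\cdot\mathbf 0^{-1})=\varphi(\x)$ and $\delta*\varphi(\x)=\varphi(\mathbf 0^{-1}\x)=\varphi(\x)$. The two derivative identities then follow by applying part (\ref{Proposition3.1b}) with $g$, respectively $f$, replaced by the compactly supported distribution $\delta$ — the interchange of derivative and integral in the computations above is trivial when one factor is $\delta$ — so that $X[\varphi]=X[\varphi*\delta]=\varphi*X[\delta]$ and $Y[\varphi]=Y[\delta*\varphi]=Y[\delta]*\varphi$; these same formulas also serve to define the distributions $X[\delta]$ and $Y[\delta]$.

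The only real difficulty — the main obstacle — is justifying the interchange of $\frac{d}{dt}$ with $\int_G$ (and the absolute convergence of the integrals defining $f*X[g]$ and $Y[f]*g$) under the bare hypothesis $f,g\in\mathcal C^{1}(G)\cap L^{1}(G)$: by (\ref{1.6}) the coefficients of the invariant vector fields are polynomials, so $Xg$ and $Yf$ need not lie in $L^{1}$, while the argument $\y^{-1}\x$ ranges over all of $G$. I would handle this by writing the difference quotient along the subgroup as an integral $\int_0^1 (Xg)(\,\cdot\,\exp(stv))\,ds$ of the derivative and invoking dominated convergence locally in $\x$, after the standard reduction — by approximation — to the case in which the relevant first derivatives are bounded (or one of $f,g$ has compact support), which is the only case used elsewhere in this paper; for the statement exactly as phrased, one may instead simply quote \cite{FoSt82}.
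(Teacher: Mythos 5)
The paper gives no proof of this proposition: it simply cites page 22 of \cite{FoSt82}. Your argument is the standard one found there (the one-parameter-subgroup description of invariant vector fields, bi-invariance of Haar measure for the change of variable in part (\ref{Proposition3.1c}), and differentiation under the integral sign), and it is correct; your remark that the bare hypothesis $f,g\in\mathcal C^{1}(G)\cap L^{1}(G)$ does not by itself guarantee absolute convergence of $f*X[g]$ is a fair observation about the statement, and your proposed fixes (reduction to bounded derivatives or compact support, which is all that is used elsewhere in the paper) are adequate.
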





%


%






%



Using the formulas in (\ref{1.4}), we can write the convolution of integrable functions $f$ and $g$ as
 \begin{equation}\label{5.1}
\begin{split}
f*g(\x) 
&=
\int_{\R^{N}}f(\ldots, x_{m}-y_{m}-P_{m}(\x,\y), \ldots)\,g(\ldots, y_{m}, \ldots)\,dy_{1}\cdots dy_{N}
\end{split}
\end{equation}
where each $P_{m}$ is a polynomial in the $2m-2$ variables $\{x_{1}, \ldots, x_{m-1}, y_{1}, \ldots, y_{m-1}\}$ satisfying $P_{m}(2^{K}\cdot \x, 2^{K}\cdot\y) = 2^{k_{m}d_{m}}P_{m}(\x,\y)$.  In the formula (\ref{5.1}), the variables in $\x$ appear in the argument of $f$. However by a change of variables we can move some or all of them to the argument of $g$. Thus if $S$ is any subset of $\{1, \ldots, N\}$, we can write
\begin{equation}\label{5.2}
\begin{split}
&f*g(\x) = 
\int_{\R^{N}}f\big(u_{1}(\x,\y), \ldots, u_{N}(\x,\y)\big)\,g\big(v_{1}(\x,\y), \ldots, v_{N}(\x,\y)\big)\,dy_{1}\cdots dy_{N}
\end{split}
\end{equation}
where
\begin{equation}\label{5.3}
\begin{split}
u_{m}(\x,\y) &=
\begin{cases}
x_{m}-y_{m}-Q_{m}(\x,\y) &\text{if $m\in S$,}\\
y_{m} &\text{if $m\notin S$,}
\end{cases}\\
v_{m}(\x,\y) &=
\begin{cases}
y_{m} &\text{if $m\in S$,}\\
x_{m}-y_{m}-Q_{m}(\x,\y) &\text{if $m\notin S$.}
\end{cases}
\end{split}
\end{equation}
Here each $Q_{m} = Q_{m}^{S}$ is a polynomial in the variables  $\{x_{1}, \ldots, x_{m-1}, y_{1}, \ldots, y_{m-1}\}$ with the same homogeneity as $P_{m}$; that is  $Q_{m}(2^{K}\cdot \x, 2^{K}\cdot\y) = 2^{k_{m}d_{m}}Q_{m}(\x,\y)$.

\subsection{Support properties of convolutions $[\varphi]_{I}*[\psi]_{J}$}\label{SupportProperties}\quad

\smallskip

In this section we study the support properties of the convolution of dilates of normalized bump functions with compact support. Given integers $i,j\in \Z$, we set $i\vee j = \max\{i,j\}$. Given $N$-tuples $I=(i_{1}, \ldots, i_{N}),\,J=(j_{1}, \ldots, j_{N}) \in \Z^{N}$, we set 
\begin{equation}
I\vee J =(i_{1}\vee j_{1}, \ldots, i_{N}\vee j_{N}).
\end{equation}
We want to show that the convolution  $[\varphi]_{I}*[\psi]_{J}$ is the $I\vee J$-dilate of a normalized function. 

\smallskip 

\begin{lemma} \label{Lemma5.1}Let $\varphi, \psi \in \mathcal C^{\infty}_{0}(\R^{N})$ have support in the ball $B(\rho)$. Then for any $I,J\in E_{N}$ there exists $\theta \in \mathcal C^{\infty}_{0}(\R^{N})$ supported in the ball $B(C\rho)$ such that $[\varphi]_{I}*[\psi]_{J} = [\theta]_{I\vee J}$, and $||\theta||_{(m)}\leq C_{m}||\varphi||_{(m)}||\psi||_{(m)}$. The constants $C$ and $\{C_{M}\}$ can depend on the radius $\rho$, but are independent of the functions $\varphi$ and $\psi$.
\end{lemma}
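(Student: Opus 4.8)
\textbf{Proof plan for Lemma \ref{Lemma5.1}.}

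The plan is to reduce the convolution, via the explicit polynomial formula for the group law, to a manifestly compactly supported smooth function and then identify its natural dilation scale as $I\vee J$. First I would write out $[\varphi]_I*[\psi]_J(\x)$ using equation (\ref{5.1}), and then change variables $y_m \mapsto 2^{-d_m (i_m\vee j_m)} y_m$ (equivalently $\y \mapsto 2^{I\vee J}\cdot\y$) so as to pull the $I\vee J$ dilation out in front; the $L^1$-normalizations of $[\varphi]_I$ and $[\psi]_J$ combine with the Jacobian $2^{-\sum d_m(i_m\vee j_m)}$ to leave exactly the $L^1$-normalizing factor for scale $I\vee J$. Concretely, after this substitution one gets $[\varphi]_I*[\psi]_J = [\theta]_{I\vee J}$ where
\begin{equation*}
\theta(\x) = \int_{\R^N} \varphi\big(2^{(I\vee J)-I}\cdot \big(\x - \y - \mathbf{P}(2^{I\vee J}\cdot\x,\, 2^{I\vee J}\cdot\y)\big)\big)\,\psi\big(2^{(I\vee J)-J}\cdot\y\big)\,d\y,
\end{equation*}
the polynomial correction being rescaled appropriately component by component. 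The point is that $\theta$ is smooth (differentiation under the integral sign is legitimate since $\psi$ has compact support), and it visibly depends on $\varphi,\psi$ and on the group-law polynomials, with the dilation exponents appearing only through the \emph{differences} $(I\vee J)-I \geq 0$ and $(I\vee J)-J\geq 0$, which are nonnegative since $i_m\vee j_m\geq i_m,\,j_m$.

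Next I would check the support estimate. In the integrand $\psi(2^{(I\vee J)-J}\cdot\y)$ forces $\y$ into a set of the form $\{|2^{(I\vee J)-J}\cdot\y|\leq \rho\}$; since the exponents $(i_m\vee j_m)-j_m$ are nonnegative this set is contained in $B(\rho)$ (dilating by nonnegative powers shrinks the homogeneous ball), so the $\y$-integration is over a fixed compact set of measure $O(1)$. On that set, the argument of $\varphi$ is $2^{(I\vee J)-I}\cdot(\x-\y-\cdots)$, and requiring it to lie in $B(\rho)$, together with $\y\in B(\rho)$ and the fact that the rescaled polynomial terms $2^{-d_m(i_m\vee j_m)+\cdots}\,(\cdots)$ have coefficients that are bounded (again because $I,J\in E_N$ makes all the relevant exponents $\leq 0$, exactly as in the computation in Proposition \ref{Prop1.7} and in the proof of Theorem \ref{Thm3.12}), gives $|\x|\leq C\rho$ for a constant $C$ depending only on $\rho$ and the group-law polynomials. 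Hence $\theta$ is supported in $B(C\rho)$. The norm bound $||\theta||_{(m)}\leq C_m||\varphi||_{(m)}||\psi||_{(m)}$ then follows by differentiating under the integral: each $\partial^\alpha_\x$ falling on the integrand produces, by the chain rule, a bounded number of terms, each a product of derivatives of $\varphi$ (of order $\leq |\alpha|$, with polynomial prefactors bounded on the compact support) times $\psi$ evaluated at rescaled arguments, integrated over a fixed-measure set; collecting constants gives the claim with $C_m$ depending on $\rho$, $m$, and the structure constants of $G$, but not on $\varphi,\psi$.

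The main obstacle — really the only point requiring care rather than bookkeeping — is controlling the homogeneity of the polynomial terms $P_m(2^{I\vee J}\cdot\x, 2^{I\vee J}\cdot\y)$ after they are divided by $2^{d_m(i_m\vee j_m)}$: one must verify that every resulting monomial carries a power of $2$ with nonpositive exponent, so that the coefficients stay bounded and so that the support of $\theta$ does not blow up with $I,J$. This is where the hypothesis $I,J\in E_N$ (monotone indices) is essential. Each monomial in $P_m$ is $c\, x_1^{\alpha_1}\cdots x_{m-1}^{\alpha_{m-1}} y_1^{\beta_1}\cdots y_{m-1}^{\beta_{m-1}}$ with $\sum_{l<m} d_l(\alpha_l+\beta_l)=d_m$; substituting $2^{I\vee J}$ and dividing by $2^{d_m(i_m\vee j_m)}$ produces the exponent $\sum_{l<m} d_l(\alpha_l+\beta_l)(i_l\vee j_l) - d_m(i_m\vee j_m) = -\sum_{l<m} d_l(\alpha_l+\beta_l)\big((i_m\vee j_m)-(i_l\vee j_l)\big)\leq 0$, using that $l\mapsto i_l\vee j_l$ is monotone increasing because both $I$ and $J$ are. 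The same elementary estimate handles the rescaled arguments of $\varphi$. Once this is in hand, everything else is a routine application of differentiation under the integral sign over a compact set, and the proof concludes.
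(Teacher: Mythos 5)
Your support argument, including the key homogeneity computation showing that each rescaled monomial of $P_m$ carries a nonpositive power of $2$ because $l\mapsto i_l\vee j_l$ is increasing, is essentially the paper's proof and is correct. The gap is in the norm estimate, and it is genuine rather than cosmetic. With the representation you chose — every $x_m$ placed inside the argument of $\varphi$ — two things go wrong. First, your displayed formula for $\theta$ omits the prefactor $2^{\sum_m d_m(2k_m-i_m-j_m)}$ (where $k_m=i_m\vee j_m$) that survives the change of variables and the three normalizations; since you bound the $\y$-integral only by the measure of the support of the rescaled $\psi$, which is indeed $O(1)$, even the $L^\infty$ bound you get is $2^{\sum_m d_m(k_m-i_m)}\,\|\varphi\|_{(0)}\|\psi\|_{(0)}$, unbounded whenever some $j_m>i_m$. (This particular defect is repairable: intersecting with the constraint from the support of the rescaled $\varphi$ cuts the domain down to measure $\approx\prod_m 2^{-d_m(k_m-i_m\wedge j_m)}$, which exactly cancels the prefactor.) Second, and not repairable by the same device, $\partial_{x_m}$ applied to $\varphi\bigl(\ldots,2^{d_l(k_l-i_l)}(x_l-y_l-\cdots),\ldots\bigr)$ produces by the chain rule the factors $2^{d_l(k_l-i_l)}\geq 1$, which are unbounded over $I,J\in E_N$; no compactness of the integration domain compensates for them, so the term-by-term estimate you propose cannot deliver $\|\theta\|_{(m)}\leq C_m\|\varphi\|_{(m)}\|\psi\|_{(m)}$ for $m\geq 1$.

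The paper avoids both problems by using the flexible representation (\ref{5.2})--(\ref{5.3}) of the convolution: for each coordinate $m$ one chooses whether the expression $x_m-y_m-Q_m(\x,\y)$ is fed to $\varphi$ or to $\psi$, and the right choice is $S=\{m\,:\,j_m\leq i_m=k_m\}$, i.e.\ the $\x$-dependent slot is always given to whichever factor is dilated at the larger scale $k_m$. Then the rescaling factor multiplying every $\x$-dependent argument is exactly $2^{d_m(k_m-k_m)}=1$, so $\x$-derivatives cost nothing, while the pure integration variable $y_m$ lands in the other factor multiplied by the large factor $2^{d_m(k_m-i_m\wedge j_m)}$, confining the integration to a set whose measure cancels the prefactor. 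You should redo the norm estimate along these lines (or, equivalently, integrate by parts in $\y$ to move the offending dilation factors off of $\varphi$); as written, the derivative bound does not follow from your argument.
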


\begin{proof}

Let $K=I\vee J$ and put
\begin{equation*}
\theta = \big[[\varphi]_{I}*[\psi]_{J}\big]_{-K}.
\end{equation*}
It suffices to show that $\theta$ is supported in the set $\{\x\in \R^{N}\,\big\vert\,|x_{k}|\leq C\rho,\,1\leq k \leq N\}$  and that $||\theta||_{(m)}\leq C_{m}\,||\varphi||_{(m)}||\psi||_{(m)}$ for some absolute constants $C$ and $\{C_{m}\}$.  
Making the change of variables $y_{m}\to 2^{k_{m}d_{m}}y_{m}$ and using the homogeneity of the functions $\{u_{m}\}$ and $\{v_{m}\}$, we have
\begin{align*}
&\big[[\varphi]_{I}*[\psi_{J}]\big]_{-K}(\x)\\
&= 
2^{\sum_{m} d_{m}k_{m}}([\varphi]_{I}*[\psi]_{J})(2^{d_{1}k_{1}}x_{1}, \ldots, 2^{d_{N}k_{N}}x_{N})\\
&= 
2^{\sum_{m}2d_{m}k_{m}}
\int_{\R^{N}}[\varphi]_{I}\big(\ldots, u_{m}(2^{d_{1}k_{1}}x_{1}, \ldots, 2^{d_{N}k_{N}}x_{N},\y), \ldots\big)\\
&\qquad\qquad \qquad\qquad\qquad
[\psi]_{J}\big(\ldots, v_{m}(2^{d_{1}k_{1}}x_{1}, \ldots, 2^{d_{N}k_{N}}x_{N},\y), \ldots \big)dy_{1}\cdots dy_{N}
\\
&=
2^{\sum_{m}d_{m}(2k_{m}-i_{m}-j_{m})}
\int_{\R^{N}}\varphi\big(\ldots, 2^{d_{m}(k_{m}-i_{m})}u_{m}(\x,\y), \ldots\big)
\\
&\qquad\qquad \qquad\qquad\qquad\qquad\qquad\psi\big(\ldots, 2^{d_{m}(k_{m}-j_{m})}v_{m}(\x,\y), \ldots \big)dy_{1}\cdots dy_{N}.
\end{align*}
Note that  $2^{d_{m}(k_{m}-i_{m})}\geq 1$ and $2^{d_{m}(k_{m}-j_{m})}\geq 1$. It follows that if $[\varphi_{I}*\psi_{J}]_{-K}(\x)\neq 0$, there exists $\y = (\y_{1}, \ldots, \y_{N}) \in \R^{N}$ so that for $1 \leq m \leq N$,
\begin{equation}\label{5.4}
\begin{split}
|u_{m}(\x,\y)|\leq  2^{d_{m}(k_{m}-i_{m})}|u_{m}(\x,\y)|&\leq \rho,\\
|v_{m}(\x,\y)|\leq 2^{d_{m}(k_{m}-j_{m})}|v_{m}(\x,\y)|&\leq \rho.
\end{split}
\end{equation}

We show by induction on $m$ that these inequalities imply that $|x_{m}|+|y_{m}| \leq A_{m}\rho$ for an appropriate choice of constants $A_{1} < A_{2}<  \ldots < A_{N}$. 
When $m=1$, we have $|x_{1}-y_{1}|\leq \rho$ and $|y_{1}|\leq \rho$, so 
$|x_{1}|+|y_{1}|\leq 3\rho$, and we can take $A_{1}=2$. 
Next, assume by induction that $|x_{s}|+|y_{s}|\leq A_{s}\rho$ for $1 \leq s <  m$. Since $Q_{m}(\x,\y)$ depends only on the variables $\{x_{1}, \ldots, x_{m-1},y_{1}, \ldots, y_{m-1}\}$, it follows that $|Q_{m}(\x,\y)|\leq B_{m}\rho$ where $B_{m}$ is a constant that depends on the coefficients of the polynomial $Q_{m}$, on the constants $\{A_{s}\}$ for $s<m$, and on $\rho$. We have $|y_{m}|\leq \rho$ and $|x_{m}-y_{m}-Q_{m}(\x,\y)|\leq \rho$, so $|x_{m}|+|y_{m}|\leq (B_{m}+2)\rho$.
This completes the proof of the statement about the support of $\theta$.
\smallskip

To establish the estimate $||\theta||_{(m)}\leq C_{m}\,||\varphi||_{(m)}||\psi||_{(m)}$, we again use formula (\ref{5.2}), but this time with the set $S = \{m\in \{1, \ldots, N\}\,\big\vert\, j_{m}\leq i_{m}= k_{m}\}$. Of the two factors $\big\{2^{d_{m}(k_{m}-i_{m})}, 2^{d_{m}(k_{m}-j_{m})}\big\}$, the one which equals $1$ multiplies the expression $x_{m}-y_{m}-Q_{m}(\x,\y)$, while the term $y_{m}$ is multiplied by the larger factor $2^{d_{m}(k_{m}-(i_{m}\wedge j_{m}))}$. Thus with this representation of the convolution $\varphi_{I}*\psi_{J}$, the integration takes place over the set $E=\{y\in\R^{N}\,\big\vert\,|y_{m}|\leq 2^{-d_{m}(k_{m}-(i_{m}\wedge j_{m}))}\}$. Thus we can estimate the size of $\big[\![\varphi]_{I}*[\psi]_{J}\big]_{-K}(\x)$ by
\begin{align*}
|[\varphi_{I}*\psi_{J}]_{-K}(\x)|&\leq 2^{\sum_{m}d_{m}(2k_{m}-i_{m}-j_{m})}||\varphi||_{0}||\psi||_{0}
\int_{E}d\y\leq C_{0}||\varphi||_{0}||\psi||_{0}
\end{align*}
since $2k_{m}-i_{m}-j_{m}-k_{m}+(i_{m}\wedge j_{m}) =0$. When we take derivatives of $\big[\![\varphi]_{I}*[\psi]_{J}\big]_{-K}(\x)$, the terms involving the variables $\x$ are multiplied by the factor $1$, and so  we obtain in the same way the estimate
\begin{align*}
||[\varphi_{I}*\psi_{J}]_{-K}||_{(m)}\leq C_{m}||\varphi||_{(m)}||\psi||_{(m)}.
\end{align*}
This completes the proof.
\end{proof}

\subsection{Decay and cancellation properties of convolutions $[\varphi]_{I}*[\psi]_{J}$} \label{Decay}\quad

\smallskip

We want to study the decay and cancellation properties of the convolution $[\varphi]_{I}*[\psi]_{J}$ under the assumption that $\varphi$ has cancellation in the variables $\{x_{l_{1}}, \ldots, x_{l_{a}}\}$ and $\psi$ has cancellation in the variables $\{x_{m_{1}}, \ldots, x_{m_{b}}\}$. Here \emph{decay} means that the size of $[\varphi]_{I}*[\psi]_{J}$ is small due to the difference between the $N$-tuples $I$ and $J$; \emph{cancellation} means that  the integral of $[\varphi]_{I}*[\psi]_{J}$  with respect to some variables is zero. (See Section \ref{StrongWeak} and Definition \ref{Def1.14} for the precise definition of strong and weak cancellation). Before stating our results, let us see what we should expect by considering the much simpler case in which the convolution $[\varphi]_{I}*_{e}[\psi]_{J}$ is taken with respect to the Abelian (Euclidean) vector space structure of $\R^{N}$ rather than the general homogeneous nilpotent Lie group structure $G$. 

Let $I=(i_{1}, \ldots, i_{N})$ and $J=(j_{1}, \ldots, j_{N})$, and put
\begin{align*}
A_{0}&=\big\{s\in\{1, \ldots, a\}\,\big\vert\,i_{l_{s}}\leq j_{l_{s}}\big\},\\
A_{1}&=\big\{s\in\{1, \ldots, a\}\,\big\vert\,i_{l_{s}} > j_{l_{s}}\big\} = \{1, \ldots, a\}\setminus A_{0}, \\
B_{0}&=\big\{t\in\{1, \ldots, b\}\,\big\vert\,j_{m_{t}}\leq i_{m_{t}}\big\}, \\
B_{1}&=\big\{t\in\{1, \ldots, b\}\,\big\vert\,j_{m_{t}}> i_{m_{t}}\big\} = \{m_{1}, \ldots, m_{b}\}\setminus B_{0}.
\end{align*}
Because of the hypothesis on cancellation, we can write
\begin{align*}
\varphi &= \partial_{l_{1}}\cdots \partial_{l_{a}}\widetilde\varphi,\\
\psi&= \partial_{m_{1}}\cdots \partial_{m_{b}}\widetilde\psi.
\end{align*}
For each $s\in A_{0}$ we can integrate by parts in the variable $x_{l_{s}}$ in the integral $[\varphi]_{I}*_{e}[\psi]_{J}$, moving the derivative $\partial_{l_{s}}$ from $\widetilde\varphi$ to $\widetilde\psi$. Since the width of the dilate $[\varphi]_{I}$ is narrower in this variable than the dilate $[\psi]_{J}$, this integration by parts gives a gain of $2^{-\epsilon(j_{l_{s}}-i_{l_{s}})}$, and we get such a gain for each $s\in A_{0}$. A similar argument shows that we get a gain of $2^{-\epsilon(i_{m_{t}}-j_{m_{t}})}$ for each $t\in B_{0}$. Thus the total gain from integration by parts is $\prod_{s\in A_{0}}2^{-\epsilon(j_{l_{s}}-i_{l_{s}})}\,\prod_{t\in B_{0}}2^{-\epsilon(i_{m_{t}}-j_{m_{t}})}$. In addition to this gain, we observe that in the convolution $[\varphi]_{I}*_{e}[\psi]_{J}$, the derivatives $\partial_{l_{s}}$ for $s\in A_{1}$ and $\partial_{m_{t}}$ for $t\in B_{1}$ can be pulled outside the integral. The final result is that there is a compactly supported function $\theta$, normalized relative to $\varphi$ and $\psi$, so that
\begin{equation*}
[\varphi]_{I}*_{e}[\psi]_{J}= \prod_{s\in A_{0}}2^{-\epsilon(j_{l_{s}}-i_{l_{s}})}\,\prod_{t\in B_{0}}2^{-\epsilon(i_{m_{t}}-j_{m_{t}})}\prod_{s'\in A_{1}}\prod_{t'\in B_{1}}\partial_{l_{s'}}\partial_{m_{t'}}[\theta]_{I\vee J}.
\end{equation*}
In other words, we get exponential gains from variables where there is cancellation for the function with `narrower' dilation, and the resulting convolution still has cancellation in the remaining variables. 

When dealing with convolution on a homogeneous nilpotent Lie group, we cannot move Euclidean derivatives from one factor to the other. However, we can write Euclidean derivatives in terms of left- or right-invariant vector fields which can be moved across the convolution. But this process introduces error terms involving derivatives with respect to `higher' variables, and these come with a gain involving the differences between entries of $I$ or $J$. Thus in the case of nilpotent Lie groups, we might hope that convolution results in three kinds of terms: gains of the form $2^{-\epsilon|i_{\ell}-j_{\ell}|}$ coming from integration by parts in narrow variables with cancellation, residual cancellation of the convolution in some variables which are not used in the integration by parts, and finally gains of the type $2^{-\epsilon(i_{\ell+1}-i_{\ell})}$ and $2^{-\epsilon(j_{\ell+1}-j_{\ell})}$. This is in fact the case, and is made precise in the next Lemma.

\medskip

\medskip

Suppose we are given two decompositions
\begin{align*}
(\mathcal A):\quad \R^{N}&= \R^{a_{1}}\oplus \cdots \oplus\R^{a_{n}},\\
(\mathcal B):\quad \R^{N}&= \R^{b_{1}}\oplus \cdots \oplus\R^{b_{m}}.
\end{align*}
Let $\{J^{{\mathcal A}}_{1}, \ldots, J^{{\mathcal A}}_{n}\}$ be the indices corresponding to (${\mathcal A}$) and let $\{J^{{\mathcal B}}_{1}, \ldots, J^{{\mathcal B}}_{m}\}$ be the indices corresponding to (I). Define 
\begin{align*}
\sigma&:\{1, \ldots, N\}\to\{1, \ldots,n\}\quad\text{such that \quad $\ell \in J^{{\mathcal A}}_{\sigma(\ell)}$},\\
\tau&:\{1, \ldots, N\}\to\{1, \ldots,m\}\quad\text{such that \quad $l \in J^{{\mathcal B}}_{\tau(l)}$}.
\end{align*}
In what follows, $\pi_{{\mathcal A}}$  and $\pi_{{\mathcal B}}$ denote mappings from the set $\{1, \ldots, N\}$ to itself with the property that $\pi_{{\mathcal A}}(\ell) \in J^{{\mathcal A}}_{\sigma(\ell)}$, and  $\pi_{{\mathcal B}}(l) \in J^{{\mathcal B}}_{\tau(l)}$. Also recall from equation (\ref{E2.18asd}) in Section \ref{Section4.1} that we can introduce mappings $p_{\mathcal A}:\mathcal E_{n}\to E_{N}$ and $p_{\mathcal B}:\mathcal E_{m}\to E_{N}$ so that
\begin{align*}
p_{\mathcal A}(i_{1}, \ldots, i_{n}) &= \big(\,\overset{a_{1}}{\overbrace{i_{1}, \ldots,i_{1}}}\,,\,\overset{a_{2}}{\overbrace{i_{2}, \ldots,i_{2}}}\,, \ldots , \,\overset{a_{n}}{\overbrace{i_{n}, \ldots,i_{n}}}\,\big),\\
p_{\mathcal B}(j_{1}, \ldots, j_{m}) &= \big(\,\overset{b_{1}}{\overbrace{j_{1}, \ldots,j_{1}}}\,,\,\overset{b_{2}}{\overbrace{j_{2}, \ldots,j_{2}}}\,, \ldots , \,\overset{b_{m}}{\overbrace{j_{m}, \ldots,j_{m}}}\,\big).
\end{align*}

\begin{lemma}\label{Lemma4.3qw}
Suppose that $\varphi\in  \mathcal C^{\infty}_{0}(\R^{N})$ has cancellation in the variables $x_{\ell}$ for $\ell\in A\subset \{1, \ldots, N\}$, and that $\psi\in  \mathcal C^{\infty}_{0}(\R^{N})$ has cancellation in the variables $x_{l}$ for $l\in B\subset \{1, \ldots, N\}$. Let $I=(i_{1}, \ldots, i_{n})\in \mathcal E_{n}$ and $J=(j_{1}, \ldots, j_{m})\in \mathcal E_{m}$. Set
\begin{align*}
A_{0} &= \big\{\ell\in A\,\big\vert\, i_{\sigma(\ell)}\leq j_{\tau(\ell)}\big\},\\
B_{0} &= \big\{l\in B\,\big\vert\, j_{\tau(l)}\leq i_{\sigma(l)}\big\}.
\end{align*}
Then $[\varphi]_{I}*[\psi]_{J}$ can be written as a sum of terms of the form
\begin{align*}
\prod_{\ell\in A_{1}}2^{-\epsilon(i_{\sigma(\ell)+1}-i_{\sigma(\ell)})}
&\prod_{l\in B_{1}}2^{-\epsilon(i_{\tau(l)+1}-i_{\tau(l)})}
\prod_{\ell\in A_{2}}2^{-\epsilon(j_{\tau(\ell)}-i_{\sigma(\ell)})}
\prod_{l\in B_{2}}2^{-\epsilon(i_{\sigma(l)}-j_{\tau(l)})}\\
&\big[\prod_{\ell\in A_{3}}\partial_{\pi_{\mathcal A}(\ell)}
\prod_{l\in B_{3}}\partial_{\pi_{\mathcal B}(l)}\theta\big]_{p_{\mathcal A}(I)\vee p_{\mathcal B}(J)}
\end{align*}
where $\theta$ is normalized relative to $\varphi$ and $\psi$, $A= A_{1}\cup A_{2}\cup A_{3}$ and $B=B_{1}\cup B_{2}\cup B_{3}$ are disjoint unions, and  we have $A_{2}\subset A_{0}\subset A_{1}\cup A_{2}$ and  $B_{2}\subset B_{0}\subset B_{1}\cup B_{2}$. (We will have $\sigma(\ell) \neq n$ and $\tau(l)\neq n$).
\end{lemma}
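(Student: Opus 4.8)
The plan is to reduce the statement to the Euclidean heuristic already described in the text, by replacing each Euclidean derivative coming from the cancellation of $\varphi$ and $\psi$ with an invariant vector field that can be moved across the group convolution. First, since $\varphi$ has cancellation in $\{x_\ell\}_{\ell\in A}$ and $\psi$ in $\{x_l\}_{l\in B}$, Lemma \ref{Lemma1.12} lets us write $\varphi$ as a finite sum of terms $\partial_{\ell_1}\cdots\partial_{\ell_{|A|}}\widetilde\varphi$ (one derivative for each $\ell\in A$) with $\widetilde\varphi$ normalized relative to $\varphi$, and similarly $\psi$ as a sum of $\partial_{l_1}\cdots\partial_{l_{|B|}}\widetilde\psi$. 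Applying Proposition \ref{Prop1.6} to convert these into $I$-scaled operators, we obtain $[\varphi]_I=\prod_{\ell\in A}(2^{d_\ell i_{\sigma(\ell)}}\partial_\ell)[\widetilde\varphi]_I$ with the compensating negative power $\prod_{\ell\in A}2^{-d_\ell i_{\sigma(\ell)}}$ pulled out (and the analogue for $[\psi]_J$, using $p_{\mathcal B}(J)$).

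The core step is then: in the group convolution $[\varphi]_I*[\psi]_J$, I want to move the $\partial_\ell$ from the $\varphi$-factor onto the $\psi$-factor exactly for $\ell\in A_0$ (where $[\varphi]_I$ is the narrower dilate in that variable), leaving the $\partial_\ell$ for $\ell\in A\setminus A_0$ outside the convolution. By Proposition \ref{Proposition3.1}(\ref{Proposition3.1c}), a Euclidean derivative is not directly transferable, but a left-invariant vector field $X_\ell$ applied to the left factor equals the corresponding right-invariant $Y_\ell$ applied to the right factor. So I will use Corollary \ref{Prop2.15jkl} (part (\ref{Prop2.15jkl2})): the operator $2^{d_\ell i_{\sigma(\ell)}}\partial_\ell$ applied to $[\widetilde\varphi]_I$ equals $2^{d_\ell i_{\sigma(\ell)}}X_\ell$ applied to $[\widetilde\varphi]_I$ plus ``allowable errors,'' each of which carries a gain $2^{-\epsilon(i_{\sigma(\ell)+1}-i_{\sigma(\ell)})}$ (these feed the $A_1$ product) or replaces $\partial_\ell$ by a $\partial_m$ with $d_m>d_\ell$ (a ``higher'' variable, still of the required form). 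For the main, error-free term, $2^{d_\ell i_{\sigma(\ell)}}X_\ell$ slides through the convolution onto $[\widetilde\psi]_J$ as $2^{d_\ell i_{\sigma(\ell)}}Y_\ell$; then, since $\ell\in A_0$ means $i_{\sigma(\ell)}\le j_{\tau(\ell)}$, I rewrite $2^{d_\ell i_{\sigma(\ell)}}Y_\ell=2^{-d_\ell(j_{\tau(\ell)}-i_{\sigma(\ell)})}\cdot 2^{d_\ell j_{\tau(\ell)}}Y_\ell$ — producing the decay factor $2^{-\epsilon(j_{\tau(\ell)}-i_{\sigma(\ell)})}$ that populates the $A_2$ product — and convert $2^{d_\ell j_{\tau(\ell)}}Y_\ell$ back to $2^{d_\ell j_{\tau(\ell)}}\partial_\ell$ modulo more allowable errors via Corollary \ref{Prop2.15jkl}(\ref{Prop2.15jkl1}). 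The derivatives $\partial_\ell$ with $\ell\in A\setminus A_0$ remain outside as part of the $A_3$ product. The symmetric argument (using right-invariant vector fields and Proposition \ref{Proposition3.1}(\ref{Proposition3.1c}) read in the other direction) handles $\psi$'s cancellation variables, moving $\partial_l$, $l\in B_0$, onto the $\varphi$-factor and producing the $B_1$, $B_2$, $B_3$ products. Finally, Lemma \ref{Lemma5.1} shows that whatever compactly supported, normalized function survives the integration is a $p_{\mathcal A}(I)\vee p_{\mathcal B}(J)$-dilate of a normalized bump $\theta$, and the leftover Euclidean derivatives (from $A_3$, $B_3$, and the ``higher variable'' allowable errors) sit outside in exactly the displayed form.

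The main obstacle I anticipate is the bookkeeping of the iterated substitutions: each application of Corollary \ref{Prop2.15jkl} generates a finite sum, the $Z$-to-$\partial$ conversions do not commute cleanly with the convolution, and one must check that after moving one batch of vector fields across, the remaining factor is still a dilate of a normalized $\mathcal C^\infty_0$ function to which the next batch of Corollary \ref{Prop2.15jkl} / Lemma \ref{Lemma5.1} applies — in particular that the polynomial coefficients $P^{m}_{k,I}$, $Q^{m}_{k,I}$ produced along the way stay normalized and that the gains multiply rather than interfere. One also has to be careful that the ``higher variable'' errors $\partial_m$ with $d_m>d_\ell$ land in indices $\pi_{\mathcal A}(\ell')$, $\pi_{\mathcal B}(l')$ with $\sigma,\tau$ still strictly less than $n$ (guaranteed because $\varphi,\psi$ are supported where $N_j(\x_j)\approx 1$ for all $j$, and because the allowable-error mechanism never introduces a derivative in the last block if none was present), and that when $i_{\sigma(\ell)}=j_{\tau(\ell)}$ the decay factor $2^{-\epsilon\cdot 0}=1$ causes no loss. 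These are all routine given the machinery of Section \ref{VectorFields} and Section \ref{NBFDilations}, but assembling them into the single clean statement of the Lemma is where the work lies; I would organize the write-up by first treating the case $B=\emptyset$ (only $\varphi$ has cancellation), then the case $A=\emptyset$, and finally superposing the two, invoking Remark \ref{Remarks4.9}(c) to reorder derivatives freely.
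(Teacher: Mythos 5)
Your proposal is correct and follows essentially the same route as the paper's proof: write the cancellation as full derivatives via Lemma \ref{Lemma1.12}, trade Euclidean derivatives for invariant vector fields via Corollary \ref{Prop2.15jkl} (collecting the $2^{-\epsilon(i_{\sigma(\ell)+1}-i_{\sigma(\ell)})}$ gains as allowable errors), transfer the fields indexed by $A_{0}$, $B_{0}$ across the convolution with Proposition \ref{Proposition3.1}, extract the $2^{-\epsilon(j_{\tau(\ell)}-i_{\sigma(\ell)})}$ and $2^{-\epsilon(i_{\sigma(l)}-j_{\tau(l)})}$ gains from the scale mismatch, and finish with Lemma \ref{Lemma5.1} and a conversion back to Euclidean derivatives. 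The only differences are presentational (the paper runs the $A$- and $B$-sides simultaneously rather than superposing the two special cases), and your anticipated bookkeeping issues are exactly the ones the paper resolves via Remarks \ref{Remarks4.9}.
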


\begin{proof}
Using the cancellation hypotheses, it follows from Lemma \ref{Lemma1.12} that we can write
\begin{align*}
\varphi &= \big(\prod_{\ell\in A}\partial_{x_{\ell}}\big)[\varphi_{A}], & 
[\varphi]_{I}&=\Big(\prod_{\ell\in A}\big(2^{d_{\ell}i_{\sigma(\ell)}}\partial_{x_{\ell}}\big)\Big)[\varphi_{A}]_{I},\\
\psi &= \big(\prod_{\ell\in B}\partial_{x_{\ell}}\big)[\psi_{B}], & 
[\psi]_{J}&=\Big(\prod_{\ell\in B}\big(2^{d_{\ell}j_{\tau(\ell)}}\partial_{x_{\ell}}\big)\Big)[\psi_{B}]_{J},
\end{align*}
where $\varphi_{A}$ is normalized relative to $\varphi$, and $\psi_{B}$ is normalized relative to $\psi$. We can use Corollary \ref{Prop2.15jkl}  to write $[\varphi]_{I}$ as finite sums of terms of the form
\begin{align*}
\Big(\prod_{\ell\in \widetilde A_{1}}2^{-\epsilon(i_{\sigma(\ell)+1}-i_{\sigma(\ell)})}\Big) \prod_{\ell\in A_{1}}(2^{d_{\pi_{\mathcal A}(\ell)}i_{\sigma(\ell)}}Z_{\pi_{\mathcal A}(\ell)})[\varphi_{A_{1}}]_{I}
\end{align*}
where ${A}_{1}\subset A$ is a possibly empty subset, $\widetilde {A}_{1}= {A}\setminus {A}_{1}$, each $Z_{\pi_{\mathcal A}(\ell)}$ is either the corresponding left- or  right-invariant vector field, and $\varphi_{{A}_{1}}$ is normalized relative to $\varphi$. Moreover, according to the Remarks \ref{Remarks4.9} following Corollary \ref{Prop2.15jkl}, the operators $\{Z_{\pi_{\mathcal A}(\ell)}\}$ can be put in any desired order. Similarly 
 $[\psi]_{J}$  is a finite sum of terms of the form
\begin{align*}
\Big(\prod_{l\in \widetilde {B}_{1}}2^{-\epsilon(j_{\tau(l)+1}-j_{\tau(l)})}\Big) \prod_{l\in {B}_{1}}(2^{d_{\pi_{{\mathcal B}}(l)}j_{\tau(l)}}Z_{\pi_{{\mathcal B}}(l)})[\varphi_{{B}_{1}}]_{J}.
\end{align*}
It follows that $[\varphi]_{I}*[\psi]_{J}$ is a finite sum of terms of the form
\begin{align*}
\prod_{\ell\in \widetilde {A}_{1}}2^{-\epsilon(i_{\sigma(\ell)+1}-i_{\sigma(\ell)})}&
\prod_{l\in \widetilde {B}_{1}}2^{-\epsilon(j_{\tau(l)+1}-j_{\tau(l)})}\\
&
\Big(\prod_{\ell\in {A}_{1}}(2^{d_{\pi_{\mathcal A}(\ell)}i_{\sigma(\ell)}}Z_{\pi_{\mathcal A}(\ell)})[\varphi_{{A}_{1}}]_{I}
*
\prod_{l\in {B}_{1}}(2^{d_{\pi_{{\mathcal B}}(l)}j_{\tau(l)}}Z_{\pi_{{\mathcal B}}(l)})[\varphi_{{B}_{1}}]_{J}\Big).
\end{align*}
Now let
\begin{align*}
{A}_{2}&=\{\ell \in {A}_{1}\,\big\vert\, i_{\sigma(\ell)}\leq j_{\tau(\ell)}\} =  A_{1}\cap A_{0},&
\widetilde {A}_{2}&=\{\ell \in {A}_{1}\,\big\vert\, i_{\sigma(\ell)}> j_{\tau(\ell)}\} = {A}_{1}\setminus {A}_{2},\\
{ B}_{2}&=\{\ell \in {B}_{1}\,\big\vert\, j_{\tau(l)}\leq i_{\sigma(l)}\}= B_{1}\cap B_{0},&
\widetilde {B}_{2}^{\prime}&=\{\ell \in {B}_{1}\,\big\vert\, j_{\tau(l)}> i_{\sigma(l)}\} = {B}_{1}\setminus {B}_{2}.
\end{align*}
We choose $Z_{\ell}$ and $Z_{l}$ as follows:
\begin{align*}
Z_{\ell}&=
\begin{cases}
L_{\ell} &\text{if $\ell \in { A}_{2}$}\\
R_{\ell} &\text{if $\ell \in \widetilde { A}_{2}$}
\end{cases}, &
Z_{l}&=
\begin{cases}
L_{l} &\text{if $l \in { B}_{2}$}\\
R_{l} &\text{if $l \in \widetilde { B}_{2}$}
\end{cases}.
\end{align*}
Then since left-invariant vector fields commute with right invariant vector fields, we can use Proposition \ref{Proposition3.1}, part (\ref{Proposition3.1b}) and then Proposition \ref{Proposition3.1}, part (\ref{Proposition3.1c}) to write 
\begin{align*}
\Big(\prod_{\ell\in { A}_{1}}&(2^{d_{\pi_{\mathcal A}(\ell)}i_{\sigma(\ell)}}Z_{\pi_{\mathcal A}(\ell)})[\varphi_{{ A}_{1}}]_{I}
*
\prod_{l\in { B}_{1}}(2^{d_{\pi_{{\mathcal B}}(l)}j_{\tau(l)}}Z_{\pi_{{\mathcal B}}(l)})[\varphi_{{ B}_{1}}]_{J}\Big)\\
&=
\prod_{\ell \in \widetilde { A}_{2}}(2^{d_{\pi_{\mathcal A}(\ell)}i_{\sigma(\ell)}}R_{\pi_{\mathcal A}(\ell)})
\prod_{l \in \widetilde { B}_{2}}(2^{d_{\pi_{{\mathcal B}}(l)}j_{\tau(l)}}L_{\pi_{{\mathcal B}}(l)})\\
&\qquad\qquad\qquad
\Big(\prod_{\ell\in { A}_{2}}(2^{d_{\pi_{\mathcal A}(\ell)}i_{\sigma(\ell)}}L_{\pi_{\mathcal A}(\ell)})[\varphi_{{ A}_{1}}]_{I}
*
\prod_{l\in { B}_{2}}(2^{d_{\pi_{{\mathcal B}}(l)}j_{\tau(l)}}R_{l})[\psi_{{ B}_{1}}]_{J}\Big)\\
&=
\prod_{\ell \in \widetilde { A}_{2}}(2^{d_{\pi_{\mathcal A}(\ell)}i_{\sigma(\ell)}}R_{\ell})
\prod_{l \in \widetilde { B}_{2}}(2^{d_{\pi_{{\mathcal B}}(l)}j_{\tau(l)}}L_{\pi_{{\mathcal B}}(l)})\\
&\qquad\qquad\qquad
\Big([\varphi_{{ A}_{1}}]_{I}
*
\prod_{\ell\in { A}_{2}}(2^{d_{\pi_{\mathcal A}(\ell)}i_{\sigma(\ell)}}L_{\pi_{\mathcal A}(\ell)})
\prod_{l\in { B}_{2}}(2^{d_{\pi_{{\mathcal B}}(l)}j_{\tau(l)}}R_{\pi_{{\mathcal B}}(l)})
[\psi_{{ B}_{1}}]_{J}\Big),
\end{align*}
where $\varphi_{{ A}_{1}}$ and $\psi_{{ B}_{1}}$ are normalized\footnote{The function $\varphi_{{ A}_{1}}$ also depends $A$  and $\psi_{{ B}_{1}}$ also depends on $B$.  With minimal risk of confusion, we shall omit such notation.} relative to $\varphi$ and $\psi$. (Here $\prod_{\ell \in { A}_{1}^{\prime}}$ is actually the product of the operators in the reverse order).
Now we want to commute the operators $\prod_{\ell\in { A}_{2}}(2^{d_{\pi_{\mathcal A}(\ell)}i_{\sigma(\ell)}}L_{\ell})$ and $\prod_{l\in { B}_{2}}(2^{d_{\pi_{{\mathcal B}}(l)}j_{\tau(l)}}R_{l})$ before applying them  to  $[\varphi_{{ B}_{1}}]_{J}$. According to the third of the Remarks \ref{Remarks4.9}, the result is a sum of terms of the form
\begin{align*}
\prod_{\ell\in \widetilde { A}_{3}}2^{-\epsilon(i_{\sigma(\ell)+1}-i_{\sigma(\ell)})}
\prod_{l\in \widetilde { B}_{3}}2^{-\epsilon(j_{\tau(l)+1}-j_{\tau(l)})}
\prod_{l\in { B}_{3}}(2^{d_{\pi_{{\mathcal B}}(l)}j_{\tau(l)}}R_{l})
\prod_{\ell\in { A}_{3}}(2^{d_{\pi_{\mathcal A}(\ell)}i_{\sigma(\ell)}}R_{\ell})[\psi_{{ B}_{3}}]
\end{align*}
where 
\begin{align*}
{ A}_{3}&\subset { A}_{2}^{\prime}, &
\widetilde { A}_{3}&= { A}_{2}^{\prime}\setminus { A}_{3},\\
{ B}_{3}&\subset { B}_{2}^{\prime}, &
\widetilde { B}_{3}&= { B}_{2}^{\prime}\setminus { B}_{3}.
\end{align*}
Thus $[\varphi_{{ A}_{1}}]_{I}*\prod_{\ell\in { A}_{2}}(2^{d_{\pi_{\mathcal A}(\ell)}i_{\sigma(\ell)}}L_{\ell}) \prod_{l\in { B}_{2}}(2^{d_{\pi_{{\mathcal B}}(l)}j_{\tau(l)}}R_{l})[\psi_{{ B}_{1}}]_{J}$ is a sum of terms of the form
\begin{align*}
&\prod_{\ell\in \widetilde { A}_{3}}2^{-\epsilon(i_{\sigma(\ell)+1}-i_{\sigma(\ell)})}
\prod_{l\in \widetilde { B}_{3}}2^{-\epsilon(j_{\tau(l)+1}-j_{\tau(l)})}\\
&\qquad\qquad\qquad
\Big([\varphi_{{ A}_{1}}]_{I}*
\prod_{l \in { B}_{3}}(2^{d_{\pi_{{\mathcal B}}(l)}j_{\tau(l)}}R_{l})
\prod_{\ell\in { A}_{3}}(2^{d_{\pi_{\mathcal A}(\ell)}i_{\sigma(\ell)}}R_{\ell})[\psi_{{ B}_{3}}]\Big)\\
&=
\prod_{\ell\in \widetilde { A}_{3}}2^{-\epsilon(i_{\sigma(\ell)+1}-i_{\sigma(\ell)})}
\prod_{l\in \widetilde { B}_{1}^{\prime\prime}}2^{-\epsilon(j_{\tau(l)+1}-j_{\tau(l)})}\\
&\qquad\qquad\qquad
\Big( \prod_{l\in { B}_{3}}(2^{d_{\pi_{{\mathcal B}}(l)}j_{\tau(l)}}R_{l})[\varphi_{{ A}_{1}}]_{I}*
 \prod_{\ell\in { A}_{3}}(2^{d_{\pi_{\mathcal A}(\ell)}i_{\sigma(\ell)}}R_{\ell})[\psi_{{ B}_{3}}]\Big)\\
&=
\prod_{\ell\in \widetilde { A}_{3}}2^{-\epsilon(i_{\sigma(\ell)+1}-i_{\sigma(\ell)})}
\prod_{l\in \widetilde { B}_{3}}2^{-\epsilon(j_{\tau(l)+1}-j_{\tau(l)})}
\prod_{l\in  { B}_{3}}2^{-d_{\pi_{{\mathcal B}}(l)}(i_{\tau(l)}-j_{\tau(l)})}
\prod_{\ell\in { A}_{3}}2^{-d_{\pi_{\mathcal A}(\ell)}(j_{\sigma(\ell)}-i_{\sigma(\ell)})}\\
&\qquad\qquad\qquad
\Big( \prod_{\widetilde { B}_{3}}(2^{d_{\pi_{{\mathcal B}}(l)}i_{\tau(l)}}R_{l})[\varphi_{{ A}_{1}}]_{I}*
 \prod_{\ell\in { A}_{3}}(2^{d_{\pi_{\mathcal A}(\ell)}j_{\sigma(\ell)}}R_{\ell})[\psi_{{ B}_{3}}]\Big)\\
&=
\prod_{\ell\in \widetilde { A}_{3}}2^{-\epsilon(i_{\sigma(\ell)+1}-i_{\sigma(\ell)})}
\prod_{l\in \widetilde { B}_{3}}2^{-\epsilon(j_{\tau(l)+1}-j_{\tau(l)})}
\prod_{l\in { B}_{3}}2^{-d_{\pi_{{\mathcal B}}(l)}(i_{\tau(l)}-j_{\tau(l)})}
\prod_{\ell\in { A}_{3}}2^{-d_{\pi_{\mathcal A}(\ell)}(j_{\sigma(\ell)}-i_{\sigma(\ell)})}\\
&\qquad\qquad\qquad
\Big(\big[ \prod_{l\in { B}_{3}}R_{l}\varphi_{{ A}_{1}}\big]_{I}*
\big[ \prod_{\ell\in { A}_{3}}R_{\ell}\psi_{{ B}_{3}}\big]\Big).
\end{align*}
Now according to Lemma \ref{Lemma5.1}, we can write
\begin{equation*}
\big[\prod_{\widetilde { B}_{3}}R_{l}\varphi_{{ A}_{1}}\big]_{I}*
\big[\prod_{\ell\in { A}_{1}^{\prime\prime}}R_{\ell}\psi_{{ B}_{3}}\big]
=
[\theta_{{ A}_{1}, { B}_{3}}]_{K}
\end{equation*}
where $\theta_{{ A}_{1}, { B}_{3}}$ is normalized relative to $\varphi$ and $\psi$ and $K = I \vee J$. Thus it follows that $[\varphi]_{I}*[\psi]_{J}$ is a finite sum of terms of the form
\begin{align*}
\prod_{\ell\in \widetilde { A}_{1}\cup \widetilde { A}_{3}}2^{-\epsilon(i_{\sigma(\ell)+1}-i_{\sigma(\ell)})}&\!\!\!\!
\prod_{l\in \widetilde { B}_{1}\cup \widetilde { B}_{3}}2^{-\epsilon(j_{\tau(l)+1}-j_{\tau(l)})}
\prod_{\ell\in { A}_{3}}2^{-d_{\pi_{\mathcal A}(\ell)}(j_{\sigma(\ell)}-i_{\sigma(\ell)})}
\prod_{l\in  { B}_{3}}2^{-d_{\pi_{{\mathcal B}}(l)}(i_{\tau(l)}-j_{\tau(l)})}\\
&
\prod_{\ell \in \widetilde { A}_{2}}(2^{d_{\pi_{\mathcal A}(\ell)}i_{\sigma(\ell)}}R_{\pi_{\mathcal A}(\ell)})
\prod_{l \in \widetilde { B}_{2}}(2^{d_{\pi_{{\mathcal B}}(l)}j_{\tau(l)}}L_{\pi_{{\mathcal B}}(l)})[\theta_{{ A}_{1}, { B}_{3}}]_{I\vee J}.
\end{align*}
However, it follows from part (\ref{Prop2.15jkl1}) of Corollary \ref{Prop2.15jkl} that we can write  the product of vector fields $\prod_{\ell \in \widetilde { A}_{2}}(2^{d_{\pi_{\mathcal A}(\ell)}i_{\sigma(\ell)}}R_{\ell})
\prod_{l \in \widetilde { B}_{2}}(2^{d_{\pi_{{\mathcal B}}(l)}j_{\tau(l)}}L_{l})[\theta_{{ A}_{1}, { B}_{3}}]_{I\vee J}
$ as a sum of terms of the form
\begin{align*}
\prod_{\ell\in \widetilde { A}_{4}}2^{-\epsilon(i_{\sigma(\ell)+1}-i_{\sigma(\ell)})}
\prod_{l\in \widetilde { B}_{4}}2^{-\epsilon(j_{\tau(l)+1}-j_{\tau(l)})}
\prod_{\ell \in  { A}_{4}}(2^{d_{\pi_{\mathcal A}(\ell)}i_{\sigma(\ell)}}\partial{\pi_{\mathcal A}(\ell)})
\prod_{l \in { B}_{4}}(2^{d_{\pi_{{\mathcal B}}(l)}j_{\tau(l)}}\partial_{\pi_{{\mathcal B}}(l)})[\theta_{{ A}_{4}, { B}_{4}}]_{I\vee J}.
\end{align*}
Thus we have shown that $[\varphi]_{I}*[\psi]_{J}$ is a sum of terms of the form
\begin{align*}
\prod_{\ell\in \widetilde { A}_{1}\cup \widetilde { A}_{3}\cup A_{4}}\!\!\! 2^{-\epsilon(i_{\sigma(\ell)+1}-i_{\sigma(\ell)})}\!\!\!&
\prod_{l\in \widetilde { B}_{1}\cup \widetilde { B}_{3}\cup B_{4}}\!\!\! 2^{-\epsilon(j_{\tau(l)+1}-j_{\tau(l)})}
\prod_{\ell\in { A}_{3}} 2^{-d_{\pi_{\mathcal A}(\ell)}(j_{\sigma(\ell)}-i_{\sigma(\ell)})}
\prod_{l\in  { B}_{3}}2^{-d_{\pi_{{\mathcal B}}(l)}(i_{\tau(l)}-j_{\tau(l)})}\\
&
\prod_{\ell \in { A}_{4}}(2^{d_{\pi_{\mathcal A}(\ell)}i_{\sigma(\ell)}}R_{\pi_{\mathcal A}(\ell)})
\prod_{l \in { B}_{4}}(2^{d_{\pi_{{\mathcal B}}(l)}j_{\tau(l)}}L_{\pi_{{\mathcal B}}(l)})[\theta_{{ A}_{4}, { B}_{4}}]_{I\vee J}.
\end{align*}
This has the form asserted by the Lemma, and so completes the proof.
\end{proof}

\smallskip

\begin{corollary}\label{Cor7.4}
Let $\varphi, \psi \in \mathcal C^{\infty}_{0}(\R^{N})$  have strong cancellation relative to the same decomposition $\R^{N}= \R^{a_{1}}\oplus \cdots\oplus \R^{a_{n}}$. There exists $\epsilon>0$ so that if $I, J\in \mathcal E_{n}$, it follows that $[\varphi]_{I}*[\psi]_{J}$ is a finite sum of terms of the form 
\begin{equation*}
\prod_{\ell\in A}2^{-\epsilon(i_{\ell+1}-i_{\ell})}\,\prod_{\ell\in \widetilde A}2^{-\epsilon|i_{\ell}-j_{\ell}|}
\prod_{l \in B}2^{-\epsilon(j_{l+1}-j_{l})}\,\prod_{l \in \widetilde B}2^{-\epsilon|i_{l}-j_{l}|}
\,[\theta_{A,B}]_{I\vee J}
\end{equation*}
where:
\begin{enumerate}[{\rm(a)}]

\smallskip

\item the set $\{1, \ldots,n\}$ is the disjoint union of the sets $A$ and $\widetilde A$, and of the sets $B$ and $\widetilde B$; 

\smallskip

\item each function $\theta_{A,B}$ is normalized relative to $\varphi$ and $\psi$.
\end{enumerate}
\end{corollary}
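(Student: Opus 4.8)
\emph{Proof plan.} The plan is to read off Corollary~\ref{Cor7.4} from Lemma~\ref{Lemma4.3qw}, specialized to the case where the two decompositions coincide, $\mathcal A=\mathcal B$ equal to the given $\R^N=\R^{a_1}\oplus\cdots\oplus\R^{a_n}$, with the cancellation being the strong cancellation of $\varphi$ and of $\psi$ in every block. Concretely, I would first split $\{1,\dots,n\}=S\sqcup T$ with $S=\{\ell:i_\ell\le j_\ell\}$ and $T=\{\ell:i_\ell>j_\ell\}$, the blocks in which $[\varphi]_I$, respectively $[\psi]_J$, has the narrower dilate. Strong cancellation lets us invoke Lemma~\ref{Lemma1.12} (via Remark~\ref{Remark2.18}) to write $\varphi$ as a finite sum of $n$-fold derivatives $\partial_{j_1}\cdots\partial_{j_n}\varphi_{j_1,\dots,j_n}$ with $j_\ell\in J_\ell$, and likewise for $\psi$; feeding into Lemma~\ref{Lemma4.3qw} the cancellation of $\varphi$ in the blocks $S$ and of $\psi$ in the blocks $T$ (with $\sigma=\tau$ the common block map) makes the sets $A_0,B_0$ there equal to the full cancellation sets $A,B$, so that $A_3=B_3=\emptyset$: no residual Euclidean derivatives survive.

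Next I would identify the surviving gains. For $\ell\in S$ the quantity $j_{\tau(\ell)}-i_{\sigma(\ell)}$ of Lemma~\ref{Lemma4.3qw} equals $j_\ell-i_\ell=|i_\ell-j_\ell|$, and similarly $i_{\sigma(l)}-j_{\tau(l)}=|i_l-j_l|$ for $l\in T$, while $\sigma(\ell)+1=\ell+1$. Moreover $p_{\mathcal A}(I)\vee p_{\mathcal B}(J)=p_{\mathcal A}(I\vee J)$, which represents the same dilation as $I\vee J$ in the $n$-block notation, so $[\theta]_{p_{\mathcal A}(I)\vee p_{\mathcal B}(J)}$ is exactly $[\theta]_{I\vee J}$, with $\theta$ normalized relative to $\varphi$ and $\psi$. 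Thus Lemma~\ref{Lemma4.3qw} already delivers, for each block, one gain of the form $2^{-\epsilon(i_{\ell+1}-i_\ell)}$ or $2^{-\epsilon|i_\ell-j_\ell|}$ (from the narrower of the two factors). To put the result into the symmetric shape of Corollary~\ref{Cor7.4}, where one wants a gain of \emph{each} of the two types for \emph{every} block, I would also feed into Lemma~\ref{Lemma4.3qw} the cancellation of the wider function in each block, and split any single factor $2^{-\epsilon|i_\ell-j_\ell|}$ as $2^{-(\epsilon/2)|i_\ell-j_\ell|}\cdot 2^{-(\epsilon/2)|i_\ell-j_\ell|}$, finally replacing $\epsilon$ by $\epsilon/2$. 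Taking $A$ to be the blocks whose $i$-side contributes a consecutive-difference gain and $\widetilde A$ its complement (similarly $B,\widetilde B$) puts every term in the asserted form, with $\{1,\dots,n\}=A\sqcup\widetilde A=B\sqcup\widetilde B$.

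Alternatively one can argue directly, mirroring the proof of Lemma~\ref{Lemma4.3qw}: after writing $[\varphi]_I$ and $[\psi]_J$ as sums of dilates carrying one Euclidean derivative per block, use Corollary~\ref{Prop2.15jkl} to turn the derivatives on $\varphi$ into left-invariant, and those on $\psi$ into right-invariant, vector fields (producing the consecutive-difference gains), then Proposition~\ref{Proposition3.1} to slide the left-invariant fields from $\varphi$ onto $\psi$ for the blocks $\ell\in S$ and the right-invariant fields from $\psi$ onto $\varphi$ for $\ell\in T$; in each such move the field lands on the factor that is \emph{wider} in that block, so rescaling it to the natural scale of that factor produces a factor $2^{-d_1|i_\ell-j_\ell|}$, after which Corollary~\ref{Cor1.9} (iterated, with reorderings controlled by Remark~\ref{Remarks4.9}) absorbs the remaining fields into normalized bumps and Lemma~\ref{Lemma5.1} disposes of the leftover convolution. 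The main obstacle, in either route, is the book-keeping: one must always move a vector field from the narrower toward the wider factor so as never to incur a positive power of $2$, and control the terms in which a block's derivative leaks, via Corollary~\ref{Prop2.15jkl}, into a higher block — such a leak comes bundled with an extra consecutive-difference gain, which is precisely what keeps it inside the allowed family.
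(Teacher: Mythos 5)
Your proposal matches the paper's own proof: the paper likewise reads the Corollary off Lemma~\ref{Lemma4.3qw} with $\mathcal A=\mathcal B$ equal to the given decomposition, noting that strong cancellation of both $\varphi$ and $\psi$ forces $\{\sigma(\ell):\ell\in A_{0}\cup B_{0}\}=\{1,\dots,n\}$ and hence $A_{3}=B_{3}=\emptyset$, so no residual derivatives survive and the listed gains are exactly those of the Lemma. Your additional bookkeeping --- feeding in, for each block, the cancellation of the factor with the narrower dilate so that $A=A_{0}$ and $B=B_{0}$, and splitting a single factor $2^{-\epsilon|i_{\ell}-j_{\ell}|}$ to serve both the $\widetilde A$- and $\widetilde B$-slots --- is precisely what the paper compresses into ``and the result follows.''
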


\begin{proof} Let $J_{\ell}$ denote the set of subscripts $\ell$ such that $x_{\ell}\in \R^{a_{\ell}}$, and let $\sigma:\{1, \ldots, N\}\to \{1, \ldots, n\}$ be the mapping such that $\sigma(\ell) \in J_{\ell}$ for all $\ell$. Since $\varphi$ and $\psi$ both have strong cancellation relative to the decomposition $\R^{N}= \R^{a_{1}}\oplus \cdots\oplus \R^{a_{n}}$, if $A_{0}$ and $B_{0}$ are the sets defined in Lemma \ref{Lemma4.3qw}, it follows that $\{\sigma(\ell)\,\big\vert\,\ell \in A_{0}\cup B_{0}\}= \{1, \ldots, n\}$. This means that the sets $A_{3}$ and $B_{3}$ of that Lemma must be empty, and the result follows.
\end{proof}

\subsection{Truncated flag kernels}\label{SecTruncated}\quad

\smallskip

\begin{definition}\label{DefTruncated}
A flag distribution $\K$ adapted to the decomposition $\R^{N}= \R^{a_{1}}\oplus \cdots \oplus\R^{a_{n}}$ is a \emph{truncated kernel of width $a>0$} if the differential inequalities given in  part (\ref{Def2.1a}) of Definition \ref{Def2.1} are replaced by
\begin{equation*}
\big\vert\partial^{\balpha}_{\x}K(\x)\big\vert \leq C_{\balpha}\,\prod_{k=1}^{n} \left[a+N_{1}(\x_{1}) + \cdots N_{k}(\x_{k})\right]^{-Q_{k}-[\![\balpha_{k}]\!]}.
\end{equation*}
$\K$ is an \emph{improved truncated kernel} if it is a truncated kernel, and in addition satisfies
\begin{equation*}
\big\vert\partial^{\balpha}_{\x}K(\x)\big\vert \leq C_{\balpha}\,\frac{a}{a+N_{1}(\x_{1})}\,\prod_{k=1}^{n} \left[a+N_{1}(\x_{1}) + \cdots N_{k}(\x_{k})\right]^{-Q_{k}-[\![\balpha_{k}]\!]}.
\end{equation*}
\end{definition}

Our objective is to establish the following:

\goodbreak

\begin{proposition}\label{Prop4.5z}
Let $\K$ be a flag distribution.
\begin{enumerate}[{\rm(1)}]

\smallskip

\item If $\psi\in \mathcal C^{\infty}_{0}(\R^{N})$ has support in the unit ball, then $\K*\psi$ and $\psi*\K$ are truncated flag kernels of width $1$. 

\smallskip

\item If $\psi\in \mathcal C^{\infty}_{0}(\R^{N})$ has support in the unit ball, and if $\int_{\R^{N}}\psi(\x)\,d\x=0$, then then $\K*\psi$ and $\psi*\K$ are improved truncated flag kernels of width $1$.
\end{enumerate}
\end{proposition}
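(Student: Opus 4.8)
\emph{Proof sketch.} By symmetry it suffices to treat $\K*\psi$: indeed $\psi*\K(\x)=\big(\check\K*\check\psi\big)(\x^{-1})$ with $\check f(\x)=f(\x^{-1})$, the inversion $\x\mapsto\x^{-1}$ is (up to sign) an allowable change of variables, so by Theorem \ref{Thm3.12} $\check\K$ is again a flag kernel, and the classes of Definition \ref{DefTruncated} are manifestly preserved by $f\mapsto\check f$. Next, part (2) follows from part (1) by a mean--value argument along the group. Once $\K*\psi$ is known to be a truncated kernel of width $1$, it remains only to produce an extra factor $N_1(\x_1)^{-1}$ when $N_1(\x_1)\ge1$, since for $N_1(\x_1)\le1$ the improved bound is implied by the truncated one. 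Using $\int\psi=0$ write $\K*\psi(\x)=\int_G\big(\K(\x\y^{-1})-\K(\x)\big)\psi(\y)\,d\y$. For $N_1(\x_1)\ge1$ and $\y$ in the unit ball, the path joining $\x$ (at $t=0$) to $\x\y^{-1}$ (at $t=1$) stays in the region where $\K$ is smooth and where the partial norms $N_j$ stay comparable to those of $\x$; hence $\K(\x\y^{-1})-\K(\x)$ is a finite sum of terms $c_k(\y)\int_0^1 X_k[\K](\xi_t)\,dt$ with $|c_k(\y)|\lesssim1$, and each $|X_k[\K](\xi_t)|$ carries the extra factor $\big(N_1(\x_1)+\cdots+N_{\pi(k)}(\x_{\pi(k)})\big)^{-d_k}\le N_1(\x_1)^{-d_k}\le N_1(\x_1)^{-1}$, because $d_k\ge1$ for every $k$. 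Derivatives $\partial^{\alpha}(\K*\psi)$ are handled the same way after expanding $\partial^{\alpha}$ in right--invariant vector fields (Proposition \ref{Prop1.3w}) and absorbing the polynomial coefficients into the flag bounds. This reduces everything to part (1).

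For part (1) I argue by induction on the number $n$ of steps of $\mathcal F$. If $n=1$ then $\K$ is a Calder\'on--Zygmund kernel and $\K*\psi\in\mathcal C^{\infty}(G)$: for $N_1(\x)\ge2$ one bounds $\K*\psi$ and its derivatives directly from the Calder\'on--Zygmund estimates (using $N_1(\x\y^{-1})\approx N_1(\x)$ on the unit ball), and for $N_1(\x)\le2$ one writes $\partial^{\alpha}(\K*\psi)(\x)=\langle\K,\psi^{\alpha}_{\x}\rangle$ with $\psi^{\alpha}_{\x}(\z)=\partial^{\alpha}_{\x}\psi(\x\z^{-1})$ and estimates by a fixed Schwartz seminorm of $\psi$, uniformly in $\x$; both estimates match $\big[1+N_1(\x)\big]^{-Q_1-[\![\alpha]\!]}$. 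For the inductive step, Theorem \ref{Lemma2.3} gives $\K=\K_0+\K_1+\cdots+\K_n$, where $\K_0=\sum_{I\in\mathcal E_n}[\varphi^{I}]_{I}$ with the $\varphi^{I}\in\mathcal C^{\infty}_{0}(\R^N)$ supported in the unit ball, uniformly bounded in every seminorm, and with strong cancellation, while each $\K_j$ ($1\le j\le n$) is a flag kernel adapted to a strictly coarser flag $\mathcal F_j$. By the induction hypothesis $\K_j*\psi$ is a truncated kernel of width $1$ for $\mathcal F_j$; and a truncated kernel of width $1$ for a coarser flag is one for the finer flag $\mathcal F$ — elementary, since if $\R^{b_i}=\R^{a_{r_i}}\oplus\cdots\oplus\R^{a_{s_i}}$ then $1+N^{\mathcal B}_1(\x_{b_1})+\cdots+N^{\mathcal B}_i(\x_{b_i})\gtrsim 1+N_1(\x_1)+\cdots+N_k(\x_k)$ for every $k$ in the $i$-th block, and the homogeneous dimensions and derivative weights add correctly. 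Hence $\sum_{j\ge1}\K_j*\psi$ is a truncated kernel of width $1$ for $\mathcal F$, and it remains to analyse $\K_0*\psi$.

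Write $\psi=[\psi]_{\mathbf 0}$, $\mathbf 0=(0,\dots,0)\in\mathcal E_n$. By Lemma \ref{Lemma5.1}, $[\varphi^{I}]_{I}*[\psi]_{\mathbf 0}=[\theta^{I}]_{I\vee\mathbf 0}$ with $\theta^{I}$ a normalized bump supported in a fixed ball, and $I\vee\mathbf 0$ ranges over $\mathcal E_n^{\ge0}:=\{K\in\mathcal E_n: K\ge\mathbf 0\}$. To regroup $\K_0*\psi=\sum_{I}[\theta^{I}]_{I\vee\mathbf 0}$ according to the value $K=I\vee\mathbf 0$ I use the strong cancellation of the $\varphi^{I}$: Lemma \ref{Lemma4.3qw}, applied with $\psi$ carrying no cancellation and scaling parameters $\mathbf 0$, expresses $[\varphi^{I}]_{I}*[\psi]_{\mathbf 0}$ as a finite sum of terms
\[
\Big(\prod_{\ell\in A_1}2^{-\epsilon(i_{\ell+1}-i_{\ell})}\Big)\Big(\prod_{\ell\in A_2}2^{-\epsilon|i_{\ell}|}\Big)\,[\omega^{I}]_{I\vee\mathbf 0},
\]
with $\omega^{I}$ normalized, $\{1,\dots,n\}=A_1\cup A_2\cup A_3$, $A_2\subseteq\{\ell:i_{\ell}\le0\}$, and — the key point — $A_3\subseteq\{\ell:i_{\ell}>0\}$ (and $n\notin A_1$). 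Now fix $K\in\mathcal E_n^{\ge0}$ and let $\{1,\dots,r\}=\{j:k_j=0\}$ (an initial segment since $K$ is monotone and $\ge\mathbf 0$); the fibre $\{I:I\vee\mathbf 0=K\}$ is parametrized by $i_1\le\cdots\le i_{r}\le0$. For each $\ell\le r$ the corresponding index lies in $A_1\cup A_2$, never in $A_3$, so it carries a gain; a telescoping of these gains over $\ell=1,\dots,r$ is $\lesssim|i_1|^{r-1}2^{\epsilon i_1}$, summable over the fibre, uniformly in $K$. Hence the fibre sum converges to a normalized bump $\Theta^{K}$, $\K_0*\psi=\sum_{K\in\mathcal E_n^{\ge0}}[\Theta^{K}]_{K}$, and exactly as in the proof of Proposition \ref{Proposition5.3z},
\[
\big|\partial^{\alpha}(\K_0*\psi)(\x)\big|\lesssim\sum_{K\in\mathcal E_n^{\ge0}}2^{-\sum_{j}k_j(Q_j+[\![\balpha_j]\!])}\Big(1+\sum_{j=1}^{n}2^{-k_j}N_j(\x_j)\Big)^{-M};
\]
restricting the summation estimate of Proposition \ref{Prop2.11} to the cone $K\ge\mathbf 0$ replaces each $N_1(\x_1)+\cdots+N_k(\x_k)$ by $1+N_1(\x_1)+\cdots+N_k(\x_k)$, which is precisely the truncated bound of width $1$. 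This completes the induction, hence part (1).

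The main difficulty is the bookkeeping of the last paragraph: one must verify that the strong cancellation of the $\varphi^{I}$, passed through the ``allowable errors'' of Lemma \ref{Lemma4.3qw}, always leaves a summable gain attached to every free coordinate of each fibre — this is the truncated counterpart of the convergence statement in Theorem \ref{Lemma5.4zw} — together with the restricted summation estimate that is the truncated analogue of Proposition \ref{Prop2.11}. By comparison the passage from (1) to (2) is soft; the only point needing care is that for $N_1(\x_1)\gtrsim1$ and $\y$ in the unit ball the segment from $\x$ to $\x\y^{-1}$ stays in the good region, with all partial norms $N_j$ essentially unchanged.
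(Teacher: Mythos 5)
Your part (1) is essentially the paper's argument: split $\K=\K_0+\sum_j\K_j$ with the $\K_j$ adapted to strictly coarser flags (the paper leaves the coarser terms to an implicit recursion where you make the induction on $n$ explicit — both are fine), convolve $[\varphi^I]_I$ with $[\psi]_{\mathbf 0}$ via Lemmas \ref{Lemma5.1} and \ref{Lemma4.3qw}, regroup the sum by $K=I\vee\mathbf 0$, use the cancellation gains to sum out the nonpositive ("free") indices, and finish with the restricted geometric-sum estimate (\ref{E2.11}) of Proposition \ref{Prop2.11}; your verification that every free index lands in $A_1\cup A_2$ because $A_3\cap A_0=\emptyset$ is exactly the point the paper's display (\ref{Eqn8.1}) encodes. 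Where you genuinely diverge is part (2) and the treatment of $\psi*\K$. The paper handles $\psi*\K$ by the same dyadic computation with the factors reversed, whereas you reduce it to $\K*\psi$ by inversion plus Theorem \ref{Thm3.12}; this works since in the paper's coordinates $\x\mapsto\x^{-1}$ is $-\operatorname{Id}$ composed with an allowable change of variables and preserves $N_1(\x_1)$, so the truncated classes are stable under $f\mapsto\check f$. For part (2) the paper stays inside the dyadic decomposition: it writes $\psi=\sum_l\psi_l$ with each $\psi_l$ having cancellation in a single coordinate and reads off the extra gain $2^{-d_li_l}$ from Lemma \ref{Lemma4.3qw} (display (\ref{Eqn8.2})), then reapplies Proposition \ref{Prop2.11}. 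You instead argue at the kernel level with a first-difference/mean-value estimate, using $\int\psi=0$ and the bound $|X_k[\K]|\lesssim(N_1+\cdots+N_{\pi(k)})^{-d_k}\prod_j(N_1+\cdots+N_j)^{-Q_j}$ together with $d_k\ge 1$ to extract the factor $N_1(\x_1)^{-1}$ in the region $N_1(\x_1)\gtrsim1$ where the improvement is nontrivial. This is a legitimate and arguably more transparent route; its one delicate point — which you correctly anticipate — is that the coefficients in $\frac{d}{dt}K(\x\gamma(t))$ are bounded only when the expansion is taken in left-invariant fields (Euclidean coefficients would grow with $\x$), and that the derivative case requires converting $\partial^\alpha$ to right-invariant fields before passing them onto $\K$ and absorbing the homogeneous polynomial coefficients; that bookkeeping is routine but should be written out if this were a full proof. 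The trade-off is that the paper's method produces the improved estimate uniformly within the same dyadic machinery (which is what it reuses in Theorem \ref{Theorem6.7}), while yours isolates the improvement as a soft consequence of part (1) plus one integration by parts in $t$.
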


\begin{proof}
We can write  $\K= \sum_{I\in \mathcal E_{n}}[\varphi^{I}]_{I}+\sum_{j=1}^{r}\K_{r}$, where $\{\varphi^{I}\}$ are normalized unit bump functions with strong cancellation, and $\{\K_{1}, \ldots, \K_{r}\}$ are flag distributions adapted to coarser flags. If $\K_{0}=  \sum_{I\in \mathcal E_{n}}[\varphi^{I}]_{I}$,  it suffices to show that the proposition is true for $\K_{0}$. We consider $\psi*\K_{0}$. The case of $\K_{0}*\psi$ is handled similarly.
We have
\begin{align*}
\psi*\K_{0}(\x)&= \sum_{I\in \mathcal E_{n}}\psi*[\varphi^{I}]_{I}\\
&= 
\sum_{I\in \mathcal E_{n}^{-}}\psi*[\varphi^{I}]_{I}+\sum_{k=1}^{n-1}\sum_{I\in \mathcal E_{n}^{k}}\psi*[\varphi^{I}]_{I}+\sum_{I\in \mathcal E_{n}^{+}}\psi*[\varphi^{I}]_{I}\\
&=
I(\x)+\sum_{k=1}^{n-1}II_{k}(\x)+III(\x)
\end{align*}
where
\begin{align*}
E_{n}^{-}&= \{I=(i_{1}, \ldots, i_{n})\in \mathcal E_{n}\,\big\vert\, i_{n}\leq 0\},\\
E_{n}^{\,k}&= \{I=(i_{1}, \ldots, i_{n})\in \mathcal E_{n}\,\big\vert\, i_{k}\leq0<i_{k+1} \},\\
E_{n}^{+}&= \{I=(i_{1}, \ldots, i_{n})\in \mathcal E_{n}\,\big\vert\, 0< i_{1}\}.
\end{align*}
Denote the element $(0,\ldots, 0)\in \mathcal E_{n}$ by $\mathbf 0$. If $I=(i_{1},\ldots, i_{n})\in \mathcal E_{n}^{k}$, then $i_{1}\leq i_{2}\leq \cdots \leq i_{k}\leq 0$, and we put 
\begin{equation*}
\widetilde I_{k} = (0,\ldots, 0,i_{k+1}, \ldots, i_{n}).
\end{equation*} 
Then
\begin{equation*}
I\vee \mathbf 0 =
\begin{cases}
\mathbf 0 & \text{if $I\in \mathcal E_{n}^{-}$,}\\
\widetilde I_{k}& \text{if $I\in \mathcal E_{n}^{k}$,}\\
I& \text{if $I\in \mathcal E_{n}^{+}$.}
\end{cases}
\end{equation*}
Let 
\begin{equation*}
\widetilde E_{n}^{k}= \{I\in \mathcal E_{n}\,\big\vert\,i_{1}= \cdots = i_{k}= 0\}.
\end{equation*} 
Note that if $I\in \mathcal E_{n}^{k}$ then $\widetilde I_{k}\in \widetilde E_{n}^{k}$. According to Lemmas \ref{Lemma5.1} and \ref {Lemma4.3qw}, each term $\psi*[\varphi^{I}]_{I}$ has weak cancellation. Moreover, for each $I\in \mathcal E_{n}$ there exists $\theta^{I}\in \mathcal C^{\infty}_{0}(\R^{N})$, normalized with respect to $\psi$ and $\varphi^{I}$, so that 
\begin{equation}\label{Eqn8.1}
\begin{aligned}
&&&&I&\in \mathcal E_{n}^{-} &&\Longrightarrow & \psi*[\varphi^{I}]_{I} &= 2^{-\epsilon(|i_{1}|+\cdots+|i_{n}|)}[\theta^{I}]_{\bar0}&&&&\\
&&&&I&\in \mathcal E_{n}^{k} &&\Longrightarrow & \psi*[\varphi^{I}]_{I} &= 2^{-\epsilon(|i_{1}|+\cdots+|i_{k}|)}[\theta^{I}]_{\widetilde I_{k}}&&&&\\
&&&&I&\in \mathcal E_{n}^{+} &&\Longrightarrow & \psi*[\varphi^{I}]_{I} &= [\theta^{I}]_{I}.&&&&
\end{aligned}
\end{equation}

\smallskip

We have
\begin{align*}
I(\x) &= 
\sum_{I\in \mathcal E_{n}^{-}}2^{-\epsilon(|i_{1}|+\cdots+|i_{n}|)}[\theta^{I}]_{\mathbf 0}(\x)
=
\sum_{I\in \mathcal E_{n}^{-}}2^{-\epsilon(|i_{1}|+\cdots+|i_{n}|)}\theta^{I}(\x) = \widetilde \theta^{\mathbf 0}(\x)
\end{align*}
where the series  converges to a normalized unit bump function $\widetilde \theta^{\mathbf 0}$. Next, we can write
\begin{align*}
II_{k}(\x)
&=
\sum_{I\in \mathcal E_{n}^{k}}\psi*[\varphi^{I}]_{I} (\x)
= 
\sum_{J\in \widetilde E_{n}^{k}} \Big[\sum_{\substack{I\in \mathcal E_{n}^{k}\\ \widetilde I_{k}= J}}\psi*[\varphi^{I}]_{I} \Big](\x)\\
&=
\sum_{J\in \widetilde E_{n}^{k}} \Big[\sum_{\substack{I\in \mathcal E_{n}^{k}\\ \widetilde I_{k}= J}}2^{-\epsilon(|i_{1}|+\cdots+|i_{k}|)}[\theta^{I}]_{\widetilde I_{k}}\Big](\x)\\
&=
\sum_{J\in \widetilde E_{n}^{k}}
 \Big[\sum_{\substack{I\in \mathcal E_{n}^{k}\\ \widetilde I_{k}= J}}2^{-\epsilon(|i_{1}|+\cdots+|i_{k}|)}\theta^{I}\Big]_{J}(\x)= 
\sum_{J\in \widetilde E_{n}^{k}}\big[\widetilde \theta^{J}\big]_{J}(\x),
\end{align*}
where
\begin{equation*}
\sum_{\substack{I\in \mathcal E_{n}^{k}\\ \widetilde I_{k}= J}}2^{-\epsilon(|i_{1}|+\cdots+|i_{k}|)}\theta^{I}=\widetilde \theta^{J}
\end{equation*}
converges to  a normalized unit bump function. Thus each $II_{k}$ is a flag kernel with a decomposition into dilates of normalized bump functions where all of the dilation parameters are non-negative. This is also true of the term $III$. 

Thus we have shown that
\begin{equation*}
\psi*\K_{0} = \sum_{I\in \mathcal E_{n}^{+}}[\widetilde\theta^{I}]_{I}
\end{equation*}
where
\begin{equation*}
E_{n}^{+}= \{I=(i_{1}, \ldots,i_{n})\in \mathbb Z^{n}\,\big\vert\,0\leq i_{1}\leq i_{2}\leq \cdots \leq i_{n}\}.
\end{equation*}
But then it follows from the second inequality in Proposition \ref{Prop2.11}, as in the proof of Proposition \ref{Proposition5.3z} that  $\psi*\K_{0}$ satisfies the differential inequalities of a truncated kernel of width one. 

\smallskip

If we assume in addition that $\int_{\R^{N}}\psi(\x)\,d\x=0$, then we can write $\psi = \sum_{l=1}^{N}\psi_{l}$, with 
\begin{equation*}
\int_{\R}\psi_{k}(x_{1}, \ldots, x_{l-1},t,x_{l+1}, \ldots, x_{n})\,dt =0.
\end{equation*}
We can repeat the argument given above with $\psi$ replace by $\psi_{l}$. It then follows from  Lemma \ref{Lemma4.3qw} that instead of the formulas given in equation (\ref{Eqn8.1}), we get 
\begin{equation}\label{Eqn8.2}
\begin{aligned}
&&&&I&\in \mathcal E_{n}^{-}\ &&\Longrightarrow & \psi_{l}*[\varphi^{I}]_{I} &= 2^{-\epsilon(|i_{1}|+\cdots+|i_{n}|)}[\theta^{I}]_{\bar0}&&&&\\
&&&&I&\in \mathcal E_{n}^{k} &&\Longrightarrow & \psi_{l}*[\varphi^{I}]_{I} &= 2^{-\epsilon(|i_{1}|+\cdots+|i_{k}|)}[\theta^{I}]_{\widetilde I_{k}}&\quad\text{if $k<l$}&&&&\\
&&&&I&\in \mathcal E_{n}^{k} &&\Longrightarrow & \psi_{l}*[\varphi^{I}]_{I} &= 2^{-d_{l} i_{l}}\,2^{-\epsilon(|i_{1}|+\cdots+|i_{k}|)}[\theta^{I}]_{\widetilde I_{k}}&\quad\text{if $k\geq l$}&&&&\\
&&&&I&\in \mathcal E_{n}^{+} &&\Longrightarrow & \psi_{l}*[\varphi^{I}]_{I} &= 2^{-d_{l} i_{l}}\,[\theta^{I}]_{I}.&&&&
\end{aligned}
\end{equation}
Again using the second inequality in Proposition \ref{Prop2.11}, and observing that the case $l=1$ gives the worst estimate, we see that $\psi_{l}*\K_{0}$ satisfies the differential inequalities of an improved truncated kernel of width one. 
\end{proof}

\begin{remark}\label{Remark7.7}
{\rm We point out that we can relax the $\mathcal C^{\infty}$ requirement on the function $\psi$ in Propsition \ref{Prop4.5z} in the following way. An examination of the arguments in Sections  \ref{Nilpotent} and \ref{Decay} and the proof just given show that for any\label{Here} integer m there exists an integer M, so that $\K*\psi$ and $\psi*\K$ satisfy the required differential inequalities and cancellation properties for orders of differentiation not exceeding $m$, if $\psi$ is supposed to be of class $\mathcal C^{M}$.}
\end{remark}

\section{Convolution of flag kernels}

Let $\K\in \mathcal S'(\R^{N})$ be a flag distribution on the homogeneous nilpotent Lie group $G=\R^{N}$, adapted to the standard flag associated with the decomposition $\R^{N}= \R^{a_{1}}\oplus \cdots \oplus \R^{a_{n}}$.  Define a left-invariant operator $T_{\K}:\mathcal S(\R^{N})\to\mathcal C^{\infty}(\R^{N})$ by setting
\begin{equation*}
T_{\K}[\phi](\x) = \phi * \K(\x) = \langle \K,\phi^{\#}_{\x}\rangle
\end{equation*} 
where, if $\phi\in \mathcal S(\R^{N})$, we set $\phi^{\#}_{\x}(\y) = \phi(\x\y^{-1})$. If $\K_{1}$ and $\K_{2}$ are two flag kernels on $G$, we want to make sense of the composition $T_{\K_{2}}\circ T_{\K_{1}}$, and show that the resulting operator is of the form $T_{\K_{3}}$ where $\K_{3}$ is a third flag kernel on $G$. Now formally
\begin{equation*}
T_{\K_{2}}\circ T_{\K_{1}}[\phi]= (T_{\K_{1}}[\phi])*\K_{2}= (\phi*\K_{1})*\K_{2}=\phi*(\K_{1}*\K_{2}),
\end{equation*}
so the operator $T_{\K_{2}}\circ T_{\K_{1}}$ should be given by convolution with the distribution $\K_{1}*\K_{2}$. However we cannot directly define the composition $T_{\K_{2}}\circ T_{\K_{1}}[\phi]= T_{\K_{2}}\big(T_{\K_{1}}[\phi]\big)$, even if $\phi \in \mathcal C^{\infty}_{0}(\R^{N})$, since $T_{\K_{1}}[\phi]$ need not belong to $\mathcal S(\R^{N})$. 
Also, in general one cannot convolve an arbitrary pair of distributions unless one of them has compact support. 

We will define the convolution $\K_{1}*\K_{2}$ somewhat indirectly. In Section \ref{L2boundedness} we show that if $\phi\in \mathcal S(\R^{N})$, then $T_{\K}[\phi]\in L^{2}(\R^{N})$ and the mapping $T_{\K}:\mathcal S(\R^{N}) \to L^{2}(\R^{N})$ has a (unique) continuous extension to a mapping of $L^{2}(\R^{N})$ to itself. This allows us to define $T_{\K_{2}}\circ T_{\K_{1}}$ as the composition of two mappings from $L^{2}(\R^{N})$ to itself. Then in Section \ref{Reduction}, we show that this  composition is given by convolution with a distribution which is given as sum of convolutions of dilates of  bump functions. The key is then to recognize this sum as a flag kernel. The combinatorics are rather complicated, so in Section \ref{Example} we work out an explicit example. In Section \ref{General} we prove the main result, Theroem \ref{Theorem8.4}, which shows that the convolution of two flag kernels is a sum of flag kernels. Finally in Section \ref{FlagAlgebra} we work out some additional examples.

\smallskip

\subsection{Boundedness on $L^{2}$}\label{L2boundedness}\quad

\smallskip

In this section we show that convolution with a flag kernel extends to a bounded operator on $L^{2}(\R^{N})$. Later in Section \ref{LpEstimates} we will see more: such operators are bounded on $L^{p}(\R^{N})$ for $1<p<\infty$.

\begin{lemma}\label{Lemma5.1qw}
Let $\K$ be a flag kernel on $\R^{N}$. Then there is a constant $C$ so that if  $T_{\K}[\phi] = \phi*\K$ for $\phi\in \mathcal S(\R^{N})$ then $||T_{\K}[\phi]||_{L^{2}(\R^{N})}\leq C\,||\phi||_{L^{2}(\R^{N})}$. As a consequence, there is a unique extension of $T_{\K}$  to a bounded operator from  $L^{2}(\R^{N})$  to itself.
\end{lemma}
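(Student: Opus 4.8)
The natural route is via the Fourier--analytic characterization of flag kernels already recalled in the excerpt, namely Lemma \ref{Lemma2.5}, which identifies flag kernels adapted to $\mathcal F$ with flag multipliers adapted to the dual flag $\mathcal F^{*}$. In the abelian (Euclidean) case one would be done immediately: $\widehat{\K}$ is bounded, so convolution by $\K$ is a Fourier multiplier operator of norm $\|\widehat{\K}\|_{\infty}$ on $L^{2}$. On a homogeneous nilpotent group $G$ this shortcut is not available, because the group convolution is not diagonalized by the Euclidean Fourier transform. The plan is therefore to reduce the noncommutative $L^{2}$ bound to the commutative one, at the cost of replacing $\K$ by an auxiliary family of kernels to which the commutative estimate does apply uniformly.

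Concretely, I would proceed as follows. First, using Theorem \ref{Lemma2.3} (together with Corollary \ref{Cor3.2xz}), write $\K$ as a finite sum of pieces, each of which is of the form $\sum_{I\in\mathcal E_{k}}[\varphi^{I}]_{I}$ for a uniformly normalized family $\{\varphi^{I}\}$ of compactly supported bump functions with strong cancellation, adapted to the flag $\mathcal F$ or to a strictly coarser flag. Since a finite sum of $L^{2}$-bounded operators is $L^{2}$-bounded, it suffices to treat one such piece $\K_{0}=\sum_{I\in\mathcal E_{n}}[\varphi^{I}]_{I}$. The operator $T_{\K_{0}}$ is then the sum over $I$ of the ``single--scale'' operators $S_{I}\colon \phi\mapsto \phi*[\varphi^{I}]_{I}$. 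The strategy is an almost--orthogonality (Cotlar--Stein) argument: one estimates the operator norm of $S_{I}^{*}S_{J}$ and of $S_{I}S_{J}^{*}$ on $L^{2}$, shows it is bounded by a summable quantity $\gamma(I,J)$ with $\sup_{I}\sum_{J}\gamma(I,J)^{1/2}<\infty$, and concludes that $\sum_{I}S_{I}$ converges strongly to a bounded operator with a bound independent of the truncation. The decay of $\gamma(I,J)$ in $|I-J|$ is exactly what Corollary \ref{Cor7.4} supplies: the composite kernel $[\varphi^{I}]_{I}*[\widetilde\varphi^{J}]_{J}$ (which controls $S_{I}S_{J}^{*}$) is a sum of $(I\vee J)$-dilates of normalized bump functions, each carrying an exponential gain $\prod_{\ell\in A}2^{-\epsilon(i_{\ell+1}-i_{\ell})}\prod_{\ell\in\widetilde A}2^{-\epsilon|i_{\ell}-j_{\ell}|}\cdots$; the $L^{1}$-norm of an $L^{1}$-normalized bump is $O(1)$, so Young's inequality gives an $L^{2}\to L^{2}$ bound for $S_{I}S_{J}^{*}$ of this same exponentially small size. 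The only subtlety is the presence of indices $\ell$ where neither factor exhibits a $|i_{\ell}-j_{\ell}|$ gain; there one falls back on the $2^{-\epsilon(i_{\ell+1}-i_{\ell})}$ or $2^{-\epsilon(j_{\ell+1}-j_{\ell})}$ gains, and a short combinatorial bookkeeping on the cone $\mathcal E_{n}$ shows that the resulting $\gamma(I,J)$ is still summable after taking square roots.

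The last step is the extension statement. Once $\|T_{\K}\phi\|_{2}\le C\|\phi\|_{2}$ is established for $\phi\in\mathcal S(\R^{N})$, density of $\mathcal S(\R^{N})$ in $L^{2}(\R^{N})$ and the standard BLT theorem give a unique bounded extension; one should just note that the extension agrees with convolution by $\K$ in the distributional sense, which is automatic since convolution against a fixed tempered distribution is continuous from $\mathcal S$ into $\mathcal S'$.

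I expect the main obstacle to be the almost--orthogonality estimate for the pieces of $\K$ (and of $\K_{1}*\K_{2}$, in later sections) that are adapted to \emph{coarser} flags, where the index set is a lower-dimensional face of the cone and the gains coming from Corollary \ref{Cor7.4} are correspondingly fewer; one must check that the surviving gains still dominate the volume growth of the relevant slice of $\mathcal E_{n}$. A secondary technical point is keeping the Cotlar--Stein bound uniform over finite truncations $F\subset\mathcal E_{n}$, so that the limiting operator — which is what ``convolution by $\K$'' means in view of Theorem \ref{Lemma5.4zw} — inherits the bound; this is routine once $\gamma(I,J)$ is in hand, but it is the place where one uses that all constants produced along the way depend only on the flag kernel constants of $\K$ and on finitely many seminorms of the bump functions.
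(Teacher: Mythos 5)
Your proposal follows essentially the same route as the paper: reduce via Corollary \ref{Cor3.2xz} to a sum of dilates of strongly cancelling bump functions, bound the single-scale operators and their compositions $T_IT_J^*$, $T_J^*T_I$ by the $L^1$ norms of $[\varphi^I]_I*[\widetilde\varphi^J]_J$, invoke Corollary \ref{Cor7.4} for the almost-orthogonality gains, and apply Cotlar--Stein uniformly over finite truncations before passing to the limit (the paper uses Fatou's lemma for this last step). Your concern about pieces adapted to coarser flags is handled automatically by Corollary \ref{Cor3.2xz}, since each such piece comes with its own strong cancellation relative to its own decomposition and the same argument applies with $n$ replaced by the length of that flag.
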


\begin{proof}
Using Corollary \ref{Cor3.2xz}, we can assume that $\K$ is a flag kernel adapted to a standard flag $\mathcal F$ of length $n$ as given in equation (\ref{Eqn2.6tyu}), and that there is a uniformly bounded family of functions $\{\varphi^{I}\}\subset \mathcal C^{\infty}_{0}(\R^{N})$, each having strong cancellation relative to the decomposition $\R^{N}= \R^{a_{1}}\oplus \cdots \oplus \R^{a_{n}}$ such that $\K = \sum_{I\in \mathcal E_{n}}[\varphi^{I}]_{I}$ with convergence in the sense of distributions.

For any $I\in \mathcal E_{n}$ let $T_{I}[f]= f*[\varphi^{I}]_{I}$.
Then $||T_{I}[f]||_{L^{2}}\leq ||\varphi^{I}||_{L^{1}}\,||f||_{L^{2}}$. If $(f,g) = \int_{\R^{N}}f(\x)\overline{g(\x)}\,d\x$ is the standard inner product in $L^{2}(\R^{N})$, it follows from Fubini's theorem that  $\big(T_{I}[f],g\big) = \big(f,\widetilde T_{I}[g]\big)$ where $T_{\widetilde I}[g] = g*[\widetilde \varphi^{I}]_{I}$ and $\widetilde \varphi^{I}(\x)= \overline{\varphi^{I}(\x^{-1})}$. Thus the Hilbert space adjoint of the operator $T_{\varphi^{I}}$ is the operator $T_{\varphi}^{*}= T_{\widetilde\varphi^{I}}$. Fubini's theorem also shows that 
$T_{I}\circ T_{J}^{*}= T_{\widetilde\varphi^{I} * \varphi^{J}}$ and $T_{J}^{*}\circ T_{I}= T_{\varphi^{I}*\widetilde\varphi^{J}}$. Thus if $I,J\in \mathcal E_{n}$, the $L^{2}$-norm of the operators $T_{{I}}\circ T_{{J}}^{*}$ and $T_{{J}}^{*}\circ T_{{I}}$ are bounded by the $L^{1}$ norms of $[\widetilde\varphi^{J}]_{J} * [\varphi^{I}]_{I}$ and $[\varphi^{I}]_{I}*[\widetilde\varphi^{J}]_{J}$. It follows from Corollary \ref{Cor7.4} that 
\begin{equation}\label{eqn5.1dw}
\begin{aligned}
||[\widetilde\varphi]_{J} * [\varphi]_{I}||_{L^{1}(\R^{N})}&+ ||[\varphi]_{I}*[\widetilde\varphi]_{J}||_{L^{1}(\R^{N})} \\
&\leq 
C\,2^{-\epsilon|i_{n}-j_{n}|}\,\prod_{\ell=1}^{n-1}\big[2^{-\epsilon|i_{\ell}-j_{\ell}|}+\min\big\{ 2^{-\epsilon(i_{\ell+1}-i_{\ell})}, 2^{-\epsilon(j_{\ell+1}-j_{\ell})}\big\}\big].
\end{aligned}\end{equation}

For any finite subset $F\subset E_{n}$, set $\K_{F}(\x)= \sum_{I\in F}[\varphi^{I}]_{I}(\x)$. Then for any $\phi\in \mathcal S(\R^{N})$, $\big\langle \K,\phi\big\rangle = \lim_{F\nearrow E_{n}}\big\langle \K_{F},\phi\big\rangle$, 
and in particular, if $\phi^{\#}_{\x}(\y) = \phi(\x\y^{-1})$, 
\begin{align*}
T_{\K}[\phi](\x)
= 
\big\langle \K,\phi^{\#}_{\x}\big\rangle= \lim_{F\nearrow E_{n}}\big\langle \K_{F},\phi^{\#}_{\x}\big\rangle
&= \lim_{F\nearrow E_{n}}\sum_{I\in F}\phi*[\varphi^{I}]_{I}(\x)
=
\lim_{F\nearrow E_{n}}\sum_{I\in F}T_{I}[\phi](\x).
\end{align*}
It follows from the almost orthogonality estimate in (\ref{eqn5.1dw}) and the Cotlar-Stein Theorem (see for example \cite{St93}, page 280) that there is a constant $C$ independent of the finite set $F$ such that 
\begin{equation*}
||\sum_{I\in F}T_{[\varphi^{I}]_{I}}[\phi]||_{L^{2}}\leq C\,||\phi||_{L^{2}}.
\end{equation*}
But then Fatou's lemma implies that $||T_{\K}[\phi]||_{L^{2}}\leq C\,||\phi||_{L^{2}}$ for all $\phi\in \mathcal S(\R^{N})$. This completes the proof.
\end{proof}

\begin{corollary} 
Supposet that $\K$ is a flag kernel, and that $\K = \sum_{I\in \mathcal E_{n}}[\varphi^{I}]_{I}$. Then for all $f\in L^{2}(\R^{N})$,
$$\lim_{K\nearrow E_{n}} ||\sum_{I\in F}T_{[\varphi]_{I}}[f]-T_{\K}[f]||_{L^{2}}=0.$$ 

\end{corollary}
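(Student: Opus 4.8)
The plan is to deduce this from the Cotlar--Stein almost-orthogonality already established in the proof of Lemma \ref{Lemma5.1qw}, upgrading the uniform operator bound obtained there to genuine norm-convergence of the partial sums. As in that proof, and reducing via Corollary \ref{Cor3.2xz} if necessary, I would take $\{\varphi^{I}\}_{I\in\mathcal E_{n}}\subset\mathcal C^{\infty}_{0}(\R^{N})$ to be a uniformly normalized family with strong cancellation relative to the decomposition $\R^{N}=\R^{a_{1}}\oplus\cdots\oplus\R^{a_{n}}$, with $\K=\sum_{I\in\mathcal E_{n}}[\varphi^{I}]_{I}$ in the sense of distributions. Write $T_{I}[f]=f*[\varphi^{I}]_{I}$ and, for finite $F\subset\mathcal E_{n}$, $S_{F}=\sum_{I\in F}T_{I}$. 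The estimate (\ref{eqn5.1dw}), which came out of Corollary \ref{Cor7.4} in the course of proving Lemma \ref{Lemma5.1qw}, bounds $\|T_{J}^{*}T_{I}\|_{L^{2}\to L^{2}}$ and $\|T_{I}T_{J}^{*}\|_{L^{2}\to L^{2}}$ by a quantity $\gamma(I,J)$ with $\sup_{I}\sum_{J\in\mathcal E_{n}}\gamma(I,J)^{1/2}\le A<\infty$; that is, $\{T_{I}\}$ satisfies the hypotheses of the Cotlar--Stein lemma.

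The first real step is to show that $S_{F}[f]$ converges in $L^{2}(\R^{N})$ as $F\nearrow\mathcal E_{n}$ for every $f\in L^{2}(\R^{N})$, and this is the step I expect to be the main obstacle: the uniform bound $\|S_{F}\|_{L^{2}\to L^{2}}\le A$ given by Cotlar--Stein is not by itself enough, and since we only control $\sup_{I}\sum_{J}\gamma(I,J)^{1/2}$ and not the joint sum $\sum_{I,J}\gamma(I,J)$, one cannot expand $\|S_{F}f\|_{2}^{2}$ as an absolutely convergent double series. I would instead run the standard Rademacher argument: if $(S_{F}f)_{F}$ were not Cauchy there would exist $\delta>0$ and pairwise disjoint finite sets $G_{1},G_{2},\dots\subset\mathcal E_{n}$ with $\|S_{G_{\nu}}f\|_{2}\ge\delta$ for all $\nu$; for any signs $\epsilon_{\nu}=\pm1$ the family $\{\epsilon_{\nu}T_{I}:I\in G_{\nu}\}$ still satisfies the Cotlar--Stein bounds with the same constant $A$, so $\bigl\|\sum_{\nu\le M}\epsilon_{\nu}S_{G_{\nu}}f\bigr\|_{2}\le A\|f\|_{2}$; averaging over $\epsilon\in\{\pm1\}^{M}$ gives $\sum_{\nu\le M}\|S_{G_{\nu}}f\|_{2}^{2}\le A^{2}\|f\|_{2}^{2}$, and letting $M\to\infty$ contradicts $\|S_{G_{\nu}}f\|_{2}\ge\delta$. (One may alternatively simply cite the Cotlar--Stein lemma in the form that also yields strong convergence of the series, e.g. \cite{St93}.) Denote the limit by $U[f]=\lim_{F\nearrow\mathcal E_{n}}S_{F}[f]$; then $U$ is bounded on $L^{2}(\R^{N})$.

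It remains to identify $U$ with $T_{\K}$, which is routine. For $\phi\in\mathcal S(\R^{N})$ the proof of Lemma \ref{Lemma5.1qw} already establishes the pointwise identity $T_{\K}[\phi](\x)=\lim_{F\nearrow\mathcal E_{n}}S_{F}[\phi](\x)$ for every $\x$. On the other hand $S_{F_{m}}[\phi]\to U[\phi]$ in $L^{2}(\R^{N})$ along any exhaustion $F_{m}\nearrow\mathcal E_{n}$, so some subsequence converges a.e., which forces $U[\phi]=T_{\K}[\phi]$ a.e.; thus $U=T_{\K}$ on $\mathcal S(\R^{N})$, and since both operators are bounded on $L^{2}(\R^{N})$ and $\mathcal S(\R^{N})$ is dense, $U=T_{\K}$ on all of $L^{2}(\R^{N})$. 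Combining this with the convergence $S_{F}[f]\to U[f]$ from the previous step gives $\lim_{F\nearrow\mathcal E_{n}}\|S_{F}[f]-T_{\K}[f]\|_{L^{2}}=0$ for every $f\in L^{2}(\R^{N})$, as claimed.
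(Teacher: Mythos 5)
Your proof is correct, but it reaches the conclusion by a genuinely different route than the paper. The paper's argument is a two-line reduction: since $T_{\K}$ and the partial sums $S_{F}=\sum_{I\in F}T_{[\varphi^{I}]_{I}}$ are bounded on $L^{2}$ uniformly in $F$ (Lemma \ref{Lemma5.1qw}), density of $\Sc(\R^{N})$ reduces the claim to Schwartz data, and for $\psi\in\Sc(\R^{N})$ the convergence $S_{F}[\psi]\to T_{\K}[\psi]$ in $L^{2}$ is extracted from Theorem \ref{Lemma5.4zw} and the dominated-convergence argument in its proof (the weak-cancellation rewriting of $\langle\K_{F},\,\cdot\,\rangle$ supplies, when applied to $\psi^{\#}_{\x}$, an $F$-independent $L^{2}$ majorant in $\x$, and pointwise convergence then upgrades to $L^{2}$ convergence). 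You instead prove unconditional strong convergence of the series $\sum_{I}T_{I}[f]$ for \emph{arbitrary} $f\in L^{2}$ directly from the almost-orthogonality estimate (\ref{eqn5.1dw}) via the sign-averaging (Rademacher) refinement of Cotlar--Stein, and then identify the limit operator with $T_{\K}$ using only the pointwise identity $T_{\K}[\phi](\x)=\lim_{F}S_{F}[\phi](\x)$ for $\phi\in\Sc(\R^{N})$ established in the proof of Lemma \ref{Lemma5.1qw}, plus density. Your route buys independence from the quantitative decay analysis behind Theorem \ref{Lemma5.4zw}: you never need an $L^{2}$-dominating function for $S_{F}[\psi]$, only the (much weaker) pointwise convergence on Schwartz functions, at the cost of invoking the strong form of the Cotlar--Stein lemma (which is indeed available, e.g.\ in \cite{St93}). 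The paper's route avoids the Rademacher argument by leaning on machinery it has already built in full. Both arguments are complete; your identification step (subsequential a.e.\ convergence forcing $U[\phi]=T_{\K}[\phi]$, then density) is carried out correctly.
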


\begin{proof}
Since $\mathcal S(\R^{N})$ is dense in $L^{2}(\R^{N})$, and since $T_{\K}$ is bounded on $L^{2}(\R^{N})$, it suffices to show that $\lim_{K\nearrow E_{n}} ||\sum_{I\in F}T_{[\varphi^{I}]_{I}}[\psi]-T_{\K}[\psi]||_{L^{2}}=0$ for $\psi\in \mathcal S(\R^{N})$. (Both $T_{\K}$ and $\sum_{I\in F}T_{[\varphi^{I}]_{I}}$ are bounded on $L^{2}(\R^{N})$ with norm independent of $F$). But for $\psi\in \mathcal S(\R^{N})$, the result follows from Theorem \ref{Lemma5.4zw} and the discussion following it on page \pageref{remarks}. This completes the proof.
\end{proof}



\subsection{Composition of convolution operators}\label{Reduction}\quad

\smallskip

Let $\K_{j}$, $j=1,\,2$,  be two flag kernels on $\R^{N}$, and let $T_{\K_{j}}[\phi]=\phi*\K_{j}$  be the corresponding convolution operators. According to Lemma \ref{Lemma5.1qw}, each of these operators is bounded on $L^{2}(\R^{N})$, and hence the composition $T_{\K_{2}}\circ T_{\K_{1}}$ is well-defined as a bounded operator on $L^{2}(\R^{N})$. Our main result is the following.

\begin{theorem}\label{Thm5.3dc}
Let $\mathcal F_{1}, \mathcal F_{2}$ be two standard flags on $\R^{N}$, and let $\mathcal F_{0}$ be the coarsest flag on $\R^{N}$ which is finer than both $\mathcal F_{1}$ and $\mathcal F_{2}$. For $j=1,2$, let $\K_{j}$ be a flag kernel adapted to the flag $\mathcal F_{j}$. Then $T_{\K_{2}}\circ T_{\K_{1}}$ is a flag kernel adapted to the flag $\mathcal F_{0}$.
\end{theorem}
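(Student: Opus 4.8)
The plan is to combine the dyadic decomposition of flag kernels (Theorem \ref{Lemma2.3}, or rather its consequence Corollary \ref{Cor3.2xz}), the convolution estimates for dilates of bump functions (Lemma \ref{Lemma5.1} for the support/scaling, Lemma \ref{Lemma4.3qw} and Corollary \ref{Cor7.4} for the decay and weak-cancellation properties of $[\varphi^{I}]_{I}*[\psi^{J}]_{J}$), and the converse synthesis result (Theorem \ref{Lemma5.4zw}), which says that a dyadic sum of normalized bump functions with weak cancellation converges to a flag kernel. The $L^{2}$-boundedness of Lemma \ref{Lemma5.1qw} is what makes $T_{\K_{2}}\circ T_{\K_{1}}$ a well-defined bounded operator to begin with, and it also guarantees that the partial sums of the decompositions converge in the strong operator topology, so that one is entitled to manipulate the double series $\sum_{I}\sum_{J}[\varphi^{I}]_{I}*[\psi^{J}]_{J}$ termwise.

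\textbf{Key steps, in order.} First I would reduce to the case where $\K_{1}=\sum_{I\in\mathcal E_{p}}[\varphi^{I}]_{I}$ and $\K_{2}=\sum_{J\in\mathcal E_{q}}[\psi^{J}]_{J}$ with $\{\varphi^{I}\}$, $\{\psi^{J}\}$ normalized bump functions having strong cancellation relative to the decompositions $\mathcal A_{1}$, $\mathcal A_{2}$ underlying $\mathcal F_{1}$, $\mathcal F_{2}$; using Corollary \ref{Cor3.2xz} this introduces finitely many auxiliary kernels adapted to strictly coarser flags, and since any flag coarser than $\mathcal F_{j}$ is still finer than $\mathcal F_{0}$ only when handled correctly, one must be slightly careful, but the upshot is that it suffices to treat a single pair of such sums. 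Second, I would form the formal product $\K_{1}*\K_{2}=\sum_{I,J}[\varphi^{I}]_{I}*[\psi^{J}]_{J}$ and, following the outline in the introduction, partition the index set of pairs $(I,J)$ into finitely many classes according to which coordinates of the "merged" multi-index $p_{\mathcal A_{1}}(I)\vee p_{\mathcal A_{2}}(J)$ come from $I$ versus $J$ and which of the sets $A_{0},B_{0}$ in Lemma \ref{Lemma4.3qw} are relevant; on each class the merged scale lives naturally in $\mathcal E$ for the flag $\mathcal F_{0}$ (this is exactly where the hypothesis that $\mathcal F_{0}$ is the coarsest common refinement enters: $p_{\mathcal A_{1}}(I)\vee p_{\mathcal A_{2}}(J)$ need not be monotone for $\mathcal F_{1}$ or $\mathcal F_{2}$ but is monotone for the common-refinement blocking). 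Third, within a fixed $\mathcal F_{0}$-scale $K$, I would sum $\tilde\theta_{K}=\sum_{p_{\mathcal A_{1}}(I)\vee p_{\mathcal A_{2}}(J)=K}[\varphi^{I}]_{I}*[\psi^{J}]_{J}$: by Corollary \ref{Cor7.4} each term equals a product of exponential gains $2^{-\epsilon(i_{\ell+1}-i_{\ell})}$, $2^{-\epsilon(j_{\ell+1}-j_{\ell})}$, $2^{-\epsilon|i_{\ell}-j_{\ell}|}$ times a $K$-dilate $[\theta_{A,B}]_{K}$ of a normalized bump, and the exponential gains are summable over the fibre, so $\tilde\theta_{K}$ is a normalized $K$-dilate. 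Fourth, I would check that $\tilde\theta_{K}$ has weak cancellation relative to $K$ with a uniform parameter $\epsilon'>0$: again this is read off from Corollary \ref{Cor7.4}, since every term either already carries a derivative $\partial_{\pi_{\mathcal A}(\ell)}$ / $\partial_{\pi_{\mathcal B}(l)}$ giving cancellation in the corresponding $\mathcal F_{0}$-block, or carries a gain $2^{-\epsilon(i_{\ell+1}-i_{\ell})}$ which after re-expressing in terms of the $\mathcal F_{0}$-indices is a gain of the form demanded in Definition \ref{Def1.14a2} (one uses Proposition \ref{Prop5.7ijn} to pass between "smallness of integrals" and "existence of primitives"). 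Finally, Theorem \ref{Lemma5.4zw} applied to $\sum_{K}[\tilde\theta_{K}]_{K}$ shows this converges to a flag kernel adapted to $\mathcal F_{0}$; summing over the finitely many classes of pairs $(I,J)$ and the finitely many coarse auxiliary kernels, $T_{\K_{2}}\circ T_{\K_{1}}$ is convolution with a finite sum of flag kernels for $\mathcal F_{0}$, hence — since a finite sum of flag kernels for a fixed flag is again a flag kernel for that flag — with a single flag kernel for $\mathcal F_{0}$.

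\textbf{Main obstacle.} The genuinely delicate point is the bookkeeping in the second and fourth steps: correctly partitioning the pairs $(I,J)$ so that on each piece the merged multi-index is monotone for $\mathcal F_{0}$, and then verifying that the various exponential gains produced by Lemma \ref{Lemma4.3qw} — which are expressed in terms of consecutive differences of $I$ (for $\mathcal F_{1}$-blocks) and of $J$ (for $\mathcal F_{2}$-blocks) and of differences $|i_{\ell}-j_{\ell}|$ — really do translate into gains of the shape $2^{-\epsilon'(k_{s+1}-k_{s})}$ required for weak cancellation relative to the $\mathcal F_{0}$-blocking of $K$. In the step-$2$ case this is the passage illustrated by equation (\ref{Eqn1.6}), but in higher step one must track how a block of $\mathcal F_{0}$ that sits inside a single block of $\mathcal F_{1}$ but is split across several blocks of $\mathcal F_{2}$ (or vice versa) acquires its cancellation, and handle the "equality" cases $i_{\ell}=i_{\ell+1}$ (respectively $j_{\ell}=j_{\ell+1}$) which correspond to terms living on coarser sub-flags. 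I expect this combinatorial reduction, rather than any single analytic estimate, to be the crux of the argument, and it is presumably why the authors devote Sections \ref{Example} and \ref{FlagAlgebra} to worked examples and defer the general proof (Theorem \ref{Theorem8.4}) to a later section.
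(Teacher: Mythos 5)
Your outline follows the paper's own proof essentially step for step: the $L^{2}$ theory to make sense of the composition, the reduction to double dyadic sums of strongly cancelling bumps, the partition of the pairs $(I,J)$ into finitely many shuffle classes on which the merged index $p_{\mathcal A_{1}}(I)\vee p_{\mathcal A_{2}}(J)$ has a fixed block structure, the summation over the fibre of each fixed scale $K$ using the exponential gains in the free indices, and the final synthesis via weak cancellation and Theorem \ref{Lemma5.4zw}. Two small calibrations against the paper: for two \emph{different} flags the fibre summation rests on Lemma \ref{Lemma4.3qw} together with the free-index analysis of Proposition \ref{Lemma5.5gs}, not on Corollary \ref{Cor7.4} (which assumes both factors have strong cancellation for the \emph{same} decomposition), and each shuffle class actually yields a flag kernel for a flag $\mathcal F_{\mu}$ that may be strictly coarser than $\mathcal F_{0}$, which one then observes is a fortiori a flag kernel for $\mathcal F_{0}$.
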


In order to study the composition $T_{\K_{2}}\circ T_{\K_{1}}$, we want to relate it to the decompositions of $\K_{1}$ and $\K_{2}$ as sums of dilates of normalized bump functions. According to Corollary \ref{Cor3.2xz}, we can assume that  the flags $\mathcal F_{1}$ and $\mathcal F_{2}$ are given by
\begin{align*}
\F_{1}:\qquad (0) &\subseteq \R^{a_{n}}\subseteq \R^{a_{n-1}}\oplus\R^{a_{n}}\subseteq \cdots \subseteq \R^{a_{2}}\oplus\cdots \oplus\R^{a_{n}}\subseteq \R^{N},\\
\F_{2}:\qquad (0) &\subseteq \R^{b_{m}}\subseteq \R^{b_{m-1}}\oplus\R^{b_{m}}\subseteq \cdots \subseteq \R^{b_{2}}\oplus\cdots \oplus\R^{b_{m}}\subseteq \R^{N},
\end{align*}
and the flag kernels are given by $\K_{1} = \sum_{I\in \E_{n}}[\varphi^{I}]_{I}$ and $\K_{2} = \sum_{J\in \E_{m}}[\psi^{J}]_{J}$, where $\{\varphi^{I}\,\vert\, I\in \E_{n}\}$ is a uniformly bounded family of compactly supported functions with strong cancellation relative to the flag $\mathcal F_{1}$ and $\{\psi^{J}\,\vert\, J\in \E_{m}\}$ is a uniformly bounded family of compactly supported functions with strong cancellation relative to the flag $\mathcal F_{2}$.

If $\phi,\theta\in \mathcal S(\R^{N})$, then 
\begin{align*}
T_{\K_{1}}[\phi] &= \lim_{F\nearrow \E_{n}}\sum_{I\in F}\phi * [\varphi^{I}]_{I}, \quad  \text{and}\\
T_{\K_{2}}[\theta] &= \lim_{G\nearrow \E_{m}}\sum_{J\in G}\theta * [\psi^{J}]_{J},
\end{align*}
where the limits are in $L^{2}(\R^{N})$ and are taken over finite subsets $F\subset \E_{n}$ and $G\subset \E_{m}$.   For every fixed finite set $F\subset \E_{n}$, the function $\sum_{I\in F}\phi * [\varphi^{I}]_{I}\in \mathcal S(\R^{N})$. Since $T_{\K_{2}}$ is a continuous mapping from $L^{2}(\R^{N})$ to itself, it follows that
\begin{align*}
T_{\K_{2}}\big(T_{\K_{1}}[\phi]\big) &= \lim_{F\nearrow \E_{n}}T_{\K_{2}}\big(\sum_{I\in F}\phi*[\varphi^{I}]_{I}\big)
=\lim_{F\nearrow \E_{n}}\lim_{G\nearrow \E_{m}}\sum_{I\in F}\sum_{J\in G}\phi*[\varphi^{I}]_{I}*[\psi^{J}]_{J}\\
&=
\lim_{F\nearrow \E_{n}}\lim_{G\nearrow \E_{m}}\,\phi*\Big[\sum_{I\in F}\sum_{J\in G}[\varphi^{I}]_{I}*[\psi^{J}]_{J}\Big].
\end{align*}
Thus in order to prove Theorem \ref{Thm5.3dc}, we must study the finite sums $\sum_{I\in F}\sum_{J\in G}[\varphi^{I}]_{I}*[\psi^{J}]_{J}$, and show that these converge in the sense of distributions to a finite sum of flag kernels, each adapted to a flag which is equal to or coarser than $\mathcal F_{0}$. 

\smallskip

Since the general situation is rather complicated, we first present an example which may help understand the difficulties.

\subsection{An Example} \label{Example}\quad

\smallskip

Suppose that  we are working in $\R^{5}$ with the family of dilations given by
\begin{equation}
\delta\cdot \x=\delta\cdot (x_{1},\,x_{2},\,x_{3},\,x_{4},\,x_{5}) =  (\delta^{d_{1}}x_{1},\delta^{d_{2}}x_{2},\delta^{d_{3}}x_{3},\delta^{d_{4}}x_{4},\delta^{d_{5}}x_{5})
\end{equation}
with $d_{1}\leq d_{2}\leq d_{3}\leq d_{4}\leq d_{5}$. The standard flags on $\R^{5}$ correspond to partitions of $N=5$. Consider two flags $\F_{1}$ and $\F_{2}$ corresponding to the partitions $\mathfrak A=(2,3)$, where we write $\R^{5}=\R^{2}\oplus\R^{3}$, and $\mathfrak B=(1,2,2)$, where we write $\R^{5}=\R\oplus\R^{2}\oplus\R^{2}$. Thus
\begin{align*}
\text{$\mathcal F_{1}$ is the flag} \qquad (0)&\subset \{x_{1}=x_{2}=0\}\cong \R^{3}\subset \R^{5},\\
\text{$\mathcal F_{2}$ is the flag} \qquad (0)&\subset \{x_{1}=x_{2}=x_{3}=0\}\cong\R^{2}\subset \{x_{1}=0\}\cong\R^{2}\oplus\R^{2}\subset \R^{5}.
\end{align*}
We are given flag kernels $\K_{1}=\sum_{I\in \E_{2}}[\varphi^{I}]_{I}$ and $\K_{2}= \sum_{J\in \E_{3}}[\psi^{J}]_{J}$ adapted to these two flags. We then want to study the infinite sum 
\begin{equation}\label{6.1q}
\sum_{I\in \E_{2}}\sum_{J\in \E_{3}}[\varphi^{I}]_{I}*[\psi^{J}]_{J}
\end{equation} arising from the composition of the operators $T_{\K_{1}}\circ T_{\K_{2}}$. 

Suppose $I=(i_{1},i_{2})\in \E_{2}$ and $J=(j_{1},j_{2},j_{3})\in \E_{3}$, so that $i_{1}\leq i_{2}$, and $j_{1}\leq j_{2}\leq j_{3}$. If $\varphi^{I}, \psi^{J}\in \mathcal C^{\infty}_{0}(\R^{N})$, we have
\begin{align*}
[\varphi^{I}]_{I}(\x)&= 2^{-i_{1}(d_{1}+d_{2})-i_{2}(d_{3}+d_{4}+d_{5})}\,\,
\varphi^{I}(2^{-d_{1}i_{1}}x_{1},2^{-d_{2}i_{1}}x_{2},2^{-d_{3}i_{2}}x_{3},2^{-d_{4}i_{2}}x_{4},2^{-d_{5}i_{2}}x_{5})\\
[\psi^{J}]_{J}(\x)&=
2^{-j_{1}d_{1}-j_{2}(d_{2}+d_{3})-j_{3}(d_{4}+d_{5})}\,
\psi^{J}(2^{-d_{1}j_{1}}x_{1},2^{-d_{2}j_{2}}x_{2},2^{-d_{3}j_{2}}x_{3},2^{-d_{4}j_{3}}x_{4},2^{-d_{5}j_{3}}x_{5}).
\end{align*}
Note that dilation by $I=(i_{1},i_{2})$ on $\R^{2}\oplus\R^{3}$ is the same as dilation by the  $5$-tuple $\tilde I= (i_{1},i_{1},i_{2},i_{2},i_{2})$ on $\R^{5}$, and dilation by $J=(j_{1}, j_{2}, j_{3})$ on $\R\oplus\R^{2}\oplus\R^{2}$ is the same as dilation by the $5$-tuple $\tilde J=(j_{1},j_{2},j_{2}, j_{3},j_{3})$ on $\R^{5}$. Also note that we can reconstruct $I$ and $J$ from $\tilde I$ and $\tilde J$ by consolidating repeated indices. By Lemma \ref{Lemma5.1}, the convolution $[\varphi^{I}]_{I}*[\psi^{J}]_{J}$ is equal to  $[\theta^{I,J}]_{K}$ where $\theta^{I,J}\in \mathcal C^{\infty}_{0}(\R^{5})$, and where 
\begin{equation}\label{Equation6.4rt}
\begin{aligned}
K&=(k_{1},k_{2},k_{3},k_{4},k_{5})=\tilde I\vee \tilde J =(i_{1},i_{1},i_{2},i_{2},i_{2}) \vee (j_{1}, j_{2}, j_{2}, j_{3}, j_{3})\\
&= (\max\{i_{1}, j_{1}\}, \max\{i_{1}, j_{2}\}, \max\{i_{2},j_{2}\}, \max\{i_{2},j_{3}\}, \max\{i_{2},j_{3}\}).
\end{aligned}
\end{equation}

We must consider the sum in (\ref{6.1q}) of the convolutions $[\varphi^{I}]_{I}*[\psi^{J}]_{J}$, taken over all $I\in \E_{2}$ and $J\in \E_{3}$.  Each pair $(I,J)\in \E_{2}\times \E_{3}$ gives rise to a $5$-tuple $K\in \E_{5}$. 
However, not all elements of $\E_{5}$ actually arise in this sum. (For example, it is clear from (\ref{Equation6.4rt}) that we must have $k_{4}=k_{5}$, so the 5-tuple $(1, 2, 3, 4, 5)$  does not arise). Let $\E( {\mathfrak A} , {\mathfrak B} )$ denote the set of all $5$-tuples $K=(k_{1}, k_{2}, k_{3},k_{4},k_{5})$ that do arise as in (\ref{Equation6.4rt}). (The notation reflects the fact that this set of $5$-tuples is determined by the partitions $ {\mathfrak A} =(2,3)$ and $ {\mathfrak B} =(1,2,2)$ of $N=5$).  Then for each $K\in \E( {\mathfrak A} , {\mathfrak B} )$, let $\E(K)$ denote the set of pairs $(I,J)\in \E_{2}\times \E_{3}$ which give rise to the $5$-tuple $K$. Once $K\in \E( {\mathfrak A} , {\mathfrak B} )$ is fixed, each of the terms in the inner infinite sum on the right-hand side of (\ref{Equation6.4by}) is the $K$ dilate of a normalized bump function  $\theta^{I,J}$. Then we can write the sum in  (\ref{6.1q}) as
\begin{align}\label{Equation6.4by}
\sum_{I\in \E_{2}}\sum_{J\in \E_{3}}[\varphi^{I}]_{I}*[\psi^{J}]_{J}
&=
\sum_{K\in \E( {\mathfrak A} , {\mathfrak B} )}\,\sum_{I,J\in \E(K)}[\varphi^{I}]_{I}*[\psi^{J}]_{J}
=
\sum_{K\in \E( {\mathfrak A} , {\mathfrak B} )}\,\sum_{I,J\in \E(K)}[\theta^{I,J}]_{K}.
\end{align}

We  will need to show that the infinite inner sum $\sum_{I,J\in \E(K)}[\theta^{I,J}]_{K}$ actually converges and is the $K$ dilate of a normalized bump function. However, this is not enough to give the right description of the sum in (\ref{6.1q}) as a flag kernel. In the outer sum on the right-hand side of (\ref{Equation6.4by}), the $5$-tuple $K$ runs over $\E( {\mathfrak A} , {\mathfrak B} )$ and  \emph{not}
over all of $\E_{5}$. We still need to partition $\E( {\mathfrak A} , {\mathfrak B} )$ into a finite number of subsets based on which indices in $K$ are repeated. To make this clear, we further analyze $\E( {\mathfrak A} , {\mathfrak B} )$.

As one sees from (\ref{Equation6.4rt}), the  $5$-tuple $K= \tilde I\vee \tilde J$  depends not only on the tuples $I=\{i_{1},i_{2}\}$ and $J=\{j_{1},j_{2},j_{3}\}$, but also on the \textit{ordering} of the larger set consisting of $\{i_{1},i_{2},j_{1},j_{2},j_{3}\}$. We know that $i_{1}\leq i_{2}$ and $j_{1}\leq j_{2}\leq j_{3}$, but this does not determine the ordering of the larger set. Such orderings are in one-to-one correspondence with decompositions of the set $\{1,2,3,4,5\}$ into two disjoint subsets of sizes $2$ and $3$, where elements of the first set are indices from $I$, and elements of the second set are indices from $J$. Thus there are $\binom{5}{2}=\binom{5}{3}=10$ such orderings. A description of these is given in the following Table 1:

\smallskip

\centerline{\textbf{Table 1:} \quad Decomposition of $\E( {\mathfrak A} , {\mathfrak B} )$}

\smallskip

{\scriptsize
\begin{center}
\begin{tabular}{|c|c|c|c|c|}
\hline
Decomposition & Ordering & $K$ & New Decomposition & Free\\\hline &&&&\\
$ \E( {\mathfrak A} , {\mathfrak B} )_{1}=\{1,2\} \cup \{3,4,5\}$ &$i_{1}\leq i_{2}\leq j_{1}\leq j_{2}\leq j_{3}$&$\{j_{1},j_{2},j_{2},j_{3},j_{3}\}$&$ \R\oplus\R^{2}\oplus\R^{2}$ &$i_{1}, i_{2}$\\&&&&\\ \hline&&&&\\
$ \E( {\mathfrak A} , {\mathfrak B} )_{2}=\{1,3\} \cup \{2,4,5\}$ &$i_{1}\leq j_{1} < i_{2}\leq j_{2}\leq j_{3}$&$\{j_{1},j_{2},j_{2},j_{3},j_{3}\}$&$ \R\oplus\R^{2}\oplus\R^{2}$ &$i_{1}, i_{2}$\\&&&&\\\hline&&&&\\
$ \E( {\mathfrak A} , {\mathfrak B} )_{3}=\{1,4\} \cup \{2, 3,5\}$ &$i_{1}\leq j_{1}\leq j_{2}< i_{2}\leq j_{3}$&$\{j_{1},j_{2},i_{2},j_{3},j_{3}\}$&$ \R\oplus\R\oplus\R\oplus\R^{2}$ &$i_{1}$\\&&&&\\\hline&&&&\\
$ \E( {\mathfrak A} , {\mathfrak B} )_{4}=\{1,5\} \cup \{2,3,4\}$ &$i_{1}\leq j_{1}\leq j_{2}\leq j_{3} < i_{2}$&$\{j_{1},j_{2},i_{2},i_{2},i_{2}\}$&$\R\oplus\R\oplus\R^{3}$&$i_{1}, j_{3}$\\&&&&\\\hline&&&&\\
$\E( {\mathfrak A} , {\mathfrak B} )_{5}= \{2,3\} \cup \{1,4,5\}$ &$j_{1}< i_{1}\leq i_{2}\leq j_{2}\leq j_{3}$&$\{i_{1},j_{2},j_{2},j_{3},j_{3}\}$&$ \R\oplus\R^{2}\oplus\R^{2}$ &$i_{2}, j_{1}$\\&&&&\\\hline&&&&\\
$ \E( {\mathfrak A} , {\mathfrak B} )_{6}=\{2,4\} \cup \{1,3,5\}$ &$j_{1}< i_{1}\leq j_{2}< i_{2}\leq j_{3}$&$\{i_{1}, j_{2}, i_{2}, j_{3},j_{3}\}$&$ \R\oplus\R\oplus\R\oplus\R^{2}$&$j_{1}$\\&&&&\\\hline&&&&\\
$\E( {\mathfrak A} , {\mathfrak B} )_{7}= \{2,5\} \cup \{1,3,4\}$ &$j_{1}< i_{1}\leq j_{2}\leq j_{3}<i_{2}$&$\{i_{1}, j_{2}, i_{2}, i_{2},i_{2}\}$&$\R\oplus\R\oplus\R^{3}$&$j_{1}, j_{3}$\\&&&&\\\hline&&&&\\
$\E( {\mathfrak A} , {\mathfrak B} )_{8}= \{3,4\} \cup \{1,2,5\}$ &$j_{1}\leq j_{2}<i_{1}\leq i_{2}\leq j_{3}$&$\{i_{1}, i_{1}, i_{2}, j_{3}, j_{3}\}$&$\R^{2}\oplus\R\oplus\R^{2}$&$j_{1}, j_{2}$\\&&&&\\\hline&&&&\\
$\E( {\mathfrak A} , {\mathfrak B} )_{9}= \{3,5\} \cup \{1,2,4\}$&$j_{1}\leq j_{2}<i_{1}\leq j_{3}<i_{2}$&$\{i_{1}, i_{1}, i_{2},i_{2},i_{2}\}$&$\R^{2}\oplus\R^{3}$&$j_{1}, j_{2}, j_{3}$\\&&&&\\\hline&&&&\\
$\E( {\mathfrak A} , {\mathfrak B} )_{10}= \{4,5\} \cup \{1,2,3\}$ &$j_{1}\leq j_{2}\leq j_{3}<i_{1}\leq i_{2}$&$\{i_{1}, i_{1}, i_{2},i_{2},i_{2}\}$&$\R^{2}\oplus\R^{3}$&$j_{1}, j_{2}, j_{3}$\\&&&&\\\hline
\end{tabular}
\end{center}}

\smallskip

\noindent In the first column, we have given the decomposition of $\{1,2,3,4,5\}$ into two subsets, the first with two elements and the second with three. This then gives an ordering of the elements in the set $\{i_{1},i_{2}, i_{3}, i_{4}, i_{5}\}$ which is given in the  second column.  The third column gives the value of the 5-tuple $K = \tilde I\vee\tilde J$. In this tuple, certain entries can be repeated, and this corresponds to a new decomposition of $\R^{5}$.  The fourth column gives this new decomposition of $\R^{5}$ dictated by the repeated indices of $K$. Finally, in each of the decompositions, certain of the indices from $I$ or $J$ appear in the 5-tuple $K$. In the sixth column of Table 1, we list the `free'-variables which do \emph{not} appear in $K$ are listed. It is precisely these free variables which appear in the inner sum on the right-hand side of equation (\ref{Equation6.4by}).


%
Table 1 shows that if $K\in \E( {\mathfrak A} , {\mathfrak B} )$, then $K$ takes one of five forms: 
{
\begin{equation}\label{5.6pk}
\begin{aligned}
&&(k_{1},k_{1},k_{2},k_{2},k_{2}) &&&\text{(decompositions 9 and 10)}&&&&\text{leading to the flag $\R^{2}\oplus\R^{3}$,}\\
&&(k_{1},k_{2},k_{2},k_{3},k_{3}) &&&\text{(decompositions 1, 2, and 5)}&&&&\text{leading to the flag $\R\oplus\R^{2}\oplus\R^{2}$,}\\
&&(k_{1},k_{2},k_{3},k_{3},k_{3}) &&&\text{(decompositions 4 and 7)}&&&&\text{leading to the flag $\R\oplus\R\oplus\R^{3}$,}\\
&&(k_{1},k_{1},k_{2},k_{3},k_{3}) &&&\text{(decomposition 8)}&&&&\text{leading to the flag $\R^{2}\oplus\R\oplus\R^{2}$,}\\
&&(k_{1},k_{2},k_{3},k_{4},k_{4}) &&&\text{(decompositions 3 and 6)}&&&&\text{leading to the flag $\R\oplus\R\oplus\R\oplus\R^{2}$.}
\end{aligned}
\end{equation}}
The outer sum on the right-hand side of (\ref{Equation6.4by}) thus splits into five separate sums:
\begin{equation*}
\begin{split}
\sum_{I\in \E_{2}}\sum_{J\in \E_{3}}[\varphi^{I}]_{I}*[\psi^{J}]_{J}
&=
\sum_{K\in \E_{9}\cup \E_{10}}\,\sum_{I,J\in \widetilde \E(K)}[\varphi^{I}]_{I}*[\psi^{J}]_{J}+
\sum_{K\in \E_{1}\cup \E_{2}\cup \E_{5}}\,\sum_{I,J\in \widetilde \E(K)}[\varphi^{I}]_{I}*[\psi^{J}]_{J}\\
&\quad\quad
+\sum_{K\in \E_{4}\cup \E_{7}}\,\sum_{I,J\in \widetilde \E(K)}[\varphi^{I}]_{I}*[\psi^{J}]_{J}+\sum_{K\in E_{8}}\,\sum_{I,J\in \widetilde \E(K)}[\varphi^{I}]_{I}*[\psi^{J}]_{J}\\
&\quad\quad\quad\quad
+\sum_{K\in E_{3}\cup E_{6}}\,\sum_{I,J\in \widetilde \E(K)}[\varphi^{I}]_{I}*[\psi^{J}]_{J}.
\end{split}
\end{equation*}
Our object is to show that these five sums are flag kernels, each adapted to one of the five flags listed on the right-hand side of (\ref{5.6pk}). To see this, we must show that in each case, the inner infinite sum converges, and has weak cancellation. Let us see why this happens in one case.

\smallskip

\noindent \textbf{Case 1: $K\in E_{9}\cup E_{10}$}.

\smallskip

In this case, $K= \{i_{1}, i_{1}, i_{2}, i_{2}, i_{2}\}$ is fixed, and the inner sum $\sum_{I,J\in \widetilde \E(K)}[\varphi^{I}]_{I}*[\psi^{J}]_{J}$ is over the free variables $\{j_{1}, j_{2}, j_{3}\}$ which satisfy the inequalities
\begin{equation*}
j_{1}\leq j_{2}<i_{1}\leq j_{3}<i_{2} \qquad\text{or}\qquad j_{1}\leq j_{2}\leq j_{3}<i_{1}\leq i_{2}.
\end{equation*}
In order to apply Theorem \ref{Lemma5.4zw}, we need to check that the sum converges to the $K$-dilate of a normalized bump function $\theta^{I,J}$, and moreover that $\theta^{I,J}$  has weak cancellation relative to the decomposition $\R^{2}\oplus \R^{3}$.

To show that the sum over the free variables $\{j_{1}, j_{2}, j_{3}\}$ converges, we want to show that each term in the sum can be bounded by $2^{-\epsilon[(l_{1}-j_{1})+(l_{2}-j_{2})+(l_{3}-j_{3})]}$ where $l_{1}, l_{2}, l_{3} \in \{i_{2}, i_{2}\}$, and $j_{1}\leq l_{1}$, $j_{2}\leq l_{2}$, and $j_{3}\leq l_{3}$. This will follow because, by hypothesis, $\psi^{J}$ has strong cancellation relative to the decomposition $\R\oplus\R^{2}\oplus\R^{2}$. Thus $\psi^{J}$ has cancellation in $x_{1}$, in either $x_{2}$ or $x_{3}$, and in either $x_{4}$ or $x_{5}$. In the variable $x_{1}$, $j_{1}<i_{1}$, and so by Lemma \ref{Lemma4.3qw}, we get a gain of $2^{-\epsilon|i_{1}-j_{1}|}\leq 2^{-\epsilon|i_{1}-j_{2}|}$. If there is cancellation in $x_{2}$, we have $j_{2}<i_{1}$, so we get a gain of $2^{-\epsilon|i_{1}-j_{2}|}$, while if there is cancellation in $x_{3}$, we have $j_{3}\leq i_{2}$ and so we get a gain of $2^{-\epsilon|i_{2}-j_{3}|}$. Finally, if there is cancellation in $x_{4}$ or $x_{5}$, we have $j_{3}<i_{2}$, and so we get a gain of $2^{-\epsilon|i_{2}-j_{3}|}$. Taking the best of these estimates, we see that the size of $[\varphi^{I}]_{I}*[\psi^{J}]_{J}$ is dominated by a constant times $2^{-\epsilon[|i_{1}-j_{1}|+|i_{1}-j_{2}|+|i_{2}-j_{3}|]}$. Thus in this case we can take $l_{1}= i_{1}$, $l_{2}=i_{1}$, and $l_{3}=i_{2}$.

The key points in this convergence argument are the following:

\begin{enumerate}[(a)]

\item If $f_{s}$ is a free variable, it does not appear in $K$. Since the entries  $k_{s}\in K$ are the maxima of the corresponding entries of $i_{s}\in \tilde I$ and $j_{s}\in \tilde J$, the free variable must satisfy $f_{s}\leq k_{s}$. 

\item Since we will sum over the free variable $f_{s}$, but the variable $k_{s}\in K$ is fixed, it suffices to show that there is a gain $2^{-\epsilon(k_{s}-f_{s})}$. 

\item The function $\varphi^{I}$ or $\psi^{J}$ with the free variable $f_{s}$ may not necessarily have cancellation in the variable $x_{s}$. (For example, the free variable $j_{2}$ comes from the function $\psi^{(j_{1},j_{2},j_{2},j_{3},j_{3})}$, and we only know that this function has cancellation in the variable $x_{2}$ \emph{or} the variable $x_{3}$). However, if there is no cancellation in the free variable, there is a \emph{smaller} free variable where there is cancellation, and where the corresponding element of $K$ is the same. (In our example, $\psi^{(j_{2},j_{2},j_{3})}$ has cancellation in $x_{1}$, and $k_{1}=k_{2}$).

\end{enumerate}

To see that the sum of the terms $[\varphi^{I}]_{I}*[\psi^{J}]_{J}$ has weak cancellation relative to the decomposition $\R^{2}\oplus \R^{3}$, we again use Lemma \ref{Lemma4.3qw}. We only need to observe that  either $\varphi^{I}$ or $\psi^{J}$ has cancellation in one of the variables $\{x_{1}, x_{2}\}$, and also that either $\varphi^{I}$ or $\psi^{J}$ has cancellation in one of the variables $\{x_{3}, x_{4}, x_{5}\}$.  But this is clear: for example,  $\psi^{J}$ has strong cancellation relative to the decomposition $\R\oplus\R^{2}\oplus\R^{2}$, and so has cancellation in $x_{1}$, and  $\varphi^{I}$ has strong cancellation relative to the decomposition $\R^{2}\oplus\R^{3}$, and so has cancellation in one of the variables $\{x_{3}, x_{4}, x_{5}\}$. 

\subsection{The general decomposition}\label{General}\quad

\smallskip

Now let us return to the general situation. Suppose $\mathcal F_{1}$ and $\mathcal F_{2}$ are standard flags arising from two (in general different) decompositions we label as $(\mathfrak A)$ and $(\mathfrak B)$:
\begin{align*}
(\mathfrak A):\quad &\R^{N}= \R^{a_{1}}\oplus \cdots \oplus \R^{a_{n}},\\
(\mathfrak B):\quad&\R^{N}= \R^{b_{1}}\,\oplus \cdots \oplus \R^{b_{m}}.
\end{align*}  
Let $\mathcal K_{1}$ and $\K_{2}$ be flag kernels adapted to the flag $\mathcal F_{1}$ and $\mathcal F_{2}$. We only need to consider the parts of these kernels given by sums of dilates of normalized bump functions with strong cancellation. (That is, for each kernel we focus on the part called $\K_{0}$ in Theorem \ref{Lemma2.3} and disregard the other terms since they correspond to coarser flags). Thus we  can write 
\begin{align}\label{Eqn8.7wer}
\K_{1}&= \sum_{I\in \mathcal E_{n}}[\varphi^{I}]_{I},&
\K_{2}&= \sum_{J\in \mathcal E_{m}}[\psi^{J}]_{J},
\end{align}
where each $\varphi^{I}$ has strong cancellation relative to the decomposition $\mathfrak A$ and each $\psi^{J}$ has strong cancellation relative to the decomposition $\mathfrak B$. Let
\begin{align*}
\K_{1}^{F}&= \sum_{I\in F\subset \mathcal E_{n}}[\varphi^{I}]_{I}, & \K_{2}^{G}&= \sum_{J\in G\subset \mathcal E_{m}}[\psi^{J}]_{J},
\end{align*}
where $F \subset \mathcal E_{n}$ and $G\subset \mathcal E_{m}$ are finite subsets. We study the double sum
\begin{equation}\label{E5.6gh}
\K_{1}^{F}*\K_{2}^{G}=\sum_{I\in F\subset \mathcal E_{n}}\sum_{J\in G\subset \in \mathcal E_{m}}[\varphi^{I}]_{I}*[\psi^{J}]_{J}. 
\end{equation} 
Let $\F_{0}$ be the coarsest flag which is finer than both $\F_{1}$ and $F_{2}$. 

\begin{theorem} \label{Theorem8.4}
Let $\K_{1}$ and $\K_{2}$ be flag kernels given in (\ref{Eqn8.7wer}). Then
\begin{equation*}
 \lim_{\substack{F \nearrow \mathcal E_{n}\\G\nearrow \mathcal E_{m}}}\K_{1}^{F}*\K_{2}^{G}= \K_{1}*\K_{2}
\end{equation*} converges in the sense of distributions to a finite sum of flag kernels $\sum\K_{\mu}$, each of which is adapted to a flag $\F_{\mu}$ which is equal to or coarser than the flag $\F_{0}$.
\end{theorem}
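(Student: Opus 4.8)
The plan is to mimic the five-case analysis of the $\R^5$ example in complete generality. First I would set up the combinatorial framework: for $I=(i_1,\dots,i_n)\in\mathcal E_n$ and $J=(j_1,\dots,j_m)\in\mathcal E_m$, use the maps $p_{\mathcal A}:\mathcal E_n\to E_N$ and $p_{\mathcal B}:\mathcal E_m\to E_N$ of Section \ref{Section4.1} to form the $N$-tuples $\tilde I=p_{\mathcal A}(I)$ and $\tilde J=p_{\mathcal B}(J)$, and set $K=\tilde I\vee\tilde J\in E_N$. By Lemma \ref{Lemma5.1}, $[\varphi^I]_I*[\psi^J]_J=[\theta^{I,J}]_K$ for a bump function $\theta^{I,J}$ normalized relative to $\varphi^I,\psi^J$. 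The first real step is to partition the pairs $(I,J)$ according to the relative ordering of the entries of $\tilde I$ and $\tilde J$, i.e.\ according to which of the $N$ slots of $K$ come from $\tilde I$ and which from $\tilde J$. Each such ordering $\omega$ determines (by consolidating the repeated entries forced in $K$) a partition $\mathcal C=\mathcal C(\omega)$ of $N$ that is coarser than or equal to $\mathcal A\wedge\mathcal B$ (which is the partition whose flag is $\F_0$); this is exactly the ``New Decomposition'' column of Table 1. So we write
\begin{equation*}
\K_1^F*\K_2^G=\sum_{\omega}\ \sum_{(I,J)\in F\times G\,:\,\mathrm{ord}(I,J)=\omega}[\theta^{I,J}]_{K(I,J)},
\end{equation*}
a finite sum over the finitely many orderings $\omega$, and for each $\omega$ we further group the inner sum by the value $L\in\mathcal E_{\mathcal C(\omega)}$ of the consolidated tuple, i.e.\ $\sum_\omega\sum_L\big[\sum_{(I,J):\,L(I,J)=L}\theta^{I,J}\big]_L$.

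The second step is the convergence and normalization of the inner sum over the ``free'' variables (those entries of $\tilde I,\tilde J$ not appearing in $K$). Here I would invoke Corollary \ref{Cor7.4}: since $\varphi^I$ has strong cancellation relative to $\mathcal A$ and $\psi^J$ has strong cancellation relative to $\mathcal B$, the convolution $[\varphi^I]_I*[\psi^J]_J$ equals a finite sum of terms carrying gains $\prod 2^{-\epsilon(i_{\ell+1}-i_\ell)}\prod 2^{-\epsilon(j_{\ell'+1}-j_{\ell'})}\prod 2^{-\epsilon|i_\ell-j_\ell|}$ (using the appropriate index maps $\sigma,\tau$ relating coordinates to the two decompositions). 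The three ``key points'' (a)--(c) from Case 1 of the example generalize verbatim: a free variable $f_s$ always satisfies $f_s\le k_s$; and even when the relevant bump function lacks cancellation in the exact coordinate indexed by $f_s$, strong cancellation relative to its decomposition guarantees cancellation in some coordinate whose index is $\le f_s$ and whose corresponding $K$-entry equals $k_s$ — so one extracts a geometric gain $2^{-\epsilon(k_s-f_s)}$ for every free variable. Summing the geometric series shows $\sum_{(I,J):\,L(I,J)=L}\theta^{I,J}$ converges in $\mathcal S(\R^N)$ to a function $\Theta^L_\omega$ normalized relative to the data, uniformly in $F,G$.

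The third step is to check that each $\Theta^L_\omega$ has \emph{weak} cancellation relative to the flag $\F_{\mathcal C(\omega)}$, so that Theorem \ref{Lemma5.4zw} applies and $\sum_L[\Theta^L_\omega]_L$ defines a flag kernel adapted to $\F_{\mathcal C(\omega)}$ (coarser than or equal to $\F_0$). For this I would again appeal to Lemma \ref{Lemma4.3qw}: for each factor $\R^{c_r}$ of $\mathcal C(\omega)$, at least one of $\varphi^I,\psi^J$ has cancellation in one of the coordinates lying in $\R^{c_r}$ (because $\mathcal C$ is coarser than both $\mathcal A$ and $\mathcal B$), and Lemma \ref{Lemma4.3qw} converts each such cancellation either into a primitive (Euclidean derivative) in a coordinate of $\R^{c_r}$ or into a gain of the form $2^{-\epsilon(l_{r+1}-l_r)}$ in the $L$-variable — which is precisely the defining form of weak cancellation in Definition \ref{Def1.14a2} (one should also verify the derivatives can be arranged to hit the top factor correctly, using Remarks \ref{Remarks4.9}). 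Finally I would take the limit $F\nearrow\mathcal E_n$, $G\nearrow\mathcal E_m$: uniform boundedness of the finitely many flag kernels $\sum_L[\Theta^L_\omega]_L$ together with part (\ref{Thm3.7(2)}) of Theorem \ref{Lemma5.4zw} gives convergence in $\mathcal S'$, and $\K_1*\K_2=\sum_\omega\K_\omega$ is the asserted finite sum of flag kernels. I expect the main obstacle to be the bookkeeping in step two: making precise, for an arbitrary ordering $\omega$, the claim that a free variable without its ``own'' cancellation always inherits cancellation from a strictly smaller free variable at the same consolidated scale — the statement is clear in each concrete case but needs a clean general formulation in terms of $\sigma,\tau$ and the structure of $\mathcal C(\omega)$.
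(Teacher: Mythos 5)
Your proposal follows essentially the same route as the paper: the shuffle/ordering decomposition of $\mathcal E_{n}\times\mathcal E_{m}$ (the paper's $\mathfrak P(n,m)$ and $\E_{n,m}(\mu)$), the consolidation of $K=p_{\mathfrak A}(I)\vee p_{\mathfrak B}(J)$ into a coarser decomposition depending only on the ordering, geometric summation over the free indices via the gains of Lemma \ref{Lemma4.3qw}, and weak cancellation of the resulting $\Theta^{K}$ feeding into Theorem \ref{Lemma5.4zw}. The bookkeeping point you flag at the end is exactly what the paper's Propositions \ref{Prop8.5lkj} and \ref{Lemma5.5gs} formalize, so your outline is correct and complete in approach.
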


\smallskip

Before outlining the proof, we review our notation. If $\x\in \R^{N}$, we can write $\x=(\x_{1}', \ldots, \x_{n}')$ with $\x_{j}'=(x_{p_{j}'}, \ldots, x_{q_{j}'})\in \R^{a_{j}}$, or $\x=(\x_{1}'', \ldots, \x_{m}'')$ with $\x_{k}''=(x_{p_{k}''}, \ldots, x_{q_{k}''})\in \R^{b_{k}}$. We let $J'_{j}= \{p_{j}', \ldots, q_{j}'\}$ and $J_{k}''=\{p_{k}'', \ldots, q_{k}''\}$ so that $\{1, \ldots, N\} = \bigcup_{j=1}^{n}J_{j}' = \bigcup_{k=1}^{m}J_{k}''$. Define 
\begin{align*}
&&&&\sigma_{\mathfrak A}&:\{1, \ldots,N\}\to \{1, \ldots, n\}&&\text{so that} &l&\in J_{\sigma_{\mathfrak A}(l)}'\,\,\text{for}\,\,1 \leq l \leq N;&&&&\\
&&&&\tau_{\mathfrak B}&:\{1, \ldots, N\}\to \{1, \ldots, m\}&&\text{so that}& l&\in J_{\sigma_{\mathfrak B}(l)}''\,\,\text{for}\,\,1 \leq l \leq N. &&&&
\end{align*} 
If $Q_{j}'$ is the homogeneous dimension of $\R^{a_{j}}$ and $Q_{k}''$ is the homogeneous dimension of $\R^{b_{k}}$, then
\begin{align*}
Q_{j}' &= d_{p_{j}'}+ \cdots + d_{q_{j}'}= \sum_{l\in J_{j}'}d_{l},\\
Q_{k}''&= d_{p_{k}''}+ \cdots + d_{q_{k}''}= \sum_{l \in  J_{k}''}d_{l}.
\end{align*}
If $I\in \E_{n}$ and $J\in \E_{m}$, the notation $[\varphi^{I}]_{I}$ and $[\psi^{J}]_{J}$ refers to the families of dilations 
\begin{align*}
[\varphi^{I}](\x) &= 2^{-[Q_{1}'i_{1}+\cdots +Q_{n}'i_{n}]}\,\,\,\varphi(2^{-i_{1}}\cdot \x_{1}', \ldots, 2^{-i_{n}}\cdot \x_{n}'),\\
[\psi^{J}](\x) &= 2^{-[Q_{1}''j_{1}+\cdots +Q_{m}''j_{m}]}\varphi(2^{-j_{1}}\cdot \x_{1}'', \ldots, 2^{-j_{m}}\cdot \x_{m}'')
\end{align*}

In order to compare multi-indices $I=(i_{1}, \ldots, i_{n})\in \mathcal E_{n}$ and $J=(j_{1}, \ldots, j_{m})\in \mathcal E_{m}$ which parameterize different families of dilations, we  identify them with multi-indices of length $N$ with repeated entries. Thus we define $p_{\mathfrak A}:\mathcal E_{n}\to E_{N}$ and $p_{\mathfrak B}:\mathcal E_{m}\to E_{N}$ so that $p_{\mathfrak A}(I)$ is the $N$-tuple with $i_{1}$ repeated $a_{1}$ times, $i_{2}$ repeated $a_{2}$ times, \textit{etc.} We define $p_{\mathfrak B}$  analogously. Thus
\begin{equation*}\label{E5.7gh}
\begin{split}
p_{\mathfrak A}(I)&=I=(I_{1}, \ldots, I_{N}) = \big(\,\overset{a_{1}}{\overbrace{i_{1}, \ldots\ldots\ldots,i_{1}}}\,,\ldots,\,\overset{a_{r}}{\overbrace{i_{r}, \ldots,i_{r}}}\,, \ldots \ldots\ldots, \,\overset{a_{n}}{\overbrace{i_{n}, \ldots,i_{n}}}\,\big),\\
p_{\mathfrak B}(J)&=J=(J_{1}, \ldots, J_{N})= \big(\,\overset{b_{1}}{\overbrace{j_{1}, \ldots,j_{1}}}\,,\ldots\ldots\ldots\ldots\,,\overset{b_{s}}{\overbrace{j_{s}, \ldots\ldots,j_{s}}}\,, \ldots , \,\overset{b_{m}}{\overbrace{j_{m}, \ldots\ldots,j_{m}}}\,\big).
\end{split}
\end{equation*}
Explicitly, $p_{\mathfrak A}(I)=(I_{1}, \ldots, I_{N})$ and $p_{\mathfrak B}(J)=(J_{1}, \ldots, J_{N})$ where $I_{l}=i_{\sigma(l)}$ and $J_{l}= j_{\tau(l)}$.  Next set $K =(K_{1}, \ldots, K_{N})= p_{\mathfrak A}(I) \vee p_{\mathfrak B}(J)=(k_{1}, \ldots, k_{N})$. This means that for $1 \leq l \leq N$
\begin{align}\label{Eqn8.9hjk}
K_{l}= i_{\sigma(l)}\vee j_{\tau(l)}= \max\big\{i_{\sigma(l)}, j_{\tau(l)}\big\}
=
\begin{cases}
j_{\tau(l)}  & \text{if $i_{\sigma(l)}\leq j_{\tau(l)}$}\\\\
i_{\sigma(l)}& \text{if $j_{\tau(l)} < i_{\sigma(l)}$}
\end{cases}.
\end{align}
Note that we can then define a map $\pi_{I,J}:\{1, \ldots, N\}\to \{1, \ldots, n, n+1, \ldots, n+m\}$ so that
\begin{align}\label{Eqn8.10hjk}
\pi_{I,J}(l)=
\begin{cases}
\ell \in \{1, \ldots n\} & \text{if \,\,$K_{l}= i_{\sigma(\ell)}$};\\\\
n+\ell \in \{n+1, \ldots, n+m\} &\text{if \,\,$K_{l}= j_{\tau(\ell)}$}.
\end{cases}
\end{align}

\bigskip

Now let us outline the proof of Theorem \ref{Theorem8.4}. For each $I\in \mathcal E_{n}$ and $J\in \mathcal E_{m}$ it follows from Lemma \ref{Lemma5.1} that there is a function $\theta^{I,J}\in \mathcal C^{\infty}_{0}(\R^{N})$, normalized relative to $\varphi^{I}$ and $\psi^{J}$, so that 
\begin{equation}
[\varphi^{I}]_{I}*[\psi^{J}]_{J}= [\theta^{I,J}]_{p_{\mathfrak A}(I)\vee p_{\mathfrak B}(J)},
\end{equation}
and hence equation (\ref{E5.6gh}) can be written
\begin{equation}
\K_{1}^{F}*\K_{2}^{G}= \sum_{(I,J)\in F\times G\subset \E_{n}\times \E_{m}}[\theta^{I,J}]_{p_{\mathfrak A}(I)\vee p_{\mathfrak B}(J)}.
\end{equation}
We analyze this sum by decomposing the set $\E_{n}\times \E_{m}$ into  disjoint subsets. Let $\mathfrak P(n,m)$ denote the set of permutations $\mu:\{1, \ldots, n,n+1, \ldots, n+m\}\to \{1, \ldots, n,n+1, \ldots, n+m\}$ which preserve the order of the first $n$ elements $\{1, \ldots, n\}$ and of the last $m$ elements $\{n+1 , \ldots, n+m\}$. (Explicitly, this means that if $\mu \in \mathfrak P(n,m)$,  then $1\leq s<t\leq n$ implies $\mu(s)<\mu(t)$ and $n+1\leq s<t\leq n+m$ implies $\mu(s)<\mu(t)$). This corresponds to the ten cases in the example studied in Section \ref{Example}. The cardinality of $\mathfrak P(n,m)$ is $\binom{n+m}{n}$. Let $I\in \E_{n}$ and $J\in \E_{m}$, and let us write 
\begin{align*}
I=(\alpha_{1}, \ldots, \alpha_{n})\qquad&\text{and}\qquad J=(\alpha_{n+1}, \ldots, \alpha_{n+m}), \qquad \text{so that}\\
\alpha_{1}\leq \alpha_{2}\leq \cdots \leq \alpha_{n}\qquad&\text{and}\qquad 
\alpha_{n+1}\leq \alpha_{n+2}\leq \cdots \leq \alpha_{n+m}.
\end{align*}
Then let $L(I,J)$ be the (weakly) increasing rearrangement of the set $I\cup J = \{\alpha_{1}, \ldots, \alpha_{n+m}\}$ so that
\begin{equation}\label{Eqn8.10ert}
\begin{aligned}
&\text{if \quad $1\leq r<s\leq n$} && \text{then} && \text{$\alpha_{r}$ comes to the left of $\alpha_{s}$,}\\
&\text{if  \quad $n+1\leq r<s\leq n+m$} && \text{then} && \text{$\alpha_{r}$ comes to the left of $\alpha_{s}$, }\\
&\text{if \quad $1\leq r\leq n$, $n+1\leq s\leq n+m$ and $\alpha_{r}<\alpha_{s}$} && \text{then} &&\text{$\alpha_{r}$ comes to the left of $\alpha_{s}$,}\\
&\text{if \quad $1\leq r\leq n$, $n+1\leq s\leq n+m$ and $\alpha_{s}\leq\alpha_{r}$} && \text{then} && \text{$\alpha_{s}$ comes to the left of $\alpha_{r}$. }
\end{aligned}
\end{equation}
This rearrangement of $\{\alpha_{1}, \ldots, \alpha_{n+m}\}$ is given by $L(I,J) = \{\alpha_{\mu(1)}, \ldots, \alpha_{\mu(n+m)}\}$ where $\mu$ is a permutation of the set of subscripts $\{1, \ldots, n,n+1, \ldots, n+m\}$, and it follows from (\ref{Eqn8.10ert}) that $\mu \in \mathfrak P(n,m)$. In this way we associate to each pair $(I,J)\in \E_{n}\times \E_{m}$ a unique $\mu= \mu(I,J)\in \mathfrak P(n,m)$. Conversely, for each $\mu \in\mathfrak P(n,m)$, let
\begin{equation}\label{Eqn8.14wer}
\E_{n,m}(\mu) = \big\{(I,J)\in \E_{n}\times\E_{m}\,\big\vert\, L(I,J) = \{\alpha_{\mu(1)}, \ldots, \alpha_{\mu(n+m)}\}\big\}.
\end{equation}
It follows that we have a disjoint decomposition $\E_{n}\times\E_{m}= \bigcup_{\mu\in \mathfrak P(m,n)}\E_{n,m}(\mu)$,
and we can write equation (\ref{E5.6gh}) as
\begin{equation}\label{Eqn8.15ert}
\K_{1}^{F}*\K_{2}^{G}= \sum_{\mu\in \mathfrak P(m,n)}\Big( \sum_{(I,J)\in \E_{n,m}(\mu)\cap (F\times G)}[\theta^{I,J}]_{p_{\mathfrak A}(I)\vee p_{\mathfrak B}(J)}\Big).
\end{equation}
Now let
\begin{equation}\label{E5.9gh}
\begin{split}
E_{N}(\mu)&= \left\{K \in E_{N}\,\big\vert\, \text{$K = p_{\mathfrak A}(I)\vee p_{\mathfrak B}(J)$ where $(I,J)\in \E_{n,m}(\mu)$}\right\}.
\end{split}
\end{equation}
Note that in general $E_{N}(\mu)$ is a proper subset of $E_{N}$, and an element $K\in E_{N}(\mu)$ can be represented in many ways as $p_{\mathfrak A}(I) \vee p_{\mathfrak B}(J)$ with $(I,J)\in \E_{n,m}(\mu)$. We can then rewrite (\ref{Eqn8.15ert}) as
\begin{equation}\label{Eqn8.17yui}
\begin{aligned}
\K_{1}^{F}*\K_{2}^{G}
&=
\sum_{\mu\in \mathfrak P(m,n)}\Big(
\sum_{K\in E_{N}(\mu)}\Big(
\sum_{\substack{(I,J)\in \E_{n,m}(\mu)\cap (F\times G)\\ p_{\mathfrak A}(I)\vee p_{\mathfrak B}(J) = K}}[\theta^{I,J}]_{K}\Big)\Big)\\
&=
\sum_{\mu\in \mathfrak P(m,n)}\Big(
\sum_{K\in E_{N}(\mu)}
\Big[\sum_{\substack{(I,J)\in \E_{n,m}(\mu)\cap (F\times G)\\ p_{\mathfrak A}(I)\vee p_{\mathfrak B}(J) = K}}\theta^{I,J}\Big]_{K}\Big)
\end{aligned}
\end{equation}
We will prove in Lemma \ref{Cor5.5ty} below that because the functions $\{\varphi^{I}\}$ and $\{\psi^{J}\}$ have strong cancellation, the innermost sum
\begin{equation}
\sum_{\substack{(I,J)\in \E_{n,m}(\mu)\\ p_{\mathfrak A}(I)\vee p_{\mathfrak B}(J) = K}}\theta^{I,J}= 
\lim_{\substack{F\nearrow \E_{n}\\G\nearrow \E_{m}}}\sum_{\substack{(I,J)\in \E_{n,m}(\mu)\cap (F\times G)\\ p_{\mathfrak A}(I)\vee p_{\mathfrak B}(J) = K}}\theta^{I,J}
\end{equation}
converges to a function $\Theta^{K}\in \mathcal C^{\infty}_{0}(\R^{N})$ which is normalized relative to the families $\{\varphi^{I}\}$ and $\{\psi^{j}\}$. From this it follows from (\ref{Eqn8.17yui}) that
\begin{equation}\label{Eqn8.19hjk}
\K_{1}*\K_{2}=\lim_{\substack{F\nearrow \E_{n}\\G\nearrow \E_{m}}} \K_{1}^{F}*\K_{2}^{G} 
= 
\sum_{\mu\in\mathfrak P(n,m)}\Big(\sum_{K\in E_{N}(\mu)}[\Theta^{K}]_{K}\Big).
\end{equation}
We will also see in Lemma \ref{Cor5.5ty} below that for each fixed $\mu$, the functions $\{\Theta^{K}\}$ for $K\in E_{N}(\mu)$ have weak cancellation relative to a decomposition of $\R^{N}$ depending on $\mu$, $\R^{N}= \R^{c_{1}}\oplus \cdots \oplus \R^{c_{r}}$,  and hence the inner sum  on the right hand side of (\ref{Eqn8.19hjk}) is a flag kernel relative to the corresponding standard flag $\mathcal F_{\mu}$. Once this is done, we will have established Theorem \ref{Theorem8.4}.

\bigskip

We now turn to the details of the proof. We begin by studying the $N$-tuple $K=(K_{1}, \ldots, K_{N}) =  p_{\mathfrak A}(I)\vee p_{\mathfrak B}(J)$ if $(I,J)\in \E_{n,m}(\mu)$.  Partition $K$ into disjoint subsets of consecutive entries where two successive elements $K_{l}$ and $K_{l+1}$ belong to the same subset if and only if  either
\begin{enumerate}[(i)]

\smallskip

\item \label{78} $K_{l}= i_{\sigma(l)}\geq j_{\tau(l)}$, $K_{l+1}= i_{\sigma(l+1)}\geq j_{\tau(l+1)}$, and $\sigma(l)= \sigma(l+1)$; \quad or

\smallskip

\item \label{79} $K_{l}= j_{\tau(l)}\geq i_{\sigma(l)}$, $K_{l+1}= j_{\tau(l+1)}\geq i_{\sigma(l+1)}$, and $\tau(l)= \tau(l+1)$.

\smallskip
\end{enumerate}
In particular, if two successive elements $K_{l}$ and $K_{l+1}$ belong to the same subset, they must be equal. Thus we write 
\begin{align*}
K &= 
(K_{1}, \ldots, K_{N})
=\big(\{K_{\alpha_{1}}, \ldots, K_{\beta_{1}}\}, \,\{K_{\alpha_{2}}, \ldots, K_{\beta_{2}}\}, \ldots, \{K_{\alpha_{r}}, \ldots, K_{\beta_{r}}\}\big)
\end{align*}
where 
\begin{align*}
K_{\alpha_{1}}&= K_{\alpha_{1}+1}= \cdots = K_{\beta_{1}},&
K_{\alpha_{2}}&= K_{\alpha_{2}+1}= \cdots = K_{\beta_{2}}, &
&\,\,\,\cdots &
K_{\alpha_{r}}&= K_{\alpha_{r}+1}= \cdots = K_{\beta_{r}}.
\end{align*} 
We can also write
\begin{align}\label{Eqn5.12vn}
K= (K_{1}, \ldots, K_{N}) = \big(\,\overset{c_{1}}{\overbrace{k_{1}, \ldots,k_{1}}}\,;\,\overset{c_{2}}{\overbrace{k_{2}, \ldots,k_{2}}}\,; \ldots ; \,\overset{c_{r}}{\overbrace{k_{r}, \ldots,k_{r}}}\,\big),
\end{align} 
so that $c_{1}+ c_{2}+ \cdots + c_{r}=N$. Note that $1 \leq r <m+n$ since either $i_{1}$ or $j_{1}$ does not appear in $K$. 

We have the following properties of this decomposition.

\begin{proposition}\label{Prop8.5lkj}
The integers $\{\alpha_{1}, \beta_{1}, \ldots, \alpha_{r}, \beta_{r}\}$ depend only on the permutation $\mu$ and are independent of the choice of $(I,J)\in \E_{n,m}(\mu)$. In fact, let $\widetilde K_{l}=\{K_{\alpha_{l}}, \ldots, K_{\beta_{l}}\}$ be one of the subsets of consecutive indices in $K= p_{\mathfrak A}(I)\vee p_{\mathfrak B}(J)\in \E_{n,m}(\mu)$. Then:
\begin{enumerate}[{\rm(1)}]

\smallskip

\item \label{Prop8.5lkj1}
The starting position $\alpha_{l}$ coincides either with the starting position of one of the subsets of $p_{\mathfrak A}(I)$ or with the position of one of the subsets of $p_{\mathfrak B}(J)$. More precisely,

\smallskip

\begin{enumerate}[{\rm(\ref{Prop8.5lkj1}a)}]

\item \label{Prop8.5lkj1a}
If $K_{\alpha_{l}}= i_{\sigma(\alpha_{l})}$, then $\sigma(\alpha_{l})>\sigma(\alpha_{l}-1)$, so the start of $\widetilde K_{l}$ coincides with the start of the index $i_{\sigma(\alpha_{l})}$ in $p_{\mathfrak A}(I)$. 

\smallskip

\item \label{Prop8.5lkj1b}
If $K_{\alpha_{l}}= j_{\tau(\alpha_{l})}$, then $\tau(\alpha_{l})>\tau(\alpha_{l}-1)$, so the start of $\widetilde K_{l}$ coincides with the start of the index $j_{\tau(\alpha_{l})}$ in $p_{\mathfrak B}(J)$. 

\smallskip

\end{enumerate}

\item \label{Prop8.5lkj2}
If the ending position of $\widetilde K_{l}$ does not coincide with the end of the corresponding segment of $p_{\mathfrak A}(I)$ or $p_{\mathfrak B}(J)$, then the entries of the segment which do not appear in $\widetilde K_{l}$ are bounded above by the corresponding entries of $K$. More precisely

\begin{enumerate}[{\rm(\ref{Prop8.5lkj2}a)}]

\smallskip

\item \label{Prop8.5lkj2a}
Suppose that $K_{\alpha_{l}}= i_{\sigma(\alpha_{l})}$, so that $K_{t}=i_{\sigma(\alpha_{l})}$ for $\alpha_{l}\leq t \leq \beta_{l}$. Suppose that $I_{t} = I_{\alpha_{l}}$ for $\alpha_{l}\leq t \leq \gamma$ and that $\gamma > \beta_{l}$. Then $I_{t}\leq J_{t}=K_{t}$ for $\beta_{l}+1\leq t \leq \gamma$.

\smallskip

\item \label{Prop8.5lkj2b}
Suppose that $K_{\alpha_{l}}= j_{\tau(\alpha_{l})}$, so that $K_{t}=j_{\tau(\alpha_{l})}$ for $\alpha_{l}\leq t \leq \beta_{l}$. Suppose that $J_{t} = J_{\alpha_{l}}$ for $\alpha_{l}\leq t \leq \gamma$ and that $\gamma > \beta_{l}$. Then $J_{t}\leq I_{t}=K_{t}$ for $\beta_{l}+1\leq t \leq \gamma$.
\end{enumerate}
\end{enumerate}
\end{proposition}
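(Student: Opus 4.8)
The heart of the matter is that the way $K=p_{\mathfrak A}(I)\vee p_{\mathfrak B}(J)$ is built from $I$ and $J$ is, up to the treatment of ``ties'', dictated entirely by $\mu$. The plan is therefore the following. To each position $l\in\{1,\dots,N\}$ I attach a \emph{label}: position $l$ is ``of $\mathfrak A$-type'' when $j_{\tau(l)}<i_{\sigma(l)}$ (so that rule (i) is the one that can merge $l$ with a neighbour) and ``of $\mathfrak B$-type'' when $i_{\sigma(l)}\le j_{\tau(l)}$, the borderline case $i_{\sigma(l)}=j_{\tau(l)}$ being assigned by the fixed convention of (\ref{Eqn8.9hjk}). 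The first step is to observe that the label of $l$ is a function of $\mu$ and of the fixed maps $\sigma=\sigma_{\mathfrak A}$, $\tau=\tau_{\mathfrak B}$ alone: by the rearrangement rules (\ref{Eqn8.10ert}), for $1\le\ell\le n$ and $1\le\ell'\le m$ the entry $i_\ell$ precedes $j_{\ell'}$ in $L(I,J)$ precisely when $i_\ell<j_{\ell'}$, so from $\mu$ one reads off, for each $l$, whether $i_{\sigma(l)}<j_{\tau(l)}$ or $j_{\tau(l)}\le i_{\sigma(l)}$, which (away from the ties) is exactly the label. Granting this, I would note that, by rules (i)--(ii), two consecutive positions $l,l+1$ lie in the same block $\widetilde K_\bullet$ if and only if they carry the same label and, moreover, $\sigma(l)=\sigma(l+1)$ (for $\mathfrak A$-type) or $\tau(l)=\tau(l+1)$ (for $\mathfrak B$-type); since these data depend only on $\mu$, the integers $\{\alpha_1,\beta_1,\dots,\alpha_r,\beta_r\}$ --- hence also the decomposition $\R^N=\R^{c_1}\oplus\cdots\oplus\R^{c_r}$ of (\ref{Eqn5.12vn}) --- depend only on $\mu$. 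This establishes the first assertion.

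For part (\ref{Prop8.5lkj1}) I would argue case (\ref{Prop8.5lkj1a}), case (\ref{Prop8.5lkj1b}) being obtained by exchanging the roles of $(I,\sigma)$ and $(J,\tau)$. Put $\ell=\sigma(\alpha_l)$; since $\sigma$ is non-decreasing it suffices to rule out $\sigma(\alpha_l-1)=\ell$ (with the convention $\sigma(0)=0$ this also covers $\alpha_l=1$). If $\sigma(\alpha_l-1)=\ell$, then $I_{\alpha_l-1}=i_\ell$, and by the monotonicity of $\tau$ and of the entries of $J$ together with the $\mathfrak A$-type of $\alpha_l$, $j_{\tau(\alpha_l-1)}\le j_{\tau(\alpha_l)}<i_\ell$; thus $\alpha_l-1$ is also of $\mathfrak A$-type and $\sigma(\alpha_l-1)=\sigma(\alpha_l)$, so rule (i) merges $\alpha_l-1$ with $\alpha_l$ --- contradicting that $\alpha_l$ begins its block. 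Hence $\sigma(\alpha_l-1)<\ell$, i.e.\ $\alpha_l$ is the first position of the $i_\ell$-segment of $p_{\mathfrak A}(I)$.

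For part (\ref{Prop8.5lkj2}) I again take case (\ref{Prop8.5lkj2a}). With $\ell=\sigma(\alpha_l)$ we have $\sigma\equiv\ell$ on $[\alpha_l,\gamma]\supseteq[\alpha_l,\beta_l]$ and $\gamma>\beta_l$. The position $\beta_l+1$ begins a new block; if it were of $\mathfrak A$-type, then since $\sigma(\beta_l+1)=\ell=\sigma(\beta_l)$ rule (i) would merge it with $\beta_l$, which is impossible --- so $\beta_l+1$ is of $\mathfrak B$-type, giving $i_\ell=i_{\sigma(\beta_l+1)}\le j_{\tau(\beta_l+1)}$. Now for any $t$ with $\beta_l+1\le t\le\gamma$ one has $\sigma(t)=\ell$, so $I_t=i_\ell$, and $\tau(t)\ge\tau(\beta_l+1)$, so $J_t=j_{\tau(t)}\ge j_{\tau(\beta_l+1)}\ge i_\ell=I_t$; hence by (\ref{Eqn8.9hjk}) $K_t=J_t$ and $I_t\le J_t=K_t$. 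Case (\ref{Prop8.5lkj2b}) follows by the symmetric argument.

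The step I expect to require the most care is the treatment of the ties $i_{\sigma(l)}=j_{\tau(l)}$: the permutation $\mu$ records only the $L$-order, which on such a tie places $j_{\tau(l)}$ to the left of $i_{\sigma(l)}$, and one must check that labelling such an $l$ consistently (to the $\mathfrak B$-side, in accordance with (\ref{Eqn8.9hjk})) is compatible with the block rules (i)--(ii) and yields the same block pattern for every $(I,J)\in\E_{n,m}(\mu)$. Once the ``label is a function of $\mu$'' step is secured in this form, the remainder is the short monotonicity computation carried out above, which uses only that $\sigma$ and $\tau$ are non-decreasing and that the entries of $I\in\E_n$ and $J\in\E_m$ increase weakly.
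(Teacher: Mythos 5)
Your treatment of parts (\ref{Prop8.5lkj1}) and (\ref{Prop8.5lkj2}) is essentially the paper's own proof: for (\ref{Prop8.5lkj1a}) one rules out $\sigma(\alpha_l-1)=\sigma(\alpha_l)$ because the chain $j_{\tau(\alpha_l-1)}\le j_{\tau(\alpha_l)}\le i_{\sigma(\alpha_l)}=i_{\sigma(\alpha_l-1)}$ would force rule (i) to merge position $\alpha_l-1$ into the block, and for (\ref{Prop8.5lkj2a}) one notes that $J_{\beta_l+1}<I_{\beta_l+1}$ would likewise force a merge at $\beta_l+1$, after which monotonicity of the entries of $J$ gives $I_t\le J_t=K_t$ for $\beta_l+1\le t\le\gamma$. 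For the opening assertion the paper is terser than you are: it simply observes that (1) and (2) show the cut positions depend only on the ordering of the entries of $p_{\mathfrak A}(I)$ and $p_{\mathfrak B}(J)$, hence only on $\mu$; your label formalism is a more explicit version of the same observation.

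The tie issue you flag is genuine, and the paper's proof does not address it either. The permutation $\mu$ records, for each position $l$, only the dichotomy ``$i_{\sigma(l)}<j_{\tau(l)}$ versus $j_{\tau(l)}\le i_{\sigma(l)}$'', whereas the merge rules (i)--(ii) and the assignment (\ref{Eqn8.9hjk}) draw the line at $j_{\tau(l)}<i_{\sigma(l)}$ versus $i_{\sigma(l)}\le j_{\tau(l)}$. Read completely literally, two pairs in the same $\E_{n,m}(\mu)$ differing only in whether such a tie occurs can yield different block patterns: take $n=2$, $m=1$, $a_1=a_2=1$, $b_1=2$, and compare $i_1<j_1<i_2$ (the two positions do not merge) with $i_1<j_1=i_2$ (rule (ii) merges them); both produce the ordering $i_1,j_1,i_2$, hence the same $\mu$. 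The cure is to resolve the comparisons in (i)--(ii) by the same convention used in the ordering rules (\ref{Eqn8.10ert}); with that convention your ``label is a function of $\mu$'' step holds verbatim and your proof closes. So the remaining gap is one of conventions in the statement rather than a defect of your argument, and you were right both to isolate it and to regard the rest as a short monotonicity computation.
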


\begin{proof}
We begin by establishing part (\ref{Prop8.5lkj1a}). If it were not true, then since $j_{\tau(\alpha_{l}-1)}\leq j_{\tau(\alpha_{l})}$, we would have 
\begin{equation*}
j_{\tau(\alpha_{l}-1)}\leq j_{\tau(\alpha_{l})}\leq i_{\sigma(\alpha_{l})}= i_{\sigma(\alpha_{l}-1)},
\end{equation*} 
and by condition (\ref{78}) in the definition of $\widetilde K_{l}$ it would follow that $K_{\alpha_{l}}$ and $K_{\alpha_{l}-1}$ belong to the same subset. Part (\ref{Prop8.5lkj1b}) follows in the same way. To establish part (\ref{Prop8.5lkj2a}), observe that since $J_{\beta_{l}+1}\leq J_{t}$, it suffices to show this for $t=\beta_{l}+1$. But if $J_{\beta_{l}+1}<I_{\beta_{l}+1}$, it would follow from (\ref{78}) that $K_{\beta_{l}+1}$ belongs to the same subset as $K_{\alpha_{l}}$. Part (\ref{Prop8.5lkj2b}) follows in the same way.

Now parts (\ref{Prop8.5lkj1}) and (\ref{Prop8.5lkj2}) show that the positions where we decompose $K$ depend on the \emph{ordering} of the entries of $p_{\mathfrak A}(I)$ and $p_{\mathfrak B}(J)$, and not on the entries themselves. This shows that the decomposition depends only on $\mu$, which completes the proof.
\end{proof}

\begin{definition}
Let $\mu\in \mathfrak P(n,m)$. 

\begin{enumerate}[{\rm(1)}]

\smallskip

\item It follows from Proposition \ref{Prop8.5lkj} that the permutation $\mu$ determines $r$ and the integers $\{c_{1}, \ldots, c_{r}\}$. Thus $\mu$ determines  the decomposition $\R^{N}= \R^{c_{1}}\oplus \cdots \oplus \R^{c_{r}}$. We let $\F_{\mu}$ denote the corresponding standard flag $(0) \subseteq \R^{c_{r}} \subseteq \R^{c_{r-1}}\oplus \R^{c_{r}}\subseteq \cdots \subseteq \R^{c_{2}}\oplus \cdots \oplus \R^{c_{n}}\subseteq \R^{c_{1}}\oplus \cdots \oplus \R^{c_{r}}= \R^{N}$. For $\x\in \R^{N}$, we write $\x= (\widetilde\x_{1}, \ldots, \widetilde \x_{r})$, and we let $\{\widetilde J_{1}, \ldots, \widetilde J_{r}\}$ denote the corresponding sets of subscripts so that $x_{l}$ is a coordinate in $\R^{c_{k}}$ if and only if $l\in \widetilde J_{k}$.

\smallskip

\item Let $I=(i_{1}, \ldots, i_{n}), J=(j_{1}, \ldots, j_{m}) \in \mathcal E_{n,m}(\mu)$. An index $i_{l}$  is \emph{free} if $I_{t}\leq J_{t}$ for all $t$ such that $\sigma(t) = l$. An index $j_{l}$  is \emph{free} if $J_{t}<I_{t}$ for all $t$ such that $\sigma(t) = l$. In particular, a free index does \emph{not} appear in the set $ K=p_{\mathfrak A}(I)\vee p_{\mathfrak B}(J)\in E_{N}(\mu)$. 
\smallskip
\end{enumerate}
Note that whether or not an index is free depends only on the choice of $\mu$, and not on the choice of $(I,J)\in \E_{n,m}(\mu)$. The number of free elements is equal to $m+n-r$, and $1\leq m+n-r <m+r$. 
\end{definition}

\smallskip

\begin{proposition}\label{Lemma5.5gs} 
Fix $\mu\in \mathfrak P(n,m)$, and let  $(I,J)\in \E_{n,m}(\mu)$ with $I=(i_{1}, \ldots, i_{n})$ and  $J=(j_{1}, \ldots, j_{m})$. Let $\varphi^{I}, \psi^{J}\in \mathcal C^{\infty}_{0}(\R^{N})$, and suppose that $\varphi^{I}$ has strong cancellation relative to the decomposition $\R^{N}= \R^{a_{1}}\oplus \cdots \oplus \R^{a_{n}}$, and that $\psi^{J}$ has strong cancellation relative to the decomposition $\R^{N}= \R^{b_{1}}\oplus \cdots \oplus \R^{b_{m}}$.

\begin{enumerate}[{\rm(1)}]

\smallskip

\item \label{Lemma5.5gs1}
Suppose that an index $i_{l}$ is free, and that $i_{l}=I_{r}=I_{r+1}=\cdots =I_{s}$ is the corresponding group of indices in $p_{\mathfrak A}(I)$, so that $I_{r-1}=i_{l-1}$ and $I_{s+1}=i_{l+1}$.  Then the function $\varphi^{I}$ has cancellation in the variables  $\{x_{I_{r}}, \ldots, x_{I_{s}}\}$, and $I_{t}\leq J_{t}$ for $r\leq t\leq s$.\footnote{It follows from Lemma \ref{Lemma1.12} that $\varphi^{I}$ can be written as a sum of functions each of which has cancellation in one of the variables $\{x_{I_{r}}, \ldots, x_{I_{s}}\}$.}

\smallskip

\item \label{Lemma5.5gs2}
Suppose that an index $j_{l}$ is free, and that $\{j_{l}=J_{r}=J_{r+1}=\cdots =J_{s}\}$ is the corresponding group of indices in $p_{\mathfrak B}(J)$, so that $J_{r-1}=j_{l-1}$ and $J_{s+1}=j_{l+1}$.  Then the function $\psi^{J}$ has cancellation with respect to one of the variables $\{x_{J_{r}}, \ldots, x_{J_{s}}\}$, and $J_{t}\leq I_{t}$ for $r\leq t \leq s$.\footnote{It follows from Lemma \ref{Lemma1.12} that $\psi^{J}$ can be written as a sum of functions each of which has cancellation in one of the variables $\{x_{J_{r}}, \ldots, x_{J_{s}}\}$.}

\smallskip

\item \label{Lemma5.5gs3}
Let $\R^{N}= \R^{c_{1}}\oplus \cdots \oplus \R^{c_{r}}$ be the decomposition corresponding to $\mu$, and let $\{\widetilde J_{1}, \ldots, \widetilde J_{r}\}$ be the corresponding sets of subscripts. Let $1 \leq l \leq r$.

\begin{enumerate}[{\rm(a)}]

\smallskip

\item \label{Lemma5.5gs3a}
Suppose that $K_{\alpha_{l}}= \cdots = K_{\beta_{l}}= i_{\ell}$ so that $\sigma(\alpha_{l}) = \cdots = \sigma(\beta_{l}) = \ell$. Then either $\varphi^{I}$ has cancellation in a variables $x_{t}$ with $t\in \widetilde J_{l}$, or  $\varphi^{I}$ has cancellation in a coordinate $x_{t}$ with $t>\beta_{l}$, in which case $K_{t}=J_{t}\geq I_{t}=i_{\ell}$.

\smallskip

\item \label{Lemma5.5gs3b}
Suppose that $K_{\alpha_{l}}= \cdots = K_{\beta_{l}}= j_{\ell}$ so that $\tau(\alpha_{l}) = \cdots = \tau(\beta_{l}) = \ell$. Then either $\psi^{J}$ has cancellation in a variables $x_{t}$ with $t\in \widetilde J_{l}$, or  $\psi^{J}$ has cancellation in a coordinate $x_{t}$ with $t>\beta_{l}$, in which case $K_{t}=I_{t}\geq J_{t}=j_{\ell}$.

\end{enumerate}
\end{enumerate}
\end{proposition}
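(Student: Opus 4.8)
\textbf{Proof proposal for Proposition \ref{Lemma5.5gs}.}

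The plan is to deduce all three statements from the combinatorial structure of the decomposition of $K=p_{\mathfrak A}(I)\vee p_{\mathfrak B}(J)$ established in Proposition \ref{Prop8.5lkj}, together with the definition of ``free index'' and the strong cancellation hypotheses. Parts (\ref{Lemma5.5gs1}) and (\ref{Lemma5.5gs2}) are essentially unwinding of definitions, while part (\ref{Lemma5.5gs3}) is where the real work lies.

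First I would dispose of parts (\ref{Lemma5.5gs1}) and (\ref{Lemma5.5gs2}). Suppose $i_\ell$ is free, with corresponding block $i_\ell = I_r = I_{r+1}= \cdots = I_s$ in $p_{\mathfrak A}(I)$; this block is precisely the set of subscripts $\widetilde J^{\mathfrak A}_\ell$ attached to the factor $\R^{a_\ell}$ in the decomposition $(\mathfrak A)$, so $\{I_r, \ldots, I_s\} = J'_{\ell}$ in the notation of Theorem \ref{Theorem8.4}. By definition of freeness, $I_t \le J_t$ for every $t$ with $\sigma(t)=\ell$, i.e.\ for $r\le t\le s$; this is the inequality claimed. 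Since $\varphi^I$ has strong cancellation relative to $(\mathfrak A)$, it has cancellation in the collection of variables $\{x_t : t\in J'_\ell\} = \{x_{I_r}, \ldots, x_{I_s}\}$, which is the first assertion; the footnote remark that $\varphi^I$ is then a finite sum of functions each with cancellation in a single one of these variables is just an application of Lemma \ref{Lemma1.12} with $r=1$. Part (\ref{Lemma5.5gs2}) follows by the symmetric argument with the roles of $\varphi^I$, $(\mathfrak A)$, $\sigma$ interchanged with $\psi^J$, $(\mathfrak B)$, $\tau$, using the strict inequality $J_t < I_t$ in the definition of a free $j$-index.

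For part (\ref{Lemma5.5gs3}), I would argue as follows for (\ref{Lemma5.5gs3a}) (the case (\ref{Lemma5.5gs3b}) being symmetric). Fix $l$ with $K_{\alpha_l}=\cdots=K_{\beta_l}=i_\ell$, so $\sigma(\alpha_l)=\cdots=\sigma(\beta_l)=\ell$. By part (\ref{Prop8.5lkj1a}) of Proposition \ref{Prop8.5lkj} the starting position $\alpha_l$ coincides with the starting position of the block of $p_{\mathfrak A}(I)$ carrying the index $i_\ell$; in other words $I_{\alpha_l - 1} = i_{\ell-1} < i_\ell = I_{\alpha_l}$ (or $\alpha_l = 1$). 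Now let $\gamma$ be the last position with $I_\gamma = i_\ell$, so the full $\mathfrak A$-block of $i_\ell$ is $\{\alpha_l, \ldots, \gamma\}$, i.e.\ equals $J'_\ell$. There are two cases. If $\gamma \le \beta_l$, then the entire $\mathfrak A$-block $\{x_t : t\in J'_\ell\}$ is contained in $\widetilde J_l$, and since $\varphi^I$ has strong cancellation relative to $(\mathfrak A)$ it has cancellation in the variables indexed by $J'_\ell$, hence in a variable $x_t$ with $t\in \widetilde J_l$, as required. If $\gamma > \beta_l$, then the $\mathfrak A$-block extends past $\widetilde K_l$; by part (\ref{Prop8.5lkj2a}) of Proposition \ref{Prop8.5lkj} we get $I_t \le J_t = K_t$ for $\beta_l + 1 \le t \le \gamma$, and in particular $K_t = J_t \ge I_t = i_\ell$ for these $t$. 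Again $\varphi^I$ has cancellation in the variables $\{x_t : t\in J'_\ell\} = \{x_{\alpha_l}, \ldots, x_\gamma\}$; using Lemma \ref{Lemma1.12} we may split $\varphi^I$ into pieces each having cancellation in a single one of these variables, and each such variable $x_t$ either satisfies $t\le \beta_l$ (so $t\in \widetilde J_l$) or $\beta_l < t \le \gamma$ (so $t>\beta_l$ with $K_t = J_t \ge i_\ell$). This is exactly the dichotomy asserted in (\ref{Lemma5.5gs3a}).

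The main obstacle I anticipate is purely bookkeeping: one must keep straight the three overlapping block structures --- the $\mathfrak A$-blocks of $p_{\mathfrak A}(I)$ (sets $J'_\ell$), the $\mathfrak B$-blocks of $p_{\mathfrak B}(J)$ (sets $J''_\ell$), and the $\mu$-blocks $\widetilde J_l = \{\alpha_l, \ldots, \beta_l\}$ of $K$ --- and verify that the inequalities between $I_t$, $J_t$, $K_t$ propagate correctly across block boundaries. All of this is controlled by Proposition \ref{Prop8.5lkj}, so the proof amounts to citing parts (\ref{Prop8.5lkj1a}), (\ref{Prop8.5lkj1b}), (\ref{Prop8.5lkj2a}), (\ref{Prop8.5lkj2b}) of that proposition in the appropriate cases and invoking strong cancellation plus Lemma \ref{Lemma1.12}; no estimates or analysis are involved.
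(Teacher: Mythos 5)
Your proposal is correct and follows essentially the same route as the paper: parts (1) and (2) are immediate from the definition of a free index together with strong cancellation, and part (3) is exactly the dichotomy supplied by parts (\ref{Prop8.5lkj1a}) and (\ref{Prop8.5lkj2a}) of Proposition \ref{Prop8.5lkj} (resp.\ (\ref{Prop8.5lkj1b}) and (\ref{Prop8.5lkj2b})), with Lemma \ref{Lemma1.12} used to split the block cancellation into single-variable cancellations. Your write-up merely makes explicit the case analysis ($\gamma=\beta_l$ versus $\gamma>\beta_l$) that the paper's terser argument leaves implicit.
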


\begin{proof}
To prove assertion (1), note that since $i_{l}$ is free, it does not appear in $K$. Hence for any $r\leq t \leq s$, $i_{l}\neq \max\{I_{t}, J_{t}\}= \max\{i_{l}, J_{t}\}$, and so $J_{\ell}\geq i_{\ell}= I_{t}$.  Since $\varphi^{I}$ is assumed to have strong cancellation and $\{x_{I_{r}}, \ldots, x_{I_{s}}\}$ are precisely the variables corresponding to the index $i_{r}$, this establishes (1). The proof of assertion (2) proceeds in the same way.

To prove assertion (\ref{Lemma5.5gs3a}), note that by Proposition \ref{Prop8.5lkj}, part (\ref{Prop8.5lkj1a}), the only way in which it is possible for $\varphi^{I}$ not to have cancellation in a variable $x_{t}$ with $t\in \widetilde J_{l}$ is if $I_{t} = I_{\alpha_{l}}$ for $\alpha_{l}\leq t \leq \gamma$ and that $\gamma > \beta_{l}$. But then the conclusion follows from Proposition \ref{Prop8.5lkj}, part (\ref{Prop8.5lkj2a}). The proof of assertion (\ref{Lemma5.5gs3b}) follows in the same way.
\end{proof}

\goodbreak

\begin{lemma}\label{Cor5.5ty} 
Fix $\mu\in \mathfrak P(n,m)$.
\begin{enumerate}[{\rm(1)}]

\smallskip

\item \label{Cor5.5ty1}
Let $K\in E_{N}(\mu)$. Then there exists $\Theta^{K}\in \mathcal C^{\infty}_{0}(\R^{N})$, normalized relative to the families $\{\varphi^{I}\}$ and $\{\psi^{J}\}$ so that the sum
\begin{equation*}
\sum_{\substack{(I,J)\in \E_{n,m}(\mu)\\p_{\mathfrak A}(I)\vee p_{\mathfrak B}(J)=K}}[\varphi^{I}]_{I}*[\psi^{J}]_{J}
\end{equation*}
converges (uniformly) to $[\Theta^{K}]_{K}$. 

\smallskip

\item \label{Cor5.5ty2}
The function $\Theta^{K}$ has weak cancellation relative to the decomposition of $\R^{N}$ corresponding to $\mu$.
\end{enumerate}
\end{lemma}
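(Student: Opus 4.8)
The plan is to prove both assertions simultaneously by running the machinery of Section~\ref{Decay}, keeping careful track of the permutation~$\mu$. Fix $\mu\in\mathfrak P(n,m)$ and $K\in E_N(\mu)$, and recall from Proposition~\ref{Prop8.5lkj} that $\mu$ alone determines the block structure $K=(\,\overset{c_1}{\overbrace{k_1,\dots,k_1}};\dots;\overset{c_r}{\overbrace{k_r,\dots,k_r}}\,)$, hence the coarser decomposition $\R^N=\R^{c_1}\oplus\cdots\oplus\R^{c_r}$ and the flag $\F_\mu$. For the convergence statement~(\ref{Cor5.5ty1}) I would argue as in Case~1 of the example in Section~\ref{Example}. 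By Lemma~\ref{Lemma5.1} each summand is $[\varphi^I]_I*[\psi^J]_J=[\theta^{I,J}]_K$ with $\theta^{I,J}$ normalized relative to $\varphi^I,\psi^J$; it remains to show that summing over $(I,J)\in\E_{n,m}(\mu)$ with $p_{\mathfrak A}(I)\vee p_{\mathfrak B}(J)=K$ produces an absolutely convergent sum whose limit is again a normalized bump scaled to $K$. The free indices (those not appearing in $K$) are exactly the summation variables; there are $m+n-r$ of them. So the task reduces to producing, for each free index $i_\ell$ (resp.\ $j_\ell$), an exponential gain $2^{-\epsilon(k_s - i_\ell)}$ (resp.\ $2^{-\epsilon(k_s-j_\ell)}$) for the corresponding block position~$s$, since $k_s$ is fixed while the free index ranges over integers $\le k_s$.

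The gains come from Lemma~\ref{Lemma4.3qw} together with Proposition~\ref{Lemma5.5gs}. Concretely: if $i_\ell$ is free, then by Proposition~\ref{Lemma5.5gs}(\ref{Lemma5.5gs1}) the function $\varphi^I$ has cancellation in the block of variables $\{x_{I_r},\dots,x_{I_s}\}$ attached to $i_\ell$, and $I_t\le J_t$ on that block; hence in the notation of Lemma~\ref{Lemma4.3qw} that block lies in $A_0$, and Lemma~\ref{Lemma4.3qw} yields a factor $2^{-\epsilon(j_{\tau(\cdot)}-i_\ell)}$. The subtlety (point~(c) in the discussion of Case~1) is that the function carrying the free index may only have cancellation in \emph{one} of the variables of its block, not all of them; but by Proposition~\ref{Lemma5.5gs}(\ref{Lemma5.5gs3}), whenever $\varphi^I$ (or $\psi^J$) fails to have cancellation in a variable of $\widetilde J_l$, it has cancellation in some variable $x_t$ with $t>\beta_l$ for which $K_t$ equals the same block value, so one still extracts a gain of the required size $2^{-\epsilon(k_s-f)}$ for the free index $f$. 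Combining these gains over all free indices, and absorbing the non-negative differences $i_{\sigma(\ell)+1}-i_{\sigma(\ell)}$, $j_{\tau(l)+1}-j_{\tau(l)}$ of Lemma~\ref{Lemma4.3qw} harmlessly, one gets a geometric majorant, so the sum converges in $\mathcal C^\infty_0(\R^N)$ with uniform control of every seminorm $\|\cdot\|_{(m)}$ and of the support (using Lemma~\ref{Lemma5.1}), which is precisely the assertion that the limit is $[\Theta^K]_K$ with $\Theta^K$ normalized.

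For the weak-cancellation statement~(\ref{Cor5.5ty2}) I would apply Proposition~\ref{Prop5.7ijn}: it suffices to check, for every partition $\{1,\dots,r\}=A\cup B$ with $r\notin B$, that $\int_{\bigoplus_{k\in B}\R^{c_k}}\Theta^K(\widetilde\x_A,\widetilde\x_B)\,d\widetilde\x_B$ is bounded by $\prod_{k\in B}2^{-\epsilon(k_{k+1}-k_k)}$ times a normalized function. Since $\Theta^K$ is the (uniformly convergent) sum of the $\theta^{I,J}$, and each $[\theta^{I,J}]_K=[\varphi^I]_I*[\psi^J]_J$, I would invoke Lemma~\ref{Lemma4.3qw} once more: for each block $\R^{c_k}$ with $k\in B$, at least one of $\varphi^I$, $\psi^J$ has strong cancellation in some variable of $\widetilde J_k$ --- this is exactly Proposition~\ref{Lemma5.5gs}(\ref{Lemma5.5gs3}) applied to whichever of $i$ or $j$ supplies the value $k_k$ (if $k_k=i_\ell$ use $\psi^J$'s cancellation relative to $\mathcal B$ in some variable of the block, or $\varphi^I$'s cancellation in a later variable with the same $K$-value; symmetrically if $k_k=j_\ell$). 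Lemma~\ref{Lemma4.3qw} then expresses $[\theta^{I,J}]_K$ as a sum of terms each carrying a factor $2^{-\epsilon(k_{k+1}-k_k)}$ for every $k\in B$, and since these factors are independent of $(I,J)$ they survive the summation defining $\Theta^K$. Hence $\Theta^K$ satisfies the integral bound of Proposition~\ref{Prop5.7ijn}, i.e.\ it has weak cancellation relative to $\R^N=\R^{c_1}\oplus\cdots\oplus\R^{c_r}$.

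\textbf{Main obstacle.} The delicate point is the bookkeeping in step~(\ref{Cor5.5ty1}): matching each free summation index to a variable in which the relevant bump genuinely has cancellation and in which the fixed $K$-value coincides with the block value, so that the gain $2^{-\epsilon(k_s-f)}$ is of the right type to make the geometric series converge. This is where Proposition~\ref{Lemma5.5gs}, and behind it Proposition~\ref{Prop8.5lkj}, does the real work, and where one must be careful that ``no cancellation in $x_t$, $t\in\widetilde J_l$'' always forces the alternative ``cancellation in some $x_t$, $t>\beta_l$, with $K_t=k_l$''. Once the matching is in place, convergence and the uniform normalization bounds are routine, and weak cancellation follows formally from the same lemma.
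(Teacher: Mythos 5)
Your proposal is correct and follows essentially the same route as the paper: both assertions are obtained by feeding the strong cancellation of $\varphi^{I}$ and $\psi^{J}$ into Lemma \ref{Lemma4.3qw}, with Proposition \ref{Lemma5.5gs} (and behind it Proposition \ref{Prop8.5lkj}) supplying the combinatorial fact that every free index lands in a variable where the narrower factor has cancellation, so that the resulting exponential gains make the sum over free indices converge while the residual derivatives and gains give weak cancellation. The only cosmetic difference is that you certify weak cancellation through the integral characterization of Proposition \ref{Prop5.7ijn}, whereas the paper reads it off directly from the form of the terms against Definition \ref{Def1.14a2}; since that equivalence is already established in the paper, either route is fine (your parenthetical that the alternative cancellation variable in Proposition \ref{Lemma5.5gs}(\ref{Lemma5.5gs3}) has $K_{t}$ ``equal to the same block value'' should read ``at least the block value,'' which only improves the gain).
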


\begin{proof}
Let $(I,J) \in \E_{n,m}(\mu)$. 
Suppose that $\varphi^{I}$ has cancellation in the variables $\{x_{r_{1}}, \ldots, x_{r_{n}}\}$ with $r_{l}\in J_{l}'$, and that $\psi^{J}$ has cancellation in the variables $\{x_{s_{1}}, \ldots, x_{s_{m}}\}$ with $s_{l}\in J_{l}''$. Let
\begin{align*}
&&A_{0}&=\big\{l\in \{1, \ldots,n\}\,\big\vert\, i_{r_{l}}<j_{r_{l}}\big\},
&
B_{0}&=\big\{l\in \{1, \ldots,m\}\,\big\vert\, j_{s_{l}}<i_{s_{l}}\big\}.&&
\end{align*}
Then it follows from Proposition \ref{Lemma5.5gs} that if $i_{l}$ is a free index, $l\in A_{0}$ and if $j_{l}$ is a free index, then $l\in B_{0}$. On the other hand, according to Lemma \ref{Lemma4.3qw}, we can write each $[\varphi^{I}]_{I}*[\psi^{J}]_{J}$ as a finite sum of terms of the form
\begin{align}\label{Eqn8.21okm}
\prod_{\substack{s\in A'}}
2^{-\epsilon |j_{l_{s}}-i_{l_{s}}|}
\prod_{\substack{t\in B'}}
2^{-\epsilon|i_{m_{t}}-j_{m_{t}}|}
\prod_{\substack{s\in A''}}
2^{-\epsilon|i_{l_{s}+1}-i_{l_{s}}|}
\prod_{\substack{t\in B''}}
2^{-\epsilon|j_{m_{t}+1}-j_{m_{t}}|}
\prod_{\substack{s\in A'''}}\partial_{l_{s}}\prod_{t\in B'''}\partial_{m_{t}}[\widetilde \theta]
\end{align}
where 

\begin{enumerate}

\smallskip

\item $A'$, $A''$, $A'''$ are disjoint subsets of $\{1, \ldots, n\}$ with $A'\cup A'' \cup A'''=\{1, \ldots, n\}$; 

\smallskip

\item $B'$, $B''$, $B'''$ are disjoint subsets of $\{1, \ldots, m\}$ with $B'\cup B'' \cup B'''=\{1, \ldots, m\}$;

\smallskip

\item 
$
A'\subset A_{0}\subset A'\cup A''$ and $B'\subset B_{0}\subset B'\cup B''$.

\smallskip

\item each function $\widetilde \theta$ depends on $\{A', A'', A''', B', B'', B'''\}$ and is normalized relative to $\varphi$ and $\psi$.
\end{enumerate}
Since $K$ is fixed, the sum for $p_{\mathfrak A}(I)\vee p_{\mathfrak B}(J)=K$ is precisely the sum over the set of free indices in $\{i_{1}, \ldots, i_{n}, j_{1}, \ldots, j_{m}\}$, and these are contained in the indices in $A'\cup A'' \cup B'\cup B''$. The exponential decay in the powers of $2$ in equation (\ref{Eqn8.21okm}) show that the sum over all the free indices converges, and what remains satisfies the requirements for weak cancellation in Definition \ref{Def1.14a2}. This completes the proof.
\end{proof}

\subsection{Further Examples}\label{FlagAlgebra}\quad

\smallskip

It may help to consider two additional examples.

\smallskip

\noindent\textbf{Example 2:} \quad Suppose that $N=5$, and that the two partitions of $\{1,2,3,4,5\}$ are $A = \{2,3\}$ and $B=\{2,3\}$. Thus $A$ and $B$ come from the same decomposition $\R^{5}= \R^{2}\oplus\R^{3}$.  There are  $\binom {4}{2} = 6$ different decompositions of $\{1,2,3,4\}$ into two disjoint subsets of cardinality $2$ and $2$. The six decompositions, the $5$-tuples $\tilde I$, $\tilde J$, and $\tilde I \vee \tilde J$, and the resulting new decomposition $C$ of $\{1,2,3,4,5\}$ are listed in Table 2 below. This example is typical of the convolution of two kernels coming from the \emph{same} flag (in this case coming from the decomposition $\{2,3\}$). Note that all decompositions lead to the same decomposition $\{2,3\}$. Thus the convolution will be a flag of the same type.

\centerline{\textbf{Table 2}}

\smallskip

{\SMALL

\begin{center}
\begin{tabular}{|c|c|c|c|c|c|}
\hline
{Decomposition} & Ordering & $K$ &New Decomposition &$C$&\textit{Free variables}\\\hline&&&&&\\
$ \{1,2\} \cup \{3,4\}$ &$i_{1}\leq i_{2}\leq j_{1}\leq j_{2}$&$\{j_{1},j_{1}, j_{2},j_{2},j_{2}\}$&$ \R^{2}\oplus\R^{3}$&$\{2,3\}$&$i_{1},i_{2}$\\&&&&&\\ \hline&&&&&\\
$ \{1,3\} \cup \{2,4\}$ &$i_{1}\leq j_{1}<i_{2}\leq j_{2}$&$\{j_{1},j_{1}, j_{2},j_{2},j_{2}\}$&$ \R^{2}\oplus\R^{3}$&$\{2,3\}$&$i_{1}, i_{2}$\\&&&&&\\\hline&&&&&\\
$ \{1,4\} \cup \{2, 3\}$ &$i_{1}\leq j_{1}\leq j_{2}< i_{2}$&$\{j_{1},j_{1}, i_{2},i_{2},i_{2}\}$&$ \R^{2}\oplus\R^{3}$&$\{2,3\}$&$i_{1},j_{2}$\\&&&&&\\\hline&&&&&\\
$ \{2,3\} \cup \{1,4\}$ &$j_{1}<i_{1}\leq i_{2}\leq j_{2}$&$\{i_{1},i_{1}, j_{2},j_{2},j_{2}\}$&$ \R^{2}\oplus\R^{3}$&$\{2,3\}$&$i_{2}, j_{1}$\\&&&&&\\\hline&&&&&\\
$ \{2,4\} \cup \{1,3\}$ &$j_{1}<i_{1}\leq j_{2}< i_{2}$&$\{i_{1},i_{1}, i_{2},i_{2},i_{2}\}$&$ \R^{2}\oplus\R^{3}$&$\{2,3\}$&$j_{1}, j_{2}$\\&&&&&\\\hline&&&&&\\
$ \{3,4\} \cup \{1,2\}$ &$j_{1}\leq j_{2}< i_{1}\leq i_{2}$&$\{i_{1},i_{1}, i_{2},i_{2},i_{2}\}$&$ \R^{2}\oplus\R^{3}$&$\{2,3\}$&$j_{1}, j_{2}$\\&&&&&\\\hline
\end{tabular}
\end{center}}

\vspace{.3in}
\bigskip

\noindent\textbf{Example 3:} \quad Suppose that $N=5$, and that the two partitions of $\{1,2,3,4,5\}$ are $A = \{5\}$ and $B=\{1,2,3,4,5\}$. Thus $m=1$ and $n=5$. There are then $\binom {6}{5} = 6$ different decompositions of $\{1,2,3,4,5,6\}$ into two disjoint subsets of cardinality $1$ and $5$. The six decompositions, the $5$-tuples $\mathcal I$, $\mathcal J$, and $\mathcal I \vee \mathcal J$, and the resulting new decomposition $C$ of $\{1,2,3,4,5\}$ are listed in Table 3 below.

\medskip

\centerline{\textbf{Table 3}}

\smallskip
\vbox{
{\tiny\begin{center}
\begin{tabular}{|c|c|c|c|c|c|}
\hline
\textit{Decomposition} & Ordering & $K$ &New Decomposition&$C$&\textit{Free variables}\\\hline&&&&&\\
$ \{1\} \cup \{2,3,4,5,6\}$ &$i_{1}\leq j_{1}\leq j_{2}\leq j_{3}\leq j_{4}\leq j_{5}$&$\{j_{1},j_{2},j_{3},j_{4},j_{5}\}$&$\R\oplus\R\oplus\R\oplus\R\oplus\R$&$\{1,1,1,1,1\}$&$i_{1}$\\&&&&&\\ \hline&&&&&\\
$ \{2\} \cup \{1,3,4,5,6\}$ &$j_{1}< i_{1}\leq j_{2}\leq j_{3}\leq j_{4}\leq j_{5}$&$\{i_{1},j_{2},j_{3},j_{4},j_{5}\}$&$\R\oplus\R\oplus\R\oplus\R\oplus\R$&$\{1,1,1,1,1\}$&$j_{1}$\\&&&&&\\\hline&&&&&\\
$ \{3\} \cup \{1,2,4,5,6\}$ &$j_{1}\leq j_{2}< i_{1}\leq j_{3}\leq j_{4}\leq j_{5}$&$\{i_{1},i_{1},j_{3},j_{4},j_{5}\}$&$\R^{2}\oplus\R\oplus\R\oplus\R$&$\{2,1,1,1\}$&$j_{1}, j_{2}$\\&&&&&\\\hline&&&&&\\
$ \{4\} \cup \{1,2,3,5,6\}$ &$j_{1}\leq j_{2}\leq j_{3}< i_{1}\leq j_{4}\leq j_{5}$&$\{i_{1},i_{1},i_{1},j_{4},j_{5}\}$&$\R^{3}\oplus\R\oplus\R$&$\{3,1,1\}$&$j_{1}, j_{2},j_{3}$\\&&&&&\\\hline&&&&&\\
$ \{5\} \cup \{1,2,3,4,6\}$ &$j_{1}\leq j_{2}\leq j_{3}\leq j_{4}< i_{1}\leq j_{5}$&$\{i_{1},i_{1},i_{1},i_{1},j_{5}\}$&$\R^{4}\oplus\R$&$\{4,1\}$&$j_{1}, j_{2}, j_{3}, j_{5}$\\&&&&&\\\hline&&&&&\\
$ \{6\} \cup \{1,2,3,4,5\}$ &$j_{1}\leq j_{2}\leq j_{3}\leq j_{4}\leq j_{5}< i_{1}$&$\{i_{1},i_{1},i_{1},i_{1},i_{1}\}$&$\R^{5}$&$\{5\}$&$j_{1}, j_{2}, j_{3}, j_{4}, j_{5}$\\&&&&&\\\hline
\end{tabular}
\end{center}}
}

\smallskip

\noindent {This example is typical of the convolution of a Calder\'on-Zygmund kernel with a kernel that is as fine as possible. In this case, the flag $A$ is coarser than flag $B$.}

\smallskip

\section{$L^{p}$-estimates for flag convolutions}\label{LpEstimates}

In this section we establish the boundedness in $L^{p}(G)$ for $1 < p < \infty$ of the operator $f\to \K*f$ given by convolution on $G$ with a flag kernel. To simplify the notation, we limit ourselves to the
special situation where the exponents of the dilations  $d_1 , d_2 , \ldots d_N$ equation (\ref{E2})  are 
positive integers.  The results proved below will go over to the
more general context with essentially no change in the proofs. We will also find it convenient to consider a
continuous parameter $s_k$ for the dilation of the $x_k$
variable, in place of the dyadic version $2^{i_k}$ appearing in
the previous sections.  Again, the various results above stated
for the dyadic dilations have simple modifications valid for
their continuous analogues.

\subsection{Maximal Functions}\label{SecMaxFns}\quad

\smallskip

As usual, $G$ is a homogeneous nilpotent Lie group that we identify with $\R^{N}$ as in Section \ref{Nilpotent}. We also let
\begin{equation*}
\begin{aligned}
G_{k}&= \left\{\x=(\x_{1}, \ldots, \x_{n})\in \R^{N}\,\big\vert\,\x_{1}= \cdots = \x_{k-1}=0\right\}\\
&= 
\left\{(\mathbf 0, \cdots , \mathbf 0, \x_{k}, \ldots, \x_{n}) \in \R^{N}\,\big\vert\,\x_{j}\in \R^{a_{j}},\,k \leq j \leq n\right\}.
\end{aligned}
\end{equation*}
We can identify $G_{k}$ with $\R^{a_{k}}\oplus \cdots \oplus \R^{a_{n}}$, and it follows from the formula (\ref{1.4}) for group multiplication that $G_k$ is a subgroup of $G$. We let $m(E)$ denote the Lebesgue measure of a set $E\subseteq G = G_{1}$, and $m_{k}(E)$ denote the Lebesgue measure on $G_{k}$ of a subset $E\subseteq G_{k}$. For $\s= (s_{k}, \ldots, s_{n})$, let 
\begin{equation*}
R_{\s} = R^{(k)}_{\s}= \{ (\x_{k}, \ldots, \x_{n}) \in G_k : | \x_k | \leq s^{k}_k , \ldots ,
| \x_n | \leq s^{n}_n \}.
\end{equation*} 
We say that the size of the rectangle $R_s$ is  \emph{acceptable} if $s_k \leq s_{k
+ 1} \leq \cdots \leq s_n$.

\begin{definition}\label{Def6.1}
The maximal function $M$, defined on $G=G_{1}$, is given by
\begin{equation*}
M ( f ) ( x ) = {\sf \sup} \, \frac{1}{m ( R_{\s} )} \:
\displaystyle{\int_{R_{\s}}} \, | f ( x \cdot y^{-1} ) | dy
\end{equation*}
where the supremum is taken over all acceptable rectangles $R_{\s}=R^{(1)}_{\s}\subseteq G=G_{1}$.
\end{definition}

\begin{theorem}\label{Theorem6.2}\quad
\begin{enumerate}[{\rm(a)}]
\smallskip
\item $M$ {\it is a bounded map of} 
$L^p ( G )$ {\it to itself, for}
$1 < p < \infty$.
\smallskip
\item For $1 <p<\infty$ there are constants $A_{p}$ so that if $\{ f_j \}$ are scalar-valued functions on $G$ then
\begin{equation*} 
\big\| \big(
{\sum\limits_{j}} \, M ( f_j )^2 \big)^{1/2}
\big\|_{L^p ( G )} \, \leq \,
{A}_p \big\| \big( 
\displaystyle{\sum\limits_{j}} | f_j|^2
\big)^{1/2} \big\|_{L^p ( G )}.
\end{equation*}
\end{enumerate}
\end{theorem}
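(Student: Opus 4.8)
The plan is to follow the scheme outlined in the introduction: dominate $M$ pointwise by a composition of one–parameter maximal operators attached to the chain of subgroups $G=G_1\supset G_2\supset\cdots\supset G_n$, each lifted to $G$, and then compose the corresponding scalar and $\ell^2$–valued $L^p$ estimates. First I recall the building blocks. For each $m$ let $B_m(r)\subset G_m$ be the homogeneous ball of radius $r$ for the dilations $\delta_r$ restricted to $G_m$ (they do restrict, since each $G_m$ is $\delta_r$–invariant and, by (1.4), a subgroup), and set
$$
M^{(m)}g(\x)=\sup_{r>0}\frac{1}{m_m(B_m(r))}\int_{B_m(r)}|g(\x\y^{-1})|\,d\y,\qquad \x\in G_m .
$$
By [FS], $M^{(m)}$ is of weak type $(1,1)$ and bounded on $L^\infty(G_m)$, hence bounded on $L^p(G_m)$ for $1<p\le\infty$ by Marcinkiewicz interpolation; and since $(G_m,|\cdot|,\text{Lebesgue})$ is a space of homogeneous type, $M^{(m)}$ also satisfies the Fefferman--Stein $\ell^2$–valued inequality on $L^p(G_m)$ for $1<p<\infty$.

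Next I lift each $M^{(m)}$ to an operator $\widetilde M^{(m)}$ on functions on $G$ by letting it act in the variables $(\x_m,\dots,\x_n)$ with $(\x_1,\dots,\x_{m-1})$ held fixed. The structural point, read off from (1.4): right multiplication by an element of $G_m$ leaves the blocks $\x_1,\dots,\x_{m-1}$ unchanged — because $M_j(\x,\y)$ depends only on the first $j-1$ blocks of $\x$ and of $\y$ and vanishes when its second argument is $\mathbf 0$ — so it preserves the fibration of $G$ by the sets $\{\x_1=c_1,\dots,\x_{m-1}=c_{m-1}\}$ and acts on each fibre measure–preservingly, and compatibly up to uniformly bounded distortion with the homogeneous–ball structure of $G_m$. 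Combining this with Fubini's theorem, $\widetilde M^{(m)}$ inherits from $M^{(m)}$ the $L^p(G)$–boundedness and the $\ell^2$–valued inequality, $1<p<\infty$, with constants depending only on those for $M^{(m)}$.

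The heart of the argument is the pointwise domination, for $f\ge0$,
$$
Mf(\x)\ \le\ C\,\widetilde M^{(1)}\widetilde M^{(2)}\cdots\widetilde M^{(n)}f(\x),\qquad \x\in G ,
$$
(in a suitable order of composition). Given an acceptable rectangle $R_\s$, so $s_1\le\cdots\le s_n$, one peels off the variables one block at a time: writing $R_\s=\{|\y_1|\le s_1^{d_1}\}\times R'$ with $R'\subset G_2$ the acceptable rectangle of radii $(s_2,\dots,s_n)$, Fubini expresses $m(R_\s)^{-1}\int_{R_\s}f(\x\y^{-1})\,d\y$ as an average over the $\R^{a_1}$–slab of an average over $R'\subset G_2$; the inner average is estimated by the inductive hypothesis applied inside the subgroup $G_2$, giving $C\,\widetilde M^{(2)}\cdots\widetilde M^{(n)}f$, and the outer slab–average is absorbed into $\widetilde M^{(1)}$. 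The one subtlety is non-commutativity: passing the $\y_1$–variables across the higher blocks in $\x\y^{-1}$ produces shifts governed by the polynomials $M_j$ of (1.4); but the $\delta_r$–homogeneity of $M_j$ together with the acceptability $s_1\le\cdots\le s_n$ forces these shifts, in block $j$, to have size $O(s_1^{d_j})\le O(s_j^{d_j})$, hence to remain inside a fixed dilate of the rectangle, at the cost of only uniformly bounded constants. (Concretely one may cover $R_\s$ by $O\!\big(m(R_\s)/m_1(B_1(s_1))\big)$ translates of the homogeneous ball $B_1(s_1)$ strung along the higher blocks, using that $B_1(s_1)\subset C\,R_\s$ precisely because $\s$ is acceptable.)

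Granting the domination, part (b) follows by iteration: applying the estimate to $|f_j|$ gives $\big(\sum_j M(f_j)^2\big)^{1/2}\le C\big(\sum_j(\widetilde M^{(1)}\cdots\widetilde M^{(n)}|f_j|)^2\big)^{1/2}$, and one invokes the $\ell^2$–valued inequality for $\widetilde M^{(1)}$ applied to the family $\big(\widetilde M^{(2)}\cdots\widetilde M^{(n)}|f_j|\big)_j$, then for $\widetilde M^{(2)}$, and so on down to $\widetilde M^{(n)}$, arriving at $C\big\|(\sum_j|f_j|^2)^{1/2}\big\|_{L^p}$; part (a) is the special case of a single function (or follows by composing the scalar $L^p$–bounds). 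The step I expect to be the main obstacle is the pointwise domination: making precise, in the non-abelian setting with the $\x$–dependent error terms of the group law, that the average over the thin acceptable rectangle $R_\s$ is controlled block by block by the composition of lifted ball–averaging operators. This is exactly where $s_1\le\cdots\le s_n$ enters essentially, and where one must check that both the mixing of lower blocks into higher ones and the fibrewise distortion of $G_m$–balls from the lifting step cost only constants independent of $\s$.
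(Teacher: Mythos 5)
Your overall architecture coincides with the paper's: dominate $M$ pointwise by a composition of the lifted one-parameter maximal operators $\widetilde M_k$ of the subgroups $G_k$, establish the scalar and Fefferman--Stein vector-valued bounds for each $M_k$ on $G_k$, transfer them to $G$ by the lifting argument, and compose. The scalar/vector-valued/lifting parts of your plan are fine and match Section \ref{SecMaxFns} and Appendix I. The gap is in the one step you yourself flag, and specifically in the \emph{order} in which you propose to carry out the pointwise domination. You peel off the $\x_1$-block \emph{last}, so that ``the outer slab-average is absorbed into $\widetilde M^{(1)}$,'' with the inner $G_2$-average handled first by induction. Two things go wrong. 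First, the inner average is then an average of the \emph{right}-translate $f(\cdot\,\y_1^{-1})$, and the lifted operators $\widetilde M^{(m)}$ are suprema of right-convolutions: they commute with left translations but not with the right translations your peeling produces, so the inductive bound cannot be pulled outside the $\y_1$-integral. Your proposed repair --- covering $R_{\s}$ by translates $z_iB^{(1)}(s_1)$ --- meets the same obstruction in the form of conjugation: $\tfrac{1}{m(B)}\int_{z_iB}|f(\x\y^{-1})|\,d\y$ is the average of $|f|$ over $\x z_i^{-1}\bigl(z_iB^{-1}z_i^{-1}\bigr)$, and for $z_i$ ranging over the eccentric rectangle $R_{\s}$ the conjugate $z_iBz_i^{-1}$ need not lie in a fixed dilate of $B^{(1)}(s_1)$ (the mixed monomials in $M_j(\mathbf z,\mathbf w)$ pair large coordinates of $z_i$ with coordinates of $B$, producing shifts of size far exceeding $s_1^{d_j}$ once there are three or more blocks). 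Second, even granting that, the literal outer average over the slab $\{(\y_1,\0,\dots,\0):|\y_1|\le s_1^{d_1}\}$ is an average over a lower-dimensional set and is not dominated by the full-dimensional ball maximal function $\widetilde M^{(1)}$.

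The paper's Lemma \ref{Lemma6.3} resolves both issues by reversing the order and working at the level of kernels: inequality (\ref{Eqn6.1}) states $\eta_{R^{(k)}_{c\s}}\le C\,\eta_{B^{(k)}_{s_k}}*(\delta_{x_k}\otimes\eta_{R^{(k+1)}_{\bar\s}})$, with the \emph{ball} kernel on the left of the convolution. After convolving with $f$, associativity puts the ball average innermost (applied first), and all subsequent averages are further right-convolutions acting on the already-averaged function, so no translation or conjugation ever has to be moved across a maximal operator. Iterating gives (\ref{Eqn6.3}) and hence $M\le C\,\widetilde M_n\circ\cdots\circ\widetilde M_1$, with $\widetilde M_1$ applied first. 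The verification of (\ref{Eqn6.1}) is done in adapted canonical coordinates of the second kind and uses exactly the homogeneity and acceptability you invoke. So your plan is salvageable, but the composition must be organized with $\widetilde M^{(1)}$ innermost (or the whole statement inverted via $\x\mapsto\x^{-1}$, as in (\ref{Eqn6.6}$^{\prime}$)); the ``suitable order'' is not a matter of convenience but the crux of the non-commutative difficulty.
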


To prove this theorem, we consider the standard maximal function $M_k$ on
the subgroup $G_k$ defined by
\[ M_k ( f ) ( x ) = \sup_{\rho>0} \, \frac{1}{m ( B{(\rho)})} \,
\displaystyle{\int\limits_{B{(\rho)}}} \, | f ( xy^{-1} )| dy \]
where $B{( \rho )} = B^{k}{( \rho)}$ is the automorphic
one-parameter ball given by 
\begin{equation*}
B^{(k)} ( \rho) = \big\{ (\x_{k}, \ldots, \x_{n})\in G_{k}\,\big\vert\,| \x_k | \leq \rho^k, 
| \x_{k+1} | \leq \rho^{k+1}, \ldots , |\x_n | \leq \rho^n \big\}.
\end{equation*}  
Let $\widetilde{M}_k$ be the maximal function in $G$ obtained by lifting
$M_k$ in $G_k$ to $G$.  (Facts about lifting are reviewed in the Appendix, Section \ref{Lifting}). The key lemma is

\smallskip
\begin{lemma}\label{Lemma6.3} There is a constant $C$ so that
\[
M \leq C\,\widetilde{M}_n \circ \widetilde{M}_{n - 1} \circ\ldots \circ \widetilde{M}_1 \,
.
\]
\end{lemma}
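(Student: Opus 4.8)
The plan is to reduce the multi-parameter maximal function $M$ to a composition of the one-parameter maximal functions $M_k$ on the subgroups $G_k$, by proving the pointwise domination $M \le C\,\widetilde M_n\circ\widetilde M_{n-1}\circ\cdots\circ\widetilde M_1$. Once this lemma is in hand, Theorem \ref{Theorem6.2} follows immediately: each $\widetilde M_k$ is the lift of the one-parameter Hardy--Littlewood-type maximal function $M_k$ on the homogeneous group $G_k$, which is bounded on $L^p(G_k)$ for $1<p<\infty$ and satisfies the Fefferman--Stein vector-valued inequality by the results in \cite{FoSt82}; lifting preserves these $L^p$ and $\ell^2$-valued bounds (this is the content of the lifting discussion in Section \ref{Lifting}), and a composition of operators bounded on $L^p$ (resp.\ on $L^p(\ell^2)$) is again bounded on $L^p$ (resp.\ $L^p(\ell^2)$). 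So the real work is entirely in Lemma \ref{Lemma6.3}.

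To prove the lemma, I would fix an acceptable rectangle $R_\s = R^{(1)}_\s \subseteq G$ with $s_1\le s_2\le\cdots\le s_n$, and estimate $\frac{1}{m(R_\s)}\int_{R_\s}|f(xy^{-1})|\,dy$ by peeling off one variable at a time, starting from $y_1$. Writing $y=(y_1,y_2,\ldots,y_n)$ with $y_j\in\R^{a_j}$, the averaging over $y_1\in\{|y_1|\le s_1^{d_1}\}$ at fixed $(y_2,\ldots,y_n)$ is comparable to a one-parameter average over the ball $B^{(1)}(s_1)$ in $G=G_1$ --- here one uses that $s_1\le s_2\le\cdots\le s_n$, so the ball $B^{(1)}(s_1)$ is contained in $R_\s$ up to a bounded dilation factor, hence averaging over $R_\s$ dominates (a constant times) the iterated average ``first over the $G_1$-ball of radius $s_1$, then over the remaining box in $(y_2,\ldots,y_n)$''. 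The $G_1$-average is bounded by $\widetilde M_1 f$. Iterating: after absorbing $y_1$ into $\widetilde M_1$, one is left with an average of $\widetilde M_1 f$ over a box in $(y_2,\ldots,y_n)$, i.e.\ over an acceptable rectangle $R^{(2)}_{(s_2,\ldots,s_n)}$ inside $G_2$; repeating the argument with $G_2$ in place of $G_1$ bounds this by $\widetilde M_2(\widetilde M_1 f)$, and so on. The monotonicity $s_1\le\cdots\le s_n$ is exactly what is needed to compare each box to a one-parameter ball at each stage, and it is why only acceptable rectangles are allowed in Definition \ref{Def6.1}.

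A point requiring care is the interaction with the group structure: $xy^{-1}$ is not a product variable-by-variable, because the group law \eqref{1.4} has the lower-triangular form $(xy^{-1})_k = x_k - y_k + (\text{polynomial in }x_j,y_j,\,j<k)$. I would handle this exactly as in Section \ref{SupportProperties}: after the change of variables $y_k\mapsto y_k + (\text{correction depending only on } y_1,\ldots,y_{k-1})$, the region of integration $R_\s$ is comparable to a box of the same dimensions, and one can split the convolution so that, at the stage of integrating out $y_1,\ldots,y_{k-1}$, these variables have already been ``used'' and the remaining integration in $(y_k,\ldots,y_n)$ takes place over a subgroup coset of $G_k$; the relevant ball comparisons in $G_k$ then go through because $G_k$ is a homogeneous subgroup and the dilations restrict to automorphisms of $G_k$. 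The main obstacle, and the step I expect to be most delicate, is making this ``peel off one variable, land inside $G_k$'' reduction precise: one must check that after integrating over $y_1,\dots,y_{k-1}$ the resulting function is (pointwise bounded by) the lift to $G$ of a function on $G_k$ to which $\widetilde M_k$ can legitimately be applied, and that the nonlinear corrections in the group law do not destroy the comparison between the box $R^{(k)}_{(s_k,\ldots,s_n)}$ and a genuine one-parameter ball $B^{(k)}(s_k)$ translated appropriately. Once that bookkeeping is set up, the chain of inequalities $\frac{1}{m(R_\s)}\int_{R_\s}|f(xy^{-1})|\,dy \le C\,\widetilde M_n\circ\cdots\circ\widetilde M_1 f(x)$ follows by induction on $n$, and taking the supremum over acceptable $\s$ gives the lemma.
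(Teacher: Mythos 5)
Your overall plan coincides with the paper's: one proves a one--step kernel inequality
$\eta_{R^{(k)}_{c\s}} \le C\,\eta_{B^{(k)}_{s_k}} * \big(\delta_{x_k}\otimes\eta_{R^{(k+1)}_{\bar{\s}}}\big)$
(convolution on $G_k$, computed in adapted canonical coordinates of the second kind so that the triangular group law is harmless), iterates it by downward induction on $k$, and then convolves $|f|$ with the resulting chain to bound the $R_{\s}$--average by $\widetilde{M}_n\circ\cdots\circ\widetilde{M}_1 f$. So the architecture is right.

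There is, however, a genuine gap at the one place where you justify the key step. You argue that since $B^{(1)}(s_1)$ is contained in $R_{\s}$ up to a bounded dilation, ``averaging over $R_{\s}$ dominates the iterated average.'' Two problems. First, the lemma needs the \emph{opposite} domination: the iterated average (ball at scale $s_1$, then the box $R^{(2)}_{\bar{\s}}$) must dominate the $R_{\s}$--average; the inequality you wrote is the content of Lemma \ref{Lemma6.5}, which is used for the reverse bound (\ref{Eqn6.6}), not for Lemma \ref{Lemma6.3}. Second, neither direction follows from the containment $B^{(1)}(s_1)\subseteq R_{\s}$ alone, since the measures of the two sets are not comparable. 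What one must actually prove is a pointwise lower bound for the convolution of the two normalized indicators on a fixed shrinking $R^{(1)}_{c\s}$ of the rectangle: for $\x\in R^{(1)}_{c\s}$, the set of $\y'$ for which both indicator factors equal $1$ has measure large enough that, after dividing by $m(B^{(1)}_{s_1})\,m(R^{(2)}_{\bar{\s}})$, one obtains $c\,m(R^{(1)}_{\s})^{-1}$. Equivalently, the relevant set--theoretic fact is the product inclusion $R^{(1)}_{c\s}\subseteq B^{(1)}(s_1)\cdot R^{(2)}_{\bar{\s}}$ with comparable measures, and it is exactly here that $s_1\le\cdots\le s_n$ and the homogeneity of the polynomial corrections in the group law are used. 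Relatedly, the first step is not an average over the slice $\{|y_1|\le s_1^{d_1}\}$ with $(y_2,\ldots,y_n)$ frozen: $B^{(1)}(s_1)$ is a full automorphic ball, small at scale $s_1$ in \emph{every} variable, and it is precisely its smallness in the variables $y_2,\ldots,y_n$ (guaranteed by $s_1\le s_j$) that allows the box $R^{(2)}_{\bar{\s}}$ to absorb it at the next stage.
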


\begin{proof}
Let $\s=(s_{k}, s_{k+1}, \ldots, s_{n})$ and $\bar\s=(s_{k+1}, \ldots, s_{n})$. Let 
\begin{align*}
\chi_{R^{(k)}_{\s}}&= \text{the characteristic function of the rectangle $R^{(k)}_{\s}$ in the subgroup $G_k$},\\
\chi_{R_{\bar{\s}}}^{(k + 1)} &= \text{the characteristic function of  $R^{(k +1)}_{\bar{\s}}$ in the subgroup $G_{k+1}$},\\
\chi_{B^{(k)}(\s_k)}&=\text{the characteristic function of the ball
$B^{(k)} ( \s_k)$ in $G_k$}.
\end{align*}
Let $\eta_{R^{(k)}_\s}$,
$\eta_{R^{(k+1)}_{\bar{\s}}}$, 
$\eta_{B^{(k)}_{s_k}}$  be 
the normalized versions of these functions, so that, for example,   
$
\eta_{R^{(k)}_{\s}} ( x )  = m_{k} ( R^{(k)}_\s )^{-1} \,
\chi_{R^{(k)}_{\s}}
$
with a similar definition for 
$\eta_{B^{(k)}_{\s_k}}$.
The first observation to make is that if $s_k \leq s_{k+1} \ldots \leq s_n$, there is an estimate
$
\eta_{R^{(k)}_\s} 
\lesssim \eta_{B^{(k)}_{s_k}} \, \ast \, 
\eta_{R^{(k + 1)}_{\bar{\s}}},
$
in the sense there are constants
$c, C$, so that
\begin{equation}\label{Eqn6.1}
\eta_{R^{(k)}_{c \s}} \leq 
C \eta_{B^{(k)}_{s_k}} \, \ast \,
( \delta_{x_k} \otimes \eta_{R^{(k + 1)}_{\bar{\s}}} )
\end{equation}
where the convolution is now on the group $G_k$ and $\delta_{x_k}$
denote the delta function of the $x_k$ variables.\footnote{Inequality (\ref{Eqn6.1})
is essentially contained in Subsection \ref {SupportProperties}.} In fact, 
\begin{equation*}\chi_{B ( s_k)} \ast ( \delta_{x_k} \otimes
\chi_{R^{k + 1)} ( \bar{\s})} ) = 
\int_{G_{k}}
\chi_{B (s_k)} ( x \cdot y^{-1} ) 
\, \chi_{R^{k+1}_{\bar{\s}}} ( y ) dy . 
\end{equation*}
We introduce a new coordinate system in $G_k$, so that if $\x \in
G_k$, then 
\begin{equation*}
\x = ( \x_k , \x_{k+1} , \ldots \x_n ) = (\x_k , 0 , \ldots 0 )
\cdot ( 0 , \x^\prime_{k+1} , \ldots \x^\prime _n )= (\x_k) \cdot
\x^\prime ,
\end{equation*}
 with $\x^\prime \in G_{k+1}$.  In this new coordinate system
the integral can be written as 
\begin{equation}\label{Eqn6.2}
\displaystyle{\int_{G_{k + 1}}} 
\chi_{B ( s_k)} 
( \x_k \cdot \y^\prime ) \, 
\chi_{R^{k+1}_{\bar{\s}}} 
( \y^{\prime -1} \cdot \x^\prime ) d\y^\prime.
\end{equation}
Now if $\x \in R^{(k)}_{c\s}$, and $c > 0$ is small, then $| \x_k | \leq 
c^k s^k_k$ and $\x^\prime \in R^{(k + 1)}_
{c^\prime \bar{\s}}$, with
$c^\prime$ small with $c$.  (This is because 
$\x^\prime_j = q_j ( \x )$ 
with $q_j$ homogeneous polynomials of degree
$j$, $k + 1 \leq j \leq n$.) So if 
$\y^\prime \in B^{k + 1)}_{c s_{k+1}}$
then 
$\y^\prime{^{-1}} \cdot 
\x^\prime \in R^{k+1}_{\bar{\s}}$.  Thus for $\x \in R_{c\s}^{(k)}$, 
the integrand above is 1, whenever 
$\y^\prime \in B^{k+1}_{c s_{k+1}}$.  
The result is that the last integral exceeds 
$m ( B^{k+1}_{c s_{k+1}} )$ for $\x \in R^{(k)}_{cs}$.  Dividing through by
the normalizing factors and observing that 
\[
\frac{1}{m_{k} ( B^{(k)}_{s_k})} \, \cdot \, 
\frac{1}{m_{k+1} ( R^{k+1}_{\bar{\s}})}
\, \cdot \, m_{k+1} (B^{k+1}_{c s_{k+1}})  
= c^{\prime\prime} \, \frac{1}{m_{k} ( R^{(k)}_\s )}
\]
proves the claim (\ref{Eqn6.1}).  Proceeding this way by downward induction,
starting with the trivial case $k = n$, gives
\begin{equation}\label{Eqn6.3}
\eta_{R^{(1)}_{cs}} 
\leq C \eta_{B^{(1)}_{s_1}} \ast
(\delta_{x_1} 
\otimes \eta_{B^{(2)}_{s_2}} ) 
\cdots \ast ( 
\delta_{x_1 \cdots x_{n-1}} 
\otimes \eta_{B^{(n)}_{s_n}} )
\end{equation}
whenever $s_1 \leq s_2 \ldots \leq s_n$.  The inequality (\ref{Eqn6.3}) then
implies Lemma \ref{Lemma6.3}.
\end{proof}

We now turn to the proof of Theorem \ref{Theorem6.2}. For each $k$, the maximal functions $M_k$ satisfying the usual
weak-type and $L^p$ estimates on $L^p ( G_k)$ (because the balls
$B^{(k)} ( s_k )$ satisfy the required properties for the Vitali covering
argument).  Moreover, the vector-valued version
\begin{equation}\label{Eqn6.4}
\big\| \big( \displaystyle{\sum_{j}} 
( M_k ( f_j ))^2)^{1/2} \big\|_{L^p(G_k)} \leq \mathcal{A}_p 
\big\| ( \sum | f_j |^{2} )^{1/2} \big\|_{L^p ( G_k)}
\end{equation}
also holds.  This can be shown by following the main steps in the case
of $\mathbb{R}^n$ (see e.g. \cite{St93}, Chapter 2).  In fact, one proves first
a weak-type inequality for the vector-valued case, using a
Calder\'{o}n-Zygmund decomposition, which establishes (\ref{Eqn6.4}) for $1 < p
\leq 2$.  An additional argument is needed for $p > 2$, and is based on
the fact that 
$$
\int_{G_k} M_k ( f )^2 (x)
\omega(x) dx \leq A \int_{G_k} | f ( x ) |^2 (
M_k \omega ) ( x ) dx
$$ 
for all positive functions $\omega$.
Next a lifting argument (see the Appendix) 
allows one to lift (\ref{Eqn6.4})
on $G_k$ to $G$ to get 
\begin{equation*}\tag{\ref{Eqn6.4}$^{\prime}$}
\big\| \big( \displaystyle{\sum\limits_{j}} |
\tilde{M}_k ( f_j )|^2 \big)^{1/2} \big\|_{L^p ( G )} \leq A_p \big\|
\big( \displaystyle{\sum\limits_{j}} | f_j |^2 \big)^{1/2} \big\|_{L^p (
G )}
\end{equation*}
As a result we obtain a similar inequality for $\tilde{M}_n \circ
\tilde{M}_{n-1} \circ\ldots \circ \tilde{M}_1$, and an application of Lemma
\ref{Lemma6.3} then proves Theorem \ref{Theorem6.2}. 

\smallskip

Our actual application of the estimate in equation (\ref{Eqn6.4}$^{\prime}$) is contained in the following.

\begin{corollary}\label{Cor6.4}
Suppose $F_t ( x )$ is a 
measurable function of $(t , x ) \in ( \mathbb{R}^+)^n \times
\mathbb{R}^N$.  Then
\[
\big\| \big(
\displaystyle{\int\limits_{( \mathbb{R}^+ )^n}} 
( {M} ( F_t) ) ( x ) )^2 dt \big)^{1/2} 
\big\|_{L^p ( \mathbb{R}^M)} \, \leq \,
A_p \big\| \big( \int\limits_{( \mathbb{R}^+ )^n} | F_t ( x ) |^2 dt
\big)^{1/2} \big\|_{L^p ( \mathbb{R}^N)}
\]
\end{corollary}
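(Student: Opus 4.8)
The statement is the vector-valued (continuous-parameter) analogue of the scalar boundedness of $M$, with the discrete sum $\sum_j$ replaced by an integral $\int_{(\R^+)^n}dt$. My plan is to deduce it from Theorem \ref{Theorem6.2}(b) by the standard two-step argument: first establish the pointwise control $M \leq C\,\widetilde M_n\circ\cdots\circ\widetilde M_1$ from Lemma \ref{Lemma6.3}, which is already in hand, and then prove a continuous-parameter vector-valued bound for each lifted one-parameter maximal operator $\widetilde M_k$, namely
\begin{equation*}
\Big\|\Big(\int_{(\R^+)^n}|\widetilde M_k(F_t)(x)|^2\,dt\Big)^{1/2}\Big\|_{L^p(\R^N)}\leq A_p\Big\|\Big(\int_{(\R^+)^n}|F_t(x)|^2\,dt\Big)^{1/2}\Big\|_{L^p(\R^N)}.
\end{equation*}
Composing these $n$ estimates and applying Lemma \ref{Lemma6.3} then yields the Corollary.

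The passage from the $\ell^2$-valued inequality of Theorem \ref{Theorem6.2}(b) to the $L^2((\R^+)^n,dt)$-valued one is routine measure theory: one approximates $L^2((\R^+)^n)$ by finite-dimensional subspaces spanned by characteristic functions of a dyadic mesh, applies the $\ell^2$ inequality to the resulting finitely-many functions with uniform constant $A_p$, and passes to the limit by monotone/dominated convergence. Alternatively — and this is cleaner — one invokes the general principle that a linear or sublinear operator bounded on $L^p$ of a $\sigma$-finite measure space with a bound for $\ell^2_n$-valued functions uniform in $n$ automatically extends to $L^2(\Omega,\mu)$-valued functions for any $\sigma$-finite $(\Omega,\mu)$; this is exactly the mechanism used in \cite{St93}, Chapter 2, to promote scalar maximal estimates to their vector-valued forms. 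Since Theorem \ref{Theorem6.2}(b) already provides the $\ell^2$-valued bound for $M$ itself (and its proof, sketched above via the weighted inequality $\int M_k(f)^2\omega\leq A\int|f|^2 M_k\omega$ and the lifting argument, passes through the one-parameter operators $M_k$ and $\widetilde M_k$), one can in fact apply this extension principle directly to $M$ rather than to each $\widetilde M_k$ separately; the composition step is then unnecessary.

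Concretely, I would argue as follows. Fix simple functions $F_t=\sum_{i=1}^{L}\mathbf 1_{E_i}(t)\,f_i(x)$ with the $E_i\subset(\R^+)^n$ disjoint of finite measure; then $\int|F_t(x)|^2dt=\sum_i|E_i|\,|f_i(x)|^2$ and, by sublinearity of $M$ in a form compatible with averaging over $t$ (here one uses that $M$ commutes with the decomposition since $M(F_t)(x)=M(f_i)(x)$ for $t\in E_i$), $\int (M(F_t)(x))^2dt=\sum_i|E_i|\,(M(f_i)(x))^2$. Applying Theorem \ref{Theorem6.2}(b) to the functions $g_i=|E_i|^{1/2}f_i$ gives the desired inequality for such $F_t$ with constant $A_p$ independent of $L$ and of the $E_i$. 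Since simple functions of this type are dense and both sides are continuous under the relevant monotone limits (approximate a general nonnegative $\int|F_t|^2dt$ from below by such sums and use Fatou on the left), the inequality extends to all measurable $F_t$ for which the right-hand side is finite, which is the assertion of the Corollary.

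The main obstacle is purely bookkeeping: verifying that $M$ interacts correctly with the $t$-integration — i.e. that $\big(\int (M F_t)^2 dt\big)^{1/2}$ is itself a well-defined measurable function and that the reduction to simple $F_t$ is legitimate (measurability of $(t,x)\mapsto M(F_t)(x)$, and the fact that the $\ell^2$-to-$L^2(dt)$ passage does not lose the constant). None of this is deep; it is the standard ``an $\ell^2$-valued bound upgrades to an $L^2(\mu)$-valued bound'' argument, and once it is in place the Corollary follows immediately from Theorem \ref{Theorem6.2}(b) and, if one prefers the composed route, Lemma \ref{Lemma6.3}.
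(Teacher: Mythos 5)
Your proposal is correct and follows essentially the same route as the paper: both deduce the continuous-parameter inequality from the discrete vector-valued bound (Theorem \ref{Theorem6.2}(b), itself obtained from (\ref{Eqn6.4}$'$) via Lemma \ref{Lemma6.3}) by discretizing the $t$-variable and passing to the limit, the paper via an $\eps$-mesh applied to jointly continuous compactly supported $F_t$ and you via step functions in $t$, with a Fatou/monotone-convergence argument handling general measurable $F_t$ in either case. The identity $\int(M F_t)^2\,dt=\sum_i|E_i|(Mf_i)^2$ for your product-form simple functions is exactly the right observation, and your monotone approximation from below is, if anything, a slightly cleaner way to justify the final limiting step.
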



\begin{proof}

\noindent
Assume first that $F_t ( x )$ is jointly
continuous and has compact support.  For each $\epsilon > 0$, apply the
conclusion (\ref{Eqn6.4}$^{\prime}$) to the case where $\{ f_j ( x ) \}$ are an
enumeration of the $\epsilon^{n/2} F_{\epsilon i_1 , \epsilon i_2 ,
\ldots \epsilon i_n} ( x )$, for $(i_1 , i_2 , \ldots i_n )$ ranging
over $( \mathbb{Z}^+ )^n$, and then let $\epsilon \rightarrow 0$,
obtaining the desired result in this case.  For the general $F_t$,
assuming that $\| \big({\int_{(
\mathbb{R}^+)^n}} 
\, | F_t ( x ) |^2 dt \big)^{1/2} \|_{L^p ( G)}$ is
finite, find a sequence $F^{(n)}_t ( x )$ of continuous functions of
compact support, with $F^{(n)}_t ( x ) \rightarrow F_t ( x )$ almost
everywhere, so that 
\[
\big\| \big( \displaystyle{\int_{( \mathbb{R}^+)^n}} | F^{(n)}_t
|^2 dt \big)^{1/2} 
\big\|_{L^p} \rightarrow \big\| \big( \displaystyle{\int_{(
\mathbb{R}^+)^n}} | F_t |^2 dt )^{1/2} \big\|_{L^p}
\]
and apply the previous case, via Fatou's lemma.
\end{proof}

It will also be useful to observe that effectively the estimate (\ref{Eqn6.3}) can
be reversed in the following way.

\begin{lemma}\label{Lemma6.5}
We have
\begin{equation}\label{Eqn6.5}
\eta_{B^{(1)}_{s_1}} \, {\ast} \,( \delta_{x_1} \otimes
\eta_{B^{(2)}_{s_2}} ) \,\ast\,\cdots \,\ast \,( \delta_{x_1 , \ldots x_{n-1}}
\otimes \eta_{B^{(n)}_{(s_n)}} ) 
\leq C \eta_{R^{(1)}_{cs^\ast}}
\end{equation}
for an appropriate $C > 0$.  Here $s^\ast = ( s^\ast_1, \ldots
s^\ast _n )$, with $s^\ast_k = \max \{ s_j , \, 1 \leq j \leq k \}$. Note that we do not require that $s_1 \leq s_2 \cdots \leq s_n$.
\end{lemma}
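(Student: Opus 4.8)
The plan is to reduce inequality \eqref{Eqn6.5} to the family of support statements already developed in Subsection \ref{SupportProperties}, specifically to Lemma \ref{Lemma5.1}, exactly in the way that \eqref{Eqn6.3} was used to prove Lemma \ref{Lemma6.3}, but now running the convolution estimate in the opposite direction. The left-hand side of \eqref{Eqn6.5} is an $n$-fold group convolution on $G=G_1$ of normalized indicators of automorphic one-parameter balls $B^{(k)}(s_k)$, each sitting inside a copy of $G_k$ embedded in $G$ via the coordinate shifts $\delta_{x_1\cdots x_{k-1}}\otimes(\cdot)$. I would first record that each factor $\eta_{B^{(k)}(s_k)}$ is (a constant multiple of) an $L^1$-normalized dilate of a fixed bump supported in the unit ball of $G_k$, scaled by the $n$-tuple whose $k$-th through $n$-th entries are governed by $s_k$; the point is that Lemma \ref{Lemma5.1} says the convolution of two such $I$-dilate and $J$-dilate bumps is an $(I\vee J)$-dilate of a normalized bump supported in a fixed ball. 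Iterating Lemma \ref{Lemma5.1} $n-1$ times over the $n$ factors, the entire left-hand side is bounded by a constant times the $L^1$-normalized indicator of the ball obtained by taking the coordinatewise maximum of the scales, which is precisely the rectangle $R^{(1)}_{cs^\ast}$ with $s^\ast_k=\max\{s_j:1\le j\le k\}$.

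Concretely, I would proceed by downward induction on the number of factors, mirroring the induction used in the proof of Lemma \ref{Lemma6.3}. The base step is the trivial identity $\eta_{B^{(n)}(s_n)}=\eta_{R^{(n)}_{s_n}}$ on $G_n$. For the inductive step, I would assume
$$
(\delta_{x_1\cdots x_{k}}\otimes\eta_{B^{(k+1)}(s_{k+1})})\ast\cdots\ast(\delta_{x_1\cdots x_{n-1}}\otimes\eta_{B^{(n)}(s_n)})\le C\,\eta_{R^{(k+1)}_{c\bar s^\ast}}
$$
for the appropriate maxima $\bar s^\ast=(s^\ast_{k+1},\dots,s^\ast_n)$, and then convolve on the left by $\delta_{x_1\cdots x_{k-1}}\otimes\eta_{B^{(k)}(s_k)}$. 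Writing both factors as normalized dilates of unit bumps on $G$ (the monomial nature of the group law, formula \eqref{1.4}, ensures that the shifted copies of $G_k$ and $G_{k+1}$ are handled by the homogeneity computations of Section \ref{SupportProperties}), Lemma \ref{Lemma5.1} applies and yields a normalized dilate whose scale $n$-tuple is the coordinatewise maximum. Reading off that maximum against the definition of $s^\ast$ finishes the step; since we are allowed to enlarge the constant $C$ and shrink $c$ at each of the finitely many stages, the accumulation of constants is harmless.

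The main obstacle, as in the proof of Lemma \ref{Lemma6.3}, is bookkeeping: one must check that the coordinate changes $\x=(\x_k)\cdot\x'$ with $\x'\in G_{k+1}$ (used there to write the group convolution over $G_{k+1}$) do not distort the support balls by more than a bounded factor, i.e.\ that $\x\in R^{(k)}_{c\s}$ forces $\x'\in R^{(k+1)}_{c'\bar\s}$ with $c'$ comparable to $c$, which is exactly the statement that $\x'_j=q_j(\x)$ is a homogeneous polynomial of degree $j$. This is the only genuinely non-formal point, and it is already implicit in the forward direction; for the reverse direction one additionally needs that the scale of the convolution is the \emph{maximum} rather than some larger quantity, which is precisely the content of the $I\vee J$ conclusion in Lemma \ref{Lemma5.1} (here the fact that we do \emph{not} assume $s_1\le s_2\le\cdots\le s_n$ is accommodated automatically, since $s^\ast$ is defined to be the running maximum). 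I expect the write-up to be short once the reduction to Lemma \ref{Lemma5.1} is made explicit.
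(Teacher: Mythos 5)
Your overall strategy coincides with the paper's: a downward induction whose single step is the estimate $\eta_{B^{(k)}_{s_k}}\ast(\delta_{x_k}\otimes\eta_{R^{(k+1)}_{\bar{\s}}})\le c\,\eta_{R^{(k)}_{c\widetilde{\s}}}$ with $\widetilde s_j=\max\{s_k,s_j\}$ for $j>k$ and $\widetilde s_k=s_k$, iterated to produce the running maximum $s^\ast$; and you correctly isolate the role of the homogeneity of the coordinate change $\x=(\x_k)\cdot\x'$.

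The gap is in how you propose to prove that single step. Lemma \ref{Lemma5.1} concerns convolutions $[\varphi]_I\ast[\psi]_J$ in which \emph{both} factors are $L^1$-normalized dilates of compactly supported functions on the full group; at every stage of your induction one of the two factors is $\delta_{x_k}\otimes\eta_{R^{(k+1)}_{\bar{\s}}}$, a measure carried by the proper subgroup $G_{k+1}\subset G_k$. It is not a $J$-dilate of a bump on $G_k$, so Lemma \ref{Lemma5.1} simply does not apply to it, and ``writing both factors as normalized dilates of unit bumps on $G$'' is not possible as stated. (The non-smoothness of the indicators is harmless by comparison, since only the $m=0$ part of that lemma would be relevant.) What is actually required is to go back to the integral \eqref{Eqn6.2} and prove two things by hand: a support bound --- if $\x\notin R^{(k)}_{C\widetilde{\s}}$ then either $|\x_k|\ge C^\prime s_k^k$, which kills the factor $\chi_{B(s_k)}(\x_k\cdot\y^\prime)$, or $\x^\prime\notin R^{(k+1)}_{c\widetilde{\bar{\s}}}$, which kills the factor $\chi_{R^{(k+1)}_{\bar{\s}}}(\y^{\prime-1}\cdot\x^\prime)$ --- together with a size bound on the integral (majorize it by the measure of the $\y^\prime$-support of one of the two factors) so that the three normalizing constants combine into $m_k(R^{(k)}_{\widetilde{\s}})^{-1}$. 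This is the exact mirror of the computation already carried out for Lemma \ref{Lemma6.3}, and it, rather than Lemma \ref{Lemma5.1}, is what closes your induction. (One could alternatively regularize $\delta_{x_k}$ by bumps and invoke Lemma \ref{Lemma5.1} in a limiting argument, but then you must justify the passage to the limit in the degenerate $\x_k$-scale, which your write-up does not supply.)
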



\smallskip

The proof is based on the observation that \,\,$\eta_{B^{(k)}_{s_k}} \ast ( \delta_{x_k} \, \otimes \,\eta_{R^{k+1}_{\bar{\s}}}) \leq c \,\eta_{{R^{k}}_{c{ \!s{\tilde{ ~ }}}}}
$\,\,
where 
\begin{equation*}
\widetilde s_{j}=
\begin{cases}
\max\{s_{k},s_{j}\}&\text{if $j>k$,}\\
s_{k}&\text{if $j=k$.}
\end{cases}
\end{equation*}
In fact if $\x \notin R^{k}_{C \widetilde{\s}}$ 
(for some large $C$)  then either
$|\x_k | \geq  C^\prime \,{s_{k}^{k}}$ 
or 
$\x^\prime \notin R^{(k + 1)}_{c \widetilde{\bar{s}}} \,.$
Looking back at the integral in equation (\ref{Eqn6.2}) we see that the integral vanishes,
because $\chi_{B ( s_k)} ( \x_k \cdot \y^\prime ) = 0$ in the first
case, or $\chi_{R^{(k + 1)}_{\bar{s}_{\tilde{~}}}} 
( \y^{\prime - 1} \cdot \x^\prime ) = 0$ 
in the second case.  Moreover as above, this integral
is majorized by 
$C\, m ( B^{(k+1)}_{c s^{~}_{k + 1}} )$.
Altogether then, we have $\eta_{B^{(k)}_{s_k}}\,{\ast} \,( \delta_{x_k} \otimes 
\eta_{R^{(k + 1)}_{\bar{s}}} ) \leq c \, 
\eta_{R_{\widetilde s}^{(k)}}$, and an induction proves Lemma \ref{Lemma6.5}.

\smallskip

Now let $A_s$ denote the function appearing on the left-side of (\ref{Eqn6.5}).
Then as a consequence we have
\begin{equation}\label{Eqn6.6}
| ( f \ast A_s ) ( x ) | \leq C \, M ( f ) ( x ),
\end{equation} 
for all $\s = ( s_1 , \ldots , s_n )$, not necessarily in increasing order. Similarly if 
$A^\ast_s = ( \delta_{x_1 , \ldots x_{n-1}} \otimes
\eta_{B^{(n)}_{s_n}} ) \ast 
\cdots \ast ( \eta_{B^{(1)}_{s_1}})$  
we also have $A^\ast_s ( x ) \leq
c \, \eta_{R^{(1)}_{s^\ast}}$.  This follows from (5) if we observe that
$A^\ast_s ( x ) = A_s ( x^{-1})$.  As a result, in analogy to
(\ref{Eqn6.6}),  we have for all $\s$
\begin{equation*}\tag{\ref{Eqn6.6}$^{\prime}$}
| ( f \ast A^\ast_s ) ( x ) | \leq c \, M ( f ) ( x
)
\end{equation*}
Indeed, one has $R^{-1}_{s^\ast} = R_{s^\ast}$ if, in defining $R_s$, a
coordinate system is used where the inverse of $\x=(\x_{1}, \ldots, \x_{n})$ is given by $\x^{-1} = ( - \x_1 , \ldots , - \x_n)$.  Alternatively, if we
use cannonical coordinates of the second kind, as above, then one has
$R_{c_1 s^\ast} \subset R^{-1}_{s^\ast} \subset R_{c_2 s^\ast}$, for two
appropriate constants $c_1$ and $c_2$; this also leads to (\ref{Eqn6.6}$^{\prime}$).

\subsection{Comparisons}\label{Sec6.2}\quad

\smallskip

The basic comparison function is 
\[
\Gamma_t ( x ) = t_1 \cdot t_2 \cdots \, \cdot t_n \cdot 
\displaystyle{\mathop{\Pi}\limits_{k = 1}^{n}} ( t_1 + t_2 \cdots + t_k 
+ N_1 ( x ) \cdots + N_k ( x ))^{- Q_k - 1}
\]
for $\t = ( t_1 , \ldots , t_n )$, $t_j > 0$.  Recall that $N_k ( x ) = |
x_k |^{1/k}$ and $Q_k = ka_k$ with $a_k$ the dimension of the $x_k$
space.


\begin{theorem}\label{Theorem6.6}
\begin{equation}\label{Eqn6.7}
\sup_{t}\,\big\vert (f*\Gamma_{t})(\x)\big\vert \leq C\,M(f)(\x),
\end{equation}
where the supreme is taken over all $t$, with $t_j > 0$.
\end{theorem}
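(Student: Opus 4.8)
The plan is to dominate the comparison kernel $\Gamma_{t}$ pointwise by a suitable average of the one-parameter balls $A_{s}$ from the previous subsection, and then invoke the maximal-function estimate \eqref{Eqn6.6}. Concretely, I would first rewrite $\Gamma_{t}(\x)$, using that $N_{k}(\x)=|x_{k}|^{1/k}$, in the form of a product over $k$ of factors $t_{k}(t_{1}+\cdots+t_{k}+N_{1}(\x)+\cdots+N_{k}(\x))^{-Q_{k}-1}$; the extra $-1$ in each exponent is precisely what makes $\Gamma_{t}$ integrable in each block of variables with an integrable tail, so that $\Gamma_{t}$ behaves like a (rapidly decaying) smooth approximate identity adapted to the rectangle $R^{(1)}_{t}$ when $t_{1}\le t_{2}\le\cdots\le t_{n}$, and like a superposition of such objects otherwise.

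The key steps, in order: (i) Reduce to the case $t_{1}\le t_{2}\le\cdots\le t_{n}$ is \emph{not} assumed by introducing $t^{\ast}=(t^{\ast}_{1},\ldots,t^{\ast}_{n})$ with $t^{\ast}_{k}=\max\{t_{j}:1\le j\le k\}$, exactly as in Lemma \ref{Lemma6.5}; one checks directly from the defining formula that $\Gamma_{t}(\x)\lesssim \Gamma^{\sharp}_{t^{\ast}}(\x)$ where $\Gamma^{\sharp}$ is the analogous expression built from $t^{\ast}$, because replacing $t_{k}$ by the larger $t^{\ast}_{k}$ in the numerator is compensated by the corresponding growth in the denominator up to a constant, and the monotone sequence $t^{\ast}$ satisfies the required ordering. (ii) For monotone $\t$, expand $\Gamma_{t}$ dyadically: write $\Gamma_{t}=\sum_{J}c_{J}\,[\gamma^{J}]_{J}$ where $J$ ranges over $N$-tuples with the block structure $J=p_{\mathfrak A}(j_{1},\ldots,j_{n})$, $2^{j_{k}}\approx $ (dyadic scale between $t_{k}$ and the running sum), each $\gamma^{J}$ is an $L^{1}$-normalized bump, and $\sum_{J}|c_{J}|\le C$ uniformly in $\t$; this uses the integrability gained from the $-1$ exponents, so the tails sum geometrically. (iii) Each normalized bump $[\gamma^{J}]_{J}$, being supported (essentially) in $R^{(1)}_{c\,2^{j}}$ with $L^{1}$ norm $O(1)$, satisfies $|f*[\gamma^{J}]_{J}(\x)|\le C\,M(f)(\x)$ directly from the definition of $M$ together with the pointwise bound $[\gamma^{J}]_{J}\lesssim \eta_{R^{(1)}_{c\,2^{j}}}$ for an acceptable rectangle, or more efficiently by the chain of inequalities \eqref{Eqn6.3}, \eqref{Eqn6.6}. (iv) Sum: $|f*\Gamma_{t}(\x)|\le\sum_{J}|c_{J}|\,|f*[\gamma^{J}]_{J}(\x)|\le C\big(\sum_{J}|c_{J}|\big)M(f)(\x)\le C\,M(f)(\x)$, and take the supremum over $\t$, noting all constants are independent of $\t$.

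The main obstacle is step (ii): producing the dyadic decomposition of $\Gamma_{t}$ into uniformly $L^{1}$-summable normalized bumps \emph{with constants independent of the continuous parameter} $\t$, and verifying that the bumps are genuinely supported (up to rapidly decaying tails) on \emph{acceptable} rectangles so that step (iii) applies. The delicate point is bookkeeping the interplay between the additive structure $t_{1}+\cdots+t_{k}+N_{1}+\cdots+N_{k}$ inside the denominators and the multiplicative dyadic scales $2^{j_{k}}$; one must check that whichever term dominates in each denominator, the resulting scale is monotone in $k$ (so the rectangle is acceptable) and that the exponent $-Q_{k}-1$ supplies enough decay to make the sum over the ``off-diagonal'' configurations converge geometrically. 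Once this is set up carefully — essentially the same estimate as in Proposition \ref{Prop2.11} but with the improved exponent — the remaining steps are routine applications of the maximal-function bounds already established.
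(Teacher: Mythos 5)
Your plan is correct and follows essentially the same route as the paper's proof: reduce to monotone $\t$ (the paper uses the running sums $s_j=t_1+\cdots+t_j$ where you use running maxima, an immaterial difference), then decompose according to the dyadic size of $(N_1(\x)+\cdots+N_k(\x))/(t_1+\cdots+t_k)$, dominate each piece by the normalized indicator of an acceptable rectangle times a geometrically decaying coefficient supplied by the exponent $-Q_k-1$, and sum. The two checks you flag as the main obstacle — acceptability (monotonicity) of the resulting rectangle scales and convergence of the coefficient sum — are precisely the two points the paper verifies.
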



\begin{proof}

Note that it suffices to restrict attention to 
$\t$'s that are of acceptable size.  Indeed, let $s_j = t_1 + t_2 \cdots
+ t_j$, $1 \leq j \leq k$.  Then $s_j \leq s_{j+1}$ but $k ( t_1 \cdots +
t_k ) \geq s_1 + s_2 \cdots + s_k$.  
Hence $( t_1 \cdots + t_k + N_1
+ \cdots + N_k )^{-Q_k - 1} 
\leq c_k ( s_1 + s_2 \cdots + s_k + N_1
+ \cdots + N_k )^{-Q_k - 1}$, with $c_k = k^{Q_k + 1}$.  Therefore
$\Gamma_\t ( \x ) \leq c \Gamma_\s ( \x )$, which shows that it suffices to
consider $\t$'s that are increasing.

We next fix $\t  = ( t_1 , \ldots t_n )$ and decompose the space $G=\R^{N}$ into
a preliminary dyadic partition as follows.  For each $J = (j_1 , \ldots
j_n) \in \mathbb{Z}^{n}_{+}$ we let 
\begin{equation*}
R_J = \{ \x\in \R^{N}\,\big\vert\, \text{$2^{j_k -1} < \,
\frac{N_1 ( x ) + \cdots N_k ( x )}{t_1 + t_2 \cdots + t_k} \, \leq \,
2^{j_k}$, for $k = 1 , 2 , \ldots n $} \},
\end{equation*} 
with the understanding
that if $j_k = 0$ the inequality should be taken to be $$\frac{N_1(x)
\cdots + N_k ( x )}{t_1 + t_2 \cdots t_k} \leq 1.$$
Notice that $\bigcup_{J \in\mathbb{Z}^{n}_{+}} R_J = G$ gives a partitioning of the space $G$.
However, in general each $R_J$ is not comparable to an acceptable
rectangle.  We remedy this as follows.  For a suitable constant $c$,
define $s^J = (s^J_1 , \cdots s^J_n)$ by 
\begin{equation}\label{Eqn6.8}
s^J_1 = c t_1 2^{j_1}, \quad s^J_2 = c^2 t_2 2^{j_2}, \quad \ldots \quad 
s^J_n = c^n t_n 2^{j_n}. 
\end{equation}
Now if $R_J$ is non-empty, then since $t_1 \leq t_2 \cdots \leq t_n$, we
have 
\begin{equation*}
N_1 ( x ) \approx t_1 2^{J_1}, \quad N_1 ( x ) + N_2 ( x ) \approx t_2
2^{j_2}, \quad \cdots \quad N_1 ( x) + N_2 ( x ) \cdots + N_n ( x ) \approx t_n
2^{j_n}.
\end{equation*}  
As a result, for sufficiently large $c$, it follows that
$s^J_1 \leq s^J_2 \cdots \leq s^J_n$.

Now define $R^\ast_J = \{ x: N_k ( x ) \leq s^J_k, k = 1, \ldots n
\}$ for those $J$ where $R_J$ is not empty.  Then clearly $R_J \subset
R^\ast_J$  and each $R^\ast_J$ is a rectangle of acceptable size (in fact,
essentially the smallest rectangle of acceptable size containing $R_J$).
However for $f \geq 0$, 
\[
\displaystyle{\int\limits_{G}} f ( \x \cdot \y^{-1} ) \Gamma_t ( \y ) d\y \,
= \,
\displaystyle{\sum\limits_{J \in \mathbb{Z}^n_+}} \; \; 
\displaystyle{\int\limits_{R_J}} f ( \x \y^{-1}) \Gamma_t ( \y ) d\y .
\]
Recall that by (\ref{Eqn6.8}), $t_k \approx s^J_k 2^{-j_k}$, and on $R_J$ we have 
$N_1 + \cdots N_k \approx ( t_1 + t_2 \cdots + t_k ) 
2^{j_k} \approx s^J_k$.
Thus,  on $R_J$ we have $\Gamma_t ( y ) \lesssim
\displaystyle{\mathop{\Pi}\limits_{k = 1}^{n}} 
( s^J_k )^{-Q_k} \cdot ( 2^{-Q_k j_k})$. 
So
\begin{equation*}
{\int\limits_{G}} 
f ( x y^{-1} ) \Gamma_t ( y ) dy \lesssim
\displaystyle{\sum\limits_{J}} 
\displaystyle{\mathop{\Pi}\limits_{k = 1}^{n}} 2^{-Q_k j_k} 
( s^J_k )^{-Q_k} \cdot 
\displaystyle{\int_{R^\ast_J}} f ( x y^{-1}) dy.
\end{equation*} 
But each $R^\ast_J$
is a rectangle of acceptable size and
$\displaystyle{\mathop{\Pi}\limits_{k = 1}^{n}} ( s^J_k )^{Q_k} = c \, m
( R^\ast_J)$.  Thus by the definition of $M$, the last sums is
majorized by $c  \displaystyle{\sum\limits_{J}} \,
\displaystyle\mathop{\Pi}\limits_{k = 1}^n 2^{-Q_k j_k} M ( f ) = c^\prime M ( f )$, 
and (\ref{Eqn6.7}) is proved. 
\end{proof}

\smallskip

\subsection{Truncated kernels}\label{Sec6.3}\quad

\smallskip

Recall that we defined truncated kernels and improved truncated kernels in Definition \ref{DefTruncated}  (Section \ref{SecTruncated}). Suppose that $\psi\in \mathcal C^\infty_{0}(\R^{N})$ with support in the unit ball.  For $b > 0$ write $$\psi_{b}(\x) = b^{-Q_1
-Q_2 \cdots Q_n} \psi ( b^{-1}\x_{1} , b^{-2}\x_2, \ldots b^{-n}\x_n),
$$
the automorphically dilated $\psi$.  We also say that $\psi_b$ has {\it
width} $b$.

\begin{theorem}\label{Theorem6.7}
Suppose $\K$ is a truncated flag kernel
of width $a$, and $\psi_b$ is as above of width $b$. Then
\begin{enumerate}[{\rm (1)}]
\item $\K \ast \psi_b$ 
{\it and} $\psi_b \ast \K$ 
{\it are truncated kernels of width} 
$a + b$.  
\item {\it If in addition 
$\int_{G}
\psi(\x)\, d\x = 0$, then $K \ast \psi_b$ and $\psi_b \ast K$ 
are improved truncated kernels of width $a+b$.  Moreover, then $K \ast
\psi_b$ and 
$\psi_b \ast K$ are actually improved truncated kernels of width $a
+ b$, multiplied by the further factor $\frac{b}{a+b}$.} 
\end{enumerate} 
\end{theorem}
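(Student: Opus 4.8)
The plan is to reduce everything to the dyadic statement already proved in Proposition~\ref{Prop4.5z}, using the scaling structure of truncated kernels. First I would normalize: by applying the dilation $\delta_{(a+b)^{-1}}$ to all the kernels involved, I may assume $a+b=1$. Under this dilation a truncated kernel of width $a$ becomes a truncated kernel of width $a'=a/(a+b)$, a bump $\psi_b$ of width $b$ becomes a bump of width $b'=b/(a+b)=1-a'$ (still supported in a ball of fixed radius, with semi-norms controlled), and the desired conclusion for $\K*\psi_b$ of width $a+b$ becomes the conclusion for the rescaled objects of width $1$. So it suffices to prove: if $\K$ is a truncated kernel of width $a'$ and $\psi$ is a bump of width $1-a'$, then $\K*\psi$ and $\psi*\K$ are truncated kernels of width $1$ (with constants uniform in $a'\in[0,1]$), and if moreover $\int\psi=0$ then they are improved truncated kernels of width $1$ multiplied by the factor $1-a'$ (which is $b/(a+b)$ after rescaling).

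The main step is then to decompose the truncated kernel of width $a'$ into dyadic pieces and run the argument of Proposition~\ref{Prop4.5z}. A truncated kernel of width $a'$ can be written (after a further trivial rescaling so that $a'$ is, say, $2^{-p}$ for the relevant dyadic scale, or by directly adapting the decomposition in Theorem~\ref{Lemma2.3}) as $\K=\sum_{I}[\varphi^I]_I+(\text{coarser truncated pieces})$ where the sum over $I\in\mathcal E_n$ is restricted to those $I$ with $i_1\geq -\log_2 a'$ (roughly: the truncation at width $a'$ kills the scales finer than $a'$), the $\varphi^I$ are normalized bumps with strong cancellation, plus analogous truncated kernels adapted to coarser flags which one handles by induction on $n$. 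Actually it is cleaner to observe directly that a truncated kernel of width $1$ is precisely a sum $\sum_{I\in\mathcal E_n^{+}}[\widetilde\theta^I]_I$ over $I$ with $0\leq i_1\leq\cdots\leq i_n$ of normalized bumps, by the same argument using Proposition~\ref{Prop2.11}; and a truncated kernel of width $a'$ with $2^{-p-1}<a'\leq 2^{-p}$ is essentially $\sum_{i_1\geq -p}[\varphi^I]_I$. Then $\K*\psi$ is a sum over pairs of the convolutions $[\varphi^I]_I*[\psi]$, where $[\psi]$ is a bump at scale roughly $(-p,\ldots,-p)$ (width $1-a'\approx 1$, but the point is its dyadic scale is $\gtrsim -p$). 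Applying Lemmas~\ref{Lemma5.1} and~\ref{Lemma4.3qw} exactly as in the proof of Proposition~\ref{Prop4.5z}, each $[\varphi^I]_I*[\psi]$ is a $K$-dilate of a normalized bump with weak cancellation and exponential gains $2^{-\epsilon(\cdots)}$ from the scales where $[\psi]$ is narrower; summing, one gets $\K*\psi=\sum_{K\in\mathcal E_n,\;k_1\geq -p}[\Theta^K]_K$, which by Proposition~\ref{Prop2.11} (second inequality) satisfies the differential inequalities of a truncated kernel of width $2^{-p}\approx a+b$ after undoing the normalization.

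For part~(2), the extra hypothesis $\int_G\psi\,d\x=0$ lets me write $\psi=\sum_{l=1}^N\psi_l$ with $\int_{\R}\psi_l\,dx_l=0$, as in the proof of Proposition~\ref{Prop4.5z}. Then Lemma~\ref{Lemma4.3qw} produces, in addition to the previous gains, a factor $2^{-d_l i_l}$ for the scales $i_l$ that are nonnegative (formula~(\ref{Eqn8.2}) in that proof). The worst case $l=1$ gives an extra factor behaving like $N_1(\x_1)^{d_1}\cdot$(bounded), i.e. the improved-truncated decay $\dfrac{a}{a+N_1(\x_1)}$ after reinserting the truncation width. Crucially, because the bump $\psi$ here has width $b$ rather than width $1$, tracking constants through the rescaling shows the improved estimate comes with the additional global factor $\dfrac{b}{a+b}$: the cancellation integral of $\psi_b$ against a scale-$a$ piece of $\K$ is proportional to the ``mismatch'' $b/(a+b)$ between the two widths, which is exactly the gain recorded in (\ref{Eqn8.2}) when one does not renormalize the widths to $1$. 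I would make this precise by carrying the parameter $a'$ explicitly rather than setting $a+b=1$: then $[\psi_b]$ sits at dyadic scale $\log_2 b$, $[\varphi^I]_I$ at scale $i_1\geq\log_2 a$, and the gain from one vanishing moment is $2^{-d_1(\log_2 a-\log_2 b)}\approx (b/a)^{d_1}$ when $b<a$, which combined with the truncation yields the claimed factor.

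The hard part will be bookkeeping rather than any new idea: keeping the constants uniform in $a$ and $b$ (equivalently in $a'\in[0,1]$) through the rescaling and the dyadic decomposition, and verifying that the decomposition of a \emph{truncated} kernel into dyadic bumps (the truncated analogue of Theorem~\ref{Lemma2.3}) holds with uniform bounds — in particular that the coarser-flag error terms are themselves truncated kernels of the same width so that the induction on $n$ closes. I expect the cleanest route is to prove the truncated analogue of Theorem~\ref{Lemma2.3} as a lemma first (it follows by the same Fourier-side argument of Lemma~\ref{Lemma2.6}, intersecting the supports with the region $|\xi_j|\lesssim a^{-1}$), then quote Lemmas~\ref{Lemma5.1}, \ref{Lemma4.3qw} and Proposition~\ref{Prop2.11} verbatim, and finally extract the extra $b/(a+b)$ factor from the single-vanishing-moment gain in~(\ref{Eqn8.2}).
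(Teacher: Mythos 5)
Your rescaling to $a+b=1$ is fine and is equivalent to the paper's normalization into the two cases $b=1,\ a\le 1$ and $a=1,\ b\le 1$; and in the regime where the bump is at least as wide as the truncation ($b\gtrsim a$), quoting Proposition \ref{Prop4.5z} directly does give width $b\approx a+b$, exactly as the paper does. The gap is in the opposite regime $b\ll a$ (the paper's case $a=1$, $b\le 1$), where everything in your argument rests on the assertion that a truncated kernel of width $a$ can be decomposed as $\sum_{i_1\ge \log_2 a}[\varphi^I]_I$ with normalized bumps carrying enough cancellation to make the double sum converge. This is precisely the point the paper flags as the obstruction (``The second case could be easy if every truncated kernel of width $1$ were of the form $K*\psi_1$''), and it is not established anywhere: Definition \ref{DefTruncated} only strengthens the differential inequalities, the Fourier-transform characterization of Lemma \ref{Lemma2.5} is proved only for untruncated flag kernels, and truncation in $\x$ does not translate into a support restriction $|\xi_j|\lesssim a^{-1}$ on the Fourier side, so the ``same argument as Lemma \ref{Lemma2.6}'' does not go through as stated. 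Worse, the pieces of such a decomposition cannot in general have strong cancellation (a single mean-one bump of width $a$ is a truncated kernel of width $a$), so even granting a decomposition you would lose the cancellation input that Lemma \ref{Lemma4.3qw} needs to produce the gains $2^{-\epsilon(\cdots)}$ that make $\sum_{I}[\varphi^I]_I*[\psi]$ summable; you would have to formulate and prove a truncated analogue of Theorem \ref{Lemma2.3} with weak cancellation built in, which is a substantial new lemma, not bookkeeping.

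The paper's proof avoids this entirely with a parametrix: Lemma \ref{Lemma6.8} produces $\eta_0,\eta_1\in C^{(M)}$ supported in the unit ball and a polynomial $P(X)$ in right-invariant vector fields with $P(X)\eta_0=\delta_0+\eta_1$. Writing $\K*\psi_b=\K*\big(P(X)\eta_0\big)*\psi_b - \K*\eta_1*\psi_b$ and moving $P(X)$ onto $\K$ (Proposition \ref{Proposition3.1}), one reduces to convolving the untruncated flag kernel $P(X)\K$ with the width-one, $C^{(M)}$ bumps $\eta_0*\psi_b$ and $\eta_1*\psi_b$, so Proposition \ref{Prop4.5z} applies after the relaxation to finite smoothness noted in Remark \ref{Remark7.7}. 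The extra factor $b/(a+b)$ in part (2) then comes cheaply from $\psi_b=\sum_k b^k X_k(\psi_b^{(k)})$, each derivative landing on the fixed width-one function $\eta_0$ at cost $b^k\le b$; your heuristic $(b/a)^{d_1}$ gain is in the right spirit but, again, presupposes the unproved dyadic decomposition. To repair your write-up you should either prove the truncated analogue of Theorem \ref{Lemma2.3} (with weak cancellation and uniform constants) as a genuine lemma, or adopt the parametrix device.
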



 Note that the statements of the hypotheses and
conclusions have an automorphic-dilation invariance, so in proving Theorem \ref{Theorem6.7}, it suffices to
consider two cases: $b = 1$, $a \leq 1$; and $a = 1$, $b \leq 1$.  In
the first case we use Proposition \ref{Prop4.5z}, since any truncated kernel is
actually an un-truncated kernel.  Thus we get that $K \ast \psi_1$ and
$\psi_1 \ast K$ have width $1$, which is essentially the same as having
width $1 + a$, since $a \leq 1$.

The second case could be easy if every truncated kernel of width 1 were
of the form $K \ast \psi_1$.  
Its proof is a little more involved and requires the following
lemma.

\begin{lemma}\label{Lemma6.8}
Given any $M$, there exist $\eta_0$ and
$\eta_1$ both of class $C^{(M)}$, supported in the unit ball, and a 
(non-communicative) polynomial $P ( X_1 , \ldots X_N) = P ( X )$ in
the right-invariant vector fields of $G$, so that
\begin{equation}\label{Eqn6.9}
P ( X ) \eta_0 = \delta_0 + \eta_1.
\end{equation}
with $\delta_0$ the Dirac delta at the origin.
\end{lemma}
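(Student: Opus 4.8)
The statement is an elliptic-regularization fact on the homogeneous group $G$: we want to build a parametrix for the identity using a polynomial in the right-invariant vector fields applied to a compactly supported, $C^{(M)}$ kernel. The natural approach is to exploit that $G$ is stratified (or at least graded), so there is a hypoelliptic homogeneous sub-Laplacian-type operator built from $\{X_1,\dots,X_N\}$, and then to localize its fundamental solution. Concretely, first I would choose a suitable (non-commutative) polynomial $\mathcal L = P(X_1,\dots,X_N)$ in the \emph{right}-invariant vector fields which is homogeneous of some positive degree $2\kappa$ and hypoelliptic; for instance one may take a high even power of the standard sum-of-squares operator $-\sum_{d_j=1} X_j^2$, raised to a power large enough that its fundamental solution $\Phi$ is of class $C^{(M)}$ away from a neighborhood of the origin and as smooth as desired globally except at $0$. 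By the general theory of homogeneous hypoelliptic operators on graded groups (Folland, as cited via \cite{FoSt82}), such an $\mathcal L$ has a homogeneous fundamental solution $\Phi$ with $\mathcal L\Phi = \delta_0$, and $\Phi$ is smooth away from the origin with a controlled homogeneous-degree singularity at $0$; choosing $\kappa$ large makes $\Phi\in C^{(M)}$ near the origin as well.

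The second step is localization. Pick $\chi\in C^\infty_0(G)$ supported in the unit ball, equal to $1$ on a smaller ball $B(1/2)$, and set $\eta_0 = \chi\,\Phi$. Then $\mathcal L\eta_0 = \chi\,\mathcal L\Phi + [\mathcal L,\chi]\Phi = \delta_0 + R$, where $R = [\mathcal L,\chi]\Phi$ collects all the terms in which at least one derivative falls on $\chi$. Since $\chi\equiv 1$ near the origin, $[\mathcal L,\chi]$ is a differential operator supported away from $0$, so $R$ agrees there with derivatives of the smooth function $\Phi$ times derivatives of $\chi$; hence $R\in C^{(M)}$ (indeed as smooth as we wish) and $R$ is supported in the unit ball. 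Setting $\eta_1 = R$ gives exactly $P(X)\eta_0 = \delta_0 + \eta_1$ with both $\eta_0,\eta_1$ of class $C^{(M)}$ and supported in the unit ball, which is the assertion. One small point to check: to guarantee $\eta_0$ itself is $C^{(M)}$ (not merely $\Phi$ away from $0$), I would either raise the power $\kappa$ so that $\Phi$ already has $2\kappa - Q$ derivatives at the origin exceeding $M$, or, alternatively, first subtract from $\Phi$ a Taylor-type homogeneous correction and absorb the resulting smooth bounded error into $\eta_1$; the cleanest route is simply to take $\kappa$ large, since $M$ is given in advance.

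The main obstacle is arranging a genuinely hypoelliptic homogeneous polynomial $P(X)$ in the \emph{right}-invariant fields together with a fundamental solution of the required regularity, in the generality of an arbitrary graded (not necessarily stratified) homogeneous group where $\{X_j\}$ need not generate the Lie algebra as a first layer. If the first layer $\{X_j : d_j = 1\}$ already generates $\mathfrak g$ (the stratified case), Hörmander's condition applies to $\sum X_j^2$ and Folland's homogeneous fundamental solution theory gives everything directly. In the merely graded case one instead takes $P$ to be a suitable homogeneous polynomial in \emph{all} the $X_j$ (for instance a large power of $\sum_j X_j^{2 d_N/d_j}$ after clearing to a common even degree), which is homogeneous and, being elliptic in the ordinary sense in the top-degree terms, is hypoelliptic; its homogeneous fundamental solution exists by the same circle of ideas. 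Passing between left- and right-invariant pictures is harmless here since the inversion map is a diffeomorphism and conjugates one family to the other, and $\delta_0$ is fixed by it. The remaining steps—estimating commutators, checking supports, and tuning $\kappa$ against $M$—are routine once the fundamental solution is in hand.
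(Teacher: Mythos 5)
Your overall strategy is the one the paper uses -- take $P(X)=\bigl(\sum_{j=1}^{N}X_j^2\bigr)^r$ for $r$ large, produce a fundamental solution or parametrix, cut off with a bump function, and observe that the commutator terms are smooth and supported in the unit ball -- but the technical route you choose to justify the existence and regularity of the kernel is different, and it is the more fragile of the two. The paper's point is that $\{X_1,\dots,X_N\}$ is a global frame for the tangent bundle (each $X_j=\partial_j+\sum_{d_l>d_j}P_j^l\partial_l$ in upper-triangular form), so $\sum_{j=1}^N X_j^2$ is elliptic in the \emph{classical} sense; the standard pseudodifferential parametrix for an elliptic operator of order $2r$ then has a kernel $F$ that is $C^{|\alpha|}$ near the diagonal as soon as $2r>N+|\alpha|$ (Euclidean dimension $N$, not homogeneous dimension $Q$), and one gets $P(X)F=\delta_0+\eta'$ with $\eta'$ smooth directly, with no homogeneity or sub-ellipticity needed. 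Your route through homogeneous hypoelliptic operators and Folland-type homogeneous fundamental solutions has a genuine wrinkle that you gesture at but do not resolve: to make $\Phi\in C^{(M)}$ at the origin you need the homogeneity degree $2\kappa-Q$ to exceed $M$, hence $2\kappa>Q$, and in that regime the exact homogeneous fundamental solution need not exist (only modulo logarithms and polynomial corrections), so ``just take $\kappa$ large'' is not available as stated; you would have to carry out the Taylor-correction argument you mention, or retreat to a parametrix. Note also that a parametrix suffices here -- the lemma only asks for $\delta_0$ plus a $C^{(M)}$ error -- so insisting on an exact global fundamental solution buys nothing. Your localization step and the left/right-invariance remark are fine.
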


\begin{proof}
Consider the elliptic operator of order $2r$,
$P ( X ) = \big( {\sum\limits_{j = 1}^N} 
X^2_j \big)^r$, with $r$ a positive integer.  Then by the standard theory of
pseudo-differential operators there is a locally integrable function $F$
which is $C^\infty$ away from the origin, so that $P ( X ) F = \delta_0
+ \eta^\prime$, with $\eta^\prime$ a $C^\infty$ function.  Moreover, $F$
satisfies the estimate $| \left( \frac{\partial}{\partial x}
\right)^\alpha F ( \x ) | \leq A_\alpha$, whenever $| \x | \leq 1$ and
$2 r > N + | \alpha |$. (These estimates also follow from \cite{MR1044793}, Theorem 1.) Thus we only need to take $2 r > M + N$ and set 
$\eta_0 = \mu \cdot F$,
where $\mu$ is a $C^\infty$ function supported in the unit ball, and
$\mu ( x ) = 1$ in the ball of radius $1/2$.  Then since 
$\eta_0$ is
supported in the unit ball, so is 
$\eta_1 = P ( X ) \eta_0 - \delta_0$;
and since $F$ is $C^\infty$ away from the origin it follows that
$\eta_1$ is in fact $C^\infty$ everywhere. This completes the proof of Lemma \ref{Lemma6.8}.
\end{proof}

We now return to the proof of Theorem \ref{Theorem6.7}. We consider $K \ast \psi_b$ when
$K$ has width $1$, and $b \leq 1$.  Now by the lemma $K \ast \psi_b = K
\ast \delta_0 \ast \psi_b = K \ast P ( X ) \ast \eta_0 \ast \psi_b + K
\ast \eta_1 \ast \psi_b$, since $P ( X )$ is a right-invariant
differential operator.  Now since $K$ has width $1$, $K \ast P ( X )$ is
also a truncated kernel of width one, and in particular an un-truncated
kernel. However, $\eta_0 \ast \psi_b$ has width $1 + b$, which is
essentially one.  Also, it is of class $C^{(M)}$ (uniformly in $b$),
since $\eta_0$ is of class $C^{(M)}$.  Thus  $K \ast P (
X ) \ast \eta_0 \ast \psi_b$ satisfy the differential inequalities for
a truncated kernel of width one for all orders $\leq m$.  However, the
term $K \ast \eta_1 \ast \psi_b$ clearly does the same, for all orders.
Notice we can make $m$ as large as we wish by making $M$ sufficiently
large. (See Remark \ref{Remark7.7} on page \pageref{Here}.) A similar argument works for $\psi_b \ast K$ and thus part (1)
of Theorem \ref{Theorem6.7} is proved.  

Part (2) is
proved in the same way, using conclusion (2) of Proposition \ref{Prop4.5z}.  The
further improvement given by the factor $b(a+b)^{-1}$ comes about as
follows.  As before, we may take $a = 1$, and $b \leq 1$.  Since $ \int
\psi(\x| d\x = 0$, both $\eta_0 \ast \psi_b$ and $\eta_1 \ast \psi_b$ give an
improvement of $b$. In fact, since $\int \psi(\x) d\x = 0$, it follows from Lemma \ref{Lemma1.12} and Proposition \ref{Prop1.8} that we can write
\begin{equation}\label{Eqn6.10}
\psi_b = \displaystyle{\sum} b^k X_k ( \psi^{(k)}_b )
\end{equation}
for
suitable $C^\infty$ functions $\psi^{(k)}$ supported in the unit ball,
with $\{ X_k \}$ ranging over right-invariant vector fields of degree
$k$ , $k \geq 1$.
Thus $\eta_0 \ast \psi_b = \displaystyle{\sum} b^k ( \eta_0 \ast X_k )
\ast \psi^k_b$ and this gives a gain $b$, $b \leq 1$.  Similarly for the
term $\eta_1 \ast \psi_b$.


\subsection{Key estimates: kernels}\label{Sec6.4}\quad

\smallskip

Suppose $\varphi^{(k)}\in \mathcal C^\infty_{0}$ is supported on the unit
ball of the group $G_k$, with
$${\int_{{G_k}}}
\varphi^{(k)} ( \x ) d \x = 0.$$  We set $\varphi^{(k)}_t ( \x ) =
t^{-Q_k - \cdots Q_n } \varphi ( \delta_{t^{-1}} ( \x ) )$, with 
$Q_k = k a_k$ and let 
$\tilde{\varphi}^{(k)}$ to be the corresponding
distributions lifted to the full group $G$;\, \textit{i.e.}\,
$\tilde{\varphi}^{(k)}_t = \delta_{x_1 , x_2 \cdots x_{k - 1}} \otimes
\varphi^{(k)}_t$.  We let 
$\Phi_\t = \tilde{\varphi}_{t_1}^{(1)} 
\ast \tilde{\varphi}^{(2)}_{t_2} 
\cdots \ast \tilde{\varphi}^{(n )}_{t_n}$
for $\t = ( t_1 , \ldots , t_n )$, and write 
$$\Phi^\ast_\t = \tilde{\varphi}^{(n)}_{t_n} 
\ast \tilde{\varphi}^{(n - 1)}_{t_{n - 1}}
\cdots \ast \tilde{\varphi^{(1)}}_{t_1}.$$
Recall the comparison function $\Gamma_\t$ discussed in Section \ref{Sec6.2}.  Note that
here we will allow the functions $\varphi$ and $\Phi$ to take their
values in finite-dimensional vector spaces.

\begin{theorem}\label{Theorem6.9} Suppose $\K$ is a flag kernel.  Then
\begin{enumerate}[{\rm(1)}]
\item{\it $| K \ast \Phi_\t ( \x ) |$ and $| \Phi^\ast_\t \ast K ( \x ) |$
are both majorized by $c \,\Gamma_\t ( \x )$ for all $\t$}.

\smallskip

\item If $X^R_k$ is any right-invariant vector field of degree
$k$, then $$| X^R_k ( K \ast \Phi_t ) | \leq c ( t_1 + \cdots + t_k
)^{-k} \Gamma_t ( x ).$$ 

\smallskip

\item If $X^L_k$ is any left-invariant vector field of degree
$k$, then $$| X^L_k ( K \ast \Phi_t ) | \leq c ( t_1 + \cdots + t_k
)^{-k} \Gamma_t ( x ).$$

\end{enumerate}

\end{theorem}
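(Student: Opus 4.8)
The plan is to reduce everything to the truncated-kernel estimates of Section \ref{Sec6.3} together with the dyadic decomposition of a flag kernel from Theorem \ref{Lemma2.3}. First I would write $\K = \K_0 + \sum_{j=1}^r \K_j$ as in Theorem \ref{Lemma2.3}, and since each $\K_j$ is a flag kernel adapted to a strictly coarser flag, induct on the number of steps $n$: the coarser-flag pieces are handled by the inductive hypothesis (a flag kernel for a coarser flag is in particular a flag kernel for which the corresponding $\Gamma_\t$-type bound involving fewer independent dilation parameters dominates the finer one), so it suffices to treat $\K_0 = \sum_{I\in \mathcal E_n}[\varphi^I]_I$ with $\{\varphi^I\}$ uniformly normalized bump functions with strong cancellation supported in the unit ball. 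The key is then to observe that, because each $\tilde\varphi^{(k)}_{t_k}$ has integral zero in the variables of $G_k$, the convolution $\K_0 * \Phi_\t$ is, by repeated application of Theorem \ref{Theorem6.7}(2), an \emph{improved} truncated flag kernel whose width in the $k$-th block is comparable to $t_1 + \cdots + t_k$, and which moreover carries the gain factors $\tfrac{t_k}{t_1+\cdots+t_k}$ coming from the vanishing moment of $\varphi^{(k)}$. The product of the truncated-kernel size bound with these gain factors is exactly $\Gamma_\t(\x)$: indeed
\[
\prod_{k=1}^n \frac{t_1\cdots t_k}{t_1+\cdots+t_k}\cdot\big(t_1+\cdots+t_k + N_1(\x)+\cdots+N_k(\x)\big)^{-Q_k}
\]
simplifies, after telescoping the numerators, to $t_1 t_2\cdots t_n \prod_{k=1}^n (t_1+\cdots+t_k+N_1(\x)+\cdots+N_k(\x))^{-Q_k-1}$, which is $\Gamma_\t(\x)$ up to the harmless replacement of exponent $Q_k$ by $Q_k+1$ (absorb one extra factor of the truncated denominator into the differential-inequality hierarchy by working with one more order of the truncated estimate, or equivalently note $(t_1+\cdots+t_k)\cdot(\text{denominator})^{-1}\le 1$). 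This gives part (1) for $\K * \Phi_\t$; the statement for $\Phi^\ast_\t * \K$ follows symmetrically, using the right-convolution half of Theorem \ref{Theorem6.7} and the fact that $\Phi^\ast_\t$ is the ``reverse-order, opposite-invariance'' analogue of $\Phi_\t$.

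For part (2), I would use that $X^R_k$ is right-invariant, so by Proposition \ref{Proposition3.1}(1), $X^R_k(\K*\Phi_\t) = (X^R_k\K)*\Phi_\t$ only if $X^R_k$ lands on the left factor — but $\K$ is a distribution, not a nice function, so instead I move the derivative onto the \emph{first} bump factor $\tilde\varphi^{(1)}_{t_1}$ in $\Phi_\t$ by writing $X^R_k = $ (sum of terms) acting across the convolution via Proposition \ref{Proposition3.1}(2)–(3). More efficiently: $\K*\Phi_\t = (\K*\tilde\varphi^{(1)}_{t_1})*(\tilde\varphi^{(2)}_{t_2}*\cdots*\tilde\varphi^{(n)}_{t_n})$, and $X^R_k$ applied to the whole convolution equals $(X^R_k[\K*\tilde\varphi^{(1)}_{t_1}])*\cdots$. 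Now $\K*\tilde\varphi^{(1)}_{t_1}$ is a truncated kernel of width $t_1$ by Theorem \ref{Theorem6.7}(1), and applying a right-invariant vector field of degree $k$ to a truncated kernel of width $a$ introduces a factor of order $a^{-k}$; chaining through the remaining convolutions as in part (1) and using $t_1\le t_1+\cdots+t_k$ gives the bound $c(t_1+\cdots+t_k)^{-k}\Gamma_\t(\x)$. A cleaner route, which I would actually carry out, is to rescale: write $X^R_k$ in terms of right-invariant fields and note that $X^R_k \Phi_\t$, with $t_k$ the relevant scale, produces $\Phi'_\t$ of the same structural type multiplied by $(t_1+\cdots+t_k)^{-k}$ (by Proposition \ref{Prop1.8} / Corollary \ref{Cor1.9} applied at scale $t$, together with the homogeneity $X^R_k[\varphi_t] = t^{-k}(X^R_k\varphi)_t$), then invoke part (1). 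For part (3), the left-invariant field $X^L_k$ passes through the convolution onto the last factor $\tilde\varphi^{(n)}_{t_n}$ by Proposition \ref{Proposition3.1}(1); but $t_n$ is the \emph{largest} scale, giving only $t_n^{-k}$, which is weaker than desired. So instead I would use Proposition \ref{Proposition3.1}(2) to convert $X^L_k$ at the left end into $X^R_k$ and reduce to part (2); more carefully, one commutes $X^L_k$ step by step across the factors $\tilde\varphi^{(n)}_{t_n}, \tilde\varphi^{(n-1)}_{t_{n-1}},\dots$, at each stage converting a left-invariant field acting on a factor of scale $t_j$ into a right-invariant field of the same or higher degree (the commutators $[X^L, X^L]$ have higher homogeneity, Proposition \ref{Prop2.1}) acting on the next factor plus lower-order terms, eventually landing on $\tilde\varphi^{(1)}_{t_1}$ and picking up the desired factor $(t_1+\cdots+t_k)^{-k}$.

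The main obstacle I anticipate is bookkeeping the homogeneity exponents correctly when a vector field of degree $k$ interacts with bump functions at several different scales $t_1,\dots,t_n$: one must verify that the ``gain'' is governed by the \emph{smallest} relevant scale $t_1+\cdots+t_k$ rather than an individual $t_j$, which requires care with the order in which the $X_j$-type error terms (from writing Euclidean derivatives in terms of invariant fields, Corollary \ref{Prop2.15jkl}) are passed across the convolutions. The second delicate point is the exponent mismatch between the $Q_k$ appearing in the truncated-kernel definition and the $Q_k+1$ in $\Gamma_\t$: this is resolved by noting that the improved truncated kernel obtained from $\K_0*\Phi_\t$ satisfies, for each $k$, an extra factor $\tfrac{t_1+\cdots+t_k}{t_1+\cdots+t_k+N_1+\cdots+N_k}\le 1$ beyond the bare truncated estimate (because \emph{every} $\varphi^{(j)}$, $j\le k$, has a vanishing moment, not just $\varphi^{(1)}$), and iterating Theorem \ref{Theorem6.7}(2) in all $n$ blocks supplies precisely the $n$ extra denominator powers needed. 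Once these two points are pinned down, parts (2) and (3) are formal consequences of part (1) together with the invariant-vector-field calculus of Section \ref{VectorFields} and Proposition \ref{Proposition3.1}.
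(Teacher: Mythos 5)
Your strategy for part (1) --- convolving successively with the factors $\tilde\varphi^{(k)}_{t_k}$, invoking the truncated-kernel theorem with the vanishing-moment improvement at each stage, and multiplying the gains $t_k/(t_1+\cdots+t_k)$ against the improved-kernel factors to recover the exponents $Q_k+1$ in $\Gamma_\t$ --- is exactly the paper's plan, and your reconciliation of the exponent mismatch at the end is correct. But there is a gap in how you iterate: Theorem \ref{Theorem6.7} is stated for convolution with a bump function $\psi_b$ on the full group $G$, whereas for $k\ge 2$ the factor $\tilde\varphi^{(k)}_{t_k}=\delta_{x_1\cdots x_{k-1}}\otimes\varphi^{(k)}_{t_k}$ is a measure supported on the proper subgroup $G_k$, so ``repeated application of Theorem \ref{Theorem6.7}(2)'' is not literally available. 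The paper fills this by passing to adapted canonical coordinates of the second kind, in which convolution by an element of $G_k$ does not disturb $\x_1,\ldots,\x_{k-1}$; one then inducts on $k$, showing that $K^{(k)}=\K*\tilde\varphi^{(1)}_{t_1}*\cdots*\tilde\varphi^{(k)}_{t_k}$, for each \emph{frozen} $(\x_1,\ldots,\x_k)$, is a truncated kernel on $G_{k+1}$ of width $t_1+\cdots+t_k+N_1(\x_1)+\cdots+N_k(\x_k)$ times the improving factor $\prod_{j\le k}t_j\,[t_1+\cdots+t_j+N_1+\cdots+N_j]^{-Q_j-1}$. The width must include the $N_j$ of the frozen variables; your ``width comparable to $t_1+\cdots+t_k$'' drops them, and without them the terms $N_1+\cdots+N_j$ for $j<n$ never enter the denominators of the final bound.

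Parts (2) and (3) as you have them do not go through. Your ``cleaner route'' for (2) rests on computing $X^R_k\Phi_\t$, but a right-invariant field differentiates the \emph{left} factor of a convolution (Proposition \ref{Proposition3.1}: $Y[f*g]=Y[f]*g$), so $X^R_k(\K*\Phi_\t)\neq\K*(X^R_k\Phi_\t)$. Your first route correctly puts the derivative on $\K*\tilde\varphi^{(1)}_{t_1}$, a truncated kernel of width $t_1$ (really $t_1+N_1(\x_1)$), but that yields at best the gain $t_1^{-k}$; since $t_1^{-k}\ge(t_1+\cdots+t_k)^{-k}$, the step ``using $t_1\le t_1+\cdots+t_k$'' runs the inequality backwards and cannot produce the asserted (smaller) bound. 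The gain for $\partial_{x_k}$ must instead be carried inside the induction: the paper's hypothesis (b) records that each $\x_r$-derivative of $K^{(k)}$ costs $[t_1+\cdots+t_r+N_1+\cdots+N_r]^{-r}$ per derivative, locked in at the stage where block $r$ is convolved, after which $X^R_k=\partial_{x_k}+\sum_{\ell>k}h^k_\ell\,\partial_{x_\ell}$ finishes (2). Similarly for (3), there is no identity converting $X^L_k$ applied to the whole convolution into $X^R_k$ applied to it; the paper obtains the left-invariant estimate for the mirror object $\Phi^\ast_\t*\K$ by running the same induction in reversed coordinates (equivalently via the inversion $\x\mapsto\x^{-1}$), not by passing $X^L_k$ across the factors of $\Phi_\t$.
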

For the proof we need to do our calculations in a particular coordinate
system, already used in the proof Lemma \ref{Lemma6.3}.  Here we represent a point $\x = ( \x_1 , \ldots, \x_n) \in G$ via
adapted canonical coordinates of the second kind; \textit{i.e.}  we take $\x
= \exp ( \x^\prime_1 \cdot X_1 ) \, \exp ( \x^\prime_2 \cdot X_2) \cdots
\exp ( \x^\prime_n \cdot X_n )$, where $\x^\prime = (x'_{k,1}, \ldots, x'_{k,a_{k}})$, $\{ X_{k,1}, \ldots, X_{k,a_{k}} \}$ is a
basis of the sub-space of vector fields of degree $k$, and $\x^\prime_k
\cdot X_k = {\sum_{j}} x^{\prime}_{k,j}
X_{k,j}$.
The passage from the initial $(\x_1 , \ldots \x_n)$ coordinates to
the $( \x^\prime_1, \ldots \x^\prime_n )$ coordinates is of the form
treated in Section \ref{ChVariables}, so that the basic comparison function $\Gamma_t$
is essentially unchanged when passing from $x$ to $x^\prime$.  We
therefore freely use instead the new coordinate system $\x^\prime$, and
now relabel $\x^\prime$ by $\x$.
The advantage of this coordinate system is that, firstly $\y = ( \y_1 ,
\ldots \y_k , \y_{k + 1}, \ldots, \y_n ) \in G_k$ if and only if $\y_1 = 0 ,
\ldots , \y_{k-1} = 0$, but more importantly, if $\x \in G$, and $\y \in
G_k$, then $\x \cdot \y = ( \x_1 , \cdots \x_{k - 1} , \bar{\x}_k , \ldots
\bar{\x}_n )$, with $\bar{\x}_\ell$ ( for $\ell \geq k )$, depending only
on $\x_k , \ldots\ x_n$, and $\y_k , \ldots \y_n$, and not on $\x_1 , \ldots
\x_{k-1}$.

\smallskip

Now set $K^{(k)} {(x)} = K \ast \tilde{\varphi}^{(1)}_{t_1} 
\ast \cdots \ast \tilde{\varphi}^{(k)}_{t_k}$.  Consider first $K^{(1)}(x) = K \ast
\varphi^{(1)}_{t_1}$.  According to Proposition \ref{Prop4.5z}, part (2), since
${\int_{G}} \varphi^{(1)} d\x = 0$, then for each
$\x_1$, the kernel $K^{(1)} ( \x_1 , \x_2 , \ldots, \x_n)$ as a function of
$(\x_2 , \ldots , \x_n)$ on $G_2$, is a truncated kernel of width $t_1 +
N_1 ( x_1)$, multiplied by the ``constant'' factor $t_{1}\,\big[t_1 + N_1 (\x_1)\big]^{-Q_{1}-1}$. Also we have a similar conclusion for $\partial_{x_{1}}^{\beta_1} K^{(1)} ( \x )$, except
now the improving factor is $t_{1}\,\big[t_1 + N_1 (\x_1)\big]^{-Q_{1}-1-\beta_{1}}$.

Consider next the inductive hypothesis:  for a given $k$,
\begin{enumerate}[(a)]

\smallskip

\item
For each $\x_1, \ldots \x_k$,  $K^{(k)} {(\x_{1}, \ldots, \x_{n})}$, thought of as a function of $(\x_{k+1}
, \ldots, \x_n)$ on $G_{k+1}$ is a truncated kernel of width $t_1 + \cdots
+ t_k + N_1 ( \x_1) \cdots + N_k ( \x_k )$, multiplied by the improving
factor ${\prod\limits_{j = 1}^k}  t_{j}\,\big[t_1
+ \cdots t_j + N_1 + \cdots N_j\big]^{-Q_{j}-1}$.

\smallskip

\item For
each $r \leq k$, a similar statement holds for $\partial_{x_{r}}^{\beta_r}
K^{(k)}(\x)$, except that now the $r^{\sf th}$ part of
the improving factor is 
$t_{r}\big[t_1 + \cdots t_r + N_1 \cdots + N_r\big]^{-Q_{r}-1-r\beta_{r}}$.

\smallskip
\end{enumerate}
Notice that if the inductive hypothesis holds for $k$, that is for
$K^{(k)}$, then since $K^{(k + 1)} ( \x_1 \cdots \x_n ) = K^{(k)} \ast
\varphi^{(k + 1)}_{t_{k+1}}$, where the convolution is
taken on the group $G_{k+1}$, therefore does not involve the variables
$\x_1, \ldots \x_k$ because of the nature of our coordinate system.As a
result, we get the conclusion for $k + 1$, that is for $K^{(k + 1)}$.
To see this, we merely apply Proposition \ref{Prop4.5z}, part (2), for the case of the
group $G_{k + 1}$.  

More precisely, we are convolving a truncated kernel of
width $a = t_1 + \cdots + t_k + N_1 + \cdots + N_k$ (on $G_{k + 1}$) with a function $\varphi$ (which equals $\varphi^{(k + 1)}_{t_{k + 1}}$)
of width $b = t_{k +
1}$.  The result is a truncated kernel of width $a + b = t_1 \cdots +
t_{k+1} + N_1 \cdots + N_k$ on $G_{k + 1}$, together with a further
factor $\frac{b}{a+b}$.  That is, together for $K^{(k+1)}$ we have, as a
function $x_{k + 2} , \ldots x_n$, a truncated kernel of width $t_1
\cdots + t_{k+1} + N_1 + \cdots N_{k+1}$, times a factor of the form
$b(a + b)^{-1}$.  So the full improvement is 
\begin{equation*}
\frac{b}{a+b} \cdot
\frac{a+b}{(a + b + N_{k + 1} )^{Q_{k+1}+1}} = \frac{t_{k+1}}{(t_1 \cdots
+ t_{k+1} + N_1 \cdots + N_{k + 1} )^{Q_{k+1}+1}},
\end{equation*} 
as was needed.  The
same kind of improvement holds for the estimates of 
$\partial_{x_{l}}^{\beta_\ell} K^{(k + 1)} ( \x
)$, for $\ell \leq k + 1$. 

Thus, the inductive hypothesis (now the conclusion of the induction)
holds for $k = n$. As a result, it is clear that $| K \ast \Phi_t ( x )
| \leq c \Gamma_t ( x )$ and $| \partial_{x_{k}} ( K
\ast \Phi_t ( x ) ) | \leq c \big[t_1 + \cdots + t_k + N_1 \cdots +
N_k\big]^{-k} \Gamma_t ( \x )$, for every $k$, $1 \leq k \leq n$.  Since $X^R_k
= \partial_{x_{k}}+ {\sum_{\ell > k}}
h^k_\ell \partial_{x_{ell}}$ where $h^k_\ell$ is a
homogeneous polynomial of degree $\ell - k$, it follows, in particular,
that $X^R_k ( K \ast \Phi_t) ( x ) \leq c \,\big[t_1 \cdots + t_k \big]^{-k}
\Gamma_t ( \x )$.

The results for $\Phi^\ast_t \ast K$ and $X^L_k ( \Phi^\ast_t \ast K )$
follow in the same way, but require a canonical coordinate  
system in the reverse order.  Alternatively we can deduce it from the
previous case by using the inversion $\x \rightarrow \x^{-1}$.

\smallskip

\subsection{Key estimates: operators}\label{Sec6.5}\quad

\smallskip

We define $P_\t ( f ) = f \ast \Phi_\t$, and $P^\ast_\t = f \ast
\Phi^\ast_t$, with $\t = ( t_1 , t_2, \cdots t_n )$ and $t_j > 0$, $1
\leq j \leq n$, with $\Phi_\t$ and $\Phi^\ast_\t$ defined at the beginning of
the previous section.  We suppose $\K$ is a flag kernel and $Tf = f \ast
K$ when $f$ is a Schwartz function.  We recall the maximal operator $M$
and let $\mathcal{M} = M \circ M$, i.e. $\mathcal{M} ( f ) = M ( M ( f
))$.

\begin{theorem}\label{Theorem6.10}\quad
\begin{enumerate}[{\rm(a)}]

\item {\it $| P_t T ( f ) ( x ) | \leq c \, M ( f ) ( x )$, all
$t$}.

\smallskip

\item[{\rm(a$^\prime$)}] {\it Similarly $| ( T P^\ast_t) ( f ) ( x ) | \leq c M ( f
) ( x )$, all $t$}.

\smallskip

\item {\it $| P_t TP^\ast_s ( f ) ( x ) | \leq \gamma ( s , t )
\mathcal{M} ( f ) ( x )$}, where for some $\delta > 0$, $$\gamma ( s , t ) \leq c \left(
\displaystyle{\mathop{\Pi}\limits_{k = 1}^{n}} \min \left(
\frac{s_k}{t_k} , \frac{t_k}{s_k} \right) \right)^\delta.$$
\end{enumerate}
\end{theorem}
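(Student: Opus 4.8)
The plan is to reduce everything to the kernel estimates already established in Theorem \ref{Theorem6.9} together with the maximal-function comparison in Theorem \ref{Theorem6.6}. For part (a): the operator $P_{\t}T$ has convolution kernel $\K * \Phi_{\t}$, so $P_{\t}T(f) = f * (\K * \Phi_{\t})$. By Theorem \ref{Theorem6.9}(1) we have the pointwise bound $|\K * \Phi_{\t}(\x)| \leq c\,\Gamma_{\t}(\x)$ uniformly in $\t$, and hence $|P_{\t}T(f)(\x)| = |f * (\K*\Phi_{\t})(\x)| \leq c\,(|f| * \Gamma_{\t})(\x) \leq c\,\sup_{\s}(|f|*\Gamma_{\s})(\x) \leq c'\,M(f)(\x)$, the last step being exactly Theorem \ref{Theorem6.6}. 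Part (a$^{\prime}$) is identical, using that $TP^{\ast}_{\t}$ has kernel $\Phi^{\ast}_{\t} * \K$, which is also majorized by $c\,\Gamma_{\t}$ by the same Theorem \ref{Theorem6.9}(1).

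For part (b), the key point is to produce the extra gain $\gamma(s,t)$ by exploiting cancellation. Write $P_{\t}TP^{\ast}_{\s}(f) = f * (\Phi^{\ast}_{\s} * \K * \Phi_{\t})$, and recall that $\Phi^{\ast}_{\s} = \widetilde{\varphi}^{(n)}_{s_n} * \cdots * \widetilde{\varphi}^{(1)}_{s_1}$ while $\Phi_{\t} = \widetilde{\varphi}^{(1)}_{t_1} * \cdots * \widetilde{\varphi}^{(n)}_{t_n}$, where each $\varphi^{(k)}$ has mean zero on $G_k$. The strategy is, for each index $k$, to compare $s_k$ and $t_k$ and to move a ``good'' (mean-zero, hence expressible as $\sum b^{j}X_j(\varphi^{(k),j}_b)$ via equation (\ref{Eqn6.10})) factor onto whichever side has the \emph{larger} width parameter, differentiating the factor sitting at the \emph{smaller} width. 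Concretely, if $t_k \leq s_k$ one keeps $\widetilde{\varphi}^{(k)}_{t_k}$ on the right of $\K$ and uses a vector-field identity to transfer a derivative across the convolution onto $\K * \Phi_{\t}$ (or onto the remaining kernel), using Proposition \ref{Proposition3.1}(\ref{Proposition3.1c}) to swap left- and right-invariant vector fields through the convolution; the estimate in Theorem \ref{Theorem6.9}(2) or (3) then gives a factor $(t_1 + \cdots + t_k)^{-k}$, while the mean-zero expansion (\ref{Eqn6.10}) of $\varphi^{(k)}_{s_k}$ on the other side contributes a factor $s_k^{k}$, so together one gains $(s_k/t_k)^{k} \geq (s_k/t_k)^{\delta}$; symmetrically when $s_k \leq t_k$. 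Iterating this over $k = 1, \ldots, n$ yields the product $\prod_k \min(s_k/t_k, t_k/s_k)^{\delta}$, at the cost of replacing $\K * \Phi_{\t}$ by finitely many kernels still majorized by $c\,\Gamma_{\t}$ (up to the width comparisons), after which one pointwise bound by $\Gamma_{\t}$ and one application of Theorem \ref{Theorem6.6} gives an $M(f)$; but because the remaining object is itself a convolution of $f$ against a kernel dominated by (a translate/dilate of) $\Gamma$, one more application of Theorem \ref{Theorem6.6} is needed to absorb the other half of the widths, producing $\mathcal{M}(f) = M(M(f))$ rather than $M(f)$.

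I expect the main obstacle to be the bookkeeping in part (b): organizing the transfer of derivatives so that each of the $n$ coordinate blocks contributes its own independent gain, while keeping all intermediate kernels of the form ``truncated flag kernel of controlled width composed with a bump,'' so that Theorem \ref{Theorem6.7} and Theorem \ref{Theorem6.9} remain applicable at every stage. The delicate point is that after differentiating and re-expressing via Proposition \ref{Prop1.8} / Corollary \ref{Prop2.15jkl}, one picks up lower-order error terms involving higher variables; one must check these errors come with their own gains $2^{-\epsilon(\cdot)}$ and therefore only improve the estimate. A secondary subtlety is that the gain exponent degrades from $k$ down to a fixed $\delta > 0$ when we pass from $\min(s_k/t_k, t_k/s_k)^{k}$ to the clean product form, and one must make sure the same $\delta$ works uniformly across all $k$ and all choices of which side carries the derivative; taking $\delta$ to be, say, $d_1$ (the smallest dilation exponent) or simply $1$ suffices, since each individual gain is a positive power bounded below away from zero on the relevant ranges. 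Once these points are handled, the two applications of Theorem \ref{Theorem6.6} to recover $\mathcal{M}(f)$ are routine.
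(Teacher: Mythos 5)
Parts (a) and (a$^\prime$) of your proposal are exactly the paper's argument: write the kernel as $\K*\Phi_\t$ (resp.\ $\Phi^\ast_\t*\K$), apply Theorem \ref{Theorem6.9}(1) and then Theorem \ref{Theorem6.6}. No issues there.

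For part (b) there is a genuine gap. The mean--zero factor you want to exploit, $\widetilde\varphi^{(k)}_{s_k}$, is not adjacent to $\K*\Phi_\t$ inside $\Phi^\ast_\s*\K*\Phi_\t=\widetilde\varphi^{(n)}_{s_n}*\cdots*\widetilde\varphi^{(1)}_{s_1}*\K*\Phi_\t$: it is separated from $\K$ by the factors $\widetilde\varphi^{(j)}_{s_j}$, $j=k-1,\dots,1$. To move the vector field coming from the expansion (\ref{Eqn6.10}) of $\varphi^{(k)}_{s_k}$ onto $\K*\Phi_\t$, you must pass it through each intermediate factor, and each passage (left-invariant $\to$ right-invariant $\to$ left-invariant of higher degree) produces powers of the intermediate widths $s_j$. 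The net prefactor is therefore $s_\ast^{\,r}t_k^{-r}$ with $s_\ast=\sup_{1\le j\le k}s_j$, not $s_k^{\,r}t_k^{-r}$; since $\s$ is unordered, some $s_j$ with $j<k$ may be enormous compared with $s_k$, and then this quantity is a loss, not a gain. This is why the paper splits into two cases governed by a threshold $\rho^{\sigma}$ (with $\rho=t_k/s_k$): when some adjacent ratio $s_{j-1}/s_j$ is large, the gain comes from a completely different mechanism, namely the almost-orthogonality of consecutive factors of $\Phi^\ast_\s$ themselves (Lemma \ref{Lemma6.11}), which bounds $|P^\ast_\s f|$ by $c\rho^{-2\sigma}M(f)$ directly; only when all adjacent ratios are controlled does one have $s_\ast\le s_k\rho^{\sigma(k-1)}$, making your derivative-transfer argument viable. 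Your proposal contains no mechanism to handle the first regime, and this case split is the heart of the proof of (b).

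A secondary point: the gains for different $k$ cannot simply be ``iterated,'' since each single-$k$ argument rewrites $\Phi^\ast_\s$ and consumes the cancellation structure. The paper instead proves, for each fixed $k$ separately, the bound $c\,\min(s_k/t_k,\,t_k/s_k)^{1/n}\,\mathcal M(f)$, and then takes the geometric mean of these $n$ inequalities to obtain the product form with $\delta=1/n^2$. You should adopt that device rather than attempting simultaneous gains in all coordinates.
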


\noindent
Note: the conclusion will be seen to hold for $\delta = \frac{1}{n^2}$.

\begin{proof}
The function $P_t T f$ is given by $( f \ast K
) \ast \Phi_t = f \ast ( K \ast \Phi_t )$.  Hence conclusion (a) is a direct consequence of Theorem \ref{Theorem6.6} and Theorem \ref{Theorem6.9},
part (1). The same is true  for conclusion (a$^\prime$).

Turning to conclusion (b), we first fix $k$, and consider the situation
when $t_k/s_k = \rho \geq 1$.  With this $\rho$ given, we next divide
our consideration in two cases.

\begin{namelist}{xxxxxx}
\item[\sf Case I:]  With $\sigma$ a positive constant, to be specified
below, $s_{j-1}\slash{s_j} > \rho^\sigma$ for at least one $j$, with $2 \leq j
\leq k$.  
\item[\sf Case II:] $s_{j-1}\slash{s_j} \leq \rho^\sigma$ for all $j$, with $2
\leq j \leq k$.  
\end{namelist}

To handle Case I we need the following observation that
will give us the needed gain. Recall the notation 
$\eta_{B^{(k)}_\rho}$
$={m_{k}} ( B^{(k)}_\rho )^{-1} \chi_{B^{(k)}_{\rho}}$ used
above in Section \ref{SecMaxFns}.

\begin{lemma}\label{Lemma6.11}
If $s_{j-1} \slash {s_j} \geq
\rho^\sigma$, $\rho \geq 1$, then
\begin{equation*}
\left| 
\tilde{\varphi}^{(j)}_{s_j} \ast 
\varphi^{(j-1)}_{s_{j-1}} 
\right|
\leq c \, 
\rho^{- \sigma j} 
\left( \tilde{\eta}^{(j)}_{B^{(j)}_{s_j}} \right) 
\ast \left( \eta^{(j-1)}_{B^{(j - 1)}_{s_{j-1}}} \right).
\end{equation*}
\end{lemma}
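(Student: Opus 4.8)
The statement is a pointwise convolution estimate comparing $\tilde\varphi^{(j)}_{s_j}*\varphi^{(j-1)}_{s_{j-1}}$ with a product of normalized ball averages, with the key quantitative content being the gain $\rho^{-\sigma j}$ when $s_{j-1}/s_j\ge\rho^\sigma$. The plan is to work entirely on the group $G_{j-1}$ (which contains $G_j$ as a subgroup), and to exploit the two facts that (i) $\varphi^{(j-1)}$ is supported in the unit ball of $G_{j-1}$ and, crucially, (ii) $\int_{G_{j-1}}\varphi^{(j-1)}(\x)\,d\x=0$, which is the hypothesis we carry on all the $\varphi^{(k)}$.

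First I would write out the convolution explicitly in the adapted canonical coordinates of the second kind used in the proof of Theorem \ref{Theorem6.9}, so that $\tilde\varphi^{(j)}_{s_j}=\delta_{x_1,\ldots,x_{j-1}}\otimes\varphi^{(j)}_{s_j}$ and the convolution on $G_{j-1}$ decouples: the variables $x_1,\ldots,x_{j-2}$ play no role, and in the remaining variables we are convolving (on $G_{j-1}$) a function of width $s_j$ in the $G_j$-variables, sitting on the $x_{j-1}=0$ slice, against $\varphi^{(j-1)}_{s_{j-1}}$, which has width $s_{j-1}$ and mean zero. Since $s_{j-1}\ge\rho^\sigma s_j\ge s_j$, the factor with the \emph{wider} dilation is $\varphi^{(j-1)}_{s_{j-1}}$, and I would use its mean-zero property: write $\varphi^{(j-1)}=\sum_{r}\partial_r[\varphi^{(j-1)}_{(r)}]$ (Lemma \ref{Lemma1.12}), pass the derivative onto $\tilde\varphi^{(j)}_{s_j}$ — actually onto its support, since we cannot differentiate a delta in the $x_{j-1}$-direction, so we must instead expand $\varphi^{(j-1)}$ in its own Taylor expansion around the point dictated by the convolution and use the vanishing integral, picking up one factor of $s_j/s_{j-1}\le\rho^{-\sigma}$ from each $G_{j-1}$-coordinate direction. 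Summing over the $j$-dimensional block of coordinates that make up the relevant degree count produces the exponent $\rho^{-\sigma j}$; the homogeneous degree of the $(j-1)$-st layer is $Q_{j-1}$ but the effective number of "scaling directions" controlling the gain is comparable to $j$, which is why the exponent is $\sigma j$ and not $\sigma Q_{j-1}$ — I would track the degrees carefully here.

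Next, having extracted the gain $\rho^{-\sigma j}$, I need to bound what remains by $c\,(\tilde\eta^{(j)}_{B^{(j)}_{s_j}})*(\eta^{(j-1)}_{B^{(j-1)}_{s_{j-1}}})$. This is a support-and-size estimate of exactly the type proved in Subsection \ref{SupportProperties} (Lemma \ref{Lemma5.1} and the estimates around inequality (\ref{Eqn6.1})): the convolution of a bump of width $s_j$ with a bump of width $s_{s_{j-1}}$, with $s_j\le s_{j-1}$, is supported in (a dilate of) the rectangle $R_{\widetilde\s}$ with $\widetilde s_\ell=\max\{s_j,s_\ell\}$, and its sup-norm times its support volume is controlled by the product of the two normalized ball characteristic functions. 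I would invoke (\ref{Eqn6.1}) and the observation immediately following Lemma \ref{Lemma6.5} that $\eta_{B^{(k)}_{s_k}}*(\delta_{x_k}\otimes\eta_{R^{(k+1)}_{\bar\s}})\lesssim\eta_{R^{(k)}_{c\tilde\s}}$, specialized to the two-layer situation here, to get the stated majorant. One then combines the two pieces.

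\textbf{Main obstacle.} The delicate point is the mean-zero exploitation in the non-abelian setting: on a homogeneous group one cannot simply integrate by parts to move a derivative from $\varphi^{(j-1)}$ onto the delta-factor of $\tilde\varphi^{(j)}_{s_j}$, because the latter is singular in precisely the $x_{j-1}$-direction. The right way around this is to use the adapted coordinates so that $\x\cdot\y$ has $\bar\x_\ell$ (for $\ell\ge j-1$) depending only on $\x_{j-1},\ldots,\x_n,\y_{j-1},\ldots,\y_n$, reducing the whole computation to a genuine convolution on $G_{j-1}$ where $\varphi^{(j-1)}$ \emph{does} live on the full space and \emph{can} be written as a sum of derivatives or expanded by Taylor's theorem with the vanishing zeroth-order term; then the smoothness of $\varphi^{(j)}$ supplies the needed regularity to absorb the error. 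Getting the bookkeeping of homogeneous degrees right — so that the gain comes out as $\rho^{-\sigma j}$ with the correct $j$ — is the part that requires the most care, and choosing $\sigma$ small enough (relative to $1/n$, matching the $\delta=1/n^2$ promised after Theorem \ref{Theorem6.10}) is what ties this lemma to the eventual estimate for $\gamma(s,t)$.
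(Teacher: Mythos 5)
Your second step---the support--size estimate bounding the residual convolution by $c\,(\tilde{\eta}^{(j)}_{B^{(j)}_{s_j}})\ast(\eta^{(j-1)}_{B^{(j-1)}_{s_{j-1}}})$ via Section \ref{SupportProperties} and (\ref{Eqn6.1})---is fine, but the cancellation step, which is where the factor $\rho^{-\sigma j}$ must come from, is backwards. You propose to exploit the mean-zero property of the \emph{wider} factor $\varphi^{(j-1)}_{s_{j-1}}$ and to compensate with smoothness of the narrower one. That combination produces a loss, not a gain: writing $\varphi^{(j-1)}=\sum_r\partial_r[\varphi^{(j-1)}_{(r)}]$ and moving a derivative of homogeneous degree $d_r$ from the $s_{j-1}$-dilate to the $s_j$-dilate costs $s_{j-1}^{\,d_r}\cdot s_j^{-d_r}=(s_{j-1}/s_j)^{d_r}\geq \rho^{\sigma d_r}\geq 1$; equivalently, in your Taylor-expansion variant the zeroth-order term is killed by $\int\varphi^{(j)}=0$ (the \emph{narrow} factor's cancellation), not by $\int\varphi^{(j-1)}=0$. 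The gain has to come from the cancellation of $\varphi^{(j)}_{s_j}$ together with the smoothness of $\varphi^{(j-1)}_{s_{j-1}}$. The paper's proof does exactly this: by the analogue of (\ref{Eqn6.10}) on $G_j$ one writes $\varphi^{(j)}_{s_j}=\sum_{r\geq j}(s_j)^r\,X^L_r(\psi^{(r)}_{s_j})$ with $X^L_r$ left-invariant of degree $r$; the transfer rule of Proposition \ref{Proposition3.1} gives $\bigl(\delta_{x_{j-1}}\otimes X^L_r(\psi^{(r)}_{s_j})\bigr)\ast\varphi^{(j-1)}_{s_{j-1}}=\bigl(\delta_{x_{j-1}}\otimes\psi^{(r)}_{s_j}\bigr)\ast X^R_r\bigl(\varphi^{(j-1)}_{s_{j-1}}\bigr)$; and $X^R_r(\varphi^{(j-1)}_{s_{j-1}})=s_{j-1}^{-r}\psi^{\prime(r)}_{s_{j-1}}$, so the net factor is $(s_j/s_{j-1})^{r}\leq\rho^{-\sigma r}\leq\rho^{-\sigma j}$. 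Note this also dissolves your worry about differentiating the delta factor: the derivative is moved \emph{off} the tensor factor, never onto it.

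Your accounting for the exponent $j$ is also incorrect. A single mean-zero condition lets you write a function as a \emph{sum} of first derivatives (Lemma \ref{Lemma1.12}), one per coordinate, so at best you extract one factor of gain from one term of the sum---not a product of $j$ gains, ``one from each coordinate direction.'' The exponent $j$ comes from homogeneity: $\varphi^{(j)}$ lives on $G_j$, where every coordinate has homogeneous degree at least $j$, so the expansion of $\varphi^{(j)}_{s_j}$ above involves only vector fields of degree $r\geq j$, and a \emph{single} transfer of one such field across the convolution already yields $(s_j/s_{j-1})^{r}\leq\rho^{-\sigma j}$. Without this you would only get $\rho^{-\sigma}$, which is not enough for the summation argument that follows in the proof of Theorem \ref{Theorem6.10}.
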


\begin{proof}[Proof of Lemma \ref{Lemma6.11}] In analogy with equation (\ref{Eqn6.10}) in Section \ref{Sec6.3}, we have that on $G_j$
\begin{equation}\label{Eqn6.11}
\varphi^{(j)}_{s_j} = \displaystyle{\sum\limits_{r \geq j}}
\, (s_j)^r \, X^L_r ( \psi^{(r)}_{s_j} ) 
\end{equation}
where $X^L_r$ are left-invariant vector fields 
on $G_j$, with
$\psi^{(r)}$ $C^\infty$ functions supported in the unit ball.
Now 
\begin{equation*}
\left(
\delta_{x_{j-1}} \otimes X^L_r ( \psi^{(r)}_{s_j}) \right) \ast
\varphi^{(j - 1)}_{s_{j-1}} \;
= \; \left(
\delta_{x_{j-1}} \otimes \psi^{(r)}_{s_j} \right) \ast
X^R_r \left(
\varphi^{(j - 1)}_{s_{j-1}} \right) , 
\end{equation*}
and $X^R_r \left( \varphi^{(j-1)}_{s_{j-1}} \right)$  
is of the form $s^{-r}_{j-1}
\psi^{\prime ( r )}_{s_{j-1}}. 
\)  Combining these gives a sum $\displaystyle{\sum\limits_{r \geq j}} (
s_j \slash{s_{j-1}})^r \, \tilde{\psi}^{(r)}_{s_j} \ast \psi^{\prime ( r
)}_{s_{j-1}}$ and since $s_j \slash {s_{j-1}} \leq \rho^{- \sigma}$, the
lemma is proved.
\end{proof}

Consider the operator $P^\ast_s$  given by
$P^\ast_s ( f ) = f \ast \Phi^\ast_s$ with $\Phi^\ast_s =
\tilde{\varphi}^{(k)}_{s_n} \ast \tilde{\varphi}^{(n-1)}_{s_{k-1}}
\cdots \ast \tilde{\varphi}^{(1)}_{s_1}$.  Because of Lemma \ref{Lemma6.11} and since $j\geq 2$, we have $| \Phi^\ast_s | \leq c \rho^{-2 \sigma}
\tilde{\eta}_{B^{(k)}_{s_n}}$ $\ast \cdots \ast
\tilde{\eta}_{B^{(1)}_{s_1}} = c \, \rho^{ - \sigma j} = c \rho^{- 2
\sigma} A^\ast_s$, in the terminology in Section \ref{SecMaxFns}.  So by equation
(\ref{Eqn6.6}$^{\prime}$) there, it follows that 
$
| P^\ast_ s ( f ) ( \x ) | \leq c \rho^{- 2 \sigma} M ( f ) ( \x )
$
Combining this with the first conclusion already proved that $| P_t T 
( F ) ( x ) | \leq c M ( F ) ( x )$, with $F = P^\ast_t ( f )$ yields
\begin{equation}\label{Eqn6.12}
| P_t T P^\ast_s ( f ) ( x ) | \leq c \rho^{-2 \sigma} \,
\mathcal{M} ( f )
\end{equation}
since $\mathcal{M} =   M  \cdot M $.  

\smallskip

We now turn to Case II.
Here $s_{j-1} \slash{s_j} \leq \rho^\sigma$, for all $j$, $2 \leq j
\leq k$.  Thus $s_j \leq s_k \rho^{\sigma ( k - j)}$ for $1 \leq j \leq
k$.  If we set $s_\ast = \sup\limits_{1 \leq j \leq k} s_j$, then $s_*
\leq s_k \rho^{\sigma ( k - 1)}$.  Next we recall the following fact: 

\smallskip
\noindent
If $X^R_\ell$ is a right-invariant\label{LeftInv}
vector field of degree $\ell \geq j$ on $G_j$, then one can write
\[
X^R_\ell ( \psi_a) = \displaystyle{\sum_{r \geq \ell}} a^{r-\ell} \,
X^L_r ( \psi^{(r)}_a )
\]
where $X^L_r$ are left-invariant vector fields of degree $r$. This
follows by writing $X^R_\ell = \displaystyle{\sum} h_{\ell, r} ( x )
X^L_r$ with $h_{\ell , r }$ a homogeneous polynomial of degree $r -
\ell$ and arguing as in Proposition 2.3. 

\medskip

 With this in hand, consider 
$\tilde{\varphi}_{s_k}^{k} \ast
\tilde{\varphi}^{k-1}_{s_{k-1}} \cdots \ast$ 
$\tilde{\varphi}^{(1)}_{s_1}$.
First use that 
${\int_{G_k}} \varphi^{(k)} dx = 0$,
which by the analogue of assertion (\ref{Eqn6.10}) 
(for left-invariant vector fields on the group $G_k$) 
gives an expression involving the action of
left-invariant vector fields (of degrees $\geq k$).  Next pass from the
left-invariant vector-fields acting on $\psi^{(r)}_{s_j}$ to the
corresponding right-invariant vector fields acting on
$\tilde{\varphi}^{k - 1}_{s_{k-1}}$, via the rule 
\begin{equation}\label{Eqn6.13}
( X^L_r \psi ) \ast
\varphi = \psi \ast X^R_r \varphi. 
\end{equation}
At this point utilize the remark on page \pageref{LeftInv}  to pass to left-invariant vector
fields, and then use the rule (\ref{Eqn6.13}) above to pass to $\tilde{\varphi}^{k -
2}_{s_{k - 2}}$, etc.

Putting this all together leads quickly to the following conclusion: the convolution
$$\tilde{\varphi}^{(k)}_{s_k} \ast \tilde{\varphi}^{(k-1)}_{s_{k-1}}
\cdots \ast \tilde{\varphi}^{(1)}_{s_1}$$ is a finite sum of expression
of the form 
\begin{equation}\label{Eqn6.14}
( s_\ast)^r \tilde{\psi}^{(k)}_{s_k} \ast \tilde{\psi}^{(k -
1)}_{s_{k-1}} \cdots \ast X^L_r ( \tilde{\psi}^{(1)}_{s_1} ) 
\end{equation}
for 
$r \geq k$. Here we use the fact that $s_\ast \geq s_j$, $j \leq k$.
Now consider
\begin{equation}\label{Eqn6.15}
\tilde{\varphi}^{(k)}_{s_k} \ast \tilde{\varphi}^{k -
1}_{s_{k-1}} \cdots \ast \tilde{\varphi}_{s_1} \ast K \ast \Phi_t.
\end{equation}

By applying the rule (\ref{Eqn6.3}) we can pass the left-invariant vector field $X^L_r$ as a
right-invariant vector field acting on $K \ast \Phi_t$.  We keep in mind
that $r \geq k$, and use Theorem \ref{Theorem6.9}, part (2).  Therefore
$\Phi^\ast_s \ast K \ast \Phi_t = \tilde{\varphi}^{(n)}_{s_n} \ast
\tilde{\varphi}^{(k - 1)}_{s_{k - 1}} \cdots \ast
\tilde{\varphi}^{(1)}_{s_1} \ast K \ast \Phi_t$ is majorized by a
constant multiple of $s^r_\ast \cdot t^{-r}_{k} \, A^\ast_s \ast
\Gamma_t$ where 
$A^\ast_s = \tilde{\eta}_{B^{(n)}_{s_n}} \ast
\tilde{\eta}_{B^{n - 1}_{s_{n-1}}} \cdots 
\ast \eta_{B^{(1)}_{s_1}}$.

In view of Theorem \ref{Theorem6.6} and inequality (\ref{Eqn6.6}$^\prime$) in Section \ref{SecMaxFns}, we get
that 
\begin{equation}\label{Eqn6.16}
| P_t T P^\ast_s ( f ) ( \x ) | \leq c \sup\limits_{r \geq k}
( s^r_\ast t^{-r}_k ) \, M ( M ( f ))(\x).
\end{equation}
We now pick $\sigma = 1/2k$.  
Since $s_\ast \leq s_k \rho^{\sigma (k-1)}$,
and $s_k \slash t_k = \rho^{-1}$, $\rho \geq 1$, we get 

\[ | P_t T P^\ast_s ( f ) ( x ) | \leq c \, \rho^{-1/2} \, 
M ( M ( f ) ) ( x ) \, .\]
Combining this with the previous case given by (\ref{Eqn6.12}) yields 
\[
| P_t T P^\ast_s ( f ) ( x ) | \leq c 
\left(
\frac{s_k}{t_k} \right)^{1/n} \, 
\mathcal{M} ( f ) ( x ), \ \mbox {if}  \ t_k
\slash s_k \geq 1.
\]
By a parallel argument the analogous result holds of $s_k \slash t_k
\geq 1$.  Hence 
\[
| P_t T P^\ast_s ( f ) ( x ) | \leq c \, \min \Big( \frac{s_{k}}{t_{k}},\,\frac{t_{k}}{s_{k}}\Big)^{\frac{1}{n}} \mathcal{M} ( f ) ( x ) \, .
\]
Since this holds for all $k$, $1 \leq k \leq n$, we can take the
geometric mean of these inequalities.  The result is conclusion (2) of
Theorem \ref{Theorem6.10}, with $\delta = 1 \slash n^2$.
\end{proof}

\smallskip

\subsection{Square functions and $L^p$-boundedness}\quad

\smallskip

We will construct the square functions for $G$ as products of the
(one-parameter) square functions of the sub-groups $G_k$, $1 \leq k \leq
n$. Each $G_{k}$ is a homogeneous group with family of dilations $\delta_{r}$, and so there exists a finite-dimensional inner-product space $V_k$ and a pair
$\varphi^{(k)}$, $\psi^{(k)}$ of $V_k$-valued functions, with
$\varphi^{(k)}\in \mathcal C^\infty_{0}(G_{k})$ supported in the unit ball,  and
$\psi^{(k)}\in \mathcal S(G_{k})$ a Schwartz function, so that
$\int_{G_k} \varphi^{(k)} (\x)\,d\x = 
\int_{G_k} \psi^{(k)}(\x)\,d\x = 0$, and
\begin{equation}\label{Eqn6.13w}
\int_{0}^{\infty} \psi^{(k)}_a ( \x
\y^{-1} ) \cdot \varphi^{(k)}_a ( \y ) \frac{da}{a} = \delta_0.
\end{equation}
Here $\varphi^{(k)}_{a} ( \x ) = a^{- Q_k - Q_{k+1} \ldots - Q_n} \varphi^{(k)}
( \delta_{a^{-1}} ( \x ))$, with a similar definition for $\psi^{(k)}_{a}
( \x )$. Also $\cdot $ denotes the inner product in $V_k$. See \cite{FoSt82},
Theorem 1.61.

We define operators $P^{(k)}_a$ and $Q^{(k)}_a$, acting on
functions on $G_k$, by  setting $P^{(k)}_a ( f ) = f \, \ast \, \varphi^{(k)}_a$ and 
$Q^{(k)}_a ( f ) = f \, \ast \, \psi^{(k)}_a$.
Note that (\ref{Eqn6.13w}) shows that 
\begin{equation}\label{Eqn6.14w}
\int_{0}^{\infty} P^{(k)}_{a} \cdot
Q^{(k)}_a  \frac{da}{a} = \text{Id}.
\end{equation}
Next, define the square functions $S_k$ and $S^\#_k$ by setting
\begin{align*}
S_k ( f ) ( \x ) \, &= \, \Big( \displaystyle{\int\limits_{0}^{\infty}} |
P^{(k)}_a ( f ) ( \x ) |^2 \frac{da}{a} \Big)^{1/2}, \\
 S^\#_k ( f ) ( \x ) \, &= \, \Big(
\displaystyle{\int\limits_{0}^{\infty}} | Q^{(k)}_{a} ( f ) ( \x ) |^2
\frac{da}{a} \Big)^{1/2} \, .
\end{align*}
The usual theory of singular integrals in \cite{St93} and \cite{FoSt82} 
together with (\ref{Eqn6.14w}) then gives the
inequalities 
\begin{equation}\label{Eqn6.15w}
\parallel f \parallel_{L^p} \approx \parallel S_k ( f )
\parallel_{L^p} \approx \parallel S^{\#}_{k} ( f ) \parallel_{L^p}
\end{equation}
for $1 <p<\infty$ on $G_{k}$.
The result is valid not only for
scalar-valued $f$, but also for $f$ that take their values in a Hilbert
space.

Having recalled the known results for $G_k$ we transfer them to the
whole group $G$ by writing
\begin{align*}
\tilde{P}^{(k)}_{a} ( f ) & =  f \, \ast\, 
( \delta_{x_1 \cdots x_{k-1}} \otimes
\varphi^{(k)}_{a} ) &
\tilde{Q}^{(k)}_{a} ( f )& =  f \, \ast\, 
( \delta_{x_1 \cdots x_{k-1}} \otimes
\psi^{(k)}_{a} ) \\ 
\tilde{S}_k ( f ) & =  \Big(
\displaystyle{\int\limits_{0}^{\infty}} | \tilde{P}^{(k)}_{a} ( f ) |^2
\frac{da}{a} \Big)^{1/2} &\tilde{S}^{\#}_{k} ( f ) & =  \Big( \displaystyle{\int\limits_{0}^{\infty}}
| \tilde{Q}^{(k)}_a ( f ) |^2 \frac{da}{a} \Big)^{1/2} \, .
\end{align*}
We get as a consequence 
\begin{align*}
\tag{\ref{Eqn6.14w}$^{\prime}$}\int\limits_{0}^{\infty}
\tilde{P}^{(k)}_{a} \cdot 
\tilde{Q}^{(k)}_{a} \frac{da}{a} & = \text{Id}
\end{align*}
on $G$, and \begin{align*}
\tag{\ref{Eqn6.15w}$^{\prime}$}
\parallel f \parallel_{L^p ( G )} &\approx \parallel
\tilde{S}_k ( f ) \parallel_{L^p ( G )} \approx \parallel \tilde{S}^{\#}
( f ) \parallel_{L^p ( G )}.
\end{align*}

With the above one-parameter theory arising from each $G_k$ we come to
the square functions on $G$ that are relevant for us.  For each $\t =
(t_1 , \ldots t_n ) \in ( \mathbb{R}^+)^n$ we set $P_\t =
\tilde{P}^{(n)}_{t_n} \cdot \tilde{P}^{(n - 1)}_{t_{n-1}} \cdot
\tilde{P}^{(1)}_{t_1}$.  That is, $P_\t ( f ) = f \ast \Phi_\t$, where
$\Phi_\t = \tilde{\varphi}^{(1)}_{t_1} \ast \tilde{\varphi}^{(2)}_{t_2}
\cdots \ast \tilde{\varphi}_{t_n}^{(n)}$ and
$\tilde{\varphi}^{(k)}_{t_k} = \delta_{x_1 \cdots x_{k - 1}} \otimes
\varphi^{(k)}_{t_k}$.  Note also that $\Phi_\t$ is a $V$-valued function,
where $V = V_1 \otimes V_2 \cdots \otimes V_n$.  Similarly, we define 
$P^{\ast}_\t = \tilde{P}^{(1)}_{t_1} \cdot
\tilde{P}^{(2)}_{t_2} \cdots \, \cdot \, 
\tilde{P}^{(n)}_{t_n} $,  $Q_t = \tilde{Q}^{(n)}_{t_n} \cdots
\, \cdot \tilde{Q}^{(1)}_{t_n}$, and 
$Q^\ast_\t = \tilde{Q}^{(1)}_{t_1} \cdots \, \cdot \, \tilde{Q}^{(n)}_{t_n}$.
Also $Q_t ( f ) = f \ast \bar{\psi}_t$, with $\bar{\psi}_\t =
\tilde{\psi}^{(1)}_{t_1} \ast \cdots \, \cdot \,
\tilde{\psi}^{(n)}_{t_n}$ and $\bar{\psi}_{\t}$ is also $V$-valued.
Finally, we set
\begin{align*}
S ( f ) ( x ) \, &= \, \Big(
\displaystyle{\int\limits_{( \mathbb{R}^+ )^n}} | P_t ( f ) |^2 \,
\frac{dt}{[t]} \Big)^{1/2} , &
\mathfrak{S} ( f ) ( x ) &= \Big(
\displaystyle{\int\limits_{( \mathbb{R}^+ )^n}} \, | \mathcal{M} Q_t f
|^2 \, 
\frac{dt}{[t]} \Big)^{1/2} \, .
\end{align*}
Here we use the abbreviation that $[t] = t_1 \cdot t_2 \cdots \, \cdot
t_n$.

\begin{lemma}
We have
\begin{enumerate}[{\rm(a)}]

\smallskip

\item $\displaystyle{\int\limits_{\left( \mathbb{R}^+ \right)^n}}
P^\ast_t Q_t \, \frac{dt}{[t]} \, = \, Id$;

\smallskip

\item $\parallel f \parallel_{L^p} \leq A_p \parallel S ( f )
\parallel_{L^p}$, $1 < p < \infty$;

\smallskip

\item $\parallel \mathfrak{S} ( f ) \parallel_{L^p} \leq A_p \parallel
f \parallel_{L^p}$, $1 < p < \infty$.
\end{enumerate}
\end{lemma}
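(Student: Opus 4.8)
The plan is to establish the three parts in order, since each one feeds into the next. For part (a), the identity $\int_{(\mathbb{R}^+)^n} P^\ast_t Q_t\,\frac{dt}{[t]} = \mathrm{Id}$, I would proceed by iterating the one-parameter resolutions of the identity on the subgroups $G_k$. Recall that $P^\ast_\t = \tilde P^{(1)}_{t_1}\cdots \tilde P^{(n)}_{t_n}$ and $Q_\t = \tilde Q^{(n)}_{t_n}\cdots \tilde Q^{(1)}_{t_1}$. The key observation is that for a fixed index $k$, the lifted operators $\tilde P^{(k)}_{t_k}$ and $\tilde Q^{(k)}_{t_k}$ act only in the ``last $n-k+1$ blocks'' of variables and, crucially, $\tilde P^{(j)}_{t_j}$ commutes with $\tilde Q^{(k)}_{t_k}$ whenever $j \neq k$ because convolution with $\delta_{x_1\cdots x_{j-1}}\otimes \varphi^{(j)}_{t_j}$ and with $\delta_{x_1\cdots x_{k-1}}\otimes \psi^{(k)}_{t_k}$ involve disjoint sets of ``active'' variables in the appropriate coordinate system — the same adapted canonical coordinates of the second kind used in the proof of Theorem \ref{Theorem6.9}. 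Once the commutation is in place, one can rearrange $P^\ast_t Q_t$ so that for each $k$ the factors $\tilde P^{(k)}_{t_k}$ and $\tilde Q^{(k)}_{t_k}$ are adjacent, integrate in $t_k$ using equation (\ref{Eqn6.14w}$^{\prime}$) which gives $\int_0^\infty \tilde P^{(k)}_a \tilde Q^{(k)}_a\,\frac{da}{a} = \mathrm{Id}$ on $G$, and proceed by induction on the number of remaining integrations. Some care is needed about the order of integration and absolute convergence, which follows from the rapid decay estimates of the kernels $\Phi_t$, $\bar\psi_t$.

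For part (b), the lower $L^p$ bound $\|f\|_{L^p} \le A_p \|S(f)\|_{L^p}$, I would use the identity from part (a) together with a standard duality/polarization argument. Pair $f$ with a test function $g$ and write $\langle f, g\rangle = \int_{(\mathbb{R}^+)^n} \langle Q_t f, P_t g\rangle\,\frac{dt}{[t]}$ using (a) (with the roles of $P^\ast$ and $Q$ appropriately placed, and noting $P_t$ and $P^\ast_t$ are adjoints up to the inner product on $V$). Then apply Cauchy–Schwarz in $(t,x)$ to bound this by $\|\mathfrak S' (f)\|_{L^p}\,\|S(g)\|_{L^{p'}}$ where $\mathfrak S'$ is the square function built from $Q_t$ without the maximal function — but actually one wants the cleaner route: bound $\langle f,g\rangle$ by $\int\int |Q_t f||P_t g|\,\frac{dt}{[t]}dx \le \| (\int |Q_t f|^2 \frac{dt}{[t]})^{1/2}\|_{L^p} \|S(g)\|_{L^{p'}}$, then invoke the fact that the $Q_t$-square function is bounded on $L^{p'}$ (this is the analogue of (\ref{Eqn6.15w}$^{\prime}$) for the multi-parameter product square function, obtained by iterating the vector-valued one-parameter inequalities on each $G_k$). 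Taking the supremum over $g$ with $\|g\|_{L^{p'}} \le 1$ gives the claim. The iteration of the vector-valued Littlewood–Paley inequalities across the $n$ factors is routine given (\ref{Eqn6.15w}$^{\prime}$) but must be done with the Hilbert-space-valued versions at each stage, composing $G_n, G_{n-1}, \ldots, G_1$.

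For part (c), the upper bound $\|\mathfrak S(f)\|_{L^p} \le A_p\|f\|_{L^p}$, the estimate $\mathfrak S(f)(x) = (\int |\mathcal M Q_t f|^2\,\frac{dt}{[t]})^{1/2}$ is handled by combining the vector-valued maximal inequality of Corollary \ref{Cor6.4} (with $F_t = Q_t f$, and $\mathcal M = M\circ M$, so Corollary \ref{Cor6.4} applied twice) with the boundedness of the plain $Q_t$-square function $(\int |Q_t f|^2\,\frac{dt}{[t]})^{1/2}$ on $L^p$. Concretely: $\|\mathfrak S(f)\|_{L^p} = \| (\int |\mathcal M(Q_t f)|^2\frac{dt}{[t]})^{1/2}\|_{L^p} \le A_p \|(\int |Q_t f|^2 \frac{dt}{[t]})^{1/2}\|_{L^p}$ by Corollary \ref{Cor6.4} (applied twice for the two maximal operators), and then the remaining square function is $\lesssim \|f\|_{L^p}$ by the iterated Littlewood–Paley theory on the $G_k$'s as in part (b).

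I expect the main obstacle to be part (a): establishing the commutation of the lifted one-parameter operators $\tilde P^{(j)}_{t_j}$ and $\tilde Q^{(k)}_{t_k}$ for $j\ne k$ and rigorously justifying the interchange of the $n$ iterated integrations. This requires working carefully in the adapted canonical coordinates of the second kind, where $\tilde\varphi^{(k)}_{t_k} = \delta_{x_1\cdots x_{k-1}}\otimes \varphi^{(k)}_{t_k}$ and convolution on $G_k$ does not disturb the first $k-1$ blocks; one must check that associativity of convolution plus this structural fact really does let the $G_k$-factors pass through each other, and that the decay of the kernels makes all the iterated integrals absolutely convergent so Fubini applies. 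Once (a) is secured, parts (b) and (c) are essentially a matter of assembling the pieces — Corollary \ref{Cor6.4}, the one-parameter inequalities (\ref{Eqn6.15w}$^{\prime}$), and duality — with no genuinely new difficulty, modulo the bookkeeping of Hilbert-space-valued Littlewood–Paley estimates composed $n$ times.
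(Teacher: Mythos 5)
Your treatment of part (a) contains a claim that is both false and, fortunately, unnecessary. You propose to commute $\tilde P^{(j)}_{t_j}$ past $\tilde Q^{(k)}_{t_k}$ for $j\neq k$ on the grounds that they involve ``disjoint sets of active variables.'' They do not: for $j<k$ the lifted distributions $\delta_{x_1\cdots x_{j-1}}\otimes\varphi^{(j)}_{t_j}$ and $\delta_{x_1\cdots x_{k-1}}\otimes\psi^{(k)}_{t_k}$ are supported on the \emph{nested} subgroups $G_j\supset G_k$ and both genuinely involve the variables $\x_k,\ldots,\x_n$; since $G$ is non-abelian, the corresponding right-convolution operators commute only if the two distributions convolve commutatively, which fails in general (this is exactly the difficulty that forces all the left/right-invariant vector field machinery elsewhere in the paper). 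The saving grace is that no commutation is needed: writing out $P^\ast_\t Q_\t(f)=f*\tilde\psi^{(1)}_{t_1}*\cdots*\tilde\psi^{(n)}_{t_n}*\tilde\varphi^{(n)}_{t_n}*\cdots*\tilde\varphi^{(1)}_{t_1}$, the product is already palindromic, so the innermost adjacent pair is $\tilde\psi^{(n)}_{t_n}*\tilde\varphi^{(n)}_{t_n}$; integrating in $t_n$ via (\ref{Eqn6.14w}$^{\prime}$) collapses it to the identity, the pair with index $n-1$ then becomes adjacent, and one telescopes outward (equivalently, one builds the identity from the inside out starting with $k=1$ and conjugating by the successive $\tilde P^{(k)}_{t_k}$, $\tilde Q^{(k)}_{t_k}$ before integrating). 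This is the paper's argument, and it uses only associativity of convolution and Fubini, justified as you say by the decay of the kernels.

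For (b), your primary (duality) route has the roles of $P$ and $Q$ reversed: pairing $\langle Q_tf,\,P_tg\rangle$ and applying Cauchy--Schwarz bounds $\|f\|_{L^p}$ by the $Q_t$-square function of $f$ times $\|S(g)\|_{L^{p'}}$, which proves $\|f\|_{L^p}\lesssim\|(\int|Q_tf|^2\,dt/[t])^{1/2}\|_{L^p}$ rather than $\|f\|_{L^p}\lesssim\|S(f)\|_{L^p}$; moreover $P^\ast_\t$ is the reverse-ordered product of the $\tilde P^{(k)}_{t_k}$, not the Hilbert-space adjoint of $P_\t$, so the pairing identity itself needs care. Your fallback --- iterating the Hilbert-space-valued one-parameter inequalities (\ref{Eqn6.15w}$^{\prime}$) across $G_n,\ldots,G_1$ --- is the paper's actual proof of (b) and is the cleaner route. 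Part (c) as you describe it (the two-fold application of Corollary \ref{Cor6.4} for $\mathcal M=M\circ M$ followed by the iterated Littlewood--Paley bound for the plain $Q_t$-square function) coincides with the paper's argument.
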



To prove (a), we take first the identity (\ref{Eqn6.14w}) when $k = 1$, and $a =
t_1$.  Next we multiply on the left of both sides by $P^{(2)}_{t_2}$ and
on the right of both sides by $\tilde{Q}^{(2)}_{t_2}$ and integrate in
$t_2$, using (\ref{Eqn6.14w}$^\prime$) for $k = 2$.  Continuing this way yields (a).
The inequality (b) follows from repeated comparisons of the
corresponding inequalities (\ref{Eqn6.15w}$^\prime$) for $\tilde{S}_k$.  Also (c)
follows by applying (\ref{Eqn6.15w}$^\prime$) for $\tilde{S}_k^{\#}$ and 
a two-fold application of the
vector-valued maximal function in Corollary \ref{Cor6.4}, for $F_t ( x ) = Q_t (
f ) ( x ) \cdot [t]^{- 1/2}$.  

The final lemma needed is as follows.

\begin{lemma}\label{Lemma6.13w}
Suppose $\K$ is a flag kernel end $T ( f) = f \ast \K$. Then
\begin{equation}\label{Eqn6.16w}
S ( T ( f )) ( x ) \leq c \, \mathfrak{S} ( f ) ( x ).
\end{equation}
\end{lemma}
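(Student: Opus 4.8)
The plan is to insert the reproducing formula $\int_{(\R^+)^n}P^*_sQ_s\,\frac{ds}{[s]}=\text{Id}$ of part~(a) of the Lemma above into $Tf$, apply the almost‑orthogonality estimate of Theorem~\ref{Theorem6.10}(b) under the integral sign, and then reduce everything to the $L^2$‑boundedness of a single convolution operator in the dilation parameters.

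First I would reduce to Schwartz data $f\in\mathcal S(\R^N)$; the general case follows by density together with the $L^2$‑boundedness of $T$ (Lemma~\ref{Lemma5.1qw}), and if $\mathfrak S(f)(x)=\infty$ there is nothing to prove. For $f\in\mathcal S$, $Q_sf\in\mathcal S$ and $\mathfrak S(f)\in L^2$ (by part~(c) of the Lemma above), so $\mathfrak S(f)(x)<\infty$ for a.e.~$x$. Using that $T$ and each $P_t$ are bounded on $L^2$ and that the $s$‑integral converges absolutely in $L^2$ (truncate $s$ to a compact set and pass to the limit), one writes $Tf=\int_{(\R^+)^n}T\,P^*_sQ_sf\,\tfrac{ds}{[s]}$ and hence, for a.e.~$x$,
\[
P_t(Tf)(x)=\int_{(\R^+)^n}\bigl(P_t\,T\,P^*_s\bigr)(Q_sf)(x)\,\frac{ds}{[s]}.
\]
Applying Theorem~\ref{Theorem6.10}(b) with the input $Q_sf$ then gives the pointwise bound
\[
\bigl|P_t(Tf)(x)\bigr|\le\int_{(\R^+)^n}\gamma(s,t)\,\bigl(\mathcal M\,Q_sf\bigr)(x)\,\frac{ds}{[s]},
\qquad
\gamma(s,t)\le c\Bigl(\prod_{k=1}^n\min\bigl(\tfrac{s_k}{t_k},\tfrac{t_k}{s_k}\bigr)\Bigr)^{\delta},
\]
in which $\mathcal M\,Q_sf$ is exactly the object occurring inside $\mathfrak S(f)$.

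The remaining step is to show that the integral operator $A$ with kernel $\gamma$, namely $(Ah)(t)=\int_{(\R^+)^n}\gamma(s,t)h(s)\,\tfrac{ds}{[s]}$, is bounded from $L^2\bigl((\R^+)^n,\tfrac{ds}{[s]}\bigr)$ to $L^2\bigl((\R^+)^n,\tfrac{dt}{[t]}\bigr)$ with norm depending only on $\delta$ and $n$. Since $\gamma(s,t)$ is a product of functions of the ratios $s_k/t_k$, the substitution $u_k=\log s_k$, $v_k=\log t_k$ carries $\tfrac{ds}{[s]}$ and $\tfrac{dt}{[t]}$ to Lebesgue measure on $\R^n$ and turns $A$ into convolution on $\R^n$ with the $L^1$ kernel $u\mapsto c\prod_{k=1}^n e^{-\delta|u_k|}$; Young's inequality then yields $\|A\|_{L^2\to L^2}\le c\,(2/\delta)^n$ (equivalently one may invoke Schur's test, using $\gamma(s,t)=\gamma(t,s)$ and the uniform bound $\int\gamma(s,t)\,\tfrac{ds}{[s]}\le c\,(2/\delta)^n$). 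Granting this, for each fixed $x$ one squares the pointwise bound and integrates:
\[
S(Tf)(x)^2=\int_{(\R^+)^n}\bigl|P_t(Tf)(x)\bigr|^2\frac{dt}{[t]}
\le\int_{(\R^+)^n}\Bigl(A\bigl[(\mathcal M\,Q_\cdot f)(x)\bigr](t)\Bigr)^2\frac{dt}{[t]}
\le c\int_{(\R^+)^n}\bigl|(\mathcal M\,Q_sf)(x)\bigr|^2\frac{ds}{[s]}
=c\,\mathfrak S(f)(x)^2,
\]
which is \eqref{Eqn6.16w}.

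The substantive input is Theorem~\ref{Theorem6.10}(b) — the product‑type gain $\gamma(s,t)$ for the triple composition $P_tTP^*_s$ — and this has already been established. Granting it, the passage to the square functions is just the $L^2$‑boundedness of an $L^1$‑convolution operator in the logarithmic dilation variables, so I do not expect a genuine obstacle here; the only point requiring care is the rigorous justification of the reproducing‑formula manipulations (convergence of the parameter integral and the validity of the resulting pointwise identity), which is handled by first working with Schwartz $f$ and truncating the $s$‑integral.
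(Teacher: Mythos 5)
Your proposal is correct and follows essentially the same route as the paper: insert the reproducing formula $\int P^*_sQ_s\,\frac{ds}{[s]}=\mathrm{Id}$, apply Theorem \ref{Theorem6.10}(b) to bound $|P_tTf(x)|$ by $\int\gamma(s,t)\,\mathcal M(Q_sf)(x)\,\frac{ds}{[s]}$, and then use the $L^2$-boundedness of the integral operator with kernel $\gamma$ (the paper does this by Cauchy--Schwarz together with $\sup_t\int\gamma(s,t)\,\frac{ds}{[s]}\le 2/\delta$ and the symmetric bound in $t$, i.e.\ exactly the Schur test you mention as your alternative). Your logarithmic-substitution/Young's-inequality packaging and the extra care about truncating the $s$-integral are harmless refinements of the same argument.
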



\begin{proof}  Now 
$P_t T ( f ) ( x ) = {\int_{( \mathbb{R}^+ )^n}} \,
P_t \cdot T P^\ast_s Q_s ( f ) ( x ) \, \frac{ds}{[s]}$
by part (a) of the previous lemma.  Hence Theorem \ref{Theorem6.10}, part (b) shows that
\[
| P_t T ( f ) ( x ) | \leq 
\displaystyle{\int\limits_{( \mathbb{R}^+)^n}}
\, \gamma ( s, t) \, \mathcal{M} ( Q_s ( f ) ) ( x ) \, \frac{ds}{[s]}
\, ,
\] where $\gamma (s, t ) = c 
\Big(
{\prod_{k = 1}^{n}} \, \min 
\Big(
\frac{s_k}{t_k} , 
\frac{t_k}{s_k}  \Big)\Big)^\delta$.  Thus $$| P_t T ( f ) ( x
) |^2 \leq {\int_{( \mathbb{R}^+ )^n}} \, \gamma
(s, t ) ( \mathcal{M} Q_s ( f ) ( x ))^2 \, \frac{ds}{[s]} \, \cdot \,
I ( t )$$
with $I ( t ) = {\int\limits_{( \mathbb{R}^+ )^n}} \,
\gamma (s, t ) \, \frac{ds}{[s]}$, by Schwarz's inequality.  But
$\sup\limits_{t} I ( t ) = A < \infty$, since 
$${\int\limits_{0}^{\infty}} \, \min \left( \frac{s_k}{t_k}
\, , \, \frac{t_k}{s_k} \right)^\delta \, \frac{ds_k}{s_k} \, = \,
t^{- \delta}_{k} \, \displaystyle{\int\limits_{0}^{t_k}} \, s_k^{-1 +
\delta} \, ds_k \, + \, t^\delta_k
\displaystyle{\int\limits_{t_k}^{\infty}} \, s^{-1-\delta}_{k} \, ds_k =
\frac{2}{\delta}.
$$
A further integration in $t$ (noting that also
${\int_{( \mathbb{R}^+)^n}} \, \gamma (s, t ) \,
\frac{dt}{[t]} \leq A$) then gives the desired result.
\end{proof}

\begin{theorem} \label{Theorem6.14}
With $T f = f \ast K$ as above, we
have $\parallel T f \parallel_{L^p ( G )} \leq A_p \parallel f
\parallel_{L^p ( G )}$, $1 < p < \infty$.
\end{theorem}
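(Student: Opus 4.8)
The plan is to deduce Theorem \ref{Theorem6.14} directly by combining the three facts established immediately above: the pointwise comparison $S(Tf)(\x)\le c\,\mathfrak S(f)(\x)$ of Lemma \ref{Lemma6.13w}, together with parts (b) and (c) of the square-function lemma preceding it, namely $\|g\|_{L^p(G)}\le A_p\|S(g)\|_{L^p(G)}$ and $\|\mathfrak S(f)\|_{L^p(G)}\le A_p'\|f\|_{L^p(G)}$. First I would fix $f\in\mathcal S(\R^N)$. By Lemma \ref{Lemma5.1qw} the function $Tf=f*\K$ belongs to $L^2(\R^N)$, so $S(Tf)$ is a well-defined nonnegative measurable function and part (b) of the square-function lemma applies to $g=Tf$. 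Chaining the three estimates then gives
\[
\|Tf\|_{L^p(G)}\ \le\ A_p\,\|S(Tf)\|_{L^p(G)}\ \le\ c\,A_p\,\|\mathfrak S(f)\|_{L^p(G)}\ \le\ c\,A_p\,A_p'\,\|f\|_{L^p(G)},
\]
which is the asserted inequality for Schwartz $f$, with a constant depending only on $p$ and the flag-kernel constants of $\K$.

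It then remains to pass from Schwartz functions to arbitrary $f\in L^p(G)$, $1<p<\infty$. For $f\in L^p(G)\cap L^2(G)$ one chooses $f_j\in\mathcal S(\R^N)$ with $f_j\to f$ in both $L^p$ and $L^2$ norm; since $T$ is bounded on $L^2(G)$ by Lemma \ref{Lemma5.1qw}, $Tf_j\to Tf$ in $L^2(G)$, hence along a subsequence almost everywhere, and Fatou's lemma combined with the a priori bound $\|Tf_j\|_{L^p}\le A_p\|f_j\|_{L^p}$ yields $\|Tf\|_{L^p(G)}\le A_p\|f\|_{L^p(G)}$. Since $L^p(G)\cap L^2(G)$ is dense in $L^p(G)$, this estimate extends $T$ uniquely to a bounded operator on all of $L^p(G)$, completing the proof.

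I do not expect a genuine obstacle in this final step: every hard estimate has already been carried out in Sections \ref{SecMaxFns} through \ref{Sec6.5}, and the theorem is essentially the conjunction of Lemma \ref{Lemma6.13w} with parts (b), (c) of the preceding lemma. The only point requiring mild care is the interchange of limits in the density argument, which is routine once the $L^2$-boundedness of $T$ (Lemma \ref{Lemma5.1qw}) is in hand; no new square-function or maximal-function inequality beyond those already established is needed.
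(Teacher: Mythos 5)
Your proposal is correct and follows exactly the paper's own argument: the theorem is obtained by chaining $\|Tf\|_{L^p}\le A_p\|S(Tf)\|_{L^p}$ (part (b) of the square-function lemma applied to $Tf$), the pointwise bound $S(Tf)\le c\,\mathfrak S(f)$ of Lemma \ref{Lemma6.13w}, and $\|\mathfrak S(f)\|_{L^p}\le A_p\|f\|_{L^p}$ (part (c)). The closing density argument you supply is routine and consistent with what the paper leaves implicit.
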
  

This now follows directly
from (4), once we apply Lemma 14.1 part (b), for $Tf$ in place of $f$,
and then part (c) of that lemma.

\section{Appendix I: Lifting}\label{Lifting}

Suppose  that $T ( f) = f \ast \K$ is a convolution operator on $G_k$ with $f\in \mathcal S(G_{k})$ and 
 $\K \in \mathcal{S}^\prime ( G_k)$, a tempered distribution.  Then $Tf$ can be written  \,
$T ( f ) ( \x ) = \int_{G_k} \, \K ( \y ) \, f (
\x \y^{-1} ) \, d\y = \langle \K , F_\x \rangle$, \,
where $F_\x$ is the element of $\mathcal{S} (G_k)$ given by $F_\x ( \y ) = f (
\x \y^{-1} )$ for $\y \in G_k$.
We can lift $T$ to a convolution operator
on $G$, denoted by $\tilde{T}$, given by \,
$
\tilde{T} ( f ) ( \x )  =  \int_{G_k}  K ( \y
)  f ( \x\y^{-1}) \, d\y = \big\langle \K, F_{\x}\big\rangle
$, 
\,where  $f \in \mathcal{S} ( G )$, and $F_{\x}(\y) = f(\x\y^{-1})$ for $\y\in G_{k}$.

We describe this lifting in terms of the coordinate system used in
the proof of Theorem \ref{Theorem6.9}.  We can write each $\x \in G$ as a
product $\x = \x^\prime \cdot \bar{\x}$, with $\x^\prime = ( \x_1 , \ldots
\x_{k-1} , 0 , \ldots 0 )$ and $\bar{\x} \in G_k$, where $\bar{\x} = (0 ,
\ldots 0 , \x_k , \ldots \x_n)$.
With this coordinate system, we define $\tilde{K} \in \mathcal{S}^\prime ( G )$ as
$\delta_{\x^\prime} \otimes K$, where $\x = \x^\prime \cdot \bar{\x}$, and we set 
$
\tilde{T} ( f ) ( \x ) \, = \, ( f \ast \tilde{K} ) ( \x )$ for $f \in \mathcal{S} (G) $ and  $\x \in G$.
Then $\tilde{T} ( f ) ( \x ) \, = \, \tilde{T}(f) ( \x^\prime \cdot
\bar{\x})$.  However observe that $\tilde{T}(f) ( \x^\prime \cdot \bar{\x} ) \,
= \, T (f^{\x^\prime} ) ( \bar{\x})$, where $f^{\x^\prime}$ is the element
of $\mathcal{S} ( G_k )$ given by $f^{\x^\prime} ( \y ) = f ( \x^\prime
\cdot \y )$, $\y \in G_k$.
Therefore
\begin{equation}\label{Eqn7.1}
\tilde{T} ( f ) ( \x^\prime \cdot \bar{\x} ) \, = \, T
(f^{\x^\prime} ) ( \bar{\x})
\end{equation}


Next, suppose that $T$ satisfies the bound
\begin{equation}
\parallel T ( f ) \parallel_{L^p ( G_k)} \, \leq \, 
A \parallel f \parallel_{L^p ( G_k)}
\end{equation}
for each $f \in \mathcal{S} ( G )$.  Then applying this to $f =
f^{x^\prime}$ via (\ref{Eqn7.1}), (and assuming $p < \infty$), gives 
\[
\displaystyle{\int\limits_{G_k}} \, | \tilde{T} ( f ) (x^\prime \bar{x}
) |^p \, d\bar{x} \, \leq \, A^p \, \displaystyle{\int\limits_{G_k}} \,
| f^{x^\prime} ( \bar{x} )|^p \, d \bar{x} \, ,
\] 
for each $x^\prime$,
and an integration in $x^\prime$ yields
\begin{equation}
\parallel T ( f ) \parallel_{L^p ( G )} \, \leq \, A
\parallel f \parallel_{L^p ( G )}.
\end{equation}

Suppose next that $K$ depends on a parameter $t$, $K = K_t$ and set $T_t
( f ) = f \ast K_t$.  There the same argument shows that 
\[
\parallel \sup\limits_{t} \, | \tilde{T}_t ( f ) | \parallel_{L^p ( G)}
\, \leq \, A \, \parallel f \parallel_{L^p ( G )} \, , \ \mbox{for all}
\ f \in \mathcal{S} ( G )
\]
whenever
\[
\parallel \sup\limits_{t} | T_t ( f ) | \parallel_{L^p ( G_k)} \, \leq \,
A \parallel f \parallel_{L^p ( G_k)} \, , \ \mbox{for all} \ f \in
\mathcal{S} ( G_k) \, .
\]

This proves that the lifted maximal inequality $\parallel \tilde{M}_k (
f ) \parallel_{L^p ( G)} \, \leq \, A_p \parallel f \parallel_{L^p ( G
)}$ follows from the corresponding inequality on $G_k$, by considering
first the case when $f$ is non-negative, (and 
$K_t = \eta_{B_t}^{(k)}$),
and then by replacing $f$ by $| f |$.

In the same way the vector-valued maximal inequality (\ref{Eqn6.4}) on $G_k$ can be
lifted to the corresponding inequality (\ref{Eqn6.4}$^\prime$) on $G$.
In fact, it suffices to prove (\ref{Eqn6.4}$^\prime$) when there are only $m$
non-zero $f_j$'s, $1 \leq j \leq m$, with bounds independent of $m$.
With this understanding, set 
\begin{align*}
f &= ( f_1 , \ldots f_m ),&T_t ( f ) &= ( T^1_{t_1}
( f_1) , \ldots T^m_{t_m} ( f_m) ),&| f | &= \Big( \displaystyle{\sum\limits_{j= 1}^{m}} \, |
f_j |^2 \Big)^{1/2},\\
t &= (t_1 \ldots t_m),&T^j_{t_j} ( f ) &= f \ast
K^j_{t_j},&| T_t ( f ) | &= \Big(
\displaystyle{\sum\limits_{j = 1}^m} \, | T^j_{t_j} ( f_j ) |^2
\Big)^{1/2}.
\end{align*}
Note that $\Big( \displaystyle{\sum\limits_{j = 1}^{m}}
\, \sup\limits_{t_j} \, | T^j_{t_j} ( f ) |^2 \Big)^{1/2} = \:
\sup\limits_{t} \, |T_t| ( f ) |$.
Then as before the inequality 
\[
\parallel \sup\limits_{t} | T_{t} ( f ) | \parallel_{L^p ( G_n)} \, \leq
\, A \parallel | f | \parallel_{L^p ( G_k)} \, , \, f \in \mathcal{S} (
G_n)
\]
implies the corresponding inequality for $T_t$ lifted to $G$, that is
for $\tilde{T}_t$, and this then yields the desired result.

We should remark that the lifting procedure used here can be viewed in
terms of the more abstract ``transference'' method presented in \cite{CoifmanWeiss}.

\section{Appendix II: An estimate for a geometric sum}

Recall that $\mathcal E_{n}= \{I=(i_{1}, \ldots, i_{n})\in \Z^{n}\,\big\vert\,i_{1}\leq i_{2}\leq \cdots \leq i_{n}\}$.  More generally, if $0\leq B_{1}\leq B_{2}\leq \cdots \leq B_{n}$, let
\begin{equation}\label{E2.9}
\Lambda(B) = \left\{I=(i_{1}, \ldots, i_{n})\in \mathcal E_{n}\,\big\vert\,B_{j}\leq 2^{i_{j}}\quad\text{for}\quad 1 \leq j \leq n\right\}.
\end{equation}
We establish the following estimate for geometric sums which we shall use several times. 

\smallskip

\begin{proposition}\label{Prop2.11}
Let $\alpha_{j}>0$ for $1 \leq j \leq n$, and let $M >\sum_{k=1}^{n}\alpha_{k}$. There is a constant $C$ depending on $n$, on $M$, and on the numbers $\{\alpha_{j}\}$ so that for any $A_{1}, \ldots , A_{n}\in (0,\infty)$ and any $0\leq B_{1}\leq B_{2}\leq \cdots \leq B_{n}$, 
\begin{align}\label{E2.10}
\sum_{I\in \mathcal E_{n}}\frac{\prod_{k=1}^{n}(2^{-i_{k}})^{\alpha_{k}}}{\big(1+\sum_{k=1}^{n}2^{-i_{k}}A_{k}\big)^{M}} &\leq C\,\prod_{j=1}^{n}(A_{1}+A_{2}+\cdots + A_{j})^{-\alpha_{j}},\\\notag\\\label{E2.11}
\sum_{I\in \Lambda(B)}\frac{\prod_{k=1}^{n}(2^{-i_{k}})^{\alpha_{k}}}{\big(1+\sum_{k=1}^{n}2^{-i_{k}}A_{k}\big)^{M}}&\leq C\,\prod_{j=1}^{n}(A_{1}+A_{2}+\cdots + A_{j}+B_{j})^{-\alpha_{j}}.
\end{align}
\end{proposition}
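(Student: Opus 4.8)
\textbf{Plan of proof for Proposition \ref{Prop2.11}.} The two estimates \eqref{E2.10} and \eqref{E2.11} are of the same nature, and in fact \eqref{E2.10} is the special case $B_j = 0$ of \eqref{E2.11}, so I would prove \eqref{E2.11} directly and obtain \eqref{E2.10} as a corollary. The guiding principle is that the summand decays geometrically as each $i_k \to -\infty$ (because of the factor $2^{-i_k \alpha_k}$ with $\alpha_k > 0$, together with the polynomial denominator), and also decays as $i_k \to +\infty$ once $2^{-i_k} A_k$ or $2^{-i_k} B_k$ is small compared with the competing terms; so for each coordinate the sum is essentially dominated by its largest term, and that largest term occurs where $2^{-i_k}$ is comparable to $(A_1 + \cdots + A_j + B_j)^{-1}$ for the appropriate $j$.

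\textbf{Key steps.} First I would reduce to a clean geometric estimate by the change of summation variable $i_k \mapsto i_k$ and writing $\lambda_k = 2^{-i_k}$, so that the constraint $I \in \Lambda(B)$ becomes $\lambda_1 \ge \lambda_2 \ge \cdots \ge \lambda_n$ (from $i_1 \le \cdots \le i_n$) together with $\lambda_j \le B_j^{-1}$ (when $B_j > 0$), and $\lambda_k$ ranges over the dyadic values $2^{-i_k}$. Second, I would induct on $n$. For the inductive step I fix $i_2, \ldots, i_n$ (equivalently $\lambda_2, \ldots, \lambda_n$) and sum over $i_1$ with $i_1 \le i_2$, i.e. $\lambda_1 \ge \lambda_2$; the inner sum is $\sum_{\lambda_1 \ge \lambda_2} \lambda_1^{\alpha_1}\big(1 + \lambda_1 A_1 + \sum_{k\ge 2}\lambda_k A_k\big)^{-M}$, and using $M > \alpha_1$ (indeed $M > \sum \alpha_k$) this geometric-type sum is bounded by $C\, (A_1 + \Lambda_2)^{-\alpha_1}$ where $\Lambda_2 = \lambda_2^{-1}\big(1 + \sum_{k\ge 2}\lambda_k A_k\big)$ — this is the standard fact that $\sum_{t \le T,\ t\ \mathrm{dyadic}} t^{\alpha}(1 + tR)^{-M} \lesssim \min(T, R^{-1})^{\alpha}$ applied with the right $R$, after also using the lower bound $\lambda_1 \ge \lambda_2$ to absorb the ``$1+$'' term. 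Third, having summed out $i_1$, I am left with a sum over $I' = (i_2,\ldots,i_n) \in \mathcal E_{n-1}$ of $\prod_{k=2}^n \lambda_k^{\alpha_k}$ times a denominator that still has the required form but with $A_k$ for $k \ge 2$ and with the new ``effective'' first term $A_1$ absorbed into the bound; I then need to feed this into the induction hypothesis for $n-1$, which produces $\prod_{j=2}^n (A_2 + \cdots + A_j + B_j)^{-\alpha_j}$, and finally reconcile the various $(A_1 + \cdots)$ versus $(A_2 + \cdots)$ groupings — here one uses $A_1 + \cdots + A_j \le A_1 + \cdots + A_j$ trivially in one direction and, for the other, that including $A_1$ only makes the factor smaller, together with keeping careful track of the telescoping of the $A_1$ contribution through the product. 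The constraint $\lambda_j \le B_j^{-1}$ enters exactly at the step where $B_j$ is inserted into $(A_1 + \cdots + A_j + B_j)^{-\alpha_j}$: summing $\lambda_j^{\alpha_j}$ over $\lambda_j \le B_j^{-1}$ contributes the $B_j$ in the $j$-th factor.

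\textbf{Main obstacle.} The routine part is the one-variable geometric summation lemma; the genuinely delicate bookkeeping is the third step — correctly propagating the partial sums $A_1 + \cdots + A_j$ (and the $B_j$'s) through the induction so that after summing out $i_1$ the remaining $(n-1)$-dimensional sum really does present itself in the exact form required by the induction hypothesis, and then matching the resulting product $\prod_{j=2}^n(\cdots)^{-\alpha_j}$ against the desired $\prod_{j=1}^n(\cdots)^{-\alpha_j}$ without losing the $j=1$ factor or double-counting. I expect this combinatorial/inequality-juggling to be where all the care is needed; the analytic content is elementary. An alternative that avoids some of this bookkeeping is to argue directly (not inductively): split $\mathcal E_n$ (resp. $\Lambda(B)$) into the $O(n!)$-times-$O(n)$ regions determined by which of the quantities $2^{-i_k}A_k$ is largest and by the relative sizes of the $2^{-i_k}$, and on each region bound the denominator below by its dominant term and sum the resulting pure geometric series; I would keep this as a fallback if the induction's index management becomes unwieldy.
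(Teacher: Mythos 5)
Your overall architecture (reduce \eqref{E2.10} to the case $B_j=0$ of \eqref{E2.11}, then induct on $n$ by summing out the first index, with a one-variable lemma supplying each factor $(A_1+\cdots+A_j+B_j)^{-\alpha_j}$) matches the paper's, but the mechanism is genuinely different. The paper does not sum the dyadic series directly: it first compares the sum over $\Lambda(B)$ with the integral $\int_{\Omega(B)}\prod_k x_k^{\alpha_k}\bigl(1+\sum_k x_kA_k\bigr)^{-M}\,\frac{dx_1\cdots dx_n}{x_1\cdots x_n}$ over the region $\Omega(B)=\{\tfrac12 x_{k+1}\le x_k\le B_k^{-1}\}$, and then uses the subordination identity $\bigl(1+\sum x_kA_k\bigr)^{-M}=c_M\int_0^\infty t^Me^{-t}\prod_k e^{-x_kA_kt}\,\frac{dt}{t}$ to replace the coupled polynomial denominator by a product of exponentials. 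After that the induction is painless: the one-variable estimate $\int_{x/2}^{B^{-1}}s^{\alpha}e^{-sAt}\,\frac{ds}{s}\le C\,e^{-xAt/4}(A+B)^{-\alpha}(1+t^{-\alpha})$ leaves behind the factor $e^{-x_2A_1t/4}$, which is precisely what converts $A_2$ into $A_2+\tfrac14A_1$ at the next step and so produces the partial sums $A_1+\cdots+A_j$; the hypothesis $M>\sum_k\alpha_k$ is used only at the very end to make $\int_0^\infty t^{M}e^{-t}\prod_j(1+t^{-\alpha_j})\,\frac{dt}{t}$ finite. Your direct dyadic route avoids the subordination trick at the cost of exactly the bookkeeping you identify, and it is viable.

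However, the one-variable bound you state is not correct as written, and this is where your induction currently breaks. The claim that $\sum_{\lambda_1\ge\lambda_2}\lambda_1^{\alpha_1}\bigl(1+\lambda_1A_1+\sum_{k\ge2}\lambda_kA_k\bigr)^{-M}\le C\,(A_1+\Lambda_2)^{-\alpha_1}$ fails: take $n=2$, $\alpha_1=\alpha_2=1$, $M=3$, $\lambda_2=A_2=1$, $A_1=\epsilon$; the sum is $\asymp\epsilon^{-1}$ while your bound is $\asymp 1$. The correct output of summing out $\lambda_1$ (over $\lambda_2\le\lambda_1\le B_1^{-1}$) is
\begin{equation*}
C\,(A_1+B_1)^{-\alpha_1}\Bigl(1+\lambda_2(A_1+A_2)+\sum_{k\ge3}\lambda_kA_k\Bigr)^{-(M-\alpha_1)},
\end{equation*}
i.e.\ you must retain a residual denominator of the same shape, with exponent $M-\alpha_1>\sum_{k\ge2}\alpha_k$ and with $A_2$ replaced by $A_1+A_2$ (this replacement is legitimate because $\lambda_1\ge\lambda_2$ lets you trade $\lambda_1A_1$ for $\lambda_2A_1$ in part of the denominator); only then does the $(n-1)$-dimensional sum present itself in the form required by the induction hypothesis and yield the factors $(A_1+\cdots+A_j+B_j)^{-\alpha_j}$ for $j\ge2$. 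With the lemma restated this way your argument closes; as stated, the $\alpha_1$-gain and the residual denominator cannot both be extracted from the bound you wrote down.
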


\begin{proof}
Note that if we take $B_{1}= \cdots = B_{n}=0$, then the inequality (\ref{E2.11}) gives the inequality (\ref{E2.10}). If for $x_{1}, \ldots, x_{n}\geq 0$ we put
\begin{equation*}
\varphi(x_{1}, \ldots, x_{n})=\Big(\prod_{k=1}^{n}x_{k}^{\alpha_{k}}\Big)\Big(1+\sum_{k=1}^{n}x_{k}A_{k}\Big)^{-M},
\end{equation*}
then if $1\leq s_{j}\leq 2$ for $1 \leq j \leq n$, we have
\begin{align*}
\varphi(s_{1}x_{1}, \ldots, s_{n}x_{n}) 
& 
\leq 2^{|\alpha|}\prod_{k=1}^{n}x_{k}^{\alpha_{k}}\Big(1+\sum_{k=1}^{n}x_{k}A_{k}\Big)^{-M}
\leq 2^{|\alpha|}\varphi(x_{1}, \ldots, x_{n})
\end{align*}
For each $I=(i_{1}, \ldots, i_{n})\in \Z^{n}$, let $Q_{I}= \{\x\in\R^{n}\,\big\vert\, \frac{1}{2}2^{-i_{k}}\leq x_{k}<2^{-i_{k}}\}$. It follows that there is a constant $C= C(n,\alpha)$ depending only on $n$ and $\alpha$ such that 
\begin{equation*}
C^{-1}\int_{Q_{I}}\varphi(\x)\,\frac{dx_{1}\cdots dx_{n}}{x_{1}\cdots x_{n}}\leq 
\frac{\prod_{k=1}^{n}\big(2^{-i_{k}}\big)^{\alpha_{k}}}{ \big(1+\sum_{k=1}^{n}2^{-i_{k}}A_{k}\big)^{M}}
\leq C\,\int_{Q_{I}}\varphi(\x)\,\frac{dx_{1}\cdots dx_{n}}{x_{1}\cdots x_{n}}.
\end{equation*}
Thus 
\begin{equation*}
\sum_{I\in \Lambda(B)}\frac{\prod_{k=1}^{n}(2^{-i_{k}})^{\alpha_{k}}}{\big(1+\sum_{k=1}^{n}2^{-i_{k}}A_{k}\big)^{M}}\leq C\int\limits_{\bigcup_{I\in \Lambda(B)}Q_{I}}\varphi(\x)\,\frac{dx_{1}\cdots dx_{n}}{x_{1}\cdots x_{n}}.
\end{equation*}
On the other hand, it is easy to check that if $I\in \Lambda(B)$ and $\x\in Q_{I}$, then $0<x_{k+1}\leq 2 x_{k}$ for $1\leq k \leq n-1$ and $x_{k}\leq B_{k}^{-1}$ for $1 \leq k \leq n$. Thus if we put
\begin{equation*}
\Omega(B) =\Bigg\{\x\in \R^{n}_{+}\,:\, 
\begin{cases}
\frac{1}{2}x_{k+1}\leq x_{k}\leq B_{k}^{-1}&\text{for $1\leq k \leq n-1$}\\\\
0\leq x_{n}\leq B_{n}^{-1}
\end{cases}
\Bigg\},
\end{equation*}
then $\bigcup_{I\in \Lambda(B)}Q_{I}\subseteq \Omega(B)$. Thus to prove the Proposition, it suffices to show that
\begin{equation*}
\begin{aligned}
\int\limits_{\Omega(B)}\frac{\prod_{k=1}^{n}x_{k}^{\alpha_{k}}}{\big(1+\sum_{k=1}^{n}x_{k}A_{k}\big)^{M}}&\frac{dx_{1}\cdots dx_{n}}{x_{1}\cdots x_{n}}
\leq C\,\prod_{j=1}^{n}(A_{1}+A_{2}+\cdots + A_{j}+B_{j})^{-\alpha_{j}}.
\end{aligned}
\end{equation*}
However,
\begin{equation*}
\begin{aligned}
\int\limits_{\Omega(B)}\prod_{k=1}^{n}x_{k}^{\alpha_{k}}\Big(1+\sum_{k=1}^{n}x_{k}A_{k}\Big)^{-M}&\frac{dx_{1}\cdots dx_{n}}{x_{1}\cdots x_{n}}\\
&=
\int\limits_{0}^{\infty}t^{M}e^{-t}\Big[\int\limits_{\Omega(B)}\prod_{k=1}^{n}x_{k}^{\alpha_{k}}e^{-x_{k}A_{k}t}\frac{dx_{1}\cdots dx_{n}}{x_{1}\cdots x_{n}}\Big]\,\frac{dt}{t}.
\end{aligned}
\end{equation*}
We will show  that we can estimate the inner integral on the right hand side by
\begin{equation}\label{E2.12}
\begin{aligned}
\int_{\Omega(B)}\prod_{k=1}^{n}&x_{k}^{\alpha_{k}}e^{-x_{k}A_{k}t}\frac{dx_{1}\cdots dx_{n}}{x_{1}\cdots x_{n}}\\&\leq C(n,\alpha)\prod_{j=1}^{n}(1+t^{-\alpha_{j}})(A_{1}+A_{2}+\cdots + A_{j}+B_{j})^{-\alpha_{j}},
\end{aligned}
\end{equation}
and since $M>\sum_{k=1}^{n}\alpha_{k}$, this will complete the proof.

To establish (\ref{E2.12}), we first establish an estimate for $\int_{\frac{x}{2}}^{B^{-1}}s^{\alpha}e^{-sAt}\,\frac{ds}{s}$. On the one hand, we have $$\int_{\frac{x}{2}}^{B^{-1}}s^{\alpha}e^{-sAt}\,\frac{ds}{s}\leq e^{-\frac{1}{2}xAt}\int_{0}^{B^{-1}}s^{\alpha-1}\,ds = \alpha^{-1}e^{-\frac{1}{2}xAt} B^{-\alpha}.$$ On the other hand we  have $$\int_{\frac{x}{2}}^{B^{-1}}s^{\alpha}e^{-sAt}\,\frac{ds}{s}
\leq (At)^{-\alpha}\int_{\frac{1}{2}xAt}^{\infty}s^{\alpha-1}e^{-s}ds
\leq C_{\alpha} (At)^{-\alpha}e^{-\frac{1}{4}xAt}.$$ Putting the two together, we have the estimate
\begin{equation}\label{E2.13}
\int_{\frac{x}{2}}^{B^{-1}}s^{\alpha}e^{-sAt}\,\frac{ds}{s} \leq C_{\alpha}e^{-\frac{1}{4}xAt}(At+B)^{-\alpha}\leq C_{\alpha}\,e^{-\frac{1}{4}xAt}(A+B)^{-\alpha}(1+t^{-\alpha}).
\end{equation}

We now establish (\ref{E2.12}) by induction on $n$. When $n=1$, we use (\ref{E2.13}) with $x=0$ to get
\begin{equation*}
\begin{aligned}
\int_{0}^{B_{1}^{-1}}x_{1}^{\alpha_{1}}e^{-\alpha_{1}A_{1}t}\,\frac{dx_{1}}{x_{1}}\leq C_{\alpha}(A_{1}+B_{1})^{-\alpha_{1}}(1+t^{-\alpha_{1}}).
\end{aligned}
\end{equation*}
For the induction step, we have
\begin{equation*}
\begin{aligned}
\int_{\Omega(B)}\prod_{k=1}^{n}&x_{k}^{\alpha_{k}}e^{-x_{k}A_{k}t}\frac{dx_{1}\cdots dx_{n}}{x_{1}\cdots x_{n}}\\
&=
\int_{\Omega'(B)}\prod_{k=2}^{n}x_{k}^{\alpha_{k}}e^{-x_{k}A_{k}t}\Big[
\int_{\frac{x_{2}}{2}}^{B_{1}^{-1}}x_{1}^{\alpha_{1}}e^{-\alpha_{1}A_{1}t}\,\frac{dx_{1}}{x_{1}}\Big]\frac{dx_{2}\cdots dx_{n}}{x_{2}\cdots x_{n}}\\
&\leq
C_{\alpha}(A_{1}+B_{1})^{-\alpha_{1}}(1+t^{-\alpha_{1}})\int_{\Omega'(B)}\prod_{k=2}^{n}x_{k}^{\alpha_{k}}e^{-x_{k}A_{k}t}e^{-\frac{1}{4}x_{2}A_{1}t}\frac{dx_{2}\cdots dx_{n}}{x_{2}\cdots x_{n}}.
\end{aligned}
\end{equation*}
Here \,$\displaystyle \Omega'(B) =\Bigg\{\x\in \R^{n}_{+}\,:\, 
\begin{cases}
\frac{1}{2}x_{k+1}\leq x_{k}\leq B_{k}^{-1}&\text{for $2\leq k \leq n-1$}\\
0\leq x_{n}\leq B_{n}^{-1}
\end{cases}
\Bigg\}$, and we have used the estimate in (\ref{E2.13}). The last integral on the right-hand side is thus of the same form as the original integral, except that $n$ has been replaced by $n-1$, and $A_{2}$ has been replaced by $A_{2}+\frac{1}{4}A_{1}$. We can thus use our inductive hypothesis on this integral, and we obtain the desired estimate. This completes the proof.
\end{proof}

}

\end{document}